\theoremstyle{plain}
\newtheorem{prop}{Proposition}
\newtheorem{thm}[prop]{Theorem}
\newtheorem{cor}[prop]{Corollary}
\newtheorem{lem}[prop]{Lemma}
\newtheorem{fact}[prop]{Fact}
\newtheorem{ques}{Question}
\theoremstyle{remark}
\newtheorem{exs}{Examples}
\newtheorem{example}{Example}
\newtheorem{rem}[prop]{Remark}
\numberwithin{prop}{chapter}
\numberwithin{equation}{chapter}
\theoremstyle{definition}
\newtheorem{defi}{Definition}
\newcommand{\ab}{\mathrm{ab}}
\newcommand{\dbl}{[\![}
\newcommand{\dbr}{]\!]}
\newcommand{\dbml}{\langle\!\langle}
\newcommand{\dbmr}{\rangle\!\rangle}
\newcommand{\dblaurl}{(\!(}
\newcommand{\dblaurr}{)\!)}
\newcommand{\ZpG}{\Z_p\dbl G\dbr}
\newcommand{\Zen}{\mathrm{Z}}
\DeclareMathOperator{\bra}{Br}
\DeclareMathOperator{\res}{res}
\DeclareMathOperator{\ccd}{cd}
\DeclareMathOperator{\rank}{rk}
\DeclareMathOperator{\grad}{gr}
\newcommand{\F}{\mathbb{F}}
\newcommand{\Z}{\mathbb{Z}}
\newcommand{\I}{\mathbb{I}}
\newcommand{\Q}{\mathbb{Q}}
\newcommand{\N}{\mathbb{N}}
\newcommand{\MiK}{\mathcal{K}}
\newcommand{\sep}{\mathrm{sep}}
\newcommand{\bKsep}{\bar{K}^{\sep}}
\newcommand{\chr}{\mathrm{char}}
\newcommand{\spa}{\mathrm{Span}}
\DeclareMathOperator{\Gal}{Gal}
\newcommand{\opno}{\lhd_o}
\newcommand{\opsgp}{\leq_o}
\newcommand{\clsgp}{\leq_c}
\newcommand{\kernel}{\mathrm{ker}}
\newcommand{\image}{\mathrm{im}}
\newcommand{\iid}{\mathrm{id}}
\newcommand{\Hom}{\mathrm{Hom}}
\newcommand{\tor}{\mathrm{tor}}
\newcommand{\rk}{\mathrm{rk}}
\newcommand{\Ext}{\mathrm{Ext}}
\newcommand{\Tor}{\mathrm{Tor}}
\newcommand{\argu}{\hbox to 7truept{\hrulefill}}
\newcommand{\doublespaced}{\renewcommand{\baselinestretch}{2}\normalfont}
\newcommand{\singlespaced}{\renewcommand{\baselinestretch}{1}\normalfont}
\begin{document}

\title{Cohomology of Absolute Galois Groups}
\author{Claudio Quadrelli}

\pagenumbering{roman}



\begin{center}

{\small \textsf{Universit\`a degli Studi di Milano-Bicocca and Western University}}

\medskip

\includegraphics[scale=0.8]{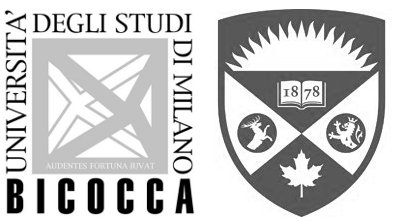}

\bigskip

{\bf \Large \textsf{COHOMOLOGY}

\textsf{OF} \textsf{ABSOLUTE} \textsf{GALOIS} \textsf{GROUPS}}

\bigskip

{\huge \textsf{Claudio Quadrelli}}

\medskip

\bigskip

{\large \textsf{December 2014}}

\end{center}

\bigskip

\bigskip

\noindent\makebox[-0.06in][l]{\rule[2ex]{2in}{.3mm}}
\singlespaced
\textsf{Advisors:\\
{\large Prof. Thomas S. Weigel} \\
{\large Prof. J\'an Min\'a\v{c}} }

\bigskip

\bigskip
\noindent\makebox[-0.06in][l]{\rule[2ex]{5in}{.5mm}}
\singlespaced
Dottorato di Ricerca in Matematica Pura e Applicata,\\
Dipartimento di Matematica e Applicazioni, Universit\`a Milano-Bicocca\\
Ph.D. Program in Mathematics\\
Department of Mathematics, Western University

\bigskip

\noindent\singlespaced
{\small A thesis submitted in partial fulfillment of the requirements for the degree of Doctor of Philosophy
- {\textcopyright} C. Quadrelli 2014}

\newpage

{\flushright
{\small
A Elisa

\bigskip

\bigskip

 ``{\it E propri adess, che te me fet vid\'e 'n suris,\\
la radio parla de questo cielo semper p\"us\'e gris.\\
E propri adess, che te strenget la me mann,\\
g'ho dum\`a 'l temp de tir\`a s\"o\"o tri strasc e 'namm.}''\\

}}

\newpage

\newpage

\chapter*{Acknowledgments}

\begin{quotation}
 ``Vita che la se incastra e l'\`e disegnada a lisca de pess,\\
vita che la sbrisiga verso quel sit 'd te gh'eet v\"oja de v\`ess. \\
Unda che la te porta fin a la spunda de l\`a \\
per fatt sentii p\"us\'e forta la v\"oja de turn\`a a caa.''\footnote{``Life, blocks fitting together,
life shaped like a fishbone; life, which slips toward the place where you'd like to be.
Wave, which pulls you to the other shore, to make you feel homesick once more and again''. From the song {\it La terza onda}.} \\
(D. Van de Sfroos)
\end{quotation}

First of all, I am bound to thank my first advisor, {\it maestro}, and so far also work-mate
(and friend), Prof. Thomas S. Weigel from Milano-Bicocca.
In particular, I am grateful to him also for never letting me to rest on my laurels,
and for being a ``warmduscher'' only in real life, and never in Mathematics.

And of course, I am very grateful to my second advisor, Prof. J\'an Min\'a\v{c} from Western, a wonderful teacher --
whose enthusiasm for Mathematics (and not only) is really contagious, and whose laugh is very well known
by many mathematicians --, great scorer (in Mathematics and in soccer), work mate and friend as well.
And thanks also to his best friend, secretary, driver, proof-reader, actress, singer (and when she has some free time, also wife),
Leslie... without her, what would be of Professor Min\'a\v{c}?

Sincere thanks to the member of my defense board:
Prof. D.~Riley and Prof. E.~Schost from Western, Prof. R.~Schoof from Roma Tor Vergata and Prof.ssa F.~Dalla Volta
from Milano-Bicocca.
I am particularly grateful to D.~Riley as Department Chair because of his effort to let me visit
Western University as undergraduate and to let me became Ph.D. student there; and I am grateful to him as professor
for all the things I learned from him about pro-$p$ groups and restricted Lie algebras.

My sincere thanks to S.K.~Chebolu, from Illinois State University, for his mathematical insight and for his friendship.
I hope we will be working together for a long time

There are some mathematicians I have met during my ``graduate career'' whom I would like to thank
for the interest shown towards my work: L.~Bary-Soroker, I.~Efrat, G.~Fernandez-Alcober, D.~Krashen, J.~Labute,
Ch.~Maire, D.~Neftin, J.~Sonn, P.~Symonds, D.~Vogel, P.~Zalesski\u{i}.
And thanks to my young collegues, in particular to A.~Chapman and J.~G\"artner.

Two people from the Math Deaprtment in Milan deserve my thanks: Tommaso~T., whom I consider as my old
``mathematical brother'', and Giusy~C., the true ``pillar'' of the department.
Same treatment is deserved by N.D.~T\^an, from Western, for his thoughtful questions and for his ``homeworks'',
without which Section~4.4 would not exists.

Last -- but not least -- I have to thank my mates, who have been good companions throughout my whole studies...
they would be too many, but I can't help to mention at least some.
From my undergraduate studies: Chiara~B., Claudio~S., Cristian~C., Davide~C., Davide~M., Matteo~C., Mattia~B.,
Paolo Nicola~C., Stefano~P.; 
and from my graduate studies: Chiara~O., Dario~C., Francesca~S., Jacopo~C., Linda~P., Marina~T., Sara~V., Simone~C.
And also from Western: Ali~F., Asghar~G., Masoud~A., Mike~R. and Sajad~S.

A special mention is deserved by office-mates in Milan:
Gianluca~P. (one of the few people in the world able to understand, appreciate and enjoy my humor,
together with Paolo Nicola!), Martino~B. (the first to leave the office), Nicola~S. (even if he's an analyst!),
Federico William~P. (even if he's abusive!), Elena~V. (even if she studies... ehm, doesn't matter),
and also Ilaria~C. and Raimundo~B. for their ``loud presence'' (cit.).

Thanks to my landlady in London-ON, Alison~W., and my room-mates (in particular Silke~D.),
for providing me a welcoming home while staying in Canada.

I can't help to thank explicitly at least some (but only for the lack of space) of my dearest friends
(who are not mentioned above):
Teo~C., the ``Syncefalia'', Samu \& Giulia, Coco \& JPF, Teo \& Chiara~B., il Biondo, il White,
and many other people from the Oratorio di Binago.

Last, but not least, thanks to my parents, Alfredo and Paola, for what I am is also because of them; and to my sister,
Giulia, for always supporting me (also with cakes and cookies).
And to Elisa (and any word here would seem inadequate, as indeed she often leaves me speechless).

\newpage

\tableofcontents

\newpage
\pagenumbering{arabic}

\chapter*{Introduction}

\begin{quote}
 ``Die Zahlentheorie nimmt unter den mathematischen Disziplinen eine \"anlich idealisierte Stellung ein
wie die Mathematik selbst unter den anderen Wissenschaften.''\footnote{``Number Theory, among the mathematical 
disciplines, occupies a similar ideaized position to that held by Mathematics itself among the sciencies''.
From the introduction of Neukirch's book {\it Algebraic Number Theory}, and quoted at the beginning
of the Introduction of \cite{nsw:cohm}.} \\ (J.~Neukirch)
\end{quote}

One may very well say that the Theory of Profinite Groups is ``daughter'' of Galois Theory.
Indeed profinite groups arose first in the study of the Galois groups of infinite Galois extensions of fields,
as such groups are profinite groups, and they carry with them a natural topology, the {\it Krull topology},
which is induced by the Galois subextensions of finite degree, and under this topology they are Hausdorff compact
and totally disconnected topological groups: these properties characterize precisely profinite groups.
Also, the Krull topology allows us to state the Fundamental Theorem of Galois Theory
also for infinite Galois extensions: one has a bijective correspondence between subextensions
and {\it closed} subgroups, and in particular between {\it finite} subextensions and {\it open} subgroups.

In particular, the tight relation between profinite groups and Galois groups is stated by the following theorem,
proved first by H.Leptin in 1955 (cf. \cite{leptin:galois theorem}).

\begin{thm}\label{0thm:G profinite}
Let $G$ be a profinite group. Then there exists a Galois extension of fields $L/K$ such that $G=\Gal(L/K)$.
\end{thm}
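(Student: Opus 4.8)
The plan is to realize $G$ as the full group of automorphisms of a suitable field $L$ fixing a subfield $K$, following the strategy of Artin's theorem. Concretely, I would produce a field $L$ carrying a faithful action of $G$ by automorphisms which is \emph{continuous}, in the sense that every element of $L$ is fixed by some open subgroup of $G$; setting $K:=L^G$ to be the fixed field, I would then check that $L/K$ is Galois with $\Gal(L/K)\cong G$.

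To build such an $L$, fix any field $k$ and let $G$ act on the disjoint union $\Omega:=\bigsqcup_{N\opno G}G/N$ by left translation on each finite coset space $G/N$. The stabilizer of a point $gN$ is the open subgroup $gNg^{-1}$, and the action is faithful because $\bigcap_{N\opno G}N=\triv$ in a profinite group. Now take $L:=k(x_\omega\mid\omega\in\Omega)$, the field of rational functions in indeterminates indexed by $\Omega$, and let $G$ permute the variables via $g\cdot x_\omega=x_{g\omega}$, extending $k$-linearly to an action by automorphisms. This action is faithful since it is already faithful on the variables, and it is continuous: any $a\in L$ involves only finitely many indeterminates $x_{\omega_1},\dots,x_{\omega_m}$, so $a$ is fixed by the open subgroup $\bigcap_i\mathrm{Stab}_G(\omega_i)$. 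I expect securing \emph{this} continuity to be the crux of the argument, and it is precisely what dictates the use of the coset spaces $G/N$ rather than the (non-continuous) regular action of $G$ on itself.

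Next I would verify that $L/K$ is Galois. Because every $a\in L$ has open stabilizer, its $G$-orbit $\{a_1,\dots,a_r\}$ is finite, and the polynomial $\prod_{i}(T-a_i)$ has $G$-invariant coefficients, hence lies in $K[T]$; it splits in $L$ and has distinct roots. Thus every element of $L$ is separable algebraic over $K$ with all its conjugates in $L$, so $L/K$ is an algebraic, normal, separable extension, i.e.\ Galois.

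Finally, let $\phi\colon G\to\Gal(L/K)$ be the natural map. It is continuous (the preimage of each basic open subgroup $\Gal(L/M)$, with $M/K$ finite, is open by the orbit-finiteness above) and injective by faithfulness. Since $G$ is compact and $\Gal(L/K)$ is Hausdorff, $\phi$ is a closed embedding, so $H:=\phi(G)$ is a closed subgroup whose fixed field is $L^H=L^G=K$. By the Galois correspondence for $L/K$, a closed subgroup with fixed field equal to $K$ must be all of $\Gal(L/K)$; hence $\phi$ is an isomorphism of profinite groups, and $L/K$ is the desired extension.
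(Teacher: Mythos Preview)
Your argument is correct and is essentially the classical construction. The paper does not actually prove Theorem~\ref{0thm:G profinite}; it only attributes the result to Leptin and refers to Ribes' treatment (cf.\ \cite[\S~2.11]{ribeszalesskii:profinite}). What you have written is precisely that standard proof: one lets $G$ act by permuting indeterminates indexed by the finite coset spaces $G/N$, takes $K=L^G$, and uses the finiteness of orbits together with the infinite Galois correspondence to identify $G$ with $\Gal(L/K)$. All the key points---faithfulness from $\bigcap_N N=\{1\}$, continuity from open stabilizers of the $x_\omega$, and the closed-image argument via compactness---are handled correctly.
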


The proof of this theorem one commonly refers to nowadays is due to L.~Ribes (1977, 
see also \cite[\S~2.11]{ribeszalesskii:profinite}).
Note that the aforementioned theorem does not say anything about the nature of the field $K$
nor about the extension $L/K$.
In fact, the essence of the whole Galois theory is to ``lose information'', as one passes from a field, i.e., an
algebraic structure with two compatible operations, to a (profinite) group, i.e., an algebraic (and topological) 
structure with only one operation (compatible with the topology).

Every field $K$ comes equipped with a distinguished Galois extension: the {\it separable closure} $\bar K^{\text{sep}}$.
Its Galois group $G_K=\Gal(\bar K^{\text{sep}}/K)$ is called the {\bf absolute Galois group} of $K$.
Such extension collects all (finite and infinite) Galois extensions of $K$; in particular, all Galois groups
of $K$ are ``encoded'' in $G_K$: this is why absolute Galois groups of fields have a prominent place in Galois theory.

Unfortunately (or fortunately, otherwise I would have wasted the last four years of my life), it is impossible
to have a result like Theorem~\ref{0thm:G profinite} for absolute Galois groups: not every profinite group
is realizable as absolute Galois group, and in general it is a very hard problem to understand which profinite group
is an absolute Galois group.
For example, the celebrated Artin-Schreier Theorem states that the only finite group which is realizable as
absolute Galois group is the finite group of order two (for example, as $\Gal(\mathbb{C}/\mathbb{R})$,
see Section~2.3 for the complete statement of the Artin-Schreier Theorem).

\bigskip

Thus, the problem to understand which profinite groups are realizable as absolute Galois groups --
and also to recover some arithmetic information from the group --  has caught the attention
of algebraists and number theorists in the last decades, and many are working on it
from different points of view, and using various tools.

A very powerful one is Galois Cohomology.
Actually, the first comprehensive exposition of the theory of profinite groups appeared in the book 
{\it Cohomologie Galoisienne} by J-P.~Serre in 1964, which is a milestone of Galois theory.
The introduction of Galois cohomological techniques is doubtlessly one of the major landmarks of 20th century
algebraic number theory and Galois theory.
For example, class field theory for a field $K$ is nowadays usually formulated via cohomological duality properties
of the absolute Galois group $G_K$.
 
A recent remarkable developement in Galois cohomology is the complete proof of the {\bf Bloch-Kato conjecture}
by V.~Voevodsky, with the substantial contribution of M.~Rost (and the ``patch'' by C.~Weibel).
The first formulation of this conjecture is due to J.~Milnor, with a later refinement by J.~Tate (see
Section~2.2 for a more detailed history of the conjecture).
The conjecture states that there is a tight relation between the cohomology of the absolute Galois group $G_K$ of
a field $K$ (a group-theoretic object), and the {\it Milnor $\mathcal{K}$-ring} of $K$ (an arithmetic object).
In particular, one has that the {\it Galois symbol}
\begin{equation}
\xymatrix{ \mathcal{K}_n^M(K)/m.\mathcal{K}_n^M(K)\ar[rr]^-{h_K}
   && H^n\left(G_K,\mu_m^{\otimes n}\right) } 
\end{equation}
from the $n$-th Milnor $\mathcal{K}$-group of $K$ modulo $m$ to the $n$-th cohomology group of $G_K$ with coefficients in
$\mu_m^{\otimes n}$, with $\mu_m$ the group of $m$-th roots of unity lying in $\bar K^{\text{sep}}$,
is an isomorphism for every $n\geq1$ and for every $m\geq2$ such that the characteristic of the field $K$
does not divide $m$.

Therefore, after the proof of what is nowadays called the Rost-Voevodsky theorem, one has these two hopes:
\begin{itemize}
 \item[(1)] to recover information about the structure of the absolute Galois group
from the structure of its cohomology;
 \item[(2)] to recover arithmetic information from the structure of the cohomology of the absolute Galois group
-- and thus, possibly, from the group structure of $G_K$ itself. 
\end{itemize}
 
\bigskip

Yet, in general it is still rather hard to handle an absolute Galois group.
Thus, for a prime number $p$, we shall focus our attention to the {\it pro-$p$ groups} ``contained'' in an absolute Galois group $G_K$:
the pro-$p$-Sylow subgroups of $G_K$ (which are again absolute Galois groups) and, above all, the 
{\bf maximal pro-$p$ Galois group} $G_K(p)$ of $K$, i.e., the maximal pro-$p$ quotient of the absolute Galois
group $G_K$.\footnote{Note that every pro-$p$-Sylow subgroup of an absolute Galois group, i.e., every absolute Galois group which is pro-$p$, is 
also the maximal pro-$p$ quotient of itself, thus the class of maximal pro-$p$ Galois groups is more general
than the class of absolute Galois pro-$p$ groups, and every result which holds for maximal pro-$p$ Galois groups,
holds also for absolute Galois groups which are pro-$p$.}
Indeed, pro-$p$ groups are much more understood than profinite groups, and this reduction is not an ``abdication'',
as such groups bring substantial information on the whole absolute Galois group, and in some cases they determine the structure of the field.
Also, many arguments form Galois cohomology and from the study of Galois representations suggest that one should
focus on pro-$p$ quotient (cf. \cite[Introduction]{bogomolovtschinkel:birat}).

In particular, the Bloch-Kato conjecture has the following corollary: if the field $K$ contains
a primitive $p$-th root of unity (and usually one should assume that $\sqrt{-1}$ lies in $K$, if $p=2$),
then the Galois symbol induces the isomorphisms of (non-negatively) graded $\F_p$-algebras
\begin{equation}
 \frac{\mathcal{K}_\bullet^M(K)}{p.\mathcal{K}_\bullet^M(K)}\simeq H^\bullet\left(G_K,\mu_p\right)\simeq
H^\bullet\left(G_K(p),\F_p\right),
\end{equation}
where the finite field $\F_p$ is a trivial $G_K(p)$-module, and $H^\bullet$ denotes the cohomology ring, equipped
with the {\it cup product}.
Since the Milnor $\mathcal{K}$-ring $\mathcal{K}_\bullet^M(K)$ is a {\it quadratic algebra} --
i.e., a graded algebra generated by elements of degree one and whose relations are generated in degree two.
also the $\F_p$-cohomology ring of the maximal pro-$p$ Galois group $G_K(p)$ is a quadratic algebra over the field $\F_p$
This provides the inspiration for the definition of a {\bf Bloch-Kato pro-$p$ group}: a pro-$p$ group
such that the $\F_p$-cohomology ring of every closed subgroup is quadratic.
Bloch-Kato pro-$p$ groups were first introduced in \cite{bcms:bk}, and then defined and studied in \cite{claudio:BK}.

\bigskip

Another tool to study maximal pro-$p$ Galois group is provided by the {\it cyclotomic character}, induced by 
the action of the absolute Galois group $G_K$ on the roots of unity lying in $\bar K^{\text{sep}}$:
in the case $K$ contains $\mu_p$, then the cyclotomic character induces a continuous homomorphism from the 
maximal pro-$p$ Galois group $G_K(p)$ to the units of the ring of $p$-adic integers $\Z_p^\times$, called the
{\it arithmetic orientation} of $G_K(p)$.

Thus, a continuous homomorphism of pro-$p$ groups $\theta\colon G\to\Z_p^\times$ is called an orientation for $G$,
and if $G$ is a Bloch-Kato pro-$p$ group and certain conditions on the induced Tate-twist module $\Z_p(1)$ are satisfied,
the {\bf orientation} is said to be {\bf cyclotomic} (in particular, the group $H^2(G,\Z_p(1))$
has to be torsion-free, see Subsection 2.4.2), and the group $G$ is said to be {\bf cyclo-oriented}.
Cyclo-oriented pro-$p$ groups are a generalization of maximal pro-$p$ Galois groups; we may say that they are 
``good candidates'' for being realized as maximal pro-$p$ Galois groups, as they have the right cohomological
properties.

For a pro-$p$ group $G$ with cyclotomic orientation $\theta$ such that either $\image(\theta)$ is trivial
or $\image(\theta)\simeq\Z_p$ -- which is always the case if $p\neq2$ -- one has an epimorphism of graded $\F_p$-algebras
\begin{equation}\label{0eq:exterior epimorphism}
\xymatrix{ \bigwedge_\bullet\left(H^1(G,\F_p)\right)\ar@{->>}[r] & H^\bullet(G,\F_p) }, 
\end{equation}
i.e., the $\F_p$-cohomology ring of $G$ is an epimorphic image of the exterior algebra over $\F_p$ generated by
the grup $H^1(G,\F_p)$.

If $G$ is finitely generated, then we are given two bound-cases: when the morphism \eqref{0eq:exterior epimorphism}
is trivial and when it is an isomorphism.
The former case is precisely the case of a free pro-$p$ group, in the latter case the group is said to be
{\bf $\theta$-abelian}.
One may represent this situation with the picture below.\footnote{I displayed this picture the first time in a talk 
during a workshop organized at the Technion, Haifa, in honor of Prof. J.~Sonn.}

\includegraphics[scale=0.5]{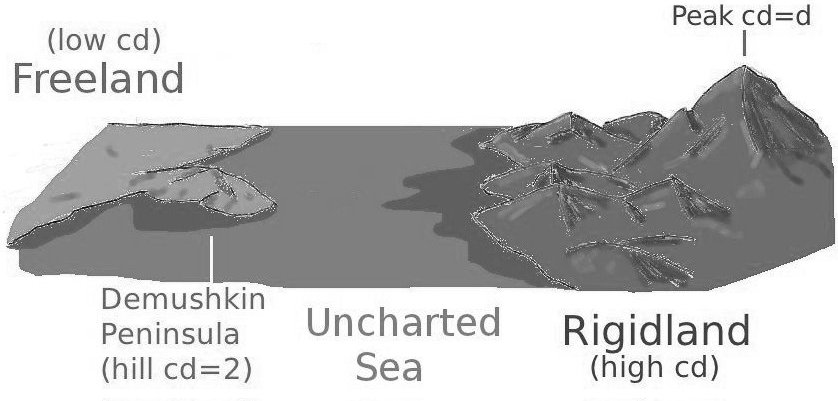}

In fact, from the ``mountain'' side it is possible to recover the full structure of the group $G$, and also the 
arithmetic of the base field, in the case $G$ is a maximal pro-$p$ Galois group, as stated by the following theorem
(see Theorem~\ref{3thm:equivalence theta-abelian} and Theorem~\ref{3thm:rigidity tutto}).

\begin{thm}\label{0thm:thetabelian}
 Let $G$ be a finitely generated cyclo-oriented pro-$p$ group.
Then the following are equivalent:
\begin{itemize}
 \item[(1)] the epimorphism \eqref{0eq:exterior epimorphism} is an isomorphism;
 \item[(2)] the cohomological dimension of $G$ is equal to the minimal number of generators of $G$;
 \item[(3)] $G$ has a presentation
\[ G=\left\langle \sigma,\tau_1,\ldots,\tau_d\:\left|\:\sigma\tau_i\sigma^{-1}=\tau_i^{1+p^k},
\tau_i\tau_j=\tau_j\tau_i \ \forall\;i,j=1,\ldots,d\right.\right\rangle \]
with $d\geq1$ and $k\in\N\cup\{\infty\}$ such that $\image(\theta)=1+p^k\Z_p$.
\end{itemize}
Moreover, if $G$ is the maximal pro-$p$ Galois group of a field $K$ containing a primitive $p$-th root of unity,
the above conditions hold if, and only if, $K$ is a $p$-rigid field, i.e.,
$K$ has a $p$-Henselian valuation of rank $d$.
\end{thm}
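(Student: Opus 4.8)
The plan is to prove the cohomological equivalence $(1)\Leftrightarrow(2)$ purely formally, then to match the $\theta$-abelian condition with the presentation $(3)$ in both directions, and finally to handle the arithmetic ``moreover'' via valuation theory. Put $n=d(G)=\dim_{\F_p}H^1(G,\F_p)$ and let $\varphi\colon\bigwedge_\bullet(H^1(G,\F_p))\to H^\bullet(G,\F_p)$ be the graded algebra epimorphism \eqref{0eq:exterior epimorphism}. For $(1)\Rightarrow(2)$ the source is concentrated in degrees $0,\dots,n$ with one-dimensional top piece $\bigwedge^n H^1$, so if $\varphi$ is an isomorphism then $H^n(G,\F_p)\neq0$ while $H^j(G,\F_p)=0$ for $j>n$, i.e.\ $\ccd(G)=n=d(G)$. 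For $(2)\Rightarrow(1)$ I would use that the \emph{source} exterior algebra is itself a Poincar\'e duality algebra. Condition $(2)$ forces $H^n(G,\F_p)\neq0$, and since $\bigwedge^n H^1$ is one-dimensional, $\varphi$ is an isomorphism in top degree $n$. If $0\neq\omega\in\ker\varphi$ lies in degree $j$, duality of $\bigwedge_\bullet(H^1)$ gives $\mu\in\bigwedge^{n-j}H^1$ with $\omega\wedge\mu$ a generator of $\bigwedge^n H^1$, whence $0=\varphi(\omega)\cup\varphi(\mu)=\varphi(\omega\wedge\mu)\neq0$, a contradiction; so $\varphi$ is injective in every degree. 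The point worth stressing is that no duality is assumed on $H^\bullet(G,\F_p)$: surjectivity in top degree together with duality of the exterior algebra already propagates injectivity downward.

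Next I would translate between cohomology and presentations. For $(1)\Rightarrow(3)$, pick topological generators $x_0,\dots,x_d$ of $G$ with dual basis $u_0,\dots,u_d$ of $H^1(G,\F_p)$, so $n=d+1$. An exterior cohomology ring gives $\dim_{\F_p}H^2(G,\F_p)=\binom{d+1}{2}$, which is the minimal number of defining relations, with the classes $\{u_i\cup u_j\}_{i<j}$ forming a basis of $H^2$ and every square $u_i\cup u_i$ zero. Via the standard dictionary relating the cup product on $H^1$ to commutators and $p$-th powers in the Frattini quotient, this says $G$ has exactly one commutator relation per pair of generators and no genuine power relation; hence $\ker\theta$ is abelian $\cong\Z_p^{\,d}$, and the orientation forces a complementary generator to act by $\sigma\tau\sigma^{-1}=\tau^{\theta(\sigma)}$. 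Choosing $\sigma$ with $\theta(\sigma)=1+p^k$ topologically generating $\image(\theta)=1+p^k\Z_p$ yields presentation $(3)$. Conversely, $(3)\Rightarrow(1)$ is a direct computation: the group of $(3)$ is $\Z_p^{\,d}\rtimes\Z_p$, and from $[\sigma,\tau_i]=\tau_i^{p^k}$ its commutator subgroup lies in $G^{p^k}$, so $G$ is powerful and torsion-free, hence uniform (for $p=2$ the cyclotomic hypothesis forces $k\geq2$, preserving powerfulness); Lazard's computation of the $\F_p$-cohomology of a uniform pro-$p$ group then gives $H^\bullet(G,\F_p)\cong\bigwedge_\bullet(H^1(G,\F_p))$, which is $(1)$.

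Finally, the arithmetic equivalence for $G=G_K(p)$ with a primitive $p$-th root of unity in $K$. If $K$ carries a $p$-Henselian valuation of rank $d$, the known decomposition of the maximal pro-$p$ Galois group of a $p$-Henselian valued field---the inertia contributing an abelian $\Z_p^{\,d}$ and the residual part a single $\Z_p$ acting through the cyclotomic character---reproduces presentation $(3)$. The reverse implication is the crux: from the group-theoretic shape $(3)$ one must \emph{build} a valuation on $K$. Here I would appeal to the valuation-theoretic recovery theorems (in the style of Engler--Koenigsmann, Efrat and Ware) that detect a $p$-Henselian valuation from the presence inside $G_K(p)$ of a large abelian normal subgroup on which the quotient acts via the orientation; the kernel $\Z_p^{\,d}$ is then exactly the inertia subgroup, so its rank $d$ is the rank of the valuation. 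I expect this construction of a genuine field valuation out of the Galois-theoretic data to be the main obstacle of the theorem, since it is the only step where arithmetic---rather than formal cohomology or group theory---is essential.
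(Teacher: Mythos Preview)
Your argument for $(1)\Leftrightarrow(2)$ is fine and matches the paper's Lemma~\ref{3lem:cup injective} (which is dismissed there as ``elementary linear algebra''). Your direction $(3)\Rightarrow(1)$ via uniformity and Lazard is also correct in spirit, and the arithmetic ``moreover'' is handled in the paper exactly as you outline, by appeal to the valuation-recovery results of Engler--Koenigsmann, Ware, and Hwang--Jacob.

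The genuine gap is your implication $(1)\Rightarrow(3)$. The ``standard dictionary'' between the cup product and commutators/$p$-th powers only identifies the \emph{initial forms} of the defining relations, i.e.\ their images in $D_2(F)/D_3(F)$. Knowing that $r(G)=\binom{d+1}{2}$ and that the relations look like commutators modulo $D_3(F)$ does \emph{not} yield that $\ker\theta$ is abelian, nor that the relations are globally $\sigma\tau_i\sigma^{-1}=\tau_i^{1+p^k}$. For instance, a uniform pro-$p$ group such as the first congruence subgroup of $\mathrm{SL}_2(\Z_p)$ has the same relation count and the same initial forms, yet is not $\theta$-abelian; what rules it out is the Bloch-Kato hypothesis on \emph{all} closed subgroups, which your argument never invokes.

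The paper's route is substantially different and this is where the real content lies. From $(1)$ one gets injectivity of $\wedge_2(\cup)$, and then the Symonds--Weigel theorem (Theorem~\ref{3thm:peter thomas}) forces $G$ to be powerful. Powerful plus torsion-free gives uniform, hence finite rank, hence no non-abelian free closed subgroups; the Tits-alternative argument (Theorem~\ref{3thm:titsalternative}) then shows $\wedge_2(\cup)$ is injective for \emph{every} closed subgroup, so $G$ is locally powerful. Finally, the classification of torsion-free locally powerful pro-$p$ groups (Theorem~\ref{3thm:locally poweful groups}), carried out via the associated $\Z_p$-Lie algebra $\log(G)$ and a Jacobi-identity computation, pins down the $\theta$-abelian presentation. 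You should either supply this chain or an alternative argument that actually uses the Bloch-Kato hypothesis on subgroups; the counting argument alone cannot close the gap.
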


This last point has particular relevance, since there is much interest in construction of non-trivial valuations
of fields. Such constructions became particularly important in recent years in connection with the so-called
{\it birational anabelian geometry}, (cf. \cite{bogomolovtschinkel:birat}, \cite{pop:birat}).
This line of research originated from ideas of A.~Grothendieck and of J.~Neukirch:
as stated, the goal is to recover the arithmtic structure of a field from its various canonical Galois groups.
The point is that usually the first step is to recover enough valuations from their cohomological ``footprints''.

The ``mountain-case'' is discriminant for Bloch-Kato pro-$p$ groups also in the sense specified by
the following Tits alternative-type result (see Theorem~\ref{3thm:titsalternative}).

\begin{thm}\label{0thm:tits}
 Let $G$ be a Bloch-Kato pro-$p$ group.
Then either the epimorphism \eqref{0eq:exterior epimorphism} is an isomorphism, or $G$ contains
a free non-abelian closed subgroup.
\end{thm}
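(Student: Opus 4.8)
The plan is to prove the contrapositive: assuming the epimorphism \eqref{0eq:exterior epimorphism} is \emph{not} an isomorphism, I shall exhibit a free non-abelian closed subgroup of $G$. First I would reformulate the hypothesis cohomologically. Since $G$ is Bloch-Kato, $H^\bullet(G,\F_p)$ is quadratic, hence of the form $T(H^1)/(R)$ with $R$ concentrated in degree two, while the exterior algebra is $T(H^1)/(S)$ with $S=\langle x\otimes x\rangle$; the existence of \eqref{0eq:exterior epimorphism} means $S\subseteq R$, so in particular $x\cup x=0$ for every $x\in H^1(G,\F_p)$. In degree two the cup-product map $\mu\colon\bigwedge^2 H^1(G,\F_p)\to H^2(G,\F_p)$ has kernel exactly $R/S$, whence \eqref{0eq:exterior epimorphism} is an isomorphism if and only if $\mu$ is injective. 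Thus the negation of the hypothesis furnishes a nonzero $\omega=\sum_{i<j}b_{ij}\,x_i\wedge x_j\in\kernel(\mu)$, where the $x_i$ are part of a basis of $H^1(G,\F_p)$; I fix indices $i_0<j_0$ with $b_{i_0j_0}\neq0$.

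The heart of the argument is to restrict $\omega$ to a well-chosen two-generated subgroup so that all decomposable summands but one are annihilated. Only finitely many $x_k$ occur in $\omega$, and they are linearly independent, so I can pick $b,c\in G$ with $x_{i_0}(b)=x_{j_0}(c)=1$, $\ x_{i_0}(c)=x_{j_0}(b)=0$, and $x_k(b)=x_k(c)=0$ for every other index $k$ appearing in $\omega$. Setting $H=\overline{\langle b,c\rangle}$, one has $\res_H x_k=0$ for those $k$, whereas $\res_H x_{i_0},\res_H x_{j_0}$ are linearly independent in $H^1(H,\F_p)$; as $H$ is two-generated this forces $H^1(H,\F_p)$ to be two-dimensional with basis $\{\res_H x_{i_0},\res_H x_{j_0}\}$, so that $\res_H$ is surjective in degree one.

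Since $\res_H$ is a ring homomorphism, $\res_H\omega=b_{i_0j_0}\,\res_H x_{i_0}\wedge\res_H x_{j_0}$, and therefore $b_{i_0j_0}\,\res_H x_{i_0}\cup\res_H x_{j_0}=\res_H\mu(\omega)=0$, giving $\res_H x_{i_0}\cup\res_H x_{j_0}=0$; moreover $\res_H x_{i_0}\cup\res_H x_{i_0}=\res_H(x_{i_0}\cup x_{i_0})=0$ and likewise for $x_{j_0}$. As $H$ is again Bloch-Kato, $H^\bullet(H,\F_p)$ is quadratic, so $H^2(H,\F_p)$ is spanned by cup products of degree-one classes; since all of these vanish, $H^2(H,\F_p)=0$ and hence $H^\bullet(H,\F_p)$ is trivial above degree one. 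Thus $H$ has cohomological dimension one, i.e. $H$ is free pro-$p$, and it is non-abelian because $\dim_{\F_p}H^1(H,\F_p)=2$.

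The main obstacle, and what makes the statement nontrivial, is exactly the middle step: a nonzero kernel element $\omega$ need not be decomposable (it may be of ``symplectic type'', with $x\cup y\neq0$ in $G$ for every independent pair), so one cannot expect to find two independent classes whose product already vanishes in $H^\bullet(G,\F_p)$. The device that removes this difficulty is restriction: killing every generator outside a single summand forces the remaining terms of the globally vanishing relation $\mu(\omega)=0$ to disappear, so the surviving decomposable product is compelled to vanish in $H^2(H,\F_p)$. The uniform treatment of $p=2$ rests on the same observation, since the classes spanning $H^1(H,\F_p)$ are inflated from $H^1(G,\F_p)$, where the existence of \eqref{0eq:exterior epimorphism} already guarantees that squares vanish.
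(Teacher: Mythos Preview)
Your proof is correct and follows essentially the same route as the paper's proof of Theorem~\ref{3thm:titsalternative}: pick a nonzero element of $\ker(\wedge_2(\cup))$, restrict to the closed subgroup generated by two elements dual to a nonvanishing coefficient, and use quadraticity of that subgroup's cohomology to conclude $H^2=0$ and hence freeness. Your handling of the $p=2$ case (observing that the existence of the epimorphism \eqref{0eq:exterior epimorphism} already forces $\chi\cup\chi=0$, so squares of restricted classes vanish too) is the same content as the paper's standing hypothesis that $\image(\theta)\simeq\Z_2$ together with Proposition~\ref{2prop:bock}, just packaged differently.
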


Thus, every Bloch-Kato pro-$p$ group which is floating in the ``uncharted sea'' contains a trace from the West shore.
On the other hand, it is possible to generalize the situation of $\theta$-abelian groups in the following way.
Set the {\bf $\theta$-centre} of a cyclo-oriented pro-$p$ group $G$ to be the (normal) subgroup
\begin{equation}
 \Zen_\theta(G)=\left\{\tau\in\kernel(\theta) \: \left|\:\sigma\tau\sigma^{-1}=\tau^{\theta(\sigma)}\text{ for all }\sigma\in G \right.\right\}
\end{equation}
Then $\Zen_\theta(G)$ is the maximal abelian normal subgroup of $G$ (cf. Proposition~\ref{4prop:Zmxlabeliannormal}),
and the short exact sequence 
\begin{equation}\label{0eq:sesG}
\xymatrix{ 1\ar[r] & \Zen_\theta(G)\ar[r] & G\ar[r] & G/\Zen_\theta(G)\ar[r] & 1}
\end{equation}
splits (cf. Theorem~\ref{4thm:Zcentresplit}).
Note that in the case of a $\theta$-abelian group, one has $\Zen_\theta(G)=\kernel(\theta)$, and the short exact
sequence \eqref{0eq:sesG} clearly splits, as the presentation in Theorem~\ref{0thm:thetabelian} provides
an explicit complement of the $\theta$-centre in $G$.
And as in Theorem~\ref{0thm:thetabelian}, the $\theta$-centre of a maximal pro-$p$ Galois group detects
the existence of non-trivial valuations, and its presence can be deduced also from the cohomology ring
(cf. Theorem~\ref{4thm:arithmetic thetacentre}).

\begin{thm}\label{0thm:arithmetic thetacentre}
Let $K$ be a field containing a primitive $p$-th root of unity, with maximal pro-$p$ Galois group $G_K(p)$ equipped
with arithmetic orientation $\theta\colon G_K(p)\to\Z_p^\times$. The following are equivalent:
\begin{itemize}
 \item[(1)] the $\theta$-centre of $G_K(p)$ is non-trivial;
 \item[(2)] the $\F_p$-cohomology ring of $G_K(p)$ is the skew-commutative tensor product of an exterior algebra
 with a quadratic algebra;
 \item[(3)] the field $K$ has a $p$-Henselian valuation of rank equal to the rank of $\Zen_\theta(G)$ as abelian pro-$p$ group.
\end{itemize}
\end{thm}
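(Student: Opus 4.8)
The plan is to split the equivalence into a group-cohomological part, $(1)\Leftrightarrow(2)$, which I would establish for abstract cyclo-oriented pro-$p$ groups, and an arithmetic part, $(1)\Leftrightarrow(3)$, which invokes the dictionary between $p$-Henselian valuations and the inertia structure of $G_K(p)$. The starting point for both is Theorem~\ref{4thm:Zcentresplit}: the sequence \eqref{0eq:sesG} splits, so I may write $G=G_K(p)=\Zen_\theta(G)\rtimes H$ with $H\simeq G/\Zen_\theta(G)$, and by Proposition~\ref{4prop:Zmxlabeliannormal} the factor $\Zen_\theta(G)$ is the maximal abelian normal subgroup; hence $H$ has trivial $\theta$-centre for the induced orientation. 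Since $K$ contains $\mu_p$ (and $\sqrt{-1}$ when $p=2$), the group $G$ is torsion-free, so $A:=\Zen_\theta(G)\simeq\Z_p^{\,r}$ with $r=\rank(A)$.

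For $(1)\Rightarrow(2)$ I would compute $H^\bullet(G,\F_p)$ from the split extension $A\rtimes H$. The conjugation action of $H$ on $A$ is through $\theta$, i.e.\ $\tau\mapsto\tau^{\theta(\sigma)}$; because the image of $\theta$ lies in the pro-$p$ Sylow subgroup of $\Z_p^\times$ we have $\theta(\sigma)\equiv1\pmod p$, so this action is trivial modulo $p$, and $H$ acts trivially on $H^\bullet(A,\F_p)\simeq\bigwedge_\bullet\left(H^1(A,\F_p)\right)$. I would then run the Lyndon--Hochschild--Serre spectral sequence and argue that it degenerates at the $E_2$-page, yielding an isomorphism
$H^\bullet(G,\F_p)\simeq H^\bullet(H,\F_p)\otimes\bigwedge_\bullet\!\big(H^1(A,\F_p)\big)$
of graded $\F_p$-algebras, where the tensor product is skew-commutative. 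The exterior factor is generated by the $r$ classes inflated from $A$, while $H^\bullet(H,\F_p)$ is quadratic because $H$ is again cyclo-oriented with trivial $\theta$-centre and therefore Bloch--Kato. This is exactly the decomposition asserted in $(2)$.

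The converse $(2)\Rightarrow(1)$ is the delicate point. Here I must reconstruct the abelian normal subgroup from the tensor decomposition of the cohomology ring. The exterior tensor factor of positive rank singles out a subspace $V\subseteq H^1(G,\F_p)$ whose elements pairwise anticommute and whose products survive up to top degree; by the quadratic duality available for Bloch--Kato groups this subspace is dual to a quotient of $G$ by a subgroup that I would identify with $\Zen_\theta(G)$, forcing $\rank(\Zen_\theta(G))>0$. Translating the ring-theoretic nondegeneracy of the exterior factor into the group-theoretic statement that the corresponding subgroup is abelian, normal, and carries the cyclotomic conjugation action is where the main work lies, and I expect this to be the chief obstacle: it is precisely the step that uses the full strength of quadraticity together with the constraints imposed by a cyclotomic orientation.

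Finally, for the arithmetic equivalence $(1)\Leftrightarrow(3)$ I would use the valuation-theoretic reading of the $\theta$-centre. A $p$-Henselian valuation of rank $d$ on $K$ produces, inside $G_K(p)$, an inertia subgroup that is abelian, normal, and on which the decomposition group acts through the cyclotomic character; this subgroup is contained in $\Zen_\theta(G_K(p))$ and has rank $d$, giving $(3)\Rightarrow(1)$ with matching ranks. For $(1)\Rightarrow(3)$ I would recover a valuation from its Galois footprint exactly as in the $\theta$-abelian case of Theorem~\ref{0thm:thetabelian}: the abelian normal subgroup $\Zen_\theta(G)$ with cyclotomic action is the signature of a $p$-Henselian valuation, and results on the canonical Henselian valuation let me construct one of rank precisely $\rank(\Zen_\theta(G))$. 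This construction of an honest valuation out of purely Galois-cohomological data is the arithmetically hardest ingredient, and together with $(2)\Rightarrow(1)$ it carries the weight of the theorem.
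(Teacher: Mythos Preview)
Your outline for $(1)\Rightarrow(2)$ and for $(1)\Leftrightarrow(3)$ is in the right spirit and close to what the paper does: for $(1)\Rightarrow(2)$ the paper also works from the split extension of Theorem~\ref{4thm:Zcentresplit}, but rather than arguing LHS degeneration it gives a direct computation (Theorem~\ref{4thm:cohomology fibre product}) using the five-term exact sequence together with restriction to the $2$-generated $\theta|_C$-abelian subgroups $\langle x,z\rangle$ to show that all cup products $\chi\cup\psi$ with $\psi\in W$ are nonzero and independent. This avoids having to verify multiplicativity of the spectral-sequence isomorphism and the vanishing of higher differentials.

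The real divergence is in $(2)\Rightarrow(1)$. You plan a purely group-theoretic reconstruction of $\Zen_\theta(G)$ from the exterior tensor factor, and you correctly flag this as the ``chief obstacle.'' The paper does \emph{not} attempt this. Instead it routes the implication through arithmetic: from the decomposition in $(2)$ it quotes Hwang--Jacob (\cite[Theorem~2.11]{hwangjacob:brauer}) to deduce that $K$ is $V$-rigid and hence carries a $p$-Henselian valuation of the correct rank, giving $(2)\Rightarrow(3)$; and $(3)\Rightarrow(1)$ then follows from Proposition~\ref{4prop:Zmxlabeliannormal} together with Engler--Koenigsmann \cite[Theorem~3.1]{ek98:abeliansbgps} (resp.\ \cite{en94:pro2gps} for $p=2$), which identify the maximal abelian normal subgroup of $G_K(p)$ with the inertia of such a valuation. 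In other words, the paper never recovers the $\theta$-centre directly from the cohomology ring; it first produces the valuation from the cohomological rigidity criterion and only then reads off the abelian normal subgroup. Your direct route may be feasible, but it is not the one taken here, and the paper's detour through Hwang--Jacob and Engler--Koenigsmann is precisely what circumvents the obstacle you identified.
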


The above result is particularly relevant for the importance of being able to find valuations, as underlined above.
Indeed, with Theorem~\ref{0thm:arithmetic thetacentre} we come full circle, as it completes the picture 
with the results contained in \cite{ek98:abeliansbgps} and \cite{efrat:libro}.
Also, it shows that cyclotomic orientations provide an effective way to express such results.

It is possible to go a bit further, in order to see how the existence of a cyclotomic orientation for a pro-$p$ group
affects the structure of the whole group.
For example, we show that the torsion in the abelianization of a finitely generated pro-$p$ group with cyclotomic
orientation is induced by the ``cyclotomic action'' of the group (cf. Theorem~\ref{4thm:Nab}).
Note that the existence of a cyclotomic orientation is a rather restrictive condition: for example, certain
free-by-Demushkin groups cannot be equipped with a cyclotomic orientation, as shown in Subsection~4.2.1.

\bigskip

Given a pro-$p$ group $G$, one may associate to $G$ another graded $\F_p$-algebra, besides the $\F_p$-cohomology ring:
the graded algebra $\grad_\bullet(G)$ induced by the {\it augmentation ideal} of the completed group algebra
$\F_p\dbl G\dbr$.

In many relevant cases -- such as free pro-$p$ groups, Demushkin groups, $\theta$-abelian groups -- the
$\F_p$-cohomology ring and the graded algebra of a maximal pro-$p$ Galois group happen to be related
via {\bf Koszul duality} of quadratic algebra, and both algebras are {\bf Koszul algebras}
(for the definition of Koszul dual of a quadratic algebra see Definition~\ref{5defi:koszul dual},
and for the definition of Koszul algebra see Definition~\ref{5defi:koszulness}).

Moreover, one has that if the relations of $G$ satisfy certain ``reasonable'' conditions, then 
$H^\bullet(G,\F_p)$ and $\grad_\bullet(G)$ are Koszul dual (cf. Theorem~\ref{5thm:koszul duality 1}).
Thus, we conjecture that if $K$ is a field containing a primitive $p$-th root of unity with finitely generated
maximal pro-$p$ Galois group $G_K(p)$, then $\F_p$-cohomology ring and the graded algebra of $G_K(p)$ are Koszul dual,
and also that both algebras are Koszul algebras (cf. Question~\ref{5ques:koszul}).

Here we study the graded algebra $\grad_\bullet(G)$ of a pro-$p$ group $G$ via the {\it restricted Lie algebra}
induced by the {\bf Zassenhaus filtration} of $G$.
The study of the graded algebras induced by filtrations of pro-$p$ groups has gained much interest recently,
in particular the algebras induced by the Zassenhaus filtration, as well as the algebras induced  
by the $p$-descending central series (see \cite{labute:dcs1,labute:dcs2,labute:mild}, \cite{minacspira:witt},
\cite{suniljan:quotients,cem:quotients}).
For example, we prove the following result (Theorem~\ref{3thm:lambda3Phi2}), the proof of which uses indeed
the Zassenhaus filtration of $G$, which generalizes \cite[Theorem~A]{cmq:fast}:

\begin{thm}\label{0thm:quotients}
It is possible to detect whether a finitely generated Bloch-Kato pro-$p$ group $G$ is $\theta$-abelian
from the third element of its $p$-descending central series.
\end{thm}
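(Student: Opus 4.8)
The plan is to translate $\theta$-abelianness into a condition on the cup product alone, and then to observe that for a Bloch--Kato group this cup product --- and hence the condition --- is already encoded in the quotient by the third term of a lower-central-type series.

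First I would record the cohomological reformulation. Since $G$ is a Bloch--Kato pro-$p$ group, its $\F_p$-cohomology ring $H^\bullet(G,\F_p)$ is quadratic, so the surjection of graded $\F_p$-algebras in \eqref{0eq:exterior epimorphism} is controlled by its restriction to degree two: a degree-preserving surjection of quadratic algebras that is bijective in degrees zero and one is an isomorphism as soon as it is injective in degree two. Hence \eqref{0eq:exterior epimorphism} is an isomorphism precisely when its degree-two component, the cup product
\[ \bigwedge\nolimits^2 H^1(G,\F_p) \longrightarrow H^2(G,\F_p), \]
is injective, and by definition this is exactly the assertion that $G$ is $\theta$-abelian.

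Next I would extract this from a small quotient. Quadraticity forces $H^2(G,\F_p)$ to be spanned by products of degree-one classes, so, writing $d=\dim_{\F_p}H^1(G,\F_p)$,
\[ \dim_{\F_p}H^2(G,\F_p) \;=\; \binom{d}{2}-\dim_{\F_p}\kernel\Bigl(\textstyle\bigwedge^2 H^1(G,\F_p)\to H^2(G,\F_p)\Bigr), \]
so that injectivity of the cup product is equivalent to the numerical identity $\dim_{\F_p}H^2(G,\F_p)=\binom d2$. I would then pass to the restricted Lie algebra attached to the Zassenhaus filtration $D_\bullet(G)$: on the lowest graded pieces $L_1=D_1/D_2$ and $L_2=D_2/D_3$ the bracket $L_1\wedge L_1\to L_2$ is, under the standard duality between these graded pieces and the low-degree cohomology operations, dual to the cup product, while for $p=2$ one must adjoin the restriction map (equivalently the Bockstein, which in that case also lands in $L_2$). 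Both $d=\dim_{\F_p}L_1$ and the image of this bracket are visibly determined by $G/D_3(G)$, and therefore so is the kernel of the cup product. Since $\lambda_3(G)\subseteq D_3(G)$, where $\lambda_\bullet(G)$ denotes the $p$-descending central series, the group $G/D_3(G)$ is a quotient of $G/\lambda_3(G)$, so knowledge of $G/\lambda_3(G)$ determines $G/D_3(G)$ and a fortiori whether the cup product is injective; this is the desired detection.

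The main obstacle is the duality in the middle step: identifying $\kernel\bigl(\bigwedge^2 H^1\to H^2\bigr)$ with the cohomological shadow of the commutator (and, for $p=2$, the power) map $L_1\wedge L_1\to L_2$, and --- crucially --- verifying that the Bloch--Kato hypothesis is exactly what rules out indecomposable classes in $H^2$. Such classes would correspond to relations of initial degree larger than two, invisible in $G/\lambda_3(G)$, and would break the count $\dim_{\F_p}H^2=\binom d2$; quadraticity is precisely what forbids them. As an independent check I would compute $G/\lambda_3(G)$ directly from the presentation in Theorem~\ref{0thm:thetabelian}(3) and confirm the equality $\dim_{\F_p}H^2=\binom d2$ in the $\theta$-abelian case, and, in the contrary case, read off from the Tits-alternative Theorem~\ref{0thm:tits} that the free non-abelian closed subgroup forces a strict inequality $\dim_{\F_p}H^2>\binom d2$ already detectable at the third step.
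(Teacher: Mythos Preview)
Your approach is genuinely different from the paper's and, once the duality step is made precise, it does work --- but it leans on a result you only allude to, and the paper's own proof avoids it entirely.

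The paper does not argue cohomologically. Its precise statement (Theorem~\ref{3thm:lambda3Phi2}) is the group-theoretic criterion $\Phi_2(G)=\lambda_3(G)$. The forward direction is a one-line computation. For the converse, the paper shows that $\Phi_2(G)=\lambda_3(G)$ forces the chain $\gamma_3(G)\leq\lambda_3(G)=\Phi_2(G)\leq D_4(G)$, and then uses Lazard's formula \eqref{1eq:lazard forumula} (with a short case split $p>3$, $p=3$, $p=2$) to conclude $D_i(G)=D_{i+1}(G)$ for some $i$. By the Lazard--Riley criterion (Proposition~\ref{1prop:properties Zassenhaus filtration}(iii)) this gives finite rank, and Theorem~\ref{3thm:equivalence theta-abelian} then yields $\theta$-abelianness. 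No cup products appear.

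Your route --- $\theta$-abelian $\Leftrightarrow$ $\wedge_2(\cup)$ injective $\Leftrightarrow$ $\dim H^2=\binom{d}{2}$, and then ``this dimension is visible in $G/\lambda_3(G)$'' --- is correct, but the last step is not the innocuous linear-algebra fact you present it as. The claim that the bracket $L_1\wedge L_1\to L_2$ is dual to the cup product amounts to $R^p[R,F]=R\cap D_3(F)$ for a minimal presentation (Proposition~\ref{5prop:RcupF3}); equivalently, it is the injectivity of inflation on decomposables in Theorem~\ref{2thm:cem}. That is precisely the content of \cite[Theorem~8.4]{cem:quotients}, a nontrivial input you should cite rather than call ``standard duality.'' With it in hand your argument is clean --- indeed cleaner than the bracket phrasing: Theorem~\ref{2thm:cem} says directly that $H^\bullet(G,\F_p)$, hence in particular whether $\wedge_2(\cup)$ is injective, is determined by $G/\lambda_3(G)$. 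What your route buys is a conceptual explanation of \emph{why} $\lambda_3$ suffices; what the paper's route buys is an elementary, self-contained proof that does not rely on \cite{cem:quotients}.

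One slip: in your closing check the inequality goes the wrong way. For a Bloch--Kato group $H^2$ is a quotient of $\bigwedge^2 H^1$, so always $\dim H^2\leq\binom{d}{2}$; the non-$\theta$-abelian case is the strict inequality $\dim H^2<\binom{d}{2}$, not $>\binom{d}{2}$.
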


Also, the Zassenhaus filtration is proving to be closely related to Massey products, which have shifted from their
original field of application (topology) toward number theory:
see \cite{jochen:thesis}, \cite{efrat:zassenhaus} and \cite{jantan:massey}.

\medskip

Albeit we have still only a glimpse of the structure of maximal pro-$p$ Galois groups but in few specific cases 
(such as the two shores of the picture)
there is a conjecture which states how a finitely generated maximal pro-$p$ Galois group should look like, the 
so called {\bf Elementary Type Conjecture} (or ETC).
Formulated first by I.~Efrat in \cite{efrat:etconj}, the ETC states that finitely generated maximal pro-$p$ Galois groups
have a rather rigid structure: namely, they can be built starting from ``elementary blocks'' such as $\Z_p$ and
Demushkin groups, via rather easy group-theoretic operations, such as free pro-$p$ products
and cyclotomic fibre products (defined in Definition~\ref{4defi:cyclotomic fibre product}).

The only evidences we have for this conjecture are:
\begin{itemize}
 \item[(1)] we have no counterexamples;
 \item[(2)] it seems ``just'' it should be so;
\end{itemize}
which are not very strong.
Yet, we show that all the classes of pro-$p$ groups we study -- Bloch-Kato pro-$p$ groups, cyclo-oriented pro-$p$
groups, Koszul duality groups and Koszul groups --, which are kind of generalizations of maximal pro-$p$ Galois groups,
are closed with respect to free pro-$p$ products and cyclotomic fibre products, and this provides at least more sense
to this conjecture.

\medskip
Therefore, the aim of this thesis is to show that our approach toward Galois theory via the 
cohomology of maximal pro-$p$ Galois groups (in particular studying Bloch-Kato pro-$p$ groups and cyclotomic
orientations) is particularly powerful and effective, as indeed it provides new consistent knowledge
on maximal pro-$p$ Galois groups, and it promises to bring more results in the future.

\medskip

The thesis is structured in the following chapters:
\medskip
\begin{itemize}

 \item[(1)] The first introductory chapter presents the theoretical background of the thesis.
 In particular, it introduces some preliminaries on pro-$p$ groups, together with cohomology of profinite groups,
 Galois cohomology and restricted Lie algebras. 

\medskip

 \item[(2)] First we introduce quadratic algebras and their properties.
Then we present the Bloch-Kato conjecture, and we define Bloch-Kato pro-$p$ groups and cyclo-oriented pro-$p$ groups.
Here we explore the first properties of these groups -- for example, we prove an Artin-Schreier-type
result for cyclo-oriented pro-$p$ groups (cf. Corollary~\ref{2cor:torsion}).
Also, we introduce Demushkin groups and the ETC.

\medskip

 \item[(3)] Here we study the ``mountain side'' of cyclo-oriented pro-$p$ groups, i.e., $\theta$-abelian groups.
We study the cup produt of such groups and we prove Theorem~\ref{0thm:tits}.
In order to describe the group structure of $\theta$-abelian groups we study {\it locally powerful} pro-$p$ groups,
and we show that the two classes of group coincides (cf. Theorem~\ref{3thm:equivalence theta-abelian}).
In order to explain the ``arithmetic role'' of $\theta$-abelian groups we introduce $p$-rigid fields, 
and we prove Theorem~\ref{0thm:thetabelian}.
Then we compute the Zassenhaus filtration for such groups and we prove Theorem~\ref{0thm:quotients}.
Part of the content of this chapter is published in \cite{claudio:BK} and \cite{cmq:fast}.

\medskip

 \item[(4)] In the fourth chapter we study free products and cyclotomic fibre products of cyclo-oriented pro-$p$
groups. In particular, we show that \eqref{0eq:sesG} splits and we prove Theorem~\ref{0thm:arithmetic thetacentre}.
Also, we show that the defining relations of a finitely generated cyclo-oriented pro-$p$ group which induce
torsion in the abelianization are induced by the cyclotomic action of the orientation (cf. Theorem~\ref{4thm:Nab}).
Part of the material contained in this chapter is being developed in \cite{qw:orientations}.

\medskip

 \item[(5)] First we define the koszul dual of a quadratic algebra, and we study Koszul duality for cyclo-oriented
pro-$p$ groups.
Then we define Koszul algebras.
Part of the material contained in this chapter is being developed in \cite{mqrtw:koszul}.
\end{itemize}


\chapter{Preliminaries}
\section[Cohomology]{Group cohomology and Galois cohomology}

Throughout the whole thesis, subgroups are assumed to be closed with respect to the profinite topology,
and the generators are assumed to be topological generators (i.e., we consider the closed subgroup
generated by such elements).

\subsection{Cohomology of profinite group}
We recall briefly the construction of the cohomology groups for profinite groups, and the property we will use further.
We refer mainly to \cite[Ch.~I]{nsw:cohm}.

Let $G$ be a profinite group.

\begin{defi}
 A {\bf topological $G$-module} $M$ is an abelian Hausdorff topological group which is an abstract $G$-module
such that the action
\[G\times M\longrightarrow M\]
is a continuous map (with $G\times M$ equipped with the product topology).
\end{defi}

For a closed subgroup $H\clsgp G$, we denote the subgroup of $H$-invariant elements in $M$ by $M^H$, i.e.
\[M^H=\left\{m\in M \left|\; h.m=m \text{ for all } h\in H\right.\right\}.\]

Assume now that $M$ is a discrete module.
For every $n\geq1$, let $G^{\times n}$ be the direct product of $n$ copies of $G$.
For a $G$-module $M$, define $C^n(G,M)$ to be the {\bf group of (inhomogeneous) cochains} of $G$
with coefficients in $M$, i.e., $C^n(G,M)$ is the abelian group of the continuous maps 
$G^{\times n}\rightarrow M$, with the group structure induced by $M$. 
(Note that, if $M$ is discrete, then a continuous map from $G^{\times n}$ to $M$ is a map which
is locally constant.) Also, define $C^0(G,M)=M$.

The $n+1$-coboundary operator $\partial^{n+1}\colon C^n(G,M)\rightarrow C^{n+1}(G,M)$ is given by
\begin{eqnarray*}
 \partial^1a(g)&=&g.a-a\quad\text{for }a\in M\\
 \partial^2f(g_1,g_2)&=&g_1.f(g_2)-f(g_1g_2)+f(g_1),\quad f\in C^1(G,M)\\
 &\vdots&\\
 \partial^{n+1}f(g_1,\ldots,g_{n+1}) &=& g_1.f(g_2,\ldots,g_{n+1})+\\
 && \sum_{i=1}^n(-1)^if(g_1,\ldots,g_{i-1},g_ig_{i+1},g_{i+2},\ldots,g_{n+1})\\
 && +(-1)^{n+1}f(g_1,\ldots,g_n)\quad\text{for }f\in C^n(G,M).
\end{eqnarray*}
Then one sets $Z^n(G,M)=\kernel(\partial^{n+1})$, called the group of (inhomogeneous) $n$-cocycles,
and $B^n(G,M)=\image(\partial^n)$, called the group of (inhomogeneous) $n$-coboundaries.

\begin{defi}
 For $n\geq0$, the quotient
\[H^n(G,M)=Z^n(G,M)/B^n(G,M)\]
is called the {\bf $n$-th cohomology group} of $G$ with coefficients in the $G$-module $M$.
\end{defi}

One has the following facts:

\begin{fact}\label{1fact:elementary cohomology}
 \begin{itemize}
 \item[(i)] The 0-th cohomology group of a profinite group $G$ with coefficients in $M$ is the subgroup of 
$G$-invariant elements, i.e.,
\[ H^0(G,M)=M^G.\]
In particular, if $G$ acts trivially on $M$, one has $H^0(G,M)=M$.
 \item[(ii)] The 1-cocycles are the continuous maps $f\colon G\rightarrow M$ such that
\[ f(g_1g_2)=f(g_1)+g_1.f(g_2)\quad\text{for every }g_1,g_2\in G.\]
They are also called crossed homomorphisms.
The 1-coboundaries are the continuous maps $a\colon G\rightarrow M$ such that $a(g)=g.a-a$ for every $g\in G$.
In particular, if $G$ acts trivially on $M$, one has
\[ H^1(G,M)=\Hom(G,M),\]
where $\Hom(G,M)$ is the group of (continuous) group homomorphisms from $G$ to $M$.
\end{itemize}
\end{fact}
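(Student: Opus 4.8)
The plan is to unwind the definitions of the cochain groups and coboundary operators recalled just above; both parts follow by direct computation from the formulas for $\partial^1$ and $\partial^2$, with no genuine obstacle beyond keeping track of the degree conventions.

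For part (i), I would first record that since there is no cochain group in degree $-1$ (equivalently, no operator $\partial^0$), the coboundary subgroup $B^0(G,M)$ is trivial, so that $H^0(G,M)=Z^0(G,M)=\kernel(\partial^1)$. Using the formula $\partial^1 a(g)=g.a-a$, an element $a\in C^0(G,M)=M$ lies in $\kernel(\partial^1)$ precisely when $g.a-a=0$ for every $g\in G$, i.e. $g.a=a$ for all $g\in G$; this is exactly the condition $a\in M^G$. Hence $H^0(G,M)=M^G$, and when the action is trivial every $m\in M$ satisfies $g.m=m$, giving $M^G=M$.

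For part (ii), I would compute $Z^1(G,M)=\kernel(\partial^2)$ directly from $\partial^2 f(g_1,g_2)=g_1.f(g_2)-f(g_1g_2)+f(g_1)$: a continuous cochain $f\colon G\to M$ is a $1$-cocycle exactly when this expression vanishes identically, which rearranges to the crossed-homomorphism identity $f(g_1g_2)=f(g_1)+g_1.f(g_2)$. The $1$-coboundaries $B^1(G,M)=\image(\partial^1)$ are then precisely the maps of the form $g\mapsto g.a-a$ for a fixed $a\in M$, as asserted. Finally, specializing to the trivial action, the cocycle identity becomes additivity $f(g_1g_2)=f(g_1)+f(g_2)$, so that $Z^1(G,M)=\Hom(G,M)$, while every coboundary $g\mapsto g.a-a=0$ vanishes, so $B^1(G,M)=0$; therefore $H^1(G,M)=Z^1(G,M)=\Hom(G,M)$.

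The only points requiring a word of care, rather than a hard argument, are continuity and the degree-$0$ convention: continuity of the representing cochains is automatic, since by construction every element of $C^n(G,M)$ is a continuous map, and one must simply record $B^0(G,M)=0$ in order to read off $H^0$ as the full group of $0$-cocycles. No step presents a real difficulty.
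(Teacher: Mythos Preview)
Your proposal is correct and is exactly the verification one would carry out from the definitions of $\partial^1$ and $\partial^2$ given just before the statement. The paper itself does not supply a proof: the statement is recorded as a \emph{Fact} and left as an immediate consequence of the coboundary formulas, which is precisely what your argument makes explicit.
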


The following proposition states a fundamental property of group cohomology
(cf. \cite[Theorem~1.3.2]{nsw:cohm}).

\begin{prop}
For an exact sequence $0\rightarrow A\rightarrow B\rightarrow C\rightarrow 0$ of $G$-modules,
one has connecting homomorphisms
\[\delta^n\colon H^n(G,C)\longrightarrow H^{n+1}(G,A)\]
for every $n\geq0$, such that 
\[\xymatrix@C=.6truecm{
\cdots\ar[r] & H^n(G,A)\ar[r] & H^n(G,B)\ar[r] & H^n(G,C)\ar[r]^-{\delta^n} & H^{n+1}(G,A)\ar[r] & \cdots}\]
is a long exact sequence.
\end{prop}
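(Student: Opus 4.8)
The plan is to reduce this to the standard homological-algebra mechanism, the zig-zag (snake) lemma, applied to the cochain complexes $C^\bullet(G,-)$ introduced above. First I would observe that for each fixed $n$ the assignment $M\mapsto C^n(G,M)$ is functorial: a $G$-module homomorphism $M\to M'$ induces, by post-composition, a homomorphism $C^n(G,M)\to C^n(G,M')$ that commutes with the coboundary operators $\partial^{n+1}$ (this is clear from the explicit formulas, since $\partial^{n+1}$ is built from the module structure, which the homomorphism respects). Hence the short exact sequence $0\to A\to B\to C\to 0$ gives, for each $n$, a three-term sequence
\[ 0\longrightarrow C^n(G,A)\longrightarrow C^n(G,B)\longrightarrow C^n(G,C)\longrightarrow 0, \]
and these assemble into a sequence of chain maps between the three cochain complexes.

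The crux of the argument --- and the only place where the profinite, continuous setting genuinely intervenes --- is to show that each of these three-term sequences is \emph{short exact}. Exactness on the left and in the middle is immediate: composing a continuous cochain with the injection $A\hookrightarrow B$ remains injective, and a continuous cochain $G^{\times n}\to B$ dies in $C^n(G,C)$ precisely when its values lie in $\kernel(B\to C)=\image(A\to B)$. The delicate point is surjectivity on the right: given a continuous, hence (since $C$ is discrete) locally constant, cochain $f\colon G^{\times n}\to C$, I must produce a \emph{continuous} lift $G^{\times n}\to B$. Here I would use that $G^{\times n}$ is compact and $f$ locally constant, so the fibres of $f$ form a finite cover by clopen sets and $f$ takes only finitely many values; choosing a preimage in $B$ for each such value (possible as $B\to C$ is surjective) and defining the lift to be constant on each fibre yields a locally constant, hence continuous, cochain mapping to $f$. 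I expect this lifting step to be the main obstacle to make fully rigorous, since it is exactly what fails without discreteness of the coefficients.

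Once short exactness of the complexes is in hand, the remainder is the classical zig-zag lemma, which also produces the $n=0$ end of the sequence automatically (with $C^m=0$ for $m<0$). I would exhibit $\delta^n$ explicitly: represent a class in $H^n(G,C)$ by a cocycle $z\in Z^n(G,C)$, lift it to $\tilde z\in C^n(G,B)$ by the surjectivity just proved, and note that $\partial^{n+1}\tilde z$ maps to $\partial^{n+1}z=0$ in $C^{n+1}(G,C)$, so it equals the image of a unique $w\in C^{n+1}(G,A)$; applying $\partial^{n+2}$ and injectivity of $C^{n+2}(G,A)\to C^{n+2}(G,B)$ shows $w\in Z^{n+1}(G,A)$, and one sets $\delta^n([z])=[w]$. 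Checking that $\delta^n$ is independent of the choices of $z$ and $\tilde z$, and that the six-term fragments splice into the asserted long exact sequence, is then a routine diagram chase through the short exact sequences of complexes, which I would carry out but not belabour.
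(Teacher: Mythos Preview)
Your argument is correct and is precisely the standard proof. The paper does not actually prove this proposition: it states it as a fundamental fact and refers the reader to \cite[Theorem~1.3.2]{nsw:cohm}, where the argument is essentially the one you outline (functoriality of $C^n(G,-)$, short exactness of the cochain complexes via the lifting argument using compactness of $G^{\times n}$ and discreteness of the coefficients, then the zig-zag lemma).
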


Also, the following result states the behavior of low-degree comology with respect of normal subgroups and quotients
(cf. \cite[Prop.~1.6.7]{nsw:cohm}).

\begin{prop}\label{1prop:5tes}
Let $N$ be a normal closed subgroup of $G$.
Then one has an exact sequence
 \[ \xymatrix@C=1.4truecm{
0\ar[r] & H^1(G/N,A^N)\ar[r]^-{\inf_{G,N}^1} & H^1(G,A)\ar[r]^-{\res_{G,N}^1} & 
H^1(N,A)^G\ar`r[d]`[l]^-{\text{tg}_{G,N}} `[dlll] `[dll] [dll]   \\
 & H^2(G/N,A^N)\ar[r]^-{\inf_{G,N}^2} & H^2(G,A) & } \]
called the {\it five term exact sequence}.
The map $\text{tg}_{G,N}$ is called transgression.
\end{prop}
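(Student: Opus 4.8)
The plan is to read this as the \emph{exact sequence of terms of low degree} attached to the normal closed subgroup $N\clno G$, building the four maps directly on inhomogeneous cochains and then checking exactness at each of the three interior spots. Conceptually this is the low-degree part of the Lyndon--Hochschild--Serre spectral sequence $E_2^{p,q}=H^p(G/N,H^q(N,A))\Rightarrow H^{p+q}(G,A)$, whose edge maps are inflation and restriction and whose differential $d_2\colon E_2^{0,1}\to E_2^{2,0}$ is the transgression; but since at this point we have only cochains available, I would carry the argument out by hand (and, if one wishes to avoid it, simply invoke that spectral sequence and cite \cite[Prop.~1.6.7]{nsw:cohm}).

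First I would pin down the maps. Inflation $\inf^i_{G,N}$ sends the class of a cocycle $\bar f$ on $G/N$ with values in $A^N\subseteq A$ to the class of $\bar f\circ\pi$, where $\pi\colon G\to G/N$ is the projection; restriction $\res^1_{G,N}$ sends $[f]$ to $[f|_N]$, and one checks the target lies in $H^1(N,A)^G$ since the $G$-action on $H^1(N,A)$ factors through $G/N$. For the transgression I would fix a continuous set-theoretic section $s\colon G/N\to G$ (which exists because $\pi$ is a surjection of profinite groups), extend a cocycle $h\colon N\to A$ to a continuous cochain $\tilde h\colon G\to A$ with $\tilde h|_N=h$, and define $\mathrm{tg}([h])$ to be the class of the $2$-cochain $\partial^2\tilde h$, once it is shown to descend.

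Next come the formal verifications. That $\res^1\circ\inf^1=0$ is immediate since $\bar f\circ\pi$ vanishes on $N$, and $\inf^1$ is injective because a cocycle on $G/N$ whose inflation equals $\partial a$ forces $a\in A^N$ (evaluate on $N$) and hence $\bar f=\partial a$ already over $G/N$. For exactness at $H^1(G,A)$ I would start from a cocycle $f$ with $f|_N=\partial b$, replace it by the cohomologous $f'=f-\partial b$, and show $f'$ descends to $G/N$ with values in $A^N$: the cocycle identity gives $f'(gn)=f'(g)$ for $n\in N$, so $f'$ is constant on left cosets; since $ng\in gN$ (as $N\clno G$), comparing $f'(ng)=f'(g)$ with $f'(ng)=n\cdot f'(g)$ yields $f'(g)\in A^N$, so $f'$ is an inflation.

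The genuine work — and the step I expect to be the main obstacle — is the transgression: its well-definedness together with $\kernel(\mathrm{tg})=\image(\res^1)$ and $\kernel(\inf^2)=\image(\mathrm{tg})$. Here the hypothesis that $[h]$ is $G$-invariant is essential: it says precisely that for every $g$ the twisted cocycle $n\mapsto g\cdot h(g^{-1}ng)$ differs from $h$ by a coboundary, and this is what forces $\partial^2\tilde h$ to factor through $G/N\times G/N$ with values in $A^N$, so that its class lands in $H^2(G/N,A^N)$; tracking how the choices of $\tilde h$ and $s$ alter $\partial^2\tilde h$ by $2$-coboundaries shows the class is independent of them. The two exactness statements then fall out of these same identities: $\mathrm{tg}([h])=0$ means $\partial^2\tilde h$ is the coboundary of an inflated $1$-cochain, which is exactly the condition that $h$ extends to a global cocycle on $G$, i.e. $[h]\in\image(\res^1)$; and $\inf^2(\bar c)=0$ for a $2$-cocycle $\bar c$ on $G/N$ means its inflation is $\partial^2 u$ for some $u\colon G\to A$, whose restriction $u|_N$ is then a cocycle on $N$ transgressing back to $[\bar c]$. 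I would organize all of this as a single bookkeeping of $\partial$ along the section $s$, which is cleaner than juggling the four exactness points separately.
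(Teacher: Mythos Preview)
The paper does not prove this proposition at all: it is stated as a standard background result with a citation to \cite[Prop.~1.6.7]{nsw:cohm}, and no argument is given. Your proposal, by contrast, sketches the full cochain-level construction of the maps and the verification of exactness at each node, with the Lyndon--Hochschild--Serre spectral sequence mentioned as the conceptual backdrop. What you outline is correct and is essentially the standard proof one finds in the cited reference; there is nothing to compare against in the paper itself beyond the citation.
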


Let $A,B,C$ be $G$-modules with bilinear pairings
\begin{equation}\label{2eq:pairings of modules}
 A\times B\longrightarrow A\otimes_{\Z}B\longrightarrow C.
\end{equation}
Then \eqref{2eq:pairings of modules} induces the map
\[\xymatrix{H^n(G,A)\times H^m(G,B)\ar[r]^-{\cup} & H^{n+m}(G,C)}\]
called the {\bf cup-product}, which has a very important role in the Bloch-Kato conjecture, as we shall see later.
The cup-product has the following property (cf. \cite[Prop.~1.4.4]{nsw:cohm}).

\begin{prop}\label{1prop:cup product}
The cup-product is associative and skew-commutative, i.e., for $\alpha\in H^n(G,A)$, $\beta\in H^m(G,B)$ and
$\gamma\in H^h(G,C)$, one has
\[(\alpha\cup\beta)\cup\gamma=\alpha\cup(\beta\cup\gamma)\quad \text{and}\quad\alpha\cup\beta=(-1)^{nm}\beta\cup\alpha,\] 
with the identifications 
\[(A\otimes B)\otimes C=A\otimes(B\otimes C)\quad \text{and} \quad A\otimes B=B\otimes A.\]
\end{prop}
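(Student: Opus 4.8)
The plan is to define the cup-product at the level of inhomogeneous cochains and to check that, of the three asserted properties, two are already identities of cochains while only skew-commutativity genuinely requires a homotopy. First I would recall the cochain formula: for $f\in C^n(G,A)$ and $g\in C^m(G,B)$ one sets
\[
(f\cup g)(x_1,\dots,x_{n+m}) = f(x_1,\dots,x_n)\otimes (x_1\cdots x_n).g(x_{n+1},\dots,x_{n+m}),
\]
the value being pushed into $C$ via the pairing \eqref{2eq:pairings of modules}. A direct computation yields the Leibniz rule $\partial^{n+m+1}(f\cup g)=(\partial^{n+1}f)\cup g+(-1)^n f\cup(\partial^{m+1}g)$, which shows simultaneously that the product of two cocycles is a cocycle and that its class depends only on the classes of $f$ and $g$; hence $\cup$ descends to a well-defined operation on $H^\bullet(G,-)$.

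For associativity I would simply expand both $(f\cup g)\cup h$ and $f\cup(g\cup h)$ on a tuple $(x_1,\dots,x_{n+m+k})$. Using the identification $(A\otimes B)\otimes C=A\otimes(B\otimes C)$ together with the multiplicativity of the $G$-action in the group variable, both iterated products equal the single cochain
\[
f(x_1,\dots,x_n)\otimes (x_1\cdots x_n).g(x_{n+1},\dots,x_{n+m})\otimes (x_1\cdots x_{n+m}).h(x_{n+m+1},\dots,x_{n+m+k}).
\]
So associativity holds already on cochains, with no recourse to homotopies.

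The main obstacle is skew-commutativity, since on cochains $f\cup g$ and $g\cup f$ are genuinely different and are \emph{not} related termwise by the sign $(-1)^{nm}$; the relation appears only after passing to cohomology. Here I would switch to the homological description of the cup-product, where it is induced by a diagonal approximation, i.e.\ a $G$-equivariant chain map $\Delta_\bullet\colon B_\bullet\to B_\bullet\otimes_\Z B_\bullet$ lifting $\mathrm{id}_\Z$ over the bar resolution $B_\bullet=B_\bullet(G)$. The flip $T$ on $B_\bullet\otimes B_\bullet$ sending $a\otimes b$ in bidegree $(p,q)$ to $(-1)^{pq}\,b\otimes a$ is itself a chain map, so $T\circ\Delta_\bullet$ is a second $G$-equivariant lift of $\mathrm{id}_\Z$. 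By the comparison theorem for projective resolutions, any two such lifts are $G$-equivariantly chain homotopic; transporting the homotopy through $\Hom_G(-,A)$ and the pairing produces a cochain $h$ with $f\cup g-(-1)^{nm}\,g\cup f=\partial h$ on cocycles, where the sign is exactly the Koszul sign introduced by $T$ in cohomological degrees $n$ and $m$. This is precisely the desired relation $\alpha\cup\beta=(-1)^{nm}\beta\cup\alpha$ in $H^\bullet(G,-)$.

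Finally I would note that one can bypass the abstract comparison theorem by exhibiting the homotopy explicitly via Steenrod's higher cup-$i$ products $\cup_i$, for which $\partial(f\cup_1 g)$ equals $\pm\bigl(f\cup g-(-1)^{nm}g\cup f\bigr)$ plus terms involving $\partial f$ and $\partial g$; evaluated on cocycles this again exhibits the two products as differing by a coboundary. Either route isolates skew-commutativity as the single step that truly needs a chain homotopy, whereas the Leibniz rule and associativity are formal identities of cochains.
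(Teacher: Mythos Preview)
Your proposal is correct and follows the standard textbook argument: the Alexander--Whitney formula for $f\cup g$ on inhomogeneous cochains, the Leibniz rule to descend to cohomology, a direct cochain-level check of associativity, and the comparison theorem for diagonal approximations (or equivalently the $\cup_1$-homotopy) to obtain skew-commutativity. Note, however, that the paper does not actually prove this proposition at all; it is stated as background with a citation to \cite[Prop.~1.4.4]{nsw:cohm}, so there is no ``paper's own proof'' to compare against. What you have written is essentially the proof one finds in that reference.
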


\begin{rem}\label{1rem:cup product}
In the case the $G$-module is a ring $k$ such that $k\otimes_{\Z}k=k$ (e.g., $k=\Z,\F_p$),
the cup product makes the cohomology of $G$ with $M$-coefficients a graded-commutative $k$-algebra.
\end{rem}

For a profinite group $G$ the {\it first Bockstein homomorphism} 
\begin{equation}\label{1eq:Bockstein}
 \xymatrix{ \beta^1\colon H^1(G,\F_p)\ar[r] & H^2(G,\F_p) }
\end{equation}
is the connecting homomorphism arising from the short exact sequence of trivial $G$-modules
\[\xymatrix{ 0\ar[r] & \Z/p\Z\ar[r]^p & \Z/p^2\Z\ar[r] & \Z/p\Z\ar[r] & 0. }\]
In the case $p=2$, the first Bockstein morphism is related to the cup product in the following way
(cf. \cite[Lemma~2.4]{idojan:series}).

\begin{lem}\label{1lem:Bockstein and cup}
Let $G$ be a profinite group.
Then, for every $\chi\in H^1(G,\F_2)$ one has $\beta^1(\chi)=\chi\cup\chi$.
\end{lem}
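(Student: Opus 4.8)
The plan is to prove the identity directly on inhomogeneous cochains, combining the explicit description of the connecting homomorphism recorded in the five-term/long-exact-sequence discussion above with the standard cocycle formula for the cup product from Proposition~\ref{1prop:cup product}. Throughout, the relevant $G$-modules $\Z/2\Z$ and $\Z/4\Z$ carry the trivial action, so every coboundary formula simplifies: the leading term $g_1.f(g_2)$ in $\partial^2 f$ becomes simply $f(g_2)$.

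First I would fix a continuous homomorphism $\chi\in H^1(G,\F_2)=\Hom(G,\Z/2\Z)$ and choose the set-theoretic lift $\tilde\chi\colon G\to\Z/4\Z$ taking values in $\{0,1\}\subseteq\Z/4\Z$. This is a locally constant $1$-cochain in $C^1(G,\Z/4\Z)$, though in general not a cocycle. By the construction of the Bockstein $\beta^1$ attached to the short exact sequence $0\to\Z/2\Z\to\Z/4\Z\to\Z/2\Z\to0$ (whose first map is multiplication by $2$), the class $\beta^1(\chi)$ is represented by the unique $2$-cocycle $c\in Z^2(G,\F_2)$ with $\partial^2\tilde\chi=2\cdot c$, using that $\partial^2\tilde\chi$ takes values in $2\Z/4\Z\cong\Z/2\Z$.

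The key step is the explicit evaluation of $\partial^2\tilde\chi$. Writing $a=\tilde\chi(g_1)$ and $b=\tilde\chi(g_2)$ in $\{0,1\}$, the trivial-action coboundary formula reads
\[
\partial^2\tilde\chi(g_1,g_2)=\tilde\chi(g_2)-\tilde\chi(g_1g_2)+\tilde\chi(g_1),
\]
and since $\tilde\chi(g_1g_2)$ is the lift in $\{0,1\}$ of $a+b\bmod 2$, a short case analysis over the four possibilities for $(a,b)$ shows that the right-hand side vanishes unless $a=b=1$, in which case it equals $2$. Hence $\partial^2\tilde\chi(g_1,g_2)=2\,\chi(g_1)\chi(g_2)$, so that $c(g_1,g_2)=\chi(g_1)\chi(g_2)$ in $\F_2$. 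Comparing with the cocycle describing the cup product $H^1(G,\F_2)\times H^1(G,\F_2)\to H^2(G,\F_2)$ induced by the multiplication $\F_2\otimes_{\Z}\F_2=\F_2$, namely $(\chi\cup\chi)(g_1,g_2)=\chi(g_1)\chi(g_2)$, I would conclude that $c$ represents $\chi\cup\chi$, whence $\beta^1(\chi)=\chi\cup\chi$.

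The only delicate point, and the main thing to get right, is the bookkeeping around the lift and the factor of $2$: one must check that $\partial^2\tilde\chi$ indeed lands in $2\Z/4\Z$ (which follows from $\chi$ being a homomorphism modulo $2$), and that halving and reducing modulo $2$ returns exactly the product $\chi(g_1)\chi(g_2)$ and not that product modified by a coboundary. Since the cup-product cocycle here is literally the same function, no normalization is required, and the signs in $\partial^2$ are immaterial modulo $2$.
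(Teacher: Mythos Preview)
Your argument is correct: the explicit cochain computation you carry out is the standard proof of this identity, and all the bookkeeping (the lift to $\{0,1\}\subset\Z/4\Z$, the case analysis showing $\partial^2\tilde\chi(g_1,g_2)=2\chi(g_1)\chi(g_2)$, and the identification with the cup-product cocycle) is accurate. The paper does not supply its own proof of this lemma but simply cites \cite[Lemma~2.4]{idojan:series}; your direct verification is exactly the kind of argument one finds behind such a citation.
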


\subsection{Continuous cohomology}

Fix a profinite group $G$.
The complete group algebra of $G$ over $\Z_p$ is the topological inverse limit
\[\Z_p\dbl G\dbr=\varprojlim_{U\opno G}\Z_p[G/U],\]
where $U$ runs through the open normal subgroups of $G$,
and $\Z_p[G/U]$ is the group algebra defined in the usual way for finite groups.
Then $\ZpG$ is a compact $\Z_p$-algebra.
Thus, if $M$ is a topological $G$-module which is also a (continuous) $\Z_p$-module,
then $M$ is a topological $\ZpG$-module.

Let $\Q_p$ be the field of $p$-adic numbers.
Then the embedding $\Z_p\subseteq \Q_p$ induces a short exact sequence
\begin{equation}\label{1eq:sesZpQpIp}
 0\longrightarrow\Z_p\longrightarrow\Q_p\longrightarrow\I_p\longrightarrow0
\end{equation}
which defines the discrete $\Z_p$-module $\I_p$.
In particular, $\Z_p$ and $\I_p$ can be considered as topological $\ZpG$-modules,
the former complete and the latter discrete, with the topology inherited from $\Q_p$.
An essential tool for working with profinite groups and $\Z_p$-modules is {\bf Pontryagin duality}.

\begin{defi}
 Let $M$ be a Hausdorff, abelian and locally compact $\Z_p$-module (e.g., a pro-$p$ group,
a compact $\Z_p$-module or a discrete $\Z_p$-module).
We call the group 
\[M^\vee=\Hom_{\text{cts}}(M,\I_p)\]
the {\bf Pontryagin dual} of $M$.
\end{defi}

For every $\Z_p$ module $M$, one has $(M^\vee)^\vee=M$.
In particular, if $M$ is a compact $\Z_p$-module, then $M^\vee$ is a discrete $\Z_p$-module, and conversely. 
Thus, Pontryagin duality induces an equivalence (of categories) between the two categories of compact $\Z_p$-modules
and discrete $\Z_p$-modules.

For a profinite group $G$, the category of compact $\ZpG$-modules has sufficiently many projectives,
and the category of discrete $\ZpG$-modules has sufficiently many injectives.
Hence, it is possible to define Ext-functors for the two categories in the usual way --
see \cite{brumer:pseudocompact}, \cite[\S~V.2]{nsw:cohm}) and \cite{symonds_thomas:prop}.
Then, one has the following result.

\begin{prop}\label{1prop:ext_tor}
For continuous $\ZpG$-module $N$, one has the isomorphisms
 \[H^n(G,N)\simeq\Ext_{\ZpG}^n(\Z_p,N)\]
for every $n\geq0$
Moreover, if $N$ is discrete, one has the canonical isomorphisms
\[H_n(G,N^\vee)\simeq H^n(G,N)^\vee.\]
\end{prop}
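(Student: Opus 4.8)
The plan is to reduce both isomorphisms to a single computation with a fixed projective resolution of $\Z_p$ in the category of compact $\ZpG$-modules, reading off cohomology from the associated $\Hom$-complex and homology from the $\widehat{\otimes}$-complex. First I would construct the continuous analogue of the standard (bar) resolution: set $X_n=\Z_p\dbl G^{\times(n+1)}\dbr$, the complete group algebra of $G^{\times(n+1)}$, viewed as a $\ZpG$-module via the diagonal left action of $G$, together with the usual alternating face maps $\partial_n\colon X_n\to X_{n-1}$ and the augmentation $X_0=\ZpG\to\Z_p$. Using $\Z_p\dbl G^{\times(n+1)}\dbr\cong\ZpG\,\widehat{\otimes}_{\Z_p}\,\Z_p\dbl G^{\times n}\dbr$ one sees that each $X_n$ is a free, hence projective, object among compact $\ZpG$-modules, and the contracting homotopy built from the unit of $G$ shows that
\[ \cdots\longrightarrow X_1\longrightarrow X_0\longrightarrow\Z_p\longrightarrow0 \]
is exact. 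Thus $X_\bullet\to\Z_p$ is a projective resolution, and by definition $\Ext_{\ZpG}^n(\Z_p,N)=H^n(\Hom_{\ZpG}(X_\bullet,N))$.

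For the first isomorphism I would identify the complex $\Hom_{\ZpG}(X_\bullet,N)$ with the inhomogeneous cochain complex $C^\bullet(G,N)$. A continuous $\ZpG$-linear map $X_n\to N$ is the same as a continuous $G$-equivariant map $G^{\times(n+1)}\to N$, and passing from homogeneous to inhomogeneous cochains by normalizing the first coordinate gives a natural identification $\Hom_{\ZpG}(X_n,N)\cong C^n(G,N)$ under which the map $\partial_{n+1}^{*}$ dual to $\partial_{n+1}$ becomes exactly the coboundary $\partial^{n+1}$ written out in the excerpt. Taking cohomology then yields $\Ext_{\ZpG}^n(\Z_p,N)\cong H^n(G,N)$ for every $n\geq0$, which is the first claim.

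For the second isomorphism I would use the \emph{same} resolution to compute homology, so that $H_n(G,N^\vee)=H_n(X_\bullet\,\widehat{\otimes}_{\ZpG}\,N^\vee)$, and then invoke Pontryagin duality. The key input is the tensor--hom adjunction in the compact/discrete setting, namely the natural isomorphism
\[ \left(A\,\widehat{\otimes}_{\ZpG}\,N^\vee\right)^\vee\;\cong\;\Hom_{\ZpG}\!\left(A,(N^\vee)^\vee\right)\;=\;\Hom_{\ZpG}(A,N) \]
for $A$ a compact $\ZpG$-module and $N$ discrete, which follows from $\Hom_{\mathrm{cts}}(A\,\widehat{\otimes}\,B,\I_p)\cong\Hom_{\ZpG}(A,\Hom_{\mathrm{cts}}(B,\I_p))$ together with $(N^\vee)^\vee=N$. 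Applying this term by term to $A=X_n$ identifies the Pontryagin dual of the chain complex $X_\bullet\,\widehat{\otimes}_{\ZpG}\,N^\vee$ with the cochain complex $\Hom_{\ZpG}(X_\bullet,N)$. Since $(-)^\vee$ is an exact contravariant equivalence between compact and discrete $\Z_p$-modules, it commutes with passage to (co)homology, so $H_n(G,N^\vee)^\vee\cong H^n(\Hom_{\ZpG}(X_\bullet,N))=H^n(G,N)$ by the first part; dualizing once more and using $(M^\vee)^\vee=M$ gives the canonical isomorphism $H_n(G,N^\vee)\cong H^n(G,N)^\vee$.

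The main obstacle I anticipate lies entirely in the first part: making the continuous standard resolution rigorous in the profinite setting --- checking that the $X_n$ are genuinely projective among compact $\ZpG$-modules, that the boundary maps and the contracting homotopy are continuous, and, above all, that the identification $\Hom_{\ZpG}(X_n,N)\cong C^n(G,N)$ is compatible with the differentials, so that the abstract $\Ext$ agrees on the nose with the explicitly defined continuous cohomology. The duality in the second part is then comparatively formal, resting only on the exactness of $(-)^\vee$ and the tensor--hom adjunction, both of which I would quote from the foundational references on pseudocompact modules (\cite{brumer:pseudocompact}, \cite[\S~V.2]{nsw:cohm}).
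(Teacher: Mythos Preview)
The paper does not actually prove this proposition: it is stated without proof and the reader is immediately referred to \cite{brumer:pseudocompact}, \cite[\S~V.2]{nsw:cohm}, and \cite{symonds_thomas:prop} for a detailed exposition. Your proposal is correct and is precisely the standard argument one finds in those references --- the continuous bar resolution to identify $\Ext_{\ZpG}^n(\Z_p,N)$ with $H^n(G,N)$, and then the tensor--hom adjunction plus exactness of Pontryagin duality for the second isomorphism --- so there is nothing to compare beyond noting that you have supplied what the paper deliberately omits.
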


For a detailed exposition on continuous cohomology of profinite groups (which would go far beyond
the aims of this thesis) see the aforementioned works by A.~Brumer, by J.~Neukirch and by P.~Symonds and Th.~Weigel.
For a brief introduction to cohomology and homology of profinite groups, see also \cite[\S~3.4-7]{ribes:lux}.

For a (topologically) finitely generated torsion-free $\ZpG$-module $M$, one has also the following property.
Set $V=\Q_p\otimes_{\Z_p}M$ and $N=\I_p\otimes_{\Z_p}M$.
In particular, $V$ is a $\Q_p$-vector space of finite dimension, and
$N$ is a discrete divisible $p$-primary torsion group, and \eqref{1eq:sesZpQpIp} induces the short exact sequence
\begin{equation}\label{1eq:short exact sequence modules Tate}
\xymatrix{0\ar[r] & M\ar[r] & V\ar[r] & N\ar[r] & 0.}
\end{equation}
Then one has the following result (cf. \cite[Prop.~2.3]{tate:K2cohomology} and \cite[Prop.~2.7.11]{nsw:cohm}).

\begin{prop}\label{1prop:tate proposition}
For every $n\geq0$ one has isomorphisms
\[H^n(G,V)\simeq H^n(G,M)\otimes_{\Z_p}\Q_p,\]
and in the exact cohomology induced by \eqref{1eq:short exact sequence modules Tate},
the kernel of the boundary homomorphism $\delta^{n}\colon H^{n-1}(G,N)\rightarrow H^n(G,M)$
is the maximal divisible $\Z_p$-submodule of $H^{n-1}(G,N)$, and its image is the torsion
$\Z_p$-submodule of $H^n(G,M)$.
\end{prop}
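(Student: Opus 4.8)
The plan is to derive everything from the two defining short exact sequences by examining the long exact sequences in cohomology they induce, together with the finiteness/torsion-freeness of $M$.

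First I would prove the isomorphism $H^n(G,V)\simeq H^n(G,M)\otimes_{\Z_p}\Q_p$. The key observation is that $V=\Q_p\otimes_{\Z_p}M=\varinjlim\bigl(M\xrightarrow{p}M\xrightarrow{p}\cdots\bigr)$ realizes $V$ as a filtered colimit of copies of $M$ with transition maps given by multiplication by $p$. Since $G$ is profinite and we are working with continuous cohomology, cohomology commutes with such filtered direct limits of discrete modules (this is the standard compatibility of $H^n(G,-)$ with $\varinjlim$, available via Proposition~\ref{1prop:ext_tor} identifying $H^n$ with $\Ext^n_{\ZpG}$). Hence $H^n(G,V)\simeq\varinjlim\bigl(H^n(G,M)\xrightarrow{p}H^n(G,M)\xrightarrow{p}\cdots\bigr)=H^n(G,M)\otimes_{\Z_p}\Q_p$. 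Alternatively, and perhaps more cleanly in the profinite setting, one writes the long exact sequence for $0\to M\xrightarrow{p}M\to M/pM\to 0$ and passes to the limit; but the colimit description is the conceptual heart.

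Next I would analyze the boundary map $\delta^n\colon H^{n-1}(G,N)\to H^n(G,M)$ arising from \eqref{1eq:short exact sequence modules Tate}. The segment of the long exact sequence reads
\[
\xymatrix@C=.5truecm{
H^{n-1}(G,V)\ar[r] & H^{n-1}(G,N)\ar[r]^-{\delta^n} & H^n(G,M)\ar[r] & H^n(G,V).}
\]
By the first part, $H^n(G,M)\otimes\Q_p\simeq H^n(G,V)$, so the map $H^n(G,M)\to H^n(G,V)$ is precisely the localization $H^n(G,M)\to H^n(G,M)\otimes\Q_p$, whose kernel is exactly the $\Z_p$-torsion submodule $H^n(G,M)_{\tor}$. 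By exactness, the image of $\delta^n$ equals this kernel, giving $\image(\delta^n)=H^n(G,M)_{\tor}$. Dually, the kernel of $\delta^n$ equals the image of $H^{n-1}(G,V)\to H^{n-1}(G,N)$. Here I would use that $N=\I_p\otimes_{\Z_p}M$ is a divisible $p$-primary torsion group (as stated), so that $H^{n-1}(G,N)$ inherits divisibility in the relevant range, and that $H^{n-1}(G,V)$ is a $\Q_p$-vector space and hence divisible; its image under a homomorphism is therefore a divisible $\Z_p$-submodule, and by exactness this image is all of $\kernel(\delta^n)$. To see it is the \emph{maximal} divisible submodule, one notes that the complementary part $H^{n-1}(G,N)/\kernel(\delta^n)\hookrightarrow H^n(G,M)$ embeds into the finitely generated $\Z_p$-module $H^n(G,M)$ (finiteness coming from $M$ being finitely generated over $\ZpG$), hence has no nonzero divisible submodule; so nothing divisible is lost, and $\kernel(\delta^n)$ captures the entire maximal divisible submodule of $H^{n-1}(G,N)$.

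The main obstacle I anticipate is the careful justification of the commutation of continuous cohomology with the filtered colimit defining $V$, and correspondingly the divisibility bookkeeping for $H^{n-1}(G,N)$: in the profinite/continuous setting one must ensure the modules stay within the category (discrete versus compact) for which the $\Ext$-interpretation of Proposition~\ref{1prop:ext_tor} and the exact-sequence machinery apply, and one must confirm that $H^n(G,M)$ is genuinely finitely generated over $\Z_p$ so that its torsion is bounded and its quotient by torsion contains no divisible part. Once these structural facts are in hand, identifying $\image(\delta^n)$ with the torsion submodule and $\kernel(\delta^n)$ with the maximal divisible submodule is a formal consequence of exactness, exactly as in the classical argument of Tate.
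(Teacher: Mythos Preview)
The paper does not supply its own proof of this proposition; it is quoted from \cite[Prop.~2.3]{tate:K2cohomology} and \cite[Prop.~2.7.11]{nsw:cohm}. Your outline is exactly Tate's standard argument and is structurally sound, but two steps need sharpening.

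For the isomorphism $H^n(G,V)\simeq H^n(G,M)\otimes_{\Z_p}\Q_p$, the colimit description $V=\varinjlim(M\xrightarrow{p}M\xrightarrow{p}\cdots)$ is right, but the justification ``cohomology commutes with filtered colimits of discrete modules'' does not apply as written: $M$ is compact, not discrete. What actually makes the step work is compactness of $G^{\times n}$: any continuous cochain $G^{\times n}\to V$ has compact image and therefore lands in some $p^{-k}M$, so $C^n(G,V)=\bigcup_k C^n(G,p^{-k}M)=C^n(G,M)\otimes_{\Z_p}\Q_p$ already at the cochain level, and flatness of $\Q_p$ over $\Z_p$ finishes.

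For the maximality of $\kernel(\delta^n)$, your argument rests on $H^n(G,M)$ being a finitely generated $\Z_p$-module. The parenthetical justification ``finiteness coming from $M$ being finitely generated over $\ZpG$'' is not valid, and the conclusion can fail for general profinite $G$ (you rightly flag this as an obstacle at the end, but do not resolve it). What you actually need is the weaker fact that $H^n(G,M)$ contains no nonzero divisible $\Z_p$-submodule. This follows from $H^n(G,M)\simeq\varprojlim_k H^n(G,M/p^kM)$ (the $M/p^kM$ are finite discrete $G$-modules with surjective transition maps; cf.\ \cite[Cor.~2.7.6]{nsw:cohm}, which the paper itself invokes elsewhere): any element of $\bigcap_k p^k H^n(G,M)$ has trivial image in each $H^n(G,M/p^kM)$ and is therefore zero. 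With this in hand, a divisible $D\subseteq H^{n-1}(G,N)$ maps under $\delta^n$ to a divisible submodule of $H^n(G,M)$, hence to zero, and your maximality claim follows.
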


\begin{defi}
The cohomological dimension $\ccd(G)$ of a profinite group $G$
is the least non-negative integer $n$ such that
\[H^m(G,M)=0\quad\text{for all } m>n\]
and for every discrete $G$-module $M$ which is a torsion abelian group, and it is infinite if no such integer exists.
\end{defi}

If $G$ is a pro-$p$ group, then $\ccd(G)\leq n$ if, and only if $H^{n+1}(G,\F_p)=0$,
where $\F_p$ is a trivial $G$-module.

\section{Preliminaries on pro-$p$ groups}

Let $G$ be a profinite group, and let $m$ be a positive integer.
We define $G^m$ and $[G,G]$ to be the closed subgroups of $G$ generated by the $m$-th powers of $G$,
resp. by the commutators\footnote{We use the left notation for the conjugation of group elements: namely, 
\[{^xy}=xyx^{-1} \quad \text{and} \quad [x,y]={^xy}\cdot y^{-1}=xyx^{-1}y^{-1}.\]} of $G$. I.e.,
\[G^m=\langle g^m\:|g\in G\rangle\clsgp G\quad\text{and}\quad 
[G,G]=\langle[g_1,g_2]\:|g_1,g_2\in G\rangle\clsgp G.\] 

\begin{defi}\label{1defi:pro-p groups}
Let $G$ be a profinite group.
\begin{itemize}
 \item[(i)] A subset $\mathcal{X}$ of $G$ is said to be a {\it system of (topological) generators} of $G$
if the closed subgroup of $G$ generated by $\mathcal{X}$ is $G$ itself.
Moreover, the system $\mathcal{X}$ is said to be {\it minimal} if any proper subset of $\mathcal{X}$
generates a proper subgroup of $G$.
 \item[(ii)] The group $G$ is {\it finitely generated} if it has a generating system which is a finite subset.
The minimal number of generators of $G$ -- i.e., the (finite) cardinality of a minimal generating system
$\mathcal{X}$ of $G$ -- will be denoted by $d(G)$, and sometimes it will be called the ``dimension''
of $G$.\footnote{Following the notation of \cite{ddsms:analytic}: indeed this makes sense in the case of
analytic pro-$p$ groups.}
 \item[(iii)] Let $F$ be a free profinite group.
A short exact sequence of profinite groups
\begin{equation}\label{1eq:presentation}
 \xymatrix{ 1\ar[r] & R\ar[r] & F\ar[r]^\pi & G\ar[r] & 1},
\end{equation}
is called a {\it presentation} of $G$.
The the elements $R$ are called the {\it relations} of $G$,
and the generators of $R$ as (closed) normal subgroup of $F$ are called {\it defining relations} of $G$.
If $G$ is finitely generated, then the presentation \eqref{1eq:presentation} is said to be {\it minimal} if $d(F)=d(G)$. 
Moreover, if the minimal number of generators of $R$ as normal subgroup is finite, then it is denoted by $r(G)$,
and $G$ is said to be {\it finitely presented}.
In particular, it is possible to present explicitly $G$ in the following way:
\[G=\langle x\in\mathcal{X}\:|\:\rho=1,\rho\in\mathcal{R}\rangle,\]
with $\mathcal{R}\subset R$ a minimal system of defining relations.
Finally, if $G$ is a pro-$p$ group, then
\begin{equation}\label{1eq:number relations}
 r(G)=\dim_{\F_p}\left(H^2(G,\F_p)\right)
\end{equation}
(cf. \cite[Prop.~3.9.5]{nsw:cohm}).
 \item[(iv)] The {\it rank} of $G$ is defined by
\[\rank(G)=\sup\{d(C)\:|\:C\clsgp G\}=\sup\{d(C)\:|\:C\opsgp G\}\]
(see \cite[\S~3.2]{ddsms:analytic}).
\end{itemize}
\end{defi}

\subsection{Filtrations of pro-$p$ groups}

Recall that the terms of the {\it lower central series} are defined by
\[\gamma_1(G)=G,\quad \gamma_2(G)=[G,G],\quad\ldots\quad\gamma_i(G)=[\gamma_{n-1}(G),G]\]
for every $i\geq1$.

The {\it Frattini subgroup} $\Phi(G)$ of a profinite group $G$ is the intersection of all the maximal proper
open subgroups of $G$.
The Frattini subgroup of $G$ is a closed subgroup, and if $\mathcal{X}\subset G$ is a system of generators of $G$, 
then $\mathcal{X}\Phi(G)/\Phi(G)$ generates $G/\Phi(G)$ (cf. \cite[Prop.~1.9]{ddsms:analytic}).
Note that if \eqref{1eq:presentation} is a minimal presentation of $G$, then $R\subseteq \Phi(F)$.

If $G$ is a pro-$p$ group, then one has the following (cf. \cite[Prop.~1.13]{ddsms:analytic}).

\begin{prop}\label{1prop:frattini subgroup}
 If $G$ is pro-$p$, then
\[\Phi(G)=G^p[G,G],\]
and the quotient group $G/\Phi(G)$ is a $\F_p$-vector space.
In particular, $G$ is finitely generated if, and only if, $G/\Phi(G)$ has finite dimension as $\F_p$-vector space,
and $d(G)=\dim(G/\Phi(G))$.
\end{prop}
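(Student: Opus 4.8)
The plan is to establish the identity $\Phi(G)=G^p[G,G]$ by a double inclusion, deduce the $\F_p$-vector space structure as an immediate consequence, and then read off the statement about generators from the defining ``non-generator'' property of the Frattini subgroup.

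First I would record the key structural fact that drives the whole argument: in a pro-$p$ group every maximal open subgroup $M$ is normal of index $p$. Indeed, $M$ contains some open normal subgroup $N\opno G$, and $M/N$ is then a maximal subgroup of the finite $p$-group $G/N$; since finite $p$-groups are nilpotent, $M/N$ is normal with cyclic quotient of order $p$, whence $M\clno G$ and $[G:M]=p$. Consequently $G/M\cong\F_p$ is abelian of exponent $p$, so every such $M$ contains both $[G,G]$ and $G^p$. Intersecting over all maximal open subgroups gives the easy inclusion $G^p[G,G]\subseteq\Phi(G)$, both groups being closed.

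For the reverse inclusion I would pass to the quotient $Q=G/G^p[G,G]$, which is an abelian pro-$p$ group of exponent $p$ and hence a topological $\F_p$-vector space. The point is that the continuous homomorphisms $Q\to\F_p$ separate the points of $Q$ (this is where Pontryagin duality, or simply the separation of points in a product of copies of $\F_p$, enters), so the intersection of their kernels --- each a maximal open subgroup of $Q$ --- is trivial, i.e.\ $\Phi(Q)=1$. Since by the first step every maximal open subgroup of $G$ already contains $G^p[G,G]$, the correspondence of subgroups through the quotient map identifies the maximal open subgroups of $G$ with those of $Q$ with no exceptions; taking intersections yields $\Phi(G)=G^p[G,G]$. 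The second assertion is now automatic, since $G/\Phi(G)=Q$ is an $\F_p$-vector space.

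Finally, for the statement on generators I would invoke the Frattini property already in play in the excerpt, namely that a subset $\mathcal{X}$ topologically generates $G$ if and only if its image generates $G/\Phi(G)$. Since $G/\Phi(G)$ is an $\F_p$-vector space, a minimal topological generating set corresponds exactly to an $\F_p$-basis; moreover the closed subgroup generated by finitely many vectors in such a space is just their finite $\F_p$-span, so $G$ is topologically finitely generated precisely when $\dim_{\F_p}(G/\Phi(G))$ is finite, and in that case $d(G)=\dim_{\F_p}(G/\Phi(G))$. I expect the main obstacle to be the reverse inclusion: everything hinges on the index-$p$ normality of maximal open subgroups together with the vanishing of the Frattini subgroup of an $\F_p$-vector space, and some care is needed to guarantee there are enough maximal open subgroups (equivalently, enough continuous characters) in the possibly infinitely generated case.
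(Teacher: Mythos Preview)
Your argument is correct and is essentially the standard proof of this fact. Note, however, that the paper does not actually give a proof of this proposition: it is stated with the attribution ``(cf.\ \cite[Prop.~1.13]{ddsms:analytic})'' and no proof environment follows, so there is nothing in the paper to compare against beyond the citation. Your write-up matches the argument one finds in that reference.
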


The {\bf $p$-descending central series} of a pro-$p$ group is a sort of ``$p$-refinement'' of the central series,
and it is defined by
\[\lambda_1(G)=G,\quad \lambda_2(G)=\Phi(G),\quad\ldots\quad \lambda_{i+1}(G)=\lambda_i(G)^p\left[G,\lambda_i(G)\right],\]
for every $i\geq1$.

\begin{rem}
In \cite{ddsms:analytic} the notation $P_i(G)$ is used instead of $\lambda_i(G)$, and D.~Segal
informally refers to this series as ``the pigs series''.
Other authors, as I.~Efrat and J.~Min\'a\v{c}, use the notation $G^{(i)}$ instead of $\lambda_i(G)$
(see \cite{cem:quotients,cmq:fast,idojan:series}).
Finally, authors from the ``Neukirch's school'' use the notation $G^i$ instead of $\lambda_i(G)$, and $G_i$ instead of $\gamma_i(G)$
(see \cite[\S~III.8]{nsw:cohm} and \cite{jochen:thesis}).
\end{rem}

The $p$-descending central series has the following properties.

\begin{prop}
Let $G$ be a pro-$p$ group.
\begin{itemize}
 \item[(i)] $[\lambda_i(G),\lambda_j(G)]\leq\lambda_{i+j}(G)$ for all $i,j\geq1$.
 \item[(ii)] If $G$ is finitely generated then $\lambda_i(G)$ is open for every $i\geq1$, and the set 
$\{\lambda_i(G),i\geq1\}$ is a basis of neighbourhoods of $1\in G$. 
\end{itemize}
\end{prop}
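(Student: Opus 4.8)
The plan is to treat the two assertions separately, proving (i) by an induction on $j$ that feeds on the three subgroups lemma and a commutator--power estimate, and then using (i) together with Proposition~\ref{1prop:frattini subgroup} to deduce (ii).

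For (i), I would fix the statement ``$[\lambda_i(G),\lambda_j(G)]\leq\lambda_{i+j}(G)$ for all $i\geq1$'' and induct on $j$. First observe that every $\lambda_i(G)$ is characteristic in $G$ (immediate from the recursion $\lambda_{i+1}(G)=\lambda_i(G)^p[G,\lambda_i(G)]$), so all commutator subgroups below live inside the normal subgroup $\lambda_{i+j}(G)$ and it suffices to check that the relevant generators land there. The base case $j=1$ is exactly the definition, since $[\lambda_i(G),G]\leq\lambda_i(G)^p[G,\lambda_i(G)]=\lambda_{i+1}(G)$. For the inductive step I would use that $\lambda_{j+1}(G)=\lambda_j(G)^p[G,\lambda_j(G)]$ is generated by the $p$-th powers $b^p$ with $b\in\lambda_j(G)$ and by the commutators $[g,x]$ with $g\in G$, $x\in\lambda_j(G)$; since $\lambda_{i+j+1}(G)$ is normal and commutators expand via $[a,xy]=[a,x]\cdot{}^{x}[a,y]$, it is enough to bound $[a,b^p]$ and $[a,[g,x]]$ for $a\in\lambda_i(G)$.

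The commutator generators are handled by the three subgroups lemma (a consequence of the Hall--Witt identity) applied with $N=\lambda_{i+j+1}(G)$, $A=\lambda_j(G)$, $B=G$, $C=\lambda_i(G)$: the inductive hypothesis gives $[[G,\lambda_i(G)],\lambda_j(G)]\leq[\lambda_{i+1}(G),\lambda_j(G)]\leq\lambda_{i+j+1}(G)$ and $[[\lambda_i(G),\lambda_j(G)],G]\leq[\lambda_{i+j}(G),G]\leq\lambda_{i+j+1}(G)$, whence $[\lambda_i(G),[G,\lambda_j(G)]]\leq\lambda_{i+j+1}(G)$. The genuinely $p$-specific --- and in my view the most delicate --- point is the power part: using the identity $[a,b^p]=\prod_{k=0}^{p-1}{}^{b^{k}}[a,b]$ together with the inductive bound $[\lambda_{i+j}(G),\lambda_j(G)]\leq\lambda_{i+2j}(G)\leq\lambda_{i+j+1}(G)$, each factor ${}^{b^{k}}[a,b]$ is congruent to $[a,b]$ modulo $\lambda_{i+j+1}(G)$, so $[a,b^p]\equiv[a,b]^p$; and $[a,b]^p\in\lambda_{i+j}(G)^p\leq\lambda_{i+j+1}(G)$ closes the induction. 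I expect the bookkeeping in this power identity, and the verification that reducing modulo the normal subgroup $\lambda_{i+j+1}(G)$ genuinely permits arguing on generators alone, to be where the care is needed.

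For (ii) I would first show by induction that each $\lambda_i(G)$ is open. The case $i=1$ is trivial, and $\lambda_2(G)=\Phi(G)$ is open of finite index by Proposition~\ref{1prop:frattini subgroup}. If $\lambda_i(G)$ is open then it is itself a finitely generated pro-$p$ group, so $\Phi(\lambda_i(G))=\lambda_i(G)^p[\lambda_i(G),\lambda_i(G)]$ has finite index in $\lambda_i(G)$; since $[\lambda_i(G),\lambda_i(G)]\leq[G,\lambda_i(G)]$ we get $\Phi(\lambda_i(G))\leq\lambda_{i+1}(G)$, whence $\lambda_{i+1}(G)$ has finite index in $\lambda_i(G)$ and hence in $G$, and being closed of finite index in a profinite group it is open. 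Finally, to see that $\{\lambda_i(G)\}$ is a basis of neighbourhoods of $1$, I would take an arbitrary $N\opno G$, note that $\lambda_i(G/N)=\lambda_i(G)N/N$ because the defining operations pass to quotients, and use that $G/N$ is a finite $p$-group, hence nilpotent, so its $p$-descending central series reaches $1$; thus $\lambda_i(G)\leq N$ for $i$ large, which is exactly the cofinality statement needed.
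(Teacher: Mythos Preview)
Your argument is correct. The induction on $j$ for part~(i) is set up properly: the base case is the definition of $\lambda_{i+1}(G)$, the three subgroups lemma handles the commutator generators of $\lambda_{j+1}(G)$ exactly as you indicate, and the power estimate $[a,b^p]\equiv[a,b]^p\bmod\lambda_{i+j+1}(G)$ goes through because $[\lambda_j(G),\lambda_{i+j}(G)]\leq\lambda_{i+2j}(G)\leq\lambda_{i+j+1}(G)$ is already available at stage $j$. Your care about reducing to generators is justified: for normal $N$, the set $\{y:[a,y]\in N\}$ is a closed subgroup by the identity $[a,y_1y_2]=[a,y_1]\cdot{}^{y_1}[a,y_2]$, and symmetrically in the first variable. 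Part~(ii) is likewise fine; the key observations $\Phi(\lambda_i(G))\leq\lambda_{i+1}(G)$ and the strict descent of the $\lambda$-series in a finite $p$-group are exactly what is needed.

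As for comparison: the paper does not prove this proposition at all --- it is stated without proof in the preliminaries, as a standard fact about the $p$-descending central series (the usual reference being \cite{ddsms:analytic}, Proposition~1.16 and its surroundings). So there is no ``paper's approach'' to compare against; you have supplied a complete self-contained argument where the thesis simply quotes the result.
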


The {\bf Zassenhaus filtration} of an arbitrary group $G$ (not necessarily a pro-$p$ group) is 
the descending series defined by $D_1(G)=G$ and
\[ D_i(G)=D_{\lceil i/p\rceil}(G)^p\prod_{j+h=i}\left[D_j(G),D_h(G)\right]\]
for every $i\geq1$, where $\lceil i/p\rceil$ is the least integer $m$ such that $pm\geq i$.
This series is also called {\it Jennings filtration} and the groups $D_i(G)$ are called
{\it dimension subgroups} of $G$ in characteristic $p$.
Note that if $G$ is pro-$p$, then $D_2(G)=\Phi(G)$.
Such series has the following properties (cf. \cite[\S~11.1]{ddsms:analytic}).

\begin{prop}\label{1prop:properties Zassenhaus filtration}
\begin{itemize}
 \item[(i)] For each $i,j\geq1$, one has
\[[D_i(G),D_j(G)]\leq D_{i+j}(G)\quad\text{and}\quad D_i(G)^p\leq D_{pi}(G),\]  
and it is the fastest descending series with these properties.\footnote{Namely, if $\{\tilde D_i(G)\}$
is another descending series satisfying these two properties, then $\tilde D_i(G)\supseteq D_i(G)$ for every $i\geq1$.} 
 \item[(ii)] For each $i\geq1$, one has the formula
\begin{equation}\label{1eq:lazard forumula}
  D_i(G)=\prod_{jp^h\geq i}\gamma_j(G)^{p^h},
\end{equation}
due to M.~Lazard.
\item[(iii)] If $G$ is a finitely generated pro-$p$ group, then $G$ has finite rank if, and only if, 
$D_i(G)=D_{i+1}(G)$ for some $i\geq1$ (this result is due to M.~Lazard and D.~Riley).
\end{itemize}
\end{prop}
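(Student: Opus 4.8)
The plan is to treat the three parts separately, since (i) is essentially formal, (ii) is a comparison of two filtrations, and (iii) rests on the theory of $p$-adic analytic groups.

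For part (i) I would read both inclusions straight off the defining recursion
\[ D_i(G)=D_{\lceil i/p\rceil}(G)^p\prod_{j+h=i}[D_j(G),D_h(G)]. \]
The product defining $D_{i+j}(G)$ contains the factor $[D_i(G),D_j(G)]$ (take the pair with entries $i$ and $h=j$), so $[D_i(G),D_j(G)]\leq D_{i+j}(G)$; and since $\lceil pi/p\rceil=i$, the factor $D_i(G)^p$ occurs in $D_{pi}(G)$, giving $D_i(G)^p\leq D_{pi}(G)$. For the minimality (``fastest'') claim I would argue by induction on $i$ that any descending series $\{\tilde D_i(G)\}$ with $\tilde D_1(G)=G$ satisfying the same two closure properties contains $D_i(G)$: the base case is clear, and in the inductive step each factor of $D_i(G)$ is absorbed, using monotonicity of $(-)^p$ and of the commutator together with $p\lceil i/p\rceil\geq i$ and the descent of $\{\tilde D_i(G)\}$. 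I expect no obstacle here.

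For part (ii), writing $E_i(G)=\prod_{jp^h\geq i}\gamma_j(G)^{p^h}$, I would prove $D_i(G)=E_i(G)$ by two inclusions. The inclusion $E_i(G)\leq D_i(G)$ is the easy one and uses only part (i): first $\gamma_j(G)\leq D_j(G)$ by induction from $\gamma_1(G)=G=D_1(G)$ and $\gamma_{j+1}(G)=[\gamma_j(G),G]\leq[D_j(G),D_1(G)]\leq D_{j+1}(G)$; then iterating the power property gives $\gamma_j(G)^{p^h}\leq D_j(G)^{p^h}\leq D_{jp^h}(G)\leq D_i(G)$ whenever $jp^h\geq i$. The reverse inclusion $D_i(G)\leq E_i(G)$ I would obtain from the minimality in part (i), by checking that $\{E_i(G)\}$ is itself a descending series closed under the commutator and the $p$-th power operation. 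The commutator closure $[E_i(G),E_j(G)]\leq E_{i+j}(G)$ reduces, after expanding both sides, to the standard estimate $[\gamma_a(G),\gamma_b(G)]\leq\gamma_{a+b}(G)$ combined with commutator--power identities, while the closure $E_i(G)^p\leq E_{pi}(G)$ requires controlling $p$-th powers of products via the Hall--Petrescu collection formula. This verification is the main obstacle of part (ii): one must show that every defect term produced by collecting powers and commutators lands in the appropriate $E$-factor, which is exactly the combinatorial content of Lazard's formula.

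For part (iii) I would pass to the associated graded object $L=\bigoplus_{i\geq1}D_i(G)/D_{i+1}(G)$, which by parts (i)--(ii) is a graded restricted Lie algebra over $\F_p$ generated in degree one by $D_1(G)/D_2(G)=G/\Phi(G)$, a space of dimension $d(G)<\infty$. The condition $D_i(G)=D_{i+1}(G)$ is precisely the vanishing of the $i$-th graded piece of $L$. I would then invoke Lazard's identification of finitely generated pro-$p$ groups of finite rank with $p$-adic analytic groups: for such a group a uniform open subgroup is powerful, so that $\gamma_{j+1}(G)$ is controlled by $\gamma_j(G)^p$ and Lazard's formula \eqref{1eq:lazard forumula} collapses, the dominant factor of $D_i(G)$ being $G^{p^{\lceil\log_p i\rceil}}$; hence the graded piece vanishes for every $i$ outside the sparse set of powers of $p$, and in particular $D_i(G)=D_{i+1}(G)$ for some $i$. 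For the converse I would argue contrapositively: if $G$ has infinite rank then $L$ has exponential growth and is nonzero in every degree, so no step of the filtration is stationary. This growth dichotomy --- polynomial versus exponential growth of $L$, and its equivalence with finiteness of the rank --- is the heart of the matter and the main obstacle of the whole proposition; it is exactly the Lazard--Riley input, and I would quote it rather than reprove it.
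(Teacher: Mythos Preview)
The paper does not prove this proposition at all: it is stated with the reference ``(cf.\ \cite[\S~11.1]{ddsms:analytic})'' and no proof is given, since these are standard background results from the theory of the Zassenhaus filtration and $p$-adic analytic groups. Your proposal is a faithful sketch of the standard argument one finds in that reference (parts (i)--(ii) follow the expected double-inclusion route via the recursive definition and Hall--Petrescu collection, and for (iii) you correctly identify that the substantive content is the Lazard--Riley dichotomy, which you sensibly propose to quote rather than reprove), so there is nothing to compare against and no gap to flag.
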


\section{Restricted Lie algebras}

Unless stated otherwise, if $L$ is a Lie algebra (over an arbitrary commutative ring),
we denote by $(\argu,\argu)\colon L\times L\rightarrow L$ its Lie bracket, and for $v\in L$ the notation
\[\text{ad}(v)\colon L\longrightarrow L,\quad w\longmapsto(v,w)\] denotes the adjoint endomorphism.
Throughout this section, let $\F$ denote a field of characteristic $p>0$.

\begin{defi}
 A restricted Lie algebra over $\F$ is a $\F$-Lie algebra $L$ equipped with a map
\[\argu^{[p]}\colon L\longrightarrow L\]
satisfying the following properties:
\begin{itemize}
 \item[(i)] $(a v)^{[p]}=a^pv^{[p]}$, with $a\in\F$ and $v\in L$;
 \item[(ii)] $(v+w)^{[p]}=v^{[p]}+w^{[p]}+\sum_{i=1}^{p-1}\frac{s_i(v,w)}{i}$, where $v,w\in L$, and
 the $s_i(v,w)$ denote the coefficients of $\lambda^{i-1}$ in the formal expression $\text{ad}(\lambda v+w)^{p-1}(v)$;
 \item[(iii)] $\text{ad}(v^{[p]})=\text{ad}(v)^p$, with $v\in L$.
\end{itemize}
\end{defi}

In fact it is possible to see a restricted Lie algebra over $\F$ as a subalgebra of a $\F$-algebra.
For any associative $\F$-algebra $A$, let $A_L$ be the induced Lie algebra, i.e., $A_L=A$ and
\[(a,b)=ab-ba\quad\text{for all }a,b\in A.\]
A Lie subalgebra $L$ of $A_L$ is said to be restricted if $a^p\in L$ for every $a\in L$.
Equivalently, a $\F$-Lie algebra $L$ together with a unitary operation $\argu^{[p]}$ is said to be restricted
if there exists an associative $\F$-algebra $A$ and a Lie algebras monomorphisms $\iota\colon L\rightarrow A_L$
such that \[\iota\left(a^{[p]}\right)=\iota(a)^p\quad\text{for every }a\in L.\]
The pair $(A,\iota)$ is called a {\it restricted envelope} for $L$.

An ideal $\mathfrak{r}$ of a restricted Lie algebra $L$ is an ideal of the underlying Lie
algebra of $L$ which is also closed under the operation $\argu^{[p]}$.
Thus the quotient $L/\mathfrak{r}$ is again a restricted $\F$-Lie algebra.
A homomorphism of restricted Lie algebras is a homomorphism of Lie algebras that commutes with the
operations $\argu^{[p]}$.

\begin{defi}
Let $L$ be a restricted Lie algebra over $\F$.
An associative $\F$-algebra $\mathcal{U}_p(L)$ equipped with a monomorphism of restricted Lie algebras
\begin{equation}\label{1eq:universal envelope embedding}
 \psi_L\colon L\longrightarrow \mathcal{U}_p(L)
\end{equation}
is called {\bf universal restricted envelope} of $L$,
if it is a restricted envelope for $L$ and it satisfies the usual universal property:
for any restricted Lie algebras homomorphism $\phi\colon L\rightarrow B_L$, with $B$ a $\F$-algebra,
there exists a unique $\F$-algebra homomorphism $\tilde\phi\colon\mathcal{U}_p(L)\rightarrow B$ such that
$\tilde\phi\circ\psi_L=\phi$.
\end{defi}

For every restricted Lie algebras there exists a unique universal restricted envelope
\cite[Prop.~1.2.4]{jochen:thesis}, and the morphism $\psi_L$ is injective \cite[Prop.~1.2.5]{jochen:thesis}.

Free restricted Lie algebras over $\F$ are defined via the usual property.
It is possible to construct such a restricted Lie algebra as follows.
Given a set of indeterminates $\mathcal{X}=\{X_i,i\in\mathcal{I}\}$, let $\F\langle \mathcal{X}\rangle$ be
the free associative $\F$-algebra on $\mathcal{X}$, and let $L$ be the restricted subalgebra of
$\F\langle \mathcal{X}\rangle_L$ generated by $\mathcal{X}$.
Then $L$ is the free restricted $\F$-Lie algebra with free generating set $\mathcal{X}$,
denoted by $L_p(\mathcal{X})$.
In particular, one has the following (cf. \cite[Prop.~1.2.7]{jochen:thesis}).

\begin{prop}\label{1prop:free restricted lie algebras}
The restricted universal envelope of the free restricted Lie algebra $L_p(\mathcal{X})$ is the algebra 
$\F\langle \mathcal{X}\rangle$, via the monomorphism which sends every $X_i$ to itself.
\end{prop}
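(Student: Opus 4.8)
The plan is to verify directly that the pair $(\F\langle\mathcal{X}\rangle,\psi)$, where $\psi\colon L_p(\mathcal{X})\hookrightarrow\F\langle\mathcal{X}\rangle$ is the inclusion sending each $X_i$ to itself, satisfies the defining universal property of the restricted universal envelope. By construction $L_p(\mathcal{X})$ is the restricted Lie subalgebra of $\F\langle\mathcal{X}\rangle_L$ generated by $\mathcal{X}$, so $\psi$ is automatically a monomorphism of restricted Lie algebras; moreover the restricted operation on $L_p(\mathcal{X})$ is by definition the $p$-th power taken in the associative algebra $\F\langle\mathcal{X}\rangle$, whence $\psi(a^{[p]})=\psi(a)^p$ for all $a$. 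Thus $(\F\langle\mathcal{X}\rangle,\psi)$ is already a restricted envelope for $L_p(\mathcal{X})$, and it remains only to establish the universal factorization.

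To that end, given any $\F$-algebra $B$ and any restricted Lie algebra homomorphism $\phi\colon L_p(\mathcal{X})\to B_L$, I would first invoke the universal property of the \emph{free associative} algebra $\F\langle\mathcal{X}\rangle$: the set map $X_i\mapsto\phi(X_i)$ extends uniquely to an $\F$-algebra homomorphism $\tilde\phi\colon\F\langle\mathcal{X}\rangle\to B$. This gives existence of a candidate factorization and, simultaneously, its uniqueness at the level of the generators $\mathcal{X}$, since any $\F$-algebra map out of $\F\langle\mathcal{X}\rangle$ is determined by its values on $\mathcal{X}$.

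The key remaining step is to check that $\tilde\phi\circ\psi=\phi$ on all of $L_p(\mathcal{X})$, not merely on $\mathcal{X}$. Because $\tilde\phi$ is a homomorphism of associative algebras it preserves commutators and $p$-th powers, so its restriction $\tilde\phi|_{L_p(\mathcal{X})}$ is itself a homomorphism of restricted Lie algebras into $B_L$ (here one uses again that the restricted operation on $L_p(\mathcal{X})$ coincides with the associative $p$-th power). Hence $\tilde\phi\circ\psi$ and $\phi$ are two restricted Lie algebra homomorphisms $L_p(\mathcal{X})\to B_L$ that agree on the generating set $\mathcal{X}$; since $\mathcal{X}$ generates $L_p(\mathcal{X})$ as a restricted Lie algebra, they coincide. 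Uniqueness of $\tilde\phi$ as an $\F$-algebra map then follows from the uniqueness clause above.

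I expect the only genuine subtlety to be this last matching argument: one must verify carefully that the restriction of $\tilde\phi$ really is a restricted homomorphism into $B_L$, and that the equality of two such homomorphisms propagates from the generators $\mathcal{X}$ to the whole of $L_p(\mathcal{X})$ via the generation property, rather than assuming it. Everything else is a formal transfer between the universal property of $\F\langle\mathcal{X}\rangle$ and that of $\mathcal{U}_p(L_p(\mathcal{X}))$.
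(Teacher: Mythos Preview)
Your argument is correct and is precisely the standard verification of the universal property. Note, however, that the paper does not give its own proof of this proposition: it simply states the result and cites \cite[Prop.~1.2.7]{jochen:thesis}, so there is nothing in the paper to compare against beyond the statement itself.
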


\subsection{The restricted Lie algebra induced by the Zassenhaus filtration}

We are interested in restricted Lie algebras, as the Zassenhaus filtration of a pro-$p$ group induces a structure of
restricted Lie algebra over $\F_p$.
Indeed, given a pro-$p$ group put 
\[L_i(G)=D_i(G)/D_{i+1}(G)\quad\text{and}\quad L_\bullet(G)=\bigoplus_{i=1}^\infty L_i(G)\]
for $i\geq1$.
Then every $L_i(G)$ is a $\F_p$-vector space, and the graded space $L_\bullet(G)$ comes equipped with
two further operations.
Let $v\in L_i(G)$, $v=xD_{i+1}(G)$, and $w\in L_j(G)$, $w=yD_{j+1}(G)$, for some $x,y\in G$.
Then one may define
\begin{eqnarray*}
 && (v,w)=[x,y]D_{i+j+1}\in L_{i+j}(G),\\
 && v^{[p]}=x^pD_{pi+1}\in L_{pi}(G),
\end{eqnarray*}
which make $L_\bullet(G)$ a restricted Lie algebra over $\F_p$ (cf. \cite[Theorem~12.8.(i)]{ddsms:analytic}).

Moreover, the restricted $\F_p$-Lie algebra $L_\bullet(G)$ of a pro-$p$ group $G$ is thightly related
to the group algebra $\F_p[G]$.
Let $\omega_{\F_p[G]}$ be the augmentation ideal of $\F_p[G]$ (i.e., $\omega_{\F_p[G]}$ is
the two-sided ideal of $\F_p[G]$ generated by the elements $x-1$, with $x\in G$), and put
\begin{eqnarray*}
 && \grad_1(G)=\F_p[G]/\omega_{\F_p[G]}\simeq\F_p,\quad\grad_{i+1}(G)=\omega_{\F_p[G]}^i/\omega_{\F_p[G]}^{i+1},\\
 && \grad_\bullet(G)=\bigoplus_{i=1}^\infty\grad_i(G)
\end{eqnarray*}
for $i\geq1$.
Then every $\grad_i(G)$ is a $\F_p$-vector space, and $\grad_\bullet(G)$ is a graded associative $\F_p$-algebra.
Then one has the following result, due to S.~Jennings (cf. \cite[Theorems~12.8-12.9]{ddsms:analytic}).

\begin{thm}\label{1thm:envelope Zassenhaus filtration}
Let $\vartheta\colon L_\bullet(G)\rightarrow\grad_\bullet(G)$ be the morphism of graded $\F_p$-algebras defined by
\[\vartheta(xD_{i+1})=(x-1)+\omega_G^{i+1},\quad \text{for }xD_{i+1}\in L_i(G), i\geq1.\]
Then $\vartheta$ is a monomorphism of restricted Lie algebras, and $\grad_\bullet(G)$ is
the universal restricted envelope of $L_\bullet(G)$.
In particular, one has
\[D_i(G)=(1+\omega_G^i)\cap G\]
for every $i\geq1$.
\end{thm}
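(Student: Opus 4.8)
The plan is to compare the combinatorial Zassenhaus filtration $\{D_i(G)\}$ with the filtration induced on $G$ by the powers of the augmentation ideal, and then to run a Poincar\'e--Birkhoff--Witt (PBW) argument for restricted Lie algebras, reducing the decisive computation to the free case via Proposition~\ref{1prop:free restricted lie algebras}. First I would work in $\F_p[G]$, abbreviate $\omega=\omega_G$, and set $E_i=(1+\omega^i)\cap G$. From the identities $[x,y]-1=\big((x-1)(y-1)-(y-1)(x-1)\big)x^{-1}y^{-1}$ and $(1+a)^p=1+a^p$ for $a\in\F_p[G]$ (the intermediate coefficients $\binom{p}{k}$ vanishing mod $p$), one reads off $[E_i,E_j]\subseteq E_{i+j}$ and $E_i^p\subseteq E_{pi}$. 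Since $\{D_i(G)\}$ is the fastest descending series enjoying these two properties (Proposition~\ref{1prop:properties Zassenhaus filtration}(i)), this yields $D_i(G)\subseteq E_i$ for every $i$, which is precisely what makes $\vartheta$ well defined: if $x\in D_i(G)$ then $x-1\in\omega^i$, and altering the representative by an element of $D_{i+1}(G)\subseteq E_{i+1}$ leaves $(x-1)+\omega^{i+1}$ unchanged. The same two leading-term identities show that $\vartheta$ sends the bracket to the commutator $\vartheta(v)\vartheta(w)-\vartheta(w)\vartheta(v)$ and the $p$-operation to the $p$-th power $\vartheta(v)^p$, so $\vartheta$ is a homomorphism of restricted $\F_p$-Lie algebras.

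By the universal property of the restricted envelope, $\vartheta$ extends to a homomorphism of graded algebras $\Theta\colon\mathcal{U}_p(L_\bullet(G))\to\grad_\bullet(G)$. Surjectivity is immediate, because $\grad_\bullet(G)$ is generated as an algebra by the classes $(x-1)+\omega^2$, all of which lie in the image of $\vartheta$. The crux is the injectivity of $\Theta$, and this is the main obstacle. Here I would reduce to the free case: taking a free pro-$p$ presentation $F\twoheadrightarrow G$ on a set $\mathcal{X}$, freeness makes $\grad_\bullet(F)$ the free associative algebra $\F_p\langle\mathcal{X}\rangle$ and $L_\bullet(F)$ the free restricted Lie algebra $L_p(\mathcal{X})$, so Proposition~\ref{1prop:free restricted lie algebras} identifies $\Theta$ with an isomorphism for $F$ and the restricted PBW theorem fixes the graded dimensions of both sides. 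The work is then to descend along the relations $R=\ker(F\to G)$, showing that they cut $\grad_\bullet$ and $\mathcal{U}_p(L_\bullet)$ down compatibly so that a PBW basis of $\mathcal{U}_p(L_\bullet(G))$ still maps to a basis of $\grad_\bullet(G)$ in each degree; the necessary dimension bookkeeping is controlled by the Lazard formula of Proposition~\ref{1prop:properties Zassenhaus filtration}(ii). This is exactly the Jennings--Quillen content and is the hardest part of the argument.

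Granting that $\Theta$ is an isomorphism, both remaining assertions follow. First, $\vartheta=\Theta\circ\psi_{L_\bullet(G)}$ is injective because the canonical map $\psi_{L_\bullet(G)}$ into the envelope is, so $\vartheta$ is a monomorphism and $\grad_\bullet(G)$ is the universal restricted envelope of $L_\bullet(G)$. Second, the identity $D_i(G)=(1+\omega^i)\cap G$ emerges as follows: the isomorphism exhibits $\grad_\bullet(G)$ as $\mathcal{U}_p(L_\bullet(G))$ generated in degree one, so the restricted Lie algebra $\bigoplus_i E_i/E_{i+1}$ coming from the augmentation filtration must itself be generated in degree one. Hence each $E_i$ is generated modulo $E_{i+1}$ by commutators and $p$-th powers of group elements of total weight $i$, all of which lie in $D_i(G)$; this gives $E_i\subseteq D_i(G)\cdot E_{i+1}$, and combined with the inclusion $D_i(G)\subseteq E_i$ already established and a descent on the filtration degree, it forces $E_i=D_i(G)$ for every $i$.
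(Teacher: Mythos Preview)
The paper does not give its own proof of this theorem: it is stated with attribution to Jennings and a citation to \cite[Theorems~12.8--12.9]{ddsms:analytic}, and no argument is supplied. So there is no in-paper proof to compare your proposal against.

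That said, your sketch is the standard Jennings route and is essentially sound, with two remarks. First, you correctly flag the injectivity of $\Theta$ as the hard step and defer it to ``Jennings--Quillen content''; that is honest, but it means your write-up is a plan rather than a proof, and the dimension-counting you allude to (PBW monomials versus $\dim_{\F_p}\omega^n/\omega^{n+1}$) is exactly where the work lies. Second, your deduction of $E_i=D_i(G)$ from the isomorphism is more tortuous than necessary, and the claim that the restricted Lie algebra $\bigoplus_i E_i/E_{i+1}$ is generated in degree one does not follow just from its sitting inside an algebra generated in degree one. A cleaner finish: once $\vartheta$ is injective on each $L_j(G)$, suppose $x\in E_i\smallsetminus D_i$ and pick $j<i$ with $x\in D_j\smallsetminus D_{j+1}$; then $0\neq xD_{j+1}\in L_j(G)$ but $\vartheta(xD_{j+1})=(x-1)+\omega^{j+1}=0$ since $x-1\in\omega^i\subseteq\omega^{j+1}$, a contradiction. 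This gives $E_i\subseteq D_i(G)$ directly, and together with the inclusion you already established yields equality.
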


Let $x$ be an element of $G$, and let $i\geq1$ such that $x\in D_i(G)\smallsetminus D_{i+1}(G)$.
Then the image $\bar x$ of $x$ in the quotient $D_i(G)/D_{i+1}(G)$ is said to be the {\it initial form} of
$x$ in $L_\bullet(G)$ (and in $\grad_\bullet(G)$).

\begin{rem}
 The restricted Lie algebra $L_\bullet(G)$ of a pro-$p$ group $G$ and the restricted envelope $\grad_\bullet(G)$
are to be intended as ``linearizations'' of the group itself: since Lie algebras and associative algebras are 
easier to handle than groups, they are supposed to provide new tools to study pro-$p$ groups.
\end{rem}

\subsection{Restricted Lie algebras of free groups}

The class of pro-$p$ groups for which the induced restricted Lie algebras are best understood are free pro-$p$ groups.
Let $F$ be a finitely generated free pro-$p$ groups with minimal generating system $\{x_1,\ldots, x_d\}$,
with $d=d(F)$, and let $\mathcal{X}=\{X_1,\ldots,X_d\}$ be a set of non commutative indeterminates.
Then one has the following (cf. \cite[Theorem~1.3.8]{jochen:thesis}).

\begin{thm}\label{1thm:L for free groups}
Let $F$ be a finitely generated free pro-$p$ group, and let $\mathcal{X}$ be as above, with $d=d(F)$.
The restricted Lie algebra over $\F_p$ induced by the Zassenhaus filtration of $F$ is free, i.e., one has 
\[L_\bullet(F)\overset{\sim}{\longrightarrow} L_p(\mathcal{X}),\quad \bar x_i\longmapsto X_i\]
for every $i=1,\ldots,d$, where $\bar x_i\in L_1(F)$ is the initial form of $x_i$.
\end{thm}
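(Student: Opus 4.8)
For a finitely generated free pro-$p$ group $F$ with minimal generating system $\{x_1,\dots,x_d\}$, the restricted Lie algebra $L_\bullet(F)$ induced by the Zassenhaus filtration is free on the initial forms $\bar x_i$, i.e. $L_\bullet(F) \cong L_p(\mathcal{X})$ via $\bar x_i \mapsto X_i$.

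Let me think about how to prove this.

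We have the following tools available from the excerpt:
- Theorem 1.27 (\ref{1thm:envelope Zassenhaus filtration}, Jennings): $\grad_\bullet(G)$ is the universal restricted envelope of $L_\bullet(G)$, via a monomorphism $\vartheta$. And $D_i(G) = (1+\omega_G^i)\cap G$.
- Proposition 1.25 (\ref{1prop:free restricted lie algebras}): the universal restricted envelope of $L_p(\mathcal{X})$ is the free associative algebra $\F\langle\mathcal{X}\rangle = \F_p\langle\mathcal{X}\rangle$.
- Proposition 1.24 (\ref{1prop:properties Zassenhaus filtration}): the Lazard formula and the "fastest descending series" characterization of the Zassenhaus filtration.

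**The strategy.** The key idea is to work with the completed group algebra $\F_p\dbl F\dbr$ of a free pro-$p$ group, which is well known to be the free associative (noncommutative) power series algebra $\F_p\dbml\mathcal{X}\dbmr$ on $d$ indeterminates, via $x_i \mapsto 1 + X_i$. The augmentation ideal $\omega_{\F_p[F]}$ (or its completed version) corresponds to the ideal generated by the $X_i$.

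So the plan is as follows.

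First, I would identify $\grad_\bullet(F)$, the graded algebra of $\F_p\dbl F\dbr$ with respect to powers of its augmentation ideal $\omega$. Using the identification $\F_p\dbl F\dbr \cong \F_p\dbml X_1,\dots,X_d\dbmr$ sending $x_i \mapsto 1+X_i$, the augmentation ideal becomes the ideal $(X_1,\dots,X_d)$. The associated graded algebra of a free power series algebra by powers of the ideal generated by the indeterminates is the free (noncommutative) polynomial algebra $\F_p\langle X_1,\dots,X_d\rangle$, graded so that each $X_i$ sits in degree one. This gives $\grad_\bullet(F) \cong \F_p\langle\mathcal{X}\rangle$ as graded $\F_p$-algebras, with $\bar x_i = (x_i - 1) + \omega^2 \mapsto X_i$.

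Second, I would combine this with Jennings' theorem (\ref{1thm:envelope Zassenhaus filtration}): $\grad_\bullet(F)$ is the universal restricted envelope of $L_\bullet(F)$. But by Proposition \ref{1prop:free restricted lie algebras}, $\F_p\langle\mathcal{X}\rangle$ is precisely the universal restricted envelope of the free restricted Lie algebra $L_p(\mathcal{X})$. Since by the previous step $\grad_\bullet(F) \cong \F_p\langle\mathcal{X}\rangle$ as graded $\F_p$-algebras, we obtain that the restricted envelope of $L_\bullet(F)$ is isomorphic to the restricted envelope of $L_p(\mathcal{X})$, compatibly with the map $\bar x_i \mapsto X_i$.

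Third, I would descend this to the Lie algebras themselves. By Theorem \ref{1thm:envelope Zassenhaus filtration}, $\vartheta$ embeds $L_\bullet(F)$ as a restricted Lie subalgebra of $\grad_\bullet(F)$, and $\psi_{L_p(\mathcal{X})}$ embeds $L_p(\mathcal{X})$ into $\F_p\langle\mathcal{X}\rangle$. Under the algebra isomorphism of the first step, $\bar x_i \mapsto X_i$, so the generators of $L_\bullet(F)$ map to the generators of $L_p(\mathcal{X})$. Since $L_\bullet(F)$ is the restricted Lie subalgebra of $\grad_\bullet(F)$ generated by the initial forms $\bar x_1,\dots,\bar x_d$ (the degree-one part generates the filtration via the defining properties of the Zassenhaus series, cf. \ref{1prop:properties Zassenhaus filtration}), its image is the restricted subalgebra generated by $X_1,\dots,X_d$, which is exactly $L_p(\mathcal{X})$ by definition. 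Hence $L_\bullet(F) \cong L_p(\mathcal{X})$ as restricted Lie algebras via $\bar x_i \mapsto X_i$.

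**The main obstacle.** The crux is the first step: the identification $\F_p\dbl F\dbr \cong \F_p\dbml X_1,\dots,X_d\dbmr$ for a free pro-$p$ group $F$, and the resulting computation of the associated graded algebra. This is really the Magnus embedding / Magnus isomorphism in the pro-$p$ setting, and one must be careful that the $\omega$-adic filtration is complete and Hausdorff (so that $\bigcap_i \omega^i = 0$) and that passing to the associated graded of the power series algebra genuinely yields the free polynomial algebra with the $X_i$ in degree one rather than something larger. A subtle point is that one must verify the filtration by powers of $\omega$ agrees with the grading by total degree in the $X_i$; this relies on the freeness of $F$ and is where the content of the theorem concentrates. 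Once this identification is in hand, the remaining steps are formal consequences of the universal property of restricted envelopes and the uniqueness statement quoted after the definition of $\mathcal{U}_p(L)$.
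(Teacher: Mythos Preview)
Your argument is correct and follows the standard route. The paper itself does not supply a proof of this theorem but cites \cite[Theorem~1.3.8]{jochen:thesis}; however, the key identification you use---the Magnus isomorphism $\F_p\dbl F\dbr\simeq\F_p\dbml\mathcal{X}\dbmr$, $x_i\mapsto 1+X_i$, and the resulting $\grad_\bullet(F)\simeq\F_p\langle\mathcal{X}\rangle$---is exactly what the paper records in Remark~\ref{1rem:magnus algebra} immediately after the statement. Combining that with Theorem~\ref{1thm:envelope Zassenhaus filtration} and Proposition~\ref{1prop:free restricted lie algebras} as you do is precisely the intended argument; your justification that $L_\bullet(F)$ is generated in degree one via the Lazard formula (Proposition~\ref{1prop:properties Zassenhaus filtration}) closes the only non-formal gap.
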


In particular, by Proposition~\ref{1prop:free restricted lie algebras} and
Theorem~\ref{1thm:envelope Zassenhaus filtration}, the graded algebra $\grad_\bullet(F)$ is isomorphic to
the algebra $\F_p\langle\mathcal{X}\rangle$, considered with the grading induced by the degree of the monomials.

\begin{rem}\label{1rem:magnus algebra}
Let $\F_p\dbml\mathcal{X}\dbmr$ be the {\it Magnus algebra}, i.e., the $\F_p$-algebra of formal series on the 
non-commutative indeterminates $\mathcal{X}$. 
It is well known that one has a continuous isomorphisms $\F_p\dbl F\dbr\simeq\F_p\dbml\mathcal{X}\dbmr$
given by $x_i\mapsto1+X_i$ for every $i=1,\ldots,d(F)$.
Then the powers of the augmentation ideal $\omega_{\F_p\dbl F\dbr}$ of $\F_p\dbl F\dbr$, and the powers of the
ideal $\langle X_1,\ldots,X_{d(F)}\rangle$ of the Magnus algebra induce an isomorphism of graded $\F_p$-algebras
$\grad_\bullet(F)\simeq\F_p\langle\mathcal{X}\rangle$ given by $\bar x_i\mapsto X_i$ for every $i=1,\ldots,d(F)$.
\end{rem}

One has also the following useful property (cf. \cite[Prop.~1.2.6]{jochen:thesis})

\begin{prop}\label{1prop:presentation restricted envelope}
Let $L$ be a restricted $\F$-Lie algebra and let $\mathfrak{r}\vartriangleleft L$ be an ideal.
Let $\mathcal{R}$ denote the left ideal of $\mathcal{U}_p(L)$ generated by $\psi_L(\mathfrak{r})$
Then $\mathcal{R}$ is a two-sided ideal and
the canonical epimorphism $L\twoheadrightarrow L/\mathfrak{r}$ induces an exact sequence
\[\xymatrix{ 0\ar[r] & \mathcal{R}\ar[r] & \mathcal{U}_p(L)\ar[r] & \mathcal{U}_p(L/\mathfrak{r})\ar[r] & 0 }.\]
\end{prop}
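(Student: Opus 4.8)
The plan is to prove both assertions directly from the universal property of the restricted envelope, avoiding any appeal to a Poincar\'e--Birkhoff--Witt basis.

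First I would show that $\mathcal{R}$ is two-sided. Since $\mathcal{U}_p(L)$ is generated as an $\F$-algebra by $\psi_L(L)$, it suffices to verify that $\mathcal{R}$ is stable under right multiplication by $\psi_L(v)$ for every $v\in L$. Taking a typical generator $u\,\psi_L(r)$ with $u\in\mathcal{U}_p(L)$ and $r\in\mathfrak{r}$, and using that $\psi_L$ is a homomorphism of Lie algebras into $\mathcal{U}_p(L)_L$, I would write
\[ u\,\psi_L(r)\,\psi_L(v)=u\,\psi_L(v)\,\psi_L(r)+u\,\psi_L\big((r,v)\big). \]
The first summand lies in $\mathcal{R}$ because $\psi_L(r)$ appears on the right, and the second lies in $\mathcal{R}$ because $(r,v)\in\mathfrak{r}$, as $\mathfrak{r}$ is an ideal. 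Hence $\mathcal{R}\cdot\psi_L(v)\subseteq\mathcal{R}$, and $\mathcal{R}$ is a two-sided ideal. This is the one place where the ideal hypothesis on $\mathfrak{r}$ enters, and I expect it to be the only genuinely delicate point of the argument.

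Next I would produce the comparison map. The canonical projection $\pi\colon L\to L/\mathfrak{r}$ is a homomorphism of restricted Lie algebras, so $\psi_{L/\mathfrak{r}}\circ\pi$ is a restricted Lie homomorphism into $\mathcal{U}_p(L/\mathfrak{r})_L$; the universal property of $\mathcal{U}_p(L)$ then yields a unique $\F$-algebra map $\Psi\colon\mathcal{U}_p(L)\to\mathcal{U}_p(L/\mathfrak{r})$ with $\Psi\circ\psi_L=\psi_{L/\mathfrak{r}}\circ\pi$. This $\Psi$ is surjective, since its image contains $\psi_{L/\mathfrak{r}}(L/\mathfrak{r})$, which generates the target. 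Moreover $\mathcal{R}\subseteq\kernel(\Psi)$, because $\Psi(\psi_L(r))=\psi_{L/\mathfrak{r}}(\pi(r))=0$ for $r\in\mathfrak{r}$ and $\kernel(\Psi)$ is a left ideal containing $\psi_L(\mathfrak{r})$. Thus $\Psi$ descends to a surjection $\bar\Psi\colon\mathcal{U}_p(L)/\mathcal{R}\to\mathcal{U}_p(L/\mathfrak{r})$.

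To finish I would exhibit an inverse to $\bar\Psi$. The composite $L\to\mathcal{U}_p(L)\to\mathcal{U}_p(L)/\mathcal{R}$ is a restricted Lie homomorphism that kills $\mathfrak{r}$, since $\psi_L(\mathfrak{r})\subseteq\mathcal{R}$, so it factors through a restricted Lie homomorphism $L/\mathfrak{r}\to(\mathcal{U}_p(L)/\mathcal{R})_L$; the universal property of $\mathcal{U}_p(L/\mathfrak{r})$ then supplies an $\F$-algebra map $\Theta\colon\mathcal{U}_p(L/\mathfrak{r})\to\mathcal{U}_p(L)/\mathcal{R}$. Both composites $\Theta\circ\bar\Psi$ and $\bar\Psi\circ\Theta$ fix the respective algebra generators $\overline{\psi_L(v)}$ and $\psi_{L/\mathfrak{r}}(\pi(v))$, and therefore equal the identity. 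Hence $\bar\Psi$ is an isomorphism, which forces $\kernel(\Psi)=\mathcal{R}$ and delivers the claimed short exact sequence. Everything after the two-sidedness check is formal bookkeeping with the two universal properties; in particular no dimension count or explicit basis is needed.
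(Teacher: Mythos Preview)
Your argument is correct. The paper does not actually prove this proposition; it merely states it with a citation to G\"artner's thesis \cite[Prop.~1.2.6]{jochen:thesis}, so there is no in-paper proof to compare against. Your approach via the two universal properties is the standard one and is complete: the two-sidedness computation is right (and, as you note, is the only place the ideal hypothesis on $\mathfrak{r}$ is used), and the construction of $\Theta$ as an inverse to $\bar\Psi$ is exactly how one avoids any appeal to a PBW-type basis. One small remark: you implicitly use that $\mathcal{U}_p(L)$ is generated as an $\F$-algebra by $\psi_L(L)$; this is immediate from the universal property (the subalgebra generated by $\psi_L(L)$ already satisfies it), but it is worth saying explicitly since you invoke it twice, once for two-sidedness and once for surjectivity of $\Psi$.
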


\section[Kummer theory]{Cohomology of Galois groups and Kummer theory}

Fix a field $K$, and let $L/K$ be a Galois extension of fields.
Then the multiplicative group $L^\times$ is a module of the Galois group $\Gal(L/K)$.
The celebrated Hilbert's Satz 90 (Noether's version) states that the first cohomology group of $\Gal(L/K)$
with coefficients in $L^\times$ is trivial, i.e.,
\begin{equation}\label{1eq:HilbertSatz90}
 H^1\left(\Gal(L/K),L^\times\right)=1.
\end{equation}
Given a field $K$ and a positive integer $m$, let $\mu_m=\mu_m(K)$ be the group of the roots of unity of order $m$
contained in $K$, namely,
\[ \mu_m(K)=\left\{\zeta\in K^\times\:\left|\;\zeta^m=1\right.\right\}.\]
Thus, one has the short exact sequence of $G_K$-modules
\begin{equation}\label{1eq:mu_rootsofunity}
\xymatrix{ 1\ar[r] & \mu_m(\bKsep)\ar[r] & (\bKsep)^\times\ar[r]^m & (\bKsep)^\times\ar[r] & 1,}
\end{equation}
which induces the exact sequence in cohomology
\[K^\times\overset{m}{\longrightarrow} K^\times \longrightarrow H^1\left(G_K,\mu_m(\bKsep)\right)
\longrightarrow H^1\left(G_K,(\bKsep)^\times\right)=0,\]
and hence an isomorphism $K^\times/(K^\times)^m \simeq H^1(G_K,\mu_m(\bKsep))$.
In particular, assume that the field $K$ contains a primitive $m^{th}$-root of unity, i.e.,
$\mu_m(\bKsep)\subseteq K$.
Then $\mu_m$ is a trivial $G_K$-module which is isomorphic to $\Z/m\Z$.
This yields the isomorphism
\begin{equation}\label{1eq:kummer_duality}
 \xymatrix{\phi_K\colon\dfrac{K^\times}{\left(K^\times\right)^m}\ar[r]^-{\sim} & H^1\left(G_K,\Z/m\Z\right)
=\Hom\left(G_K,\Z/m\Z\right),}
\end{equation}
where the equality follows from Fact~\ref{1fact:elementary cohomology}, called the {\it Kummer isomorphism}.
In particular, fix an isomorphism of $G$-modules $\mu_m\simeq\Z/m\Z$.
Then one has
\begin{equation}\label{1eq:kummer_duality2}
 \phi_K(\bar a)(\sigma)=\frac{\sigma\left(\sqrt[m]{a}\right)}{\sqrt[m]{a}}\in\mu_m,
\end{equation}
with $\bar a=a\bmod(K^\times)^m$ (cf. \cite[(3.1)]{cmq:fast}).

Now let $K(p)$ be the {\bf $p$-closure} of $K$, i.e., $K(p)/K$ is the maximal $p$-extension of $K$,
and let $G_K(p)=\Gal(K(p)/K)$ be the {\it maximal pro-$p$ Galois group} of $K$.
Assume that $K$ contains all the roots of unity of $p$-power order.
Then Hilbert's Sats 90 holds for the extension $K(p)/K$, and \eqref{1eq:kummer_duality2}, together with a fixed
isomorphism of trivial $G_K(p)$-modules $\mu_p\simeq\F_p$, yields an isomorphism
\begin{equation}
 \label{1eq:kummer_dualityp}
 K^\times/(K^\times)^p \simeq H^1(G_K(p),\F_p)
\end{equation}

Recall from Proposition~\ref{1prop:frattini subgroup} that for a pro-$p$ group $G$, the quotient $G/\Phi(G)$
is a $\F_p$-vector space.
Then one has the following (cf. \cite[Prop.~3.9.1]{nsw:cohm}).

\begin{cor}
 Let $G$ be a pro-$p$ group.
Then the cohomology group $H^1(G,\F_p)$ is the Pontryagin dual of the quotient $G/\Phi(G)$, i.e.,
\begin{equation}\label{1eq:pontryagin_duality} 
 H^1(G,\F_p)=\Hom(G,\F_p) = \Hom(G/\Phi(G),\I_p)=(G/\Phi(G))^\vee,
\end{equation}
and the groups $H^1(G,\F_p)$ and $G/\Phi(G)$ are isomorphic as (discrete) $\F_p$-vector spaces.
In particular, if $G$ is finitely generated, then 
\[\dim_{\F_p}(H^1(G,\F_p))=\dim_{\F_p}(G/\Phi(G))=d(G).\]
\end{cor}

Thus, if $\mathcal{X}=\{x_i,i\in \mathcal{I}\}\subset G$ is a minimal system of generators of $G$,
the dual $\mathcal{X}^\vee$ is a basis for $H^1(G,\F_p)$, with $\mathcal{X}^\vee=\{\chi_i,i\in\mathcal{I}\}$
such that $\chi_i(x_j)=\delta_{ij}$ for every $i,j\in\mathcal{I}$.
Therefore, the isomorphisms \eqref{1eq:kummer_dualityp} and \eqref{1eq:pontryagin_duality} yield the isomorphism
of discrete $\F_p$-vector spaces \[K^\times/(K^\times)^p \simeq G_K(p)/\Phi\left(G_K(p)\right)\]
(with $K$ satisfying the above conditions).
Hence, if $\{a_i,i\in\mathcal{I}\}\subset K^\times\smallsetminus(K^\times)^p$ is a set of representatives
of $K^\times/(K^\times)^p$, one has the minimal generating sistem $\mathcal{X}$ of $G_K(p)$ such that
\begin{equation}\label{1eq:kummer_duality explicitely}
\dfrac{x_j(\sqrt[p]{a_i})}{\sqrt[p]{a_i}}=\left\{\begin{array}{cc} \zeta & \text{if }i=j \\ 1 & \text{if }i\neq j
\end{array}\right. 
\end{equation}
with $\zeta\in K$ a fixed primitive $p$-th root of unity, and the dual basis
$\mathcal{X}^\vee\subset H^1(G_K(p),\F_p)$.

Finally, one has this classic result (see also \cite[\S~3.6]{wdg:lux} and \cite[\S~3]{bogomolovtschinkel:birat}).

\begin{prop}\label{1prop:roots and kummer}
Assume $K$ contains a primitive $p$-th root of unity, and let $K(\sqrt[p]{K})=K(\sqrt[p]{a},a\in K)$.
Then $K(\sqrt[p]{K})/K$ is Galois, and it is the compositum of all finite cyclic extensions of $K$ of degree $p$. 
Its Galois group is
\[\Gal\left(K(\sqrt[p]{K})/K\right)=G_K(p)/\Phi(G_K(p))\simeq \left(K^\times/(K^\times)^p\right)^\vee,\]
i.e., $G_{K(\sqrt[p]{K})}=\Phi(G_K(p))$.
\end{prop}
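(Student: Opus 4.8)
The plan is to proceed in three stages: first to show that $K(\sqrt[p]{K})/K$ is Galois and to identify it with the maximal elementary abelian $p$-extension of $K$; then to match this field against the fixed field of $\Phi(G_K(p))$ through the Galois correspondence; and finally to read off the Pontryagin-dual description by combining two isomorphisms already at our disposal.

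First I would check that $K(\sqrt[p]{K})/K$ is Galois. Since $K$ contains a primitive $p$-th root of unity $\zeta$, for each $a\in K^\times$ the polynomial $X^p-a$ splits completely in $K(\sqrt[p]{a})$ as $\prod_{i=0}^{p-1}\left(X-\zeta^i\sqrt[p]{a}\right)$, so $K(\sqrt[p]{a})/K$ is Galois and cyclic of degree $1$ or $p$, its Galois group being cut out by the character $\phi_K(\bar a)$ of \eqref{1eq:kummer_duality2}. The field $K(\sqrt[p]{K})$ is the compositum of all the $K(\sqrt[p]{a})$, hence Galois over $K$, with Galois group abelian of exponent $p$, i.e.\ an $\F_p$-vector space. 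Conversely, Kummer theory shows that every cyclic degree-$p$ extension of $K$ is of the shape $K(\sqrt[p]{a})$; therefore $K(\sqrt[p]{K})$ contains each of them and is precisely their compositum, as claimed.

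Next I would identify $K(\sqrt[p]{K})$ with the fixed field of $\Phi(G_K(p))$. As $K(\sqrt[p]{K})/K$ is a $p$-extension with elementary abelian Galois group, it lies inside $K(p)$ and corresponds to a closed normal subgroup $N\clno G_K(p)$ with $G_K(p)/N$ elementary abelian; hence $N\supseteq\Phi(G_K(p))=G_K(p)^p[G_K(p),G_K(p)]$ by Proposition~\ref{1prop:frattini subgroup}. On the other hand $G_K(p)/\Phi(G_K(p))$ is itself an $\F_p$-vector space (again Proposition~\ref{1prop:frattini subgroup}), so its fixed field is an elementary abelian $p$-extension of $K$ and, by the previous paragraph, is contained in $K(\sqrt[p]{K})$; this forces $N=\Phi(G_K(p))$. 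Thus $\Gal(K(\sqrt[p]{K})/K)=G_K(p)/\Phi(G_K(p))$, and since $K(p)$ is also the $p$-closure of $K(\sqrt[p]{K})$ we obtain $G_{K(\sqrt[p]{K})}=\Gal(K(p)/K(\sqrt[p]{K}))=\Phi(G_K(p))$.

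Finally, the dual statement falls out by stringing together the Kummer isomorphism \eqref{1eq:kummer_dualityp}, namely $K^\times/(K^\times)^p\simeq H^1(G_K(p),\F_p)$, with the Pontryagin description \eqref{1eq:pontryagin_duality}, namely $H^1(G_K(p),\F_p)=(G_K(p)/\Phi(G_K(p)))^\vee$; dualizing and using $(M^\vee)^\vee=M$ yields $G_K(p)/\Phi(G_K(p))\simeq(K^\times/(K^\times)^p)^\vee$. The one point requiring care---and the main obstacle---is the clean two-way matching in the second stage: one must argue both that every elementary abelian $p$-extension on the field side is realized by adjoining $p$-th roots (Kummer theory) and that ``having elementary abelian Galois group'' corresponds exactly to ``containing $\Phi$'' on the group side (the Frattini computation $\Phi(G)=G^p[G,G]$). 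Everything else is formal.
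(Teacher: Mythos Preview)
The paper does not supply its own proof of this proposition; it is stated as a classical result with references to \cite[\S~3.6]{wdg:lux} and \cite[\S~3]{bogomolovtschinkel:birat}. Your argument is correct and is the standard one: Kummer theory identifies the cyclic degree-$p$ extensions, the Frattini computation $\Phi(G)=G^p[G,G]$ matches ``elementary abelian quotient'' with ``contains $\Phi$'', and the dual description follows from \eqref{1eq:kummer_dualityp} and \eqref{1eq:pontryagin_duality}.

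One small point worth making explicit is the line ``since $K(p)$ is also the $p$-closure of $K(\sqrt[p]{K})$''. This is true---for any intermediate field $K\subseteq L\subseteq K(p)$ one has $L(p)=K(p)$---but it is not entirely content-free, and you use it to justify the identity $G_{K(\sqrt[p]{K})}(p)=\Phi(G_K(p))$. Alternatively, and more in keeping with the paper's later usage (e.g.\ around Proposition~\ref{3prop:coinvariants}), one can simply read $G_{K(\sqrt[p]{K})}$ as shorthand for $\Gal(K(p)/K(\sqrt[p]{K}))$, in which case the identification with $\Phi(G_K(p))$ is immediate from your second paragraph and no extra step is required.
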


One has an arithmetical interpretation also for the second cohomology group of a Galois group. 
Indeed, for a Galois extension $L/K$ one has canonical isomorphisms
\[ \xymatrix{ H^2(\Gal(L/K),\bar L^{\times})\ar[r]^-{\sim} & \bra(L/K) }, \]
where $\bra(L/K)$ is the {\it Brauer group} of $L/K$ (cf. \cite[Theorem~6.3.4]{nsw:cohm}).
In particular, one has $H^2(G_K,(\bar K^{\sep})^\times)\simeq\bra(K)$, with $\bra(K)$ the Brauer group of $K$.
Thus, for a positive integer $m$ the short exact sequence \eqref{1eq:mu_rootsofunity},
together with Hilbert's Satz 90, induces the exact sequence in cohomology
\[ \xymatrix{ 0\ar[r] & H^2(G_K,\mu_m)\ar[r] & \bra(K)\ar[r]^m & \bra(K) }, \]
which implies the following.

\begin{cor}\label{1cor:H2 and Brauer group}
 Let $m$ be a positive integer such that the characteristic of $K$ does not divide $m$.
Then $H^2(G_K,\mu_m)$ is (canonically) isomorphic to $\bra_m(K)$, the subgroup of the Brauer group of $K$
of exponent $m$.
\end{cor}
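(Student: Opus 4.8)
The plan is to read the result directly off the long exact cohomology sequence attached to the Kummer sequence \eqref{1eq:mu_rootsofunity}, continuing the computation begun in the paragraph immediately preceding the statement. First I would observe that the hypothesis $\chr(K)\nmid m$ is exactly what makes \eqref{1eq:mu_rootsofunity} a genuine short exact sequence of $G_K$-modules: for any $a\in(\bKsep)^\times$ the polynomial $x^m-a$ is separable and therefore splits over the separably closed field $\bKsep$, so the $m$-th power map on $(\bKsep)^\times$ is surjective, with kernel the group $\mu_m$ of $m$-th roots of unity.

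Next I would pass to the induced long exact sequence in the continuous cohomology of $G_K$. Hilbert's Satz~90 in the form \eqref{1eq:HilbertSatz90}, applied to the extension $\bKsep/K$, gives $H^1(G_K,(\bKsep)^\times)=1$; this is the crucial vanishing, and it forces the relevant portion of the long exact sequence to truncate to the exact sequence
\[ \xymatrix{ 0\ar[r] & H^2(G_K,\mu_m)\ar[r] & H^2(G_K,(\bKsep)^\times)\ar[r]^-{m_*} & H^2(G_K,(\bKsep)^\times) }, \]
where $m_*$ is the endomorphism of $H^2$ induced by the $m$-th power map on coefficients. I would then invoke the canonical identification $H^2(G_K,(\bKsep)^\times)\simeq\bra(K)$ recorded above (cf. \cite[Theorem~6.3.4]{nsw:cohm}). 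Since $H^2(G_K,-)$ is an additive functor in the coefficient module, and the $m$-th power endomorphism of $(\bKsep)^\times$ is multiplication by $m$ once the group law is written additively, the induced map $m_*$ is multiplication by $m$ on $H^2$; the identification with $\bra(K)$ being natural, $m_*$ corresponds under it to multiplication by $m$ on $\bra(K)$.

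Putting this together, exactness at the left-hand copy of $\bra(K)$ identifies $H^2(G_K,\mu_m)$ with the kernel of multiplication by $m$,
\[ H^2(G_K,\mu_m)\;\simeq\;\kernel\!\left(\bra(K)\xrightarrow{\;m\;}\bra(K)\right), \]
and the kernel of multiplication by $m$ on any abelian group is precisely its subgroup of elements of order dividing $m$, which is by definition $\bra_m(K)$, the subgroup of the Brauer group of exponent $m$. This yields the asserted canonical isomorphism.

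I do not anticipate a genuine obstacle here, as the corollary is essentially a formal consequence of the exact sequence already displayed before the statement. The only two points needing a little care are the separability argument justifying the exactness of \eqref{1eq:mu_rootsofunity} under $\chr(K)\nmid m$, and the naturality required to match $m_*$ with multiplication by $m$ under the Brauer-group identification; everything else follows at once from the long exact sequence and Hilbert~90.
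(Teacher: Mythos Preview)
Your proposal is correct and follows exactly the argument the paper has in mind: the paper simply displays the exact sequence $0\to H^2(G_K,\mu_m)\to\bra(K)\xrightarrow{m}\bra(K)$ obtained from Hilbert~90 and the Kummer sequence, and states that it ``implies the following'' corollary, without further proof. Your added remarks on separability and on why $m_*$ corresponds to multiplication by $m$ just make explicit what the paper leaves implicit.
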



\chapter{The Bloch-Kato conjecture}

\section{Quadratic algebras}

Fix a ring $R$, and let $A_\bullet$ be a non-negatively graded $R$-algebra, i.e.,
$A_\bullet=\bigoplus_{n\geq0}A_n$ with $A_0=R$ and $A_n$ a $R$-module for every $n\geq1$,
together with a product such that if $a\in A_n$ and $b\in A_m$, then $ab\in A_{n+m}$.

A graded algebra $A_\bullet$ is said to be {\bf quadratic} if $A_n$ is generated by products of $n$ elements of $A_1$, with $n>0$,
and the defining relations of $A_\bullet$ are generated by elements of degree 2: namely
\begin{equation}\label{2eq:quadratic algebra}
 A_\bullet\simeq\frac{\mathcal{T}^\bullet(A_1)}{\mathcal{R}},\quad
\mathcal{T}^\bullet(A_1)=\bigoplus_{n\geq0}A_1^{\otimes n},
\end{equation}
where $\mathcal{R}$ is a two-sided ideal of $\mathcal{T}^\bullet(A_1)$ generated by elements of $A_1\otimes_RA_1$.

\begin{exs}Let $\F$ be a field, and let $V$ be a $\F$-vector space.
\begin{itemize}
 \item[(a)] 
The symmetric algebra $S_\bullet(V)$ is a quadratic $\F$-algebra, as
\[S_\bullet(V)=\frac{\mathcal{T}^\bullet(V)}{\langle v\otimes w-w\otimes v\rangle},
\quad v,w\in V.\]
 \item[(b)] The exterior algebra $\bigwedge_\bullet(V)$ is a quadratic $\F$-algebra, as
\[\bigwedge_{n\geq0}(V)=\frac{\mathcal{T}^\bullet(V)}{\langle v\otimes w+w\otimes v\rangle},
\quad v,w\in V.\]
\end{itemize}
\end{exs}

\begin{defi}
Fix a field $K$.
For a positive integer $n$, the {\bf $n^{th}$ Milnor $\MiK$-group} of $K$ is the quotient 
\[\MiK_n^M(K)=\left(K^\times\otimes_{\Z}\cdots\otimes_{\Z} K^\times\right)/I_n,\]
where $I_n$ is the subgroup generated by the elements $a_1\otimes\cdots\otimes a_n$
such that $a_i+a_j=1$ for some $i\neq j$.
(Such relations are called of {\it Steinberg type}.)
Moreover, set $\MiK_0^M=\Z$.
Then the {\bf Milnor $\MiK$-ring} of $K$ is the quadratic $\Z$-algebra
\[\MiK_\bullet^M(K)=\bigoplus_{n\geq0}\MiK_n^M(K)=
\frac{\mathcal{T}^\bullet(K^\times)}{\langle a\otimes b\:|\;a+b=1\rangle},\]
equipped with the addition induced by the multiplication of $K$,
and with the multiplication induced by the tensor product.
\end{defi}

Given $a_1,\ldots, a_n\in K^\times$, the symbol $\{a_1,\ldots,a_n\}$ denotes the image
of $a_1\otimes\cdots\otimes a_n$ in $\MiK_n^M(K)$.
A particular feature of Milnor $\MiK$-rings of fields is the anti-commutativity:
\begin{equation}\label{2eq:anticommutativity_MiK}
 \{a,b\}=-\{b,a\}\quad\text{ for all }a,b\in K^\times
\end{equation}
(cf. \cite[Prop.~24.1.2]{efrat:libro}).
Therefore, the algebra $\MiK_\bullet^M(K)$ is in fact a quotient of the exterior algebra $\bigwedge_\bullet K^\times$.

For a positive integer $m$, one may consider the Milnor $\MiK$-ring modulo $m$, i.e., 
\[\MiK_\bullet^M(K)/m=\bigoplus_{n\geq0}\MiK_n^M(K)/m.\MiK_n^M(K).\]
Then $\MiK_\bullet^M(K)/m$ is a quadratic, anti-commutative $\Z/m\Z$-algebra.
In particular, one has the following (cf. \cite[Prop.~24.1.3]{efrat:libro}).

\begin{prop}
 For positive integers $m$, one has
\[\MiK_\bullet^M(K)/m=\frac{\mathcal{T}^\bullet\left(K^\times/(K^\times)^m\right)}
{\langle \bar a\otimes \bar b\:|\;a+b\in (K^\times)^m\rangle},\]
where $\bar a=a\bmod (K^\times)^m$ and $\bar b=b\bmod (K^\times)^m$.
\end{prop}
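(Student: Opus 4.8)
The plan is to show that reducing the defining relations of the Milnor $\MiK$-ring modulo $m$ amounts to replacing $K^\times$ by $K^\times/(K^\times)^m$ in the generators and enlarging the Steinberg relations accordingly. First I would recall that by definition
\[
\MiK_\bullet^M(K)/m = \frac{\MiK_\bullet^M(K)}{m.\MiK_\bullet^M(K)} = \frac{\mathcal{T}^\bullet(K^\times)}{\langle a\otimes b\:|\;a+b=1\rangle + m.\mathcal{T}^\bullet(K^\times)},
\]
so the task is to identify this quotient with the right-hand side of the claimed formula. The natural approach is to construct a surjective $\Z$-algebra homomorphism in one direction and exhibit an inverse, or equivalently to compare the two presentations by analysing their generators and relations in each degree.

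The key steps, in order, are as follows. In degree one, I would observe that $\MiK_1^M(K)/m = K^\times/(K^\times)^m$, since $\MiK_1^M(K)=K^\times$ (written additively) and reduction mod $m$ sends $K^\times$ to $K^\times/(K^\times)^m$; this identifies the degree-one generators on both sides. Next I would note that the tensor algebra $\mathcal{T}^\bullet\bigl(K^\times/(K^\times)^m\bigr)$ is exactly the quotient $\mathcal{T}^\bullet(K^\times)/m.\mathcal{T}^\bullet(K^\times)$, because tensor product commutes with the reduction $K^\times \twoheadrightarrow K^\times/(K^\times)^m$ over $\Z$; this handles the $m$-torsion part of the relations and replaces the ground ring $\Z$ by $\Z/m\Z$. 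Finally I would check that, inside this quotient, the image of the ideal $\langle a\otimes b\:|\;a+b=1\rangle$ is precisely the ideal $\langle\bar a\otimes\bar b\:|\;a+b\in(K^\times)^m\rangle$. The inclusion of the former into the latter is immediate since $a+b=1\in(K^\times)^m$. For the reverse inclusion one uses the multiplicativity of the Steinberg symbol: if $a+b\in(K^\times)^m$, write $a+b=c^m$ and reduce the standard bilinearity identities for Milnor symbols modulo $m$ to show that $\bar a\otimes\bar b$ already lies in the mod-$m$ Steinberg ideal.

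The main obstacle I expect is precisely this last verification, namely that the relation $a+b\in(K^\times)^m$ is generated by the genuine Steinberg relations $a'+b'=1$ together with the $m$-th power relations, rather than being strictly larger. The cleanest way to resolve it is to use the homogeneity of $c=(a+b)^{1/m}$: setting $a'=a/c^m$ and $b'=b/c^m$ gives $a'+b'=1$, and then the identity $\{a,b\}=\{a'c^m,b'c^m\}$ expands, via anti-commutativity \eqref{2eq:anticommutativity_MiK} and bilinearity, into $\{a',b'\}$ plus terms each involving a factor that is an $m$-th power and hence vanishes modulo $m$. I would carry out this expansion carefully in degree two and observe that the argument propagates to higher degrees since both ideals are generated in degree two, which completes the identification of the two presentations.
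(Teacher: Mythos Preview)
Your proposal is correct, and in fact the paper does not supply a proof of this proposition at all: it is stated with a citation to \cite[Prop.~24.1.3]{efrat:libro} and left without argument. So there is nothing in the paper to compare against; your proof plan stands on its own and is sound.

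One small simplification worth noting: in your final step you expand $\{a,b\}=\{a'c^m,b'c^m\}$ via bilinearity to show that $\bar a\otimes\bar b$ lies in the mod-$m$ Steinberg ideal. But this expansion is unnecessary. Once you set $a'=a/c^m$ and $b'=b/c^m$, you have $a\equiv a'$ and $b\equiv b'$ in $K^\times/(K^\times)^m$, hence $\bar a\otimes\bar b=\bar{a'}\otimes\bar{b'}$ \emph{on the nose} in $\mathcal{T}^\bullet(K^\times/(K^\times)^m)$; since $a'+b'=1$, this element is visibly the image of a genuine Steinberg relation. The bilinearity argument you sketch is not wrong, just more elaborate than needed.
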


\subsection{Constructions with quadratic algebras}

Let $A_\bullet$ and $B_\bullet$ be two quadratic algebras over a field $\F$.
There are several constructions which produce a new quadratic algebra $C_\bullet$ generated by $A_1$ and $B_1$,
i.e., \[C_\bullet=\frac{\mathcal{T}^\bullet(A_1\oplus B_1)}{\mathcal{R}},\]
with $\mathcal{R}$ the two-sided ideal generated by $\mathcal{R}_1\leq(A_1\oplus B_1)^{\otimes2}$.

Let $\mathcal{R}_1^A\leq A_1\otimes A_1$ and $\mathcal{R}_1^B\leq B_1\otimes B_1$ be the relations of $A_\bullet$,
resp. of $B_\bullet$.
Then one has the following constructions.
\begin{itemize}
 \item[(a)] The {\it direct product} $C_\bullet=A_\bullet\sqcap B_\bullet$ (also denoted as $A_\bullet\oplus B_\bullet$)
is such that $C_n=A_n\oplus B_n$ for every $n\geq1$.
Thus, one has
\[\mathcal{R}_1=\mathcal{R}_1^A\oplus\mathcal{R}_1^B\oplus(A_1\otimes B_1)\oplus (B_1\otimes A_1).\]
In particular, $ab=ba=0$ for any $a\in A_n$, $b\in B_m$, $n,m\geq1$.
 \item[(b)] The {\it free product} $C_\bullet=A_\bullet\sqcup B_\bullet$ (also denoted as $A_\bullet\ast B_\bullet$)
is the associative algebra generated freely by $A_\bullet$ and $B_\bullet$.
One has
\[\mathcal{R}_1=\mathcal{R}_1^A\oplus\mathcal{R}_1^B.\]
 \item[(c)] The {\it symmetric tensor product} $A_\bullet\otimes^1 B_\bullet$ (also called 1-tensor product)
is such that $ab=ba$ for any $a\in A_n$, $b\in B_m$, $n,m\geq1$.
Thus,
\[\mathcal{R}_1=\mathcal{R}_1^A\oplus\mathcal{R}_1^B\oplus\langle ab-ba\rangle,\]
with $a\in A_1$ and $b\in B_1$.
 \item[(d)] The {\it skew-commutative tensor product} $A_\bullet\otimes^{-1} B_\bullet$ (also called $(-1)$-tensor product,
or graded tensor product, see \cite[\S~23.2]{efrat:libro})
is such that $ab=(-1)^{nm}ba$ for any $a\in A_n$, $b\in B_m$, $n,m\geq1$.
Thus,
\[\mathcal{R}_1=\mathcal{R}_1^A\oplus\mathcal{R}_1^B\oplus\langle ab+ba\rangle,\]
with $a\in A_1$ and $b\in B_1$.
\end{itemize}
See \cite[\S~3.1]{pp:quadratic algebras} for further details.

\section[The RVW Theorem]{The Rost-Voevodsky Theorem}

Assume now that the characteristic of $K$ does not divide $m$, and let $\mu_m$ be the group of the $m^{th}$-roots
of unity as in (\ref{1eq:mu_rootsofunity}).
The isomorphism (\ref{1eq:kummer_duality}) induces the isomorphism 
$\MiK_1(K)/m\simeq H^1(G_K,\mu_m)$.
Thus the cup-product induces a morphism
\begin{equation}\label{2eq:normresidue1}
  \frac{K^\times}{(K^\times)^m}\otimes_{\Z/m\Z}\cdots\otimes_{\Z/m\Z}\frac{K^\times}{(K^\times)^m}
\longrightarrow H^n\left(G_K,\mu_m^{\otimes n}\right)
\end{equation}
defined by $\bar a_1\otimes\cdots\otimes \bar a_n \mapsto \chi_{a_1}\cup\cdots\cup\chi_{a_n}$,
whith the $\chi_{a_i}$'s as in \eqref{1eq:kummer_duality explicitely}.
In fact, J.~Tate showed in \cite{tate:K2cohomology} that the map (\ref{2eq:normresidue1}) factors through
$\MiK_n^M(K)/m$, thus one has the homomorphism 
\[ h_K\colon \MiK_n^M(K)/m \longrightarrow H^n\left(G_K,\mu_m^{\otimes n}\right)\]
(cf. \cite[Theorem~6.4.2]{nsw:cohm}) called the {\bf Galois symbol} (or the {\bf norm residue map}) of degree $n$.

\begin{thm}[Bloch-Kato Conjecture]\label{2thm:BKconjecture}
 For every field $K$ and for every positive $m$ prime to $\chr(K)$, the Galois symbol $h_K$ is an isomorphism
for every degree $n$.
\end{thm}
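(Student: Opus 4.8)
The plan is to proceed by induction on the degree $n$, after first reducing to the case where $m=\ell$ is prime. For the reduction in $m$: writing $m=\prod_i\ell_i^{e_i}$, the Galois symbol is compatible with the Bockstein-type long exact sequences relating $\mu_{\ell^e}$ to $\mu_\ell$ on both the $\MiK$-theoretic side and the cohomological side, so a dévissage/five-lemma argument reduces the general statement to the prime-coefficient map $\MiK_n^M(K)/\ell\to H^n(G_K,\mu_\ell^{\otimes n})$. The degrees $n=0$ and $n=1$ are immediate: $n=0$ is the identity on $\Z/m\Z$, and $n=1$ is precisely the Kummer isomorphism \eqref{1eq:kummer_duality} coming from Hilbert's Satz 90 \eqref{1eq:HilbertSatz90}. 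The degree $n=2$ is the Merkurjev--Suslin theorem, which I would take as the base of the induction. So assume $h_K$ is an isomorphism in all degrees strictly below $n$, for every field of characteristic prime to $\ell$.

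The decisive step is to interpret $h_K$ inside \emph{motivic cohomology} and thereby reduce the Bloch--Kato statement to the Beilinson--Lichtenbaum conjecture. By the theorem of Nesterenko, Suslin and Totaro identifying Milnor $\MiK$-theory with diagonal motivic cohomology, one has $\MiK_n^M(K)/\ell\cong H^{n,n}_{\mathrm{mot}}(\mathrm{Spec}\,K,\Z/\ell)$, and under this identification $h_K$ becomes the comparison map $H^{n,n}_{\mathrm{mot}}(\mathrm{Spec}\,K,\Z/\ell)\to H^{n}_{\mathrm{et}}(\mathrm{Spec}\,K,\mu_\ell^{\otimes n})=H^n(G_K,\mu_\ell^{\otimes n})$. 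Thus $h_K$ is an isomorphism exactly when this comparison map is bijective on the diagonal, i.e. when the statement $BL(n)$ holds: the comparison map $H^{p,q}_{\mathrm{mot}}(-,\Z/\ell)\to H^{p}_{\mathrm{et}}(-,\mu_\ell^{\otimes q})$ is an isomorphism for $p\le q$ and a monomorphism for $p=q+1$, on all smooth varieties. The task is now to establish $BL(n)$.

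To prove $BL(n)$ I would follow Voevodsky's strategy. Surjectivity of $h_K$ reduces, via a transfer/norm argument, to trivializing a given symbol after base change to the function field of a suitable splitting variety; here one invokes Rost's \emph{norm varieties}, smooth projective geometrically irreducible varieties of dimension $\ell^{n-1}-1$ that split the symbol and carry an associated Rost motive whose motivic cohomology is computable. Injectivity and the remaining part of $BL(n)$ are then reduced, using the inductive hypothesis $BL(n-1)$ together with a Bloch--Ogus--Gabber style localization and limit argument, to a vanishing statement for the motivic cohomology of these norm varieties. That vanishing is forced by the action of the motivic Steenrod algebra: one constructs the reduced power operations and the Milnor operations $Q_i$ on mod-$\ell$ motivic cohomology and shows that the relevant class in the Rost motive is detected by the iterated operation $Q_{n-1}\cdots Q_0$, which obstructs its survival.

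The main obstacle---the technical heart of the theorem---is twofold. First, one must establish the existence and the key geometric/numerical properties of norm varieties (Rost's contribution), in particular that $X_a$ is a $\nu_{n-1}$-variety whose characteristic number is prime to $\ell$, so that its Rost motive has the predicted splitting behaviour. Second, one must develop the full theory of motivic cohomology operations over a general base: constructing the motivic Steenrod algebra, proving its structure theorems, and computing the operations on symmetric products and on the Rost motive---this rests on Voevodsky's deepest results. These two pillars, Rost's norm varieties and Voevodsky's motivic operations, together with Weibel's patch securing the general inductive framework, are the essential difficulty; the reductions in $m$ and in $n$, and the comparison and transfer formalities, are by contrast relatively routine.
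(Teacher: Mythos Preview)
The paper does not prove this statement at all. Theorem~\ref{2thm:BKconjecture} is stated as a known result --- the Rost--Voevodsky theorem --- and the paper simply records its history and cites the literature (Merkur'ev--Suslin for $n=2$, Voevodsky for powers of $2$, and finally \cite{voevodsky:2} together with Rost's contribution and Weibel's patch for the general case). It is used as a black box throughout the thesis to deduce consequences for the $\F_p$-cohomology ring of maximal pro-$p$ Galois groups. So there is no ``paper's own proof'' against which to compare your proposal.

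That said, your outline is a fair high-level summary of the actual Voevodsky--Rost strategy: the reduction to prime $\ell$, the reformulation via motivic cohomology and the Beilinson--Lichtenbaum conjecture, and the twin pillars of Rost's norm varieties and Voevodsky's motivic Steenrod operations. As a sketch it is accurate in spirit; as an actual proof it would of course require several hundred pages, which is precisely why the thesis --- like essentially every paper that uses the Bloch--Kato conjecture --- treats it as an input rather than something to be established.
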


For $m=2$ this was conjectured first by J.~Milnor in the late '60s, whereas the full formulation of the conjecture
was stated by S.~Bloch and K.~Kato in \cite{bk:BK}.
The complete proof required several steps:
\begin{itemize}
 \item[(i)] for $n=1$ the Galois symbol is just the Kummer isomorphism;
 \item[(ii)] for $n=2$ it was proved by A.S.~Merkur'ev and A.A.~Suslin in the early '80s (cf. \cite{merkurjevsuslin:1});
 \item[(iii)] for $n=3$ and $m=2$ it was proved by Merkur'ev and Suslin in the late '80s
(cf. \cite{merkurjevsuslin:2}) and, independently, by M.~Rost;
 \item[(iv)] for any $n$ and $m$ a power of 2 it was proved by V.~Voevodsky at the beginning of the century (cf. \cite{voevodsky:1}).
\end{itemize}
For this, Voevodsky was awarded the Fields Medal in 2002.
Later, he announced a proof of the full conjecture in a preprint.
Yet, it required a while to see a complete (and published) proof, which appeared in 2011 (cf. \cite{voevodsky:2}).
Also Rost gave a remarkable and substantial contribution to achieve the final proof -- further, C.~Weibel solved
some technical problems via what is now known as the ``Chuck Weibel's patch'' -- so that now the Bloch-Kato
Conjecture is adressed as the {\bf Rost-Voevodsky theorem}, see also \cite{weibel:proof1,weibel:proof2}.

\section{Bloch-Kato pro-$p$ groups}

Let $p$ be a prime.
If $K$ contains a primitive $p^{th}$-root of unity, then $\mu_p$ is isomorphic to $\F_p$ as (trivial) $G_K$-modules.
In particular, one has $\mu_p^{\otimes n}\simeq\F_p$ for every $n\geq1$.
Therefore, one has the cup product 
\[\xymatrix{H^n(G_K,\F_p)\times H^m(G_K,\F_p)\ar[r]^-{\cup}&H^{n+m}(G_K,\F_p)},\]
with $n,m\geq0$, and by Remark~\ref{1rem:cup product} we may define the graded-commutative $\F_p$-algebra
\[H^\bullet(G_K,\F_p)=\bigoplus_{n\geq0}H^n(G_K,\F_p),\] equipped with the cup-product.
Since the Galois symbol is defined by the cup product, $h_K$ induces an isomorphism of graded $\F_p$-algebras
\[ \xymatrix{h_K\colon \MiK_\bullet^M(K)/p\ar[r]^-{\sim} & H^\bullet(G_K,\F_p)},\]
which implies that the $\F_p$-cohomology ring of $G_K$ is a quadratic $\F_p$-algebra.

For a profinite group $G$ let $\mathcal{O}_p(G)$ be the subgroup
\[\mathcal{O}_p(G)=\left\langle C\in \text{Syl}_\ell(G)\:|\; \ell\neq p\right\rangle,\]
where $\text{Syl}_\ell(G)$ is the set of the Sylow pro-$\ell$ subgroups;
namely one has the short exact sequence
\begin{equation}\label{2eq:ses_mxlpquot}
 1\longrightarrow \mathcal{O}_p(G) \longrightarrow G\longrightarrow G(p)\longrightarrow 1,
\end{equation}
with $G(p)$ the maximal pro-$p$ quotient of $G$, i.e., the Galois group of the extensions $K(p)/K$ (cf. \cite[\S~2]{claudio:BK}).
Therefore, in the case $G$ is the absolute Galois group $G_K$ of a field with the above properties,
$\mathcal{O}_p(G_K)$ is the absolute Galois group of the $p$-closure $K(p)$,
and by the Bloch-Kato Conjecture the cohomology ring $H^\bullet(\mathcal{O}_p(G_K),\F_p)$ is quadratic,
and thus generated by $H^1(\mathcal{O}_p(G_K),\F_p)$.
Since $\mathcal{O}_p(G_K)$ is $p$-perfect, by Fact~\ref{1fact:elementary cohomology} one has
\[H^1\left(\mathcal{O}_p(G_K),\F_p\right)=\Hom_{\F_p}\left(\mathcal{O}_p(G_K),\F_p\right)=0,\]
and hence $H^n(\mathcal{O}_p(G_K),\F_p)=0$ for every $n\geq1$.
Thus in the Lyndon-Hochschild-Serre spectral sequence (cf. \cite[\S~II.4]{nsw:cohm})
induced by (\ref{2eq:ses_mxlpquot}), the terms
\[E_2^{rs}=H^r\left(G_K(p), H^s(\mathcal{O}_p(G_K))\right)\]
vanish for $s>0$, and the spectral sequence collapses at the $E_2$-term.
Hence the inflation map
\begin{equation}\label{2eq:inflation BlochKato}
 \text{inf}_{G_K,\mathcal{O}_p(G_K)}^n\colon H^n\left(G_K(p),\F_p\right)\longrightarrow H^n\left(G_K,\F_p\right)
\end{equation}
is an isomorphism for every $n\geq0$ (cf. \cite[Lemma~2.1.2]{nsw:cohm}).
This proves the following.

\begin{prop}
 Let $K$ be a field containing a primitive $p^{th}$-root of unity, with $p$ a fixed prime.
Then the $\F_p$-cohomology ring $H^\bullet(G_K(p),\F_p)$ of the maximal pro-$p$ Galois group of $K$
is a quadratic $\F_p$-algebra.
\end{prop}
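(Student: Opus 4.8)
The plan is to transfer quadraticity from the $\F_p$-cohomology ring of the full absolute Galois group $G_K$ -- where it follows at once from the Rost--Voevodsky theorem -- to that of its maximal pro-$p$ quotient $G_K(p)$, by means of an inflation isomorphism coming from the short exact sequence \eqref{2eq:ses_mxlpquot}.

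First I would record that $H^\bullet(G_K,\F_p)$ is quadratic. Since $K$ contains a primitive $p$-th root of unity, the Galois symbol $h_K\colon\MiK_\bullet^M(K)/p\to H^\bullet(G_K,\F_p)$ is an isomorphism of graded $\F_p$-algebras by Theorem~\ref{2thm:BKconjecture}. As the Milnor ring modulo $p$ is generated in degree one with defining relations of Steinberg type in degree two, it is a quadratic algebra, and hence so is its isomorphic image $H^\bullet(G_K,\F_p)$.

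The main work is then to compare $H^\bullet(G_K,\F_p)$ with $H^\bullet(G_K(p),\F_p)$. I would feed \eqref{2eq:ses_mxlpquot} into the Lyndon--Hochschild--Serre spectral sequence $E_2^{rs}=H^r(G_K(p),H^s(\mathcal{O}_p(G_K),\F_p))\Rightarrow H^{r+s}(G_K,\F_p)$ and argue that the kernel $\mathcal{O}_p(G_K)$ contributes nothing in positive degrees. The crucial -- and most delicate -- point is precisely the vanishing $H^s(\mathcal{O}_p(G_K),\F_p)=0$ for every $s>0$. In degree one this is immediate: $\mathcal{O}_p(G_K)$ is $p$-perfect, since it is generated by Sylow pro-$\ell$ subgroups for $\ell\neq p$ and hence has trivial maximal pro-$p$ quotient, so $H^1(\mathcal{O}_p(G_K),\F_p)=\Hom(\mathcal{O}_p(G_K),\F_p)=0$. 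To push this to all higher degrees I would invoke Bloch--Kato a second time, now for the field $K(p)$: because $\mathcal{O}_p(G_K)=G_{K(p)}$ and $K(p)\supseteq K$ still contains $\mu_p$, its $\F_p$-cohomology ring is again quadratic, in particular generated in degree one, so the vanishing of the degree-one part forces vanishing in every positive degree.

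With those terms killed, the spectral sequence collapses at $E_2$ and the inflation map $\text{inf}^n\colon H^n(G_K(p),\F_p)\to H^n(G_K,\F_p)$ is an isomorphism for all $n$. Since inflation is a homomorphism of graded rings compatible with the cup product, this is an isomorphism of graded $\F_p$-algebras, whence $H^\bullet(G_K(p),\F_p)\cong H^\bullet(G_K,\F_p)$ inherits quadraticity. I expect the higher-degree vanishing for $\mathcal{O}_p(G_K)$ to be the only real obstacle; everything else is formal once it is in hand, and the one subtlety there is the (entirely legitimate) double application of the Rost--Voevodsky theorem to the two base fields $K$ and $K(p)$.
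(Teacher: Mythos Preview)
Your proposal is correct and follows essentially the same route as the paper: establish quadraticity of $H^\bullet(G_K,\F_p)$ via the Galois symbol, identify $\mathcal{O}_p(G_K)$ with $G_{K(p)}$, apply Bloch--Kato a second time to $K(p)$ together with $p$-perfectness to kill $H^s(\mathcal{O}_p(G_K),\F_p)$ for all $s\geq1$, and then collapse the Lyndon--Hochschild--Serre spectral sequence to get the inflation isomorphism. You have also correctly flagged the double invocation of Rost--Voevodsky as the one substantive point; the paper makes exactly the same move.
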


\begin{defi}[\cite{claudio:BK}]
 Let $G$ be a pro-$p$ group.
Then $G$ is said to be a {\bf Bloch-Kato pro-$p$ group} if for every closed subgroup $C\leq G$ the 
$\F_p$-cohomology ring $H^\bullet(C,\F_p)$ is a quadratic $\F_p$-algebra.
\end{defi}

Thus, the maximal pro-$p$ Galois group $G_K(p)$ of a field $K$ containing $\mu_p$ is a Bloch-Kato pro-$p$ group:
indeed, every closed subgroup of the maximal pro-$p$ Galois group of a field is again a maximal pro-$p$ Galois group
(and this justifies such a requirement in the definition).

By Proposition~\ref{1prop:cup product}, the cohomology ring of a pro-$p$ group $G$ with $\F_p$-coefficients is skew-commutative.
In particular, for $G$ a Bloch-Kato pro-$p$ group let $\{\chi_i,i\in \mathcal{I}\}$ be a basis
for $H^1(G,\F_p)$ as $\F_p$-vector space, and let $\mathcal{R}$ be the defining relations of $H^\bullet(G,\F_p)$
as in \eqref{2eq:quadratic algebra}, i.e.,
\begin{equation}\label{2eq:quadratic cohomology for BK 1}
 H^\bullet(G,\F_p)\simeq\frac{\mathcal{T}^\bullet\left(H^1(G,\F_p)\right)}{\mathcal{R}}.
\end{equation}
Then one has $\chi_i\otimes\chi_j+\chi_j\otimes\chi_i\in \mathcal{R}$ for every $i,j\in\mathcal{I}$,
and \eqref{2eq:quadratic cohomology for BK 1} induces an epimorphism of quadratic $\F_p$-algebras
\begin{equation}\label{2eq:BK epimorphism exterior algebra}
 \xymatrix{\bigwedge_\bullet H^1(G,\F_p)\ar@{->>}[rr] && H^\bullet(G,\F_p)},
\end{equation}
so that, in the case $p$ odd, we may consider the exterior algebra generated by $H^1(G,\F_p)$ as
an ``upper bound'' for the $\F_p$-cohomology of Bloch-Kato pro-$p$ groups, namely,
this is the case when the ideal $\mathcal{R}$ is the smallest possible.
On the other hand, the ideal $\mathcal{R}$ may be generated by the whole group $H^2(G,\F_p)$, i.e., the
$\F_p$-cohomology ring of $G$ is concentrated in degrees 0 and 1, and this case can be considered as 
``lower bound''.
The latter case is completely explained by the following classic result
(cf. \cite[Prop.~3.3.2 and Prop.~3.5.17]{nsw:cohm}).

\begin{prop}\label{2prop:cohomological dimension prop}
Let $G$ be a pro-$p$ group.
\begin{itemize}
 \item[(i)] for $n\geq0$, one has $\ccd(G)=n$ if, and only if, $H^{n+1}(G,\F_p)=0$.
 \item[(ii)] the group $G$ is free if, and only if, $\ccd(G)=1$.
\end{itemize}
\end{prop}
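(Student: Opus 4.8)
The plan is to base both parts on the pro-$p$ criterion, recorded in the preliminaries, that $\ccd(G)\le n$ if and only if $H^{n+1}(G,\F_p)=0$: assertion (i) amounts to this criterion, and (ii) is its case $n=1$ read through the relation count \eqref{1eq:number relations}. So I would first prove the criterion and then specialize. The only substantive direction of the criterion is that $H^{n+1}(G,\F_p)=0$ forces $H^m(G,M)=0$ for all $m>n$ and every discrete torsion $G$-module $M$.

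I would reduce the coefficients to $\F_p$ in three steps. (a) Splitting $M$ into its $\ell$-primary components and using that, for $m\ge1$, $H^m(G,-)$ is $p$-primary torsion when $G$ is pro-$p$, the prime-to-$p$ parts contribute nothing, so one may assume $M$ is $p$-torsion. (b) Writing $M$ as the direct limit of its finite $G$-submodules and using that continuous cohomology commutes with direct limits, one may assume $M$ finite. (c) Since the only simple discrete $\F_p[G]$-module is the trivial module $\F_p$, a finite $p$-torsion $M$ carries a $G$-composition series all of whose quotients are $\F_p$; feeding the resulting short exact sequences into the long exact cohomology sequence and inducting on $|M|$ upgrades $H^{n+1}(G,\F_p)=0$ to $H^{n+1}(G,M)=0$, first for finite $M$ and then, via (b) once more, for every $p$-torsion $M$.

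It remains to climb from degree $n+1$ to all higher degrees, and this dimension shift is the step I expect to be the crux. Embedding a $p$-torsion module $M$ into the coinduced module $I=\mathrm{Maps}_{\mathrm{cts}}(G,M)$ gives a short exact sequence $0\to M\to I\to M'\to0$ of $p$-torsion modules with $H^m(G,I)=0$ for all $m\ge1$ by Shapiro's lemma. The long exact sequence then yields $H^{m+1}(G,M)\cong H^m(G,M')$ for $m\ge1$, so induction on the degree propagates the vanishing from $n+1$ to every $m>n$. Together with step (a) this gives $\ccd(G)\le n$, proving the criterion and hence (i).

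For (ii), I would apply the criterion with $n=1$: $\ccd(G)\le1$ is equivalent to $H^2(G,\F_p)=0$, which by \eqref{1eq:number relations} is equivalent to $r(G)=0$. If $G$ is free pro-$p$ it has no defining relations, so $r(G)=0$ and $\ccd(G)\le1$, with equality once $G\ne1$ since then $H^1(G,\F_p)=\Hom(G,\F_p)\ne0$. Conversely, choosing a minimal presentation $1\to R\to F\to G\to1$ with $d(F)=d(G)$, so that $R\subseteq\Phi(F)$, the condition $r(G)=0$ says that $R$ needs no generators as a closed normal subgroup of $F$; a topological Nakayama (Frattini) argument applied to the compact module $R/\overline{[R,F]R^p}$ then forces $R=1$, whence $G\cong F$ is free. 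The points needing genuine care are the commuting of cohomology with direct limits in step (b) and this final Nakayama argument; both are standard but not purely formal.
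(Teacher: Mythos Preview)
The paper does not give its own proof of this proposition; it is stated as a classical fact with a citation to \cite[Prop.~3.3.2 and Prop.~3.5.17]{nsw:cohm}. Your sketch is correct and is essentially the standard argument one finds there: the reduction (a)--(c) to the simple module $\F_p$ via $p$-primary decomposition, direct limits, and composition series, followed by dimension shifting via coinduced modules, is exactly how one proves the criterion $\ccd(G)\le n\Leftrightarrow H^{n+1}(G,\F_p)=0$ for pro-$p$ groups, and the deduction of (ii) from \eqref{1eq:number relations} is the standard route.

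One small remark on (ii): once you invoke \eqref{1eq:number relations}, the conclusion $R=1$ from $r(G)=0$ is immediate by definition of $r(G)$ (the empty set generates the trivial subgroup), so the Nakayama/Frattini argument you mention is already baked into \eqref{1eq:number relations} rather than being an additional step. If you wanted to avoid relying on \eqref{1eq:number relations}, the cleaner route is to show directly that $H^2(G,\F_p)=0$ makes every embedding problem with $p$-elementary kernel solvable, and then lift a minimal generating system of $G$ to $F$; but what you wrote is fine as stated.
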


In particular, if $p$ is odd and $G$ is a finitely generated Bloch-Kato pro-$p$ group, then
\eqref{2eq:BK epimorphism exterior algebra} and Proposition~\ref{2prop:cohomological dimension prop}
imply the following (see also \cite[Prop.~4.1 and Prop.~4.3]{claudio:BK}).

\begin{cor}\label{1cor:cd d r for BK}
For $G$ Bloch-Kato and $p$ odd, one has the following:
\begin{itemize}
 \item[(i)] $\ccd(G)\leq d(G)$, and the equality holds if, and only if \eqref{2eq:BK epimorphism exterior algebra}
is an isomorphism;
 \item[(ii)] $r(G)\leq\binom{d(G)}{2}$.
\end{itemize}
\end{cor}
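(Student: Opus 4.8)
The plan is to derive everything from the epimorphism of graded $\F_p$-algebras \eqref{2eq:BK epimorphism exterior algebra}, which for $p$ odd is already available for any Bloch-Kato pro-$p$ group (it uses only that $H^\bullet(G,\F_p)$ is quadratic and skew-commutative, so the subgroup hypothesis in the definition will play no role here). Write $V=H^1(G,\F_p)$ and $d=d(G)=\dim_{\F_p}V$, which is finite since $G$ is finitely generated. Let $\phi\colon\bigwedge_\bullet V\twoheadrightarrow H^\bullet(G,\F_p)$ denote the epimorphism \eqref{2eq:BK epimorphism exterior algebra}; being surjective in each degree $n$, it gives $\dim_{\F_p}H^n(G,\F_p)\leq\dim_{\F_p}\bigwedge^n V=\binom{d}{n}$. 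Part (ii) is then immediate: taking $n=2$ and recalling $r(G)=\dim_{\F_p}H^2(G,\F_p)$ from \eqref{1eq:number relations}, one obtains $r(G)\leq\binom{d}{2}=\binom{d(G)}{2}$.

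For the inequality in (i) I would observe that $\bigwedge^n V=0$ for every $n>d$, so surjectivity of $\phi$ forces $H^n(G,\F_p)=0$ for all $n>d$; in particular $H^{d+1}(G,\F_p)=0$, and Proposition~\ref{2prop:cohomological dimension prop} yields $\ccd(G)\leq d=d(G)$. The easy implication of the equivalence follows the same line: if $\phi$ is an isomorphism then $H^\bullet(G,\F_p)\cong\bigwedge_\bullet V$, whose top nonzero component is $\bigwedge^d V\cong\F_p$ in degree $d$; hence $H^d(G,\F_p)\neq0$ while $H^{d+1}(G,\F_p)=0$, so $\ccd(G)=d=d(G)$ by Proposition~\ref{2prop:cohomological dimension prop}.

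The real content, and what I expect to be the crux, is the converse: assuming $\ccd(G)=d(G)=d$, show that $\phi$ is injective. Since $\ccd(G)\not\leq d-1$, Proposition~\ref{2prop:cohomological dimension prop} gives $H^d(G,\F_p)\neq0$; as $\phi$ in degree $d$ is a surjection out of the one-dimensional space $\bigwedge^d V$, it must be an isomorphism there. I would then combine this with the fact that $\phi$ is an algebra map and with the non-degeneracy of the exterior pairing: for any $0\neq\omega\in\bigwedge^k V$ one can choose $\eta\in\bigwedge^{d-k}V$ with $\omega\wedge\eta\neq0$ in $\bigwedge^d V$ (concretely, the complementary wedge of a basis monomial occurring in $\omega$). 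If $\omega\in\kernel(\phi)$ in degree $k$, then $\phi(\omega\wedge\eta)=\phi(\omega)\cup\phi(\eta)=0$, and injectivity of $\phi$ in degree $d$ forces $\omega\wedge\eta=0$, a contradiction. Hence $\kernel(\phi)$ is trivial in every degree, so $\phi$ is an isomorphism. Thus the only nonformal step is this duality argument in top degree, which upgrades a surjection that is an isomorphism in the single degree $d$ to an isomorphism in all degrees.
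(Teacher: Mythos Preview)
Your proof is correct and is precisely the ``easy argument from elementary linear algebra'' that the paper alludes to (it gives no explicit proof here, only citing \cite[Prop.~4.1 and Prop.~4.3]{claudio:BK} and later, in Lemma~\ref{3lem:cup injective}, the same circle of ideas). The top-degree duality trick you use --- pairing $\bigwedge^k V$ against $\bigwedge^{d-k} V$ to propagate injectivity down from degree $d$ --- is the standard way to fill in these details, and nothing is missing.
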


The celebrated Artin-Schreier theorem states that the only non-trivial finite subgroups
of an absolute Galois group is the cyclic group of order two.\footnote{In particular, the Artin-Schreier theorem
states that if the absolute Galois group $G_K$ of a field $K$ is finite (and non-trivial), then $G_K\simeq\Z/2\Z$
and $K$ is a {\it real closed field}, i.e., $K$ is of characteristic 0 with an unique {\it ordering} such that
every positive element is a square and every polynomial in $K[X]$ of odd degree has a zero in $K$.}
A similar condition holds for Bloch-Kato pro-$p$ groups:
a Bloch-Kato pro-$p$ group $G$ may have torsion only if $p=2$.
In particular one has the following.

\begin{prop}\label{2prop:tor p}
Bloch-Kato pro-$p$ group $G$ is torsion if, and only if, $G$ is abelian and of exponent 2,
and moreover any of such group is a Bloch-Kato pro-2 group.
\end{prop}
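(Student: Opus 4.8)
The plan is to reduce the whole statement to the cohomology of the procyclic subgroups generated by torsion elements. The crucial observation is that if $G$ is a Bloch--Kato pro-$p$ group and $g\in G$ is a nontrivial element of finite order, then, since finite subgroups of pro-$p$ groups are $p$-groups, the closed (indeed finite) subgroup $\langle g\rangle$ is cyclic of order $p^k$ for some $k\ge1$; being a closed subgroup of $G$, it must itself be Bloch--Kato. So the first step is to decide for which $p$ and $k$ the cyclic group $\Z/p^k\Z$ is Bloch--Kato, i.e. has quadratic $\F_p$-cohomology.

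For the forward implication I would show that the only nontrivial finite cyclic $p$-group that is Bloch--Kato is $\Z/2\Z$. Let $C=\Z/p^k\Z$ and let $\chi$ be a generator of the one-dimensional space $H^1(C,\F_p)=\Hom(C,\F_p)$ (Fact~\ref{1fact:elementary cohomology}). Since $C$ is finite cyclic, its mod-$p$ cohomology is periodic of period two, so $H^2(C,\F_p)\cong\F_p$ is nonzero. For $C$ to be quadratic, $H^2$ would have to be spanned by products of degree-one classes, i.e. by $\chi\cup\chi$. But $\chi\cup\chi=0$ in all cases except $C=\Z/2\Z$: if $p$ is odd this follows from skew-commutativity (Proposition~\ref{1prop:cup product}), which gives $\chi\cup\chi=-\chi\cup\chi$ and hence $\chi\cup\chi=0$; if $p=2$ and $k\ge2$ then by Lemma~\ref{1lem:Bockstein and cup} one has $\chi\cup\chi=\beta^1(\chi)$, and $\beta^1(\chi)=0$ because $\chi$, being reduction modulo $2$, lifts to $\Hom(\Z/2^k\Z,\Z/4\Z)$ and therefore lies in the image of $H^1(C,\Z/4\Z)$. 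Thus $\Z/p^k\Z$ fails to be quadratic unless $p=2$ and $k=1$. Consequently, if the Bloch--Kato group $G$ is torsion then $p=2$ and every element has order dividing $2$; since a group of exponent $2$ satisfies $xy=yx$ for all $x,y$ (from $(xy)^2=1$), $G$ is abelian of exponent $2$.

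For the converse I would argue that every abelian pro-$2$ group $G$ of exponent $2$ is Bloch--Kato. Such a $G$ is elementary abelian, and every closed subgroup of $G$ is again abelian of exponent $2$, so it suffices to check that $H^\bullet(G,\F_2)$ itself is quadratic. For finite elementary abelian $2$-groups the K\"unneth formula gives $H^\bullet((\Z/2\Z)^n,\F_2)\cong\F_2[x_1,\dots,x_n]$, the polynomial (equivalently symmetric) algebra on $H^1$, which is quadratic as recalled for the symmetric algebra in the examples of quadratic algebras above; passing to the direct limit over finite quotients yields the same conclusion for arbitrary $G$. Hence $H^\bullet(C,\F_2)$ is quadratic for every closed $C\leq G$, and $G$ is a Bloch--Kato pro-$2$ group.

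The main obstacle is the cohomological computation underlying the forward direction, namely verifying that $\Z/4\Z$ (and more generally every cyclic $2$-group of order $\ge4$) is not quadratic: this is exactly where the vanishing $\chi\cup\chi=\beta^1(\chi)=0$ is essential, in sharp contrast with $\Z/2\Z$, for which $\chi\cup\chi\neq0$ generates $H^2$. Once this single computation is in hand, the remaining steps --- the reduction to procyclic subgroups, the passage from exponent $2$ to commutativity, and the K\"unneth computation for the converse --- are routine.
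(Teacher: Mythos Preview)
Your proposal is correct and follows essentially the same strategy as the paper's proof: reduce to the finite cyclic subgroups generated by torsion elements, rule out $\Z/p^k\Z$ for $p$ odd or $k\ge2$ by showing $H^2$ is not generated by cup products of degree-one classes, and handle the converse via the polynomial/symmetric algebra description of $H^\bullet(G,\F_2)$ for elementary abelian $2$-groups. The only difference is one of explicitness: you supply the concrete computation $\chi\cup\chi=\beta^1(\chi)=0$ (via skew-commutativity for $p$ odd and via the lift to $\Z/4\Z$-coefficients for $p=2$, $k\ge2$), whereas the paper simply asserts that $H^\bullet(C_{p^k},\F_p)$ is not quadratic in those cases and that the map from the exterior algebra fails to surject onto $H^2$.
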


\begin{proof}
Let $p$ be any prime and let $C_p$ be the cyclic group of order $p$, and assume that $G$ admits a finite subgroup.
Then $G$ has $C_p$ as a (closed) subgroup.
If $p$ is odd, then the cohomology ring $H^\bullet(C_p,\F_p)$ is not quadratic, as $H^2(C_p,\F_p)$
contains an element which is not a combination of cup products of elements in $H^1(C_p,\F_p)$, i.e.,
the map \eqref{2eq:BK epimorphism exterior algebra} is not an epimorphism in degree 2.
Thus $G$ can't be a Bloch-Kato pro-$p$ group.

Moreover, if $p=2$ and $G$ has $2^n$-torsion, with $n>1$, the above argument works also for $C_{2^n}$ instead
of $C_p$, with $C_{2^n}$ the cyclic group of order $2^n$.

On the other hand, if $p=2$ and $G$ is abelian and of exponent 2, then the cohomology ring $H^\bullet(G,\F_2)$
is isomorphic to the symmetric $\F_2$-algebra $S_\bullet(H^1(G,\F_2))$, and thus $G$ is Bloch-Kato.
\end{proof}

For Bloch-Kato pro-$p$ groups one may deduce the following result from \cite{cem:quotients}.

\begin{thm}\label{2thm:cem}
 Let $G$ be a Bloch-Kato pro-$p$ group.
Then the inflation map induces an isomorphism
\[ \xymatrix{ H^\bullet(G/\lambda_3(G),\F_p)_{\text{dec}}\ar[r]^-{\sim} & H^\bullet(G,\F_p), } \]
where $H^\bullet(\argu,\F_p)_{\text{dec}}$ is the {\it decomposable part} of the cohomology ring, i.e.,
the subalgebra generated by products of elements of degree one.
Moreover, if $\phi\colon G_1\to G_2$ is a homomorphism of Bloch-Kato pro-$p$ group, then 
the induced map in cohomology \[\phi^\ast\colon H^\bullet(G_2,\F_p)\longrightarrow H^\bullet(G_1,\F_p)\]
is an isomorphism if, and only if, the induced map on the quotients $G_1/\lambda_3(G_1)\to G_2/\lambda_3(G_2)$
is an isomorphism.
\end{thm}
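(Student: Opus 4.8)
The plan is to reduce the whole statement to the behaviour of inflation in cohomological degrees $1$ and $2$, and then to propagate to all degrees by exploiting that $H^\bullet(G,\F_p)$ is quadratic. Write $Q=G/\lambda_3(G)$ and recall that, since $G$ is Bloch--Kato, $H^\bullet(G,\F_p)$ is quadratic, hence generated in degree one: thus $H^\bullet(G,\F_p)=H^\bullet(G,\F_p)_{\text{dec}}$, and the asserted map is the restriction to the decomposable subalgebra of the inflation ring homomorphism $\inf\colon H^\bullet(Q,\F_p)\to H^\bullet(G,\F_p)$. Because $\inf$ is a homomorphism of graded $\F_p$-algebras (Remark~\ref{1rem:cup product}), its image on $H^\bullet(Q,\F_p)_{\text{dec}}$ is the subalgebra of $H^\bullet(G,\F_p)$ generated by $\inf(H^1(Q,\F_p))$; so once inflation is an isomorphism in degree one, surjectivity in all degrees is immediate from quadraticity, and it remains only to control degree two and to bootstrap.

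For degree one I would feed the five term exact sequence of Proposition~\ref{1prop:5tes} with $N=\lambda_3(G)$ and trivial coefficients $\F_p$: since $\lambda_3(G)\subseteq\Phi(G)$, every $\chi\in H^1(G,\F_p)=\Hom(G,\F_p)$ kills $\lambda_3(G)$, so the restriction $H^1(G,\F_p)\to H^1(\lambda_3(G),\F_p)^{G}$ vanishes, inflation $H^1(Q,\F_p)\to H^1(G,\F_p)$ is an isomorphism, and the same sequence identifies $\kernel(\inf\colon H^2(Q,\F_p)\to H^2(G,\F_p))$ with the transgression image $H^1(\lambda_3(G),\F_p)^{G}\cong(\lambda_3(G)/\lambda_4(G))^\vee$. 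The bootstrap is then purely algebraic: identifying degree-one parts via $\inf$, one writes $H^\bullet(Q,\F_p)_{\text{dec}}=\mathcal{T}^\bullet(H^1(G,\F_p))/J$ and $H^\bullet(G,\F_p)=\mathcal{T}^\bullet(H^1(G,\F_p))/K$ with $K=\langle K_2\rangle$ quadratic, and the existence of the surjective map forces $J\subseteq K$; an isomorphism in degree two gives $J_2=K_2$, whence $K=\langle K_2\rangle\subseteq J$ and therefore $J=K$. Thus everything hinges on inflation being an isomorphism onto $H^2(G,\F_p)$ when restricted to $H^2(Q,\F_p)_{\text{dec}}$.

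This degree-two statement is the heart of the matter and the step I expect to be the real obstacle. Surjectivity is free: $\inf$ carries $\chi\cup\psi$ to $\inf(\chi)\cup\inf(\psi)$, so its image on decomposables is $H^2(G,\F_p)_{\text{dec}}=H^2(G,\F_p)$ by quadraticity. Injectivity on decomposables is equivalent to the equality of the kernels of the cup-product maps $\mathrm{cup}_Q\colon H^1(Q,\F_p)^{\otimes 2}\to H^2(Q,\F_p)$ and $\mathrm{cup}_G\colon H^1(G,\F_p)^{\otimes 2}\to H^2(G,\F_p)$: the inclusion $\kernel(\mathrm{cup}_Q)\subseteq\kernel(\mathrm{cup}_G)$ is automatic from $\mathrm{cup}_G=\inf\circ\mathrm{cup}_Q$, and the content is the reverse inclusion, i.e.\ that every degree-two cup-product relation holding in $G$ already holds in $Q$. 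I would prove this through the standard description of the degree-two cup product by the commutator-and-$p$-power pairing: dualising, $\mathrm{cup}_G$ is transpose to the map recording commutators $V\wedge V\to\lambda_2(G)/\lambda_3(G)$ and $p$-th powers $V\to\lambda_2(G)/\lambda_3(G)$ (the $p$-power contribution being carried by the Bockstein $\chi\mapsto\chi\cup\chi$ when $p=2$, by Lemma~\ref{1lem:Bockstein and cup}), where $V=G/\Phi(G)$. Since $\lambda_3(Q)=1$ and $\lambda_2(Q)=\lambda_2(G)/\lambda_3(G)$ with identical commutator and power maps, this pairing, and hence the kernel of the cup product, is literally the same for $G$ and for $Q$. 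This is exactly the input supplied by \cite{cem:quotients}, and the genuinely technical work is making the duality above canonical and uniform in $p$ (the even case requiring care because of Bocksteins).

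Finally, the functorial statement follows by naturality of inflation. The isomorphism $H^\bullet(G_i/\lambda_3(G_i),\F_p)_{\text{dec}}\xrightarrow{\sim}H^\bullet(G_i,\F_p)$ fits into a square with the maps induced by $\phi$ and by the induced $\bar\phi\colon G_1/\lambda_3(G_1)\to G_2/\lambda_3(G_2)$; if $\bar\phi$ is an isomorphism then $\bar\phi^{\ast}$ is an isomorphism of cohomology rings, hence of their decomposable subalgebras, and the square forces $\phi^{\ast}$ to be an isomorphism. Conversely, if $\phi^{\ast}$ is an isomorphism then it is so in degrees one and two, preserving cup products and Bocksteins; degree one dualises to an isomorphism $G_1/\Phi(G_1)\cong G_2/\Phi(G_2)$, so $\bar\phi$ is surjective, while the degree-two cup/Bockstein datum is, by the very pairing above, a complete invariant of the class-two quotient $G_i/\lambda_3(G_i)$ (the reconstruction direction of \cite{cem:quotients}), forcing $\dim_{\F_p}\lambda_2(G_1)/\lambda_3(G_1)=\dim_{\F_p}\lambda_2(G_2)/\lambda_3(G_2)$ and hence $\bar\phi$ to be an isomorphism. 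The main obstacle throughout is the single degree-two identification; once it is in place, both halves are formal.
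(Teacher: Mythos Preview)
The paper does not actually give a proof of this theorem: it is stated with the attribution ``one may deduce the following result from \cite{cem:quotients}'' and no further argument. So there is no in-paper proof to compare against; your sketch is in effect a reconstruction of the argument behind the cited reference.

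That said, your outline is correct and tracks the expected proof. The reduction to degrees $1$ and $2$ via quadraticity, the five-term sequence for degree $1$, and the identification of the degree-$2$ problem with equality of cup-product kernels are all standard and sound. The substantive input you flag --- that the cup product $H^1\otimes H^1\to H^2$ is Pontryagin dual to the commutator/$p$-power map $V\otimes V\to\lambda_2(G)/\lambda_3(G)$, hence depends only on $G/\lambda_3(G)$ --- is exactly the content supplied by \cite{cem:quotients} (and in the form the thesis later uses, \cite[Prop.~3.9.13]{nsw:cohm}; see the proof of Proposition~\ref{5prop:commutative diagram pairings}). One small point: in your converse for the functorial statement you appeal to a dimension count on $\lambda_2(G_i)/\lambda_3(G_i)$, but these quotients need not be finite-dimensional when $d(G_i)=\infty$; it is cleaner to argue directly that the surjection $\bar\phi$ induces isomorphisms on both $G_i/\lambda_2(G_i)$ and $\lambda_2(G_i)/\lambda_3(G_i)$ via duality with the (injective on decomposables) $\bar\phi^\ast$ in degrees $1$ and $2$, and then conclude $\bar\phi$ is injective.
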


We shall study more in detail the quotient $G/\lambda_3(G)$ in Section~\ref{3sec:finite quotients}

\section{Oriented pro-$p$ groups}

Fix a prime $p$ and a field $K$, and let $\mu_{p^\infty}=\mu_{p^\infty}(K)$ be
the group of all the roots of unity of $p$-power order, i.e., $\mu_{p^\infty}(K)=\varinjlim_{k\geq0}\mu_{p^k}(K)$.
In particular, $\mu_{p^\infty}$ is a discrete torsion $\Z_p$-module, and one has
\[ \text{Aut}_K\left(\mu_{p^\infty}(\bar K^{\sep})\right)\simeq\Z_p^\times,\]
with $\text{Aut}_K(\mu_{p^\infty})$ the group of $K$-automorphisms (cf. \cite[Lemma~1]{ware:galp}).
Therefore, the action of the absolute Galois group on the separable closure induces an homomorphism of
pro-$p$ groups
\begin{equation}\label{2eq:Aut(mu)}
\theta_{(p,K)}\colon G_K\longrightarrow\Z_p^\times,
\end{equation}
called the $p$-cyclotomic character of $K$.

If $K\supseteq \mu_p$, then the $p$-closure $K(p)$ contains $\mu_{p^\infty}(\bar K^{\sep})$, and the maximal
pro-$p$ Galois group $G_K(p)$ acts on $\mu_{p^\infty}$,
namely, the cyclotomic character $\theta_{p,K}$ is trivial on the subgroup $\mathcal{O}_p(G)$, and it factors
in a unique way through a homomorphism $\theta\colon G_K(p)\rightarrow\Z_p^\times$.
We will call the map $\theta$ the {\it arithmetical orientation} of the Galois group $G_K(p)$.
This is the arithmetical justification for the following definition.

\begin{defi}
Let $G$ be a profinite group.
A continuous homomorphism of profinite groups $\theta\colon G\to\Z_p^\times$ will be called a
{\bf $p$-orientation} of $G$, and the group $G$ will be called a {\bf $p$-oriented} profinite group. 
\end{defi}

Thus, absolute Galois groups (and maximal pro-$p$ Galois groups of fields containing a primitive
$p^{th}$-root of unity) are naturally equipped with a $p$-orientation.

A $p$-orientation $\theta\colon G\to\Z_p^\times$ of a profinite group $G$ induces a natural continuous
action of $G$ on $\Z_p$ in the following way: for every $g\in G$, the multiplication by $\theta(g)$ induces
an automorphisms of $\Z_p$.
Thus $\Z_p(1)$ will denote the $\ZpG$-module isomorphic to $\Z_p$ as $\Z_p$-module,\footnote{
In the arithmetical case, the module $\Z_p(m)$ is called the {\it $m^{th}$ Tate twist} of $\Z_p$, as
this construction was used first by J.~Tate in \cite{tate:K2cohomology} (see also \cite[(7.3.6)]{nsw:cohm}).}
and with the $G$-action induced by the orientation $\theta$.
Also, for $m\in\Z$, one has the $\ZpG$-module $\Z_p(m)$, together with the action
induced by $\theta^m$, i.e., $g.\lambda=\theta(g)^m\cdot\lambda$ for every $g\in G$ and $\lambda\in \Z_p$.

\subsection{$\theta$-abelian pro-$p$ groups}
From now on we will concentrate on pro-$p$ groups with $p$-orientations
(and we will omit the ``$p$'' in the notation, for simplicity).

\begin{rem}\label{1rem:p-adic and orientations}
If $G$ is a pro-$p$ group, then the image of an orientation $\theta\colon G\rightarrow\Z_p^\times$
is again a pro-$p$ group.
\begin{itemize}
 \item If $p$ is odd, then $\Z_p^\times$ is virtually pro-$p$, and the Sylow pro-$p$ group is $1+p\Z_p$,
with $|\Z_p^\times:(1+p\Z_p)|=p-1$.
In particular, one has that $\image(\theta)$ is contained in $1+p\Z_p$, and it is isomorphic to $\Z_p$.
 \item If $p=2$, then $\Z_2^\times$ is pro-$2$, and it is isomorphic to $\Z/2\Z\times\Z_2$.
Thus, one has that $\image(\theta)$ is torsion-free and isomorphic to $\Z_2$ if, and only if,
$\image(\theta)\leq1+4\Z_2$.
\end{itemize}
\end{rem}

In a pro-$p$ group with orientation, one has a distinguished subgroup.

\begin{defi}
 Let $(G,\theta)$ be an oriented pro-$p$ group.
The closed subgroup defined by 
\[\Zen_\theta(G)=\left\{h\in\kernel(\theta)\;\left|ghg^{-1}=h^{\theta(g)}\text{ for all }g\in G\right.\right\}\]
will be called the {\bf $\theta$-centre} of $G$. 
Moreover, the group $G$ will be called {\bf $\theta$-abelian}, if $\Zen_\theta(G)=\kernel(\theta)$.
\end{defi}

Namely, the $\theta$-centre of an oriented pro-$p$ group is given by the ``copies'' of the module $\Z_p(1)$,
induced by the orientation, lying inside $G$.
It is easy to see that $\Zen_\theta(G)$ is an abelian normal subgroup of $G$.
If a pro-$p$ group with an orientation $(G,\theta)$ is $\theta$-abelian, then it has a very simple structure
(see \cite[Prop.~3.4]{claudio:BK} and \cite[Remark~3.3]{cmq:fast}).

\begin{prop}\label{2prop:presentation theta-abelian}
 Let $(G,\theta)$ be a pro-$p$ group with an orientation, ans suppose that $\image(\theta)\simeq\Z_p$.
Then $G$ is $\theta$-abelian if, and only if, there exists a minimal generating system
$\mathcal{X}=\{x_0,x_i,i\in\mathcal{I}\}\subset G$, such that $G$ has a presentation
\begin{equation}\label{2eq:presentation theta-abelian}
 G=\left\langle x_0,x_i\left|\:x_0x_ix_0^{-1}=x_i^{\theta(x_0)},[x_i,x_j]=1, i,j\in\mathcal{I}\right.\right\rangle.
\end{equation}
In particular, $G$ can be expressed as a fibre product
\[G\simeq\image(\theta)\ltimes\Z_p(1)^{\mathcal{I}},\]
with the action induced by the orientation.
\end{prop}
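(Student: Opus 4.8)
The plan is to prove the two implications separately, dispatching the converse directly and reserving the real work — the structure of $\kernel(\theta)$ as a $\Z_p$-module — for the forward direction. For the converse, suppose $G$ has the displayed presentation and set $K=\overline{\langle x_i : i\in\mathcal{I}\rangle}$. The relations $[x_i,x_j]=1$ make $K$ abelian, and $x_0x_ix_0^{-1}=x_i^{\theta(x_0)}$ shows $K$ is normalized by $x_0$, hence normal in $G$. Every element of $G$ is of the form $x_0^a k$ with $a\in\Z_p$ and $k\in K$, and since $\theta(x_0^a k)=\theta(x_0)^a$ one gets $K\subseteq\kernel(\theta)$; because $G/K$ is topologically generated by the image of $x_0$ and $\theta$ maps it isomorphically onto $\image(\theta)\simeq\Z_p$, in fact $K=\kernel(\theta)$. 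For $h\in K$ and $g=x_0^a k$ one computes $ghg^{-1}=x_0^a h x_0^{-a}=h^{\theta(x_0)^a}=h^{\theta(g)}$, using that $K$ is abelian; hence $h\in\Zen_\theta(G)$. Thus $\kernel(\theta)\subseteq\Zen_\theta(G)$, and the reverse inclusion is automatic, so $G$ is $\theta$-abelian.

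For the forward direction, assume $\Zen_\theta(G)=\kernel(\theta)$. First I would split the extension
\[ 1\longrightarrow\kernel(\theta)\longrightarrow G\overset{\theta}{\longrightarrow}\image(\theta)\longrightarrow 1: \]
since $\image(\theta)\simeq\Z_p$ is a free pro-$p$ group, it is projective, so a continuous section exists, and choosing $x_0\in G$ with $\theta(x_0)$ a topological generator of $\image(\theta)$ yields a complement $\overline{\langle x_0\rangle}\simeq\Z_p$. The hypothesis $\Zen_\theta(G)=\kernel(\theta)$ says precisely that every $h\in\kernel(\theta)$ satisfies $ghg^{-1}=h^{\theta(g)}$ for all $g$; taking $g\in\kernel(\theta)$, where $\theta(g)=1$, shows $\kernel(\theta)$ is abelian, and taking $g=x_0$ shows that conjugation by $x_0$ acts on the abelian pro-$p$ group $\kernel(\theta)$ as multiplication by the scalar $\theta(x_0)\in 1+p\Z_p$. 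In additive notation this is exactly the $\Z_p[\image(\theta)]$-module action defining copies of $\Z_p(1)$.

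Next I would fix a minimal system of topological generators $\{x_i : i\in\mathcal{I}\}$ of $\kernel(\theta)$, realizing $\kernel(\theta)\simeq\Z_p(1)^{\mathcal{I}}$ as a free $\Z_p$-module carrying the cyclotomic action; together with $x_0$ these form a minimal generating system of $G$, and the relations $[x_i,x_j]=1$ and $x_0x_ix_0^{-1}=x_i^{\theta(x_0)}$ are forced by the previous step. To see that these are \emph{defining} relations, I would compare $G$ with the pro-$p$ group $\tilde G$ presented by them: the relations furnish an epimorphism $\tilde G\twoheadrightarrow G$ sending generators to generators, and since both groups are split extensions $\image(\theta)\ltimes\Z_p(1)^{\mathcal{I}}$ whose kernels are free abelian pro-$p$ on the same family $\{x_i\}$, this epimorphism restricts to an isomorphism on kernels and is the identity on the quotient $\Z_p$, hence is an isomorphism. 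The fibre-product description $G\simeq\image(\theta)\ltimes\Z_p(1)^{\mathcal{I}}$ is then a restatement of this split extension with abelian kernel.

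The delicate point, and the step I expect to be the main obstacle, is the claim that $\kernel(\theta)$ is a \emph{free} $\Z_p$-module, i.e.\ that it is torsion-free: only then does $\kernel(\theta)\simeq\Z_p(1)^{\mathcal{I}}$ hold and do the two listed families of relations exhaust the defining relations. Indeed, a torsion element $h\in\kernel(\theta)$ would still be compatible with $\theta$-abelianness — since $\theta(g)\equiv 1\pmod p$ forces $h^{\theta(g)}=h$ on elements of order dividing $p$ — yet it would demand an extra relation of the form $x_i^{p^k}=1$ absent from the presentation. Consequently this is where one must invoke the standing hypotheses on $G$ that preclude such torsion (for Bloch-Kato pro-$p$ groups this is governed by Proposition~\ref{2prop:tor p}), rather than relying on formal manipulation of the orientation alone.
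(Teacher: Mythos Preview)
The paper does not supply its own proof of this proposition: it is stated with a citation to \cite[Prop.~3.4]{claudio:BK} and \cite[Remark~3.3]{cmq:fast}, so there is no argument in the text to compare against directly. Your proof is the right one, and both directions are handled correctly: the converse is a straightforward check, and for the forward direction the splitting via projectivity of $\Z_p$, the abelianness of $\kernel(\theta)$ from $\theta(g)=1$, and the scalar action of $x_0$ are exactly the ingredients needed.

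Your closing diagnosis is not merely a caution but is in fact decisive: the proposition, taken literally for an arbitrary oriented pro-$p$ group with $\image(\theta)\simeq\Z_p$, is false without a torsion-freeness hypothesis on $\kernel(\theta)$. The group $G=\Z_p\times\Z/p\Z$ with $\theta(a,b)=(1+p)^a$ is $\theta$-abelian (since $(1+p)^a\equiv 1\bmod p$ makes $h^{\theta(g)}=h$ for $h$ of order $p$, and $G$ is abelian), yet admits no presentation of the displayed form, which forces $\kernel(\theta)$ to be a free $\Z_p$-module. So the statement tacitly relies on the ambient torsion-freeness that the paper elsewhere secures for Bloch-Kato or cyclo-oriented groups (Proposition~\ref{2prop:tor p} for $p$ odd, Corollary~\ref{2cor:torsion}(ii) for $p=2$ with $\image(\theta)$ torsion-free). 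Once $\kernel(\theta)$ is torsion-free it is, as an abelian pro-$p$ group, a product of copies of $\Z_p$, and your comparison of $G$ with the abstractly presented group $\tilde G$ goes through. You have correctly isolated the one non-formal step and named the source of the missing hypothesis.
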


\subsection{Cyclotomic orientations}

Let $K$ be a field containing the $p$th roots of unity $\mu_p$, and let $G_K(p)$ be the maximal pro-$p$ Galois group.
For every $n\geq0$, the $p^n$th power induces the short exact sequence of $\Z_p\dbl G_K(p) \dbr$-modules
\begin{equation}\label{2eq:short exact sequence mu}
 \xymatrix{ 0\ar[r] & \mu_{p^n}\ar[r] & K(p)^\times\ar[r]^{p^n} & K(p)^\times\ar[r] &0.}
\end{equation}
Since $\mu_{p^n}$ is isomorphic to $\Z_p(1)/p^n\Z_p(1)$ as $G_K(p)$-modules,
\eqref{2eq:short exact sequence mu} induces the commutative diagram in cohomology
\[ \xymatrix@C=.7truecm{
K^\times/(K^\times)^{p^n} \ar[r]^-{\sim}\ar@{->>}[d] & H^1\left(G_K(p),\mu_{p^n}\right)\ar@{=}[r]\ar[d]^{p^{n-1}} & H^1\left(G_K(p),\Z_p(1)/p^n\Z_p(1)\right)\ar[d]\\
K^\times/(K^\times)^{p^m} \ar[r]^-{\sim}             & H^1\left(G_K(p),\mu_{p^m}\right)\ar@{=}[r]                 & H^1\left(G_K(p),\Z_p(1)/p^n\Z_p(1)\right)
}\]
for every $n>m\geq1$, where the left-hand side horizontal arrows are isomorphisms by \eqref{1eq:kummer_dualityp}.
In particular, one has the epimorphism
\begin{equation}\label{2eq:epimorphismZp1/pn}
 \xymatrix{H^1\left(G_K(p),\Z_p(1)/p^n\Z_p(1)\right)\ar@{->>}[r]^{\pi_{nm}} & H^1\left(G_K(p),\Z_p(1)/p^m\Z_p(1)\right)}
\end{equation}
for every $n>m\geq1$.
In particular, $\{H^1\left(G_K(p),\Z_p(1)/p^n\Z_p(1)\right),\pi_{nm}\}$ is a projective system,
and by \cite[Corollary~2.7.6]{nsw:cohm} one has the isomorphism $H^1(G,\Z_p(1))\simeq\varprojlim_n H^1(G_K(p),\Z_p(1)/p^n\Z_p(1))$.

\begin{defi}
 Let $(G,\theta)$ be a pro-$p$ group with an orientation.
Then $\theta$ is called a {\bf cyclotomic} orientation (and $G$ is said to be {\bf cyclo-oriented})
if the following hold:
\begin{itemize}
 \item[(i)] $G$ is a Bloch-Kato pro-$p$ group;
 \item[(ii)] $H^2(C,\Z_p(1))$ is a torsion-free abelian group for all closed subgroups $C$ of $G$.
\end{itemize}
\end{defi}

\begin{rem}\label{2rem:cyclo-oriented free gps}
 If $F$ is a free pro-$p$ group, then $\ccd(F)=1$ by Proposition~\ref{2prop:cohomological dimension prop}.
Then for any orientation $\theta\colon F\to\Z_p^\times$ one has $H^2(F,\Z_p(1))=0$,
so that $(F,\theta)$ is cyclo-oriented for any $\theta$.
\end{rem}

One has the following elementary fact.

\begin{lem}\label{2lem:H2 torfree iff H1 projects}
 Let $(G,\theta)$ be a pro-$p$ group with orientation.
Then $H^2(G,\Z_p(1))$ is a torsion-free abelian group if, and only if, the map
\begin{equation}\label{2eq:p projection}
 \xymatrix{H^1(G,\Z_p(1))\ar[r] & H^1(G,\F_p) }
\end{equation}
induced by the short exact sequence of $\ZpG$-modules
\begin{equation}\label{2eq:ses Zp1Zp1Fp}
 \xymatrix{ 0\ar[r] & \Z_p(1)\ar[r]^-{p} & \Z_p(1)\ar[r] & \F_p\ar[r] & 0 }
\end{equation}
is an epimorphism.
\end{lem}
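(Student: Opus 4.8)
The plan is to read the statement straight off the long exact cohomology sequence attached to \eqref{2eq:ses Zp1Zp1Fp}. First I would record the relevant stretch of that sequence, namely
\[
\xymatrix@C=.55truecm{ H^1(G,\Z_p(1)) \ar[r] & H^1(G,\F_p) \ar[r]^-{\delta} & H^2(G,\Z_p(1)) \ar[r]^-{p} & H^2(G,\Z_p(1)) },
\]
where the first arrow is precisely the map \eqref{2eq:p projection} (it is induced by the reduction $\Z_p(1)\twoheadrightarrow\F_p$, which is the surjection in \eqref{2eq:ses Zp1Zp1Fp}), where $\delta$ is the connecting homomorphism, and where the last arrow is multiplication by $p$, being the map induced on $H^2$ by the injection $\Z_p(1)\xrightarrow{p}\Z_p(1)$.

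Second, exactness at $H^1(G,\F_p)$ gives $\image\bigl(\eqref{2eq:p projection}\bigr)=\kernel(\delta)$; hence \eqref{2eq:p projection} is an epimorphism if and only if $\kernel(\delta)=H^1(G,\F_p)$, i.e.\ if and only if $\delta=0$, i.e.\ if and only if $\image(\delta)=0$. Third, exactness at the first copy of $H^2(G,\Z_p(1))$ identifies $\image(\delta)$ with the kernel of multiplication by $p$ on $H^2(G,\Z_p(1))$, that is, with the $p$-torsion subgroup $\{c\in H^2(G,\Z_p(1))\mid pc=0\}$. Chaining these two observations, \eqref{2eq:p projection} is surjective if and only if this $p$-torsion subgroup is trivial.

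Finally I would supply the one genuinely non-formal ingredient. Since $H^2(G,\Z_p(1))$ is a $\Z_p$-module and $\Z_p$ is a local ring whose maximal ideal is generated by $p$, the module is torsion-free exactly when it has no nonzero $p$-torsion: if some nonzero element were annihilated by $p^n$ with $n\geq1$ minimal, then $p^{\,n-1}$ times it would be a nonzero element killed by $p$. Thus the vanishing of the $p$-torsion subgroup is equivalent to $H^2(G,\Z_p(1))$ being torsion-free, and combining this with the previous paragraph completes the proof.

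I do not expect a serious obstacle: the argument is an exactness computation built around the Bockstein-type connecting map $\delta$, and every equivalence is forced by the long exact sequence. The only step warranting a line of justification rather than pure diagram chasing is the last one, the reduction of ``torsion-free'' to ``no $p$-torsion'' for $\Z_p$-modules; everything else is immediate from exactness and functoriality.
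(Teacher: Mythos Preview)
Your proof is correct and follows essentially the same approach as the paper: both read the equivalence directly off the long exact cohomology sequence attached to \eqref{2eq:ses Zp1Zp1Fp}, using that multiplication by $p$ on $H^2(G,\Z_p(1))$ is injective precisely when the connecting map vanishes, which in turn happens precisely when \eqref{2eq:p projection} is surjective. You are slightly more explicit in justifying why ``torsion-free'' reduces to ``no $p$-torsion'' for $\Z_p$-modules, which the paper leaves implicit.
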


\begin{proof}
 The short exact sequence \eqref{2eq:ses Zp1Zp1Fp} induces the exact sequence in cohomology
\[ \xymatrix@C=.6truecm{ H^1(G,\Z_p(1))\ar[r] & H^1(G,\F_p)\ar[r] & H^2(G,\Z_p(1))\ar[r]^-{p} 
& H^2(G,\Z_p(1)), }\]
and $H^2(G,\Z_p(1))$ is torsion-free if, and only if, the right-hand side arrow is injective, namely,
if, and only if, the middle arrow is trivial and the left-hand side arrow is surjective.
\end{proof}

Consequently, by \eqref{2eq:epimorphismZp1/pn} one has that $H^1(G_K(p),\Z_p(1))$ projects onto $H^1(G_K(p),\F_p)$,
so that one has the following.

\begin{thm}
Let $K$ be a field containing a primitive $p$-th root of unity, and let $G_K(p)$ be its maximal pro-$p$ Galois group.
Then the arithmetical orientation $\theta\colon G_K(p)\to \Z_p^\times$ is cyclotomic. 
\end{thm}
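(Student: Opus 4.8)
The plan is to verify directly the two defining conditions of a cyclotomic orientation for $(G_K(p),\theta)$, handling them uniformly over every closed subgroup. Condition (i)---that $G_K(p)$ be a Bloch-Kato pro-$p$ group---is essentially already in hand: the Proposition preceding the definition of Bloch-Kato pro-$p$ groups shows that $H^\bullet(G_K(p),\F_p)$ is quadratic, and the same argument applies verbatim to every closed subgroup once one recalls that each closed $C\clsgp G_K(p)$ is itself the maximal pro-$p$ Galois group of a field containing $\mu_p$. Concretely, setting $L=K(p)^{C}$, the extension $K(p)/L$ is a pro-$p$ extension and $K(p)$ admits no proper $p$-extension, so $K(p)=L(p)$ and $C=\Gal(K(p)/L)=G_L(p)$ with $L\supseteq K\supseteq\mu_p$. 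Hence condition (i) holds for $G_K(p)$ and all of its closed subgroups.

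For condition (ii) I would fix an arbitrary closed $C\clsgp G_K(p)$, write $C=G_L(p)$ as above, and note that the restriction $\theta|_{C}$ is precisely the arithmetical orientation of $G_L(p)$, since both are induced by the Galois action on the $p$-power roots of unity lying in $K(p)$. Applying Lemma~\ref{2lem:H2 torfree iff H1 projects} to $(C,\theta|_C)$, the group $H^2(C,\Z_p(1))$ is torsion-free if and only if the reduction map $H^1(C,\Z_p(1))\to H^1(C,\F_p)$ is an epimorphism, so it suffices to establish this surjectivity for $C=G_L(p)$.

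To obtain the surjectivity I would run, over the field $L$ in place of $K$, exactly the construction that precedes the theorem. The commutative Kummer diagram produces the epimorphisms $\pi_{nm}$ of \eqref{2eq:epimorphismZp1/pn}, so $\{H^1(C,\Z_p(1)/p^n\Z_p(1)),\pi_{nm}\}$ is a projective system with surjective transition maps, and $H^1(C,\Z_p(1))\simeq\varprojlim_n H^1(C,\Z_p(1)/p^n\Z_p(1))$. Since $\image(\theta|_C)\subseteq 1+p\Z_p$, the module $\Z_p(1)/p\Z_p(1)$ is the trivial module $\F_p$, whence the projection of the inverse limit onto its $n=1$ term coincides with the map of Lemma~\ref{2lem:H2 torfree iff H1 projects}. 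For a countable projective system with surjective transition maps, the projection from the limit onto any single term is surjective---one lifts a given class successively through each $\pi_{n+1,n}$---which yields the desired epimorphism $H^1(C,\Z_p(1))\to H^1(C,\F_p)$. Running this for every closed $C$ gives condition (ii), and the two conditions together show that $\theta$ is cyclotomic.

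The degree-counting and Kummer-theoretic bookkeeping are not the difficulty here; the main obstacle---and the only genuine content beyond the discussion already carried out for $G_K(p)$ itself---is the reduction to all closed subgroups. I must be sure that each $C\clsgp G_K(p)$ really is the maximal pro-$p$ Galois group of an intermediate field still containing $\mu_p$, and that $\theta$ restricts to its arithmetical orientation, so that the Kummer input is available for $C$. The secondary point requiring care is the passage to the inverse limit, where surjectivity of the transition maps must be promoted to surjectivity of the projection onto the first term.
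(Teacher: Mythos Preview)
Your proposal is correct and follows essentially the same approach as the paper: both use the Kummer-theoretic commutative diagram to obtain the surjective transition maps \eqref{2eq:epimorphismZp1/pn}, pass to the inverse limit $H^1(G_K(p),\Z_p(1))\simeq\varprojlim_n H^1(G_K(p),\Z_p(1)/p^n\Z_p(1))$, and conclude surjectivity onto $H^1(G_K(p),\F_p)$, then invoke Lemma~\ref{2lem:H2 torfree iff H1 projects}. You are in fact more careful than the paper in explicitly verifying the condition for every closed subgroup $C=G_L(p)$ and in spelling out why surjectivity of transition maps yields surjectivity of the projection from the limit; the paper leaves both points implicit.
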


The following result is a refinement of Proposition~\ref{2prop:tor p} for pro-2 groups with a cyclotomic orientation.

\begin{prop}\label{2prop:tor2}
Let $(G,\theta)$ be a pro-$2$ group with cyclotomic orientation.
\begin{itemize}
\item[(i)] If $G$ is non-trivial and torsion, then $G\simeq C_2$ and $\image(\theta)=\{\pm 1\}$.
\item[(ii)] If $\image(\theta)$ is torsion free, then $G$ is torsion free.
\end{itemize}
\end{prop}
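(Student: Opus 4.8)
The plan is to prove the two parts together, obtaining~(i) from~(ii) together with Proposition~\ref{2prop:tor p}. Everything reduces to a single computational fact about the group $C_2$ with $\Z_2$-coefficients: if $C\cong C_2$ acts \emph{trivially} on $\Z_2$ then $H^2(C,\Z_2)$ carries torsion, whereas if the generator of $C$ acts by $-1$ then $H^2(C,\Z_2(1))=0$ is torsion-free. The first of these follows from the $2$-periodicity of cyclic-group cohomology, which gives $H^2(C,\Z_2)=\Z_2/2\Z_2\cong\Z/2\Z$; alternatively, Lemma~\ref{2lem:H2 torfree iff H1 projects} already shows it is not torsion-free, because $H^1(C,\Z_2)=\Hom(C,\Z_2)=0$ cannot surject onto $H^1(C,\F_2)=\F_2$. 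The second (the $-1$ action) follows from the same periodicity, as both the fixed points and the norm of $\Z_2(1)$ vanish. These are the only quantitative inputs.

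For part~(ii) I would argue by contradiction, keeping the hypothesis that $\image(\theta)$ is torsion-free. Suppose $G$ had non-trivial torsion; being pro-$2$ it would then contain an element $t$ of order exactly $2$, and $C=\langle t\rangle\cong C_2$ is a closed subgroup. Since $t^2=1$ we have $\theta(t)\in\{\pm1\}$. If $\theta(t)=-1$ then $-1\in\image(\theta)$, so $\image(\theta)$ has torsion, against the hypothesis. If $\theta(t)=1$ then $\theta|_C$ is trivial, $\Z_2(1)$ restricts to $\Z_2$ with trivial $C$-action, and the first computation gives $H^2(C,\Z_2(1))\cong\Z/2\Z$, contradicting condition~(ii) of cyclotomicity for the closed subgroup $C$. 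Either alternative is impossible, so $G$ is torsion-free.

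For part~(i) I would first apply Proposition~\ref{2prop:tor p}: a cyclo-oriented group is Bloch--Kato, so a non-trivial torsion $G$ is abelian of exponent $2$. Then $\theta(g)^2=1$ for every $g\in G$, whence $\image(\theta)\subseteq\{\pm1\}$. To force $G\cong C_2$ I would examine $N=\kernel(\theta)$, a closed subgroup on which the restricted orientation is trivial. A closed subgroup of a cyclo-oriented group is again cyclo-oriented (both defining conditions are inherited by closed subgroups), so $(N,\theta|_N)$ is cyclo-oriented with trivial, hence torsion-free, image; part~(ii) then makes $N$ torsion-free. But $N$ lies in an exponent-$2$ group, so it is elementary abelian, and a torsion-free elementary abelian $2$-group is trivial. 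Thus $\kernel(\theta)=1$, $\theta$ is injective, and $G\cong\image(\theta)\subseteq\{\pm1\}$; since $G\neq1$ this gives $G\cong C_2$ and $\image(\theta)=\{\pm1\}$.

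The one delicate point---really the whole content of the argument---is to make the reduction to the order-$2$ subgroup legitimate: one must use that condition~(ii) of cyclotomicity quantifies over \emph{all} closed subgroups, and that $\Z_p(1)$ restricts to a subgroup with action given by the restricted orientation, so that the two $C_2$-computations above may be invoked verbatim. Once these are in place both parts are immediate; I would record only that~(i) depends on~(ii), so I would prove~(ii) first.
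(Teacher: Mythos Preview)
Your argument is correct. The approach is close to the paper's but inverted: the paper proves (i) first and then reads off (ii), whereas you prove (ii) first and deduce (i) from it. For (i) the paper works directly with the full kernel $K=\kernel(\theta)$ and, using the long exact sequence from $0\to\Z_2\to\Q_2\to\I_2\to0$ together with $H^1(K,\Q_2)=0$, embeds the torsion group $K^\vee=H^1(K,\I_2)$ into the torsion-free $H^2(K,\Z_2(1))$, forcing $K=1$; you instead reduce immediately to a single copy of $C_2$ and invoke $2$-periodicity (equivalently Lemma~\ref{2lem:H2 torfree iff H1 projects}). Both routes exploit the same phenomenon---a torsion subgroup on which $\theta$ is trivial produces torsion in $H^2(\,\cdot\,,\Z_2(1))$---so the difference is purely organisational. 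Your reduction to $C_2$ is slightly more elementary, while the paper's embedding $K^\vee\hookrightarrow H^2(K,\Z_2)$ handles the whole kernel in one stroke without first cutting down to an order-$2$ subgroup.
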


\begin{proof} 
(i) By Proposition~\ref{2prop:tor p}, $G$ is an elementary abelian $2$-group.
Let $K=\kernel(\theta)$. Then $(K,\theta\vert_K)$ is an elementary abelian pro-$2$ group
with trivial cyclotomic $2$-orientation, namely,
$\theta\vert_K\equiv 1$ and $\Z_2(m)\simeq\Z_2(0)=\Z_2$ as $K$-modules for all $m\in\Z$.
Since $K$ is torsion, one has $H^1(K,\Q_2)=\Hom(K,\Q_2)=0$, 
and the connecting map $H^1(K,\I_2)\to H^2(K,\Z_2)$ induced in cohomology by the exact sequence
\eqref{1eq:sesZpQpIp} for $p=2$ is injective.
Thus one has the embedding 
\[\xymatrix{ K^\vee=H^1(K,\I_2)\ar@{^(->}[r] & H^2(K,\Z_2)=H^2(K,\Z_2(1))},\]
which is torsion-free.
Consequently $K^\vee=0$ and $K$ is the trivial group, and $\theta\colon G\to\Z_2^\times$ is injective.
This shows (i), and (ii) is a direct consequence of (i).
\end{proof}

From Proposition~\ref{2prop:tor p}, Proposition~\ref{2prop:tor2} and Sylow's theorem
one deduces the following corollary, which can be considered as a local version of the Artin-Schreier theorem
for profinite groups with cyclotomic $p$-orientation.

\begin{cor}\label{2cor:torsion}
Let $p$ be a prime number, and let $(G,\theta)$ be a pro-$p$ group with cyclotomic orientation.
\begin{itemize}
\item[(i)] If $p$ is odd, then $G$ has no $p$-torsion.
\item[(ii)] If $p=2$, then every non-trivial $2$-torsion subgroup is isomorphic
to $C_2$, the cyclic group of order $2$. Moreover, if $\image(\theta)$ has no 
$2$-torsion, then $G$ has no $2$-torsion. 
\end{itemize}
\end{cor}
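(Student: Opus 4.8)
The plan is to derive Corollary~\ref{2cor:torsion} as a direct consequence of the two preceding propositions together with the transfer of torsion through closed subgroups. First I would observe that the statement concerns $p$-torsion elements, which generate finite cyclic $p$-subgroups, so the question reduces to understanding which finite $p$-groups can sit inside a cyclo-oriented pro-$p$ group. The key point is that being Bloch-Kato and having torsion-free $H^2(C,\Z_p(1))$ are both inherited by \emph{every} closed subgroup $C$ of $G$ (the Bloch-Kato condition by definition, and the torsion-freeness condition likewise by definition of cyclotomic orientation), so any closed subgroup of $(G,\theta)$ is itself cyclo-oriented with the restricted orientation $\theta\vert_C$.

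For part (i), I would argue by contradiction: if $p$ is odd and $G$ had a nontrivial $p$-torsion element, then $G$ would contain a copy of $C_p$ as a closed subgroup. But by Proposition~\ref{2prop:tor p} a Bloch-Kato pro-$p$ group can have torsion only when $p=2$; equivalently, $H^\bullet(C_p,\F_p)$ fails to be quadratic for odd $p$, contradicting that $G$ is Bloch-Kato. Hence no such element exists. This part does not even need the cyclotomic refinement---it follows already from the Bloch-Kato hypothesis alone.

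For part (ii), with $p=2$, I would again pass to a torsion subgroup $T\leq G$. By Proposition~\ref{2prop:tor p}, $T$ must be abelian of exponent $2$; the cyclotomic refinement in Proposition~\ref{2prop:tor2}(i) then forces any nontrivial torsion closed subgroup that is itself cyclo-oriented to be isomorphic to $C_2$ (one cannot have $C_2\times C_2$ sitting inside, since that subgroup would again be cyclo-oriented yet torsion of rank $2$, contradicting Proposition~\ref{2prop:tor2}(i), which pins torsion cyclo-oriented groups down to $C_2$). For the final clause, if $\image(\theta)$ has no $2$-torsion, then $\image(\theta)\leq 1+4\Z_2$ by Remark~\ref{1rem:p-adic and orientations}, so $\image(\theta)$ is torsion-free, and Proposition~\ref{2prop:tor2}(ii) applied to $G$ directly gives that $G$ is torsion-free.

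The only real subtlety---and the step I expect to require the most care---is the invocation of Sylow's theorem mentioned in the corollary's preamble: one must make sure that a $p$-torsion element of $G$ genuinely produces a closed copy of $C_p$ (resp.\ $C_2$) to which Propositions~\ref{2prop:tor p} and~\ref{2prop:tor2} apply, and that restricting a cyclotomic orientation to such a finite subgroup again yields a cyclotomic orientation. Since both defining conditions of ``cyclo-oriented'' are stated for all closed subgroups, this hereditary property is immediate, and Sylow's theorem serves only to reduce an arbitrary finite (hence pro-$p$) torsion subgroup to its $p$-part. Once this bookkeeping is in place, the corollary follows formally from the two propositions.
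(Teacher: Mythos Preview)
Your proposal is correct and follows essentially the same route as the paper, which simply states that the corollary is deduced from Proposition~\ref{2prop:tor p}, Proposition~\ref{2prop:tor2}, and Sylow's theorem. One small remark: since $G$ is already a pro-$p$ group, every finite subgroup is automatically a $p$-group, so your description of Sylow's role (``reduce an arbitrary finite torsion subgroup to its $p$-part'') is not quite what is needed here; the reference to Sylow is more naturally read as pointing to the ambient profinite Artin--Schreier context, but your core argument does not depend on this and goes through as written.
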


\begin{rem}\label{2rem:2tor}
Let $\theta\colon \Z_2\to\Z_2^\times$ be given by $\theta(1+\lambda)=-1$ and $\theta(\lambda)=1$
for all $\lambda\in 2\Z_2$. Then $\theta$ is a $2$-orientation of $G=\Z_2$ satisfying
$\image(\theta)=\{\pm1\}$. As $\ccd_2(\Z_2)=1$, $\theta$ is also cyclotomic by Remark~\ref{2rem:cyclo-oriented free gps}.
This shows that for a torsion free pro-$2$ group $G$ with cyclotomic $2$-orientation
$\theta\colon G\to \Z_2^\times$, $\image(\theta)$ is not necessarily torsion free.
\end{rem}

On the other hand, if $\image(\theta)\leq1+4\Z_2$ for a cyclo-oriented pro-$2$ group $(G,\theta)$,
one has the following.

\begin{prop}\label{2prop:bock}
Let $(G,\theta)$ be a pro-$2$ group with cyclotomic $2$-orientation satisfying $\image(\theta)\simeq\Z_2$.
Then $\chi\cup\chi=0$ for all $\chi\in H^1(G,\F_2)$.
\end{prop}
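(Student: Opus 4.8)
The plan is to reduce the statement to the vanishing of the first Bockstein homomorphism. By Lemma~\ref{1lem:Bockstein and cup} one has $\chi\cup\chi=\beta^1(\chi)$ for every $\chi\in H^1(G,\F_2)$, so it suffices to prove that $\beta^1\colon H^1(G,\F_2)\to H^2(G,\F_2)$ is identically zero. Recall that $\beta^1$ is the connecting homomorphism of the sequence of trivial modules $0\to\Z/2\Z\to\Z/4\Z\to\Z/2\Z\to0$.

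The first point I would establish is that the hypothesis $\image(\theta)\simeq\Z_2$ forces $\image(\theta)\leq1+4\Z_2$, by Remark~\ref{1rem:p-adic and orientations}; hence $\theta(g)\equiv1\pmod4$ for all $g\in G$, and the twisted module $\Z_2(1)/4\Z_2(1)$ is isomorphic to $\Z/4\Z$ carrying the \emph{trivial} $G$-action (while $\Z_2(1)/2\Z_2(1)\simeq\F_2$ trivially as well). Consequently the defining sequence of $\beta^1$ is, as a sequence of $\Z_2\dbl G\dbr$-modules, exactly $0\to\Z_2(1)/2\Z_2(1)\to\Z_2(1)/4\Z_2(1)\to\Z_2(1)/2\Z_2(1)\to0$.

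Next I would compare this with the sequence \eqref{2eq:ses Zp1Zp1Fp} (taken for $p=2$) appearing in Lemma~\ref{2lem:H2 torfree iff H1 projects}, by constructing a morphism of short exact sequences whose middle vertical map is reduction modulo $4$, whose left vertical map is reduction modulo $2$, and whose right vertical map is the identity on $\F_2$. Naturality of the connecting homomorphisms then gives $\beta^1=\mathrm{red}_\ast\circ\delta$, where $\delta\colon H^1(G,\F_2)\to H^2(G,\Z_2(1))$ is the connecting map of \eqref{2eq:ses Zp1Zp1Fp} and $\mathrm{red}_\ast\colon H^2(G,\Z_2(1))\to H^2(G,\F_2)$ is induced by reduction modulo $2$. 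Finally, since $(G,\theta)$ is cyclo-oriented the group $H^2(G,\Z_2(1))$ is torsion-free, so Lemma~\ref{2lem:H2 torfree iff H1 projects} tells us that the map $H^1(G,\Z_2(1))\to H^1(G,\F_2)$ is onto; by exactness this is equivalent to $\delta=0$, whence $\beta^1=0$ and the proposition follows.

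I expect the only real obstacle to lie in checking that the proposed ladder of short exact sequences genuinely commutes: one must match the injection $\F_2\hookrightarrow\Z/4\Z$ given by $1\mapsto2$ against multiplication by $2$ on $\Z_2(1)$, and verify that the congruence $\theta(g)\equiv1\pmod4$ really untwists the reduction of $\Z_2(1)$ modulo $4$ into the classical Bockstein sequence. Once this ladder is in place, both the naturality of the connecting maps and the torsion-freeness input are formal.
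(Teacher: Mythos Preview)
Your proof is correct and follows essentially the same strategy as the paper: both arguments use that $\image(\theta)\subseteq 1+4\Z_2$ to make $\Z_2(1)/4\Z_2(1)$ a trivial module, and then feed in the torsion-freeness of $H^2(G,\Z_2(1))$ to kill the Bockstein. The only organizational difference is that the paper compares the two twisted sequences $0\to\Z_2(1)\xrightarrow{4}\Z_2(1)\to\Z_4(1)\to0$ and $0\to\Z_2(1)\xrightarrow{2}\Z_2(1)\to\F_2\to0$ and invokes the weak four lemma to get surjectivity of $H^1(G,\Z/4\Z)\to H^1(G,\F_2)$, whereas you map the twisted sequence directly onto the Bockstein sequence and use naturality of $\delta$ to factor $\beta^1=\mathrm{red}_\ast\circ\delta$; your route is slightly more direct but the content is the same.
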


\begin{proof}
Set $\Z_4(1)=\Z_2(1)/4\Z_2(1)$.
By Remark~\ref{1rem:p-adic and orientations}, $\Z_4(1)$ is a trivial $\Z_2\dbl G\dbr$-module,
thus $\Z_4\simeq\Z/4\Z$ as abelian groups.
Since the group $H^2(G,\Z_2(1))$ is torsion-free, one has a commutative diagram with exact rows
\begin{equation}\label{eq:tor1}
\xymatrix{H^1\left(G,\Z_2(1)\right)\ar[r]^{4}\ar[d]_{2}&
H^1\left(G,\Z_2(1)\right)\ar[r]\ar@{=}[d] & H^1\left(G,\Z_4(1)\right)\ar[r]\ar[d]^{\pi} & 0\\
H^1\left(G,\Z_2(1)\right)\ar[r]^{2} & H^1\left(G,\Z_2(1)\right)\ar[r] & H^1\left(G,\F_2\right)\ar[r] & 0}
\end{equation}
with $\pi$ the canonical morphism.
Hence by the weak four lemma applied to the diagram 
\eqref{eq:tor1} extended by another column of $0$'s to the right, $\pi$ is surjective. 
In particular, the first Bockstein morphism
\[\beta^1\colon H^1(G,\F_2)\longrightarrow H^2(G,\F_2)\]
is the trivial map, and the claim follows by Lemma~\ref{1lem:Bockstein and cup}.
\end{proof}

Let $(G,\theta)$ be a cyclo-oriented pro-$p$ group.
The above result, together with Remark~\ref{1rem:p-adic and orientations}, shows that if $\image(\theta)$
is pro-$p$-cyclic (possibly trivial), then the cup product in $H^\bullet(G,\F_p)$ is skew-commutative,
and Corollary~\ref{1cor:cd d r for BK} holds also in the case $\image\simeq\Z_2$.
In particular, from \eqref{2eq:BK epimorphism exterior algebra} one gets the epimorphism of
quadratic $\F_p$-algebras
\begin{equation}\label{2eq:projection exterior algebra}
 \xymatrix{\dfrac{\mathcal{T}^\bullet\left(H^1(G,\F_p)\right)}{\langle\chi\otimes\chi\rangle}\ar@{->>}[rr]
&& H^\bullet(G,\F_p)}
\end{equation}
for all cyclo-oriented pro-$p$ groups $(G,\theta)$ with $\image(\theta)\simeq\Z_p$,
or $\image(\theta)=\{1\}$ for $p$ odd.

\subsection{Modules of cyclo-oriented groups}

Given an oriented pro-$p$ group $(G,\theta)$, the orientation induces a $G$-action also on the $\Z_p$-modules
$\Q_p$ and $\I_p$.
Thus, one may define the $\ZpG$-modules $\Q_p(m)$ and $\I_p(m)$, for $m\in\Z$, 
and \eqref{1eq:sesZpQpIp} induces the short exact sequence 
\begin{equation}\label{2eq:sesZp1Qp1Ip1}
 0\longrightarrow \Z_p(m)\longrightarrow \Q_p(m)\longrightarrow \I_p(m)\longrightarrow0.
\end{equation}

\begin{fact}\label{fact:pontryagin dual} 
For $m\in\Z$, one has $\I_p(m)^\vee\simeq\Z_p(-m)$.
\end{fact}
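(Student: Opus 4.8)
The plan is to compute the Pontryagin dual $\I_p(m)^\vee=\Hom_{\text{cts}}(\I_p(m),\I_p)$ explicitly, keeping track of how the $\theta^m$-twist in the action is transformed by duality. First I would isolate the underlying $\Z_p$-module isomorphism, independent of the twist. Since $\I_p\simeq\Q_p/\Z_p$ is the discrete divisible $p$-primary torsion module, the standard computation $\Z_p^\vee\simeq\I_p$ together with reflexivity $(M^\vee)^\vee=M$ yields a canonical isomorphism of $\Z_p$-modules $c\colon\I_p^\vee\longrightarrow\Z_p$; concretely, every continuous endomorphism $\phi$ of $\I_p$ is multiplication by a unique scalar $\lambda_\phi\in\Z_p$ (as $\End_{\text{cts}}(\Q_p/\Z_p)\simeq\Z_p$), and $c(\phi)=\lambda_\phi$. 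This already settles the case $m=0$: with trivial actions, $\I_p(0)^\vee\simeq\Z_p(0)$ as $\ZpG$-modules.

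Next I would reinstate the twist and verify that $c$ is $G$-equivariant for the correct shifts. Recall the $G$-action on a Pontryagin dual is $(g.\phi)(x)=\phi(g^{-1}.x)$, with the target $\I_p$ carrying the trivial $G$-action. Taking $M=\I_p(m)$, so that $g^{-1}.x=\theta(g)^{-m}x$, and using that $\phi\in\Hom_{\text{cts}}(\I_p,\I_p)$ is $\Z_p$-linear (indeed scalar multiplication), I compute
\[
(g.\phi)(x)=\phi\!\left(\theta(g)^{-m}x\right)=\theta(g)^{-m}\phi(x),
\qquad\text{i.e.}\qquad g.\phi=\theta(g)^{-m}\,\phi .
\]
Applying $c$ turns this into $g.\lambda_\phi=\theta(g)^{-m}\lambda_\phi$, which is exactly the defining action of $\Z_p(-m)$. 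Hence $c$ intertwines the $G$-actions and gives the desired isomorphism $\I_p(m)^\vee\simeq\Z_p(-m)$ of $\ZpG$-modules.

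The substantive content is the one-line observation that the $g^{-1}$ appearing in the dual action inverts the Tate twist, sending $\theta^m$ to $\theta^{-m}$; everything else is the untwisted duality $\I_p^\vee\simeq\Z_p$. I expect the only delicate points to be bookkeeping rather than mathematics: confirming via the compact–discrete equivalence that $\I_p(m)^\vee$ is complete (so that the target $\Z_p(-m)$ is the right object), checking that $c$ is genuinely an isomorphism of $\Z_p$-modules through $\Z_p^\vee\simeq\I_p$ and reflexivity, and remaining consistent with the convention $(g.\phi)(x)=\phi(g^{-1}.x)$ so that the exponent sign comes out as $-m$ and not $+m$.
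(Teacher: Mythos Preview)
Your proof is correct and follows essentially the same approach as the paper: both identify $\I_p^\vee\simeq\Z_p$ as $\Z_p$-modules and then verify $G$-equivariance via the one-line computation $(g.\phi)(\lambda)=\phi(\theta(g^{-1})^m\lambda)=\theta(g)^{-m}\phi(\lambda)$. The paper's version is terser, but your more careful treatment of the underlying module isomorphism and the bookkeeping is a welcome elaboration of the same argument.
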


\begin{proof}
By definition, one has  $\I_p\simeq\Z_p^\vee$ as $\Z_p$-modules.
Thus, for $\lambda\in\I_p$, $\phi\in\I_p(m)^*$ and $g\in G$ one has  
\[ (g.\phi)(\lambda)=\phi\left(g^{-1}.\lambda\right)=
\phi\left(\theta(g^{-1})^m\cdot\lambda\right)=\theta(g)^{-m}\cdot\phi(\lambda), \]
and this yields the claim.
\end{proof}

\begin{lem}\label{lem:homtensor equivalence}
For $G$ a profinite group and $\theta\colon G\rightarrow\Z_p^\times$ an orientation,
the Pontryagin dual induces the natural isomorphism of functors
\[ \Z_p(m)^\times\hat\otimes_G\Hom(\argu,\I_p)\longrightarrow \Hom_G(\Z_p(m),\argu)^\vee, \quad m\in\Z.\]
\end{lem}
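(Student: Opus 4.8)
The plan is to deduce the isomorphism from the standard tensor--Hom adjunction in the framework of Pontryagin duality, together with the biduality $(M^\vee)^\vee\simeq M$ recorded earlier. First I fix the relevant categories. The argument $\argu$ ranges over discrete $\ZpG$-modules $A$, so that $A^\vee=\Hom(A,\I_p)$ is a compact $\ZpG$-module; since $\Z_p(m)$ is a finitely generated compact $\ZpG$-module, the completed tensor product $\Z_p(m)\hat\otimes_G A^\vee$ is again compact, while $\Hom_G(\Z_p(m),A)$ is discrete and its Pontryagin dual is compact. Thus both functors in the statement are contravariant and send discrete modules to compact modules, and it suffices to produce an isomorphism between them which is natural in $A$.

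The key input is the adjunction
\[
\left(M\hat\otimes_G N\right)^\vee\simeq\Hom_G\!\left(M,N^\vee\right),
\]
natural in the compact $\ZpG$-modules $M$ and $N$. This is the $\I_p$-valued tensor--Hom adjunction: by the universal property of $\hat\otimes_G$, an element of the left-hand side is a continuous $G$-balanced bilinear map $M\times N\to\I_p$, and sending such a map $\beta$ to $m\mapsto\beta(m,\argu)$ identifies it with a continuous $G$-equivariant map $M\to N^\vee$. (The $G$-balancing condition on $\beta$ is exactly $G$-equivariance of the associated map, using the anti-involution $g\mapsto g^{-1}$ of $\ZpG$ to pass between the left and right actions.)

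With this in hand the computation is immediate. Taking $M=\Z_p(m)$ and $N=A^\vee$, and using biduality $(A^\vee)^\vee\simeq A$ for discrete $A$, the adjunction gives
\[
\left(\Z_p(m)\hat\otimes_G A^\vee\right)^\vee\simeq\Hom_G\!\left(\Z_p(m),(A^\vee)^\vee\right)\simeq\Hom_G\!\left(\Z_p(m),A\right).
\]
Applying the Pontryagin dual once more and invoking biduality on the left-hand side, $(M^\vee)^\vee\simeq M$, I obtain
\[
\Z_p(m)\hat\otimes_G A^\vee\simeq\Hom_G\!\left(\Z_p(m),A\right)^\vee,
\]
which is the asserted isomorphism. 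Naturality in $A$ follows from the naturality of the tensor--Hom adjunction and of the biduality isomorphism, each of which is natural in both variables.

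The main obstacle I expect is not the formal chain of isomorphisms but verifying that the tensor--Hom adjunction genuinely holds in the topological (compact/discrete) framework rather than merely the abstract one: one must check that $\hat\otimes_G$ really corepresents \emph{continuous} $G$-balanced bilinear maps into a discrete module, and that the resulting bijection is an isomorphism of topological $\Z_p$-modules, so that Pontryagin duality may legitimately be applied a second time. These foundational points are supplied by the treatment of continuous (co)homology and Pontryagin duality for $\ZpG$-modules in the references already cited (\cite{brumer:pseudocompact}, \cite[\S~V.2]{nsw:cohm}, and \cite{symonds_thomas:prop}), and once they are granted the remaining steps are purely functorial.
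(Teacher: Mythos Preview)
Your argument is correct and more complete than the paper's own proof, but it proceeds along a different line. The paper simply writes down the explicit map
\[
\lambda\otimes f\longmapsto\bigl(h\mapsto f(h(\lambda))\bigr),\qquad \lambda\in\Z_p(m),\ f\in A^\vee,\ h\in\Hom_G(\Z_p(m),A),
\]
and then verifies the single identity $\phi_{\lambda.g\otimes f}=\phi_{\lambda\otimes g.f}$, i.e.\ the $G$-balancing condition, appealing to the equivariance of $h$ and the definition of the $G$-action on $A^\vee$. That is all: the paper does not separately check bijectivity or naturality. By contrast you derive the isomorphism formally from the tensor--Hom adjunction $\bigl(M\hat\otimes_G N\bigr)^\vee\simeq\Hom_G(M,N^\vee)$ together with biduality $(A^\vee)^\vee\simeq A$, and then dualise once more. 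Your route has the advantage that bijectivity and naturality come for free from the functorial machinery, at the cost of invoking the topological tensor--Hom adjunction as a black box; the paper's route is entirely elementary and exhibits the map concretely, but leaves the reader to supply injectivity and surjectivity. The two are of course the same map: the explicit formula above is precisely the component of the adjunction isomorphism.
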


\begin{proof}
Let $N$ be a discrete left $\ZpG$-module, and let $\lambda\in\Z_p(m)^\times$ and $f\in\Hom(N,\I_p)$.
Then define $\phi_{\lambda\otimes f}\in \Hom_G(\Z_p(m),N)^\vee$ such that 
$\phi_{\lambda\otimes f}(h)=f(h(\lambda))$ for every $h\in \Hom_G(\Z_p(m),N)$.
Then, for every $g\in G$, one has
\begin{eqnarray*}
&& \phi_{\lambda.g\otimes f}(h)=f(h(\lambda.g))=f\left(h(g^{-1}.\lambda)\right)= \\ &&
=f\left(g^{-1}.h(\lambda)\right)=g.f(h(\lambda))=\phi_{\lambda\otimes g.f}(h),
\end{eqnarray*}
and this yields the claim.
\end{proof}

In particular, for $m=0$ Lemma~\ref{lem:homtensor equivalence} implies the isomorphism
\begin{equation}\label{eq:isomorphism ext tor dual}
\Tor_n^{\ZpG}\left(\Z_p,M^\vee\right) \simeq \Ext_{\ZpG}^n\left(\Z_p,M\right)^\vee,
\end{equation}
with $M$ a discrete module (see also \cite[Corollary~5.2.9]{nsw:cohm}).

\begin{prop}\label{2prop:threequivalence homologycohomology}
Let $G$ be a profinite group with a $p$-orientation $\theta$.
Then for every $m\in\Z$ the following are equivalent:
\begin{itemize}
 \item[(i)] $H ^{m+1}(G,\Z_p(m))$ is a torsion-free $\Z_p$-module.
 \item[(ii)] $H^m(G,\I_p(m))$ is a $p$-divisible $\Z_p$-module. 
 \item[(iii)] $H_m(G,\Z_p(-m))$ is a torsion-free $\Z_p$-module.
\end{itemize}\end{prop}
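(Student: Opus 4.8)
The plan is to deduce all three equivalences from the single short exact sequence \eqref{2eq:sesZp1Qp1Ip1} together with the two duality statements already recorded in the excerpt. Concretely, I would prove (i) $\Leftrightarrow$ (ii) by reading off the long exact cohomology sequence attached to \eqref{2eq:sesZp1Qp1Ip1}, and then (ii) $\Leftrightarrow$ (iii) by passing to Pontryagin duals.

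For (i) $\Leftrightarrow$ (ii), the key observation is that $\Z_p(m)\simeq\Z_p$ as a $\Z_p$-module, so it is a (topologically) finitely generated torsion-free $\ZpG$-module, and \eqref{2eq:sesZp1Qp1Ip1} is exactly the sequence \eqref{1eq:short exact sequence modules Tate} for $M=\Z_p(m)$, $V=\Q_p(m)$, $N=\I_p(m)$. I would therefore apply Proposition~\ref{1prop:tate proposition} with $n=m+1$: the connecting homomorphism $\delta^{m+1}\colon H^m(G,\I_p(m))\to H^{m+1}(G,\Z_p(m))$ has image equal to the torsion $\Z_p$-submodule of $H^{m+1}(G,\Z_p(m))$ and kernel equal to the maximal divisible $\Z_p$-submodule of $H^m(G,\I_p(m))$. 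Hence $H^{m+1}(G,\Z_p(m))$ is torsion-free iff $\delta^{m+1}=0$ iff $\kernel(\delta^{m+1})$ is all of $H^m(G,\I_p(m))$ iff $H^m(G,\I_p(m))$ coincides with its maximal divisible submodule. Since $\I_p(m)$ is a $p$-primary torsion group, $H^m(G,\I_p(m))$ is likewise $p$-primary torsion, so ``divisible'' and ``$p$-divisible'' agree here, which is precisely~(ii).

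For (ii) $\Leftrightarrow$ (iii), I would identify $H_m(G,\Z_p(-m))$ with the Pontryagin dual of $H^m(G,\I_p(m))$. By Fact~\ref{fact:pontryagin dual} one has $\I_p(m)^\vee\simeq\Z_p(-m)$, and $\I_p(m)$ is discrete, so the second isomorphism of Proposition~\ref{1prop:ext_tor} gives $H_m(G,\Z_p(-m))=H_m(G,\I_p(m)^\vee)\simeq H^m(G,\I_p(m))^\vee$. It then remains to note that Pontryagin duality exchanges $p$-divisibility and torsion-freeness: applying the exact contravariant functor $(\argu)^\vee$ to the multiplication-by-$p$ endomorphism of $A=H^m(G,\I_p(m))$ shows that $p\colon A\to A$ is surjective iff $p\colon A^\vee\to A^\vee$ is injective, i.e. $A$ is $p$-divisible iff $A^\vee$ is torsion-free (torsion in a $\Z_p$-module being $p$-primary). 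With this $A$ the statement reads exactly (ii) $\Leftrightarrow$ (iii).

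The routine content is minimal; the point requiring care is the bookkeeping in the first step --- verifying that \eqref{2eq:sesZp1Qp1Ip1} really is an instance of \eqref{1eq:short exact sequence modules Tate}, and that the degree shift in Proposition~\ref{1prop:tate proposition} lands the connecting map in the correct pair $H^m(G,\I_p(m))\to H^{m+1}(G,\Z_p(m))$ --- together with the clean translation, via the $p$-primary torsion nature of the coefficients, between ``the maximal divisible submodule is everything'' and the literal condition ``$p$-divisible''. Everything else reduces to the two duality results stated earlier, so I expect no genuine obstacle beyond this identification.
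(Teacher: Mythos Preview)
Your proposal is correct and follows essentially the same route as the paper: the equivalence (i)~$\Leftrightarrow$~(ii) is read off from the long exact sequence of \eqref{2eq:sesZp1Qp1Ip1} via Proposition~\ref{1prop:tate proposition}, and (ii)~$\Leftrightarrow$~(iii) comes from the Pontryagin-duality identification $H_m(G,\Z_p(-m))\simeq H^m(G,\I_p(m))^\vee$ obtained from Fact~\ref{fact:pontryagin dual} together with the duality isomorphism (the paper cites \eqref{eq:isomorphism ext tor dual}, which is the same content as the second part of Proposition~\ref{1prop:ext_tor}). Your extra remark that ``divisible'' and ``$p$-divisible'' agree because $H^m(G,\I_p(m))$ is $p$-primary is a welcome clarification that the paper leaves implicit.
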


\begin{proof}
The exact sequence \eqref{2eq:sesZp1Qp1Ip1} induces the exact sequence in cohomology
\begin{equation}\label{2eq:lngextseqI_p}
\xymatrix@C=0.7truecm{
H^n\left(G,\Z_p(m)\right)\ar[r] & H^n\left(G,\Q_p(m)\right)\ar[r] &
H^n\left(G,\I_p(m)\right)\ar`r[d]`[l]^-{\delta^m} `[dll] `[dl] [dl]   \\
 & H^{m+1}\left(G,\Z_p(m)\right)\ar[r] & H^{m+1}\left(G,\Q_p(m)\right) }
\end{equation}
for $m\geq0$.
Then by Proposition~\ref{1prop:tate proposition}, one has that $\kernel(\delta)$ is
the maximal $p$-divisible subgroup of $H^m(G,\I_p(m))$,
and $\image(\delta)$ is the torsion subgroup of $H^{m+1}(G,\Z_p(m))$.
Thus, $H^{m+1}(G,\Z_p(m))$ is a torsion-free $\Z_p\dbl G\dbr$-module if, and only if, 
$\delta$ is the zero map, i.e., $\kernel(\delta)=H^m(G,\I_p(m))$.
This establishes the equivalence between (i) and (ii).

Now set $M=\I_p(m)$.
Then (\ref{eq:isomorphism ext tor dual}) and Fact~\ref{fact:pontryagin dual} imply that
\[H_m(G,\Z_p(-m))\cong H^m\left(G,\I_p(m)\right)^\vee.\]
By duality, it follows that $H_m(G,\Z_p(-m))$ is a profinite torsion-free $\ZpG$-module
if, and only if, $H^m(G,\I_p(m))$ is a discrete $p$-divisible $\ZpG$-module.
This establishes the equivalence between (ii) and (iii).
\end{proof}

The above result has the following consequence.

\begin{cor}\label{2cor:kernelab torfree}
Let $(G,\theta)$ a cyclo-oriented pro-$p$ group.
The abelianization $\kernel(\theta)/[\kernel(\theta),\kernel(\theta)]$ of the kernel
of the orientation $\theta$ is a torsion-free group.
\end{cor}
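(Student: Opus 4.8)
The plan is to reduce the statement to Proposition~\ref{2prop:threequivalence homologycohomology}, but applied not to $G$ itself, rather to the closed subgroup $N=\kernel(\theta)$ equipped with the \emph{trivial} orientation obtained by restricting $\theta$. The point is that the cyclotomic condition ``$H^2(C,\Z_p(1))$ is torsion-free for every closed subgroup $C$'' is strong enough to be used at the single subgroup $C=N$, and on $N$ all the Tate twists degenerate.

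First I would record the key observation that $\theta$ restricts to the trivial homomorphism on $N=\kernel(\theta)$. Consequently the $G$-action on $\Z_p$ induced by any power of $\theta$ becomes trivial upon restriction, so that $\Z_p(m)|_N\simeq\Z_p$ as $\Z_p\dbl N\dbr$-modules for every $m\in\Z$. In particular $(N,\theta|_N)$ is a (pro-$p$) group carrying the trivial orientation, and Proposition~\ref{2prop:threequivalence homologycohomology} applies to it. Taking $m=1$, that proposition gives the equivalence of ``$H^2(N,\Z_p(1))$ is torsion-free'' with ``$H_1(N,\Z_p(-1))$ is torsion-free''.

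Next I would feed the hypothesis into the cohomological condition. Since $(G,\theta)$ is cyclo-oriented, $H^2(C,\Z_p(1))$ is torsion-free for every closed subgroup $C\leq G$; specializing to $C=N$ and using $\Z_p(1)|_N\simeq\Z_p$ yields that $H^2(N,\Z_p)$ is a torsion-free $\Z_p$-module, which is exactly condition (i) of the proposition for $(N,\theta|_N)$ at $m=1$. Reading off the equivalent condition (iii), and using $\Z_p(-1)|_N\simeq\Z_p$ once more, I conclude that $H_1(N,\Z_p)$ is torsion-free. It then remains to identify $H_1(N,\Z_p)$ with the abelianization $N^{\ab}=\kernel(\theta)/[\kernel(\theta),\kernel(\theta)]$ of $N$ as a pro-$p$ group: by Proposition~\ref{1prop:ext_tor} together with Pontryagin duality and Fact~\ref{1fact:elementary cohomology}(ii), one has $H_1(N,\Z_p)\simeq H^1(N,\I_p)^\vee=\Hom(N,\I_p)^\vee=\big((N^{\ab})^\vee\big)^\vee\simeq N^{\ab}$, completing the argument.

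I do not expect a genuine obstacle here, since the real content sits entirely in Proposition~\ref{2prop:threequivalence homologycohomology}; the corollary is essentially the remark that the cyclotomic condition, evaluated at $C=\kernel(\theta)$, becomes a statement about the abelianization of the kernel. The only points requiring mild care are bookkeeping ones: that torsion-freeness of $H_1(N,\Z_p)$ as a $\Z_p$-module coincides with torsion-freeness of $N^{\ab}$ as an abstract group (which is fine, as all torsion in a pro-$p$ group is $p$-torsion, matching $\Z_p$-module torsion), and that the homology identification $H_1(N,\Z_p)\simeq N^{\ab}$ is invoked through the homology formalism of profinite groups rather than the cohomological one used earlier.
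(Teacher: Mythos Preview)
Your proposal is correct and follows essentially the same approach as the paper: restrict to $N=\kernel(\theta)$ where the orientation becomes trivial so that $\Z_p(\pm1)|_N\simeq\Z_p$, invoke the cyclotomic hypothesis at the closed subgroup $C=N$ to obtain torsion-freeness of $H^2(N,\Z_p(1))$, and then apply Proposition~\ref{2prop:threequivalence homologycohomology} with $m=1$ to conclude that $H_1(N,\Z_p)\simeq N^{\ab}$ is torsion-free. Your write-up is in fact more explicit than the paper's (which compresses the argument to two lines), particularly in spelling out the identification $H_1(N,\Z_p)\simeq N^{\ab}$ via duality.
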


\begin{proof}
Since the restriction $\theta|_{\kernel(\theta)}$ is trivial, $\Z_p(1)$ is a trivial
$\Z_p\dbl\kernel(\theta)\dbr$-module.
Thus, one has
\[H_1(\kernel(\theta),\Z_p(1))=H_1(\kernel(\theta),\Z_p)\simeq\frac{\kernel(\theta)}{[\kernel(\theta),\kernel(\theta)]}.\]
Therefore, $\kernel(\theta)/[\kernel(\theta),\kernel(\theta)]$ is torsion-free
by Proposition~\ref{2prop:threequivalence homologycohomology}.
\end{proof}

On the other hand, the following result describes the torsion of $H^1(G,\Z_p(1))$ as $\Z_p$-module
(\cite[Lemma~3.2]{qw:diederichsen}).

\begin{lem}\label{2lem:torsion H1}
Let $(G,\theta)$ be a cyclo-oriented pro-$p$ group such that $\image(\theta)=1+p^k\Z_p\simeq\Z_p$
for some $k\geq1$.
The torsion $\Z_p$-submodule of $H^1(G,\Z_p(1))$ is isomorphic to $\Z/p^k\Z$. 
\end{lem}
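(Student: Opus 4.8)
The plan is to determine the torsion submodule of $M:=H^1(G,\Z_p(1))$ by computing its $p^n$-torsion $M[p^n]$ for every $n$ and then letting $n$ grow. The only coefficient module available is $\Z_p(1)$, so the natural tool is the long exact cohomology sequence attached to the short exact sequence of $\ZpG$-modules
\[ \xymatrix{ 0\ar[r] & \Z_p(1)\ar[r]^-{p^n} & \Z_p(1)\ar[r] & \Z_p(1)/p^n\Z_p(1)\ar[r] & 0. } \]
First I would record that $H^0(G,\Z_p(1))=\Z_p(1)^G=0$: since $\image(\theta)=1+p^k\Z_p$ is non-trivial there is $\sigma\in G$ with $\theta(\sigma)-1=p^k\neq 0$, and $(\theta(\sigma)-1)\lambda=0$ forces $\lambda=0$ in the domain $\Z_p$.

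With $H^0(G,\Z_p(1))=0$, the long exact sequence degenerates to
\[ \xymatrix@C=.6truecm{ 0\ar[r] & H^0(G,\Z_p(1)/p^n\Z_p(1))\ar[r]^-{\delta} & H^1(G,\Z_p(1))\ar[r]^-{p^n} & H^1(G,\Z_p(1)), } \]
so the connecting map $\delta$ is injective and identifies $M[p^n]=\kernel(p^n)$ with the invariants $(\Z_p(1)/p^n\Z_p(1))^G$. Now $\Z_p(1)/p^n\Z_p(1)$ is $\Z/p^n\Z$ with $g$ acting as multiplication by $\theta(g)$, so its invariants are $\{\lambda\in\Z/p^n\Z : (\theta(g)-1)\lambda\equiv 0 \pmod{p^n}\ \forall g\}$. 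Here I would use the exact shape of the image: as $g$ ranges over $G$ the elements $\theta(g)-1$ fill out $p^k\Z_p$, and since $1+p^k\in\image(\theta)$ the value $p^k$ itself is attained, so the defining congruence reduces to the single condition $p^k\lambda\equiv 0\pmod{p^n}$. Hence $(\Z_p(1)/p^n\Z_p(1))^G=p^{\max(n-k,0)}\Z/p^n\Z$, a cyclic group of order $p^{\min(n,k)}$.

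Finally I would take $n\geq k$. For such $n$ one gets $M[p^n]\cong\Z/p^k\Z$; in particular the case $n=k$ yields the whole of $\Z/p^k\Z$, a genuinely cyclic group of order $p^k$. Since the subgroups $M[p^k]\subseteq M[p^{k+1}]\subseteq\cdots$ are nested and all have order $p^k$, they coincide, and their union---which is precisely the torsion $\Z_p$-submodule of $M$---is this one cyclic group. This gives $H^1(G,\Z_p(1))_{\tor}\cong\Z/p^k\Z$, as claimed.

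The computation is entirely formal once the image of $\theta$ is known; the one point that demands care is the passage from ``killed by $p^k$, of order $p^k$'' to ``cyclic'', which is why I would read off cyclicity directly from the case $n=k$ rather than from orders alone. I do not expect to need the full cyclotomic hypothesis (the Bloch--Kato property together with torsion-freeness of $H^2$) for this lemma: only the non-triviality of $\theta$ and the precise description $\image(\theta)=1+p^k\Z_p$ enter, so the genuine content is the elementary invariants computation above.
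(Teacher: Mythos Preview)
Your argument is correct. The route differs from the paper's: the paper invokes the short exact sequence $0\to\Z_p(1)\to\Q_p(1)\to\I_p(1)\to0$ and appeals to Proposition~\ref{1prop:tate proposition} (Tate's result that the boundary map $\delta^0\colon H^0(G,\I_p(1))\to H^1(G,\Z_p(1))$ has image exactly the torsion submodule), then computes $\I_p(1)^G=\tfrac{1}{p^k}\Z_p/\Z_p\simeq\Z/p^k\Z$ in one stroke. You instead work with the finite-level sequences $0\to\Z_p(1)\xrightarrow{p^n}\Z_p(1)\to\Z_p(1)/p^n\to0$, identify $M[p^n]$ with $(\Z_p(1)/p^n)^G$, and pass to the limit by hand. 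Since $\I_p(1)=\varinjlim_n\Z_p(1)/p^n$, the two computations of invariants are the same underneath; what you gain is that your proof is self-contained and avoids citing Proposition~\ref{1prop:tate proposition}, while the paper's version is shorter once that proposition is available. Your closing observation that the cyclotomic hypothesis is not actually used---only the shape of $\image(\theta)$---is also correct; the paper's proof likewise uses nothing beyond $\image(\theta)=1+p^k\Z_p$.
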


\begin{proof}
The exact sequence of $G$-modules \eqref{2eq:sesZp1Qp1Ip1} with $m=1$ induces the exact cohomology sequence
\[ \xymatrix{\cdots\ar[r] & H^0\left(G,\Q_p(1)\right)\ar[r] &  
H^0\left(G,\I_p(1)\right)\ar`r[d]`[l]^-{\delta^0} `[dll] `[dl] [dl] & \\
  & H^1\left(G,\Z_p(1)\right) \ar[r] & H^1(G,\Q_p(1))\ar[r] &\cdots } \]
As $\theta$ is non-trivial, one has $H^0(G,\Q_p(1))=\Q_p(1)^G=0$.
Moreover, $H^1(G,\Q_p(1))$ is a torsion free $\Z_p$-module, and thus by Proposition~\ref{1prop:tate proposition}
$\delta$ induces an isomorphism between $\I_p(1)^G$ and the torsion submodule of $H^1\left(G,\Z_p(1)\right)$.
Since \[\I_p(1)^G=\frac{1}{p^k}\Z_p(1)/\Z_p(1)\simeq\Z_p/p^k\Z_p\] as $\Z_p$-modules, this yields the claim.
\end{proof}

\section{The Elementary Type conjecture}

\subsection{Demushkin groups}
Let $\F$ be a field and let $V,W$ be two $\F$-vector spaces.
Recall that a {\it non-degenerate pairing} of $V$ and $W$ is a $\F$-bilinear form $V\times W\rightarrow \F$
which induces injections $V\hookrightarrow W^*$ and $W\hookrightarrow V^*$, where $V^*$ and $W^*$
are the $\F$-duals.
If $V$ and $W$ have finite dimensions, then such monomorphisms are isomorphisms.

\begin{defi}
 A finitely generated pro-$p$ group $G$ is called a {\bf Demushkin group} if the cohomology ring $H^\bullet(G,\F_p)$
satisfies the following properties:
\begin{itemize}
 \item[(i)] $H^1(G,\F_p)=1$ is isomorphic to $\F_p$ as $\F_p$-vector spaces.
 \item[(ii)] the cup product $\cup\colon H^1(G,\F_p)\times H^1(G,\F_p)\rightarrow H^2(G,\F_p)$ is a non-degenerate pairing.
\end{itemize}
\end{defi}

If $G$ is an infinite Demushkin group, then $G$ is a Poincar\'e group of cohomological dimension $\ccd(G)=2$.
On the other hand, the only torsion Demushkin group is the cyclic group of order 2
(cf. \cite[(3.9.9) and (3.9.10)]{nsw:cohm}).
In particular, one may find a basis $\{\chi_1,\ldots,\chi_d\}$ of $H^1(G,\F_p)$, with $d=d(G)$,
such that 
\begin{eqnarray*}
 \chi_1\cup\chi_2=\chi_3\cup\chi_4=\ldots=\chi_{d-1}\cup\chi_d,\quad\text{for }2\mid d,\\
 \chi_1\cup\chi_1=\chi_2\cup\chi_3=\ldots=\chi_{d-1}\cup\chi_d,\quad\text{for }2\nmid d,
\end{eqnarray*}
and $H^2(G,\F_p)$ is generated by such element.

Demuskin groups are arithmetically interesting, as they appear as Galois group of $p$-adic fields.
In particular, if $K$ is a {\it $p$-adic local field} (i.e., a finite extension of the field $\Q_p$)
containing the roots of unity of order $p$, then the maximal pro-$p$ Galois group $G_K(p)$ is a Demushkin group
with $d(G_K(p))=|K:\Q_p|+2$ (cf. \cite[Theorem~7.5.11]{nsw:cohm}).
From this, it follows that arithmetical Demushkin groups are also Bloch-Kato pro-$p$ groups.

By \eqref{1eq:number relations}, a Demushkin group $G$ is a {\it one-relator} pro-$p$ group, i.e., $r(G)=1$.
Thus, one has that either
\[G/[G,G]\simeq\Z_p^d \quad \text{or} \quad G/[G,G]\simeq \Z/p^k\Z\times\Z_p^{d-1},\]
with $k\geq1$.
Set $q_G=0$ in the former case and $q_G=p^k$ in the latter.
The structure of Demushkin groups has been studied in the '60s by S.P.~Demu\v{s}kin, J.-P.~Serre
and finally by J.~Labute, who completed the case $p=2$ and equipped Demushkin groups with a suitable orientation.
Altogether, one has the following (cf. \cite{labute:classification}).

\begin{thm}\label{2thm:Demushkin}
 Let $G$ be a finitely generated one-relator pro-$p$ group, and set $q_G$ as above.
Then $G$ is a Demushkin group if, and only if, it is isomorphic to the pro-$p$ group with a presentation
$\langle x_1,\ldots,x_d|\:r=1 \rangle$ with $d=d(G)$, such that 
\begin{itemize}
 \item[(i)] if $p$ is odd or if $p=2$ and $q_G\neq2$, one has \[r=x_1^{-q_G}[x_1,x_2][x_3,x_4]\cdots[x_{d-1},x_d];\]
 \item[(ii)] if $q_G=2$ and $d$ is odd, one has \[r=x_1^{-2}x_2^{-2^f}[x_2,x_3][x_4,x_5]\cdots[x_{d-1},x_d],\] 
for some $f=2,3,\ldots,\infty$;
 \item[(iii)]  if $q_G=2$ and $d$ is even, one has \[r=x_1^{-2-\alpha}[x_1,x_2]x_3^{-2^f}[x_3,x_4]\cdots[x_{d-1},x_d],\]
for some $f=2,3,\ldots,\infty$ and $\alpha\in 4\Z_2$.
\end{itemize}
Moreover, let $\theta\colon G\rightarrow\Z_p^\times$ be the orientation defined by
\begin{itemize}
 \item[(i)] $\theta(x_2)=(1-q_G)$ and $\theta(x_i)=1$ for $i\neq2$, if $q_G\neq2$;
 \item[(ii)] $\theta(x_1)=-1$, $\theta(x_3)=(1-2^f)$ and $\theta(x_i)=1$ for $i\neq1,3$, if $q_G=2$ and $d$ is odd;
 \item[(iii)] $\theta(x_2)=-(1+\alpha)^{-1}$, $\theta(x_4)=(1-2^f)$ and $\theta(x_i)=1$
for $i\neq2,4$, if $q_G=2$ and $d$ is even.
\end{itemize}
Then $\theta$ is a cyclotomic orientation for $G$.
\end{thm}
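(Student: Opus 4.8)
The plan is to prove the two assertions of the theorem in turn: first the classification of one-relator Demushkin groups by the invariants $d=d(G)$ and $q_G$, and then that the displayed $\theta$ is a cyclotomic orientation. For the classification I would begin from the cohomological data. Since $r(G)=1$, formula \eqref{1eq:number relations} gives $\dim_{\F_p}H^2(G,\F_p)=1$, and the defining property of a Demushkin group is that the cup product
$$H^1(G,\F_p)\times H^1(G,\F_p)\longrightarrow H^2(G,\F_p)\cong\F_p$$
is a non-degenerate pairing. By Proposition~\ref{1prop:cup product} this form is alternating for $p$ odd and possibly symmetric for $p=2$; in the odd case alternation already forces $d$ to be even. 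The link between the form and the relation is the transgression in the five-term sequence (Proposition~\ref{1prop:5tes}) for a minimal presentation $1\to R\to F\to G\to1$: expanding the defining relation $r$ in the Magnus algebra $\F_p\dbml\mathcal{X}\dbmr$ (Remark~\ref{1rem:magnus algebra}), its degree-two coefficients compute the products $\chi_i\cup\chi_j$. Normalising the non-degenerate form over $\F_p$ — symplectically for $p$ odd, and by the classification of symmetric forms for $p=2$ — corresponds to a change of minimal generating system that reduces the degree-two part of $r$ to $[x_1,x_2]\cdots[x_{d-1},x_d]$. The leading power $x_1^{-q_G}$, and for $p=2$ the additional $2$-power factors $x_i^{-2^f}$, are then pinned down by reading $q_G$ off $G/[G,G]$ together with the first Bockstein (Lemma~\ref{1lem:Bockstein and cup}), which for $p=2$ ties $\chi\cup\chi$ to the squares. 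I expect the genuinely hard work to be confined to $p=2$ with $q_G=2$, where the symmetric form does not diagonalise and the subcases (ii) and (iii) split according to the parity of $d$; this is exactly the delicate part of the Demushkin--Serre theory completed by Labute, so I would invoke \cite{labute:classification} for the full normalisation rather than redo it. The converse — that each listed presentation defines a Demushkin group — is the easier direction, obtained by computing $H^\bullet$ of the explicit one-relator group and checking non-degeneracy.

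For the orientation I would first note that $\theta$ factors through $G/[G,G]$, so it is a homomorphism as soon as $\theta(r)=1$ in $\Z_p^\times$. Since commutators vanish, this reduces in each case to the surviving power factors of $r$, which evaluate to $1$ under the prescribed values (in (i) because $\theta(x_1)=1$; in (ii)--(iii) because the relevant generators are sent to $1$ or to $-1$, whose square is $1$, and $-1$ is the unique element of order $2$ in $\Z_2^\times$). The non-trivial values of $\theta$ — on $x_2$ in (i), on $x_1,x_3$ in (ii), on $x_2,x_4$ in (iii) — are instead forced by requiring $\theta$ to be the character by which $G$ acts on its dualizing module; one computes this action from the presentation and matches it against the stated formulas, and it is here that all the case distinctions genuinely bite.

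It then remains to verify cyclotomicity, i.e. that $G$ is Bloch-Kato and that $H^2(C,\Z_p(1))$ is torsion-free for every closed $C\leq G$. The key input is that a Demushkin group is a Poincar\'e duality pro-$p$ group of dimension $2$ whose dualizing module is $\Z_p(1)$ with the above $\theta$-action; Poincar\'e duality then identifies $H^2(G,\Z_p(1))$ with a free $\Z_p$-module of rank one, which is torsion-free (equivalently, by Lemma~\ref{2lem:H2 torfree iff H1 projects}, $H^1(G,\Z_p(1))$ surjects onto $H^1(G,\F_p)$). The same holds for every open subgroup, since open subgroups of Demushkin groups are again Demushkin and the dualizing module restricts; any closed subgroup of infinite index has $\ccd\leq1$ and is therefore free pro-$p$, so $H^2(C,\Z_p(1))=0$. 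Finally $G$ is Bloch-Kato because the $\F_p$-cohomology of a Demushkin group is quadratic (generators in degree one, the single relation in degree two, by the non-degenerate pairing) and that of a free pro-$p$ group is quadratic as well. The two principal obstacles are thus the full $p=2$ normalisation in (ii)--(iii), which I would take from \cite{labute:classification}, and the identification of the explicit $\theta$ with the dualizing character, which is what anchors the chosen $\theta$-values to the arithmetic of the relation.
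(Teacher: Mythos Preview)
The paper does not prove this theorem: it is stated with the attribution ``(cf.\ \cite{labute:classification})'' and no argument is given. So there is no proof in the paper to compare against; the result is imported wholesale from Labute's classification, with Remark~\ref{2rem:notation of labute} only reconciling notational conventions.

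Your proposal is therefore considerably more detailed than what the paper offers, and the outline is sound. The classification sketch --- reading the cup-product form off the degree-two part of $r$ in the Magnus algebra via the transgression, normalising the form, and deferring the genuinely delicate $p=2$, $q_G=2$ cases to \cite{labute:classification} --- is exactly the standard route and matches the Demushkin--Serre--Labute development. For the second part, your verification that $\theta(r)=1$ is correct in each case, and the argument for cyclotomicity (Poincar\'e duality identifies the dualizing module as $\Z_p(1)$, giving $H^2(G,\Z_p(1))\simeq\Z_p$; open subgroups remain Demushkin; closed subgroups of infinite index are free so $H^2$ vanishes; quadraticity of $H^\bullet$ for both Demushkin and free pro-$p$ groups gives Bloch--Kato) is the right one and is essentially what is implicit in the paper's later use of this theorem. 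In short: your sketch is correct and already goes beyond what the paper itself supplies.
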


\begin{rem}\label{2rem:notation of labute}
 Note that in \cite{labute:classification}, the author uses a different notation:
\begin{itemize}
 \item[(a)] he uses the ``right-action notation'' for the commutator, i.e., $[x,y]=x^{-1}x^y=x^{-1}y^{-1}xy$,
that is why we get the signs minus in the relations;
 \item[(b)] he uses $\chi$ instead of $\theta$ for the orientation (and he does not use such term, as he calls 
$\chi$ simply a ``continuous homomorphism'');
 \item[(c)] he uses $\mathbf{U}_p$ instead of $\Z_p^\times$.
\end{itemize}
\end{rem}

It is interesting to compute also the restricted Lie algebra $L_\bullet(G)$ induced by the Zassenhaus filtration,
when $G$ is a Demushkin group.
Let $\mathcal{X}=\{x_1,\ldots,x_d\}$, $d=d(G)$, be a minimal generating system for $G$
such that $G$ has a presentation with a relation $r$ as described in Theorem~\ref{2thm:Demushkin}.
Then $r$ lies in $D_2(G)\smallsetminus D_3(G)$, and one has
\[r\equiv\left\{\begin{array}{cc} [x_1,x_2]+\ldots+[x_{d-1},x_d] \bmod D_3(G) & \text{if }q_G\neq2 \\ 
 x^2+[x_1,x_2]+\ldots+[x_{d-1},x_d] \bmod D_3(G) & \text{if }q_G=2,\:2\mid d \\
 x^2+[x_2,x_3]+\ldots+[x_{d-1},x_d] \bmod D_3(G) & \text{if }q_G=2,\:2\nmid d  
 \end{array}\right. \]
(where we used the additive notation for the quotient $D_2(G)/D_3(G)$).
If $q_G\neq2$, then \cite[Theorem~2.4.6]{jochen:thesis} and Theorem~\ref{1thm:envelope Zassenhaus filtration} imply
that the restricted Lie algebra $L_\bullet(G)$ is the quotient of the free restricted $\F_p$-Lie algebra
$L_p(\mathcal{X})$ over the ideal $\mathfrak{r}$ generated by the image of $r$ in $L_2(F)$,
where $F$ is the free pro-$p$ group generated by $\mathcal{X}$.
Moreover, by Proposition~\ref{1prop:presentation restricted envelope} one has
\begin{equation}\label{2eq:restricted envelope demushkin}
 \grad_\bullet(G)\simeq \mathcal{U}_p(L_\bullet(G))\simeq
\frac{\F_p\langle\mathcal{X}\rangle}{\langle[X_1,X_2]+\ldots+[X_{d-1},X_d]\rangle},
\end{equation}
where $\mathcal{X}=\{X_1\ldots,X_d\}$, with the grading induced by the degrees of the monomials.

\subsection{Free products and the Elementary Type Conjecture}

Let $G$ be a profinite group, and let $\{G_i,i\in \mathcal{I}\}$ be a collection of profinite groups
with $\mathcal{I}$ a set of indices.
For every $i\in\mathcal{I}$, let $\iota_i\colon G_i\to G$ be a continuous homomorphism.
If the set $\mathcal{I}$ is not finite, assume further that for every open neighborhood $U$ of 1 in $G$,
$U$ contains all but a finite number of the images $\image(\iota_i)$ (if this condition is satisfied, the family
$\{\iota_i,i\in\mathcal{I}\}$ is said to be {\it convergent}).

The profinite group $G$, together with the maps $\iota_i$, is  said to be the {\bf free profinite product}
of the groups $G_i$ if the following universal property is satisfied:
whenever $\{\psi_i\colon G_i\to H,i\in\mathcal{I}\}$ is a convergent family of continuous homomorphisms
into a profinite group $H$, there exists a unique continuous homomorphism $\psi\colon G\to H$ such that
\begin{equation}\label{2eq:free product}
 \xymatrix{ G_i\ar[r]^-{\iota_i}\ar[d]_{\psi_i} & G \ar@{-->}[dl]^{\psi}\\ H}
\end{equation}
commutes for every $i\in\mathcal{I}$.
Such free product is denoted by
\[G=\coprod_{i\in\mathcal{I}}G_i.\]

If the groups $G_i$'s are all pro-$p$, then $G$ is the {\bf free pro-$p$ product}
if \eqref{2eq:free product} commutes for every $i\in\mathcal{I}$ and for any pro-$p$ group $H$
(see, e.g., \cite[\S~9.1]{ribeszalesskii:profinite} and \cite[\S~2.7]{ribes:lux} for details).
In particular, if $\mathcal{I}$ is finite, let $G^{abs}$ denote the free {\it abstract} product of the $G_i$'s.
Then $G$ is the projective limit $\varprojlim_{\mathcal{N}}G^{abs}/N$,
where $\mathcal{N}$ is the family of normal subgroups of $G^{abs}$
\begin{equation}\label{2eq:free prop prod N}
 \mathcal{N}=\left\{N\;\left|\;|G^{abs}:N|=p^m,N\cap G_i\leq G_i\text{ open for all }i \right.\right\}.
\end{equation}
One may denote the free pro-$p$ product also by 
\[G=G_1\ast_{\hat p}\cdots\ast_{\hat p}G_n.\]

It is well known that free profinite products of absolute Galois groups of fields are absolute Galois groups.
Moreover, the free pro-$p$ product of absolute Galois groups which are pro-$p$ is a pro-$p$ group which is
realizable as absolute Galois group for some fields (see, e.g., \cite{ershov:products}
and \cite{haranjardenkoenigsmann} for details).

Yet, such a general statement is still missing for maximal pro-$p$ Galois groups of fields.
So far, only partial results are known: for example
\begin{itemize}
 \item[(a)] the free pro-$p$ product of two arithmentic Demushkin groups is realizable as
maximal pro-$p$ Galois group of fields (this is a consequence of \cite[Main Theorem]{efrat:etconj});
 \item[(b)] the free pro-$p$ product of $\theta$-abelian maximal pro-$p$ groups (and of $\Z/2\Z$ in the case $p=2$)
is realizable as maximal pro-$p$ Galois group of fields (this can be deduced from \cite{efrat:freeprod}).
\end{itemize}
The possibility to use free pro-$p$ products to construct new maximal pro-$p$ Galois groups provided the inspiration
to formulate the {\bf Elementary Type Conjecture} (=ETC) on maximal pro-$p$ Galois groups,
which states how a finitely generated maximal pro-$p$ Galois group of a field should ``look like''.

This conjecture originates form the theory of Witt rings: the {\it ETC on Witt rings of fields}
was concieved in the early '80s to study maximal 2-extension of fields via Witt rings, 
and it states that the Witt rings of fields of characteristic not 2 can be constructed from 
elementary ``building blocks'' by means of two standard operations (cf. \cite[\S~29.3]{efrat:libro}).
Later, B.~Jacob and R.~Ware translated the conjecture in the context of maximal pro-2 Galois groups of fields,
and they obtained a classification of such galois groups with small number of generators
(cf. \cite{jacobware:pro2} and \cite{jacobware:pro2bis}). 
In this case, finitely generated pro-2 Galois groups can be built from some ``basic'' pro-2 groups, i.e.,
$\Z_2$, Demushkin groups and $\Z/2\Z$.

In the '90s, I.~Efrat stated a stronger pro-$p$ arithmetical version of the ETC
which mimics Jacob and Ware's ``pro-2 translation'':
the maximal pro-$p$ Galois groups of fields of characteristic not $p$ can be constructed
from elementary ``building blocks'', such as $\Z_p$, Demushkin groups and $\Z/2\Z$, 
by iterating two rather simple operations, i.e., free pro-$p$ products and certain fibre products
(which we will define in Section~4.3), and it was stated explicitly in \cite[Question~4.8]{efrat:etconj}
and reformulated in \cite[Definition~1]{engler:etc}.

The ETC has been proven in some particular cases, such as maximal pro-$p$ Galois groups
of algebraic extensions of $\Q$, or fields satisfying certain condition on valuations.
A general answer for this conjecture seems to be still quite far from being obtained (so far).


\chapter[$\theta$-abelian groups]{The upper bound: $\theta$-abelian groups}

\section[The cup product]{$\theta$-abelian pro-$p$ groups and the cup product}

In this chapter we shall study the ``mountain boundary case'' of Bloch-Kato pro-$p$ groups,
namely, the case
\begin{equation}\label{3eq:wedge_isomorphic_cohomology}
\xymatrix{\bigwedge_\bullet H^1(G,\F_p) \ar[r]^-{\sim} & H^\bullet(G,\F_p).} 
\end{equation}

\begin{rem}
Most of the content of this chapter is contained (sometimes in a different shape) in \cite{claudio:BK}
and \cite{cmq:fast}, plus some generalizations for the two cases $p=2$ and $d(G)=\infty$.
\end{rem}

In particular, one has the following fact.

\begin{lem}\label{3lem:cup injective}
Let $(G,\theta)$ be a pro-$p$ group with orientation, and assume that $\theta$ is cyclotomic with
$\image(\theta)\simeq\Z_2$ if $p=2$.
Then the epimorphism \eqref{2eq:projection exterior algebra} is an isomorphism if, and only if, the map
\begin{equation}\label{3eq:wedge cup product}
\xymatrix{ H^1(G,\F_p)\wedge H^1(G,\F_p)\ar[rr]^-{\wedge_2(\cup)} && H^2(G,\F_p) },
\end{equation}
induced by the cup product, is injective.
In particular, if $G$ is finitely generated, then \eqref{3eq:wedge cup product} is an isomorphism
 if, and only if, 
\begin{equation}\label{3eq:cd=d}
 \ccd(G)=d(G).
\end{equation}
\end{lem}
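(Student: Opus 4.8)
The plan is to reduce everything to the elementary structure of \emph{quadratic} algebras. First I would record that both $\bigwedge_\bullet H^1(G,\F_p)$ and $H^\bullet(G,\F_p)$ are quadratic $\F_p$-algebras: the former by construction, the latter because a cyclo-oriented group is Bloch-Kato. Setting $V = H^1(G,\F_p)$, I would write $\bigwedge_\bullet V = \mathcal{T}^\bullet(V)/I$ and $H^\bullet(G,\F_p) = \mathcal{T}^\bullet(V)/\mathcal{R}$, where $I$ and $\mathcal{R}$ are the two-sided ideals generated by their respective degree-two components $I_2,\mathcal{R}_2 \subseteq V\otimes V$, with $I_2 = \langle\chi\otimes\chi\rangle$. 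The inclusion $I_2\subseteq\mathcal{R}_2$ --- equivalently the vanishing $\chi\cup\chi = 0$ for all $\chi$, which holds by skew-commutativity when $p$ is odd and by Proposition~\ref{2prop:bock} when $p=2$ and $\image(\theta)\simeq\Z_2$ --- is exactly what yields the epimorphism \eqref{2eq:projection exterior algebra}.

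For the first equivalence I would then argue degree by degree. The degree-two part of \eqref{2eq:projection exterior algebra} is the canonical surjection $\wedge_2(\cup)\colon (V\otimes V)/I_2\to(V\otimes V)/\mathcal{R}_2$, whose kernel is $\mathcal{R}_2/I_2$; hence $\wedge_2(\cup)$ is injective precisely when $I_2 = \mathcal{R}_2$. Since both ideals are generated in degree two, the equality $I_2 = \mathcal{R}_2$ propagates to $I = \mathcal{R}$, making \eqref{2eq:projection exterior algebra} an isomorphism, and conversely an isomorphism of graded algebras is injective in each degree. I expect this propagation step --- the observation that, for quadratic algebras, injectivity in degree two forces injectivity in all degrees --- to be the only genuine content; without Bloch-Kato quadraticity of $H^\bullet(G,\F_p)$ it would fail.

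For the ``in particular'' clause I would first note that $\wedge_2(\cup)$ is automatically surjective, since quadraticity makes $H^2(G,\F_p)$ spanned by the cup products $\chi\cup\chi'$, which are the images of $\chi\wedge\chi'$. Thus \eqref{3eq:wedge cup product} is an isomorphism if and only if it is injective, which by the first part of the lemma is equivalent to \eqref{2eq:projection exterior algebra} being an isomorphism. Finally I would invoke Corollary~\ref{1cor:cd d r for BK}(i) --- valid also for $\image(\theta)\simeq\Z_2$, as recorded after Proposition~\ref{2prop:bock} --- to identify, for finitely generated $G$, the isomorphism of \eqref{2eq:projection exterior algebra} with the equality $\ccd(G)=d(G)$, and chain the equivalences. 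The one place demanding care is the $p=2$ case, where the hypothesis $\image(\theta)\simeq\Z_2$ is precisely what supplies $\chi\cup\chi=0$ and keeps the whole argument running.
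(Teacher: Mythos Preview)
Your proposal is correct and matches the paper's approach exactly. The paper's own proof is only a pointer --- ``an easy argument from elementary linear algebra, together with Proposition~\ref{2prop:bock} for the case $p=2$'' plus a reference to \cite[Prop.~4.3]{claudio:BK} --- and you have spelled out precisely those details: the quadratic-algebra reduction (injectivity in degree two forces equality of the defining ideals, hence an isomorphism in all degrees), the use of Proposition~\ref{2prop:bock} to secure $\chi\cup\chi=0$ when $p=2$, and the appeal to Corollary~\ref{1cor:cd d r for BK}(i) for the finitely generated case.
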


\begin{proof}
 The proof is an easy argument from elementary linear algebra, together with Proposition~\ref{2prop:bock}
for the case $p=2$. See also \cite[Prop.~4.3]{claudio:BK}.
\end{proof}

Since every closed subgroup of a maximal pro-$p$ Galois group is again a maximal pro-$p$ Galois,
it is worth asking what does the injectivity of the morphism $\wedge_2(\cup)$ for every closed subgroup
imply in the case of Bloch-Kato pro-$p$ groups.
In particular, one has the following Tits alternative-type result.

\begin{thm}\label{3thm:titsalternative}
 Let $(G,\theta)$ be a Bloch-Kato pro-$p$ group, and assume that $\theta$ is cyclotomic with
$\image(\theta)\simeq\Z_2$ if $p=2$.
Then $G$ does not contain non-abelian closed free pro-$p$ group if, and only if, the map
\begin{equation}\label{3eq:wedge cup product closed subgroup}
 \xymatrix{ \wedge_2(\cup)\colon H^1(C,\F_p)\wedge H^1(C,\F_p)\ar[r] & H^2(C,\F_p) }
\end{equation}
is injective for every closed subgroup $C$ of $G$.
\end{thm}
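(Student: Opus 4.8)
The plan is to prove the logically equivalent statement that $G$ \emph{contains} a non-abelian closed free pro-$p$ subgroup precisely when the map \eqref{3eq:wedge cup product closed subgroup} \emph{fails} to be injective for some closed subgroup $C$; the theorem is then the contrapositive on both sides. The forward implication is immediate: if $F\le G$ is a closed free pro-$p$ subgroup with $d(F)\ge 2$, take $C=F$. The target $H^2(F,\F_p)$ vanishes since $\ccd(F)=1$ (Proposition~\ref{2prop:cohomological dimension prop}), while the source $H^1(F,\F_p)\wedge H^1(F,\F_p)$ has dimension $\binom{d(F)}{2}\ge 1$; so $\wedge_2(\cup)$ is the zero map on a non-zero space and cannot be injective. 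No orientation hypothesis is needed here.

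For the converse I would start from a closed $C\le G$ on which $\wedge_2(\cup)$ is non-injective and manufacture a free non-abelian subgroup. First I reduce to the finitely generated case: a non-zero kernel element may be written $\eta=\sum_{i<j}c_{ij}\,\chi_i\wedge\chi_j$ with $\chi_1,\dots,\chi_n\in H^1(C,\F_p)=\Hom(C,\F_p)$ linearly independent. Picking $x_1,\dots,x_n\in C$ with $\chi_i(x_j)=\delta_{ij}$ and setting $C'=\overline{\langle x_1,\dots,x_n\rangle}$, the restricted classes $\chi_i|_{C'}$ remain independent and $\{x_i\}$ becomes a minimal generating system of $C'$ with $\{\chi_i|_{C'}\}$ as dual basis. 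Since restriction commutes with cup products, $\eta$ restricts to a non-zero element of the kernel of $\wedge_2(\cup)$ on the finitely generated group $C'$.

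The key step is a second restriction, now to a two-generator subgroup. Fix a pair $i<j$ with $c_{ij}\ne 0$ and put $H=\overline{\langle x_i,x_j\rangle}$. Each $\chi_k$ with $k\notin\{i,j\}$ restricts to the trivial homomorphism on $H$, so the relation $\cup(\eta)=\sum_{k<l}c_{kl}\,\chi_k\cup\chi_l=0$ restricts to $c_{ij}\,(\chi_i|_H)\cup(\chi_j|_H)=0$, forcing $(\chi_i|_H)\cup(\chi_j|_H)=0$ in $H^2(H,\F_p)$. Because $H$ is a Bloch-Kato pro-$p$ group, $H^\bullet(H,\F_p)$ is quadratic and $H^2(H,\F_p)$ is spanned by the degree-one cup products of the basis $\chi_i|_H,\chi_j|_H$; the mixed product vanishes by the above and the two squares vanish as well, so $H^2(H,\F_p)=0$. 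Then $\ccd(H)\le 1$, hence $H$ is free by Proposition~\ref{2prop:cohomological dimension prop}, and $d(H)=2$ makes it free non-abelian, which closes the argument.

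The hard part will be justifying the vanishing of the squares $(\chi_i|_H)\cup(\chi_i|_H)$ used at the end. For $p$ odd this is automatic from skew-commutativity of the cup product, but for $p=2$ it is the statement $\beta^1=0$ (Lemma~\ref{1lem:Bockstein and cup}), and this is exactly where the hypothesis $\image(\theta)\simeq\Z_2$ enters. I would observe that $H$ is again a cyclo-oriented Bloch-Kato group and that $\image(\theta|_H)\le\image(\theta)\le 1+4\Z_2$, so $\Z_2(1)/4\Z_2(1)$ is a trivial $H$-module and $H^2(H,\Z_2(1))$ is torsion-free; the diagram argument of Proposition~\ref{2prop:bock} then goes through verbatim for $H$ and yields $\chi\cup\chi=0$ for all $\chi\in H^1(H,\F_2)$. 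The only genuinely delicate point is checking that cyclotomicity of the orientation, together with the image bound, is inherited by the closed subgroups $C'$ and $H$ obtained from the two reductions.
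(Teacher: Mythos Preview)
Your proof is correct and follows essentially the same route as the paper: restrict to a $2$-generated subgroup $H$ on which the single surviving mixed cup product vanishes, then use the Bloch--Kato property and the vanishing of squares to conclude $H^2(H,\F_p)=0$, whence $H$ is free of rank $2$. Two minor remarks: the intermediate reduction to a finitely generated $C'$ is unnecessary (the paper passes directly from $C$ to the two-generator subgroup $S$), and your worry about inheritance of cyclotomicity is a non-issue, since the very definition of a cyclotomic orientation requires $H^2(C,\Z_p(1))$ to be torsion-free for \emph{every} closed subgroup $C$, so the restriction of $\theta$ to $H$ is automatically cyclotomic with $\image(\theta|_H)\le 1+4\Z_2$.
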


\begin{proof}
Assume that $G$ contains a closed subgroup $C$ such that \eqref{3eq:wedge cup product closed subgroup}
is not injective.
Let $\{\chi_i,i\in\mathcal{I}\}$ be an $\F_p$-basis of the $H^1(C,\F_p)$.
Thus there exists a non-trivial element
\[\eta=\sum_{i,j\in\mathcal{J}}\,a_{ij}\chi_i\wedge\chi_j\in\kernel\left(\wedge_2(\cup)\right),\]
with $\mathcal{J}\subseteq\mathcal{I}$ finite.
As $\eta\not=0$, there exist $m,n\in\mathcal{J}$, $m\neq n$, such that $a_{mn}\not=0$.
Let $\{x_i,i\in\mathcal{I}\}\subset C$ be a minimal generating system of $C$ such that
every $\chi_i$ is dual to $x_i$ for all $i$, and let $S$ be the subgroup of $G$ generated by $x_m$ and $x_n$.
Then by duality \eqref{1eq:pontryagin_duality} the map 
\[\xymatrix{\res^1_{C,S}\colon H^1(C,\F_p)\ar[r] & H^1(S,\F_p)}\]
is surjective, and, by construction,
\[\kernel\left(\res^1_{C,S}\right)=\spa_{\F_p}\left\{\,\chi_i\mid  i\in\mathcal{I},\; i\not=n,m\,\right\}.\] 
From the surjectivity of $\res^1_{C,S}$ and the commutativity of the diagram
\begin{equation}\label{3eq:diagram tits alternative}
 \xymatrix{ H^1(C,\F_p)\wedge H^1(C,\F_p)\ar@<6ex>@{->>}[d]_{\res^1_{C,S}}\ar@<-6ex>@{->>}[d]_{\res^1_{C,S}}\ar[rr]^-{\wedge_2(\cup)} 
&& H^2(C,\F_p)\ar@{->}[d]^{\res^2_{C,S}}\\
H^1(S,\F_p)\wedge H^1(S,\F_p)\ar[rr]^-{\wedge_2(\cup)} && H^{2}(S,\F_p) }
\end{equation}
one concludes that the lower horizontal arrow of \eqref{3eq:diagram tits alternative} is the $0$-map.
Since $S$ is Bloch-Kato, one has that $H^2(S,\F_p)=0$, i.e., $S$ is a 2-generated free pro-$p$ group
by Proposition~\ref{2prop:cohomological dimension prop}, a contradiction.

Conversely, if $G$ contains a non-abelian free pro-$p$ group $C$, then $H^2(C,\F_p)=0$,
and \eqref{3eq:wedge cup product closed subgroup} is not injective.
\end{proof}

The following theorem is the key to study the group structure of Bloch-Kato pro-$p$ groups
whose $\F_p$-cohomology ring is an exterior algebra.
It is due to P.~Symonds and Th.~Weigel (cf. \cite[Theorem~5.1.6]{symonds_thomas:prop}).

\begin{thm}\label{3thm:peter thomas}
Let $G$ be a finitely generated pro-$p$ group.
Then the map \eqref{3eq:wedge cup product} is injective if, and only if, $G$ is powerful.
\end{thm}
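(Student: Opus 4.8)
The plan is to translate the cohomological condition in \eqref{3eq:wedge cup product} into a statement about a minimal presentation, and then into the group-theoretic condition defining powerful groups. Fix a minimal presentation $1\to R\to F\to G\to 1$ with $F$ free pro-$p$ of rank $d=d(G)$ and $R\subseteq\Phi(F)$, and write $V=G/\Phi(G)\cong H^1(G,\F_p)^\vee$ via \eqref{1eq:pontryagin_duality}. Applying the five term exact sequence of Proposition~\ref{1prop:5tes} to this presentation, the transgression yields an isomorphism $H^2(G,\F_p)^\vee\cong R/R^p[R,F]$, since $F$ is free and the presentation is minimal. Using the Magnus identification $\grad_\bullet(F)\cong\F_p\langle X\rangle$ of Remark~\ref{1rem:magnus algebra} together with the free differential calculus, I would show that under this isomorphism the transpose of $\wedge_2(\cup)$ is the map sending the class of a relation $r$ to the antisymmetric component in $\bigwedge^2 V$ of the degree-two leading form $\mathrm{in}_2(r)$ of $r-1$ in $\grad_2(F)$ (the standard fact that the cup product of two degree-one classes, evaluated on a relation, reads off the corresponding quadratic coefficient of its Magnus expansion). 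Hence $\wedge_2(\cup)$ is injective if and only if the leading forms of the relations span the antisymmetric quotient of $\grad_2(F)$.

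Next I would pin down the subspace of $\grad_2(F)$ reached by group elements, namely $D_2(F)/D_3(F)$, using Theorem~\ref{1thm:envelope Zassenhaus filtration} (which gives $D_i(F)=(1+\omega^i)\cap F$) and Lazard's formula \eqref{1eq:lazard forumula}. For $p$ odd one computes $D_3(F)=F^p[F,\Phi(F)]$ up to higher terms, so that $F^p\subseteq D_3(F)$ and $D_2(F)/D_3(F)\cong\gamma_2(F)/\gamma_3(F)\cong\bigwedge^2 V$; in particular the leading form of every element of $\Phi(F)$ is antisymmetric. Combining this with the previous paragraph, $\wedge_2(\cup)$ is injective if and only if $R\,D_3(F)=\Phi(F)$. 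On the other hand, lifting the defining relation of powerfulness through $F\to G$, the group $G$ is powerful precisely when $[F,F]\subseteq F^pR$. The easy implication is then immediate: if $[F,F]\subseteq F^pR$, reducing modulo $D_3(F)$ and using $F^p\subseteq D_3(F)$ gives $\bigwedge^2 V=\image([F,F])\subseteq\image(R)$ in $D_2(F)/D_3(F)$, whence $R\,D_3(F)=\Phi(F)$ and $\wedge_2(\cup)$ is injective.

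The substantial direction is the converse: $R\,D_3(F)=\Phi(F)$ must be upgraded to the \emph{all-levels} condition $[F,F]\subseteq\overline{F^pR}$. Injectivity of the cup product is visibly a single-level (degree-two) statement, whereas powerfulness is a condition at every level of the central series, and bridging this gap is the main obstacle. The plan is a bootstrap. Write $N=\overline{F^pR}$; from $\Phi(F)=R\,D_3(F)$ and $D_3(F)=F^p[F,\Phi(F)]$ one obtains $\Phi(F)\subseteq N\,[F,\Phi(F)]$, so the quotient $A=\Phi(F)/N$ satisfies $A=\image([F,\Phi(F)])=\omega\cdot A$, where $\omega$ is the augmentation ideal acting by conjugation. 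Since the relation module $\Phi(F)/\overline{F^p}$ is finitely generated as a $\ZpG$-module, so is $A$, and topological Nakayama forces $A=0$, i.e. $\Phi(F)=N=\overline{F^pR}$; then $[F,F]\subseteq\Phi(F)=\overline{F^pR}$ yields $[G,G]\subseteq G^p$. Making the commutator calculus and the passage to closures precise, and verifying the finite generation needed for Nakayama, is the delicate part of this step.

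Finally, the case $p=2$ requires a separate treatment, which I expect to be the second source of difficulty. Now squares contribute in degree two, since $x^2-1\equiv X^2$ lies in $\grad_2(F)$, and by Lemma~\ref{1lem:Bockstein and cup} one has $\chi\cup\chi=\beta^1(\chi)$, so the diagonal of the cup product is governed by the Bockstein rather than by commutators. Recomputing via \eqref{1eq:lazard forumula} one finds $\lceil 3/2\rceil=2$, hence $D_3(F)=\Phi(F)^2[F,\Phi(F)]$, so that $F^4\subseteq D_3(F)$ but $F^2\not\subseteq D_3(F)$; this is exactly why the defining condition becomes $[G,G]\subseteq G^4$ rather than $[G,G]\subseteq G^2$. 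I would rerun the argument tracking the square map $V\to D_2(F)/D_3(F)$ alongside the commutator map, show that injectivity of $\wedge_2(\cup)$ (the off-diagonal part of the cup product) is equivalent to the commutator leading forms spanning the antisymmetric quotient modulo squares, and then apply the analogous Nakayama bootstrap with $N=\overline{F^4R}$ to conclude $[G,G]\subseteq G^4$.
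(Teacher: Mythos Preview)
The paper does not give its own proof of this theorem: it is stated with a reference to Symonds--Weigel \cite[Theorem~5.1.6]{symonds_thomas:prop} and then used as a black box. So there is no ``paper's proof'' to compare against, and your proposal is an attempt to reconstruct an argument the thesis deliberately imports.

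That said, your outline for $p$ odd is essentially sound, and the duality step you describe is in fact carried out later in the thesis (Proposition~\ref{5prop:commutative diagram pairings}), which confirms that the transpose of $\wedge_2(\cup)$ is exactly the map $R/R^p[R,F]\to D_2(F)/D_3(F)$ sending a relation to its degree-two initial form. Your identification $D_3(F)=F^p[F,\Phi(F)]$ is also correct: one inclusion is $[D_1,D_2]\subseteq D_3$ from Proposition~\ref{1prop:properties Zassenhaus filtration}, and the other is $\gamma_3(F)\subseteq[F,\Phi(F)]$ together with $F^p\subseteq D_3(F)$ for $p$ odd. The one genuine gap is in your Nakayama step. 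You write $A=\Phi(F)/N$ with $N=\overline{F^pR}$ and then treat $A$ as a $\Z_p\dbl G\dbr$-module on which the augmentation ideal acts; but $A$ is not known to be abelian (you would need $[\Phi(F),\Phi(F)]\subseteq N$, which is not automatic), and ``the relation module $\Phi(F)/\overline{F^p}$'' is likewise not a module since that quotient is non-abelian. The fix is to stay in the category of pro-$p$ groups: from $\Phi(F)=N\cdot[F,\Phi(F)]$ you get, in $\bar F=F/N$, that the closed normal subgroup $\bar\Phi=\Phi(F)/N$ satisfies $\bar\Phi=\overline{[\bar F,\bar\Phi]}$. In any pro-$p$ group this forces $\bar\Phi=1$, since a non-trivial closed normal subgroup of a pro-$p$ group always admits a $\bar F$-invariant open subgroup of index $p$ on which $\bar F$ (being pro-$p$) must act trivially. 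This replaces your module-theoretic Nakayama and closes the argument for $p$ odd.

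For $p=2$ there is a further subtlety you gloss over: the map $\wedge_2(\cup)$ is only well-defined once $\chi\cup\chi=0$ for all $\chi$, which by Lemma~\ref{1lem:Bockstein and cup} is the vanishing of the Bockstein. The thesis arranges this via the hypothesis $\image(\theta)\simeq\Z_2$ and Proposition~\ref{2prop:bock}, but Theorem~\ref{3thm:peter thomas} as stated carries no such hypothesis, so your $p=2$ sketch would need to confront this head-on rather than only tracking the ``off-diagonal'' part.
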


\section[Powerful pro-$p$ groups]{$\theta$-abelian pro-$p$ groups and powerful pro-$p$ groups}

\subsection{Powerful pro-$p$ groups and Lie algebras}

Recall from Section~1.2 that $G^m$ is generated by the $m$-th powers of $G$, for $m\geq1$.
A pro-$p$ group is said to be {\it powerful} if 
\[[G,G]\leq \left\{ \begin{array}{cc} G^p & \text{for $p$ odd} \\  G^4 & \text{for }p=2.\end{array}\right. \]
Moreover, $G$ is called {\it uniformly powerful}, or simply {\it uniform}, if $G$ is finitely generated,
powerful, and 
\[\left|\lambda_i(G):\lambda_{i+1}(G)\right|=|G:\Phi(G)|\quad \text{for all }i\geq1\]
Thus, one has the following characterization for uniform pro-$p$ groups (\cite[Ch.~3-4]{ddsms:analytic}).

\begin{thm}
 A finitely generated powerful pro-$p$ group $G$ is uniform if, and only if, $G$ is torsion-free.
\end{thm}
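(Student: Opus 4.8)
The plan is to translate both conditions into a single statement about the $p$-power maps on the $p$-descending central series $\lambda_i=\lambda_i(G)$, and then argue in each direction. Recall from the theory of powerful pro-$p$ groups (\cite[Ch.~3]{ddsms:analytic}) that for powerful $G$ one has $\lambda_{i+1}=\lambda_i^p$, each quotient $\lambda_i/\lambda_{i+1}$ is a finite $\F_p$-vector space, and the $p$-power map induces a surjective $\F_p$-linear map
\[ \phi_i\colon \lambda_i/\lambda_{i+1}\longrightarrow \lambda_{i+1}/\lambda_{i+2},\qquad x\lambda_{i+1}\longmapsto x^p\lambda_{i+2}. \]
Writing $d_i=\dim_{\F_p}(\lambda_i/\lambda_{i+1})$, surjectivity gives $d_1\geq d_2\geq\cdots$, and $d_1=d(G)$ since $\lambda_2=\Phi(G)$. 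As $|\lambda_i:\lambda_{i+1}|=p^{d_i}$ and $|G:\Phi(G)|=p^{d_1}$, the group $G$ is uniform precisely when $d_i=d_1$ for all $i$, equivalently when every $\phi_i$ is injective (a surjection between $\F_p$-spaces of equal finite dimension), equivalently when every $\phi_i$ is an isomorphism. So the whole theorem reduces to showing that all $\phi_i$ are injective if and only if $G$ is torsion-free.

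For the forward implication I would argue by descent. Suppose every $\phi_i$ is injective and let $x\neq 1$. Since $G$ is finitely generated the $\lambda_i$ form a base of neighbourhoods of $1$, so there is a unique $i$ with $x\in\lambda_i\smallsetminus\lambda_{i+1}$; thus $x\lambda_{i+1}$ is nonzero, and injectivity of $\phi_i$ forces $x^p\notin\lambda_{i+2}$, i.e. $x^p\in\lambda_{i+1}\smallsetminus\lambda_{i+2}$. Iterating, $x^{p^k}\in\lambda_{i+k}\smallsetminus\lambda_{i+k+1}$ for every $k\geq 0$, so $x^{p^k}\neq 1$ for all $k$ and $x$ has infinite order. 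Hence $G$ is torsion-free.

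For the converse I would prove the contrapositive. If $G$ is not uniform, some $\phi_i$ fails to be injective, so there is $x\in\lambda_i\smallsetminus\lambda_{i+1}$ with $x^p\in\lambda_{i+2}=\lambda_{i+1}^p$; choose $z\in\lambda_{i+1}$ with $z^p=x^p$ and set $w=xz^{-1}$. Then $w\notin\lambda_{i+1}$, so $w\neq 1$, and the claim is that $w$ is a nontrivial torsion element, contradicting torsion-freeness. This last claim is the main obstacle: since $G$ need not be abelian one does not get $w^p=x^pz^{-p}=1$ on the nose, and one must control the commutator corrections. The tool is the system of power congruences valid in powerful groups (the Hall--Petrescu collection specialised as in \cite[Ch.~3]{ddsms:analytic}), namely that for $a,b$ in a common term $\lambda_j$ one has $(ab)^p\equiv a^pb^p$ modulo a strictly deeper term of the series — the very congruence that makes each $\phi_i$ a homomorphism. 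Applying these congruences and iterating pushes the discrepancy between $w^{p^k}$ and $x^{p^k}z^{-p^k}=1$ (note $x^{p^k}=z^{p^k}$ as group elements) into arbitrarily deep terms $\lambda_N$, whose intersection is trivial; this traps $w$ in a finite quotient and shows $w^{p^k}=1$ for some $k$.

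I expect the technical heart of the argument — and the only place where powerfulness, rather than mere finite generation, is essential — to be this propagation of the power congruences in the non-abelian setting of the backward direction. The reduction to the maps $\phi_i$ and the forward descent are then formal consequences of the powerful-group machinery recalled above.
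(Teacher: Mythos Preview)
The paper does not prove this theorem; it simply records it with a citation to \cite[Ch.~3--4]{ddsms:analytic}, so there is no in-paper argument to compare against. I will therefore evaluate your proposal on its own merits against the standard proof in that reference.

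Your reduction to the injectivity of the maps $\phi_i$ and your forward direction (uniform $\Rightarrow$ torsion-free) are correct and are exactly the standard descent argument.

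The backward direction has a real gap. You construct $w = xz^{-1}\notin\lambda_{i+1}$ with $x^p=z^p$ and then argue that ``iterating pushes $w^{p^k}$ into arbitrarily deep $\lambda_N$'', concluding that $w^{p^k}=1$ for some $k$. But what you have actually shown is only that $w^{p^k}\to 1$ in the profinite topology, and this is trivially true of \emph{every} element of a pro-$p$ group; it does not force $w^{p^k}=1$ for any finite $k$. (Take $G=\Z_p$: every nonzero $w$ satisfies $p^k w\to 0$, yet none is torsion.) The phrase ``traps $w$ in a finite quotient'' is not an argument: in each $G/\lambda_N$ the element $w$ has some order $p^{m(N)}$, but nothing prevents $m(N)$ from growing with $N$.

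The correct argument (essentially \cite[Theorem~4.5]{ddsms:analytic}) does not fix $w$ and take powers; it \emph{modifies} $w$ iteratively. After reducing to the case where $\lambda_2(G)$ is already uniform (so the $p$-power maps $\lambda_j\to\lambda_{j+1}$ are bijections for $j\geq 2$), one has $w^p\in\lambda_3$, writes $w^p=z_1^p$ for the unique $z_1\in\lambda_2$, and sets $w_1=wz_1^{-1}$. The crucial input you are missing is a \emph{refined} power congruence: for $a\in G$ and $b\in\lambda_j$ one has $(ab)^p\equiv a^p b^p$ modulo $\lambda_{j+2}$ (this uses that each $\lambda_j$ is powerfully embedded in $G$, not merely that $\phi_i$ is a homomorphism). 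This gives $w_1^p\in\lambda_4$, and iterating produces a Cauchy sequence $w_k\equiv w\pmod{\lambda_2}$ with $w_k^p\in\lambda_{k+3}$. The limit $w_\infty$ satisfies $w_\infty\notin\lambda_2$ (hence $w_\infty\neq 1$) and $w_\infty^p=1$, yielding the desired torsion element. Your sketch has the right starting point but stops one idea short: the torsion element is a limit of successive corrections of $w$, not $w$ itself.
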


Finally, a pro-$p$ group $G$ is called {\it locally powerful}
if every finitely generated closed subgroup $K$ of $G$ is powerful.
Moreover, for uniform pro-$p$ groups, one has the following property (cf. \cite[Prop.~4.32]{ddsms:analytic}).

\begin{prop}\label{3prop:proppresuniform}
 Let $G$ be a $d$-generated uniform pro-$p$ group, and let $\{x_1,\ldots,x_d\}$ be a generating set for $G$.
Then $G$ has a presentation $G=\langle x_1,\ldots,x_d|R\rangle$ with relations
\begin{equation}\label{3eq:proppresuniform1}
 R=\left\{[x_i,x_j]=x_1^{\lambda_1(i,j)}\cdots x_d^{\lambda_d(i,j)}, 1\leq i<j\leq d\right\},
\end{equation}
and for all $i,j$ one has $\lambda_n(i,j)\in p.\Z_p$ if $p$ is odd, and $\lambda_n(i,j)\in 4.\Z_2$ if $p=2$.
\end{prop}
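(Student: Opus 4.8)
The plan is to exploit the fact that a $d$-generated uniform pro-$p$ group carries a canonical $\Z_p$-coordinate system, which reduces the statement to two points: that commutators land deep in the Frattini filtration, and that the resulting relations already determine $G$. First I would recall the coordinate structure of uniform groups from \cite[Ch.~4]{ddsms:analytic}. Because $G$ is uniform with minimal generating set $\{x_1,\ldots,x_d\}$ (here $d=d(G)$), the map $\Z_p^d\to G$, $(a_1,\ldots,a_d)\mapsto x_1^{a_1}\cdots x_d^{a_d}$, is a homeomorphism, so every $g\in G$ has a unique normal form $x_1^{a_1}\cdots x_d^{a_d}$ with $a_n\in\Z_p$; moreover the $p$-descending central series is recovered as $\lambda_{i+1}(G)=G^{p^i}=\{x_1^{a_1}\cdots x_d^{a_d}\mid a_n\in p^i\Z_p\}$. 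In particular each $\lambda_n(i,j)$ in \eqref{3eq:proppresuniform1} is well defined as the normal-form coordinate of the commutator $[x_i,x_j]$.

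Next comes the exponent condition. Since $G$ is powerful, $[G,G]\leq G^p$ for $p$ odd and $[G,G]\leq G^4$ for $p=2$; together with Proposition~\ref{1prop:frattini subgroup} this gives $\Phi(G)=G^p=\lambda_2(G)$ when $p$ is odd. Hence $[x_i,x_j]\in\lambda_2(G)$ for $p$ odd, and $[x_i,x_j]\in G^4=\lambda_3(G)$ for $p=2$, and reading off the coordinate description above forces $\lambda_n(i,j)\in p\Z_p$ (resp.\ $\lambda_n(i,j)\in 4\Z_2$) for every $n$. This is the easy half.

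The substantive part is that the relations \eqref{3eq:proppresuniform1} are defining. Let $F$ be the free pro-$p$ group on $x_1,\ldots,x_d$, let $\pi\colon F\twoheadrightarrow G$ be the induced (minimal) presentation with $R=\kernel(\pi)\clsgp\Phi(F)$, and let $N\clno F$ be the closed normal subgroup generated by the $\binom{d}{2}$ relators $r_{ij}=[x_i,x_j]^{-1}x_1^{\lambda_1(i,j)}\cdots x_d^{\lambda_d(i,j)}$, which lie in $R$ since the relations hold in $G$. I would argue $N=R$ in two movements. For the count: $G$ is powerful, so by Theorem~\ref{3thm:peter thomas} the cup-product map $\wedge_2(\cup)$ is injective, whence $\dim_{\F_p}H^2(G,\F_p)\geq\binom{d}{2}$ and, by \eqref{1eq:number relations}, $r(G)\geq\binom{d}{2}$. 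For the generation: in $F/N$ every commutator $[x_i,x_j]$ equals $\prod_n x_n^{\lambda_n(i,j)}$ with $\lambda_n(i,j)\in p\Z_p$, so an ordered-collection process lets one rewrite any word as a normal form $x_1^{a_1}\cdots x_d^{a_d}$; because each application of a relation pushes the uncollected part one step deeper into the $p$-descending central series (this is exactly where $\lambda_n(i,j)\in p\Z_p$ is needed for convergence), the process terminates in the inverse limit, so $F/N$ is identified with the coordinate set of $G$. Thus the surjection $F/N\twoheadrightarrow G$ is also injective, $N=R$, and the $r_{ij}$ generate $R$ as a normal subgroup; comparing with the lower bound $r(G)\geq\binom{d}{2}$ shows they even form a minimal system of defining relations.

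I expect the collection/convergence argument to be the main obstacle: one must verify both that the relations suffice to reach a normal form and that the group $F/N$ so produced is no smaller than $G$ (no unexpected collapse), the latter being guaranteed by the uniqueness of the normal form in $G$ itself. The cleanest way to organize this is to compare $F/N$ with the universal uniform group on $d$ generators built from the same structure constants via the $\log$/$\exp$ correspondence of \cite[Ch.~4 and 9]{ddsms:analytic}, where the condition $\lambda_n(i,j)\in p\Z_p$ (resp.\ $4\Z_2$) is precisely what makes the associated bracket $\Z_p$-valued and the Hausdorff series converge.
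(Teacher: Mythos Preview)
The paper does not supply its own proof of this proposition; it simply cites \cite[Prop.~4.32]{ddsms:analytic}. Your outline is essentially the argument given there: the normal-form coordinate system on a uniform group makes the $\lambda_n(i,j)$ well defined, powerfulness forces $[x_i,x_j]\in G^p$ (resp.\ $G^4$) and hence the exponent condition, and the heart of the matter is the collection/Hausdorff-series argument showing that the quotient $F/N$ by these $\binom{d}{2}$ relations is already $G$. You correctly identify that last step as the delicate one and correctly point to the $\log$/$\exp$ machinery of \cite[Ch.~4, 9]{ddsms:analytic} as the clean way to carry it out.

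Two minor remarks. First, the appeal to Theorem~\ref{3thm:peter thomas} to get $r(G)\geq\binom{d}{2}$ and hence minimality of the relation set is a pleasant extra, but note that the proposition as stated does not assert minimality, only that these relations suffice; so this detour is optional, and you should in any case verify that the Symonds--Weigel result does not itself rest on the structure theory you are proving (in the paper's logical order it is placed before this proposition, so the citation is formally acceptable). Second, in your sketch of the collection argument you say the surjection $F/N\twoheadrightarrow G$ is injective because normal forms in $F/N$ map to the unique normal forms in $G$; strictly speaking what needs checking is that \emph{distinct} normal forms in $F/N$ stay distinct, i.e.\ that the collection process in $F/N$ yields a \emph{unique} normal form, and this is exactly what the Hausdorff-series/Lie-algebra construction in \cite{ddsms:analytic} provides.
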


If $G$ is a uniform pro-$p$ group, then it is possible to associate a $\Z_p$-Lie algebra
$\log(G)$ to it (see \cite[\S~4.5]{ddsms:analytic} and \cite[\S~3.1]{claudio:BK}),
i.e., $\log(G)$ is the $\Z_p$-free module generated by the generators of $G$,
equipped with the sum
\begin{equation}\label{deflie1}
 x+y=\lim_{n\rightarrow\infty}x+_ny,\quad x+_ny=\left(x^{p^n}y^{p^n}\right)^{p^{-n}},
\end{equation}
and the Lie brackets
\begin{equation}\label{deflie2}
(x,y)=\lim_{n\rightarrow\infty}(x,y)_n,\quad(x,y)_n=\left[x^{p^n},y^{p^n}\right]^{p^{-2n}}.                     
\end{equation}

\begin{rem}
 Note that the $\Z_p$-Lie algebra $\log(G)$ of a uniform pro-$p$ group is not related to the
restricted $\F_p$-Lie algebra $L_p(G)$ as defined in Section~1.3.
\end{rem}

The following result lists the properties of the $\Z_p$-Lie algebras associated to uniform pro-$p$ groups
we will need in the continuation (cf. \cite[\S~3.1 and \S~3.3]{claudio:BK}).

\begin{prop}\label{3prop:remarklocpwfetLie}
\begin{itemize}
 \item[(i)]  If $G$ is uniform with minimal generating system $\{x_1,\ldots,x_n\}$,
then it is possible to write every element $g\in G$ as $g=x_1^{\lambda_1}\cdots x_n^{\lambda_n}$,
with $\lambda_i\in\Z_p$, in a unique way.
Thus the map
\[G\longrightarrow\log(G),\quad x_1^{\lambda_1}\cdots x_n^{\lambda_n}\longmapsto\lambda_1x_1+\ldots+\lambda_nx_n\]
is a homeomorphism in the $\Z_p$-topology (cf. \cite[Theorem 4.9]{ddsms:analytic}).
\item[(ii)] If $G$ is locally powerful and torsion-free, let $C$ be a finitely generated subgroup of $G$.
Then one can construct the Lie algebra $\log(C)$, which is in fact a subalgebra of $\log(G)$.
In particular, the $\Z_p$-submodule $\spa_{\Z_p}\{x\in \mathcal{S}\}$ of $\log(G)$ is closed under Lie brackets
for every finite subset $\mathcal{S}\subseteq G$.
\item[(iii)] An uniform pro-$p$ group $G$ with orientation $\theta$ is $\theta$-abelian if, and only if, 
the associated $\Z_p$-Lie algebra $\log(G)$ has a basis $\{v_1,\ldots,v_d\}$, with $d=d(G)$, such that
\[(v_1,v_i)=\lambda v_i \quad \text{and} \quad (v_i,v_j)=0\]
for all $1<i,j\leq d$, where $\image(\theta)=1+\lambda\Z_p$ and $\lambda\in p\Z_p$, or $\lambda\in 4\Z_2$ if $p=2$
(cf. \cite[Prop.~3.6]{claudio:BK})
\end{itemize} 
\end{prop}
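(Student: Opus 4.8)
The plan is to treat the three assertions separately, relying on the structure theory of uniform pro-$p$ groups from \cite{ddsms:analytic} and on \cite[\S~3.1 and \S~3.3]{claudio:BK}. For (i) I would appeal to the coordinates of the first kind: since $G$ is uniform of dimension $d=d(G)$, \cite[Theorem~4.9]{ddsms:analytic} asserts that $(\lambda_1,\dots,\lambda_d)\mapsto x_1^{\lambda_1}\cdots x_d^{\lambda_d}$ is a homeomorphism $\Z_p^d\to G$; in particular each $g\in G$ admits a unique such expression. By construction $\log(G)$ is the free $\Z_p$-module on $x_1,\dots,x_d$, so the map in the statement is the composite of the inverse of this homeomorphism with the identification $\Z_p^d\simeq\log(G)$, and is therefore itself a homeomorphism for the $\Z_p$-topology.

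For (ii), let $C\leq G$ be finitely generated and closed. Then $C$ is powerful because $G$ is locally powerful, and torsion-free as a closed subgroup of a torsion-free group, hence uniform by the characterisation of uniform groups recalled above; thus $\log(C)$ is defined. The operations \eqref{deflie1}--\eqref{deflie2} are intrinsic limits in the ambient multiplication, and since $C\hookrightarrow G$ is a homomorphism they agree whether computed in $C$ or in $G$; under the identification of (i) this exhibits $\log(C)$ as a $\Z_p$-Lie subalgebra of $\log(G)$. For the final clause, given finite $\mathcal{S}\subseteq G$ put $C$ equal to the (closed) subgroup it generates, which is uniform by the above. Part (i) identifies the Frattini quotient $C/\Phi(C)$ with $\log(C)/p\log(C)$, on which the image of $\mathcal{S}$ spans by the Frattini property; Nakayama's lemma for the finitely generated $\Z_p$-module $\log(C)$ then gives $\spa_{\Z_p}\{x\in\mathcal{S}\}=\log(C)$, which is closed under the bracket since it is a Lie algebra.

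For (iii) I would argue both implications through the explicit presentation of $\theta$-abelian groups. If $G$ is $\theta$-abelian with $\image(\theta)=1+\lambda\Z_p$, then Proposition~\ref{2prop:presentation theta-abelian} supplies generators $x_0,x_i$ with $x_0x_ix_0^{-1}=x_i^{1+\lambda}$ and $[x_i,x_j]=1$; this group is torsion-free, and powerful since $[G,G]$ lies in the closed subgroup generated by the powers $x_i^{\lambda}$, which sits inside $G^p$ (resp.\ $G^4$ when $p=2$) because $\lambda\in p\Z_p$ (resp.\ $4\Z_2$). Hence $G$ is uniform and $\log(G)$ exists. Writing $v_1,v_i$ for the images of $x_0,x_i$, the relations translate into $(v_1,v_i)=\lambda v_i$ and $(v_i,v_j)=0$, and conversely a basis obeying these relations reconstructs the same presentation; this is exactly \cite[Prop.~3.6]{claudio:BK}. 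The main obstacle is precisely this last translation: one must verify that applying $\log$ to $x_0x_ix_0^{-1}=x_i^{1+\lambda}$ discards the higher Baker--Campbell--Hausdorff corrections and returns the eigenvalue $\lambda$ itself, rather than $e^{\lambda}-1$ or a similar series, which is where the hypotheses $\lambda\in p\Z_p$, respectively $\lambda\in 4\Z_2$, are needed to guarantee the $p$-adic convergence of \eqref{deflie1}--\eqref{deflie2}.
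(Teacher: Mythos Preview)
The paper does not actually prove this proposition: it is stated as a list of facts with citations to \cite[Theorem~4.9]{ddsms:analytic} and \cite[\S~3.1, \S~3.3, Prop.~3.6]{claudio:BK}, so there is no argument to compare against and your sketch already goes beyond what the text supplies.

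Your treatment of (i) and (ii) is sound. In (ii) the equality $\spa_{\Z_p}\{x\in\mathcal{S}\}=\log(C)$ via Nakayama is the right idea; to justify the identification $\log(C)/p\log(C)\simeq C/\Phi(C)$ you should note that for uniform $C$ one has $\Phi(C)=C^p$ and that the $p$-th power map $C\to C^p$ is a bijection, so the coordinate map of (i) descends modulo $p$.

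For (iii) you have correctly located the only genuine issue, but your description of the discrepancy is inverted. Carrying out the limit in \eqref{deflie2} for the relation $x_0x_ix_0^{-1}=x_i^{1+\lambda}$ gives
\[
[x_0^{p^n},x_i^{p^n}]^{p^{-2n}}=x_i^{\,p^{-n}\bigl((1+\lambda)^{p^n}-1\bigr)}\xrightarrow{\ n\to\infty\ } x_i^{\,\log(1+\lambda)},
\]
so the eigenvalue of $\mathrm{ad}(x_0)$ is $\mu=\log(1+\lambda)$, not $\lambda$. The statement is nevertheless correct because $\mu/\lambda\in 1+p\Z_p\subseteq\Z_p^\times$ (here one uses $\lambda\in p\Z_p$, respectively $\lambda\in 4\Z_2$, for the series to converge and for the ratio to be a unit); rescaling $v_1:=(\lambda/\mu)\cdot x_0$ then produces the basis with $(v_1,v_i)=\lambda v_i$ asserted in the proposition. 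This normalisation is what is carried out in \cite[Prop.~3.6]{claudio:BK}.
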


\subsection{Locally powerful pro-$p$ groups}

In spite of a rather convoluted definition, torsion-free locally powerful pro-$p$ groups have a very simple 
(and rigid) structure: in fact such a group is naturally equipped with an orientation $\theta$ which makes it
$\theta$-abelian.

\begin{thm}\label{3thm:locally poweful groups}
Let $G$ be a torsion-free pro-$p$ group.
Then $G$ is locally powerful if, and only if, there exists an orientation $\theta\colon G\rightarrow \Z_p^\times$
such that $(G,\theta)$ is $\theta$-abelian.
\end{thm}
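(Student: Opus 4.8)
The plan is to prove the two implications separately, exploiting the structural results on uniform and locally powerful pro-$p$ groups together with the associated $\Z_p$-Lie algebra $\log(G)$.

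For the \emph{backward} direction, suppose $(G,\theta)$ is $\theta$-abelian. By Proposition~\ref{2prop:presentation theta-abelian} the group $G$ has the presentation \eqref{2eq:presentation theta-abelian}, from which every two-generated closed subgroup is visibly metabelian of a very restricted shape; I would verify directly from the commutator relations $x_0 x_i x_0^{-1}=x_i^{\theta(x_0)}$ with $\theta(x_0)\in 1+p\Z_p$ (or $1+4\Z_2$ when $p=2$, using that $\image(\theta)\simeq\Z_p$ forces torsion-freeness via Corollary~\ref{2cor:torsion} and Proposition~\ref{2prop:tor2}) that $[K,K]\leq K^p$ (resp.\ $K^4$) for every finitely generated closed subgroup $K$. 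This shows $G$ is locally powerful. Here one must be slightly careful that the hypothesis is only that \emph{some} orientation makes $G$ $\theta$-abelian, so torsion-freeness of $G$ is an assumption we are given and need not re-derive.

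For the \emph{forward} direction, assume $G$ is torsion-free and locally powerful. First I would note that $G$ itself, being finitely generated (or treating the finitely generated closed subgroups and passing to a limit) powerful and torsion-free, is uniform, so the $\Z_p$-Lie algebra $\log(G)$ of Proposition~\ref{3prop:remarklocpwfetLie} is available, and by part (ii) the span of any finite subset of generators is a Lie subalgebra. The key step is to extract, from local powerfulness, enough structure on the bracket to produce the orientation. I would argue that for any two elements the bracket $(v_i,v_j)$ lies in $p\log(G)$ (resp.\ $4\log(G)$), by Proposition~\ref{3prop:proppresuniform} applied to the two-generated uniform subgroup they generate, and then show that the resulting family of brackets forces the Lie algebra into the form described in Proposition~\ref{3prop:remarklocpwfetLie}(iii): a distinguished $v_1$ acting by a scalar $\lambda$ on a complementary abelian ideal, with all other brackets vanishing. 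Defining $\theta$ by $\image(\theta)=1+\lambda\Z_p$ and checking it is a genuine orientation (continuous homomorphism to $\Z_p^\times$) then gives, via the same Proposition~\ref{3prop:remarklocpwfetLie}(iii), that $(G,\theta)$ is $\theta$-abelian.

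The main obstacle I anticipate is the forward direction's \emph{rigidity} argument: deducing that local powerfulness of \emph{all} finitely generated subgroups forces the Lie bracket to be simultaneously diagonalizable in the strong ``rank-one'' sense of Proposition~\ref{3prop:remarklocpwfetLie}(iii), rather than merely being a nilpotent or solvable Lie algebra with small brackets. The heart of the matter is a linear-algebra argument over $\Z_p$: if every pair of elements generated a powerful subgroup yet the brackets did \emph{not} all point along a single direction, one should be able to exhibit a finitely generated subgroup violating the powerfulness inequality $[K,K]\leq K^p$ (resp.\ $K^4$), giving a contradiction. Making this extraction uniform and basis-independent, and handling the $p=2$ case where the orientation image must avoid the torsion of $\Z_2^\times$, is where the real work lies.
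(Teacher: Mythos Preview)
Your overall framework matches the paper's: the backward direction is dispatched by Proposition~\ref{2prop:presentation theta-abelian}, and the forward direction passes to the $\Z_p$-Lie algebra $\log(G)$ and aims for the normal form of Proposition~\ref{3prop:remarklocpwfetLie}(iii). So the architecture is right.

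However, the gap you yourself flag is exactly the heart of the proof, and your suggested mechanism for closing it (``exhibit a finitely generated subgroup violating $[K,K]\leq K^p$'') is not how the paper proceeds and is not obviously workable as stated. The paper's argument is an \emph{induction on $d(G)$} in the finitely generated case, with explicit low-dimensional base cases:
\begin{itemize}
\item For $d=2$ one invokes the known classification of $2$-generated nonabelian uniform pro-$p$ groups (they are all $\langle x,y\mid [x,y]=y^{p^k}\rangle$), which is already $\theta$-abelian.
\item For $d=3$ one computes the brackets in $\log(G)$, uses that each $2$-generated span is a Lie subalgebra (Proposition~\ref{3prop:remarklocpwfetLie}(ii)) to force $(x_i,x_j)\in\spa_{\Z_p}\{x_i,x_j\}$, then applies the \emph{Jacobi identity} and a \emph{trace argument} ($\text{tr}(\text{ad}[x_1,x_3])=0$) to kill cross-terms, and finally tests against carefully chosen elements like $v_1=x_1+x_2$, $v_2=x_2+x_3$ to force the two nonzero eigenvalues to coincide. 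This is the genuinely delicate step.
\item For $d\geq 4$ one applies the $3$-generated case to overlapping triples $\langle x_1,x_i,x_{d}\rangle$ to propagate the single eigenvalue $\lambda$.
\end{itemize}
Only \emph{after} the finitely generated case is settled does one pass to the general case by taking the union of the $\theta_C$-abelian structures on finitely generated $C\leq G$ and checking they are compatible (the paper does this via the commutator subalgebras $V_C$ and centres $Z_C$).

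So your proposal is not wrong, but it stops precisely where the substance begins. The ``linear-algebra argument over $\Z_p$'' you allude to is not a single observation but the Jacobi-plus-trace-plus-test-vector computation in dimension $3$, and you should supply it.
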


\begin{proof}
If $G$ is $\theta$-abelian,
then, by Proposition~\ref{2prop:presentation theta-abelian}, $G$ is locally powerful and torsion-free.

Conversely, let $G$ be a torsion-free locally powerful pro-$p$ group,
and assume first that $G$ is finitely generated, with $d(G)=d\geq2$,
Thus by Proposition \ref{3prop:proppresuniform}, $G$ has a presentation $G=\langle x_1,\ldots,x_d|R\rangle$
with relations as in (\ref{3eq:proppresuniform1}).
Let $H_{ij}\clsgp G$ be the closed subgroup generated by the elements $x_i,x_j$.
Since $H_{ij}$ is uniform as well, we have that
\[H_{ij}=\left\langle x_i,x_j\left|[x_i,x_j]=x_i^{\lambda_i(i,j)} x_j^{\lambda_j(i,j)},\lambda_i,\lambda_j\in p.\Z_p\right.\right\rangle,\]
so that $R=\{[x_i,x_j]=x_i^{\lambda_i(i,j)} x_j^{\lambda_j(i,j)}, 1\leq i<j\leq d\}$
is the set of relations.

Since an abelian pro-$p$ group is ${\mathbf 1}$-abelian, where ${\mathbf 1}$ is the trivial orientation,
we may assume that $G$ is not abelian, i.e., we may assume without loss of generality that $x_1$ and $x_2$ do not commute.

\medskip
\textit{Step 1:} 
First suppose that $d=2$.
It is well known that if $G$ is nonabelian,
then $G$ has a presentation $\langle x,y|[x,y]y^{-p^k}\rangle$
for some uniquely determined positive integer $k$ (with $k\geq2$ if $p=2$, see \cite[Ch.~4, Ex.~13]{ddsms:analytic})
-- in fact $G$ is a 2-generated Demushkin group.
Thus, $G$ is $\theta$-abelian by Proposition~\ref{2prop:presentation theta-abelian},
with $\theta(x)=1+p^k$ and $\theta(y)=1$.

\medskip
\textit{Step 2:} Suppose $d=3$.
By the previously mentioned remark we may choose $x_1,x_2$ such that $[x_1,x_2]=x_2^\lambda$,
with $\lambda\in p.\Z_p$ (resp. $\lambda\in4.\Z_2$ if $p=2$).
Thus
\[G=\left\langle x_1,x_2,x_3\left| \left[x_1,x_2\right]=x_2^\lambda,\left[x_1,x_3\right]=x_1^{\lambda_1}x_3^{\lambda_2},
\left[x_2,x_3\right]=x_2^{\mu_1}x_3^{\mu_2}\right.\right\rangle,\]
with $\lambda_i,\mu_i\in p.\Z_p$ (resp. in $4.\Z_2$).
Let $H_{ij}$ be the subgroups as defined above, with $1\leq i<j\leq3$, and let $L=\log(G)$.
Clearly, $(x_i,x_j)_n\in H_{ij}$ for all $n$.
Hence $(x_i,x_j)\in\spa_{\Z_p}\{x_i,x_j\}$.
In particular, the Lie brackets in $L$ are such that
\[(x_1,x_2)=\alpha x_2,\quad (x_2,x_3)=\beta_2x_2+\beta_3x_3,\quad (x_1,x_3)=\gamma_1x_1+\gamma_3x_3,\]
with $\alpha, \beta_i, \gamma_i\in p.\Z_p$ (resp. in $4.\Z_2$).

By the Jacobi identity, one has
\begin{eqnarray*}
 0 &=& \left((x_1,x_2),x_3\right)+\left((x_2,x_3),x_1\right)+\left((x_3,x_1),x_2\right)\\
   &=& (\alpha x_2,x_3)+(\beta_2x_2+\beta_3x_3,x_1)-(\gamma_1x_1+\gamma_3x_3,x_2)\\
   &=& -\beta_3\gamma_1x_1+\left(\alpha\beta_2-\alpha\beta_2-\alpha\gamma_1+\beta_2\gamma_3\right)x_2
        +\left(\alpha\beta_3-\beta_3\gamma_3+\beta_3\gamma_3\right)x_3,
\end{eqnarray*}
hence $\beta_3\gamma_1x_1=0$, and thus $\beta_3=0$ or $\gamma_1=0$.
\begin{enumerate}
 \item[(1)] If $\beta_3=0$, then by definition $(x_2,x_3)\in\spa_{\Z_p}\{x_2\}$,
 i.e., the $\Z_p$-module generated by $x_2$ is an ideal of $L$.
Therefore we may choose without loss of generality $x_1$ and $x_3$ such that $(x_1,x_3)\in\spa_{\Z_p}\{x_3\}$,
and $(x_i,x_2)\in\spa_{\Z_p}\{x_2\}$ for $i=1,3$.
 \item[(2)] If $\gamma_1=0$, then by definition $(x_1,x_3)\in\spa_{\Z_p}\{x_3\}$,
i.e., the $\Z_p$-module generated by $x_2$ and $x_3$ is an ideal of $L$.
Therefore we may choose without loss of generality $x_2$ and $x_3$ such that $(x_2,x_3)\in\spa_{\Z_p}\{x_2\}$,
and $(x_1,x_i)\in\spa_{\Z_p}\{x_i\}$ for $i=2,3$.
\end{enumerate}
Altogether the Lie brackets in $L$ are
\[(x_1,x_2)=\alpha'x_2,\quad(x_2,x_3)=\beta'x_2,\quad(x_1,x_3)=\gamma'x_3,\]
with $\alpha',\beta',\gamma'\in p.\Z_p$ (resp. in $4.\Z_2$).
The matrix of $\text{ad}(\gamma x_3)$ with respect to the basis $\{x_1,x_2,x_3\}$ is given by
\[\text{ad}(\gamma'x_3)=\left(\begin{array}{ccc}
   0 & 0 & 0 \\ 0 & \beta'\gamma' & 0 \\ -\gamma'^2 & 0 & 0 \end{array}\right).\]
In particular, its trace is $\text{tr}(\text{ad}(\gamma'x_3))=\beta'\gamma'$.
Since $\text{ad}(\gamma'x_3)=(\text{ad}(x_1),\text{ad}(x_3))$,
one has $\text{tr}(\text{ad}(\gamma'x_3))=\beta'\gamma'=0$.
Therefore $\beta'=0$ or $\gamma'=0$.

\begin{enumerate}
 \item[(1)] If $\beta'=0$, let $v_1=x_1+x_2$ and $v_2=x_2+x_3$.
Then $(v_1,v_2)=\alpha'x_2+\gamma'x_3$.
By Proposition~\ref{3prop:remarklocpwfetLie}, one has that $(v_1,v_2)$ is a
$\Z_p$-linear combination of $v_1$ and $v_2$. 
Thus $(v_1,v_2)$ is necessarily a multiple of $v_2$, i.e., $\alpha'=\gamma'$.
 \item[(2)] If $\gamma'=0$ and $\beta'\neq0$, let $v=x_1+x_2$. Then $(v,x_3)=\beta'x_2$.
Again by Proposition~\ref{3prop:remarklocpwfetLie}, one has that $(v,x_3)\in\spa_{\Z_p}\{v,x_3\}$.
In particular, no multiple of $x_2$ lies in $\spa_{\Z_p}\{v,x_3\}$.
Therefore, this case is impossible.
 \item[(3)] If $\beta'=\gamma'=0$ then $\alpha'=0$ by (1). So $L$, and hence $G$, is abelian.
But this case was excluded previously.
\end{enumerate}

This yields $\beta'=0$ and $\alpha'=\gamma'\neq0$, with $\alpha'\in p.\Z_p$ (resp. in $4.\Z_2$),
and the claim follows from Proposition~\ref{3prop:remarklocpwfetLie}.

\medskip

\textit{Step 3:} Suppose that $G$ is locally powerful, torsion-free with $d(G)=n+1\geq4$,
and let $G$ be generated by $x_1,\ldots,x_{n+1}$.
Since $G$ is non-abelian we may assume without loss of generality that $x_1$ and $x_2$ do not commute.

Let $H\clsgp G$ be the subgroup generated by $x_1,\ldots,x_n$.
Thus by induction there is a unique (non-trivial) orientation $\theta\colon H\rightarrow\Z_p^\times$ such that $H$ is $\theta$-abelian.
In particular, we may assume that $[x_1,x_i]=x_i^\lambda$ and $[x_i,x_j]=1$ for all $2\leq i,j\leq n$,
where \[\lambda=\theta(x_1)-1\in p.\Z_p\smallsetminus\{0\}\] (resp. in $4.\Z_2\smallsetminus\{0\}$ for $p=2$).

Furthermore, let $H_i\clsgp G$ be the subgroup generated by $x_1,x_i,x_{n+1}$, for $2\leq i\leq n$.
By induction, for each $i$ there exists an orientation $\theta_i\colon H_i\rightarrow\Z_p^\times$ such that $H_i$ is $\theta_i$-abelian.

Since $\theta_i(x_1)=\theta(x_1)=1+\lambda$ and $\theta_i(x_i)=\theta(x_i)=1$ for all $i$,
then necessarily $\theta_i(x_{n+1})=1$ for all $i$;
i.e., \[ [x_1,x_{n+1}]=x_{n+1}^\lambda \quad\text{and}\quad [x_i,x_{n+1}]=1 \] for all $i$.
Hence we may extend $\theta$ to $G$ such that $\theta(x_{n+1})=1$.
Thus $G$ is $\theta$-abelian.

\medskip

{\it Step 4:} Assume now that $G$ is not finitely generated, and let $C$ be any finitely generated subgroup of $G$.
Thus $G$ is $\theta_C$-abelian, for some orientation $\theta_C\colon C\to \Z_p^\times$.
Also, let $L_C=\log(C)$ be the Lie algebra associated to $C$

Also, let $V_C$ be the commutator subalgebra $(L_C,L_C)\leq L_C$, and let $Z_C$ be the kernel of $\theta_C$,
considered as ideal of $L_C$.
Then $(v,w)=0$ for every $v\in Z_C$ and $w\in V_C$, and $V_C=\lambda_C Z_C$, for some $\lambda\in p\Z_p$
(and $\lambda\in4\Z_2$ for $p=2$).
Note that $V_C$ and $Z_C$ are abelian Lie algebras for every $C$.
Let 
\[H=\overline{\bigcup_{C<G}V_C} \quad\text{and}\quad Z=\overline{\bigcup_{C<G}Z_C},\]
where the line denotes the closure in the $\Z_p$-topology.
Then $H$ and $Z$ are abelian subgroups of $G$, $H$ is the commutator subgroup of $G$, and $G/Z\simeq\Z_p$.

For every element $x\in G\smallsetminus Z$, one has $\text{ad}(x)z=\lambda_xz$, for some $\lambda_x\in p\Z_p$
(resp., $\lambda_x\in4\Z_p$).
Let $x_0\in G\smallsetminus Z$ such that $\lambda_{x_0}$ has the minimal $p$-adic value.
Then we may define an orientation $\theta\colon G\to\Z_p^\times$ such that $\kernel(\theta)=Z$ and
$\theta(x_0)=1+\lambda_{x_0}$.
Then $\theta_C=\theta|_C$ for every finitely generated subgroup $C$, so that $G$ is $\theta$-abelian.
\end{proof}

\begin{rem}
Theorem~\ref{3thm:locally poweful groups} is the union of \cite[Theorem~A]{claudio:BK} (for the
finitely generated case) and \cite[Prop.~3.5]{cmq:fast} (for the infinitely generated case).
\end{rem}

Recall from Definition~\ref{1defi:pro-p groups} the definition of rank of a pro-$p$ group.
The following is a well-known result.

\begin{fact}\label{3fact:rank of free}
A non-abelian free pro-$p$ group has infinite rank. 
\end{fact}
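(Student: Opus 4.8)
The plan is to reduce to the case of finite rank and then apply the Nielsen--Schreier index formula for free pro-$p$ groups. Throughout, write $F$ for the given non-abelian free pro-$p$ group.

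First I would dispose of the degenerate case: if $d(F)=\infty$, then already $\rank(F)\geq d(F)=\infty$, since $F$ is a closed subgroup of itself. Hence I may assume that $F$ is free of finite rank $d=d(F)$; the hypothesis that $F$ is non-abelian guarantees $d\geq2$, so that $d-1\geq1$. This strict positivity of $d-1$ is the only place where the non-abelian assumption is used, and it is precisely what drives the argument: for $F\simeq\Z_p$ one has $d-1=0$ and every open subgroup is again procyclic, so the rank is finite.

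The key input is the pro-$p$ analogue of the Nielsen--Schreier theorem (cf.\ \cite{ribeszalesskii:profinite}): every closed subgroup of a free pro-$p$ group is again free, and if $H\opsgp F$ has finite index $n=[F:H]$, then
\[ d(H)=1+n(d-1). \]
One may also recover this formula from the cohomological Euler characteristic, using that $\ccd(F)=1$ by Proposition~\ref{2prop:cohomological dimension prop}(ii) and that $d(C)=\dim_{\F_p}H^1(C,\F_p)$ for every finitely generated pro-$p$ group $C$.

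Finally I would produce open subgroups of unbounded index. Since the abelianization of $F$ contains $\Z_p^{\,d}$ with $d\geq2$, the group $F$ is infinite. By the proposition on the $p$-descending central series, each term $\lambda_i(F)$ is open and $\{\lambda_i(F)\mid i\geq1\}$ is a basis of neighbourhoods of $1$; since $F$ is Hausdorff this forces $\bigcap_i\lambda_i(F)=1$. Were the indices $n_i=[F:\lambda_i(F)]$ bounded, the descending chain $\lambda_i(F)$ would stabilize at an open subgroup of finite index equal to $\bigcap_i\lambda_i(F)=1$, making $F$ finite, a contradiction. Thus $n_i\to\infty$, and applying the Schreier formula to $H_i=\lambda_i(F)$ gives $d(H_i)=1+n_i(d-1)\to\infty$ because $d-1\geq1$. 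Therefore $\rank(F)=\sup\{d(C)\mid C\opsgp F\}=\infty$, as claimed. The only substantive ingredient is the index formula; the remaining steps merely assemble standard facts, and the sole subtlety is ensuring the Schreier factor $d-1$ is nonzero, which is exactly the content of the non-abelian hypothesis.
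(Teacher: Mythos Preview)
Your proof is correct. The paper does not actually prove this statement: it is recorded as a well-known Fact without argument, so there is no paper proof to compare against. Your route via the Schreier index formula $d(H)=1+[F:H](d(F)-1)$ for open subgroups of a free pro-$p$ group is precisely the standard one the paper has in mind (it cites this formula as \cite[Corollary~3.9.6]{nsw:cohm} a few lines later), and your handling of the non-abelian hypothesis to ensure $d(F)-1\geq1$ is exactly the point.
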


From Theorem~\ref{3thm:titsalternative} and Theorem~\ref{3thm:locally poweful groups}
one gets the following.

\begin{thm}\label{3thm:equivalence theta-abelian}
 Let $G$ be a Bloch-Kato pro-$p$ group (and assume further that $G$ has a cyclotomic orientation $\theta'$
such that $\image(\theta')\simeq\Z_2$ for $p=2$).
Then the following are equivalent.
\begin{itemize}
 \item[(i)] $G$ is locally powerful.
 \item[(ii)] $G$ does not contain non-abelian closed free pro-$p$ subgroups.
 \item[(iii)] There exists an orientation $\theta\colon G\to\Z_p^\times$ such that $G$ is $\theta$-abelian. 
\end{itemize}
Moreover, if $G$ is finitely generated, the above conditions and the below ones are equivalent.
\begin{itemize}
 \item[(iv)] $G$ is powerful.
 \item[(v)] $G$ is $p$-adic analytic.\footnote{
A topological group $G$ is said to be a {\it $p$-adic analytic group} if $G$ has the structure of analytic manyfold
over $\Q_p$, and the function 
\[f\colon G\times G\longrightarrow G,\quad (x,y)\longmapsto xy^{-1}\] is analytic 
(see also \cite[Ch.~8]{ddsms:analytic})}
 \item[(vi)] the cohomology ring $H^\bullet(G,\F_p)$ is an exterior $\F_p$-algebra.
 \item[(vii)] $\ccd(G)=d(G)$.
 \item[(viii)] $d(C)=d(G)$ for every open subgroup $C$ of $G$, i.e., $G$ has
constant generating number on open subgroups. 
\end{itemize}
\end{thm}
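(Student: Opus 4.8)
The plan is to route every implication through a single invariant of Bloch--Kato groups, the injectivity of the cup-product map $\wedge_2(\cup)$ of \eqref{3eq:wedge cup product}, and to use the rank of $G$ as the bridge between the cohomological conditions on $G$ itself and the structural conditions on all of its subgroups. First I would record that $G$ is torsion-free: for $p$ odd this is immediate from Proposition~\ref{2prop:tor p}, while for $p=2$ the standing hypothesis $\image(\theta')\simeq\Z_2$ together with Corollary~\ref{2cor:torsion}(ii) excludes $2$-torsion. This is exactly what is needed to apply Theorem~\ref{3thm:locally poweful groups}.

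I would settle the three ``unrestricted'' conditions directly. The equivalence (i)$\Leftrightarrow$(iii) is precisely Theorem~\ref{3thm:locally poweful groups}. For (iii)$\Rightarrow$(ii) I would note that a $\theta$-abelian group is metabelian by Proposition~\ref{2prop:presentation theta-abelian}, hence solvable, so it can contain no non-abelian free pro-$p$ subgroup. For (ii)$\Rightarrow$(i) I would invoke Theorem~\ref{3thm:titsalternative}, which converts (ii) into injectivity of $\wedge_2(\cup)$ on \emph{every} closed subgroup, and then apply Theorem~\ref{3thm:peter thomas} to each \emph{finitely generated} closed subgroup $C$ to deduce that $C$ is powerful; that is exactly local powerfulness. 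Note that none of this uses finite generation of $G$, so (i)$\Leftrightarrow$(ii)$\Leftrightarrow$(iii) holds in full generality.

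For finitely generated $G$ I would first close the purely cohomological triangle (iv)$\Leftrightarrow$(vi)$\Leftrightarrow$(vii). Since $G$ is Bloch--Kato the map $\wedge_2(\cup)$ is automatically surjective, so injectivity is the same as bijectivity; by Lemma~\ref{3lem:cup injective} this is equivalent both to the surjection \eqref{2eq:projection exterior algebra} being an isomorphism, which is exactly (vi), and (in the finitely generated case) to $\ccd(G)=d(G)$ as in \eqref{3eq:cd=d}, which is (vii); and Theorem~\ref{3thm:peter thomas} identifies the same injectivity with $G$ being powerful, which is (iv). It remains to splice this triangle into (i)--(iii). The implication (i)$\Rightarrow$(iv) is trivial, as $G$ is a finitely generated closed subgroup of itself. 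The key step, and the one I expect to be the main obstacle, is the reverse passage (iv)$\Rightarrow$(ii), since (iv) is a condition on $G$ alone while (ii) concerns all of its subgroups: here I would use that a finitely generated powerful torsion-free group is uniform, hence $p$-adic analytic and therefore of finite rank in the sense of Definition~\ref{1defi:pro-p groups}; since rank does not increase on passing to closed subgroups and a non-abelian free pro-$p$ group has infinite rank by Fact~\ref{3fact:rank of free}, $G$ can contain no such subgroup, giving (ii).

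Finally I would absorb (v) and (viii) through the same finite-rank bridge. For (iv)$\Leftrightarrow$(v) I would cite the characterization of finitely generated $p$-adic analytic groups as the finite-rank (equivalently virtually powerful) pro-$p$ groups from \cite{ddsms:analytic}: (iv)$\Rightarrow$(v) is Lazard's theorem, and (v) in turn forces finite rank and hence (ii) by the rank obstruction above. For (viii) I would argue both directions: if $G$ is locally powerful then every open subgroup $C$ is powerful, so $\ccd(C)=d(C)$ by the triangle applied to $C$, and combining with $\ccd(C)=\ccd(G)=d(G)$ (equality of cohomological dimensions being valid since $G$ is torsion-free) yields $d(C)=d(G)$; conversely $d(C)=d(G)$ for all open $C$ forces $\rank(G)=\sup_{C\opsgp G}d(C)=d(G)<\infty$, whence (ii) once more by the free-rank obstruction. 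This links every listed condition into a single equivalence class.
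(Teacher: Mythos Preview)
Your argument is correct and follows essentially the same route as the paper's proof: the same trio of ingredients (Theorem~\ref{3thm:titsalternative}, Theorem~\ref{3thm:locally poweful groups}, Theorem~\ref{3thm:peter thomas}) together with the finite-rank obstruction of Fact~\ref{3fact:rank of free} does all the real work, and you link (iv), (v), (viii) back to (ii) via finite rank exactly as the paper does. Your write-up is in fact more explicit than the paper's in two places: you spell out why $G$ is torsion-free (needed to invoke Theorem~\ref{3thm:locally poweful groups}), and you make the step (ii)$\Rightarrow$(i) concrete by feeding the injectivity of $\wedge_2(\cup)$ on each finitely generated closed subgroup into Theorem~\ref{3thm:peter thomas}, which the paper leaves implicit.

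The only genuine deviation is your argument for (i)$\Rightarrow$(viii). The paper cites \cite[Prop.~4.4]{ddsms:analytic} directly, while you instead apply the equivalence (iv)$\Leftrightarrow$(vii) to each open subgroup $C$ (which is again finitely generated, Bloch--Kato, and powerful) to get $\ccd(C)=d(C)$, and then use that $\ccd(C)=\ccd(G)$ for open subgroups of a torsion-free pro-$p$ group of finite cohomological dimension. This is a valid alternative and has the small advantage of staying entirely within the cohomological framework already established, at the cost of needing the observation that the restricted orientation $\theta'\vert_C$ still has image contained in $1+4\Z_2$ when $p=2$ (so that Lemma~\ref{3lem:cup injective} applies to $C$).
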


\begin{proof}
 The equivalence between (i), (ii) and (iii) follows directly from Theorem~\ref{3thm:titsalternative}
and Theorem~\ref{3thm:locally poweful groups}.
Now assume that $G$ is finitely generated.

Condition (i) implies (iv) by definition.
Conversely, if $G$ is powerful, then the rank $\rank(G)$ is finite \cite[Theorem~3.13]{ddsms:analytic}.
Thus, by Fact~\ref{3fact:rank of free}, $G$ contains no non-abelian free subgroups, and by 
Theorem~\ref{3thm:titsalternative} $G$ is $\theta$-abelian, for some orientation $\theta\colon G\to\Z_p^\times$.
This establishes the equivalence between (iv) and the first three conditions.

By \cite[Theorem~8.18]{ddsms:analytic}, condition (i) implies condition (v), whereas
condition (v) implies condition (ii) follows by \cite[Theorem~8.32]{ddsms:analytic}.

Conditions (vi) and (vii) are equivalent by Lemma~\ref{3lem:cup injective}, and conditions
(iv) and (vi) are equivalent by Theorem~\ref{3thm:peter thomas}.

If $G$ is locally powerful, then it has finite rank by the argument above, thus every open subgroup $C$ of $G$ is
powerful.
Thus, (i) implies (viii) by \cite[Prop.~4.4]{ddsms:analytic}.
Conversely, if (viii) holds, then one has $\rank(G)=d(G)$, and $G$ contains no non-abelian free subgroups. 
\end{proof}

 For a finitely generated pro-$p$ group $G$, consider the property
\begin{equation}\label{3eq:constant generating number}
 d(C)-n=|G:C|(d(G)-n)
\end{equation}
for all open subgroups $C$ of $G$, with $n$ a fixed poritive integer.
In the early '80s, K.~Iwasawa observed that pro-$p$ groups satisfying \eqref{3eq:constant generating number}
have interesting representation-theoretic properties, and he raised the question of determining the groups 
satisfying \eqref{3eq:constant generating number} for each $n\geq1$.

For $n=1$, \eqref{3eq:constant generating number} becomes the well-known (topological) Schreier index formula,
which characterizes free pro-$p$ groups \cite[Corollary~3.9.6]{nsw:cohm}. 
In \cite{dummitlabute:formula}, D.~Dummit and Labute answer Iwasawa's question for $n=2$ in the case
of groups with one defining relation: $G$ is a Demushkin group if and only if G is a one-relator torsion-free
pro-$p$ group satisfying \eqref{3eq:constant generating number} with $n=2$.
 
Hnece, Theorem~\ref{3thm:equivalence theta-abelian}(viii) provides the answer to Iwasawa's question with $n=d(G)$
in the category of finitely generated Bloch-Kato pro-$p$ groups.
A similar answer has been proven in \cite{klopschsnopce:generating} for the category of $p$-adic analytic
pro-$p$ groups.
It is interesting to remark that the groups listed in \cite[Theorem~1.1.(1)-(2),~(4)]{klopschsnopce:generating}
are $\theta$-abelian for some orientation $\theta$ (in fact the groups listed in (4) are not powerful,
as the image of the orientation is torsion), whereas the groups listed in
\cite[Theorem~1.1.(3)]{klopschsnopce:generating} are not Bloch-Kato
(and thus they cannot be realized as maximal pro-$p$ Galois groups), for they have non-trivial 3-torsion.

\section[Rigid fields]{Rigid fields and $\theta$-abelian Galois groups}

Let $K$ be a field containing a root of unity of order $p$, and let $G_K(p)$ be
the maximal pro-$p$ Galois group of $K$, with arithmentic orientation $\theta\colon G_K(p)\to\Z_p^\times$.
The condition of $\theta$-abelianity of the group $G_K(p)$ can be translated in the arithmentic ``language''
(i.e., in conditions on the field $K$) in some different ways.

Recall from \eqref{1eq:kummer_dualityp} that the first $\F_p$-cohomology group of $G_K(p)$ is isomorphic to
the quotient $K^\times/(K^\times)^p$ as $\F_p$-vector space.
Also, fix an isomorphism of $G_K(p)$-modules $\mu_p\simeq\F_p$.
Then by \eqref{2eq:inflation BlochKato} and by Corollary~\ref{1cor:H2 and Brauer group} one has that
the second $\F_p$-cohomology group of $G_K(p)$ is canonically isomorphic to the subgroup $\bra_p(K)$ of
the Brauer group of $\bra(K)$.

It is possible to define a map from $K^\times/\left(K^\times\right)^p\wedge K^\times/\left(K^\times\right)^p$ to
$\bra_p(K)$ as follows.
For $a,b\in K^\times$, the {\bf cyclic $K$-algebra} generated by $a$ and $b$ is the $K$-algebra
\[(a,b)_K=\frac{K\langle u,v\rangle}{\left\langle u^p=a,v^p=b,uv=\zeta vu\right\rangle},\]
where $\zeta\in K^\times$ is the fixed primitive $p^{th}$-root of the unity
(which depends on the isomorphism $\mu_p\simeq\F_p$).
For $a,b\in K^\times$, the cyclic algebra $(a,b)_K$ represents a class of the Brauer group $\bra(K)$.
Moreover, for every $a,b\in K^\times$, the algebra $(a^p,b)_K$ splits, so that in fact the class of $(a,b)_K$
lies in $\bra_p(K)$, and we may define a map 
\begin{equation}\label{3eq:map cyclic algebra}
 \xymatrix{ K^\times/\left(K^\times\right)^p\wedge K^\times/\left(K^\times\right)^p
\ar[rr]^-{(\argu,\argu)_K} && \bra_p(K) }.
\end{equation}

In their work on the case $n=2$ of the Bloch-Kato conjecture,
Merkur'ev and Suslin proved that one has a commutative diagram
\begin{equation}\label{3eq:diagram MerkurevSuslin}
  \xymatrix{ K^\times/\left(K^\times\right)^p\wedge K^\times/\left(K^\times\right)^p\ar@<7ex>[d]^{\wr} \ar@<-7ex>[d]^{\wr} 
\ar[rr]^-{(\argu,\argu)_K} && \bra_p(K) \\
 H^1 \left(G_K(p),\F_p \right) \wedge H^1 \left (G_K(p),\F_p \right) \ar[rr]^-{\wedge_2(\cup)} 
&& H^2 \left(G_K(p),\F_p \right) \ar[u]_{\wr}}
\end{equation}
where the upper horizontal arrow (and thus also the lower one) is an epimorphism of vector spaces over $\F_p$.

\subsection{$p$-rigid fields}

Let $a$ be a $p$-power free unit of $K$ (i.e., $a\in K^\times\smallsetminus (K^\times)^p$).
Then $K(\sqrt[p]{a})/K$ is a Galois extension of degree $p$, with cyclic Galois group generated by an element
$\bar\sigma\in\Gal(K(\sqrt[p]{a})/K)$ of order $p$.
Recall that the {\bf norm} $\mathfrak{N}\colon K(\sqrt[p]{a})\to K$ of the extension $K(\sqrt[p]{a})/K$ is defined by 
\begin{eqnarray*}
 \mathfrak{N}(\gamma) &=& \gamma\cdot\bar\sigma(\gamma)\cdots\bar\sigma^{p-1}(\gamma) \\ &=&
\prod_{i=0}^{p-1}\left(c_0+c_1\zeta^i\sqrt[p]{a}+\ldots+c_{p-1}\zeta^{i(p-1)}\sqrt[p]{a^{p-1}}\right),
\end{eqnarray*}
with $\gamma=c_0+c_1\sqrt[p]{a}+\ldots+c_{p-1}\sqrt[p]{a^{p-1}}$.

\begin{defi}
Let $K$ be a field containing a primitive $p$-th root of unity (and containing also $\sqrt{-1}$, if $p=2$).
\begin{itemize}
 \item[(i)] An element $a\in K^\times$ which is $p$-power free is said to be {\bf $p$-rigid}
if $\mathfrak{N}(K(\sqrt[p]{a}))$ is contained in the set $\bigcup_{n=0}^{p-1}a^nK^p$.
 \item[(ii)] The field $K$ is said to be {\bf $p$-rigid} if
all of the elements in $K\smallsetminus K^p$ are $p$-rigid.
\end{itemize}
\end{defi}

\begin{rem}\label{3rem:norm map}
\begin{itemize}
 \item[(i)] Since $\mathfrak{N}(K)=K^p$, every $p$-power of $K$ is a norm of $K(\sqrt[p]{a})/K$.
 \item[(ii)] The restriction of the norm on the multiplicative group $K(\sqrt[p]{a})^\times$ is a group homomorphism.
 \item[(iii)] If $p$ is odd, then $\mathfrak{N}(\sqrt[p]{a^m})=a^m$, with $m$ a positive integer,
whereas if $p=2$ then $\mathfrak{N}(\sqrt{a})=-a$.
\end{itemize}
\end{rem}

The notion of $p$-rigidity  of fields was introduced by K.~Szymiczek in \cite{szym:rigid},
and it was developed and studed first in the case $p=2$, and then in the case of an odd prime. 
The notion of $p$-rigidity is important expecially for the consequences on the existence of valuations
(see \cite[\S~11.3-11.4]{efrat:libro}).

\begin{exs}
\begin{itemize}
 \item[(a)] Let $\ell$ be a prime different to $p$ and $k$ a positive integer such that $p\mid(\ell^k-1)$,
and let $K$ be the field $\F_{\ell^k}\dblaurl X\dblaurr$, i.e, $K$ is the field of Laurent series on
the indeterminate $X$ with coefficients in the finite field $\F_{\ell^k}$.
Then $K$ is $p$-rigid \cite[p.~727]{ware:galp}.
 \item[(b)] Let $\zeta$ be a primitive $p$-th root of unity and $\ell$ a prime different to $p$.
Then the $p$-adic field $\Q_\ell(\zeta)$ is $p$-rigid \cite[Exam.~3.1]{cmq:fast}.
\end{itemize}
\end{exs}

In the case $p=2$, one may understand a $2$-rigid field by studying the structure of the Witt ring of quadratic
forms of the field. 
The consequences of $2$-rigidity were studied in several papers, and today many results about 2-rigid fields
are known, and these fields are rather well understood (see, for example,
\cite{ware:rigid}, \cite{arasonelmanjacob:rigid}, \cite{jacobware:pro2} and \cite{leepsmith:2rigidity}).

The problem to generalize the results obtained in the case $p=2$ for odd primes
has been considered rather hard for long time. 
A first difficulty is that there is no usable version of Witt ring for higher degree forms.
Thus, one has to find an ``odd substitute'' of a quaternionic pairing.
(Indeed, quaternionic pairings were developed as a tool for studying abstract Witt rings.)
The first attempts to use the cup product in Galois cohomology to study the maximal pro-$p$ Galois groups
of $p$-rigid fields are to be found in \cite{ware:galp} first, and then, more explicitly, in \cite{hwangjacob:brauer}.
On the other hand, \cite{ek98:abeliansbgps} makes no use of cohomological tools. 

It is worth to stress that all the contribuitions to $p$-rigid fields listed so far make use of instruments
provided by the theory of valuations, whereas our approach (based only on Galois cohomology and the theory
of pro-$p$ groups) makes it possible to study $p$-rigid without any ``help'' from valuation theory.
In fact, the following result is the key to relate $p$-rigid fields with the cohomology of maximal
pro-$p$ Galois groups.

\begin{prop}\label{3prop:definition rigidity}
 Let $K$ be a field containing a root of unity of order $p$ (and also $\sqrt{-1}$, if $p=2$).
Then $K$ is $p$-rigid if, and only if, the map $\wedge_2(\cup)$ in the $\F_p$-cohomology of the Galois group
$G_K(p)$ is a monomorphism.
\end{prop}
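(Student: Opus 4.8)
The plan is to transport the whole question to the Brauer group by means of the Merkur'ev--Suslin diagram \eqref{3eq:diagram MerkurevSuslin}. Since its two vertical arrows are isomorphisms, the map $\wedge_2(\cup)$ is a monomorphism if, and only if, the cyclic-algebra map
\[ \xymatrix{ K^\times/(K^\times)^p \wedge K^\times/(K^\times)^p \ar[rr]^-{(\argu,\argu)_K} && \bra_p(K) } \]
is injective, so I would forget the cohomology and argue entirely with cyclic algebras. The basic input is the classical splitting criterion: for a $p$-power free $a$, the class $(a,b)_K$ is trivial in $\bra_p(K)$ if, and only if, $b$ lies in the norm group $\mathfrak{N}(K(\sqrt[p]{a})^\times)$. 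Combining this with Remark~\ref{3rem:norm map} — which already gives $\bigcup_{n=0}^{p-1}a^nK^p\subseteq \mathfrak{N}(K(\sqrt[p]{a})^\times)$ — one reformulates rigidity as follows: $a$ is $p$-rigid precisely when $\mathfrak{N}(K(\sqrt[p]{a})^\times)=\bigcup_{n=0}^{p-1}a^nK^p$, equivalently when the decomposable symbol $(a,b)_K$ vanishes exactly for those $b$ with $\bar b\in\F_p\bar a$ inside $K^\times/(K^\times)^p$.

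With this dictionary the implication ``$\wedge_2(\cup)$ injective $\Rightarrow$ $K$ is $p$-rigid'' is the short one, and I would prove its contrapositive. If $K$ is not $p$-rigid there is a $p$-power free $a$ and an element $b=\mathfrak{N}(\gamma)$, $\gamma\in K(\sqrt[p]{a})^\times$, with $\bar b\notin\F_p\bar a$. Then $\bar a$ and $\bar b$ are $\F_p$-linearly independent, so $\bar a\wedge\bar b\neq 0$, while by the splitting criterion $(a,b)_K=0$; hence $\bar a\wedge\bar b$ is a nonzero element in the kernel of the cyclic-algebra map, and so, through the diagram, $\wedge_2(\cup)$ fails to be injective.

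For the converse ``$K$ $p$-rigid $\Rightarrow$ $\wedge_2(\cup)$ injective'' I would fix an $\F_p$-basis $\{\bar a_i\}_{i\in\mathcal I}$ of $K^\times/(K^\times)^p$, so that the decomposable wedges $\{\bar a_i\wedge\bar a_j\}_{i<j}$ form a basis of the exterior square, and aim to show that the symbols $(a_i,a_j)_K$ remain $\F_p$-linearly independent in $\bra_p(K)$. As an exterior square is the direct limit of those of its finite-dimensional subspaces, any kernel element already lives in a finite-dimensional piece, so I may argue by induction on $d=\dim_{\F_p}K^\times/(K^\times)^p$; the cases $d\leq 2$ are exactly the pointwise rigidity statement reformulated above. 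For the inductive step I would take a hypothetical nonzero kernel element $\eta=\sum_{i<j}c_{ij}\bar a_i\wedge\bar a_j$ with, say, $c_{mn}\neq 0$, and single out the pair $(m,n)$ exactly as in the proof of Theorem~\ref{3thm:titsalternative}: passing to the $2$-generated closed subgroup $S=\langle x_m,x_n\rangle$ of $G_K(p)$, the surjection $\res^1_{G_K(p),S}$ kills every $\chi_i$ with $i\neq m,n$, so the image of $\eta$ collapses to $c_{mn}\,(\bar\chi_m\cup\bar\chi_n)$, which must then vanish in $H^2(S,\F_p)$. The heart of the matter — and the step I expect to be the main obstacle — is to show that rigidity forces $\bar\chi_m\cup\bar\chi_n\neq 0$, i.e. that $S$ cannot be free, so that the off-diagonal symbol $(a_m,a_n)_K$ genuinely survives rather than being absorbed by the remaining generators. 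This is precisely the passage from the \emph{decomposable} vanishing controlled by the norm condition to honest linear independence of all symbols, and it is where one must exploit the full multiplicative strength of rigidity (the norm group of every $K(\sqrt[p]{a})/K$ being as small as possible) to exclude hidden ``linkage'' relations among distinct symbols. Once this independence is secured, $c_{mn}=0$ for every pair, forcing $\eta=0$ and completing the argument.
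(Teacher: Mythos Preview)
Your contrapositive for ``$\wedge_2(\cup)$ injective $\Rightarrow$ $K$ $p$-rigid'' is correct and is precisely what the paper does.

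For the converse, the approach you propose has a real gap --- and you have put your finger on it. Restricting a putative kernel element $\eta$ to the $2$-generated subgroup $S=\langle x_m,x_n\rangle$ and asking that $\res^1(\chi_m)\cup\res^1(\chi_n)\neq 0$ in $H^2(S,\F_p)$ is asking that $S$ not be free. But $S=G_L(p)$ for some (possibly infinite) $p$-extension $L/K$, and $p$-rigidity of $K$ tells you nothing \emph{a priori} about $L$; closing that step would amount to hereditary $p$-rigidity (Theorem~\ref{3thm:hereditary rigidity}), which in the paper's logical order comes \emph{after} this proposition and in fact uses it. So the induction cannot be completed along these lines.

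The paper's route is much shorter and sidesteps the issue entirely. The key observation --- compressed in the paper into the single word ``namely'' --- is that the kernel of
\[
\bigwedge\nolimits^2\!\bigl(K^\times/(K^\times)^p\bigr)\longrightarrow \MiK_2^M(K)/p\;\overset{\sim}{\longrightarrow}\;H^2\bigl(G_K(p),\F_p\bigr)
\]
is \emph{spanned by decomposable elements}: by the presentation of $\MiK_\bullet^M(K)/p$ recalled in \S2.1, the defining relations are the Steinberg-type wedges $\bar a\wedge\bar b$ with $a+b\in(K^\times)^p$. For each such generator one has $(a,b)_K=0$ in $\bra_p(K)$; $p$-rigidity then forces $\bar b\in\F_p\bar a$, i.e.\ $\bar a\wedge\bar b=0$. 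Hence every generator of the kernel is already the zero vector, so the kernel is trivial and $\wedge_2(\cup)$ is injective. No induction, no passage to subgroups, and no analysis of ``linkage'' among distinct symbols is needed: injectivity on decomposable wedges \emph{is} injectivity, precisely because the kernel is spanned by decomposables.
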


\begin{proof}
The definition of $p$-rigidity states that for every $a$, the norms of $K(\sqrt[p]{a})/K$ modulo $(K^\times)^p$
are precisely the $\F_p$-multiples of the $\bar a^i$'s, with $\bar a^i=a^i\bmod (K^\times)^p$ and $0\leq i\leq p-1$,
i.e.,
\[\mathfrak{N}(\sqrt[p]{a})=\spa_{\F_p}\{\bar a,\ldots,\bar a^{p-1}\}\leq K^\times/(K^\times)^p.\]

It is well known that the class of $(a,b)_K$ is trivial in $\bra(K)$ if, and only if, $b$ is a norm
of $K(\sqrt[p]{a})/K$ -- in particular, if $b$ is a $p$-power of $K$ or if $\bar b$ is $\F_p$-linearly dependent
to $\bar a$ in $K^\times/(K^\times)^p$, by Remark~\ref{3rem:norm map}. 

Consequently, $K$ is $p$-rigid if, and only if, the algebra $(a,b)_K$ splits if, and only if, $\bar b$
is $\F_p$-linearly dependent to $\bar a^i$ in $K^\times/(K^\times)^p$ for some $i$, namely,
the map \eqref{3eq:map cyclic algebra} is a monomorphism.
Therefore the claim follows from the commutative diagram \eqref{3eq:diagram MerkurevSuslin}. 
\end{proof}

This provides a first immediate consequence.

\begin{cor}
Let $K$ be a field containing a primitive $p$-th root of unity (and also $\sqrt{-1}$, if $p=2$),
and assume that the quotient $K^\times/(K^\times)^p$ is finite.
Then $K$ is $p$-rigid if, and only if,
 the maximal pro-$p$ Galois group $G_K(p)$ is $\theta$-abelian, with $\theta\colon G_K(p)\to \Z_p^\times$
the arithmetical orientation.
\end{cor}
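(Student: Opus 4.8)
The plan is to realize the corollary as the arithmetic shadow of the purely group-theoretic equivalences already assembled for $G:=G_K(p)$, and then to pin down that the orientation they produce is the arithmetic one. First I would record the standing hypotheses in cohomological form. Since $K$ contains $\mu_p$, the group $G$ is a Bloch-Kato pro-$p$ group and its arithmetic orientation $\theta$ is cyclotomic; the isomorphism \eqref{1eq:kummer_dualityp} together with Proposition~\ref{1prop:frattini subgroup} turns the hypothesis $\dim_{\F_p}K^\times/(K^\times)^p<\infty$ into the statement that $G$ is finitely generated; and when $p=2$ the assumption $\sqrt{-1}\in K$ forces the cyclotomic character to be trivial on $\mu_4$, so that $\image(\theta)\subseteq 1+4\Z_2$ and hence $\image(\theta)\simeq\Z_2$ (or is trivial) by Remark~\ref{1rem:p-adic and orientations}. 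Thus all the hypotheses of Lemma~\ref{3lem:cup injective} and of Theorem~\ref{3thm:equivalence theta-abelian} are in force.

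Next I would chain the available equivalences. By Proposition~\ref{3prop:definition rigidity}, $K$ is $p$-rigid if and only if the map $\wedge_2(\cup)$ on $H^1(G,\F_p)\wedge H^1(G,\F_p)$ is injective. By Lemma~\ref{3lem:cup injective} this injectivity is equivalent, for the finitely generated group $G$, to the numerical equality $\ccd(G)=d(G)$. Finally Theorem~\ref{3thm:equivalence theta-abelian}, whose hypotheses we have just checked, shows that $\ccd(G)=d(G)$ is equivalent to the existence of \emph{some} orientation $\eta\colon G\to\Z_p^\times$ making $G$ $\eta$-abelian. The converse direction of the corollary is then immediate: if $G$ is $\theta$-abelian for the arithmetic $\theta$, it is $\eta$-abelian for an orientation, so $\ccd(G)=d(G)$ and $K$ is $p$-rigid by running the same three results backwards.

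The genuine obstacle is the forward direction: upgrading ``$G$ is $\eta$-abelian for some $\eta$'' to ``$G$ is $\theta$-abelian for the arithmetic orientation $\theta$''. Here I would exploit the rigidity of the $\eta$-abelian structure from Proposition~\ref{2prop:presentation theta-abelian}: writing $G=\langle x_0,x_i\rangle$ with $x_0x_ix_0^{-1}=x_i^{\eta(x_0)}$ and $\kernel(\eta)=\langle x_i\rangle$ free abelian of rank $d(G)-1$, one checks that every continuous homomorphism $G\to\Z_p^\times$ is trivial on $\kernel(\eta)$, since the $x_i$ map into the torsion of $G^{\mathrm{ab}}$ while the pro-$p$ part of $\Z_p^\times$ is torsion-free. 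Hence $\theta(x_i)=1=\eta(x_i)$, and $\theta$ and $\eta$ can differ only in the value on $x_0$. To force $\theta(x_0)=\eta(x_0)$ I would use that $\theta$ is cyclotomic: feeding the extension $1\to\kernel(\eta)\to G\to\Z_p\to1$ into the Lyndon--Hochschild--Serre spectral sequence with coefficients in $\Z_p(1)_\theta$, and noting that $\Z_p$ has cohomological dimension one so that the sequence degenerates into short exact sequences, the generator of $G/\kernel(\eta)$ acts on $H^1(\kernel(\eta),\Z_p(1)_\theta)$ by the scalar $\eta(x_0)^{-1}\theta(x_0)$. Consequently the submodule $H^1(\Z_p,H^1(\kernel(\eta),\Z_p(1)_\theta))$ of $H^2(G,\Z_p(1)_\theta)$ equals $(\Z_p/(\eta(x_0)^{-1}\theta(x_0)-1)\Z_p)^{d(G)-1}$, which is nonzero torsion unless $\theta(x_0)=\eta(x_0)$. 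Since cyclotomicity demands that $H^2(G,\Z_p(1)_\theta)$ be torsion-free, this pins down $\theta(x_0)=\eta(x_0)$, whence $\theta=\eta$ and $G$ is $\theta$-abelian for the arithmetic orientation. I expect this spectral-sequence computation, together with the degenerate cases $d(G)=1$ and $\image(\theta)=\{1\}$ (where both sides collapse to ``$G$ abelian'' and can be treated directly), to be the main technical point; everything else is bookkeeping of results already in hand.
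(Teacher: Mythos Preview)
Your chain---Proposition~\ref{3prop:definition rigidity} $\Rightarrow$ $\wedge_2(\cup)$ injective $\Rightarrow$ $\ccd(G)=d(G)$ via Lemma~\ref{3lem:cup injective} $\Rightarrow$ $\eta$-abelian for some $\eta$ via Theorem~\ref{3thm:equivalence theta-abelian}---is exactly the route the paper has in mind; indeed the paper gives no written proof here, labelling the corollary an ``immediate consequence'' of Proposition~\ref{3prop:definition rigidity} together with the equivalences already assembled.

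Where you go beyond the paper is in the upgrade from ``some orientation $\eta$'' to ``the arithmetic orientation $\theta$'', which the paper leaves implicit. Your spectral-sequence argument is correct: with $\ccd(G/\kernel(\eta))=1$ the term $E_2^{1,1}=H^1(\Z_p,H^1(\kernel(\eta),\Z_p(1)_\theta))$ is genuinely a \emph{submodule} of $H^2(G,\Z_p(1)_\theta)$ (not just a subquotient), and computing it as $(\Z_p/(\theta(x_0)\eta(x_0)^{-1}-1)\Z_p)^{d(G)-1}$ forces $\theta(x_0)=\eta(x_0)$ by torsion-freeness of $H^2(G,\Z_p(1))$. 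A somewhat shorter alternative, which the thesis in fact supplies later as Proposition~\ref{4prop:Zmxlabeliannormal}, is to observe that for cyclotomic $\theta$ the $\theta$-centre $\Zen_\theta(G)$ is the maximal abelian normal subgroup of $G$, hence equals $\kernel(\eta)$; comparing the conjugation formulas $x_0 z x_0^{-1}=z^{\theta(x_0)}$ (from $z\in\Zen_\theta(G)$) and $x_0 z x_0^{-1}=z^{\eta(x_0)}$ (from $\eta$-abelianity) on any $1\neq z\in\kernel(\eta)$ then yields $\theta=\eta$ directly. Either way the identification step is genuinely needed for a complete proof, and you were right to supply it.
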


Note that under the assumption that $K$ contains $\sqrt{-1}$, one has that either $\image(\theta)\simeq\Z_2$
or $\theta\equiv{\mathbf 1}$.

In order to study explicitly the structure of maximal pro-$p$ Galois group of $p$-rigid fields, R.~Ware introduced
in \cite{ware:galp} also the notion of hereditary $p$-rigidity:
a field $K$ is said to be hereditarily $p$-rigid if every subextension of the $p$-closure $K(p)$ is $p$-rigid.
As Ware pointed out, to conclude that $K$ is hereditary $p$-rigid, it is enough to check that
each finite extension $L/K$ is $p$-rigid.
In \cite{ek98:abeliansbgps}, A.~Engler and J.~Koenigsmann showed that $p$-rigidity implies hereditary $p$-rigidity,
and later also the author, with S.K.~Chebolu and J.~Min\'a\v{c}, proved the same result using a different argument
(cf. \cite[\S~3.3]{cmq:fast}). The former proof makes use of techniques from the theory of valuations.

\begin{thm}\label{3thm:hereditary rigidity}
 If a field $K$ containing a primitive $p$-th root of unity (and $\sqrt{-1}$ for $p=2$) is $p$-rigid, 
then every $p$-extension $L$ of $K$ is rigid.
\end{thm}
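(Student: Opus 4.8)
The plan is to translate the statement entirely into one about the closed subgroups of $G:=G_K(p)$ and then exploit the rigidity of $\theta$-abelian groups. First I would use the Galois correspondence: every $p$-extension $L$ of $K$ lies in $K(p)$, so $K(p)=L(p)$ and $G_L(p)=\Gal(K(p)/L)$ is a closed subgroup of $G$, while conversely every closed subgroup arises in this way. Since $L\supseteq K$, the field $L$ again contains a primitive $p$-th root of unity (and $\sqrt{-1}$ if $p=2$), so Proposition~\ref{3prop:definition rigidity} applies to $L$ as well: $L$ is $p$-rigid if and only if the map $\wedge_2(\cup)$ in the $\F_p$-cohomology of $G_L(p)$ is injective. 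Hence the theorem is equivalent to the purely group-theoretic assertion that $\wedge_2(\cup)$ is injective for \emph{every} closed subgroup of $G$.

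Next I would extract the structure of $G$ itself. By Proposition~\ref{3prop:definition rigidity} applied to $K$, the $p$-rigidity of $K$ says exactly that $\wedge_2(\cup)$ is injective for $G$. The arithmetic orientation $\theta\colon G\to\Z_p^\times$ is cyclotomic, and the hypothesis $\sqrt{-1}\in K$ for $p=2$ forces $\image(\theta)\subseteq 1+4\Z_2$, so by Remark~\ref{1rem:p-adic and orientations} the image is torsion-free and the Bockstein vanishes (Proposition~\ref{2prop:bock}); thus the hypotheses of Lemma~\ref{3lem:cup injective} are met and the epimorphism \eqref{2eq:projection exterior algebra} is an isomorphism, i.e.\ $H^\bullet(G,\F_p)$ is an exterior algebra. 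In the finitely generated case (for instance when $K^\times/(K^\times)^p$ is finite), Theorem~\ref{3thm:equivalence theta-abelian} then gives at once that $(G,\theta)$ is $\theta$-abelian, equivalently locally powerful.

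The conclusion would follow from inheritance. A closed subgroup $H$ of a locally powerful group is again locally powerful, since every finitely generated closed subgroup of $H$ is a finitely generated closed subgroup of $G$ and hence powerful; moreover $H$ is again Bloch-Kato and cyclo-oriented for the restricted orientation (these conditions are defined on all closed subgroups). Applying Theorem~\ref{3thm:equivalence theta-abelian} to $H$ shows $H$ is $\theta$-abelian, whence Lemma~\ref{3lem:cup injective} shows $\wedge_2(\cup)$ is injective for $H$. Running this with $H=G_L(p)$ and returning through Proposition~\ref{3prop:definition rigidity} yields the $p$-rigidity of $L$.

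The main obstacle is the passage from injectivity of $\wedge_2(\cup)$ for the single group $G$ to injectivity for all of its closed subgroups, i.e.\ the step that produces $\theta$-abelianity. By Theorem~\ref{3thm:titsalternative} this amounts to ruling out non-abelian free closed pro-$p$ subgroups of $G$, and this is genuinely stronger than what the exterior cohomology of $G$ alone yields once $G$ fails to be finitely generated. To cover the general case I would reduce to finitely generated subextensions: by Theorem~\ref{3thm:equivalence theta-abelian} it suffices to show every finitely generated closed $S\leq G$ is powerful (then $G$ is locally powerful and the argument above applies verbatim), and for this one uses Theorem~\ref{3thm:peter thomas} together with the fact that $p$-rigidity is a condition on pairs of elements, so it can be verified on the finitely generated subextensions cutting out $S$. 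This localisation — equivalently, the one-step inheritance of rigidity under a single degree-$p$ extension followed by a direct-limit argument over finite subextensions — is the technical heart, and is precisely the point handled by valuation-theoretic means in Engler--Koenigsmann and cohomologically in the author's work with Chebolu and Min\'a\v{c}.
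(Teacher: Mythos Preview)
Your approach coincides with the paper's. The thesis does not prove Theorem~\ref{3thm:hereditary rigidity} in full generality either: it cites \cite{ek98:abeliansbgps} and \cite[\S~3.3]{cmq:fast} for the general statement, and in the Remark immediately following gives exactly the argument you outline for the finitely generated case (Proposition~\ref{3prop:definition rigidity} $\Rightarrow$ $\wedge_2(\cup)$ injective $\Rightarrow$ $G_K(p)$ powerful by Theorem~\ref{3thm:peter thomas} $\Rightarrow$ locally powerful by Theorem~\ref{3thm:equivalence theta-abelian} $\Rightarrow$ every closed subgroup has $\wedge_2(\cup)$ injective $\Rightarrow$ every $p$-extension is $p$-rigid). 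You have correctly identified that the passage from ``$\wedge_2(\cup)$ injective for $G$'' to ``locally powerful'' is the genuine obstacle when $G_K(p)$ is not finitely generated, and that this is precisely what is outsourced to the cited references; the paper makes the same concession.

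One small caution on your sketch of the localisation argument: a finitely generated closed subgroup $S\leq G_K(p)$ corresponds to a (typically infinite) $p$-extension $L_S/K$, not to a finite one, so ``verifying $p$-rigidity on finitely generated subextensions cutting out $S$'' is not quite the right picture. The actual reduction in \cite{cmq:fast} proceeds by showing inheritance under a single degree-$p$ step and then iterating, rather than by a direct-limit argument over subfields.
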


\begin{rem}
Note that if the quotient $K^\times/(K^\times)^p$ is finite, then it is possible to prove that a $p$-rigid field 
is also hereditarily $p$-rigid without making use of arithmetical tools.
Indeed, assume that the field $K$ (containing a primitive $p$-th root of unity and also $\sqrt{-1}$ if $p=2$)
is $p$-rigid.
Then by Proposition~\ref{3prop:definition rigidity} and Theorem~\ref{3thm:peter thomas} the maximal pro-$p$
Galois group $G_K(p)$ is powerful, and by Theorem~\ref{3thm:equivalence theta-abelian} it is also locally powerful.
Thus again by Proposition~\ref{3prop:definition rigidity} and Theorem~\ref{3thm:peter thomas}
every $p$-extension $L$ of $K$ is $p$-rigid.
\end{rem}

Therefore, one has an ``arithmetical version'' of Theorem~\ref{3thm:equivalence theta-abelian}.

\begin{thm}\label{3thm:rigidity tutto}
 Let $K$ be a field containing a primitive $p$-th root of unity (and $\sqrt{-1}$ for $p=2$), and let $G$ be
the maximal pro-$p$ Galois group of $K$, equipped with the arithmetical orientation $\theta\colon G\to \Z_p^\times$.
Then, $K$ is $p$-rigid if, and only if:
\begin{itemize}
 \item[(i)] $G$ is $\theta$-abelian and $\image(\theta)=1+p^k\Z_p$, where $k$ is the maximum
positive integer such that $\mu_{p^k}\subseteq K$ and $\mu_{p^{k+1}}\nsubseteq K$
(in case $\mu_{p^\infty}\subseteq K$, then $k=\infty$ and $\theta\equiv\mathbf{1}$);
 \item[(ii)] there are no $p$-extensions $L/K$ such that the maximal pro-$p$ Galois group $G_L(p)$ is
free and non-abelian; 
 \item[(iii)] $G$ is solvable;\footnote{I.e., $G$ admits a finite normal series of closed subgroups such that
each successive quotient is abelian}
\end{itemize}
Assume further that the quotient $K^\times/(K^\times)^p$ is finite.
Then, $K$ is $p$-rigid if, and only if:
\begin{itemize}
 \item[(iv)] $G$ is powerful;
 \item[(v)] the cohomology ring $H^\bullet(G,\F_p)$ is isomorphic to the exterior $\F_p$-algebra
$\bigwedge_{i\geq0}K^\times/(K^\times)^p$;
 \item[(vi)] one has 
\[\ccd(G)=\dim_{\F_p}\left(K^\times/(K^\times)^p\right)=\frac{1}{p}\left|K^\times/(K^\times)^p\right|.\]
 \item[(vii)] for every finite $p$-extension $L/K$ one has 
\[\dim_{\F_p}\left(L^\times/(L^\times)^p\right)=\dim_{\F_p}\left(K^\times/(K^\times)^p\right).\]
\end{itemize}
\end{thm}

The above result extends and generalizes the results proved in \cite{ware:galp}
and some in \cite{ek98:abeliansbgps}.
In particular, Corollary~2, Theorem~2 and Theorem~4 in the former paper require the assumption that $K$
contains also a root of unity of order $p^2$, which is not necessary in our result.

\section[Finite quotients]{Finite quotients and $\theta$-abelian groups}\label{3sec:finite quotients}
It is possible to obtain further characterizations of $\theta$-abelian pro-$p$ groups
which involve finite quotients, and which have a particular arithmetical interpretation.

For a pro-$p$ group $G$, let $\Phi_2(G)$ be the Frattini subgroup of the Frattini subgroup, 
i.e., \[\Phi_2(G)=\Phi\left(\Phi(G)\right).\]
Thus, $G^{p^2}\leq\Phi_2(G)$, so that the quotient $G/\Phi_2(G)$ has exponent $p^2$.
In particular, if $G$ is finitely generated, then the quotient $G/\Phi_2(G)$ is a finite $p$-group.
Also, one has 
\begin{equation}\label{3eq:lambda3Phi2}
 \lambda_3(G)=\Phi_2(G)\cdot[G,\Phi(G)],
\end{equation}
where $\lambda_3(G)$ is the third element of the $p$-descending central series (see Subsection~1.2.1).

If $G$ is $\theta$-abelian for some orientation $\theta$, then one has a tighter relation between
$\lambda_3(G)$ and $\Phi_2(G)$.

\begin{thm}\label{3thm:lambda3Phi2}
Let $G$ be a finitely generated Bloch-Kato pro-$p$ group.
Then there exists an orientation $\theta\colon G\to\Z_p^\times$ such that $G$ is $\theta$-abelian if,
and only if, one has the equality \[\Phi_2(G)=\lambda_3(G).\]
\end{thm}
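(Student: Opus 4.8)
The plan is to prove the two implications by rather different means. The forward direction (\,$\theta$-abelian $\Rightarrow \Phi_2(G)=\lambda_3(G)$\,) I would extract from the structure theory of uniform pro-$p$ groups. The reverse direction rests on first reducing $\theta$-abelianity to the injectivity of the cup product (via Lemma~\ref{3lem:cup injective}, Theorem~\ref{3thm:peter thomas} and Theorem~\ref{3thm:equivalence theta-abelian}), and then translating the hypothesis $\Phi_2(G)=\lambda_3(G)$ into the graded restricted Lie algebra $L_\bullet(G)$ coming from the Zassenhaus filtration. Throughout I write $d=d(G)$.

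For the forward direction, if $G$ is $\theta$-abelian then by Theorem~\ref{3thm:equivalence theta-abelian} it is powerful, and by Corollary~\ref{2cor:torsion} it has no $p$-torsion once $\image(\theta)\simeq\Z_p$ (the abelian case $\theta\equiv\mathbf 1$ and, for $p=2$, the case $\image(\theta)\simeq\Z_2$ are covered). Hence $G$ is torsion-free and therefore uniform. For a uniform group one has $\lambda_i(G)=G^{p^{i-1}}$, so $\lambda_3(G)=G^{p^2}$; moreover $\Phi(G)=G^p$ is again uniform, whence $\Phi_2(G)=\Phi(\Phi(G))=(G^p)^p=G^{p^2}$. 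Thus $\Phi_2(G)=G^{p^2}=\lambda_3(G)$ (for $p=2$ the analogous computation uses $\Phi(G)=G^2$ and $\Phi_2(G)=G^4=\lambda_3(G)$).

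For the reverse direction I would begin from \eqref{3eq:lambda3Phi2}, which gives $\lambda_3(G)=\Phi_2(G)\cdot[G,\Phi(G)]$, so that the hypothesis is equivalent to $[G,\Phi(G)]\le\Phi_2(G)$. In particular $\gamma_3(G)\le\Phi_2(G)$, and from the identity $[g,h^p]\equiv[g,h]^p \pmod{\gamma_3(G)}$ one gets $[g,h]^p\in\Phi_2(G)$ for all $g,h$. Now recall that by Proposition~\ref{1prop:properties Zassenhaus filtration}(i) the $p$-th powers lie in $D_p(G)$, so for $p$ odd they sit in degree $\ge p\ge 3$; consequently $L_2(G)=\Phi(G)/D_3(G)$ is spanned by the images of the commutators $[x_i,x_j]$, and by Theorem~\ref{1thm:L for free groups} the cup product map \eqref{3eq:wedge cup product} is injective if and only if these span $L_2(F)\cong\bigwedge_2 H^1(G,\F_p)$, i.e. if and only if $L_2(G)=0$, i.e. $\Phi(G)=D_3(G)$. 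Combining Lemma~\ref{3lem:cup injective}, Theorem~\ref{3thm:peter thomas} and Theorem~\ref{3thm:equivalence theta-abelian}, for a finitely generated Bloch–Kato group this is exactly $\theta$-abelianity. Thus it remains to prove, for Bloch–Kato $G$, that $\Phi_2(G)=\lambda_3(G)\Rightarrow\Phi(G)=D_3(G)$.

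The essential input here is the Bloch–Kato hypothesis, entering through the bound $r(G)\le\binom{d}{2}$ of Corollary~\ref{1cor:cd d r for BK}: in a minimal presentation, the defining relations whose Zassenhaus-initial form has degree $2$ span a subspace of $L_2(F)$ of dimension $\binom{d}{2}-\dim_{\F_p} L_2(G)$, so the number of remaining (higher-degree) relations is at most $\dim_{\F_p} L_2(G)$. I would feed this count, together with Lazard's formula for $D_3(G)$ (Proposition~\ref{1prop:properties Zassenhaus filtration}(ii)) and the relations $\gamma_3(G)\le\Phi_2(G)$, $[g,h]^p\in\Phi_2(G)$ obtained above, to show that a nonzero class in $L_2(G)$ would produce an element of $\lambda_3(G)\smallsetminus\Phi_2(G)$, contradicting the hypothesis. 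This bookkeeping, matching the weight-three layer $\lambda_3(G)/\Phi_2(G)$ against the weight-two layer $\Phi(G)/D_3(G)$ through the Bloch–Kato constraint, is the main obstacle; the Heisenberg pro-$p$ group, which satisfies $\Phi_2=\lambda_3$ yet is neither powerful nor Bloch–Kato, shows that the hypothesis cannot be removed. Finally, for $p=2$ the degree-two part of $L_\bullet(G)$ also contains the squares $x_i^2$, so before running the argument I would invoke Proposition~\ref{2prop:bock} (giving $\chi\cup\chi=0$ when $\image(\theta)\simeq\Z_2$) to discard the square contributions.
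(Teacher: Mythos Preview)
Your forward direction is fine and essentially matches the paper's computation.

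The reverse direction, however, has a genuine gap. You correctly identify the target $L_2(G)=0$ (equivalently $\Phi(G)=D_3(G)$) and the Bloch--Kato input $r(G)\le\binom{d}{2}$, but the crucial step --- actually deducing $L_2(G)=0$ from $\Phi_2(G)=\lambda_3(G)$ --- is left as unspecified ``bookkeeping'', which you yourself call ``the main obstacle''. The bound on $r(G)$ controls how many relation initial forms lie in degree~$2$, but does not by itself force $L_2(G)=0$; to make your argument work you would have to produce, from a hypothetical nonzero class in $L_2(G)$, an explicit element of $\lambda_3(G)\smallsetminus\Phi_2(G)$, and you give no mechanism for this. (Incidentally, the sentence ``injective if and only if these span $L_2(F)$'' is garbled: the commutators $[x_i,x_j]$ \emph{always} span $L_2(F)$ for $p$ odd. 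What you presumably mean is that the \emph{relation} images span $L_2(F)$, so that $L_2(G)=L_2(F)/\mathrm{im}(R)=0$.)

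The paper's proof of the reverse direction avoids the counting entirely. The key observation is that
\[
\Phi_2(G)=D_2(G)^p\,[D_2(G),D_2(G)]\le D_{2p}(G)\cdot D_4(G)\le D_4(G)
\]
always holds, so the hypothesis gives $\gamma_3(G)\le\lambda_3(G)=\Phi_2(G)\le D_4(G)$. One then computes $D_3(G)$ and $D_4(G)$ via Lazard's formula (Proposition~\ref{1prop:properties Zassenhaus filtration}(ii)): for $p>3$ one has $D_3(G)=\gamma_3(G)\cdot G^p$ and $D_4(G)=\gamma_4(G)\cdot G^p$, and since $\gamma_3(G)\le D_4(G)$ and $G^p\le D_4(G)$ this forces $D_3(G)=D_4(G)$. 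For $p=2$ the same argument gives $D_3(G)=D_4(G)$; for $p=3$ one first observes $\gamma_4(G)=[G,\gamma_3(G)]\le[D_1(G),D_4(G)]\le D_5(G)$ and concludes $D_4(G)=D_5(G)$. In every case some $D_n(G)=D_{n+1}(G)$, so $G$ has finite rank by Proposition~\ref{1prop:properties Zassenhaus filtration}(iii), and finite rank plus Bloch--Kato gives $\theta$-abelian via Theorem~\ref{3thm:equivalence theta-abelian}. The Bloch--Kato hypothesis enters only at this last step; no relation-counting or cup-product analysis is needed along the way.
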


\begin{proof}
Assume first that $G$ is $\theta$-abelian, for some orientation $\theta$.
Then one has 
\[\Phi_2(G)=\Phi(G)^p[\Phi(G),\Phi(G)]=G^{p^2}=\lambda_3(G)\]
-- in fact one has $\lambda_i(G)=G^{p^{i-1}}$ for every $i\geq1$.

Conversely, assume that $\Phi_2(G)=\lambda_3(G)$.
Since 
\[ \left[D_2(G),D_2(G)\right]\leq D_4(G) \quad\text{and}\quad D_2(G)^p\leq D_{2p}(G),\]
one has \[\Phi_2(G)=D_2(G)^p\left[D_2(G),D_2(G)\right]\leq D_4(G),\] as $\Phi(G)=D_2(G)$.
Moreover, one has the inclusion $\gamma_3(G)\leq \lambda_3(G)$.
Therefore, one has the chain of inclusions
\begin{equation} \label{3eq:inclusions quotients}
\gamma_3(G)\leq \lambda_3(G)=\Phi_2(G)\leq D_4(G).
\end{equation}

We shall split the proof of this implication in three cases.
\begin{itemize}

 \item[(i)] Assume $p>3$.
By Lazard's formula \eqref{1eq:lazard forumula}, one has
\begin{eqnarray*}
&& D_3(G)=\prod_{ip^h\geq3}\gamma_i(G)^{p^h}=\gamma_3(G)\cdot G^p, \\
&& D_4(G)=\prod_{ip^h\geq4}\gamma_i(G)^{p^h}=\gamma_4(G)\cdot G^p.
\end{eqnarray*}
Therefore, \eqref{3eq:inclusions quotients} implies
\[ D_3(G)=\gamma_3(G)\cdot G^p\leq \lambda_3(G)=\Phi_2(G)\cdot G^p\leq D_4(G),\]
as $G^p\leq D_4(G)$.
Thus, one has the equality $D_3(G)=D_4(G)$.
Hence, Proposition~\ref{1prop:properties Zassenhaus filtration} implies that $\rk(G)$ is finite,
and thus by Theorem~\ref{3thm:equivalence theta-abelian} there is an orientation $\theta\colon G\to\Z_p^\times$
such that $G$ is $\theta$-abelian.

 \item[(ii)] Assume $p=2$.
From \eqref{1eq:lazard forumula} one obtains
\begin{eqnarray*}
&& D_3(G)=\prod_{i2^h\geq3}\gamma_i(G)^{2^h}=\gamma_3(G)\cdot \gamma_2(G)^2\cdot G^4, \\
&& D_4(G)=\prod_{i2^h\geq4}\gamma_i(G)^{2^h}=\gamma_4(G)\cdot \gamma_2(G)^2\cdot G^4.
\end{eqnarray*}
Therefore, \eqref{3eq:inclusions quotients} implies
\[D_3(G)=\gamma_3(G)\gamma_2(G)^2 G^4\leq \Phi_2(G)\gamma_2(G)^2 G^4\leq D_4(G),\]
as $\gamma_2(G)^2 G^4\leq D_4(G)$.
Thus, one has the equality $D_3(G)=D_4(G)$.
Hence, Proposition~\ref{1prop:properties Zassenhaus filtration} implies that $\rk(G)$ is finite,
and thus by Theorem~\ref{3thm:equivalence theta-abelian} there is an orientation $\theta\colon G\to\Z_p^\times$
such that $G$ is $\theta$-abelian.

\item[(iii)] Assume $p=3$.
By \eqref{1eq:lazard forumula}, one has
\begin{eqnarray*}
&& D_4(G)=\prod_{i3^h\geq4}\gamma_i(G)^{3^h}=\gamma_4(G)\cdot\gamma_2(G)^3\cdot G^9, \\
&& D_5(G)=\prod_{i3^h\geq5}\gamma_i(G)^{3^h}=\gamma_5(G)\cdot\gamma_2(G)^3\cdot G^9.
\end{eqnarray*}
Therefore, from \eqref{3eq:inclusions quotients} one obtains the chain of inclusions
\[ \gamma_4(G) = [G,\gamma_3(G)] \leq [G,D_4(G)] = [D_1,D_4(G)] \leq D_5(G),\]
which implies
\[ D_4(G) =\gamma_4(G)\cdot\gamma_2(G)^3\cdot G^9 \leq D_5(G),\]
as $G^9,\gamma_2(G)^3\leq D_5(G)$.
Thus, one has the equality $D_4(G)=D_5(G)$.
Hence, Proposition~\ref{1prop:properties Zassenhaus filtration} implies that $\rk(G)$ is finite,
and thus by Theorem~\ref{3thm:equivalence theta-abelian} there is an orientation $\theta\colon G\to\Z_p^\times$
such that $G$ is $\theta$-abelian.
\end{itemize}
\end{proof}

\begin{cor}
Let $G$ be a finitely generated Bloch-Kato pro-$p$ group.
Then there exists an orientation $\theta\colon G\to\Z_p^\times$ such that $G$ is $\theta$-abelian if,
and only if, one has 
\begin{equation}\label{3eq:zent_in_quotient}
 \Phi(G)/\Phi_2\leq\Zen\left(G/\Phi_2\right).
\end{equation}
\end{cor}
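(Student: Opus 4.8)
The plan is to deduce this corollary directly from Theorem~\ref{3thm:lambda3Phi2}, which already records that a finitely generated Bloch-Kato pro-$p$ group $G$ admits a $\theta$-abelian orientation precisely when $\Phi_2(G)=\lambda_3(G)$. Thus it suffices to prove the purely group-theoretic equivalence between the centrality condition \eqref{3eq:zent_in_quotient} and the equality $\Phi_2(G)=\lambda_3(G)$; no cohomology or orientation is needed at this stage.

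First I would rephrase \eqref{3eq:zent_in_quotient} as a commutator inclusion. By definition of the center, a coset $h\Phi_2(G)$ with $h\in\Phi(G)$ lies in $\Zen(G/\Phi_2(G))$ if and only if it commutes with every coset $g\Phi_2(G)$, i.e.\ $[g,h]\in\Phi_2(G)$ for all $g\in G$. Hence \eqref{3eq:zent_in_quotient} holds exactly when $[g,h]\in\Phi_2(G)$ for all $g\in G$ and $h\in\Phi(G)$, and since $\Phi_2(G)$ is a closed subgroup, this is equivalent to the containment $[G,\Phi(G)]\leq\Phi_2(G)$ of closed subgroups. Next I would invoke identity \eqref{3eq:lambda3Phi2}, namely $\lambda_3(G)=\Phi_2(G)\cdot[G,\Phi(G)]$. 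Because $\Phi_2(G)\leq\lambda_3(G)$ always holds, the product $\Phi_2(G)\cdot[G,\Phi(G)]$ collapses to $\Phi_2(G)$ exactly when the factor $[G,\Phi(G)]$ is already absorbed into $\Phi_2(G)$; that is, $\lambda_3(G)=\Phi_2(G)$ if and only if $[G,\Phi(G)]\leq\Phi_2(G)$. Chaining the two equivalences yields that \eqref{3eq:zent_in_quotient} holds if and only if $\Phi_2(G)=\lambda_3(G)$, and applying Theorem~\ref{3thm:lambda3Phi2} finishes the argument.

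There is essentially no serious obstacle here, as the result is a cosmetic reformulation of Theorem~\ref{3thm:lambda3Phi2}. The only point requiring a touch of care is the translation from the centrality condition to the commutator condition: one must note that controlling the elementary commutators $[g,h]$ for generators $g\in G$, $h\in\Phi(G)$ is enough to control the full closed subgroup $[G,\Phi(G)]$, which is immediate since $\Phi_2(G)$ is closed and $[G,\Phi(G)]$ is by definition the closed subgroup generated by such commutators.
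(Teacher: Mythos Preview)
Your proof is correct and follows essentially the same approach as the paper's own argument: both reduce the corollary to Theorem~\ref{3thm:lambda3Phi2} by translating the centrality condition \eqref{3eq:zent_in_quotient} into the commutator inclusion $[G,\Phi(G)]\leq\Phi_2(G)$ and then matching this against the identity $\lambda_3(G)=\Phi_2(G)\cdot[G,\Phi(G)]$. The only cosmetic difference is that the paper treats the two implications separately (using, for the forward direction, that $\lambda_2(G)/\lambda_3(G)$ is automatically central in $G/\lambda_3(G)$), whereas you package both directions into a single equivalence; the substance is identical.
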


\begin{proof}
 Assume that $G$ is $\theta$-abelian.
Then by Theorem~\ref{3thm:lambda3Phi2} one has $\lambda_3(G)=\Phi_2(G)$,
and $\lambda_2(G)/\lambda_3(G)$ is central in $G/\lambda_3(G)$.

Conversely, assume that $\Phi(G)/\Phi_2$ is central in $G/\Phi_2(G)$.
This implies that the commutator subgroup $[G,\Phi(G)]$ is contained in $\Phi_2(G)$.
Since
\[\Phi_2(G)=\Phi\left(\Phi(G)\right)\geq\left(\Phi(G)\right)^p\quad
\text{and}\quad \lambda_3(G)=\left(\Phi(G)\right)^p\left[G,\Phi(G)\right],\]
it follows that $\Phi_2(G)$ contains $\lambda_3(G)$, and thus $\lambda_3(G)=\Phi_2(G)$.
\end{proof}

\subsection{Rigid fields and small Galois groups}
The above results can be translated in the arithmetical ``language'' in terms of finite extensions.
Let $K$ be a field containing a root of unity of order $p$, and let $G$ be the maximal
pro-$p$ Galois group $G_K(p)$.
Recall that by Proposition~\ref{1prop:roots and kummer} the quotient $G/\Phi(G)$ is the Galois group
of the extension of $K(\sqrt[p]{K})/K$.
Set $L=K(\sqrt[p]{K})$.
Then $L(\sqrt[p]{L})/K$ is again a Galois extension, and one has
\[\Gal\left(L(\sqrt[p]{L})/L\right)=\frac{\Phi(G)}{\Phi_2(G)}\quad\text{and}\quad
\Gal\left(L(\sqrt[p]{L})/K\right)=\frac{G}{\Phi_2(G)},\]
as $\Phi_2(G)$ is the maximal pro-$p$ Galois group of the field $L(\sqrt[p]{L})$.
An important subextension of $L(\sqrt[p]{L})$ is the following: let $A\subseteq L^\times$ be the set
\[A=\left\{ a\in L^\times\left|\: L\left(\sqrt[p]{a}\right)/K\text{ is Galois} \right.\right\}.\]
Consider the field $L(\sqrt[p]{A})$.
The following well-known fact (cf. \cite[Lemma~2.2]{cmq:fast}) is necessary to determine the Galois group of
$L(\sqrt[p]{A})/K$.

\begin{fact}\label{3fact:ledet}
Let $K$ be a field containing a primitive $p^{th}$-root of unity, and let $L/K$ be a $p$-extension
with $a\in L^\times$.
Then $L(\sqrt[p]{a})/K$ is Galois if, and only if, \[\frac{\sigma(a)}{a}\in L^p\]
for every $\sigma\in\Gal(L/K)$.
\end{fact}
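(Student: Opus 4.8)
The plan is to use the embedding criterion for normality, turning the Galois condition on $M=L(\sqrt[p]{a})$ into a statement about $p$-th roots of the conjugates $\sigma(a)$, and then to exploit the pro-$p$ structure of $\Gal(L/K)$ at the very end. Fix a primitive $p$-th root of unity $\zeta\in K$ and set $\alpha=\sqrt[p]{a}$, $M=L(\alpha)$. Since $K$ contains $\zeta$ the characteristic is not $p$, so $M/K$ is separable and ``Galois'' is equivalent to ``normal''; and since $\zeta\in L$, the extension $M/L$ is the splitting field of $X^p-a$, hence Galois (cyclic of degree $1$ or $p$). If $a\in(L^\times)^p$ both sides of the asserted equivalence hold trivially, so I would assume $a\notin(L^\times)^p$; then $X^p-a$ is irreducible over $L$ and $[M:L]=p$.

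First I would analyse the $K$-embeddings $\tau\colon M\to\bKsep$. Each restricts on $L$ to some $\sigma\in\Gal(L/K)$ (using that $L/K$ is normal), and applying $\tau$ to $\alpha^p=a$ gives $\tau(\alpha)^p=\sigma(a)$, so $\tau(M)=L(\tau(\alpha))$ where $\tau(\alpha)$ is a $p$-th root of $\sigma(a)$. Because all $p$-th roots of $\sigma(a)$ differ by the $p$-th roots of unity already lying in $L$, the field $\tau(M)$ is independent of the chosen extension of $\sigma$; call it $L(\beta_\sigma)$. As $\sigma$ fixes $(L^\times)^p$ setwise, $\sigma(a)\notin(L^\times)^p$, so $[L(\beta_\sigma):L]=p$ and hence $L(\beta_\sigma)=M$ iff $\beta_\sigma\in M$. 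Thus $M/K$ is Galois iff every $\sigma(a)$ admits a $p$-th root in $M$. A short Kummer computation settles when this holds: if $\gamma\in M$ satisfies $\gamma^p=\sigma(a)\in L$, then the generator $\rho$ of $\Gal(M/L)$ sends $\gamma$ to $\zeta^{j}\gamma$ for some $j$, whence $\gamma\alpha^{-j}$ is $\rho$-fixed and lies in $L$; so a $p$-th root exists in $M$ exactly when $\sigma(a)\equiv a^{j}\pmod{(L^\times)^p}$ for some integer $j$.

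The main obstacle, and the only place the $p$-extension hypothesis is used, is upgrading this to $j\equiv1$. Writing $V=L^\times/(L^\times)^p$ as an $\F_p[\Gal(L/K)]$-module and $\bar a\in V$ for the class of $a$ (nonzero by assumption), the condition just obtained reads $\sigma(\bar a)=j(\sigma)\bar a$ with $j(\sigma)\in\F_p^\times$, and the relation $\sigma\tau(\bar a)=j(\sigma)j(\tau)\bar a$ makes $j\colon\Gal(L/K)\to\F_p^\times$ a continuous homomorphism. Since $\Gal(L/K)$ is pro-$p$ while $\F_p^\times$ has order $p-1$ prime to $p$, the map $j$ is trivial, forcing $\sigma(a)/a\in(L^\times)^p$ for all $\sigma$. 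Conversely, if $\sigma(a)/a=d^p$ with $d\in L^\times$, then $\alpha d$ is a $p$-th root of $\sigma(a)$ inside $M$, so every $K$-embedding preserves $M$ and $M/K$ is Galois; this is exactly the $j=1$ case of the previous step run backwards, which completes the equivalence.
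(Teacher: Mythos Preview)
Your argument is correct. The paper does not actually prove this fact: it is stated as well-known and referenced to \cite[Lemma~2.2]{cmq:fast}, so there is no internal proof to compare against. Your approach---reducing normality of $M=L(\sqrt[p]{a})$ over $K$ to the existence of $p$-th roots of each $\sigma(a)$ inside $M$, identifying such roots via the Kummer generator $\alpha$, and then killing the resulting character $j\colon\Gal(L/K)\to\F_p^\times$ using that $\Gal(L/K)$ is pro-$p$---is the standard one and is carried out cleanly. The one step you assert without justification is the continuity of $j$; this is immediate because $L^\times/(L^\times)^p$ is a discrete $\Gal(L/K)$-module (the stabilizer of $\bar a$ contains $\Gal(L/L_0)$ for any finite subextension $L_0$ with $a\in L_0$), so the kernel of $j$ is open.
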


\begin{prop}\label{3prop:coinvariants}
Let $K$ be a field containing a root of unity of order $p$ (and containing also $\sqrt{-1}$ in the case $p=2$),
and let $L/K$ and $A\subseteq L^\times$ be as above.
Then the subgroup $\lambda_3(G)$ of $G$ is the maximal pro-$p$ Galois group of $L(\sqrt[p]{A})$,
i.e., \[G/\lambda_3(G)=\Gal\left(L(\sqrt[p]{A})/K\right).\]
\end{prop}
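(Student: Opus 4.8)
The plan is to carry out the whole argument inside the extension $L(\sqrt[p]{L})$, whose Galois group over $L$ is $\Phi(G)/\Phi_2(G)$ (as recalled just before the statement, using $G_L(p)=\Phi(G)$ and $\Phi(\Phi(G))=\Phi_2(G)$), and to identify \emph{both} $\lambda_3(G)/\Phi_2(G)$ and $\Gal\!\left(L(\sqrt[p]{L})/L(\sqrt[p]{A})\right)$ with one and the same subgroup of $\Phi(G)/\Phi_2(G)$ via Kummer duality. First I would set $\Gamma=\Gal(L/K)=G/\Phi(G)$ and $V=L^\times/(L^\times)^p$. Since $K\supseteq\mu_p$, the group $\Gamma$ acts trivially on $\mu_p$; and since $L\supseteq\mu_p$, Kummer theory (Proposition~\ref{1prop:roots and kummer} applied to $L$) gives a perfect pairing $\Phi(G)/\Phi_2(G)\times V\to\mu_p\simeq\F_p$ that is $\Gamma$-equivariant with $\Gamma$ acting trivially on $\mu_p$. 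Hence the two $\Gamma$-actions — conjugation on $W:=\Phi(G)/\Phi_2(G)$ and the natural action on $V$ — are contragredient, i.e. $W=V^\vee$. By Fact~\ref{3fact:ledet}, $a\in A$ iff $\sigma(a)/a\in(L^\times)^p$ for all $\sigma\in\Gamma$, so $A/(L^\times)^p=V^\Gamma$, and under Kummer duality $L(\sqrt[p]{A})$ is fixed exactly by the annihilator $\mathrm{Ann}(V^\Gamma)\leq W$. The proposition thus reduces to the identity $\lambda_3(G)/\Phi_2(G)=\mathrm{Ann}(V^\Gamma)$.

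Next I would describe $\lambda_3(G)/\Phi_2(G)$ intrinsically as a $\Gamma$-submodule of $W$. By \eqref{3eq:lambda3Phi2} one has $\lambda_3(G)=\Phi_2(G)\cdot[G,\Phi(G)]$, so $\lambda_3(G)/\Phi_2(G)$ is the image of $[G,\Phi(G)]$ in the elementary abelian group $W$. Writing $W$ additively, for $g\in G$ with image $\gamma\in\Gamma$ and $\phi\in\Phi(G)$ with image $w\in W$, the class of $[g,\phi]=g\phi g^{-1}\phi^{-1}$ equals $\gamma w-w=(\gamma-1)w$. Therefore $\lambda_3(G)/\Phi_2(G)$ is precisely the (closed) augmentation submodule $I_\Gamma W=\overline{\langle(\gamma-1)w\,:\,\gamma\in\Gamma,\,w\in W\rangle}$, equivalently the kernel of $W\twoheadrightarrow W_\Gamma$; this also explains the title ``coinvariants''.

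The crux is then the purely module-theoretic identity $I_\Gamma W=\mathrm{Ann}(V^\Gamma)$ for the contragredient $W=V^\vee$. The pairing gives $\langle(\gamma-1)h,v\rangle=\langle h,(\gamma^{-1}-1)v\rangle$, so $(\gamma-1)$ on $W$ is the transpose of $(\gamma^{-1}-1)$ on $V$; hence $\image\!\left((\gamma-1)|_W\right)=\mathrm{Ann}\!\left(\kernel\!\left((\gamma^{-1}-1)|_V\right)\right)=\mathrm{Ann}\!\left(V^{\langle\gamma\rangle}\right)$. Summing over $\gamma$ and using that annihilators convert (closures of) sums into intersections then yields $I_\Gamma W=\sum_\gamma\mathrm{Ann}(V^{\langle\gamma\rangle})=\mathrm{Ann}\!\left(\bigcap_\gamma V^{\langle\gamma\rangle}\right)=\mathrm{Ann}(V^\Gamma)$. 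I expect this to be the main obstacle: in the finite-dimensional case it is immediate linear algebra, but without a finiteness hypothesis on $K^\times/(K^\times)^p$ I must run it through Pontryagin duality between the profinite group $W$ and the discrete group $V$, where one needs the biduality $\mathrm{Ann}(\mathrm{Ann}(-))=\mathrm{id}$ on closed subgroups and the exchange of (topological) sums with intersections; the closures introduced by ``closed subgroup generated by commutators'' in the definition of $\lambda_3(G)$ are exactly what make the two sides match.

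Finally, combining the two identifications gives $\lambda_3(G)/\Phi_2(G)=\mathrm{Ann}(V^\Gamma)$, so the fixed field of $\lambda_3(G)$ inside $L(\sqrt[p]{L})$ is $L(\sqrt[p]{A})$. Since $\lambda_3(G)$ is a characteristic, hence closed normal, subgroup of $G$, the extension $L(\sqrt[p]{A})/K$ is Galois and $\lambda_3(G)=G_{L(\sqrt[p]{A})}(p)$, whence $G/\lambda_3(G)=\Gal\!\left(L(\sqrt[p]{A})/K\right)$, as claimed.
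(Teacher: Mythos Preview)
Your proof is correct and follows essentially the same route as the paper: Kummer duality identifies $\Phi(G)/\Phi_2(G)$ with $(L^\times/(L^\times)^p)^\vee$, Fact~\ref{3fact:ledet} gives $A/(L^\times)^p=V^\Gamma$, and \eqref{3eq:lambda3Phi2} gives $\Phi(G)/\lambda_3(G)=(\Phi(G)/\Phi_2(G))_\Gamma$. The only difference is packaging: the paper invokes the standard Pontryagin-duality fact $(M_\Gamma)^\vee\simeq(M^\vee)^\Gamma$ in one line, whereas you unwind it explicitly as $I_\Gamma W=\mathrm{Ann}(V^\Gamma)$.
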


\begin{proof}
By Proposition~\ref{1prop:roots and kummer} one knows that 
 \[\frac{L^\times}{(L^\times)^p}\simeq \left(\frac{\Phi(G)}{\Phi_2(G)}\right)^\vee.\]
By Fact~\ref{3fact:ledet}, the set $A$ corresponds to the $G$-invariant submodule $(L^\times/(L^\times)^p)^G$
of $L^\times/(L^\times)^p$, considered as discrete $G$-module.

On the other hand, \eqref{3eq:lambda3Phi2} implies that the quotient $\Phi(G)/\lambda_3(G)$ is 
isomorphic to the $G$-coinvariant quotient of the discrete $G$-module $\Phi(G)/\Phi_2(G)$.
Therefore, by duality one has the isomorphisms of discrete $G$-modules
\[ \frac{A}{\left(L^\times\right)^p}=\left(\frac{L^\times}{\left(L^\times\right)^p}\right)^G\simeq
\left(\left(\frac{\Phi(G)}{\Phi_2(G)}\right)_G\right)^\vee\simeq\left(\frac{\Phi(G)}{\lambda_3(G)}\right)^\vee, \]
so that the claim follows by Kummer duality.
\end{proof}

Theorem~\ref{3thm:lambda3Phi2} and Proposition~\ref{3prop:coinvariants} have the following consequence.

\begin{cor}
Let $\theta\colon G_K(p)\to\Z_p^\times$ be the arithmetical orientation of the maximal pro-$p$ Galois group of $K$.
Then $G_K(p)$ is $\theta$-abelian if, and only if, the extension $L(\sqrt[p]{a})/K$ is Galois for every $a\in L$.
\end{cor}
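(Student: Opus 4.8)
The plan is to read the corollary off the dictionary already established, by recognizing that the stated arithmetic condition is precisely the equality $A=L^\times$, and that this equality is in turn equivalent to $\lambda_3(G)=\Phi_2(G)$, where $G=G_K(p)$. First I would observe that the hypothesis ``$L(\sqrt[p]{a})/K$ is Galois for every $a\in L$'' says exactly that $A=L^\times$, where $A$ is the subgroup introduced before Proposition~\ref{3prop:coinvariants}. By Fact~\ref{3fact:ledet}, $a\in A$ precisely when $\sigma(a)/a\in L^p$ for all $\sigma\in\Gal(L/K)$; this is a subgroup condition, so $A$ is a subgroup of $L^\times$ containing $(L^\times)^p$.

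Next I would pin down the two relevant subfields of $K(p)$ in terms of the filtration of $G$. Applying Proposition~\ref{1prop:roots and kummer} first to $K$ gives $G_L(p)=\Phi(G)$, and applying it again to $L$ shows that $L(\sqrt[p]{L})$ is the fixed field of $\Phi(\Phi(G))=\Phi_2(G)$, i.e. $\Phi_2(G)=G_{L(\sqrt[p]{L})}(p)$. On the other hand, Proposition~\ref{3prop:coinvariants} identifies $\lambda_3(G)$ as the maximal pro-$p$ Galois group of $L(\sqrt[p]{A})$. Since $A$ is a subgroup of $L^\times$ containing $(L^\times)^p$ and $L(\sqrt[p]{L})=L(\sqrt[p]{L^\times})$, the Kummer correspondence between subgroups of $L^\times/(L^\times)^p$ and subextensions of $L(\sqrt[p]{L})/L$ shows that $A=L^\times$ holds if and only if $L(\sqrt[p]{A})=L(\sqrt[p]{L})$, equivalently if and only if $\lambda_3(G)=\Phi_2(G)$.

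Finally I would invoke Theorem~\ref{3thm:lambda3Phi2}, according to which $G$ admits an orientation making it $\theta$-abelian if and only if $\Phi_2(G)=\lambda_3(G)$; in the Galois setting this orientation is the arithmetical one $\theta$. Chaining the equivalences yields that $G_K(p)$ is $\theta$-abelian if and only if $A=L^\times$, which is the asserted condition. I expect the only genuinely delicate point to be the double application of Proposition~\ref{1prop:roots and kummer} to identify $L(\sqrt[p]{L})$ with the fixed field of $\Phi_2(G)$, together with the bookkeeping that converts the field equality $L(\sqrt[p]{A})=L(\sqrt[p]{L})$ into the subgroup equality $A=L^\times$ via Kummer theory; everything else is a formal consequence of the results quoted above.
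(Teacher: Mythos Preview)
Your proposal is correct and follows precisely the route the paper intends: the paper states the corollary as an immediate consequence of Theorem~\ref{3thm:lambda3Phi2} and Proposition~\ref{3prop:coinvariants}, and your argument is exactly the unpacking of that implication via the Kummer dictionary and the identification $\Phi_2(G)=G_{L(\sqrt[p]{L})}(p)$. The paper spells out the same field-theoretic translation, $\lambda_3(G_K(p))=\Phi_2(G_K(p))\Leftrightarrow L(\sqrt[p]{A})=L(\sqrt[p]{L})$, in the paragraph immediately following the corollary.
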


The equality of Galois groups $\lambda_3(G_K(p))=\Phi_2(G_K(p))$ is equivalent to the equality of extensions
\begin{equation}\label{3eq:equality of fields}
 L(\sqrt[p]{A})/K=L(\sqrt[p]{L})/K.
\end{equation}
Therefore, $K$ is $p$-rigid if, and only if, equality \eqref{3eq:equality of fields} holds. \footnote{
Note that the following notation is rather common in papers on rigid fields: $K^{(2)}=L$, $K^{(3)}=L(\sqrt[p]{A})$
and $K^{\{3\}}=L(\sqrt[p]{L})$, see for example \cite[\S~2.3]{cmq:fast}}
This is the main result in \cite{cmq:fast}, and it completes the the result obtained by D.~Leep and T.~Smith
for 2-rigid fields in \cite{leepsmith:2rigidity}.

In the case $p=2$ the quotient $G_K(p)/\lambda_3(G_K(p))$ has been extensively studied under the name of ``$W$-group'',
in particular in connection with quadratic forms (see, e.g., \cite{minacspira:pytagoras}).
It was further shown that in this case such quotient has great arithmetical significance, as  it encodes information
about orderings and valuations of $K$.
In the case $p$ odd the quotient $G_K(p)/\lambda_3(G_K(p))$ has been studied by S.K.~Chebolu, J.~Min\'a\v{c}
and Efrat in \cite{cem:quotients} (see also Theorem~\ref{2thm:cem}).

Also, one has the following (cf. \cite[\S~4.2]{cmq:fast}).

\begin{thm}
Let $K$ be a field containing a root of unity of order $p$ (and containing also $\sqrt{-1}$ in the case $p=2$).
Then $K$ is $p$-rigid if, and only if the $p$-closure of $K$ is obtained by adjoining
the $p$-power roots of all the elements of $K$, namely,
\[K(p)=K\left(\sqrt[p^m]{a},m\geq1,a\in K\right).\]
\end{thm}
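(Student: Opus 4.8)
The plan is to combine the already-established equivalence between $p$-rigidity and $\theta$-abelianity with a Kummer-theoretic analysis of the field $M:=K(\sqrt[p^m]{a},\,m\geq1,\,a\in K)$. By Theorem~\ref{3thm:rigidity tutto}(i), $K$ is $p$-rigid if and only if its maximal pro-$p$ Galois group $G=G_K(p)$ is $\theta$-abelian for the arithmetical orientation $\theta$; so it suffices to prove that $K(p)=M$ if and only if $G$ is $\theta$-abelian. First I would fix the intermediate field $F:=K(p)^{\kernel(\theta)}$, which is exactly $K(\mu_{p^\infty})$, since $\kernel(\theta)$ is by construction the subgroup acting trivially on the roots of unity; thus $\Gal(F/K)\simeq\image(\theta)$ and $\Gal(K(p)/F)=\kernel(\theta)$. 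Because $\mu_{p^\infty}\subseteq M$ (the $p$-power roots of $1\in K$) and each generator $\sqrt[p^m]{a}$ has all its conjugates in $M$, the extension $M/K$ is Galois with $F\subseteq M\subseteq K(p)$.

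For the implication $K(p)=M\Rightarrow$ $\theta$-abelian, I would first note that $M/F$ is a Kummer extension, hence abelian, so $\kernel(\theta)=\Gal(K(p)/F)$ is abelian. The decisive step is a direct conjugation computation: for $h\in\kernel(\theta)$, $g\in G$ and a generator $\alpha=\sqrt[p^m]{a}$ with $a\in K^\times$, writing $h(\alpha)=\eta\alpha$ with $\eta\in\mu_{p^m}$ and using that $g$ acts on $\mu_{p^m}$ through $\theta(g)$ while fixing $a$, one obtains $ghg^{-1}(\alpha)=\eta^{\theta(g)}\alpha=h^{\theta(g)}(\alpha)$. As the $\alpha$'s generate $K(p)=M$, this yields $ghg^{-1}=h^{\theta(g)}$, i.e. $\kernel(\theta)\subseteq\Zen_\theta(G)\subseteq\kernel(\theta)$, so $G$ is $\theta$-abelian.

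For the converse, assume $G$ is $\theta$-abelian, so that $\kernel(\theta)=\Zen_\theta(G)\simeq\Z_p(1)^{\mathcal{I}}$ is abelian and $K(p)/F$ is a Kummer extension. By Kummer theory over $F\supseteq\mu_{p^\infty}$, one has $K(p)=F(\sqrt[p^m]{\Delta})$ for a subgroup $\Delta\leq F^\times$ dual to $\Gal(K(p)/F)$, and the twist $\Z_p(1)$ on the Galois side forces $\image(\theta)=\Gal(F/K)$ to act trivially on $\Delta$ modulo $p$-powers. I would then descend these invariant Kummer elements to $K^\times$ via Hilbert~90 \eqref{1eq:HilbertSatz90} for the (pro-cyclic) extension $F/K$, concluding $F(\sqrt[p^m]{\Delta})=F(\sqrt[p^m]{K^\times})=M$, whence $K(p)=M$. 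Alternatively, and more transparently, I would invoke the explicit presentation of Proposition~\ref{2prop:presentation theta-abelian}: a minimal generating system $\{x_0,x_i\}$ of $G$ is dual under \eqref{1eq:kummer_duality explicitely} to representatives $\{a_0,a_i\}\subseteq K^\times$ of $K^\times/(K^\times)^p$, with $\kernel(\theta)=\overline{\langle x_i\rangle}$; any $\sigma\in\Gal(K(p)/M)$ lies in $\kernel(\theta)$ and, by non-degeneracy of the higher Kummer pairing against the $a_j$, has all coordinates zero, so $\Gal(K(p)/M)=1$.

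The main obstacle is this descent in the converse: over the infinite extension $F=K(\mu_{p^\infty})$ one must check that the $\image(\theta)$-invariant Kummer generators of $K(p)/F$ can genuinely be represented by elements of $K^\times$, not merely of $F^\times$. I expect to handle this by reducing to the finite layers $K(\mu_{p^n})/K$, applying Hilbert~90 there, and passing to the limit, using that $\image(\theta)$ is pro-cyclic (trivial or $\simeq\Z_p$, which is guaranteed by the hypothesis $\sqrt{-1}\in K$ when $p=2$). Everything else amounts to bookkeeping of the cyclotomic twist in the Kummer pairing, which the module identification $\kernel(\theta)\simeq\Z_p(1)^{\mathcal{I}}$ renders routine.
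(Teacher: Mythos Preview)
The thesis does not supply its own proof of this theorem; it is stated with a bare citation ``(cf.\ \cite[\S~4.2]{cmq:fast})'' and no argument. So there is nothing to compare against directly, and your proposal must be judged on its own.

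Your strategy is sound and is essentially the natural one. The forward direction ($K(p)=M\Rightarrow\theta$-abelian) is complete: the conjugation computation on the radical generators $\sqrt[p^m]{a}$ is correct and, since these generate $K(p)$ over $K$, it pins down $ghg^{-1}=h^{\theta(g)}$ as automorphisms.

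For the converse, your alternative (b) via the explicit presentation is the cleaner route, but your phrase ``non-degeneracy of the higher Kummer pairing against the $a_j$'' hides the one genuine step. What you actually need is that the continuous homomorphism
\[
\phi\colon\kernel(\theta)\longrightarrow\textstyle\prod_{i\in\mathcal{I}}\varprojlim_m\mu_{p^m},\qquad
\sigma\longmapsto\bigl(\sigma(\sqrt[p^m]{a_i})/\sqrt[p^m]{a_i}\bigr)_{i,m}
\]
is injective. You know only that $\phi\bmod p$ is an isomorphism (this is exactly the level-$p$ Kummer duality \eqref{1eq:kummer_duality explicitely}); from there you must invoke a topological Nakayama argument: $\phi$ is a $\Z_p$-linear map between free pro-$p$ $\Z_p$-modules which is an isomorphism modulo $p$, hence an isomorphism. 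Equivalently, the relation $x_i(\sqrt[p]{a_i})=\zeta\sqrt[p]{a_i}$ forces $a_i\notin(F^\times)^p$, so the images of the $a_i$ stay independent in $F^\times/(F^\times)^{p^m}$ for every $m$, and $F(\sqrt[p^\infty]{a_i}:i\in\mathcal{I})$ is already all of $K(p)$. Once you make this explicit, the converse is finished without needing the Hilbert~90 descent you flagged as the ``main obstacle''; that alternative route (a) works too but is heavier than necessary.
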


Note that if the field $K$ is $p$-rigid, then one needs only the quotient $K^\times/(K^\times)^p$ and the image
of the arithmetical orientation $\theta\colon G_K(p)\to\Z_p^\times$ to recover the entire structures of the 
maximal pro-$p$ Galois group $G_K(p)$ and of the $p$-closure $K(p)/K$.

\section[The Zassenhaus filtration]{The Zassenhaus filtration for $\theta$-abelian groups}

The rather simple structure of a $\theta$-abelian pro-$p$ group makes it possible to compute explicitly 
the Zassenhaus filtration and the induced quotients.

Let $(G,\theta)$ be a $\theta$-abelian pro-$p$ group, with $k\in\N\cup\{\infty\}$ such that $\image(\theta)=1+p^k\Z_p$
(in particular, $k=\infty$ if, and only if, $\theta\equiv{\mathbf 1}$, as $p^\infty=0$ in the pro-$p$ topology).
Then one has \[\gamma_i(G)=\kernel(\theta)^{p^{k(i-1)}}\quad \text{for every }i>1.\]
For every $n\geq1$ set $\ell=\lceil\log_p(n)\rceil$, i.e., $\ell$ is the least integer such that $n\leq p^\ell$.
Hence, by Lazard's formula \eqref{1eq:lazard forumula}, one has
\begin{equation}\label{3eq:dimension_subgps}
 D_n(G)=G^{p^\ell}\prod_{ip^h\geq n}\kernel(\theta)^{p^{k(i-1)+h}},\quad \text{with }i\geq 2.
\end{equation}
We shall show that for every $i,h$ such that $i\geq2$ and $ip^h\geq n$, one has the inequality
\begin{equation}\label{3eq:stubid_inequality}
k(i-1)+h \geq \ell,
\end{equation}
so that $\kernel(\theta)^{p^{k(i-1)+h}}\leq G^{p^\ell}$, and $D_n(G)=G^{p^\ell}$.
If $h\geq \ell$, then inequality (\ref{3eq:stubid_inequality}) follows immediately.
Otherwise, notice that $i>p^{\ell-h-1}\geq1$, as $ip^h\geq n > p^{\ell-1}$, which implies
\begin{equation}\label{3eq:stubid_inequality2}
 k(i - 1) > k \left(p^{(\ell-h)-1}-1\right).
\end{equation}
Therefore, for $\ell-h\geq2$, the inequality (\ref{3eq:stubid_inequality2}) implies $k(i-1)\geq\ell-h$,
and thus (\ref{3eq:stubid_inequality}), whereas for $\ell-h=1$ (\ref{3eq:stubid_inequality})
follows from the fact that $i\geq2$.

Altogether, this shows that
\begin{equation}\label{3eq:Zassenhaus filtration for thetabelian}
 D_n(G)=G^{p^\ell}\quad \text{for } p^{\ell-1}<n\leq p^\ell.
\end{equation}
Consequently, the quotients induced by the Zassenhaus filtration are
\begin{equation}\label{3eq:quotients_Zassenhaus}
 L_i(G)=\frac{D_i(G)}{D_{i+1}(G)}\simeq\left\{\begin{array}{cc} G/\Phi(G) & \text{for $i$ a $p^{th}$-power} \\
0 & \text{otherwise} \end{array}\right.
\end{equation}
as $\F_p$-vector spaces.
Therefore, $L_\bullet(G)$ is an abelian restricted Lie algebra over $\F_p$, and by Proposition~\ref{1prop:presentation restricted envelope}
the restricted envelope is the polynomial $\F_p$-algebra in $d(G)$ indeterminates, i.e.,
\begin{equation}\label{3eq:restricted envelope theta-abelian}
 \grad_\bullet(G)\simeq\mathcal{U}_p(L_\bullet(G))\simeq\F_p\left[X_1,\ldots,X_{d(G)}\right],
\end{equation}
with the grading induced by the degrees of the monomials.

\begin{rem}
 It is worth to stress that the Zassenhaus filtration loses completely the information about the image of the
orientation $\theta$ -- as it happens for Demu\v{s}kin groups, see \eqref{2eq:restricted envelope demushkin}.
\end{rem}

\begin{thm}
 Let $(G,\theta)$ be a finitely generated pro-$p$ group with a cyclotomic orientation, and assume further that
$\image(\theta)\leq1+4\Z_2$ if $p=2$.
Then the following are equivalent:
\begin{itemize}
 \item[(i)] $G$ is $\theta$-abelian;
 \item[(ii)] the algebra $L_\bullet(G)$ is an abelian restricted Lie algebra over $\F_p$;
 \item[(iii)] the algebra $\grad_\bullet(G)$ is a commutative polynomial $\F_p$-algebra.
\end{itemize}
\end{thm}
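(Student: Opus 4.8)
The plan is to prove the cycle of implications (i) $\Rightarrow$ (iii) $\Rightarrow$ (ii) $\Rightarrow$ (i), since the first two arrows are essentially free and all the real content sits in the last one. The implication (i) $\Rightarrow$ (iii) is already contained in the computation preceding the statement: if $G$ is $\theta$-abelian, then \eqref{3eq:Zassenhaus filtration for thetabelian}--\eqref{3eq:restricted envelope theta-abelian} give $\grad_\bullet(G)\simeq\mathcal{U}_p(L_\bullet(G))\simeq\F_p[X_1,\ldots,X_{d(G)}]$, a commutative polynomial algebra. For (iii) $\Rightarrow$ (ii) I would use only commutativity: by Theorem~\ref{1thm:envelope Zassenhaus filtration} the map $\vartheta$ embeds $L_\bullet(G)$ into $\grad_\bullet(G)$ as a restricted Lie subalgebra whose bracket is the commutator $ab-ba$ of $\grad_\bullet(G)$; if the latter is commutative this bracket vanishes identically, so $L_\bullet(G)$ is abelian. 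This direction does not distinguish a polynomial algebra from a truncated one, and that distinction is precisely what the orientation must recover in the converse.

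The heart of the matter is (ii) $\Rightarrow$ (i), and here the strategy is to convert abelianness of $L_\bullet(G)$ into a collapse $D_i(G)=D_{i+1}(G)$ of the Zassenhaus filtration at some finite stage, which by Proposition~\ref{1prop:properties Zassenhaus filtration}(iii) forces $G$ to have finite rank. Vanishing of the bracket on $L_1(G)$ reads $[x,y]\in D_3(G)$ for all $x,y\in G$, i.e. $\gamma_2(G)\subseteq D_3(G)$. For $p$ odd, Lazard's formula \eqref{1eq:lazard forumula} gives $D_2(G)=\gamma_2(G)G^p$ and $D_3(G)=\gamma_3(G)G^p$, so $\gamma_2(G)\subseteq D_3(G)$ yields $D_2(G)=\gamma_2(G)G^p\subseteq\gamma_3(G)G^p=D_3(G)$, whence $D_2(G)=D_3(G)$. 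For $p=2$ I would additionally use the vanishing of the bracket $L_1(G)\times L_2(G)\to L_3(G)$, which gives $[G,\Phi(G)]\subseteq D_4(G)$ and hence $\gamma_3(G)=[G,\gamma_2(G)]\subseteq[G,\Phi(G)]\subseteq D_4(G)$; since $D_3(G)=\gamma_3(G)\gamma_2(G)^2G^4$ and $D_4(G)=\gamma_4(G)\gamma_2(G)^2G^4$ by \eqref{1eq:lazard forumula}, this forces $D_3(G)=D_4(G)$. In either case $G$ has finite rank.

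A finitely generated pro-$p$ group of finite rank is $p$-adic analytic, so Theorem~\ref{3thm:equivalence theta-abelian} applies and yields that $G$ is $\theta$-abelian. The one point that genuinely requires the cyclotomic hypothesis (and the restriction $\image(\theta)\le 1+4\Z_2$ for $p=2$, which by Corollary~\ref{2cor:torsion} keeps $G$ torsion-free, so that the finite-rank group is uniform) is that the orientation furnished by the equivalence theorem may be taken to be the given $\theta$: one must check that on a group of this rigid locally powerful shape the cyclotomic orientation is the canonical one, induced by the conjugation action on its maximal abelian normal subgroup $\Zen_\theta(G)$. I expect this orientation-matching step, rather than any of the filtration computations, to be the subtle part of the argument; the computations themselves are routine applications of Lazard's formula and mirror the case analysis already carried out in the proof of Theorem~\ref{3thm:lambda3Phi2}.
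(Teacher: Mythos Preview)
Your proposal is correct and follows essentially the same route as the paper: (i) $\Rightarrow$ (iii) from the preceding computation, (iii) $\Rightarrow$ (ii) via the embedding $L_\bullet(G)\hookrightarrow\grad_\bullet(G)$, and (ii) $\Rightarrow$ (i) by collapsing the Zassenhaus filtration at a finite stage, invoking the Lazard--Riley criterion for finite rank, and then Theorem~\ref{3thm:equivalence theta-abelian}. The only difference is in how the collapse is obtained: the paper argues that abelianness of $L_\bullet(G)$ forces $L_i(G)=0$ whenever $i$ is not a $p$-power (implicitly using that $L_\bullet(G)$ is generated in degree~$1$, so with vanishing bracket only the $[p]$-operation survives), whereas you extract $D_2(G)=D_3(G)$ (odd~$p$) or $D_3(G)=D_4(G)$ ($p=2$) directly from Lazard's formula~\eqref{1eq:lazard forumula}. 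Both reach the same collapse point; your version is more explicit and closely mirrors the case analysis already present in the proof of Theorem~\ref{3thm:lambda3Phi2}.

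Your closing remark about orientation-matching is well taken: Theorem~\ref{3thm:equivalence theta-abelian} only produces \emph{some} orientation making $G$ $\theta'$-abelian, and one must check that the given cyclotomic $\theta$ agrees. The paper's proof passes over this point in silence. It does hold, and the cyclotomic hypothesis (together with $\image(\theta)\le 1+4\Z_2$ when $p=2$) is exactly what is needed: for non-abelian $G$ the maximal abelian normal subgroup $\ker(\theta')$ equals $\Zen_\theta(G)\subseteq\ker(\theta)$ by Proposition~\ref{4prop:Zmxlabeliannormal}, and torsion-freeness of $\image(\theta)$ then forces $\ker(\theta)=\ker(\theta')$; for abelian $G\simeq\Z_p^d$ with $d\ge2$ a non-trivial orientation fails the surjectivity test of Lemma~\ref{2lem:H2 torfree iff H1 projects}, so a cyclotomic $\theta$ must be trivial. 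You were right to single this out as the genuinely non-formal step.
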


\begin{proof}
 Assume first that $G$ is $\theta$-abelian.
Then (ii) and (iii) follow from \eqref{3eq:quotients_Zassenhaus} and \eqref{3eq:restricted envelope theta-abelian}.

Assume that the algebra $L_\bullet(G)$ is abelian.
Then \eqref{3eq:restricted envelope theta-abelian} follows immediately.
Moreover, one has $L_i(G)=D_i(G)/D_{i+1}(G)$ whenever $i$ is not a $p$-power.
In particular, $D_i(G)=D_{i+1}(G)$ for every such $i$'s, so that \cite[Theorem~11.4]{ddsms:analytic} implies that
$G$ has finite rank, and (i) follows by Theorem~\ref{3thm:equivalence theta-abelian}.

Assume now that $\grad_\bullet(G)$ is a commutative polynomial $\F_p$-algebra.
Since the map $\psi_{L_\bullet(G)}\colon L_\bullet(G)\to\grad_\bullet(G)$ is a monomorphism,
also the restricted Lie algebra $L_\bullet(G)$ has to be commutative, so that (iii) implies (ii).
\end{proof}

\begin{cor}
 Let $K$ be a field containing a primitive $p$-th root of unity
such that the quotient $K^\times/(K^\times)^p$ is finite, and assume further that
$\sqrt{-1}$ lies in $K$ if $p=2$.
Then the following are equivalent:
\begin{itemize}
 \item[(i)] $K$ is $p$-rigid;
 \item[(ii)] the algebra $L_\bullet(G_K(p))$ induced by the maximal pro-$p$ Galois group $G_K(p)$
is an abelian restricted Lie algebra over $\F_p$;
 \item[(iii)] the algebra $\grad_\bullet(G_K(p))$ is a commutative polynomial $\F_p$-algebra.
\end{itemize}
\end{cor}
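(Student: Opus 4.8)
The plan is to deduce this corollary by combining the group-theoretic characterization established in the preceding theorem with the arithmetic dictionary of Theorem~\ref{3thm:rigidity tutto}, so that essentially no new work is required. First I would record that $G=G_K(p)$ is a Bloch-Kato pro-$p$ group carrying its arithmetical orientation $\theta\colon G_K(p)\to\Z_p^\times$, which is cyclotomic by the theorem asserting that arithmetical orientations are cyclotomic. Since $K^\times/(K^\times)^p$ is assumed finite, the Kummer isomorphism \eqref{1eq:kummer_dualityp} together with Proposition~\ref{1prop:frattini subgroup} gives $d(G)=\dim_{\F_p}\left(K^\times/(K^\times)^p\right)<\infty$, so that $G$ is finitely generated. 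Finally, the hypothesis $\sqrt{-1}\in K$ when $p=2$ means $\mu_4\subseteq K$, so the action of $G$ on $\mu_{2^\infty}$ fixes $\mu_4$ and hence $\image(\theta)\leq1+4\Z_2$. Thus $(G,\theta)$ satisfies exactly the standing hypotheses of the preceding theorem.

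Next I would invoke Theorem~\ref{3thm:rigidity tutto}(i): under precisely these assumptions $K$ is $p$-rigid if, and only if, $G$ is $\theta$-abelian (with $\image(\theta)=1+p^k\Z_p$). This identifies condition (i) of the corollary with the $\theta$-abelianity of $G$. I would then apply the preceding theorem to the finitely generated cyclo-oriented pair $(G,\theta)$, whose hypotheses we have just verified: it states that $G$ is $\theta$-abelian if, and only if, $L_\bullet(G)$ is an abelian restricted Lie algebra over $\F_p$, if, and only if, $\grad_\bullet(G)$ is a commutative polynomial $\F_p$-algebra. This identifies $\theta$-abelianity of $G$ with conditions (ii) and (iii).

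Chaining these two equivalences yields that (i), the $\theta$-abelianity of $G$, (ii), and (iii) are all equivalent, which is exactly the assertion. I do not expect any genuine obstacle here, since the substantive content is entirely contained in the preceding theorem and in Theorem~\ref{3thm:rigidity tutto}; the only points meriting care are the two preliminary reductions, namely deriving finite generation of $G$ from the finiteness of $K^\times/(K^\times)^p$, and extracting the constraint $\image(\theta)\leq1+4\Z_2$ from $\sqrt{-1}\in K$ in the case $p=2$, both of which are routine.
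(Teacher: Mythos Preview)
Your proposal is correct and matches the paper's intended argument: the corollary is stated without proof immediately after the preceding theorem, so it is meant to follow exactly as you describe, by combining that theorem with Theorem~\ref{3thm:rigidity tutto}(i) once the hypotheses (finite generation from Kummer theory, and $\image(\theta)\leq 1+4\Z_2$ from $\sqrt{-1}\in K$) are verified. Your two preliminary reductions are precisely the routine checks the paper leaves implicit.
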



\chapter{Products, relations, and the ETC}

\section{The group $H^1(G,\Z_p(1))$}

Let $(G,\theta)$ be a cyclo-oriented pro-$p$ group.
Our first goal is to study the structure of the cohomology group $H^1(G,\Z_p(1))$
and the continuous crossed homomorphism $f\colon G\to\Z_p(1)$ which represent the cohomology classes.

The following result is a consequence of \cite[Prop.~6]{labute:classification}.

\begin{prop}\label{4prop:inizio}
Let $(G,\theta)$ be a finitely generated pro-$p$ group with orientation.
Then $\theta$ is cyclotomic if, and only if, one may arbitrarily prescribe the values of
a continuous crossed homomorphisms $f\colon G\to\Z_p(1)$ on a minimal system of generators of $G$.
\end{prop}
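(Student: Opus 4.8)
The plan is to reformulate the prescription property cohomologically and then match it to the torsion-freeness clause in the definition of a cyclotomic orientation. Fix a minimal generating system $\{x_1,\dots,x_d\}$ of $G$, with $d=d(G)$, and consider the evaluation map
\[
\mathrm{ev}\colon Z^1(G,\Z_p(1))\longrightarrow \Z_p(1)^{d},\qquad f\longmapsto (f(x_1),\dots,f(x_d)).
\]
The assertion ``one may arbitrarily prescribe the values of $f$ on the $x_i$'' is precisely the surjectivity of $\mathrm{ev}$. So the whole argument reduces to showing that $\mathrm{ev}$ is surjective if and only if $H^2(G,\Z_p(1))$ is torsion-free, after which Lemma~\ref{2lem:H2 torfree iff H1 projects} performs the translation into the language of cyclotomic orientations.

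First I would reduce modulo $p$. Since $G$ is pro-$p$, Remark~\ref{1rem:p-adic and orientations} gives $\theta(g)\equiv 1 \bmod p$ for all $g\in G$ (also when $p=2$), so $\Z_p(1)/p\Z_p(1)=\F_p$ is the trivial $G$-module and $Z^1(G,\F_p)=H^1(G,\F_p)=\Hom(G,\F_p)$ by Fact~\ref{1fact:elementary cohomology}. Evaluation on the $x_i$ identifies $\Hom(G,\F_p)$ with $\F_p^{d}$ via \eqref{1eq:pontryagin_duality}. This produces a commutative square whose bottom row is that isomorphism, whose right column is componentwise reduction $\Z_p^{d}\to\F_p^{d}$, and whose left column is cocycle reduction $Z^1(G,\Z_p(1))\to Z^1(G,\F_p)$. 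Chasing the square shows that the mod-$p$ reduction of $\mathrm{ev}$ is surjective exactly when the reduction map $\rho\colon H^1(G,\Z_p(1))\to H^1(G,\F_p)$ coming from \eqref{2eq:ses Zp1Zp1Fp} is surjective.

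The passage from ``surjective mod $p$'' to ``surjective over $\Z_p$'' is the one genuinely analytic point, and I expect it to be the technical heart of the proof. Here I would invoke compactness: $Z^1(G,\Z_p(1))$ is a compact $\Z_p$-module and $\mathrm{ev}$ is continuous, so its image is a closed $\Z_p$-submodule $M\clsgp \Z_p^{d}$; if $\rho$ is onto then $M+p\Z_p^{d}=\Z_p^{d}$, and the topological Nakayama lemma forces $M=\Z_p^{d}$, i.e. $\mathrm{ev}$ is surjective. The converse implication is immediate. Combining this with Lemma~\ref{2lem:H2 torfree iff H1 projects} yields the chain of equivalences: $\mathrm{ev}$ surjective $\iff$ $\rho$ surjective $\iff$ $H^2(G,\Z_p(1))$ torsion-free.

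This settles clause (ii) of cyclotomicity for $G$ itself, and it already gives the forward implication cleanly: if $\theta$ is cyclotomic, then taking $C=G$ makes $H^2(G,\Z_p(1))$ torsion-free, whence $\mathrm{ev}$ is surjective by the above. The delicate point — and where the cited Proposition~6 of Labute enters — is the reverse implication, which must upgrade the prescription property at $G$ to the full definition, namely $H^2(C,\Z_p(1))$ torsion-free for \emph{every} closed subgroup $C$ together with the Bloch-Kato property. The hard part will be propagating the free-prescription of crossed homomorphisms along restriction to closed subgroups, whose minimal generators are not subsets of $\{x_1,\dots,x_d\}$; Labute's result is exactly the tool that packages the prescription so that it descends. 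I would therefore structure the write-up so that ``$\Rightarrow$'' is the self-contained Nakayama argument and ``$\Leftarrow$'' leans on Labute for the subgroup-level control.
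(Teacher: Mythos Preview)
Your Nakayama/compactness argument for the equivalence ``$\mathrm{ev}$ surjective $\iff$ $\rho$ surjective'' is correct, and it is in fact a cleaner alternative to the paper's approach. The paper invokes Labute's Proposition~6 precisely for the implication ``$\rho$ surjective $\Rightarrow$ prescription possible'': it observes that surjectivity of $H^1(G,\Z_p(1))\to H^1(G,\F_p)$ forces surjectivity of $H^1(G,\Z_p(1)/p^n\Z_p(1))\to H^1(G,\F_p)$ for all $n$, which is condition~(1) of Labute's result, whence the prescription property. Your compactness argument bypasses this citation entirely and is more self-contained.

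Where you go astray is in locating Labute's role. You expect it to handle the passage to closed subgroups and the Bloch-Kato clause in the reverse implication, but the paper does neither. Its proof opens by declaring, via Lemma~\ref{2lem:H2 torfree iff H1 projects}, that ``$\theta$ cyclotomic'' is equivalent to the surjectivity of~\eqref{2eq:p projection} --- that is, it silently reads ``cyclotomic'' as ``$H^2(G,\Z_p(1))$ torsion-free for $G$ itself'', dropping both the Bloch-Kato requirement and the quantification over closed subgroups. With that reading, the direction ``prescription $\Rightarrow$ cyclotomic'' is the one-line observation you already have (lift each $\chi$ generator-by-generator), and there is nothing further to prove. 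So your worry about subgroup-level control, while a legitimate reading of the definition, is not something the paper attempts; the proposition is being stated and proved at the level of $G$ alone.
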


\begin{proof}
First of all, recall that by Proposition~\ref{2lem:H2 torfree iff H1 projects}, $\theta$ is a cyclotomic orientation
for $G$ if, and only if, one has the epimorphism \eqref{2eq:p projection}.

Assume first that it is possible to arbitrarily prescribe the values of a crossed homomorphism $f$ on
a minimal system of generators of $G$.
Then it is easy to see that \eqref{2eq:p projection} is an epimorphism: indeed, for every $\chi\in H^1(G,\F_p)$,
$\chi$ is the image of a crossed homomorphism $f\colon G\to\Z_p(1)$ such that $f(x)\equiv\chi(x)\bmod p$
for every generator $x$ of $G$.

Conversely, assume that \eqref{2eq:p projection} is an epimorphism.
The composition of projections $\Z_p(1)\twoheadrightarrow\Z_p(1)/p^n\Z_p(1)\twoheadrightarrow\F_p$ for every $n\geq1$
implies that the map
\[\xymatrix{ H^1(G,\Z_p(1)/p^n\Z_p(1))\ar[r] & H^1(G,\F_p)}\]
is surjective for every $n\geq1$.
In particular, condition (1) of \cite[Prop.~6]{labute:classification} holds, and this yields the thesis.
\end{proof}

For $x,y\in G$ and $f\colon G\to\Z_p(1)$ a continuous crossed homomorphism, one has the formula
\begin{equation}\label{4eq:c on commutators}
 f\left([x,y]\right)=(\theta(x)-1)f(y)-(\theta(y)-1)f(x).
\end{equation}
Let \begin{equation}\label{4eq:presentation}
 \xymatrix{ 1\ar[r] & R\ar[r] & F\ar[r] & G\ar[r] & 1 }
\end{equation}
be a minimal presentation
for a finitely generated cyclo-oriented pro-$p$ group $(G,\theta)$.
Since $R\leq\Phi(G)$, we may extend $\theta$ to an orientation $\tilde{\theta}\colon F\rightarrow \Z_p^\times$
such that $\theta\circ\pi=\tilde\theta$.
Note that $(F,\tilde\theta)$ is again a cyclo-oriented pro-$p$ group by Remark~\ref{2rem:cyclo-oriented free gps}.
Then, one has the following.

\begin{fact}\label{4fact:crossed homomorphisms and relations}
\begin{itemize}
 \item[(i)] A continuous crossed homomorphism $f\colon G\to\Z_p(1)$ is uniquely determined by the values
 of $f$ on a minimal set of generators of $G$.
In particular, we may arbitrarily prescribe such values (cf. \cite[Prop.~6~(3)]{labute:classification}).
 \item[(ii)] For a crossed homomorphism $f\colon G\to\Z_p(1)$, let $\tilde f\colon F\to\Z_p(1)$ be the lift of $f$
induced by the values of $f$ on a minimal set of generators of $G$, i.e., $\tilde f=f\circ\pi$.
Then one has $\tilde f|_R\equiv\mathbf{0}$.
 \item[(iii)] If a crossed homomorphism $h\colon F\to\Z_p(1)$ is zero on a set of defining relations of $R$,
then $h|_R\equiv\mathbf{0}$.
Indeed, if $\rho\in R$ and $h(\rho)=0$, \eqref{4eq:c on commutators} implies that
\begin{eqnarray*}
 h\left({^x\rho}\right)&=& h\left([x,\rho]\rho^{-1}\right)\\ &=&
h([x,\rho])+\theta([x,\rho])h(\rho^{-1}) \\ &=&(\theta(x)-1)h(\rho)-(\theta(\rho)-1)h(x)+1\cdot 0=0,
\end{eqnarray*}
as $\theta(\rho)=1$.
\end{itemize}
\end{fact}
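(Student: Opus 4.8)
The plan is to dispatch parts (i) and (ii) by direct cocycle bookkeeping and to concentrate the real effort on (iii). For (i), I would start from the cocycle identity $f(xy)=f(x)+\theta(x)\cdot f(y)$, which together with $f(1)=\mathbf{0}$ and $f(x^{-1})=-\theta(x)^{-1}f(x)$ determines the value of $f$ on every word in a fixed minimal generating set $\mathcal{X}$ from its values on $\mathcal{X}$. Since the abstract subgroup generated by $\mathcal{X}$ is dense in $G$ and $f$ is continuous, this fixes $f$ on all of $G$, which gives uniqueness. The assertion that the values on $\mathcal{X}$ may be prescribed arbitrarily is exactly the content of Proposition~\ref{4prop:inizio}: because $(G,\theta)$ is cyclo-oriented its orientation is cyclotomic, so Proposition~\ref{4prop:inizio} (resting on \cite[Prop.~6(3)]{labute:classification}) guarantees that any assignment $\mathcal{X}\to\Z_p(1)$ extends to a continuous crossed homomorphism. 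For (ii), the quickest route is to observe that $\tilde f=f\circ\pi$ is itself a crossed homomorphism for $(F,\tilde\theta)$, since $\tilde\theta=\theta\circ\pi$ yields $\tilde f(xy)=\tilde f(x)+\tilde\theta(x)\tilde f(y)$; as $R=\kernel(\pi)$ and crossed homomorphisms send the identity to $\mathbf{0}$, one gets $\tilde f(\rho)=f(\pi(\rho))=f(1)=\mathbf{0}$ for every $\rho\in R$, i.e. $\tilde f|_R\equiv\mathbf{0}$.

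The substance is in (iii), where the decisive observation is that $\tilde\theta$ is trivial on $R$: since $\pi(R)=1$ one has $\tilde\theta(\rho)=\theta(\pi(\rho))=1$ for all $\rho\in R$. This upgrades the restriction $h|_R$ from a mere cocycle to a genuine continuous homomorphism into the abelian group $\Z_p(1)$, because $h(r_1r_2)=h(r_1)+\tilde\theta(r_1)h(r_2)=h(r_1)+h(r_2)$. It therefore suffices to check that $h$ vanishes on a set of topological generators of $R$. The defining relations generate $R$ as a closed normal subgroup of $F$, so $R$ is topologically generated by the conjugates ${}^{x}\rho$ of the defining relations $\rho$; and the displayed computation (applying \eqref{4eq:c on commutators} together with $\theta(\rho)=1$ and $\tilde\theta([x,\rho])=1$) shows $h({}^{x}\rho)=\mathbf{0}$ as soon as $h(\rho)=\mathbf{0}$. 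A continuous homomorphism vanishing on a topological generating set of $R$ is identically $\mathbf{0}$, whence $h|_R\equiv\mathbf{0}$.

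I expect the only genuinely non-formal point to be this last upgrade, namely recognising that the triviality of $\tilde\theta$ on $R$ collapses the cocycle condition on $R$ to plain additivity. Without it, the vanishing of $h$ on the defining relations and their conjugates could not be propagated across products, and the passage from generators of the normal subgroup $R$ to all of $R$ would be considerably more delicate; everything else reduces to routine cocycle manipulation combined with continuity and density.
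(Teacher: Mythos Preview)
Your proposal is correct and follows the same approach as the paper: the paper's justification of (iii) consists essentially of the displayed conjugate computation, and you have supplied the surrounding logical scaffolding (that $\tilde\theta|_R\equiv 1$ turns $h|_R$ into a genuine homomorphism, so vanishing on the conjugates ${}^x\rho$ of the defining relations propagates to all of $R$) which the paper leaves implicit. Parts (i) and (ii) are handled exactly as intended.
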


For every crossed homomorphism $f\colon G\to \Z_p(1)$, the restriction
$f|_{\kernel(\theta)}\colon\kernel(\theta)\rightarrow\Z_p(1)$ is a morphism of pro-$p$ groups.
In particular, if the orientation $\theta$ is trivial, then $\Z_p(1)$ is a trivial $\ZpG$-module, and 
\[H^1(G,\Z_p(1))\simeq\Hom(G,\Z_p).\]
Moreover, recall that from Corollary~\ref{2cor:kernelab torfree} one has that the abelianization
of $\kernel(\theta)$ is torsion-free.
Thus one has the isomorphism of abelian pro-$p$ groups
\[\frac{\kernel(\theta)}{[\kernel(\theta),\kernel(\theta)]}
\simeq\Hom(\kernel(\theta),\Z_p)\simeq H^1(\kernel(\theta),\Z_p(1)).\]

\begin{rem}\label{4rem:Gab torsionfree thetatrivial}
Using Proposition~\ref{4prop:H1Zp1} and Fact~\ref{4fact:crossed homomorphisms and relations} allows us to prove
Corollary~\ref{2cor:kernelab torfree} in a more ``direct'' way.
Indeed, let $(G,\theta)$ be a cyclo-oriented pro-$p$ group with $\theta$ trivial.
Let \ref{4eq:presentation} be a minimal presentation for $G$, and assume for contradiction that there is a relation
$\rho\in R$ and a generator $x\in F$ such that $\rho\equiv x^{p^m}\bmod[F,F]$ for some finite $m$.
With an abuse of notation, consider $x$ as element of $G$, and let $\chi\in H^1(G,\F_p)$ be the dual of $x$.
Also, let the crossed homomorphism $\tilde f\colon F\to \Z_p(1)$ be a ``lift'' of $\chi$, i.e.,
the cohomology class of $\tilde f$ is equivalent to $\chi$ modulo $p$. 
By assumption, $\Z_p(1)$ is a trivial $\ZpG$-module, and thus also a trivial $\Z_p\dbl F\dbr$-module.
Hence $\tilde f$ is an homomorphism, and 
\[0=\tilde f(\rho)=\tilde f\left(x^{p^m}\right)=p^m \tilde f(x)=p^m,\]
a contradiction.
Thus, the abelianization $G/[G,G]$ is torsion-free.
\end{rem}

Assume now that $\image(\theta)\simeq\Z_p$.
The orientation $\theta$ induces an action of $G$ on $H^1(\kernel(\theta),\Z_p(1))$, given by
\begin{equation}\label{4eq:action H1}
(g.f)(z)=\theta(g)\cdot f\left(g^{-1}z\right),\quad g\in G,z\in\kernel(\theta),
\end{equation}
for $f\in H^1(\kernel(\theta),\Z_p(1))$.
Note that every $g\in G$ can be written as $g=x^{\lambda}z$, with $z\in\kernel(\theta)$ and $x\in G$ such that 
$\theta(x)$ generates $\image(\theta)$.
Thus, if $f\colon G\to \Z_p(1)$ is a continuous crossed homomorphism such that $f(x^\lambda)=0$
for every $\lambda\in\Z_p$ (i.e., we may consider $f$ as element of $\Hom(\kernel(\theta),\Z_p)$),
then Fact~\ref{1fact:elementary cohomology} implies that 
\[f\left(x^\lambda z\right)=\theta\left(x^\lambda\right)\cdot f(z) \ \Longrightarrow \ 
\theta\left(x^{-\lambda}\right)\cdot f\left(x^\lambda z\right)=f(z) \]
for every $\lambda\in\Z_p$ and $z\in\kernel(\theta)$, namely, by \eqref{4eq:action H1} $G$ acts trivially on $f$.
Therefore, $H^1(\kernel(\theta),\Z_p(1))^G$ embeds in $H^1(G,\Z_p(1))$. 

Moreover, for every crossed homomorphism $f$ and for every $x,y\in G$, one has \[f(xy)\equiv f(x)+f(y)\mod q,\]
with $q$ a $p$-th power such that $\image(\theta)=1+q\Z_p$.
Also, by Fact~\ref{1fact:elementary cohomology} every $\lambda\in\Z_p(1)$ induces a crossed homomorphism
\[\lambda\colon G\longrightarrow\Z_p(1), \quad \lambda(x)=(\theta(x)-1)\lambda\]
which is a 1-coboundary.
Note that $\lambda|_{\kernel(\theta)}\equiv\mathbf{0}$, so that $\lambda$ factors
through the quotient $G/\kernel(\theta)$, and the image of $\lambda$ is $q\lambda\Z_p$.
Therefore, by Lemma~\ref{2lem:torsion H1}, the reduction modulo $q$ induces the isomorphism
\begin{equation}\label{4eq:torsion}
 \xymatrix{ \tor_{\Z_p}\left(H^1(G,\Z_p(1))\right)\ar[r]^-{\sim} & H^1(G/\kernel(\theta),\Z/q\Z).}
\end{equation}

Moreover, the five term exact sequence induced by the quotient $G/\kernel(\theta)$ (cf. \cite[Prop.~1.6.7]{nsw:cohm})
induces the exact sequence in cohomology
\[ \xymatrix@C=.5truecm{ 0\ar[r] & H^1(G/\kernel(\theta),\Z_p(1))\ar[r] & H^1(G,\Z_p(1))\ar[r] & 
H^1(\kernel(\theta),\Z_p(1))^G \ar`r[d]`[l] `[dlll] `[dll] [dll] \\ 
 & H^2(G/\kernel(\theta),\Z_p(1))\ar[r] & \cdots &} \]
which becomes
\[ \xymatrix@C=.4truecm{ 0\ar[r] & H^1(G/\kernel(\theta),\Z/q\Z)\ar[r] & H^1(G,\Z_p(1))\ar[r] & H^1(\kernel(\theta),\Z_p)^G \ar[r] & 0 } \]
as $\ccd(G/\kernel(\theta))=1$.
Consequently, one has the following.

\begin{prop}\label{4prop:H1Zp1}
 Let $(G,\theta)$ be a cyclo-oriented pro-$p$ group.
Then one has the isomorphism of $\Z_p$-modules
\[H^1(G,\Z_p(1))\simeq H^1(G/\kernel(\theta),\Z/q\Z)\oplus H(\kernel(\theta),\Z_p)^G.\]
\end{prop}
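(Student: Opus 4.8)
The plan is to read the asserted decomposition off the five-term exact sequence attached to the extension $1\to\kernel(\theta)\to G\to G/\kernel(\theta)\to 1$ — precisely the sequence assembled in the paragraph preceding the statement — and then to produce a splitting. First I would dispose of the degenerate case in which $\theta$ is trivial: then $\kernel(\theta)=G$, the coefficients $\Z_p(1)$ form the trivial $\ZpG$-module, so $H^1(G,\Z_p(1))=\Hom(G,\Z_p)=H^1(\kernel(\theta),\Z_p)^G$ while the first summand is zero, and the isomorphism is immediate. Hence I may assume $\image(\theta)\simeq\Z_p$, which by Remark~\ref{1rem:p-adic and orientations} is the only remaining possibility (for $p$ odd, and the relevant one for $p=2$).

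Next I would apply the five-term exact sequence of Proposition~\ref{1prop:5tes} to $N=\kernel(\theta)$ with coefficients $\Z_p(1)$ and identify the four terms. Since $\theta$ restricts trivially to $\kernel(\theta)$, one has $\Z_p(1)^{\kernel(\theta)}=\Z_p(1)$ and $\Z_p(1)|_{\kernel(\theta)}$ is the trivial module $\Z_p$, whence $H^1(\kernel(\theta),\Z_p(1))^G=H^1(\kernel(\theta),\Z_p)^G$. Because $G/\kernel(\theta)\simeq\image(\theta)\simeq\Z_p$ has $\ccd=1$, the transgression target $H^2(G/\kernel(\theta),\Z_p(1))$ vanishes, so the restriction map is surjective. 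Finally, computing the cohomology of the procyclic group $G/\kernel(\theta)$ acting on $\Z_p(1)$ through $\theta$ — a topological generator acts by multiplication by an element of $1+q\Z_p$, so the difference operator is multiplication by a unit times $q$ — gives $H^1(G/\kernel(\theta),\Z_p(1))\simeq\Z/q\Z$, and reduction modulo $q$ identifies this with $H^1(G/\kernel(\theta),\Z/q\Z)$, in accordance with \eqref{4eq:torsion} and Lemma~\ref{2lem:torsion H1}. Thus the five-term sequence collapses to the short exact sequence of $\Z_p$-modules
\[0\to H^1(G/\kernel(\theta),\Z/q\Z)\to H^1(G,\Z_p(1))\to H^1(\kernel(\theta),\Z_p)^G\to 0,\]
whose right-hand map is the restriction.

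It then remains to split this sequence, and this is where the real content lies; I would invoke the explicit extension construction already carried out before the statement. Choosing a continuous homomorphic section of $G\twoheadrightarrow G/\kernel(\theta)\simeq\Z_p$ (which exists, since $\Z_p$ is free pro-$p$ hence projective), fix $x\in G$ with $\theta(x)$ a topological generator of $\image(\theta)$, so that every $g\in G$ is uniquely $g=x^{\lambda}z$ with $z\in\kernel(\theta)$. Given $f_0\in H^1(\kernel(\theta),\Z_p)^G$, the formula $\tilde f(x^{\lambda}z)=\theta(x)^{\lambda}f_0(z)$ defines a continuous crossed homomorphism $G\to\Z_p(1)$ restricting to $f_0$; this assignment is $\Z_p$-linear in $f_0$ and furnishes a section of the restriction map, so the sequence splits and the direct sum decomposition follows. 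The main obstacle is exactly the verification that $\tilde f$ is a well-defined crossed homomorphism (and that its class is independent of the chosen section), which rests on the cocycle identity \eqref{4eq:c on commutators} together with the $G$-invariance \eqref{4eq:action H1} of $f_0$, rather than on any finiteness hypothesis on $G$; once this is established, the splitting — and with it the proposition — is immediate.
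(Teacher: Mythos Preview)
Your argument is correct and follows essentially the same route as the paper: both apply the five-term sequence for $\kernel(\theta)\trianglelefteq G$ with $\Z_p(1)$-coefficients, use $\ccd(G/\kernel(\theta))=1$ to kill the transgression, identify $H^1(G/\kernel(\theta),\Z_p(1))\simeq\Z/q\Z$ via Lemma~\ref{2lem:torsion H1} and \eqref{4eq:torsion}, and split the resulting short exact sequence by the explicit extension $\tilde f(x^\lambda z)=\theta(x)^\lambda f_0(z)$ --- which is precisely the embedding $H^1(\kernel(\theta),\Z_p(1))^G\hookrightarrow H^1(G,\Z_p(1))$ established in the paragraph preceding the statement. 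Your presentation is if anything slightly more explicit about the section, and your remark that the verification of the crossed-homomorphism identity hinges exactly on the $G$-invariance \eqref{4eq:action H1} of $f_0$ is on the mark.
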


\begin{exs}
\begin{itemize}
 \item[(a)] Let $(G,\theta)$ be a finitely generated $\theta$-abelian pro-$p$ group with $d(G)=d$ and a presentation
\eqref{2eq:presentation theta-abelian}, with $\mathcal{I}={1,\ldots,d-1}$, and set $\theta(x_0)=1+p$.
Then $H^1(G,\Z_p(1))$ is the $\Z_p$-module generated by the classes of the continuous crossed homomorphisms
$f_i\colon G\to\Z_p(1)$, with $f_i(x_j)=\delta_{ij}$ for $i,j=0,\ldots,d-1$.
In particular, \[H^1(G,\Z_p(1))\simeq\F_p\oplus\Z_p^{d-1}\] as $\Z_p$-modules, and $\text{ord}(f_0)=p$, as
$pf_0$ is the crossed homomorphism induced by $1\in\Z_p(1)$.
Note that for every $i\geq1$ one has
\[(x_0.f_i)(x_i)=\theta(x_0)\cdot f\left(x_i^{\theta(x_0^{-1})}x_0^{-1}\right)=\theta(x_0)\theta(x_0)^{-1}f_i(x_i)+0\]
i.e., the action of $G$ fixes $\Hom(\Zen_\theta(G),\Z_p(1))$.
 \item[(b)] Let $G$ be as above, but equip it with the trivial orientation $\theta'\colon G\to\Z_p^\times$,
$\theta'\equiv\mathbf{0}$.
Then by Corollary~\ref{2cor:kernelab torfree} $(G,\theta')$ is not cyclo-oriented, as the abelianization of
$G=\kernel(\theta')$ is not torsion-free.
Moreover, $H^1(G,\Z_p)\simeq\Z_p$, and such group does not projects onto $H^1(G,\F_p)$ -- in particular,
it is not possible to lift the dual of $x_i$ from $H^1(G,\F_p)$ to $\Hom(G,\Z_p)$ for every $i\geq1$,
as one would have $p=f_i\left(x_i^p\right)=f_i([x_0,x_i])=0$.
\end{itemize}
\end{exs}

\section{Free products}

Recall from Subsection~2.5.2 the definition of free product in the category of pro-$p$ groups.
The class of Bloch-Kato pro-$p$ groups is closed under free pro-$p$ products
(cf. \cite[Theorem~5.2]{claudio:BK}).
We want now to extend such closure to the class of cyclo-oriented pro-$p$ groups.

Then for two oriented pro-$p$ groups $(G_1,\theta_1)$ and $(G_2,\theta_2)$,
the orientations $\theta_1$ and $\theta_2$
induce a new orientation on the free product, i.e., one has the commutative diagram
\begin{equation}\label{1eq:free product}
 \xymatrix{ G_i\ar[r]^-{\iota_i}\ar[d]_{\theta_i} & G_1\ast_{\hat p} G_2 \ar@{-->}[dl]^{\tilde\theta}\\
 \Z_p^\times}
\end{equation}
with $i=1,2$.
Thus, we may define the free pro-$p$ product for oriented pro-$p$ groups, and one has the following.

\begin{thm}\label{4thm:cyclotomic free product}
 Let $(G_1,\theta_1)$ and $(G_2,\theta_2)$ be two cyclo-oriented pro-$p$ groups,
and assume further that $\image(\theta_i)$ has non non-trivial torsion for both $i$.
Then the orientation $\tilde\theta$ defined as in \eqref{1eq:free product} is a cyclotomic orientation for
the free product $G_1\ast_{\hat p}G_2$.
\end{thm}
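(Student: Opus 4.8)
The goal is to show that the free pro-$p$ product $G = G_1 *_{\hat p} G_2$ of two cyclo-oriented pro-$p$ groups, equipped with the induced orientation $\tilde\theta$, is again cyclo-oriented. By definition this requires two things: (1) $G$ is a Bloch-Kato pro-$p$ group, and (2) $H^2(C,\Z_p(1))$ is torsion-free for every closed subgroup $C \le G$. The first condition is already granted to us, since the excerpt states that Bloch-Kato pro-$p$ groups are closed under free pro-$p$ products (citing [Theorem 5.2]{claudio:BK}). So the entire content of the proof is condition (2).

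Let me think about what tools are available...

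The key structural fact about free pro-$p$ products is the cohomological Mayer-Vietoris / direct-sum decomposition for coefficients. For a free pro-$p$ product $G = G_1 *_{\hat p} G_2$ and a $G$-module $M$, one expects that in degrees $n \ge 2$ the cohomology splits as a direct sum of the cohomologies of the factors: $H^n(G, M) \simeq H^n(G_1, M) \oplus H^n(G_2, M)$. Here $\Z_p(1)$ is a $\tilde\theta$-module, and its restriction to $G_i$ is exactly $\Z_p(1)$ with respect to $\theta_i$ (this is precisely what diagram \eqref{1eq:free product} guarantees — the orientation restricts correctly). So for the full group $G$ itself, this would give $H^2(G,\Z_p(1)) \simeq H^2(G_1,\Z_p(1)) \oplus H^2(G_2,\Z_p(1))$, and since each summand is torsion-free by the cyclotomicity of $(G_i,\theta_i)$, the direct sum is torsion-free.

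But condition (2) demands this for *every* closed subgroup $C$, not just $G$ itself. This is where the main obstacle lies. The saving grace is the Kurosh subgroup theorem for free pro-$p$ products: every closed subgroup $C$ of a free pro-$p$ product $G_1 *_{\hat p} G_2$ is itself a free pro-$p$ product of a free pro-$p$ group together with closed subgroups conjugate into the factors $G_i$. More precisely, $C \simeq F *_{\hat p} \left( \coprod_\alpha (C \cap {}^{g_\alpha}G_{i_\alpha}) \right)$ where $F$ is free. The restriction of $\tilde\theta$ to each intersection-factor is (conjugate to) an orientation on a closed subgroup of $G_{i}$, and since $(G_i,\theta_i)$ is cyclo-oriented, these subgroups have torsion-free $H^2(\,\cdot\,,\Z_p(1))$; the free factor contributes nothing in degree $2$ by Remark~\ref{2rem:cyclo-oriented free gps}. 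Applying the degree-$\ge 2$ direct-sum decomposition to this Kurosh decomposition of $C$ yields that $H^2(C,\Z_p(1))$ is a direct sum of torsion-free $\Z_p$-modules, hence torsion-free.

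The plan in order: first, establish the cohomological direct-sum decomposition $H^n(H_1 *_{\hat p} H_2, M) \simeq H^n(H_1,M) \oplus H^n(H_2, M)$ for $n \ge 2$ and arbitrary (oriented) modules $M$, using the Mayer-Vietoris sequence for free pro-$p$ products together with the fact that a free pro-$p$ group has cohomological dimension $1$; second, verify that the induced orientation $\tilde\theta$ restricts correctly to each factor so that $\Z_p(1)$ as a $G$-module restricts to $\Z_p(1)$ as a $G_i$-module (immediate from \eqref{1eq:free product}); third, invoke the Kurosh subgroup theorem to reduce an arbitrary closed subgroup $C$ to a free pro-$p$ product of a free group and subgroups sitting inside conjugates of the $G_i$; and finally, combine these to conclude torsion-freeness of $H^2(C,\Z_p(1))$. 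The main obstacle — and the step requiring the most care — is the reduction via Kurosh's theorem, specifically checking that the orientation restricts compatibly to the intersection-factors (including handling the conjugating elements $g_\alpha$, which act through $\tilde\theta(g_\alpha) \in \image(\tilde\theta)$) and that the hypothesis ``$\image(\theta_i)$ has no non-trivial torsion'' is exactly what is needed to keep the restricted orientations inside the cyclotomic regime (i.e. $\image \simeq \Z_p$ or trivial, per Remark~\ref{1rem:p-adic and orientations}), so that the torsion-freeness hypothesis on the factors transfers to all their relevant closed subgroups.
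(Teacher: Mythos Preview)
Your approach is correct and takes a genuinely different route from the paper's.

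The paper does \emph{not} use the degree-$2$ Mayer--Vietoris decomposition. Instead it works entirely in degree~$1$: by Lemma~\ref{2lem:H2 torfree iff H1 projects}, torsion-freeness of $H^2(\,\cdot\,,\Z_p(1))$ is equivalent to surjectivity of $H^1(\,\cdot\,,\Z_p(1))\to H^1(\,\cdot\,,\F_p)$, and the paper verifies this surjectivity for $G$ by first showing that the restriction maps $\res^1_{G,G_i}\colon H^1(G,\Z_p(1))\to H^1(G_i,\Z_p(1))$ are surjective, using the explicit structure of $H^1(\,\cdot\,,\Z_p(1))$ established in Proposition~\ref{4prop:H1Zp1}. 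The hypothesis that $\image(\theta_i)$ has no non-trivial torsion enters precisely here, since Proposition~\ref{4prop:H1Zp1} assumes $\image(\theta)$ is trivial or $\simeq\Z_p$. Both approaches then handle an arbitrary closed subgroup $C$ via the same Kurosh-type decomposition (the paper cites the proof of \cite[Theorem~5.2]{claudio:BK}).

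Your route is more direct and arguably proves more: the decomposition $H^2(G,\Z_p(1))\simeq H^2(G_1,\Z_p(1))\oplus H^2(G_2,\Z_p(1))$ yields torsion-freeness immediately, without invoking Proposition~\ref{4prop:H1Zp1}, and in fact does not appear to need the torsion hypothesis on $\image(\theta_i)$ at all --- so your explanation of the role of that hypothesis is off (it is genuinely needed in the paper's argument, not in yours; the cyclo-orientation of $G_i$ already gives torsion-freeness for \emph{all} closed subgroups of $G_i$ by definition). One technical point you should make explicit: the direct-sum decomposition for cohomology of free pro-$p$ products (\cite[Theorem~4.1.4]{nsw:cohm}) is stated for discrete $p$-primary modules, whereas $\Z_p(1)$ is compact. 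You should either pass through the finite quotients $\Z_p(1)/p^k\Z_p(1)$ and take the inverse limit via \cite[Corollary~2.7.6]{nsw:cohm}, or simply observe that the restriction map embeds $H^2(C,\Z_p(1))$ into the product of the $H^2$'s of the Kurosh factors, which already suffices for torsion-freeness.
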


\begin{proof}
Set $G=G_1\ast_{\hat p}G_2$.
First, as stated above, the group $G$ is a Bloch-Kato pro-$p$ group.
Thus, it is enough to show that $H^2(C,\Z_p(1))$ is torsion-free for every $C$.

By \cite[Theorem~4.1.5]{nsw:cohm}, the group $H^1(G,\F_p)$ decomposes as
$H^1(G_1,\F_p)\oplus H^1(G_2,\F_p)$.
Thus, the inclusions $\iota_1,\iota_2$ induce the commutative diagram
\begin{equation}\label{4eq:commutative diagram freeprod}
 \xymatrix{ H^1(G,\Z_p(1))\ar[rr]\ar[d]^{\res_{G,G_i}^1} && H^1(G_1,\F_p)\oplus H^1(G_2,\F_p)\ar@{->>}[d] \\
 H^1(G_i,\Z_p(1))\ar@{->>}[rr] && H^1(G_i,\F_p) }
\end{equation}
for $i=1,2$, where the lower arrow is surjective by hypothesis and by Lemma~\ref{2lem:H2 torfree iff H1 projects}.
In particular, the groups $G_1$ and $G_2$ are properly embedded in $G$ (cf. \cite[Fact~4.5]{claudio:BK}).

Moreover, by \eqref{1eq:free product}, one has that $\tilde\theta|_{G_i}=\theta_i$ for $i=1,2$.
Thus, the restrictions $H^1(\kernel(\tilde\theta),\Z_p)\to H^1(\kernel(\theta_1),\Z_p)$ are surjective.
Also, the monomorphisms $\bar\iota_i\colon G_i/\kernel(\theta_i)\to G/\kernel(\tilde\theta)$ induce 
epimorphisms $H^1(G/\kernel(\tilde\theta),\Z/\tilde q\Z)\twoheadrightarrow H^1(G_i/\kernel(\theta_i),\Z/q_i\Z)$,
with $\image(\theta_i)=1+q_i\Z_p$ and $\image(\tilde\theta)=1+\tilde q\Z_p$,
if the respective orientation is not trivial.
Therefore, Proposition~\ref{4prop:H1Zp1} implies that
\begin{equation}\label{1eq:restriction free product}
 \res_{G,G_i}^1\colon H^1(G,\Z_p(1))\longrightarrow H^1(G_i,\Z_p(1))
\end{equation}
is surjective for $i=1,2$. 
Thus the map $H^1(G,\Z_p(1))\to H^1(G_i,\F_p)$ is surjective for each $i$, and therefore
also the upper arrow of \eqref{4eq:commutative diagram freeprod} is surjective,
and by Lemma~\ref{2lem:H2 torfree iff H1 projects} $H^2(G,\Z_p(1))$ is torsion-free.

Now let $C$ be a closed subgroup of $G$.
Then by the proof of \cite[Theorem~5.2]{claudio:BK}, one has $C=F\ast_{\hat p}C_1\ast_{\hat p}C_2$,
with $F$ a free pro-$p$ group and for every $i$,
\[C_i=\coprod_{r\in\mathcal{R}_i}(C\cap G_i^r)\]
where $\mathcal{R}_i$ is a set of representatives of the coset space $C\backslash G/G_i$. \footnote{
This construction is called the free pro-$p$ product of a {\it sheaf} of pro-$p$ groups,
see \cite[\S~5.1]{claudio:BK} for further details and references.}
Note that every $C\cap G_i^r$ is a subgroup of a group isomorphic to $G_i$, thus it is again cyclo-oriented.
Thus, for every $r'\in\mathcal{R}_i$ one has the commutative diagram
\[ \xymatrix{ H^1(C_i,\Z_p(1))\ar[rr]\ar@{->>}[d]^{\res_{C_i,C\cap G_i^{r'}}} && \bigoplus_{\mathcal{R}_i}H^1(C\cap G_i^r,\F_p)\ar@{->>}[d] \\
 H^1(C\cap G_i^{r'},\Z_p(1))\ar@{->>}[rr] && H^1(C\cap G_i^{r'},\F_p) } \]
with the right-hand side vertical arrow is surjective by \cite[Eq.~(5.1)]{claudio:BK}.
Hence, also $C_i$ (and consequently $C$) is cyclo-oriented. 
\end{proof}

\subsection{Free-by-Demushkin groups}

In the short paper \cite{wurfel:remark}, T.~W\"urfel proves the following theorem.

\begin{thm}\label{4thm:wurfel}
Let $K$ be a field such that $\chr(K)\neq p$ and $\mu_{p^\infty}\subseteq K$, 
and suppose that the maximal pro-$p$ Galois group $G_K(p)$ is finitely generated with one defining relation.
Then there exists a normal closed subgroup $N$ of $G$ which is pro-$p$ free such that the quotient $G/N$
is a Demushkin group, and the inflation map
\begin{equation}\label{4eq:wurfel inflation}
 \xymatrix{ H^2(C/N,\Z/p^n\Z)\ar[rr]^-{\inf_{C,N}^2} && H^2(C,\Z/p^n\Z) }
\end{equation}
is an isomorphism for every closed subgroup $C$ of $G$ containing $N$.
\end{thm}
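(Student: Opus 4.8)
The plan is to exploit the hypothesis $\mu_{p^\infty}\subseteq K$ to reduce everything to a statement about a one-relator cyclo-oriented Bloch-Kato pro-$p$ group with \emph{trivial} orientation, and then to carve out the Demushkin quotient by means of the cup-product form. First I would record the reductions. Since $\mu_{p^\infty}\subseteq K$, the Galois action on the $p$-power roots of unity is trivial, so the arithmetical orientation $\theta\colon G\to\Z_p^\times$ is trivial and every coefficient module $\Z/p^n\Z$ is the trivial module $\mu_{p^n}$. Thus $G=G_K(p)$ is a cyclo-oriented Bloch-Kato pro-$p$ group, and by Corollary~\ref{2cor:kernelab torfree} the abelianization $G/[G,G]=\kernel(\theta)^{\ab}$ is torsion-free; hence in a minimal presentation $1\to R\to F\to G\to1$ with $R=\langle\!\langle\rho\rangle\!\rangle$ the single defining relation $\rho$ lies in $\overline{[F,F]}$ (no power part survives in $F^{\ab}\cong\Z_p^d$), i.e.\ $q_G=0$.

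Next I would analyse the cup product. As $r(G)=1$, equation~\eqref{1eq:number relations} gives $\dim_{\F_p}H^2(G,\F_p)=1$, and since $G$ is Bloch-Kato its cohomology is generated in degree one; thus the bilinear form $b\colon H^1(G,\F_p)\times H^1(G,\F_p)\to H^2(G,\F_p)\cong\F_p$ induced by the cup product is \emph{nonzero}. It is alternating: for $p$ odd this is skew-commutativity (Proposition~\ref{1prop:cup product}), and for $p=2$ it follows from $\mu_4\subseteq K$, which makes every class lift to $\Z/4\Z$-coefficients and so forces $\beta^1=0$, whence $\chi\cup\chi=0$ by Lemma~\ref{1lem:Bockstein and cup}. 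Via the standard identification of $b$ with the dual of the quadratic initial form of $\rho$ in $L_2(F)$, let $V_0\subseteq H^1(G,\F_p)$ be the radical of $b$, of dimension $s$, so that the residue form on the quotient is non-degenerate of even rank $2m=d(G)-s\ge2$. Choosing a minimal generating system $\{x_1,\dots,x_d\}$ adapted to this decomposition, I may assume that the duals of $x_{2m+1},\dots,x_d$ span $V_0$ and that $\rho\equiv[x_1,x_2]\cdots[x_{2m-1},x_{2m}]\pmod{D_3(F)}$.

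I would then set $N$ to be the closed normal subgroup of $G$ generated by the images of $x_{2m+1},\dots,x_d$. The quotient $G/N=\langle\bar x_1,\dots,\bar x_{2m}\mid\bar\rho\rangle$ is one-relator, its first cohomology is the complement of $V_0$ spanned by the duals of $\bar x_1,\dots,\bar x_{2m}$, and the cup pairing restricted there is precisely the non-degenerate residue of $b$; hence $G/N$ is a Demushkin group by definition (with $q=0$), and Theorem~\ref{2thm:Demushkin} applies. For freeness of $N$ one checks $\ccd(N)=1$, i.e.\ $H^2(N,\F_p)=0$: the projection induces an isomorphism $H^2(G/N,\F_p)\xrightarrow{\sim}H^2(G,\F_p)$ (both one-dimensional, the map nonzero because $\bar\rho$ is the non-degenerate Demushkin relator), so the unique relation of $G$ is already accounted for in $G/N$ and $N$ carries no relation; Proposition~\ref{2prop:cohomological dimension prop} then gives that $N$ is free.

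Finally, for the inflation statement I would run the Lyndon--Hochschild--Serre spectral sequence of $1\to N\to C\to C/N\to1$ for each closed $C\supseteq N$, with trivial coefficients $A=\Z/p^n\Z$. Since $N$ is free, $H^s(N,A)=0$ for $s\ge2$, so $E_2^{r,s}=0$ for $s\ge2$, and the seven-term exact sequence reduces the claim to two vanishings: the transgression $H^1(N,A)^{C/N}\to H^2(C/N,A)$ must be zero, and the edge map $H^2(C,A)\to H^1(C/N,H^1(N,A))$ must vanish. Both are governed by the conjugation action of the Demushkin group $C/N$ on $N^{\ab}=H^1(N,A)^\vee$, and \emph{this is the main obstacle}: one must show that the $C/N$-module $\Hom(N,A)$ has no invariants (killing the transgression) and that $H^1(C/N,\Hom(N,A))=0$. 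This is exactly where the special nature of $G_K(p)$ and W\"urfel's analysis enter, since the free generators of $N$ are acted on by $C/N$ in a way dictated by the higher-order part of $\rho$; one exploits that $C/N$ is a closed subgroup of a Demushkin group (hence itself free or Demushkin) to verify the vanishing uniformly in $C$ and in $n$. Granting this, the seven-term sequence yields $\inf\colon H^2(C/N,\Z/p^n\Z)\xrightarrow{\sim}H^2(C,\Z/p^n\Z)$ for every closed $C\supseteq N$, completing the proof.
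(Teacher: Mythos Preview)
The thesis does not prove Theorem~\ref{4thm:wurfel}; it is quoted from W\"urfel's paper \cite{wurfel:remark} and used only to motivate the discussion of free-by-Demushkin groups that follows. So there is no ``paper's own proof'' to compare against; any argument you give is necessarily your own.

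That said, your approach has the right skeleton but two genuine gaps. First, the freeness of $N$. You correctly show that $\inf^2\colon H^2(G/N,\F_p)\to H^2(G,\F_p)$ is an isomorphism (both spaces one-dimensional, the map hits $\chi_1\cup\chi_2\neq0$), but this does \emph{not} yield $H^2(N,\F_p)=0$. Running the Hochschild--Serre spectral sequence with your isomorphism and $\ccd(G/N)=2$ gives $E_\infty^{1,1}=H^1(G/N,H^1(N,\F_p))=0$ and $E_\infty^{0,2}=0$; the latter only says that $d_2^{0,2}\colon H^2(N,\F_p)^{G/N}\hookrightarrow H^2(G/N,H^1(N,\F_p))$ is injective, not that its source vanishes. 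The slogan ``the unique relation is already accounted for in $G/N$'' is not a proof here. One really needs an additional argument (as in W\"urfel's paper) controlling the $G/N$-module structure of $N^{\ab}$, or an independent bound on $\ccd(N)$.

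Second, the inflation isomorphism for arbitrary closed $C\supseteq N$ and all $n$. You yourself flag this as ``the main obstacle'' and then write ``Granting this''---so the proof is incomplete as stated. The claims you would need, namely $H^0(C/N,\Hom(N,\Z/p^n\Z))=0$ and $H^1(C/N,\Hom(N,\Z/p^n\Z))=0$, are far from formal: $N$ is infinitely generated in general, and the $C/N$-action on $N^{\ab}$ is precisely what W\"urfel analyses using the arithmetic input (Hilbert~90-type information for the extensions cut out by $N$). Merely knowing that $C/N$ is free or Demushkin does not force these vanishings. Without this step, neither injectivity nor surjectivity of $\inf^2_{C,N}$ follows.
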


He concludes with the following question: is it always true that such $G_K(p)$ is a {\it free-by-Demushkin group},
i.e., $G$ is decomposable as free pro-$p$ product $G=G/N\ast_{\hat p} F$,
with $F$ a finitely generated free pro-$p$ group, if the map \eqref{4eq:wurfel inflation} is an isomorphism?

In \cite{kocloupavel:freeby}, D.~Kochloukova and P.~Zalesski\u{i} construct a pro-$p$ group
as described in Theorem~\ref{4thm:wurfel} such that \eqref{4eq:wurfel inflation} is an isomorphism,
but which is not a free-by-Demushkin group.
Also they observe that such group is not realizable as maximal pro-$p$ Galois group of any field.
Such group is defined by the presentation
\begin{equation}\label{4eq:kochloupavel presentation}
 G=\left\langle x,y,z\left|z^p[x,y]=1\right.\right\rangle.
\end{equation}
Let $N$ be the normal subgroup of $G$ generated by $z$, and set $D=G/N$.
Then $N$ is an (infinitely generated) free pro-$p$ group, and $D$ is a 2-generated abelian group.
In particular, $D$ is a 2-generated Demushkin group with $q_D=0$.

We want to show that no orientations $\theta\colon G\to\Z_p^\times$ make $(G,\theta)$
a cyclo-oriented pro-$p$ group.
Assume for contradiction that $G$ has a cyclotomic orientation $\theta$.
Since the abelianization $G/[G,G]$ is isomorphic to $\Z_p^2\oplus\Z/p\Z$,
by Remark~\ref{4rem:Gab torsionfree thetatrivial} the orientation $\theta$ can not be trivial.
Also, 
\[1=\theta\left(z^p[x,y]\right)=\theta\left(z^p\right)\theta([x,y])=\theta(z)^p,\]
thus $\theta(z)=1$. 
Let $f\colon G\to\Z_p(1)$ be the continuous crossed homomorphism representing a lift of
the dual of $z$ in $H^1(G,\F_p)$, i.e., $f(z)=1$ and $f(x),f(y)=0$
(such $f$ exists by Proposition~\ref{4prop:H1Zp1} and Fact~\ref{4fact:crossed homomorphisms and relations},
as we are assuming that $\theta$ is cyclotomic).
Then, using \eqref{4eq:c on commutators}, one obtains
\begin{eqnarray*}
 f\left(z^p[x,y]\right)&=& f(z^p)+\theta(z)f([x,y]) \\
 &=& pf(z)+(\theta(x)-1)f(y)-(\theta(y)-1)f(x) \\ &=&p.
\end{eqnarray*}
But $f(z^p[x,y])=f(1)=0$, a contradiction.

\section{The cyclotomic fibre product}

The other basic operation involved in the ETC is the {\it cyclotomic fibre product} induced by
the arithmetic orientation of a maximal pro-$p$ Galois group.

\begin{defi}\label{4defi:cyclotomic fibre product}
Let $(G,\theta)$ be an oriented pro-$p$ group, and let $Z$ be a pro-$p$-cyclic group, i.e., $Z\simeq\Z_p$.
The {\bf cyclotomic fibre product} $\tilde G=G\ltimes_\theta Z$ is the fibre product induced by
the split short exact sequence
\begin{equation}\label{4eq:ses cyclotomic fibreprod}
 \xymatrix{ 1\ar[r] & Z\ar[r] & \tilde G\ar[r] & G\ar[r] & 1 }
\end{equation}
with $Z\simeq\Z_p(1)$ as $\ZpG$-module.
The group $\tilde G$ comes equipped with an orientation $\tilde\theta\colon\tilde G\to\Z_p^\times$
such that $\tilde\theta|_G=\theta$ and $\tilde\theta|_Z\equiv\mathbf{0}$.
\end{defi}

The class of maximal pro-$p$ Galois groups is closed under iterated cyclotomic fibre products,
as shown by the following.

\begin{prop}\label{4prop:Galois fibre prod}
 Let $K$ be a field containing a primitive $p$-th root of unity with cyclotomic orientation
$\theta\colon G_K(p)\to\Z_p^\times$, and let $A$ be an abelian free pro-$p$ group, i.e., $A\simeq\Z_p^m$,
for some $m\geq1$.
Then the cyclotomic fibre product $G_K(p)\ltimes_\theta A$ is realizable as maximal pro-$p$ Galois group $G_L(p)$
for some field $L\supset K$.
\end{prop}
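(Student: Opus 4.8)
The plan is to realize the cyclotomic fibre product explicitly as the maximal pro-$p$ Galois group of an iterated formal Laurent series field over $K$, after first reducing to the rank-one case.

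\emph{Reduction to $m=1$.} Since $A\simeq\Z_p^m$ and the cyclotomic fibre product of Definition~\ref{4defi:cyclotomic fibre product} is built by iterating the rank-one construction, I would write
\[ G_K(p)\ltimes_\theta A \;\cong\; \bigl(G_K(p)\ltimes_\theta \Z_p(1)^{m-1}\bigr)\ltimes_{\tilde\theta}\Z_p(1). \]
Hence it suffices to treat the case $m=1$, provided that the field $L$ produced there again contains $\mu_p$ and carries an arithmetic orientation coinciding with $\tilde\theta$ on the fibre product; this is precisely what lets the induction proceed one further step.

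\emph{The rank-one field.} For $m=1$ I would set $L=K\dblaurl t\dblaurr$, the field of formal Laurent series over $K$. As $K$ contains a primitive $p$-th root of unity, $\chr(K)\neq p$ and $\mu_p\subseteq K\subseteq L$, so $L\supset K$ satisfies the hypotheses. The field $L$ is complete, hence Henselian, for the $t$-adic valuation $v$, with residue field $K$ and value group $\Z$.

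\emph{The structure theorem.} Since the residue characteristic differs from $p$, every finite $p$-extension of $L$ is tamely ramified, and the valuation-theoretic description of the maximal pro-$p$ Galois group of a Henselian field (cf.\ \cite{ek98:abeliansbgps}, \cite{efrat:libro}) provides a short exact sequence
\[ \xymatrix{ 1\ar[r] & I\ar[r] & G_L(p)\ar[r] & G_K(p)\ar[r] & 1 } \]
in which the inertia group $I$ is pro-$p$-cyclic with $I\simeq\Z_p(1)$ (the value group being $\Z$), and $G_K(p)$ acts on $I$ through the cyclotomic character, i.e.\ via the orientation $\theta$; the coefficient field $K\hookrightarrow L$ furnishes a splitting. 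Thus $G_L(p)\cong G_K(p)\ltimes_\theta\Z_p(1)$, which by Definition~\ref{4defi:cyclotomic fibre product} is exactly the cyclotomic fibre product. I would then verify that the arithmetic orientation of $G_L(p)$ restricts to $\theta$ on the split copy of $G_K(p)$ and is trivial on $I$ — the $p$-power roots of unity generate unramified extensions, so inertia fixes them — whence it agrees with $\tilde\theta$, closing the induction. Equivalently, taking $L=K\dblaurl t_1\dblaurr\cdots\dblaurl t_m\dblaurr$ all at once, a Henselian field with residue field $K$ and value group $\Z^m$, realizes $G_K(p)\ltimes_\theta\Z_p(1)^m=G_K(p)\ltimes_\theta A$ as $G_L(p)$.

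\emph{Main obstacle.} The hard part will be the structure theorem: pinning down the inertia subgroup as $\Z_p(1)$ with precisely the $\theta$-twisted module structure demanded by Definition~\ref{4defi:cyclotomic fibre product}, and checking that the inertia-by-residue sequence splits compatibly with the orientations. This is exactly where the input from valuation theory (tame ramification, the unramified/ramified decomposition, and the coefficient field giving the section) is needed; once it is in place, matching the data with the definition of the cyclotomic fibre product and running the induction are routine.
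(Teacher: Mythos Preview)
Your argument is correct and is the standard construction underlying this fact. The paper itself does not prove the proposition at all: it simply states that ``the above result is well known, and it can be deduced, e.g., from \cite[\S~1, p.~512]{engler:etc}.'' Your Laurent-series realization $L=K\dblaurl t_1\dblaurr\cdots\dblaurl t_m\dblaurr$, together with the tame-ramification structure theorem for Henselian fields of residue characteristic $\neq p$, is exactly the content behind that citation, so there is nothing to compare --- you have supplied the proof the paper omits.
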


The above result is well known, and it can be deduced, e.g., from \cite[\S~1, p.~512]{engler:etc}.
Also, it is clear that the cyclotomic fibre product ``enlarges'' the $\theta$-centre of an oriented pro-$p$ group.
In particular, one has the following elementary fact.

\begin{fact}\label{4fact:fribre product elementary}
If $(G,\theta)$ is an oriented pro-$p$ group and $Z$ is as above,
then for $\tilde G=G\ltimes_\theta Z$ one has $\Zen_{\tilde\theta}(\tilde G)=\Zen_\theta(G)\times Z$.
\end{fact}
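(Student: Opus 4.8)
The plan is to compute the $\tilde\theta$-centre directly from the definition, using that every element of $\tilde G = G \ltimes_\theta Z$ is uniquely a product $gz$ with $g \in G$ and $z \in Z$, and that $\tilde\theta(gz) = \theta(g)$ since $\tilde\theta|_Z$ is trivial; in particular $\kernel(\tilde\theta) = \kernel(\theta)\cdot Z$. The structural observation I would record first is that, because $Z \simeq \Z_p(1)$ as a $\ZpG$-module, conjugation is $gzg^{-1} = z^{\theta(g)}$ for $g \in G$, $z \in Z$. Consequently any $h \in \kernel(\theta)$ satisfies $hzh^{-1} = z^{\theta(h)} = z$, i.e.\ elements of $\kernel(\theta)$ commute with all of $Z$. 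This single fact drives both inclusions.

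For the inclusion $\Zen_\theta(G) \times Z \subseteq \Zen_{\tilde\theta}(\tilde G)$ I would treat the two factors separately and then combine. First, for $z \in Z$ and any $\tilde g = gz' \in \tilde G$, abelianness of $Z$ gives $\tilde g z \tilde g^{-1} = gzg^{-1} = z^{\theta(g)} = z^{\tilde\theta(\tilde g)}$, so $Z \subseteq \Zen_{\tilde\theta}(\tilde G)$. Second, for $h \in \Zen_\theta(G)$ one has $h \in \kernel(\theta)$, hence $h$ commutes with $Z$ by the observation above, so $\tilde g h \tilde g^{-1} = ghg^{-1} = h^{\theta(g)} = h^{\tilde\theta(\tilde g)}$, giving $h \in \Zen_{\tilde\theta}(\tilde G)$. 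Since $\Zen_{\tilde\theta}(\tilde G)$ is a subgroup, since $\Zen_\theta(G) \subseteq G$ meets $Z$ trivially, and since the two factors commute, their product in $\tilde G$ is an internal direct product $\Zen_\theta(G) \times Z$ contained in $\Zen_{\tilde\theta}(\tilde G)$.

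For the reverse inclusion I would take an arbitrary $\tilde h \in \Zen_{\tilde\theta}(\tilde G)$ and write $\tilde h = hz$ with $h \in G$, $z \in Z$. From $\tilde h \in \kernel(\tilde\theta)$ I get $\theta(h) = 1$, so $h \in \kernel(\theta)$. As $z \in \Zen_{\tilde\theta}(\tilde G)$ by the first part and $\Zen_{\tilde\theta}(\tilde G)$ is a group, $h = \tilde h z^{-1}$ also lies in $\Zen_{\tilde\theta}(\tilde G)$. Restricting the defining twisted-conjugation identity to test elements $\tilde g = g \in G$, and using that conjugation inside the subgroup $G$ of $\tilde G$ agrees with conjugation in $G$, yields $ghg^{-1} = h^{\theta(g)}$ for all $g \in G$. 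Hence $h \in \Zen_\theta(G)$ and $\tilde h = hz \in \Zen_\theta(G) \times Z$, closing the argument.

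The computation is essentially routine; the point requiring care, and the one I would flag as the crux, is that $Z$ itself lies in the $\tilde\theta$-centre even though $Z$ is \emph{not} central in $\tilde G$. This works precisely because the $\theta$-centre asks for twisted conjugation $z \mapsto z^{\tilde\theta(\tilde g)}$ rather than genuine centrality, and the twist $\tilde\theta(\tilde g) = \theta(g)$ matches the action of $G$ on $Z = \Z_p(1)$ by construction. The complementary subtlety is the commutation of $\kernel(\theta)$ with $Z$, which is exactly what makes the decomposition direct rather than merely a product.
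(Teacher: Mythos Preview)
Your proof is correct. The paper does not give a proof for this statement; it is stated as an ``elementary fact'' following the remark that the cyclotomic fibre product enlarges the $\theta$-centre, and your argument is precisely the direct verification one would expect the reader to supply.
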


As Theorem~\ref{4thm:Zcentresplit} will show, every $\theta$-centre of oriented Bloch-Kato pro-$p$ groups
can be obtained as the result of a cyclotomic fibre products.

\subsection{The $\theta$-centre}

Let $C\leq G$ be a closed subgroup of a pro-$p$ group $G$.
Then $C$ is called {\it isolated} if for every element $g\in G$
there exists $m\geq 1$ such that $g^{p^m}\in C$ if, and only if, $g\in C$.
It follows that a closed normal subgroup $N\vartriangleleft G$ is isolated if, and only if, $G/N$ is torsion free.

\begin{prop}\label{4prop:isoZtheta}
Let $(G,\theta)$ be an oriented torsion free Bloch-Kato pro-$p$ group. 
Then $\Zen_\theta(G)$ is an isolated subgroup of $G$.
\end{prop}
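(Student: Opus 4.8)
The plan is to verify the isolatedness condition directly: assuming $g^{p^m}\in\Zen_\theta(G)$ I must deduce $g\in\Zen_\theta(G)$. A trivial induction reduces this to the case $m=1$, since from $(g^{p^{m-1}})^p=g^{p^m}\in\Zen_\theta(G)$ the case $m=1$ gives $g^{p^{m-1}}\in\Zen_\theta(G)$, and one descends to $g$. So fix $g\neq1$ with $z:=g^p\in\Zen_\theta(G)$; as $G$ is torsion free, $z\neq1$ and $\overline{\langle z\rangle}\cong\Z_p$. First I would compute $\theta(g)$. Since $z\in\Zen_\theta(G)$ we have $gzg^{-1}=z^{\theta(g)}$, while $z=g^p$ obviously commutes with $g$; hence $z=z^{\theta(g)}$, so $z^{\theta(g)-1}=1$ and torsion freeness forces $\theta(g)=1$, i.e.\ $g\in\kernel(\theta)$. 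In particular $g$ centralizes $\Zen_\theta(G)$, because $ghg^{-1}=h^{\theta(g)}=h$ for $h\in\Zen_\theta(G)$. This step is uniform in $p$; for $p=2$ it is precisely what rules out the dangerous possibility $\theta(g)=-1$.

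The core is to show ${}^xg=g^{\theta(x)}$ for every $x\in G$; combined with $g\in\kernel(\theta)$ this is exactly the assertion $g\in\Zen_\theta(G)$. Fix $x$ and put $w:={}^xg$ and $v:=g^{\theta(x)}\in\overline{\langle g\rangle}$. Conjugating $z=g^p$ by $x$ and using ${}^xz=z^{\theta(x)}$ gives $w^p=z^{\theta(x)}=v^p$. A one-line conjugation computation, using that $z\in\Zen_\theta(G)$ and $\theta(g)=1$, shows $wzw^{-1}=z^{\theta(g)}=z$, so both generators of $M:=\overline{\langle g,w\rangle}$ commute with $z$; thus $z\in\Zen(M)$ is a nontrivial central element. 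I will argue below that $M$ is abelian. Granting that, $w$ and $v$ commute, whence $(wv^{-1})^p=w^pv^{-p}=1$, and torsion freeness yields $w=v$, i.e.\ ${}^xg=g^{\theta(x)}$, completing the proof.

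It remains to prove the key claim: a $2$-generated, torsion free, Bloch--Kato pro-$p$ group $M$ with $\Zen(M)\neq1$ is abelian. Suppose not; then $d(M)=2$. A non-abelian free pro-$p$ group has trivial centre, so $M$ is not free and $\ccd(M)\geq2$ by Proposition~\ref{2prop:cohomological dimension prop}. For $p$ odd, Corollary~\ref{1cor:cd d r for BK} gives $\ccd(M)\leq d(M)=2$, hence $\ccd(M)=d(M)$ and the epimorphism \eqref{2eq:BK epimorphism exterior algebra} is an isomorphism; in degree $2$ this says the cup-product map $\wedge_2(\cup)$ is injective, so $M$ is powerful by Theorem~\ref{3thm:peter thomas}, and therefore uniform (being torsion free). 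But then $\log(M)$ is a $2$-dimensional $\Z_p$-Lie algebra (Proposition~\ref{3prop:remarklocpwfetLie}) containing the nonzero central element $\log(z)$; since the only $2$-dimensional Lie algebra with nontrivial centre is abelian, $\log(M)$, and hence $M$, is abelian --- a contradiction.

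The main obstacle is the case $p=2$, where Corollary~\ref{1cor:cd d r for BK} is unavailable: the exterior bound may fail because $\chi\cup\chi=\beta^1(\chi)$ (Lemma~\ref{1lem:Bockstein and cup}) need not vanish, so the cohomological-dimension estimate no longer controls $\ccd(M)$. Here I would instead analyse the central extension $1\to\overline{\langle z\rangle}\to M\to M/\overline{\langle z\rangle}\to1$, noting that $M/\overline{\langle z\rangle}$ is generated by two involutions and is therefore a quotient of the pro-$2$ infinite dihedral group. The delicate point --- and where I expect the real work to lie --- is to show that torsion freeness together with the quadraticity of $H^\bullet(M,\F_2)$ forces this quotient to be abelian, hence $[M,M]=1$; this again yields $M$ abelian and closes the argument.
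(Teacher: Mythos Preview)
For $p$ odd your proof is correct. Both you and the paper reduce to the same key lemma---a $2$-generated torsion-free Bloch--Kato pro-$p$ group with nontrivial centre is abelian---which the paper obtains by a single citation of Theorem~\ref{3thm:equivalence theta-abelian}, while you give the explicit route $\ccd(M)=d(M)\Rightarrow M$ powerful $\Rightarrow$ uniform $\Rightarrow$ $2$-dimensional $\Z_p$-Lie algebra with nonzero centre $\Rightarrow$ abelian. The organisation differs: the paper first treats $\theta\equiv\mathbf{1}$ (where $\Zen_\theta(G)=\Zen(G)$) and then reduces the nontrivial case to this applied to $K=\kernel(\theta)$; you instead verify ${}^xg=g^{\theta(x)}$ directly for each $x$ via $M=\overline{\langle g,{}^xg\rangle}$. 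Since $\theta(g)=\theta({}^xg)=1$, your $M$ already lies in $K$, so both approaches end up invoking the key lemma only for subgroups of $K$; your version is arguably more direct.

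For $p=2$ your dihedral-quotient sketch is not the paper's method and does not obviously close. The paper proceeds in layers: with $K=\kernel(\theta)$ and $\image(\theta)\simeq\Z_2$, first $\Zen(K)$ is isolated in $K$ by the trivial-orientation case; then, since $\Zen(K)$ is a torsion-free abelian pro-$2$ group with a $\Z_2\dbl G/K\dbr$-action and
\[
\Zen_\theta(G)=\bigl\{z\in\Zen(K)\ \bigm|\ (g-\theta(g)\,\iid)\,z=0\ \text{for all }g\in G\bigr\},
\]
$\Zen_\theta(G)$ is an intersection of kernels of endomorphisms of a torsion-free $\Z_2$-module, hence pure (isolated) in $\Zen(K)$, and so in $K$; finally one rules out $g\notin K$ by exactly your $\theta(g)=1$ computation. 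This module-theoretic middle step is what your approach lacks. Note, however, that it does not eliminate the dependence on the key lemma: the paper still needs it (for subgroups of $K$) and simply cites Theorem~\ref{3thm:equivalence theta-abelian}, so your identification of that lemma as the essential difficulty at $p=2$ is on target.
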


\begin{proof}
Assume first that $\theta$ is trivial.
Then $\Zen_\theta(G)$ is the centre $\Zen(G)$.
Suppose there exists $x\in G\smallsetminus \Zen(G)$ and $m\geq 1$ such that $x^{p^m}\in\Zen(G)$.
Without loss of generality, we may assume that $m=1$, i.e., $x^p=z\in\Zen(G)$.
As $G$ is torsion free, one has $z\neq1$.
Since $x\not\in\Zen(G)$, there exists an element $g\in G$ such that $gxg^{-1}\neq x$.
Set $C$ to be the closed subgroup of $G$ generated by $x$ and $g$.
As $C$ is a Bloch-Kato pro-$p$ group and $z\in\Zen(C)$,
$C$ must be abelian by Theorem~\ref{3thm:equivalence theta-abelian}.
Hence $gxg^{-1}=x$, a contradiction, and this yields the claim.

Assume now that $\image(\theta)\simeq\Z_p$, and let $p$ be odd.
As $\Zen_\theta(G)$ is a closed normal subgroup of $G$, it suffices to show
that $G/\Zen_\theta(G)$ is torsion free.
Since $\Zen_\theta(G)=\Zen(\kernel(\theta))$, the $\theta$-centre of $G$ is isolated in $\kernel(\theta)$.
Thus $\kernel(\theta)/\Zen_\theta(G)$ is torsion free.
As $G/\kernel(\theta)\simeq\image(\theta)$ is torsion free, this yields the claim in this case.

Now let $p=2$, and put $K=\kernel(\theta)$.
Since $\image(\theta)\simeq\Z_2$, $\Zen(K)$ is isolated in $K$.
Moreover, $\Zen(K)$ is a profinite left $\Z_2\dbl G/K\dbr$-module which is a torsion free abelian pro-$2$ group.
As
\begin{equation}\label{eq:tZ2}
\Zen_\theta(G)=\left\{z\in\Zen(K)\:\left|\: (g-\theta(g)\iid_K)z=0\text{ for all }g\in G\right.\right\},
\end{equation}
$\Zen_\theta(G)$ is an isolated subgroup of $\Zen(K)$.
Hence $\Zen_\theta(G)$ is an isolated subgroup of $K$.

Suppose $\Zen_\theta(G)$ is not isolated in $G$.
Then there exists $ x \in G\smallsetminus\Zen_\theta(G)$ such that
$ x ^2=z\in\Zen_\theta(G)$ and $z\not=1$. As $ x \not\in\Zen_\theta(G)$,
the previously mentioned remark shows that $ x \not\in K$,
i.e., $\theta(x)\not=1$. Hence
$z=xzx^{-1}=z^{\theta(x)}$.
As every element in $\Z_2^\times$ acts fixed point freely on the closed subgroup generated by $z$,
one concludes that $\theta(x)=1$, a contradiction, and this yields the claim.
\end{proof}

The following fact is straightforward. 

\begin{fact}\label{4fact:sur}
Let $\phi_\bullet\colon A_\bullet\to B_\bullet$ be a morphism
of positively graded $\F_p$-algebras such that
$\phi_1\colon A_1\to B_1$ is surjective, and that $B_\bullet$ is generated
by $B_1$. Then $\phi_n\colon A_n\to B_n$ is surjective for all $n\geq 0$.
\end{fact}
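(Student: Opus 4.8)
The plan is to exploit directly the two hypotheses: that $\phi_\bullet$ is multiplicative and that $B_\bullet$ is generated in degree one. First I would dispose of the degenerate case $n=0$. Since $\phi_\bullet$ is a morphism of $\F_p$-algebras it sends the unit to the unit, and as both $A_0$ and $B_0$ coincide with $\F_p\cdot 1$, the map $\phi_0$ is the identity, hence surjective.

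For $n\geq 1$ the key observation is that the hypothesis ``$B_\bullet$ is generated by $B_1$'' means precisely that $B_n$ is spanned over $\F_p$ by products $b_1\cdots b_n$ with each $b_i\in B_1$; this is exactly the translation, for a graded algebra, of being generated in degree one. Thus it suffices to show that every such monomial lies in $\image(\phi_n)$, since $\phi_n$ is $\F_p$-linear and the image of a linear map is a subspace.

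So I would take a monomial $b_1\cdots b_n\in B_n$ with $b_i\in B_1$. Using the surjectivity of $\phi_1$, for each index $i$ I pick $a_i\in A_1$ with $\phi_1(a_i)=b_i$. The product $a_1\cdots a_n$ lies in $A_n$ because $A_\bullet$ is graded, and since $\phi_\bullet$ respects both the multiplication and the grading one has
\[
\phi_n(a_1\cdots a_n)=\phi_1(a_1)\cdots\phi_1(a_n)=b_1\cdots b_n .
\]
Hence every generating monomial of $B_n$ lies in the image of $\phi_n$, and by $\F_p$-linearity $\phi_n$ is surjective for every $n\geq 1$.

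There is no genuine obstacle here; the statement is elementary and the only points deserving attention are the correct reading of the generation hypothesis --- namely that it forces $B_n=\spa_{\F_p}\{\,b_1\cdots b_n\mid b_i\in B_1\,\}$ --- together with the minor bookkeeping that the lifted product $a_1\cdots a_n$ genuinely sits in the graded piece $A_n$, which is automatic from the grading of $A_\bullet$.
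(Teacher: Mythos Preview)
Your proof is correct. The paper does not actually supply a proof of this Fact---it simply records it as ``straightforward''---and your argument is exactly the standard justification one would give: lift each factor of a degree-one monomial via the surjection $\phi_1$ and use multiplicativity of $\phi_\bullet$.
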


\begin{prop}\label{4prop:norm}
Let $G$ be a Bloch-Kato pro-$p$ group of finite cohomological dimension,
and let $Z$ be a closed normal subgroup of $G$ isomorphic to $\Z_p$ such that $G/Z$ is torsion-free.
Then $Z\not\subseteq\Phi(G)$.
\end{prop}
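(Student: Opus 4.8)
The plan is to argue by contradiction: assume $Z\subseteq\Phi(G)$ and show that the quotient $Q=G/Z$ would then be forced to have infinite cohomological dimension, which is incompatible with $Q$ being torsion-free.

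First I would translate the Frattini hypothesis into cohomology. Since $H^1(G,\F_p)=\Hom(G,\F_p)$ factors through $G/\Phi(G)$, the restriction $\res^1_{G,Z}\colon H^1(G,\F_p)\to H^1(Z,\F_p)$ is the zero map precisely when $Z\subseteq\Phi(G)$. As $G$ is pro-$p$ it acts trivially on $H^\bullet(Z,\F_p)$ (the group $\F_p^\times$ has order prime to $p$), and $\ccd(Z)=1$ because $Z\simeq\Z_p$ is free pro-$p$ of rank one (Proposition~\ref{2prop:cohomological dimension prop}); hence the Lyndon--Hochschild--Serre spectral sequence of $1\to Z\to G\to Q\to1$ has only the two rows $t=0,1$ and collapses into a Gysin-type long exact sequence $\cdots\to H^{j}(Q,\F_p)\xrightarrow{\cup t}H^{j+2}(Q,\F_p)\xrightarrow{\inf}H^{j+2}(G,\F_p)\to H^{j+1}(Q,\F_p)\xrightarrow{\cup t}\cdots$, in which $d_2$ is cup product with the transgression class $t\in H^2(Q,\F_p)$. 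The five term exact sequence of Proposition~\ref{1prop:5tes} shows that $\res^1_{G,Z}=0$ forces the transgression to be injective, so $t\neq0$ and $\inf\colon H^1(Q,\F_p)\xrightarrow{\sim}H^1(G,\F_p)$.

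Next I would feed in the Bloch--Kato hypothesis. The ring $H^\bullet(G,\F_p)$ is quadratic, hence generated in degree $1$; since $\inf$ is an isomorphism in degree $1$ and is a morphism of graded $\F_p$-algebras, Fact~\ref{4fact:sur} gives that $\inf\colon H^\bullet(Q,\F_p)\to H^\bullet(G,\F_p)$ is surjective in every degree. Surjectivity of $\inf$ means the connecting maps $H^\bullet(G,\F_p)\to H^{\bullet-1}(Q,\F_p)$ in the Gysin sequence vanish, so $\cup t\colon H^{j}(Q,\F_p)\to H^{j+2}(Q,\F_p)$ is injective for every $j\geq0$. In particular $t$ is not nilpotent: starting from $1\in H^0(Q,\F_p)$ and applying injectivity repeatedly yields $t^{k}\neq0$ for all $k\geq1$, whence $H^{2k}(Q,\F_p)\neq0$ for all $k$ and $\ccd(Q)=\infty$. (Using $\ccd(G)<\infty$ one also reads off that $\cup t$ is an isomorphism in all large degrees, so the cohomology of $Q$ is eventually $2$-periodic.)

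Finally I would derive the contradiction from torsion-freeness of $Q$. A non-nilpotent class of positive even degree, together with the eventual $2$-periodicity just obtained, cannot occur in the $\F_p$-cohomology of a torsion-free pro-$p$ group: such a group has no nontrivial elementary abelian $p$-subgroup, and by the detection of non-nilpotent cohomology on elementary abelian subgroups every positive-degree class of $H^\bullet(Q,\F_p)$ must then be nilpotent. This contradicts $t^k\neq0$, and therefore $Z\not\subseteq\Phi(G)$. The hard part is exactly this last step, namely making rigorous that a non-nilpotent (equivalently, periodic) element of $H^2(Q,\F_p)$ forces torsion in $Q$; the reduction via the Gysin sequence and Fact~\ref{4fact:sur} is routine, so the entire difficulty is concentrated in ruling out $\ccd(Q)=\infty$ for the torsion-free group $Q$. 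It is worth noting that the Bloch--Kato hypothesis is essential here, since without it (for instance for a Heisenberg pro-$p$ group) $\inf$ need not be surjective and no such contradiction arises.
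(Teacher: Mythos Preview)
Your spectral-sequence/Gysin setup and the use of the Bloch--Kato hypothesis via Fact~\ref{4fact:sur} match the paper exactly; the divergence is in how torsion-freeness of $Q=G/Z$ enters. You derive $\ccd(Q)=\infty$ first and then try to contradict torsion-freeness via a Quillen-type ``detection on elementary abelian subgroups'' at the end. That last step is a genuine gap: Quillen's $F$-isomorphism is a theorem for finite (or compact Lie) groups, and its profinite analogues require finiteness hypotheses on the cohomology ring that you have not verified for $Q$. You flag this yourself as the hard part, and indeed there is no short route to making it rigorous in this generality.

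The paper sidesteps the issue by using torsion-freeness \emph{before} the spectral-sequence argument rather than after. Since $Z\simeq\Z_p$ has $\ccd(Z)=1$ and $H^1(U,\F_p)\simeq\F_p$ for every open $U\leq Z$, \cite[Theorem~3.3.9]{nsw:cohm} gives $\text{vcd}(G/Z)=d-1$; then Serre's theorem \cite[Prop.~3.3.5]{nsw:cohm} (a torsion-free profinite group with finite virtual cohomological dimension has $\ccd=\text{vcd}$) yields $\ccd(Q)=d-1<\infty$. Now the $E_2$-page is confined to the rectangle $\{0,\dots,d-1\}\times\{0,1\}$, the corner term $E_2^{d-1,1}=H^{d-1}(Q,\F_p)\neq0$ receives and emits no nontrivial differential, so $E_\infty^{d-1,1}\neq0$, contradicting $E_\infty^{\bullet,1}=0$ (which is exactly what surjectivity of inflation gives, as you argued). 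In your Gysin language: once $\ccd(Q)=d-1$ is known, injectivity of $\cup t$ at degree $d-1$ would force $0\neq H^{d-1}(Q,\F_p)\hookrightarrow H^{d+1}(Q,\F_p)=0$. So your proof becomes complete if you replace the Quillen step by these two citations from \cite{nsw:cohm}.
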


\begin{proof}
Let $d=\ccd(G)$.
As $Z\simeq\Z_p$, one has $\ccd(Z)=1$ and $H^1(U,\F_p)\simeq \F_p$ for all open subgroups $U$ of $Z$.
Thus, the {\it virtual} cohomological dimension of $G/Z$ is $\text{vcd}(G/Z)=d-1$
(cf. \cite[Theorem~3.3.9]{nsw:cohm}).
Since $G/Z$ is torsion-free one has $\ccd(G/Z)<\infty$,
hence \cite[Prop.~3.3.5]{nsw:cohm} implies that $\ccd(G/Z)=d-1$.

Suppose that $Z\subseteq \Phi(G)$.
Then $\inf_{G,Z}^1\colon H^1(G/Z,\F_p)\to H^1(G,\F_p)$ is an isomorphism.
Since $G$ is a Bloch-Kato pro-$p$ group, Fact~\ref{4fact:sur} implies that
\begin{equation}\label{4eq:norm}
\text{inf}_{G,Z}^n\colon H^n(G/Z,\F_p)\longrightarrow H^n(G,\F_p)
\end{equation} 
is surjective for all $n\geq 0$.
In particular, if $(E^{s,t}_r,d_r)$ denotes the Hochschild-Serre spectral sequence associated to
the extension of pro-$p$ groups $Z\to G\to G/Z$ with $\F_p$-coefficients,
then $E^{s,t}_\infty$ must be concentrated on the bottom row.
However, $ H^1(Z,\F_p)$ is a trivial $G/Z$-module isomorphic to $\F_p$, and thus 
\begin{equation}\label{eq:e2}
E^{d-1,1}_2=H^{d-1}\left(G/Z,H^1(Z,\F_p)\right)\neq0.
\end{equation}
For $r\geq 2$ the pages of the spectral sequence $(E_r^{s,t},d_r)$ are concentrated in the rectangle 
$\{0,\ldots,d-1\}\times\{0,1\}$. Thus the place $(d-1,1)$ can never be hit by a differential non-trivially,
and $d_r^{d-1,1}=0$ for all $r\geq 2$.
Hence $E^{d-1,1}_\infty=E^{d-1,1}_2\not=0$. A contradiction, and this yields the claim.
\end{proof}

Proposition~\ref{4prop:norm} has the following consequence.

\begin{prop}\label{4prop:split}
Let $G$ be a finitely generated Bloch-Kato pro-$p$ group, and let $Z$ be a closed normal subgroup of $G$
isomorphic to $\Z_p$ such that $G/Z$ is torsion-free.
Then there exists a $Z$-complement in $G$, i.e., the extension of pro-$p$ groups
$1\to Z\to G\to G/Z\to 1$ splits.
\end{prop}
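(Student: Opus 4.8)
The plan is to exhibit an explicit complement to $Z$ and then force it to meet $Z$ trivially by invoking Proposition~\ref{4prop:norm} a second time, now for a suitable closed subgroup of $G$. First I would apply Proposition~\ref{4prop:norm} to $G$ itself: its hypotheses hold, $G$ being a finitely generated Bloch-Kato pro-$p$ group with $Z\cong\Z_p$ normal, $G/Z$ torsion-free (and $G$ of finite cohomological dimension, the one hypothesis I return to at the end). This yields $Z\nsubseteq\Phi(G)$. Since $Z\cong\Z_p$, the intersection $Z\cap\Phi(G)$ is a proper closed subgroup of $Z$, hence contained in $Z^p$; therefore a topological generator $z$ of $Z$ satisfies $z\notin\Phi(G)$. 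As $\bar z\neq 0$ in $G/\Phi(G)$, I would complete it to a minimal generating system $\{z,x_1,\dots,x_{d-1}\}$ of $G$, where $d=d(G)$, and set $H=\overline{\langle x_1,\dots,x_{d-1}\rangle}$.

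Next I would verify that $H$ is a complement. Because $Z=\overline{\langle z\rangle}$ is normal and $Z$ together with $H$ contains a generating system, $G=ZH$, so the restriction to $H$ of the projection $\pi\colon G\to Q:=G/Z$ is surjective. A Frattini-quotient count gives $\dim_{\F_p}Q/\Phi(Q)=\dim_{\F_p}G/(\Phi(G)Z)=d-1$, since $\bar z$ spans the one-dimensional subspace $\Phi(G)Z/\Phi(G)$ of $G/\Phi(G)$; hence $d(Q)=d-1=d(H)$. A surjection of finitely generated pro-$p$ groups between groups of equal generator rank induces a surjection of equal-dimensional $\F_p$-vector spaces on Frattini quotients, thus an isomorphism; consequently $\ker(\pi|_H)=H\cap Z\subseteq\Phi(H)$.

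Finally I would rule out $H\cap Z\neq 1$. The subgroup $H$ is again Bloch-Kato, since a closed subgroup of a Bloch-Kato pro-$p$ group is Bloch-Kato, and it is finitely generated; moreover $N:=H\cap Z\clno H$ because $Z\clno G$. If $N\neq 1$, then $N$ is a nontrivial closed subgroup of $Z\cong\Z_p$, hence $N\cong\Z_p$, and $H/N\cong Q$ is torsion-free. Applying Proposition~\ref{4prop:norm} to the pair $(H,N)$ would then give $N\nsubseteq\Phi(H)$, contradicting the conclusion of the previous paragraph. Therefore $H\cap Z=1$, so $H\cong Q$ maps isomorphically onto $G/Z$ and the extension $1\to Z\to G\to Q\to 1$ splits.

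The step I expect to be delicate is the last one: applying Proposition~\ref{4prop:norm} to $H$ requires $H$ to inherit all of that proposition's hypotheses, and the only nonformal one is finiteness of the cohomological dimension. This is harmless as $\ccd(H)\le\ccd(G)$, but it is precisely the hypothesis that must be carried along from $G$; it is the condition whose presence I would want to make explicit. The other point needing care is that $N$ must be a genuinely nontrivial normal subgroup \emph{isomorphic to} $\Z_p$ and sitting inside $\Phi(H)$ before the contradiction applies — which is exactly why the Frattini count of the middle step, forcing $d(H)=d(Q)$ and hence $N\subseteq\Phi(H)$, is indispensable rather than a formality.
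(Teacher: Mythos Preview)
Your argument is correct and takes a genuinely different route from the paper's. The paper proceeds by an iterative descent: from $Z\not\subseteq\Phi(G)$ it chooses a maximal open subgroup $C_1$ with $C_1Z=G$ and $C_1\cap Z=Z^p$, applies Proposition~\ref{4prop:norm} again to the pair $(C_1,Z^p)$, obtains $C_2\subset C_1$ of index $p$ with $C_2\cap Z=Z^{p^2}$, and so on; the intersection $\bigcap_{m\ge1} C_m$ is the desired complement. You instead construct the candidate complement $H$ in one stroke from a minimal generating set extending a generator of $Z$, and invoke Proposition~\ref{4prop:norm} a single additional time---now for $(H,H\cap Z)$---to force $H\cap Z=1$ by contradiction. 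The paper's approach uses only elementary index-$p$ steps but needs an infinite tower of applications of Proposition~\ref{4prop:norm} followed by a limit; yours is shorter, gives the complement explicitly, and correctly isolates the Frattini count $d(H)=d(Q)$ (hence $H\cap Z\subseteq\Phi(H)$) as the essential ingredient. Your caveat about finite cohomological dimension is well placed: the paper's proof relies on it too, at every stage for the open subgroups $C_m$, although it is not listed among the stated hypotheses.
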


\begin{proof}
By Proposition~\ref{4prop:norm} one has $Z\not\subseteq\Phi(G)$.
Hence there exists an open subgroup $C_1$ of index $p$ such that $C_1Z=G$ and $Z_1=C_1\cap Z=Z^p$.
Moreover, $Z_1$ is a closed normal subgroup in $C_1$ such that $C_1/Z$ is torsion-free and $Z_1\simeq \Z_p$.
Thus again by Proposition~\ref{4prop:norm} one has $Z_1\not\subseteq\Phi(C_1)$.
Repeating this process one finds open subgroup $C_m$ of $G$ of index $p^k$ such that
\[C_mZ=G\quad \text{and} \quad Z_m=C_m\cap Z=Z^{p^m}.\] 
Hence $\bigcap_{m\geq 1} C_m$ is a $Z$-complement in $G$.
\end{proof}

\begin{thm}\label{4thm:Zcentresplit}
Let $(G,\theta)$ be a finitely generated cyclo-oriented pro-$p$ group.
Then there exists a closed subgroup $G_\circ$ of $G$ which is a complement for $\Zen_\theta(G)$, i.e., 
the short exact sequence
\begin{equation}\label{4eq:split ext thetacentre}
\xymatrix{ 1\ar[r] & \Zen_\theta(G)\ar[r] & G \ar[r] & G/\Zen_\theta(G)\ar[r] & 1 }
\end{equation}
splits, with $G_\circ\simeq G/\Zen_\theta(G)$, so that $G=G_\circ\ltimes_\theta\Zen_\theta(G)$.
\end{thm}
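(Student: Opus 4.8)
The plan is to peel off the $\theta$-centre one $\Z_p$-line at a time and to feed each step into Proposition~\ref{4prop:split}. Write $Z=\Zen_\theta(G)$. Since a closed subgroup of a cyclo-oriented pro-$p$ group is again cyclo-oriented (both defining conditions are quantified over all closed subgroups), every closed subgroup produced below is again finitely generated and cyclo-oriented, so the construction iterates. First I would reduce to the case that $G$ is torsion free: for $p$ odd this is automatic by Corollary~\ref{2cor:torsion}, while for $p=2$ the only obstruction is a $C_2$, and the purely torsion case $G\simeq C_2$ is trivial, since then $\image(\theta)=\{\pm1\}$ forces $\kernel(\theta)=1$, hence $Z=1$. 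Assuming $G$ torsion free, $Z$ is a finitely generated torsion-free abelian pro-$p$ group, i.e. $Z\simeq\Z_p^{r}$ with $r=\rank(Z)$, and I would induct on $r$, the case $r=0$ being vacuous.

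For the inductive step, choose a primitive vector of $Z\simeq\Z_p^{r}$ and let $Z_1\simeq\Z_p$ be the $\Z_p$-line it spans, a direct summand, so that $Z/Z_1\simeq\Z_p^{\,r-1}$ is torsion free. Because $G$ acts on its $\theta$-centre by the scalar $\theta$, i.e. $g\tau g^{-1}=\tau^{\theta(g)}$ for $\tau\in Z$, every $\Z_p$-submodule of $Z$ is $G$-invariant; in particular $Z_1$ is normal in $G$. Moreover $Z_1$ is isolated in $Z$ (as $Z/Z_1$ is torsion free) and $Z$ is isolated in $G$ by Proposition~\ref{4prop:isoZtheta}; isolation is transitive, since $g^{p^m}\in Z_1\subseteq Z$ forces $g\in Z$ and then $g\in Z_1$, so $Z_1$ is isolated in $G$, i.e. $G/Z_1$ is torsion free. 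Proposition~\ref{4prop:split} then provides a complement: $G=C\ltimes Z_1$ with $Z_1$ normal, $C\simeq G/Z_1$ and $C\cap Z_1=1$.

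It remains to run the induction on $C$, for which I must identify its $\theta$-centre. By Dedekind's modular law $Z=(Z\cap C)Z_1$ with $(Z\cap C)\cap Z_1=1$, so $Z\cap C\simeq Z/Z_1\simeq\Z_p^{\,r-1}$; and $Z\cap C$ is an abelian subgroup of $\kernel(\theta)$, normal in $C$, on which $C$ acts by $\theta$, whence $Z\cap C\subseteq\Zen_{\theta|_C}(C)$. For the reverse inclusion set $M=\Zen_{\theta|_C}(C)$. Then $M$ is abelian, and since $M\subseteq\kernel(\theta)$ each $m\in M$ has $\theta(m)=1$, so $mz_1m^{-1}=z_1^{\theta(m)}=z_1$ for $z_1\in Z_1\subseteq Z$; thus $m$ commutes with $Z_1$ and $M$ is normalized by both $C$ and $Z_1$, hence normal in $G=CZ_1$. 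As $Z$ is the maximal abelian normal subgroup of $G$ (Proposition~\ref{4prop:Zmxlabeliannormal}), $M\subseteq Z$, so $M\subseteq Z\cap C$ and therefore $\Zen_{\theta|_C}(C)=Z\cap C\simeq\Z_p^{\,r-1}$. The inductive hypothesis applied to the finitely generated cyclo-oriented group $C$ yields a complement $C_\circ\le C$ with $C=C_\circ\ltimes(Z\cap C)$. Finally $G_\circ:=C_\circ$ works: $G=CZ_1=C_\circ(Z\cap C)Z_1=C_\circ Z$, while $G_\circ\cap Z=C_\circ\cap(Z\cap C)=1$, so $G=G_\circ\ltimes Z$ with $G_\circ\simeq G/Z$.

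The main obstacle is the middle identification $\Zen_{\theta|_C}(C)=Z\cap C$: one must rule out that passing to the complement enlarges the $\theta$-centre, and the crucial input is that elements of a $\theta$-centre lie in $\kernel(\theta)$, which forces them to commute with the line $Z_1$ and thereby makes $M$ normal in all of $G$, so that maximality of $Z$ as an abelian normal subgroup closes the argument. A secondary point demanding care is the torsion bookkeeping for $p=2$, namely ensuring Proposition~\ref{4prop:split} applies at every stage; this is guaranteed because closed subgroups of a torsion-free group are torsion free, so torsion-freeness, and hence the isolation of the $\theta$-centre, is preserved along the induction.
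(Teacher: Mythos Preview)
Your argument is correct and is precisely the inductive reduction the paper has in mind: the theorem is stated immediately after Proposition~\ref{4prop:split} with no further proof, so the intended argument is to iterate that proposition, peeling off one $\Z_p$-line of $\Zen_\theta(G)$ at a time, exactly as you do.

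One simplification worth noting: you can run the induction more cheaply by tracking $Z\cap C_i$ directly rather than proving that it equals $\Zen_{\theta|_{C_i}}(C_i)$. At each stage, any direct $\Z_p$-summand $Z_{i+1}$ of $Z\cap C_i$ is normal in $C_i$ (because $C_i\le G$ acts on all of $Z$ by the scalar $\theta$) and isolated in $C_i$ (because $Z$ is isolated in $G$ by Proposition~\ref{4prop:isoZtheta}, hence $Z\cap C_i$ is isolated in $C_i$, and $Z_{i+1}$ is a direct summand of $Z\cap C_i$); so Proposition~\ref{4prop:split} applies to $C_i$ and $Z_{i+1}$ without further input. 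After $r$ steps $C_r\cap Z=1$ and $C_rZ=G$. This sidesteps the forward reference to Proposition~\ref{4prop:Zmxlabeliannormal} and the attendant hypothesis on $\image(\theta)$ when $p=2$. A small point you skate over: the finiteness of $r=\rank(Z)$ is not automatic, since closed subgroups of finitely generated pro-$p$ groups need not be finitely generated; it follows here because by Proposition~\ref{4prop:norm} each line you split off lies outside the Frattini subgroup, so $d(C_{i+1})=d(C_i)-1$ and hence $r\le d(G)$.
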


We want now to compute the Zassenhaus filtration for a cyclo-oriented pro-$p$ group with non-trivial $\theta$-centre,
as we did for $\theta$-abelian groups in Section~3.5.

Let $(G,\theta)$ be as in Theorem~\ref{4thm:Zcentresplit}, with $G=G_\circ\ltimes_\theta\Zen_\theta(G)$
and $\Zen_\theta(G)$ non-trivial.
Also let $p^k$ be the $p$-power such that $\image(\theta)=1+p^k\Z_p$. 
Then it is easy to see that for each $i\geq1$ one has
\[\gamma_i(G)=\gamma_i(G_\circ)\ltimes_{\theta_{\gamma_i}}\Zen_\theta(G)^{p^{k(i-1)}},\]
with $\theta_{\gamma_i}=\theta|_{\gamma_i(G_\circ)}$.
Therefore, an argument as in \eqref{3eq:Zassenhaus filtration for thetabelian} shows that
\begin{equation}\label{4eq:Zassenhaus filtration for fibreproduct}
  D_n(G)=D_n(G_\circ)\ltimes_{\theta_n}\Zen_\theta(G)^{p^\ell}\quad \text{for } p^{\ell-1}<n\leq p^\ell,
\end{equation}
with $\theta_n=\theta|_{D_n(G_\circ)}$.
Consequently, the quotients induced by the Zassenhaus filtration are
\begin{equation}\label{4eq:quotients_Zassenhaus}
 L_i(G)\simeq\left\{\begin{array}{cc} L_i(G_\circ)\oplus\Zen_\theta(G)/\Zen_\theta(G)^p & \text{for $i$ a $p^{th}$-power} \\
L_i(G_\circ) & \text{otherwise} \end{array}\right.
\end{equation}
as vector spaces over $\F_p$.

\begin{prop}\label{4prop:Zassenhaus algebra fibreprod}
 Let $(G,\theta)$ be a cyclo-oriented pro-$p$ group, and assume that $\image(\theta)\simeq\Z_2$ for $p=2$.
Then the restricted Lie algebra $L_\bullet(G)$ induced by the Zassenhaus filtration decomposes as
\[L_\bullet(G)=L_\bullet(G_\circ)\oplus A_\bullet\]
with $G_\circ\leq G$ as in Theorem~\ref{4thm:Zcentresplit},
and $A_\bullet$ an abelian restricted Lie algebra such that $\dim(A_1)=d(\Zen_\theta(G))=d$.
Also, one has
\begin{equation}\label{4eq:lie algebra fibreprod}
 \grad_\bullet(G)\simeq\mathcal{U}_p(L_\bullet(G))\simeq\grad_\bullet(G_\circ)\otimes^1\F_p[Z_1,\ldots,Z_d],
\end{equation}
where $\otimes^1$ denotes the symmetric tensor product (defined in Subsection~2.1.1),
and the grading in the polynomial part is induced by the degrees of the monomials.
\end{prop}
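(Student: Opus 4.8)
The plan is to upgrade the graded vector-space splitting already recorded in \eqref{4eq:quotients_Zassenhaus} to a splitting of restricted Lie algebras, and then to pass to universal restricted envelopes via Jennings' theorem. Write $Z=\Zen_\theta(G)$ and use the decomposition $G=G_\circ\ltimes_\theta Z$ of Theorem~\ref{4thm:Zcentresplit}. First I would record that $D_n(G)\cap G_\circ=D_n(G_\circ)$: this follows at once from the semidirect decomposition \eqref{4eq:Zassenhaus filtration for fibreproduct}, since an element of $G_\circ$ lying in $D_n(G)$ must have trivial $Z$-component. Consequently the inclusion $G_\circ\hookrightarrow G$ induces an injection of graded restricted Lie algebras $L_\bullet(G_\circ)\hookrightarrow L_\bullet(G)$ whose image is a restricted subalgebra. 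On the other hand the subgroup $Z\cong\Z_p^d$ (abelian and torsion-free, with $d=d(Z)=\rank(Z)$) contributes the subspace $A_\bullet$ whose homogeneous components are $A_{p^\ell}=Z^{p^\ell}/Z^{p^{\ell+1}}\cong\F_p^d$ in each $p$-power degree and $0$ otherwise; in particular $\dim_{\F_p}(A_1)=d$. Since $Z$ is abelian the Lie bracket vanishes on $A_\bullet$, while the restricted power map $A_{p^\ell}\to A_{p^{\ell+1}}$, $zZ^{p^{\ell+1}}\mapsto z^pZ^{p^{\ell+2}}$, is an isomorphism; thus $A_\bullet\cong L_\bullet(Z)$ is the free abelian restricted Lie algebra on $d$ generators, exactly as in the $\theta$-abelian computation \eqref{3eq:quotients_Zassenhaus}.

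The heart of the argument is to show that the brackets between $L_\bullet(G_\circ)$ and $A_\bullet$ vanish, so that $L_\bullet(G)=L_\bullet(G_\circ)\oplus A_\bullet$ is a genuine direct sum of restricted Lie algebras with $A_\bullet$ a central ideal. Here the defining property of the $\theta$-centre enters: for $x\in G$ and $z\in Z$ one has $[x,z]=z^{\theta(x)-1}$. Taking $x\in D_i(G_\circ)$ with $p^{\lambda-1}<i\le p^\lambda$, functoriality of the Zassenhaus filtration under $\theta\colon G\twoheadrightarrow\image(\theta)=1+p^k\Z_p$ gives $\theta(x)-1\in p^{k+\lambda}\Z_p$, whence $[x,z]\in Z^{p^{a+k+\lambda}}\subseteq D_{p^{a+k+\lambda}}(G)$ whenever $z$ has initial form in degree $j=p^a$. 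Comparing with the target degree $i+j\le p^\lambda+p^a$, the estimate $p^{a+k+\lambda}=p^k\,p^{a+\lambda}\ge 3p^{a+\lambda}>2p^{a+\lambda}\ge p^a+p^\lambda\ge i+j$ forces $[x,z]\in D_{i+j+1}(G)$, so the induced bracket in $L_{i+j}(G)$ is zero. The inequality $p^k\ge 3$ is precisely what the hypotheses guarantee: for $p$ odd one has $\image(\theta)\subseteq 1+p\Z_p$ by Remark~\ref{1rem:p-adic and orientations}, so $p^k\ge p\ge 3$, while for $p=2$ the assumption $\image(\theta)\simeq\Z_2$ forces $\image(\theta)\subseteq 1+4\Z_2$ and hence $2^k\ge 4$. (When $\theta$ is trivial, $Z$ is central and the brackets vanish for free.) A routine check that the restricted power map respects the splitting --- $x^p\in G_\circ$ for $x\in G_\circ$, and $z^p\in Z$ for $z\in Z$ --- then confirms that the decomposition is one of restricted Lie algebras.

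Finally I would pass to graded algebras. By Jennings' theorem (Theorem~\ref{1thm:envelope Zassenhaus filtration}), $\grad_\bullet(G)\cong\mathcal{U}_p(L_\bullet(G))$, and for a direct sum of restricted Lie algebras the restricted PBW theorem yields $\mathcal{U}_p(L_\bullet(G_\circ)\oplus A_\bullet)\cong\mathcal{U}_p(L_\bullet(G_\circ))\otimes^1\mathcal{U}_p(A_\bullet)$: choosing a PBW basis that lists the $G_\circ$-generators before the $A_\bullet$-generators realizes the envelope as the tensor product, and since $[L_\bullet(G_\circ),A_\bullet]=0$ the two tensor factors commute elementwise, which is exactly the symmetric $1$-tensor product of Subsection~2.1.1. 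Identifying $\mathcal{U}_p(L_\bullet(G_\circ))=\grad_\bullet(G_\circ)$ and $\mathcal{U}_p(A_\bullet)\cong\F_p[Z_1,\dots,Z_d]$ (the latter by \eqref{3eq:restricted envelope theta-abelian} applied to the abelian group $Z$) gives the desired isomorphism $\grad_\bullet(G)\cong\grad_\bullet(G_\circ)\otimes^1\F_p[Z_1,\dots,Z_d]$. The main obstacle is the bracket-vanishing step, both in getting the degree bookkeeping right and in seeing that the hypothesis $p^k\ge 3$ (equivalently, the $p=2$ assumption $\image(\theta)\simeq\Z_2$) is exactly what prevents a nonzero bracket from surviving into $L_2(G)$; the envelope computation is then formal once the splitting is established at the level of restricted Lie algebras.
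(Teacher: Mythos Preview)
Your approach is correct and coincides with the paper's, which simply asserts that the Lie-algebra splitting follows from \eqref{4eq:quotients_Zassenhaus} and that $(L_\bullet(G_\circ),A_\bullet)=0$ forces $\psi_L(L_\bullet(G_\circ))$ and $\psi_L(A_\bullet)$ to commute in $\mathcal{U}_p(L_\bullet(G))$, yielding \eqref{4eq:lie algebra fibreprod}. Your contribution is to actually justify the bracket vanishing via the degree estimate $p^{a+k+\lambda}>p^a+p^\lambda\geq i+j$ (using $p^k\geq3$ from the hypothesis on $\image(\theta)$), a step the paper takes for granted; everything else---the subalgebra inclusion $L_\bullet(G_\circ)\hookrightarrow L_\bullet(G)$, the identification $A_\bullet\cong L_\bullet(Z)$, and the passage to envelopes via Theorem~\ref{1thm:envelope Zassenhaus filtration}---matches the paper's implicit argument.
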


\begin{proof}
 The first statement follows from \eqref{4eq:quotients_Zassenhaus}.
As for the second one, since $(L_\bullet(G_\circ),A_\bullet)=0$, one has that $\psi_L(L_\bullet(G_\circ))$
and $\psi_L(A_\bullet)$ commute in $\mathcal{U}_p(L_\bullet(G))$, and \eqref{4eq:lie algebra fibreprod} holds.
\end{proof}

\subsection{Cohomology of cyclotomic fibre products}

The next result is the cohomological ``translation'' of Theorem~\ref{4thm:Zcentresplit}, namely,
it shows how the presence of the $\theta$-centre decomposes the $\F_p$-cohomology ring.

\begin{thm}\label{4thm:cohomology fibre product}
Let $(G,\theta)$ be a finitely generated Bloch-Kato pro-$p$ group with orientation
such that $\Zen_\theta(G)$ is non-trivial, and let $G_\circ$ be the complement of the $\theta$-centre. 
Then one has the isomorphism of $\F_p$-vector spaces
\begin{equation}\label{4eq:H2 for thetacentre}
 H^2(G,\F_p)\simeq H^2(G_\circ,\F_p)\oplus(V\wedge W)\oplus(W\wedge W),
\end{equation}
with $V=H^1(G_\circ,\F_p)$ and $W=H^1(\Zen_\theta(G),\F_p)$.
\end{thm}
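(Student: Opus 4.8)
The plan is to run the Lyndon--Hochschild--Serre spectral sequence for the split extension supplied by Theorem~\ref{4thm:Zcentresplit}. Write $Z=\Zen_\theta(G)$. Being a $\theta$-centre, $Z$ is abelian, consists of copies of $\Z_p(1)$, and by Corollary~\ref{2cor:torsion} is torsion-free, so $Z\simeq\Z_p(1)^d$ as a $\ZpG$-module with $d=d(Z)$; consequently $H^\bullet(Z,\F_p)\simeq\bigwedge_\bullet W$ with $W=H^1(Z,\F_p)$. Theorem~\ref{4thm:Zcentresplit} gives a splitting of $1\to Z\to G\to G_\circ\to1$, and I would study the spectral sequence $E_2^{s,t}=H^s(G_\circ,H^t(Z,\F_p))\Rightarrow H^{s+t}(G,\F_p)$.

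First I would record that $G_\circ$ acts trivially on $W$. The conjugation action of $G$ on $Z$ is $gzg^{-1}=z^{\theta(g)}$, so on $W=\Hom(Z,\F_p)$ an element $g$ acts by the scalar $\theta(g)^{-1}\bmod p$. For $p$ odd one has $\image(\theta)\subseteq 1+p\Z_p$ by Remark~\ref{1rem:p-adic and orientations}, while for $p=2$ every unit of $\Z_2^\times$ is $\equiv1\bmod 2$; in either case $\theta(g)\equiv1\bmod p$, so the action on $W$, and hence on every $\bigwedge^t W$, is trivial. Reading off the terms of total degree $2$ then yields $E_2^{2,0}=H^2(G_\circ,\F_p)$, $E_2^{1,1}=H^1(G_\circ,W)\simeq H^1(G_\circ,\F_p)\otimes W\simeq V\wedge W$, and $E_2^{0,2}=(\bigwedge^2 W)^{G_\circ}=W\wedge W$.

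The heart of the argument is that these three terms survive to $E_\infty$. Because the extension splits, inflation $\inf\colon H^n(G_\circ,\F_p)\to H^n(G,\F_p)$ is a split monomorphism (the section $s\colon G_\circ\to G$ gives $s^\ast\circ\inf=\iid$); since the edge map $E_2^{n,0}\twoheadrightarrow E_\infty^{n,0}$ followed by the inclusion into $H^n(G,\F_p)$ is exactly inflation, injectivity forces every differential hitting the bottom row to vanish. This gives at once $E_\infty^{2,0}=E_2^{2,0}$ (the incoming $d_2\colon E_2^{0,1}\to E_2^{2,0}$ dies) and $E_\infty^{1,1}=E_2^{1,1}$ (its only candidate differential $d_2\colon E_2^{1,1}\to E_2^{3,0}$ lands in the bottom row). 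The remaining map to kill is $d_2\colon E_2^{0,2}\to E_2^{2,1}$, which does not target the bottom row; here I would invoke multiplicativity: $E_2^{0,2}=\bigwedge^2W$ is generated by cup products of classes in $E_2^{0,1}=W$, and since $d_2$ is a derivation that already vanishes on $E_2^{0,1}$ (another bottom-row target), it vanishes on all of $E_2^{0,2}$. As $d_3\colon E_3^{0,2}\to E_3^{3,0}$ also hits the bottom row, $E_\infty^{0,2}=E_2^{0,2}$.

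Finally, convergence produces a filtration of $H^2(G,\F_p)$ with graded pieces $E_\infty^{2,0}$, $E_\infty^{1,1}$, $E_\infty^{0,2}$; over the field $\F_p$ the filtration splits, giving $H^2(G,\F_p)\simeq H^2(G_\circ,\F_p)\oplus(V\wedge W)\oplus(W\wedge W)$. I expect the main obstacle to be the vanishing of the off-diagonal differential $d_2\colon E_2^{0,2}\to E_2^{2,1}$: unlike the bottom-row differentials it is not directly controlled by the splitting, and the cleanest route is the derivation argument, which itself depends on having first identified $E_2^{0,1}=W$ as a bottom-row target so that $d_2$ annihilates it.
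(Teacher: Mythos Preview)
Your spectral-sequence argument is correct and complete: the splitting kills every differential into the bottom row, and the derivation property then forces $d_2$ to vanish on $E_2^{0,2}$ since it already vanishes on $E_2^{0,1}$. The identification $E_2^{1,1}\simeq V\otimes W$ with the paper's $V\wedge W$ is the standard one (these are distinct subspaces of $H^1(G,\F_p)$, and $V\wedge W$ denotes their image in $\bigwedge^2(V\oplus W)$).

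The paper takes a different route. It argues by induction on $d(\Zen_\theta(G))$, reducing to the case $Z\simeq\Z_p$. There it uses only the five-term sequence rather than the full spectral sequence: the splitting makes $\inf^2$ injective, so $H^2(G_\circ,\F_p)$ embeds; separately, for each nonzero $\chi\in V$ the paper restricts to the $2$-generated subgroup $\langle x,z\rangle$, which is $\theta$-abelian by Theorem~\ref{3thm:equivalence theta-abelian}, to see that $\chi\cup\psi\neq0$, so $V\wedge W$ also sits inside $H^2(G,\F_p)$. The proof then concludes by a dimension count, comparing $r(G)$ with $r(G_\circ)+\dim V$ using the explicit semidirect-product presentation. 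Your approach is cleaner and treats arbitrary rank of $Z$ in one stroke, and in fact never invokes the Bloch--Kato property directly (only the existence of the splitting and the triviality of the mod-$p$ action). The paper's approach, by contrast, extracts the concrete geometric fact that every mixed cup product $\chi\cup\psi$ is nonzero by restriction to a visible $\theta$-abelian subgroup, which is informative in its own right even if the global decomposition follows more quickly from your degeneration argument.
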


\begin{proof}
We shall prove the result by induction on the minimal number of generators of $\Zen_\theta(G)$.

Assume first that $\Zen_\theta(G)\simeq \Z_p$, and let $z\in \Zen_\theta(G)$ be a generator.
Moreover, let $\psi\in W$ be the dual of $z$, and pick a $\F_p$-basis $\{\chi_i,i\in\mathcal{I}\}$ of $V$,
so that $\{\psi,\chi_i\}$ is a basis for $H^1(G,\F_p)$.

Let $\chi\in V$ be any non-trivial $\F_p$-linear combination of the $\chi_i$'s,
and let $x\in G_\circ\smallsetminus \Phi(G)$ be such that $\chi$ is dual to $x$.
Set $C\leq G$ be the closed subgroup generated by $x$ and $z$.
By construction, $C$ is a $2$-generated $\theta|_C$-abelian pro-$p$ group,
and by Theorem~\ref{3thm:equivalence theta-abelian}  the cohomology ring $H^\bullet(C,\F_p)$ is 
the exterior $\F_p$-algebra generated by $\res_{G,C}^1(\chi)$ and $\res_{G,C}^1(\psi)$.
In particular, one has
\[\res_{G,C}^2(\chi\cup\psi)=\res_{G,C}^1(\chi)\cup\res_{G,C}^1(\psi)\neq0,\]
hence $\chi\cup\psi\neq0$ in $H^2(G,\F_p)$.
Therefore, all the cup products $\chi_i\cup\psi$, with $i\in\mathcal{I}$, are $\F_p$-linearly independent,
and $V\wedge W$ is a subspace of $H^2(G,\F_p)$.

Moreover, the short exact sequence of pro-$p$ groups \eqref{4eq:split ext thetacentre}
induces the five term exact sequence
\begin{equation}
 \xymatrix@C=1.2truecm{0\ar[r] & H^1(G_\circ,\F_p)\ar[r]^-{\inf_{G,G_\circ}^1} & V\oplus W\ar[r]^-{\res_{G,\Zen_\theta(G)}^1} &
W^G \ar`r[d]`[l]^-{\text{tg}_{G,\Zen_\theta(G)}} `[dlll] `[dll] [dll] \\
  & H^2(G_\circ,\F_p)\ar[r]^-{\inf_{G,G_\circ}^2} & H^2(G,\F_p). & }
\end{equation}
Since $W^G=H^1(\Zen_\theta(G),\F_p)$, one has that $\res_{G,\Zen_\theta(G)}^1$ is surjective,
for the $\theta$-centre is properly embedded in $G$ \cite[Fact 4.5]{claudio:BK}.
Thus, the transgression map $\text{tg}_{G,\Zen_\theta(G)}$ is trivial, and $\inf_{G,G_\circ}^2$ is injective.
In particular, the image of $\inf_{G,G_\circ}^2$ is isomorphic to $H^2(G_\circ,\F_p)$.

Since the intersection between $V\wedge W$ and $\inf_{G,G_\circ}^2(H^2(G_\circ,\F_p))$ is trivial, one has 
\begin{eqnarray*}
 r(G)=\dim_{\F_p}H^2(G,\F_p) &\geq& 
\dim_{\F_p}(V\wedge W)+\dim_{\F_p}H^2(G_\circ,\F_p)\\
&=& \dim_{\F_p}(V)+r(G_\circ),
\end{eqnarray*}
by Definition~\ref{1defi:pro-p groups}.
By hypothesis, the minimal number of relations of $G$ is exactly $r(G)=r(G_\circ)+\dim_{\F_p}(V)$,
thus $H^2(G,\F_p)$ is generated by $\inf_{G,G_\circ}^2H^2(G_\circ,\F_p)$ and $V\wedge W$.

Assume now that $d(\Zen_\theta(G))>1$, and let $Z_1,Z_2$ be subgroups of the $\theta$-centre
such that $Z_2\simeq\Z_p$ and $\Zen_\theta(G)=Z_1\times Z_2$.
Moreover, set $W_i=H^1(Z_i,\F_p)$, with $i=1,2$, and $\tilde G_\circ=G_\circ\ltimes_\theta Z_1$.
Thus, $H^1(\tilde G_\circ,\F_p)=V\oplus W_1$, and by induction one has
\[H^2(\tilde G_\circ,\F_p)\simeq H^2(G_\circ,\F_p)\oplus(V\wedge W_1)\oplus(W_1\wedge W_1).\]
Also, the above argument shows that 
\[H^2(G,\F_p)\simeq H^2(\tilde G_\circ,\F_p)\oplus ((V\oplus W_1)\wedge W_2),\]
and the claim follows.
\end{proof}

\begin{rem}\label{4rem:graded tensor product}
By Theorem~\ref{4thm:cohomology fibre product}, the $\F_p$-cohomology ring of the cyclo-oriented pro-$p$ group
is the skew-commutative tensor product
\[H^\bullet(G,\F_p)=H^\bullet(G_\circ,\F_p)\otimes^{-1}\left(\bigwedge_{n\geq0}W\right),\]
which can be considered also as a {\it graded tensor product}\footnote{
Let $A_\bullet$ and $B_\bullet$ be two (non-negatively) graded rings.
The {\it tensor product} in the category of graded rings is defined by
\[ A_\bullet\otimes_{\grad}B_\bullet=\coprod_{k\geq0}\left(\bigoplus_{i+j=k}A_i\otimes B_j\right),\]
with the multiplication law $(a\otimes b)(a'\otimes b')=(-1)^{i'j}aa'\otimes bb'$
for $a\in A_i$, $a'\in A_{i'}$, $b\in B_j$ and $b'\in B_{j'}$ (cf. \cite[\S~23.2]{efrat:libro}).}.
In particular, the $\F_p$-cohomology ring of a Bloch-Kato pro-$p$ group can be considered as a
{\it $\kappa$-structure} (cf. \cite[Ch.~23]{efrat:libro}), and $H^\bullet(G,\F_p)$ is the {\it extension}
of $\kappa$-structures $H^\bullet(G_\circ,\F_p)[W]$ \cite[p.~211]{efrat:libro}. 
\end{rem}

\begin{example}\label{4exa:fibreprod}
Let $G_\circ$ be a Demushkin group with cyclotomic orientation $\theta\colon G_\circ\to\Z_p^\times$ such that
$\image(\theta)\simeq\Z_p$, and let $\{\chi_1,\ldots,\chi_d\}$ be a suitable basis for $H^1(G_\circ,\F_p)$,
with $d=d(G_\circ)$.
Also, let $Z$ be isomorphic to $\Z_p(1)$ as $\ZpG$-modules, and let $\psi$ be a generator of $H^1(Z,\F_p)$.
Then for $G=G_\circ\ltimes_\theta Z$, the $\F_p$-vector space $H^2(G,\F_p)$ is generated by the elements
\[\chi_1\cup\chi_2=\ldots=\chi_{d-1}\cup\chi_d \quad\text{and}\quad
\chi_i\cup\psi \ \text{for every }i,\]
and $\dim(H^2(G,\F_p))=d+1$.
\end{example}

Theorem~\ref{4thm:cohomology fibre product} shows that the class of Bloch-Kato pro-$p$ groups is closed
under cyclotomic fibre products.
We want to extend the closure to the class of cyclo-oriented pro-$p$ groups.

\begin{thm}\label{4thm:cyclotomic fibreprod}
Let $(G,\theta)$ be a finitely generated cyclo-oriented pro-$p$ group,
and let $\tilde G=G\ltimes_\theta Z$ be the cyclotomic fibre product
with $Z$ and $\tilde\theta$ as in Definition~\ref{4defi:cyclotomic fibre product}.
Then $(\tilde G,\tilde\theta)$ is again a cyclo-oriented pro-$p$ group.
\end{thm}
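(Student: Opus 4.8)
The plan is to verify the two defining conditions of a cyclotomic orientation for $(\tilde G,\tilde\theta)$ in turn. Writing $\pi\colon\tilde G\to G$ for the projection of the split sequence \eqref{4eq:ses cyclotomic fibreprod} and noting $\tilde\theta=\theta\circ\pi$, condition (i) — that $\tilde G$ be Bloch-Kato — is already available: since $\Zen_{\tilde\theta}(\tilde G)=\Zen_\theta(G)\times Z$ is non-trivial by Fact~\ref{4fact:fribre product elementary}, Theorem~\ref{4thm:cohomology fibre product} shows that the class of Bloch-Kato pro-$p$ groups is stable under cyclotomic fibre products, so $\tilde G$ is Bloch-Kato. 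Hence the whole content lies in condition (ii), and by Lemma~\ref{2lem:H2 torfree iff H1 projects} it suffices to prove that the reduction map $H^1(C,\Z_p(1))\to H^1(C,\F_p)$ coming from \eqref{2eq:ses Zp1Zp1Fp} is surjective for every closed subgroup $C\clsgp\tilde G$.

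The key device will be a lifting argument valid for a cyclotomic fibre product itself. Suppose $C=\bar C\ltimes_\theta C_0$ with $\bar C$ cyclo-oriented and $C_0\simeq\Z_p$ a copy of $\Z_p(1)$ on which $\tilde\theta$ is trivial, and write $\bar s\colon\bar C\to C$, $\bar\pi\colon C\to\bar C$ for the section and projection. Given $\chi\in H^1(C,\F_p)=\Hom(C,\F_p)$, I would first lift $\chi\circ\bar s\in H^1(\bar C,\F_p)$ to a crossed homomorphism $f_1\colon\bar C\to\Z_p(1)$ — possible because $\bar C$ is cyclo-oriented, again by Lemma~\ref{2lem:H2 torfree iff H1 projects} — and pull it back to $f_1\circ\bar\pi$, a crossed homomorphism on $C$ agreeing with $\chi$ on $\bar s(\bar C)$ and vanishing on $C_0$. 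The difference $\chi-(f_1\circ\bar\pi\bmod p)$ is then supported on $C_0$, so I would lift its restriction $C_0\to\F_p$ to a homomorphism $h\colon C_0\to\Z_p$ and prescribe a crossed homomorphism $f_2\colon C\to\Z_p(1)$ by $f_2=0$ on $\bar s(\bar C)$ and $f_2=h$ on $C_0$. To see that $f_2$ exists one lifts the prescribed values to the free pro-$p$ cover (free crossed homomorphisms are unconstrained, as free groups are cyclo-oriented for any orientation by Remark~\ref{2rem:cyclo-oriented free gps}) and checks vanishing on the defining relations; the only non-trivial ones are the conjugation relations $\bar s(c)\,z\,\bar s(c)^{-1}=z^{\theta(c)}$, where \eqref{4eq:c on commutators} together with $\tilde\theta(C_0)=1$ makes both sides equal $(\theta(c)-1)h(z)$, so $f_2$ descends to $C$ by Fact~\ref{4fact:crossed homomorphisms and relations}(iii). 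Then $f_1\circ\bar\pi+f_2$ reduces to $\chi$, giving surjectivity for such $C$; applied to $C=\tilde G=G\ltimes_\theta Z$ (with $\bar C=G$, $C_0=Z$) this already disposes of the case $C=\tilde G$.

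It remains to reduce an arbitrary closed $C\clsgp\tilde G$ to this shape. Set $C_0=C\cap Z$ and $\bar C=\pi(C)\clsgp G$; then $\bar C$ is again cyclo-oriented, since its closed subgroups are closed subgroups of $G$. If $C_0=1$ then $\pi|_C\colon C\to\bar C$ is an isomorphism of oriented pro-$p$ groups (recall $\tilde\theta=\theta\circ\pi$), whence $H^2(C,\Z_p(1))\cong H^2(\bar C,\Z_p(1))$ is torsion-free. If $C_0\simeq\Z_p$, then $C_0\subseteq Z\subseteq\Zen_{\tilde\theta}(\tilde G)$ is $\tilde\theta$-central in $C$ and inherits the module structure $\Z_p(1)$ (the conjugation action factors through $\bar C$), so once the extension $1\to C_0\to C\to\bar C\to1$ is known to split I may invoke the lifting argument of the previous paragraph with this $C$. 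In either case $H^2(C,\Z_p(1))$ is torsion-free, and together with (i) this yields that $\tilde\theta$ is cyclotomic.

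The hard part will be exactly the splitting of $1\to C_0\to C\to\bar C\to1$ for a general closed subgroup with $C_0\subsetneq Z$, where the inherited section $s(\bar C)$ of the splitting $s\colon G\to\tilde G$ of \eqref{4eq:ses cyclotomic fibreprod} need not lie in $C$. When $C$ is finitely generated and $\bar C$ is torsion-free this is precisely Proposition~\ref{4prop:split} (with $C_0$ in the role of $Z$), and Theorem~\ref{4thm:Zcentresplit} is the analogous statement for the full $\theta$-centre. So the work is to remove the finite-generation hypothesis — by exhausting $C$ through its finitely generated closed subgroups and checking that torsion-freeness of $H^2(-,\Z_p(1))$ survives the limit — and to handle $p=2$, where $\bar C$ may contain a copy of $C_2$ by Corollary~\ref{2cor:torsion}; in that case I would first split off the $2$-torsion and apply the splitting and lifting argument only to the torsion-free part, using the hypothesis controlling $\image(\theta)$. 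As a consistency check, Proposition~\ref{4prop:H1Zp1} provides an independent description of $H^1(C,\Z_p(1))$ compatible with the two crossed homomorphisms $f_1,f_2$ constructed above.
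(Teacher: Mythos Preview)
Your approach is sound and ultimately reaches the same conclusion, but it diverges from the paper's in two places worth noting.

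For the surjectivity of $H^1(\tilde G,\Z_p(1))\to H^1(\tilde G,\F_p)$, the paper does not build crossed homomorphisms by hand. Instead it checks that the generator $f\colon z\mapsto 1$ of $H^1(Z,\Z_p(1))$ is $G$-fixed via \eqref{4eq:action H1}, so that $H^1(Z,\Z_p(1))^G=H^1(Z,\Z_p(1))$; since the section of \eqref{4eq:ses cyclotomic fibreprod} makes $\inf^2$ split injective, the transgression vanishes and the five-term sequence with $\Z_p(1)$-coefficients becomes short exact. Pairing this row with the analogous short exact sequence in $\F_p$-coefficients and applying the snake lemma yields the middle surjection in one stroke. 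This is slicker than your explicit $f_1,f_2$ construction, though yours has the merit of making the lifts concrete and of visibly matching the decomposition in Proposition~\ref{4prop:H1Zp1}.

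For a general closed subgroup $\tilde C\leq\tilde G$, the paper simply \emph{asserts} the decomposition $\tilde C=C\ltimes_{\theta|_C}Z'$ with $C\leq G$ and $Z'\leq Z$, and reruns the same argument. You are right that this is the substantive point: the global section $s$ need not land in $\tilde C$, so the splitting of $1\to Z'\to\tilde C\to C\to1$ is not automatic. Your plan to invoke Proposition~\ref{4prop:split} --- using that $\tilde C$ is Bloch-Kato as a closed subgroup of the Bloch-Kato group $\tilde G$, and that $C\leq G$ is torsion-free by Corollary~\ref{2cor:torsion} --- is exactly the right repair in the finitely generated case, and is presumably what the paper's terse sentence is resting on. Your flagging of the residual work (non-finitely-generated $\tilde C$ and possible $2$-torsion in $C$ when $p=2$ with $\image(\theta)$ not torsion-free) is well placed; the paper does not address these points either.
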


\begin{proof}
By Theorem~\ref{4thm:cohomology fibre product}, $\tilde G$ is a Bloch-Kato pro-$p$ group,
so it remains to show that $H^2(\tilde G,\Z_p(1))$ is a torsion-free $\Z_p$-module.
Thus, by Lemma~\ref{2lem:H2 torfree iff H1 projects}, it is enough tho show that the morphism
\begin{equation}\label{4eq:projection cyclotomic fibre product}
 H^1(\tilde G,\Z_p(1))\to H^1(\tilde G,\F_p)
\end{equation}
induced by \eqref{2eq:ses Zp1Zp1Fp} is surjective.

Let $z$ be a generator of $Z$.
Then $H^1(Z,\Z_p(1))$ is the $\Z_p$-module generated by the morphism $f\colon Z\to \Z_p(1)$
defined by $f(z)=1$.
Note that by \eqref{4eq:action H1} $\tilde G$ fixes $f$, as 
\[(g.f)(z)=\theta(g)\cdot f\left(z^{\theta(g^{-1})}g^{-1}\right)=\theta(g)\theta(g)^{-1}\cdot1+0=1\]
for every $g\in G$.
Thus, $H^1(Z,\Z_p(1))^{G}=H^1(Z,\Z_p(1))$.
In particular, this implies that the restriction map
\[\res_{\tilde G,Z}^1\colon H^1(\tilde G,\Z_p(1))\longrightarrow H^1(Z,\Z_p(1))\]
is surjective.

Therefore, by the five term exact sequence induced by \eqref{4eq:ses cyclotomic fibreprod}
with coefficients in $\Z_p(1)$, one has the commutative diagram
 \[\xymatrix@C=.27truecm{ 0\ar[r] & H^1(G,\Z_p(1))\ar[r]^-{\inf_{\tilde G,Z}^1}\ar@{->>}[d] &
H^1(\tilde G,\Z_p(1))\ar[r]^-{\res_{\tilde G,Z}^1}\ar[d] & H^1(Z,\Z_p(1))\ar[r]\ar@{->>}[d] & 0\\
0\ar[r] & H^1(G,\F_p)\ar[r] & H^1(G,\F_p)\oplus H^1(Z,\F_p)\ar[r] & H^1(Z,\F_p)\ar[r] & 0 } \]
where the left-hand side vertical arrow is surjective by hypothesis, and the right-hand side one is surjective
since $f\bmod p$ generates $H^1(Z,\F_p)$.
Hence, by the snake lemma, also \eqref{4eq:projection cyclotomic fibre product} is surjective.

Moreover, every closed subgroup $\tilde C$ of $\tilde G$ decomposes as cyclotomic fibre product
$\tilde C=C\ltimes_{\theta|_C}Z'$, with $C$ a subgroup of $G$ and $Z'$ a subgroup of $Z$,
and by the above argument $H^2(\tilde C,\Z_p(1))$ is torsion-free.
\end{proof}

\subsection{Arithmetic}

Let $K$ be a field.
Recall that a {\bf valuation} of $K$ is a group homomorphism $v$ from $K^\times$ into an ordered abelian group
$(\Gamma,\leq)$ such that for every $\alpha,\beta\in K^\times$ with $x\neq-y$, one has
\[v(x+y)\geq\min\{v(x),v(y)\}.\]
The {\it rank} of the valuation $v$ is the rank of the abelian group $\image(v)\leq\Gamma$.

A valuation $v\colon K^\times\rightarrow\Gamma$ is said to be {\bf $p$-Henselian}
if it extends uniquely to the $p$-closure $K(p)$, i.e., there exists a unique valuation
$w\colon K(p)^\times\to\Gamma$ such that $w|_{K^\times}=v$.
(For more details on valuation theory see \cite{efrat:libro}.)

Assume now that $K$ has characteristic prime to $p$.
We are to show the relation between the structure of the maximal pro-$p$ Galois group $G_K(p)$ of $K$
(and of its $\F_p$-cohomology ring) and the existence of certain valuations of $K$.

\begin{prop}\label{4prop:Zmxlabeliannormal}
Let $(G,\theta)$ be a cyclo-oriented pro-$p$ group,
and assume that $\image(\theta)$ is either pro-$p$-cyclic or trivial.
Then the $\theta$-centre $\Zen_\theta(G)$ is the maximal normal abelian subgroup of $G$
\end{prop}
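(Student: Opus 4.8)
The plan is to show that $\Zen_\theta(G)$ is abelian and normal (already asserted just after its definition), and then that it is \emph{maximal} among abelian normal subgroups by showing that any abelian normal subgroup $N\lhd G$ is forced to lie inside $\Zen_\theta(G)$. The key tool I expect to use is the Bloch-Kato hypothesis through Theorem~\ref{3thm:equivalence theta-abelian}: every $2$-generated closed subgroup of $G$ is itself Bloch-Kato, and if it has non-trivial centre it must be $\theta$-abelian, hence its commutation is governed by the orientation $\theta$. This is exactly the mechanism already exploited in Proposition~\ref{4prop:isoZtheta}, so the argument should run in parallel.

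First I would fix an abelian normal subgroup $N\lhd G$ and take any $\tau\in N$, $\tau\neq1$. I want to prove $\tau\in\Zen_\theta(G)$, i.e.\ $\tau\in\kernel(\theta)$ and $\sigma\tau\sigma^{-1}=\tau^{\theta(\sigma)}$ for all $\sigma\in G$. For the conjugation relation, fix an arbitrary $\sigma\in G$ and let $C=\overline{\langle\sigma,\tau\rangle}$ be the $2$-generated closed subgroup they generate. Since $N$ is normal and abelian, $\sigma\tau\sigma^{-1}\in N$ commutes with $\tau$, so $\tau$ lies in the centre of $C$; thus $C$ is a Bloch-Kato pro-$p$ group with non-trivial centre. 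By Theorem~\ref{3thm:equivalence theta-abelian} (using the hypothesis that $\image(\theta)$ is pro-$p$-cyclic or trivial, which guarantees the cup-product is skew-commutative via Proposition~\ref{2prop:bock}), $C$ is $\theta|_C$-abelian, so its structure is given by the presentation in Proposition~\ref{2prop:presentation theta-abelian}: conjugation acts on $\kernel(\theta|_C)$ exactly through $\theta$.

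Next I would pin down that $\tau\in\kernel(\theta)$. If $\theta$ is trivial this is automatic and the $\theta$-centre is just the ordinary centre, so $N$ central gives $N\subseteq\Zen(G)=\Zen_\theta(G)$ directly. When $\image(\theta)\simeq\Z_p$, I would argue that an abelian normal subgroup cannot have elements outside $\kernel(\theta)$ acting non-trivially: within the $\theta$-abelian group $C$ above, the only elements centralizing a non-trivial $\tau\in\kernel(\theta|_C)$ while themselves mapping non-trivially under $\theta$ would have to commute with $\tau$, forcing $\theta|_C$ to be trivial on $\tau$ and reducing to the $\theta$-abelian relation $\sigma\tau\sigma^{-1}=\tau^{\theta(\sigma)}$. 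Running this over all $\sigma\in G$ yields $\tau\in\Zen_\theta(G)$, hence $N\subseteq\Zen_\theta(G)$, giving maximality.

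The main obstacle I anticipate is the bookkeeping in the case $\image(\theta)\simeq\Z_p$ (and especially the delicate $p=2$ subcase), namely ruling out that a putative larger abelian normal subgroup contains elements $\sigma$ with $\theta(\sigma)\neq1$ that still commute with all of $N$. The resolution is that if such $\sigma$ commuted with a non-trivial $\tau\in\kernel(\theta)$, then in the $\theta$-abelian subgroup $\overline{\langle\sigma,\tau\rangle}$ one would get $\tau=\sigma\tau\sigma^{-1}=\tau^{\theta(\sigma)}$, so $\tau^{\theta(\sigma)-1}=1$; since $\kernel(\theta)$ is torsion-free (Corollary~\ref{2cor:torsion}) and $\theta(\sigma)\neq1$, this is impossible unless $\tau=1$. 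This contradiction is what closes the argument, and it is essentially the same fixed-point-free action idea used at the end of the proof of Proposition~\ref{4prop:isoZtheta}.
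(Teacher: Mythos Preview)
Your overall strategy matches the paper's: take $\tau\in N$, an arbitrary $\sigma\in G$, form the $2$-generated subgroup $C=\overline{\langle\sigma,\tau\rangle}$, and use Theorem~\ref{3thm:equivalence theta-abelian} to force $C$ to be $\theta|_C$-abelian, which gives $\sigma\tau\sigma^{-1}=\tau^{\theta(\sigma)}$. But your central step is wrong. You claim that since $\sigma\tau\sigma^{-1}\in N$ commutes with $\tau$, the element $\tau$ lies in the centre of $C$. This is false: $C$ is generated by $\sigma$ and $\tau$, and for $\tau$ to be central it must commute with $\sigma$ itself, not merely with $\sigma\tau\sigma^{-1}$. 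The simplest counterexample is already the $2$-generated $\theta$-abelian group $G=\langle x_0,x_1\mid x_0x_1x_0^{-1}=x_1^{1+p}\rangle$ with $N=\Zen_\theta(G)=\overline{\langle x_1\rangle}$: taking $\sigma=x_0$, $\tau=x_1$ gives $C=G$, and $x_1$ is certainly not central there.

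What the paper does instead is rule out the free case by a different mechanism: since $\tau\in N$ and $N\lhd G$, the quotient $C/(C\cap N)$ is generated by the image of $\sigma$ alone, hence is procyclic, so $[C,C]\leq C\cap N\leq N$ is abelian. A $2$-generated free pro-$p$ group has non-abelian commutator subgroup, so $C$ cannot be free; by Theorem~\ref{3thm:equivalence theta-abelian} it is therefore $\theta'$-abelian for some orientation $\theta'$, and since $\theta|_C$ is cyclotomic one gets $\theta'=\theta|_C$ and $\tau\in\Zen_{\theta|_C}(C)$. Your treatment of $\tau\in\kernel(\theta)$ is also circular (you assume it in the very sentence meant to establish it); the paper handles this upfront by observing that $(N,\theta|_N)$ is itself cyclo-oriented and abelian, which forces $\theta|_N$ to be trivial, i.e.\ $N\leq\kernel(\theta)$. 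Once that is in place, the torsion-free fixed-point argument you sketch at the end is not needed for this proposition.
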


\begin{proof}
If $G$ is abelian, then $\theta$ is trivial, and $\Zen_\theta(G)=G$.
Thus, assume that $G$ is not abelian, and let $N$ be the maximal normal abelian subgroup of $G$.
Note that $N\leq\kernel(\theta)$, as $(N,\theta|_N)$ is cyclo-oriented.

Fix an element $z\in N$, and choose any $x\in G$.
Let $C\leq G$ be the closed subgroup generated by $x$ and $z$: then either $C\simeq \Z_p$ or $d(C)=2$.
In the former case, $x$ and $z$ commute, and moreover $C\leq\kernel(\theta)$: thus, one has 
\begin{equation}\label{4eq:Zmxlabeliannormal1}
 {^xz}=z=z^{\theta(x)}.
\end{equation}
In the latter case, Theorem~\ref{3thm:equivalence theta-abelian} implies that either $C$ is
a 2-generated free pro-$p$ group, or $C$ is $\theta'$-abelian, for some orientation $\theta'\colon C\to\Z_p^\times$.
Since $N$ is normal, the commutator subgroup $[C,C]$ of $C$ is contained in $N$,
hence $[C,C]$ is abelian, and $C$ is $\theta'$-abelian.
Since $\theta|_C$ is a cyclotomic orientation for $C$, one has $\theta'=\theta|_C$,
and $z\in\Zen_{\theta|_C}(C)$:  thus, one has 
\begin{equation}\label{4eq:Zmxlabeliannormal2}
 {^xz}=z^{\theta|_C(x)}=z^{\theta(x)}.
\end{equation}

Thus for every $z\in N$ and $x\in G$ one has \eqref{4eq:Zmxlabeliannormal1} or \eqref{4eq:Zmxlabeliannormal2}.
Therefore, $N\leq Z_\theta(G)$, and thus they coincide, as the $\theta$-centre is abelian.
\end{proof}

The following result shows the arithmetic meaning of the $\theta$-centre of the maximal pro-$p$ Galois group
of a field, with $\theta$ the arithmetic orientation.

\begin{thm}\label{4thm:arithmetic thetacentre}
Let $K$ be a field of characteristic $\chr K\neq p$ containing a primitive $p$-th root of unity,
and also $\sqrt{-1}$ if $p=2$.
Let $\theta\colon G_K(p)\to\Z_p^\times$ be the cyclotomic orientation,
and assume that the maximal pro-$p$ Galois group $G_K(p)$ is finitely generated.
Then the following are equivalent:
\begin{itemize}
 \item[(i)] the $\theta$-centre $\Zen_\theta(G)$ is non-trivial;
 \item[(ii)] there exist two $\F_p$-vector subspaces $V,W<H^1(G_K(p),\F_p)$ such that
\begin{eqnarray*}
 && H^1\left(G_K(p),\F_p\right)=V\oplus W, \quad \text{and}\\
 && H^2\left(G_K(p),\F_p\right)=\wedge_2(\cup)(V)\oplus(V\wedge W)\oplus(W\wedge W),
\end{eqnarray*}
with $\wedge_2(\cup)$ as in \eqref{3eq:wedge cup product}.
 \item[(iii)] there exists a non-trivial $p$-Henselian valuation $v\colon K^\times\rightarrow\Gamma$
with residue characteristic different to $p$.
\end{itemize}
Moreover, if the conditions above hold, then
\[d\left(\Zen_\theta(G_K(p))\right)=\dim_{\F_p}(W)=\rank(v).\]
\end{thm}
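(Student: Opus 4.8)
The plan is to split the equivalence into a purely group-cohomological half, (i) $\Leftrightarrow$ (ii), carried out with the fibre-product machinery of this chapter, and an arithmetic half, (i) $\Leftrightarrow$ (iii), which rests on valuation theory. Throughout write $G=G_K(p)$. For (i) $\Rightarrow$ (ii): since $\Zen_\theta(G)$ is non-trivial, Theorem~\ref{4thm:Zcentresplit} furnishes a complement $G_\circ$ with $G=G_\circ\ltimes_\theta\Zen_\theta(G)$. Setting $V=H^1(G_\circ,\F_p)$ and $W=H^1(\Zen_\theta(G),\F_p)$ one has $H^1(G,\F_p)=V\oplus W$, and Theorem~\ref{4thm:cohomology fibre product} gives $H^2(G,\F_p)\simeq H^2(G_\circ,\F_p)\oplus(V\wedge W)\oplus(W\wedge W)$. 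As $G_\circ$ is Bloch-Kato its cohomology ring is quadratic, so $H^2(G_\circ,\F_p)$ is exactly the image $\wedge_2(\cup)(V)$ of the cup product on $V$, cf.\ \eqref{3eq:wedge cup product}; this is the decomposition in (ii). The count $d(\Zen_\theta(G))=\dim_{\F_p}(W)$ is then immediate from Pontryagin duality \eqref{1eq:pontryagin_duality}, while $\Zen_\theta(G)$ being a free abelian pro-$p$ group (Corollary~\ref{2cor:kernelab torfree}) yields $d(\Zen_\theta(G))=\rank(\Zen_\theta(G))$.

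For the converse (ii) $\Rightarrow$ (i), I would choose a minimal generating system of $G$ whose dual basis of $H^1(G,\F_p)$ is adapted to $H^1=V\oplus W$; a short argument shows the generators $z_1,\dots,z_m$ dual to a basis of $W$ may be taken inside $\kernel(\theta)$. The injectivity of $\wedge_2(\cup)$ on $W\wedge W$ and on $V\wedge W$ forces every two-generated subgroup $\langle z_j,z_k\rangle$ and $\langle x,z_j\rangle$ to have non-zero $H^2$, so by Theorem~\ref{3thm:titsalternative} none of them is free, whence by Theorem~\ref{3thm:equivalence theta-abelian} each is $\theta$-abelian. A patching argument through the Jacobi identity, exactly in the style of the proof of Theorem~\ref{3thm:locally poweful groups}, then upgrades this pairwise information: the subgroup generated by the $z_j$ is abelian, normal, and acted on by $G$ through the cyclotomic action, hence a non-trivial subgroup of $\Zen_\theta(G)$ by Proposition~\ref{4prop:Zmxlabeliannormal}.

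For (i) $\Leftrightarrow$ (iii) I would invoke valuation theory. In the direction (iii) $\Rightarrow$ (i), a non-trivial $p$-Henselian valuation $v$ of rank $r$ with residue characteristic $\neq p$ has, via Hensel–inertia theory for the maximal pro-$p$ Galois group, an inertia subgroup isomorphic to $\Z_p(1)^r$ on which $G$ acts through the cyclotomic character; this subgroup lies in $\Zen_\theta(G)$ and is non-trivial. For (i) $\Rightarrow$ (iii), the non-trivial $\theta$-centre is, by Proposition~\ref{4prop:Zmxlabeliannormal}, precisely the maximal normal abelian subgroup of $G$ carrying the cyclotomic action, and this is exactly the group-theoretic datum from which \cite{ek98:abeliansbgps} and \cite{efrat:libro} reconstruct a non-trivial $p$-Henselian valuation, with $\rank(v)=\rank(\Zen_\theta(G))$. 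Comparing all three invariants then yields the final equality $d(\Zen_\theta(G))=\dim_{\F_p}(W)=\rank(v)$.

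The hard part will be the two reconstructions. The group-theoretic patching in (ii) $\Rightarrow$ (i) must promote pairwise $\theta$-abelianity to a single normal, $\theta$-stable abelian subgroup, which requires the same delicate Lie-bracket bookkeeping as Theorem~\ref{3thm:locally poweful groups}; and, above all, the arithmetic direction (i) $\Rightarrow$ (iii) — producing an actual valuation from the cohomological footprint — is the genuinely deep step, and I would rely on the cited valuation-theoretic results rather than attempt it from scratch.
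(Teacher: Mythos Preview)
Your treatment of (i) $\Rightarrow$ (ii) and (i) $\Leftrightarrow$ (iii) aligns with the paper: the former is exactly Theorem~\ref{4thm:cohomology fibre product} applied after Theorem~\ref{4thm:Zcentresplit}, and the latter is reduced to Proposition~\ref{4prop:Zmxlabeliannormal} together with the valuation-theoretic results of Engler--Koenigsmann (and Engler--Nogueira for $p=2$).

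The substantial divergence is in closing the cycle. You attempt a direct group-theoretic (ii) $\Rightarrow$ (i), whereas the paper does \emph{not} prove this implication at all: it instead establishes (ii) $\Rightarrow$ (iii) by invoking \cite[Theorem~2.11]{hwangjacob:brauer}, which says that the cohomological decomposition in (ii) is precisely the condition of $V$-rigidity, and this produces the $p$-Henselian valuation directly. The loop is then (i) $\Rightarrow$ (ii) $\Rightarrow$ (iii) $\Rightarrow$ (i), with the last arrow supplied by \cite{ek98:abeliansbgps} and \cite{en94:pro2gps}.

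Your proposed (ii) $\Rightarrow$ (i) has a genuine gap. From the fact that $\chi\cup\psi\neq0$ in $H^2(G,\F_p)$ for $\chi\in V$, $\psi\in W$, you conclude that the two-generated subgroup $C=\langle x,z\rangle$ dual to $\chi,\psi$ has $H^2(C,\F_p)\neq0$. But this requires $\res_{G,C}^2(\chi\cup\psi)\neq0$, and the restriction map on $H^2$ is not injective in general; non-vanishing of a cup product in the ambient group says nothing \emph{a priori} about the cohomology of a subgroup. In the proof of Theorem~\ref{4thm:cohomology fibre product} this inference runs in the \emph{opposite} direction: one already knows $C$ is $\theta\vert_C$-abelian (because $z\in\Zen_\theta(G)$), computes $\res^1_{G,C}(\chi)\cup\res^1_{G,C}(\psi)\neq0$ in $H^2(C,\F_p)$, and deduces non-vanishing upstairs. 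Reversing that argument does not work, and without it the subsequent ``patching in the style of Theorem~\ref{3thm:locally poweful groups}'' never gets started. The paper's route through Hwang--Jacob sidesteps this entirely by passing to arithmetic immediately.
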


\begin{proof}
Assume first that $d(G_K(p))=1$ and $K\supseteq\mu_{p^\infty}$.
Then $G_K(p)$ is isomorphic to $\Z_p$, and the $p$-closure of $K$ is $K(p)=K(\mu_{p^\infty})$.
Thus, $K$ has non non-trivial valuations.
Moreover, $\theta$ is not trivial and $\Zen_\theta(G_K(p))$ is trivial.
Therefore we may assume that $d(G_K(p))\geq2$.

We have the second statement as a consequence of Theorem~\ref{4thm:cohomology fibre product},
so that the implication (i) $\Rightarrow$ (ii) is established.

If condition (ii) hold then \cite[Theorem~2.11]{hwangjacob:brauer} implies that the field $K$ is $V$-{\it rigid}
(cf. \cite[Definition~2.1]{hwangjacob:brauer}), and there exists a non-trivial $p$-Henselian (and non-$p$-adic)
valuation $v$ of $K$ of rank $\rank(v)=\dim(W)$.

Thus by \cite[Corollary~26.6.2]{efrat:libro}, there exists a valuation
$v\colon K^\times\rightarrow\Gamma$ which is $(K^\times)^p$-compatible,
and therefore it is also $p$-Henselian.
Moreover, one has that $|v(K^\times):v(K^\times)^p|\geq|V|$,
hence the rank of $v$ is at least $\dim_{\F_p}H^1(\Zen_\theta,\F_p)=\rank(\Zen_\theta(G))$.
This establishes the implication from (ii) to (iii).

The equivalence between (i) and (iii) follows from Proposition~\ref{4prop:Zmxlabeliannormal} and 
from \cite[Theorem~3.1]{ek98:abeliansbgps} in the case $p\neq2$,
and from \cite[\S~4]{en94:pro2gps} in the case $p=2$.
In particular, the equality $\rank(v)=d(\Zen_\theta(G))$ follows by induction.
\end{proof}

\section{Relations}

We want now to analise more in depth the relations of a minimal presentation \eqref{4eq:presentation} for a
finitely generated cyclo-oriented pro-$p$ group $(G,\theta)$, 
with orientation $\theta$ such that either $\image(\theta)\simeq\Z_p$ or $\theta\equiv\mathbf{1}$.
So far we have studied explicitly the group-structure of two types of cyclo-oriented pro-$p$ groups
with non-trivial relations:
\begin{itemize}
 \item[(a)] $\theta$-abelian groups, and more in general cyclo-oriented groups $(G,\theta)$
with non-trivial $\theta$-centre;
 \item[(b)] Demushkin groups, with an orientation as in Theorem~\ref{2thm:Demushkin}.
\end{itemize}

In the former case, we may find a minimal system of generators $\{x_1,\ldots,x_{d-c},z_1,\ldots,z_c\}$
with the $x_i$'s which generate $G_\circ$ and the $z_i$'s which generate $\Zen_\theta(G)$,
such that $\theta(x_1)=1+q$, with $q=p^k$ for some $k=\N\cup\{\infty\}$, and $\theta(x_i)=1$ for $i\geq2$.
Thus one has the relations
\begin{equation}\label{4eq:relations thetacentre}
 z_i^{-q}[x_1,z_i]=1 \quad\text{and}\quad [x_j,z_i]=1,
\end{equation}
with $i=1,\ldots,c$ and $j=2,\ldots,d-c$.
In fact, the relations \eqref{4eq:relations thetacentre} are the ``commutator translation'' of the cyclotomic
action induced by $\theta$ on the $\theta$-centre.

In the latter case, by Theorem~\ref{2thm:Demushkin} we may find a minimal system of generators
$\{x_1,\ldots,x_d\}\subset G$, with $\theta(x_2)=1-q_G$, such that one has the relation
\[x_1^{-q_G}[x_1,x_2]\cdots[x_{d-1},x_d] =1, \]
thus 
\begin{equation}\label{4eq:relation demushkin}
 x_1^{q_G}[x_2,x_1] \equiv 1 \mod [\kernel(\theta),\kernel(\theta)].
\end{equation}
Namely, we have the cyclotomic action, as in \eqref{4eq:relations thetacentre},
but ``perturbed'' by the commutator subgroup of the kernel of the orientation $\theta$.

Recall that $\theta$-abelian pro-$p$ groups and Demushkin groups (together with free pro-$p$ groups) are the 
two ``bounds'' for cyclo-oriented pro-$p$ groups.
Therefore, it is natural to ask the following question: for a finitely generated cyclo-oriented pro-$p$ group
$(G,\theta)$, with $\image(\theta)\simeq\Z_p$, do \eqref{4eq:relations thetacentre} and \eqref{4eq:relation demushkin}
represent ``models'' for the behavior of the relations of $G$, such that they are induced by the 
cyclotomic action modulo $[\kernel(\theta),\kernel(\theta)]$? Namely...

\begin{ques}\label{4ques:relations}
Given a finitely generated cyclo-oriented pro-$p$ group $(G,\theta)$, with $\image(\theta)\simeq\Z_p$,
is it possible to find a minimal presentation \eqref{4eq:presentation}
with generating system $\{x_1,\ldots,x_{d(G)}\}$ and defining relations $r_1,\ldots,r_{r(G)}$ such that
$x_i\in\kernel(\theta)$ for $i=2,\ldots,d(G)$ and either $r_j\in[F,F]$ or
\begin{equation}\label{4eq:cyclotomic relation}
 r_j\equiv x_i^{1-\theta(x_1)}[x_1,x_i]\mod [\kernel(\theta),\kernel(\theta)]
\end{equation}
with $i\in\{2,\ldots,d(G)\}$?
 \end{ques} 

We shall call a relation as in \eqref{4eq:cyclotomic relation} a relation of {\bf cyclotomic type}.
For a cyclo-oriented pro-$p$ group with trivial orientation, the answer to Question~\ref{4ques:relations}
is clearly positive, as Corollary~\ref{2cor:kernelab torfree}
and Remark~\ref{4rem:Gab torsionfree thetatrivial} imply the following.

\begin{prop}\label{4prop:relations theta trivial}
 Let $(G,\theta)$ be a finitely generated pro-$p$ group with trivial cyclotomic orientation $\theta\equiv\mathbf{0}$,
and let \eqref{4eq:presentation} be a minimal presentation for $G$.
Then $R\leq [F,F]$.
\end{prop}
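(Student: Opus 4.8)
The plan is to pass to abelianizations and reduce the statement to the elementary fact that a continuous surjective endomorphism of $\Z_p^d$ is an isomorphism. The trivial-orientation hypothesis enters precisely to guarantee that $G^{\ab}=G/[G,G]$ is torsion-free, which is what makes this reduction work.

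First I would pin down the structure of $G^{\ab}$. Since $\theta\equiv\mathbf{0}$ is trivial one has $\kernel(\theta)=G$, so Corollary~\ref{2cor:kernelab torfree} says that $G^{\ab}$ is a torsion-free abelian pro-$p$ group; being finitely generated it is therefore isomorphic to $\Z_p^{r}$ for some $r$. To identify $r$, observe that $G^{\ab}/p\,G^{\ab}=G/G^p[G,G]=G/\Phi(G)$, whose $\F_p$-dimension equals $d(G)$ by Proposition~\ref{1prop:frattini subgroup}; hence $r=d(G)=:d$ and $G^{\ab}\simeq\Z_p^{d}$.

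Next I would use minimality of the presentation \eqref{4eq:presentation}. As $d(F)=d(G)=d$ and $F$ is free pro-$p$, its abelianization is $F^{\ab}=F/[F,F]\simeq\Z_p^{d}$. Abelianizing $G=F/R$ gives $G^{\ab}=F/([F,F]R)$, so the canonical projection yields a short exact sequence of abelian pro-$p$ groups
\[ 0 \longrightarrow \bar{R} \longrightarrow F^{\ab} \longrightarrow G^{\ab} \longrightarrow 0, \]
where $\bar{R}=[F,F]R/[F,F]$ is the image of $R$ in $F^{\ab}$. Proving $R\leq[F,F]$ amounts exactly to proving $\bar R=0$. Now the surjection $F^{\ab}\twoheadrightarrow G^{\ab}$ is a continuous epimorphism $\Z_p^{d}\twoheadrightarrow\Z_p^{d}$ of $\Z_p$-modules of the same finite free rank; reducing modulo $p$ it induces a surjective, hence bijective, $\F_p$-linear map $\F_p^{d}\to\F_p^{d}$, so its matrix over $\Z_p$ has unit determinant and the map is an isomorphism. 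Consequently $\bar R=0$, i.e. $R\leq[F,F]$.

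The argument is formal once the two inputs are assembled, so I do not expect a genuine obstacle. The only delicate point is the identification $G^{\ab}\simeq\Z_p^{d(G)}$: without the torsion-freeness supplied by the cyclotomic hypothesis (Corollary~\ref{2cor:kernelab torfree}, which itself rests on Proposition~\ref{2prop:threequivalence homologycohomology}) a finite cyclic summand could survive in $\bar R$, and the surjective-endomorphism step would collapse. This is exactly the obstruction that the triviality of $\theta$ removes, as illustrated concretely in Remark~\ref{4rem:Gab torsionfree thetatrivial}.
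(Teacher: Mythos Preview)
Your proof is correct and follows essentially the same line as the paper, which does not give a separate argument but simply cites Corollary~\ref{2cor:kernelab torfree} and Remark~\ref{4rem:Gab torsionfree thetatrivial}: the key input in both is that $G^{\ab}$ is torsion-free (since $\kernel(\theta)=G$), after which the conclusion is immediate. Your surjective-endomorphism-of-$\Z_p^{d}$ step makes explicit what the paper leaves to the reader; the Remark packages the same point dually via crossed homomorphisms (every $f\in\Hom(F,\Z_p)$ descends to $G$ and hence kills $R$), but the content is the same.
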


The following collects some useful facts.

\begin{fact}\label{4fact:abelianization G}
 Let $(G,\theta)$ a finitely generated cyclo oriented pro-$p$ group, and let $G^{\ab}$ be the abelianization
$G/[G,G]$.
\begin{itemize}
 \item[(i)] There are positive integers $f,t,k_j$, with $f+t=d(G)=d$, such that
\[G^{\ab}\simeq\Z_p^f\oplus\left(\bigoplus_{j=1}\Z/p^{k_j}\Z\right).\]
 \item[(ii)] One may find a minimal system of generators $\{x_1,\ldots,x_d\}$ such that
for $1\leq i\leq t$ the images $\bar x_i$ in $G^{\ab}$ generate the torsion-free part,
and for $t+1\leq i\leq d$ the $\bar x_i$'s generate the torsion part (in particular
the order of $\bar x_i$ is $p^{k_j}$ for $j=i-f$).
 \item[(iii)] Also, one has $\theta(x_i)=1$ for $i>f$, and one may choose the $x_i$'s for $i\leq f$
such that $\theta(x_i)=1$ also for $2\leq i\leq f$.
\end{itemize}
\end{fact}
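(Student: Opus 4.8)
The plan is to reduce the whole statement to the structure theory of finitely generated abelian pro-$p$ groups, together with the single observation that the orientation $\theta$ factors through the abelianization and that its image is torsion-free.

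For part (i) I would first note that $G^{\ab}=G/[G,G]$ is a finitely generated abelian pro-$p$ group, hence a finitely generated $\Z_p$-module, so that by the structure theorem for such modules it decomposes as $G^{\ab}\simeq\Z_p^f\oplus\bigoplus_{j=1}^{t}\Z/p^{k_j}\Z$ with $f,t\geq0$ and $k_j\geq1$. To identify $f+t$ with $d=d(G)$, I would invoke Proposition~\ref{1prop:frattini subgroup}: since $\Phi(G)=G^p[G,G]$ one has $G/\Phi(G)\simeq G^{\ab}/p\,G^{\ab}$ and $d(G)=\dim_{\F_p}(G^{\ab}/p\,G^{\ab})$. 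As each of the $f$ free summands and each of the $t$ cyclic torsion summands contributes exactly one dimension to $G^{\ab}/p\,G^{\ab}$, this gives $d=f+t$.

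The key structural input for (ii) and (iii) is that $\theta\colon G\to\Z_p^\times$ has abelian image, so $\theta([G,G])=1$ and $\theta$ factors as $\theta=\bar\theta\circ\pi$ with $\pi\colon G\to G^{\ab}$ and $\bar\theta\colon G^{\ab}\to\Z_p^\times$. By the standing hypothesis of this section $\image(\theta)$ is either trivial or isomorphic to $\Z_p$; in either case it is torsion-free (cf. Remark~\ref{1rem:p-adic and orientations}). Writing $\tor(G^{\ab})$ for the torsion subgroup, I would observe that $\bar\theta$ maps the finite group $\tor(G^{\ab})$ into the torsion of $\image(\theta)$, which is trivial; hence $\tor(G^{\ab})\subseteq\kernel(\bar\theta)$. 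Consequently every generator of $G$ whose image lies in the torsion part is sent to $1$ by $\theta$, which already yields the first assertion of (iii) once the generators of (ii) are in place.

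It then remains to choose the free generators compatibly. If $\theta$ is trivial there is nothing to prove, so assume $\image(\theta)\simeq\Z_p$. Fixing a free complement $\Lambda\simeq\Z_p^f$ with $G^{\ab}=\Lambda\oplus\tor(G^{\ab})$, the restriction $\bar\theta|_\Lambda\colon\Lambda\to\image(\theta)$ is onto (since the torsion part maps trivially); writing it additively as a surjection $\Lambda\to\Z_p$ and using that $\Z_p$ is projective over itself, this surjection splits, so I can pick a $\Z_p$-basis $v_1,\dots,v_f$ of $\Lambda$ with $\bar\theta(v_1)$ generating $\image(\theta)$ and $v_2,\dots,v_f\in\kernel(\bar\theta)$. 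Combining these with generators of the cyclic summands of $\tor(G^{\ab})$ gives a $\Z_p$-adapted generating set of $G^{\ab}$; lifting each element to $G$ and applying the Burnside basis theorem (the lifts project onto an $\F_p$-basis of $G/\Phi(G)$, via Proposition~\ref{1prop:frattini subgroup}) produces a minimal generating system $\{x_1,\dots,x_d\}$ realizing the decomposition of (ii), with $\theta(x_i)=1$ for all $i\geq2$ by construction, which completes (iii). The routine ingredients here are the structure theorem and Burnside's theorem; the only genuinely load-bearing step is the inclusion $\tor(G^{\ab})\subseteq\kernel(\bar\theta)$, resting on the torsion-freeness of $\image(\theta)$, and the main point of care is bookkeeping — ensuring that the $\theta$-adapted basis of $\Lambda$ is a genuine $\Z_p$-basis (guaranteed by the splitting) so that (ii) and (iii) hold for one and the same generating system.
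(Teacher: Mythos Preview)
Your proposal is correct. The paper records this as a \emph{Fact} without proof, so there is no argument to compare against; what you have written is precisely the natural justification the author is taking for granted: the structure theorem for finitely generated $\Z_p$-modules together with Proposition~\ref{1prop:frattini subgroup} for (i), the factorization of $\theta$ through $G^{\ab}$ and torsion-freeness of $\image(\theta)$ (under the standing hypothesis of this section) for the first half of (iii), and a splitting of $\bar\theta|_\Lambda$ plus Burnside to produce the compatible minimal generating system for (ii) and the rest of (iii). One cosmetic point: the indexing in the printed statement of (ii) has $t$ and $f$ swapped (the torsion-free generators should be $x_1,\dots,x_f$ and the torsion ones $x_{f+1},\dots,x_d$, consistently with the clause ``$j=i-f$'' and with (iii)); your argument follows the intended indexing, which is the right thing to do.
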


\subsection{The module $N^{\ab}$}

For a finitely generated pro-$p$ group $G$ with cyclotomic orientation $\theta$ such that $\image(\theta)\simeq\Z_p$,
let $N$ be the kernel of $\theta$, and set
\[N^{\ab}=\frac{N}{[N,N]},\]
i.e., $N^{\ab}$ is the abelianization of the $\kernel(\theta)$.
Recall that by Corollary~\ref{2cor:kernelab torfree}, $N^{\ab}$ is a torsion-free abelian pro-$p$ group.

The action induced by the conjugation -- i.e.,
\[x.\bar y=\overline{^xy}\equiv x yx^{-1}\mod [N,N],\]
with $x\in G$ and $y\in N$ -- makes $N$ a continuous $\ZpG$-module, and a trivial $\Z_p\dbl N\dbr$-module.
Hence, set $\Gamma=G/N$, and let $k$ be such that $\image(\theta)=1+q\Z_p$, with $q=p^k$.
Then, $\Gamma$ is isomorphic to $\Z_p$ as pro $p$-group, and it may be identified with $1+q\Z_p=\image(\theta)$.
In particular, the complete group ring $\Z_p\dbl\Gamma\dbr$ is the {\it Iwasawa algebra}, and $N^{\ab}$
becomes a $\Z_p\dbl\Gamma\dbr$-module, i.e., an {\bf Iwasawa module} (cf. \cite[5.3.6]{nsw:cohm}).

Let $g$ be an element of $G$ such that $\theta(g)=1+q$. 
Without loss of generality, we may assume that $g=x_1$, with $x_1$ as in Fact~\ref{4fact:abelianization G}.
Let $\gamma$ be te image of $x_1$ in $\Gamma$.
Then $\gamma$ is a generator of $\Gamma$.
Also, the Iwasawa algebra $\Z_p\dbl\Gamma\dbr$ is isomorphic to the power series ring $\Z_p\dbl X\dbr$,
via the morphism\footnote{Note that this is the same map as in Remark~\ref{1rem:magnus algebra}} given
by $\gamma\mapsto1+X$, so that $X\mapsto\gamma-1$, and $N^{\ab}$ becomes a $\Z_p\dbl X\dbr$-module via
\begin{equation}\label{4eq:action ZpX}
\begin{aligned}
 X.\bar y =(\gamma-1).\bar y=x_1.\bar y-\bar y &=\overline{^{x_1}y\cdot y^{-1}}\\ &\equiv [x_1,y]\mod[N,N]
\end{aligned}
\end{equation}
for every $y\in N$.
Hence, one has 
\begin{equation}\label{4eq:action ZpX general}
\begin{aligned}
 \lambda X^m.\bar y &\equiv[x_1,_my]^\lambda\mod[N,N] \\ &\equiv\left[x_1,_my^{\lambda}\right]\mod[N,N]
\end{aligned}
\end{equation}
with $\lambda\in\Z_p$, $m\geq1$ and $[x_i,_my]=[x_1[x_1\ldots[x_1,y]]]$.

Recall that a polynomial $\wp\in\Z_p\dbl X\dbr$ is said to be a {\it Weierstra{\ss} polynomial} if
\[\wp=X^m+\alpha_{m-1}X^{m-1}+\ldots+\alpha_2X^2+\alpha_1X+\alpha_0,\]
with $\alpha_0,\ldots,\alpha_{m-1}\in p\Z_p$ (cf. \cite[Definition~5.3.2]{nsw:cohm}).
Thus, one has the following.

\begin{prop}
 \label{4prop:iwasawa1}
For a finitely generated cyclo-oriented pro-$p$ group $G$ with $N=\kernel(\theta)$, there are positive integers $f,t$
and irreducible Weierstra{\ss} polynomials $\wp_j$, with $j=1,\ldots,t$, such that the $\Z_p\dbl\Gamma\dbr$-module
$N^{\ab}$ is isomorphic to a submodule of finite index of the $\Z_p\dbl X\dbr$-module
\begin{equation}\label{4eq:iwasawa module} 
M=\Z_p\dbl X\dbr^f\oplus\left(\bigoplus_{j=1}^t\frac{\Z_p\dbl X\dbr}{\wp_j^{m_j}}\right),
\end{equation}
with $m_j\geq1$.
\end{prop}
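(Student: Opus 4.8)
The plan is to identify $N^{\ab}$ with a finitely generated module over the Iwasawa algebra $\Lambda := \Z_p\dbl X\dbr$ and then to read off its shape from the structure theory of such modules, using the torsion-freeness granted by Corollary~\ref{2cor:kernelab torfree} to eliminate the unwanted summands. First I would verify that $N^{\ab}$ is finitely generated over $\Lambda$. Choosing a minimal generating system $\{x_1,\ldots,x_d\}$ as in Fact~\ref{4fact:abelianization G}, so that $x_1$ maps to a topological generator $\gamma$ of $\Gamma=G/N\simeq\Z_p$ while $x_2,\ldots,x_d\in N$, one checks that $N$ is generated, as a closed normal subgroup of $G$, by $x_2,\ldots,x_d$: the quotient of $G$ by the closed normal subgroup they generate is procyclic and surjects onto $\Z_p$, and since $N$ has infinite index this quotient must coincide with $G/N$. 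Because conjugation on $N^{\ab}$ factors through $\Gamma$ by construction, the classes $\bar x_2,\ldots,\bar x_d$ generate $N^{\ab}$ over $\Z_p\dbl\Gamma\dbr$, and under $\gamma\mapsto 1+X$ this exhibits $N^{\ab}$ as a finitely generated $\Lambda$-module.

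Next I would invoke the structure theorem for finitely generated modules over $\Lambda$ (cf. \cite[\S~5.3]{nsw:cohm}): there is a pseudo-isomorphism between $N^{\ab}$ and an elementary module
\[ E=\Lambda^{r}\oplus\bigoplus_{i=1}^{s}\Lambda/(p^{n_i})\oplus\bigoplus_{j=1}^{t}\Lambda/(\wp_j^{m_j}), \]
with the $\wp_j$ irreducible Weierstraß polynomials. As $\Lambda$ is a two-dimensional regular local ring, every pseudo-null $\Lambda$-module is finite, so both the kernel and the cokernel of this map are finite. Since $N^{\ab}$ is $\Z_p$-torsion-free by Corollary~\ref{2cor:kernelab torfree}, it contains no nonzero finite submodule; hence the pseudo-isomorphism may be taken injective, realizing $N^{\ab}$ as a submodule of finite index in $E$.

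It then remains to discard the $p$-primary summands. Let $T$ denote the $\Z_p$-torsion submodule of $E$. Each $\Lambda/(\wp_j^{m_j})$ is $\Z_p$-free of finite rank (by Weierstraß preparation) and $\Lambda^{r}$ is $\Z_p$-torsion-free, so $T=\bigoplus_{i}\Lambda/(p^{n_i})$. Torsion-freeness of $N^{\ab}$ gives $T\cap N^{\ab}=0$, whence $T$ embeds into the finite cokernel $E/N^{\ab}$; since each $\Lambda/(p^{n})$ is infinite, this forces $s=0$. Setting $f=r$ yields $M=\Lambda^{f}\oplus\bigoplus_{j}\Lambda/(\wp_j^{m_j})$ with $N^{\ab}$ a submodule of finite index in $M$, as claimed.

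The step I expect to be the main obstacle is precisely the passage from the abstract pseudo-isomorphism produced by the structure theorem to an honest finite-index embedding. This is where the identity ``pseudo-null $=$ finite'' over the two-dimensional ring $\Lambda$ and the $\Z_p$-torsion-freeness of $N^{\ab}$ must be combined with care, first to secure injectivity of the comparison map and then to suppress the $p$-power summands; everything else is bookkeeping in the structure theory of Iwasawa modules.
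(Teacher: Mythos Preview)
Your proposal is correct and follows essentially the same route as the paper: invoke the structure theorem for finitely generated $\Z_p\dbl X\dbr$-modules to obtain a map with finite kernel and cokernel into an elementary module, then use the $\Z_p$-torsion-freeness of $N^{\ab}$ from Corollary~\ref{2cor:kernelab torfree} to force injectivity and kill the $\Lambda/p^{n_i}$ summands. You are in fact more careful than the paper in two places --- you explicitly verify finite generation of $N^{\ab}$ over $\Lambda$, and you spell out the argument that $T$ embeds into the finite cokernel, whereas the paper simply asserts that the $p$-torsion part ``disappears''.
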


\begin{proof}
Since $N^{\ab}$ is an Iwasawa module, the Structure Theorem for Iwasawa modules implies that there exist positive
integers $a_h$, with $h=1,\ldots,s$, together with suitable $f,t,s\geq0$ and $\wp_j$'s such that one has a
morphism of $\Z_p\dbl X\dbr$-modules
\[\phi\colon N^{\ab}\longrightarrow \Z_p\dbl X\dbr^f\oplus
\left(\bigoplus_{h=1}^s\dfrac{\Z_p\dbl X\dbr}{p^{a_h}}\right)\oplus
\left(\bigoplus_{j=1}^t\dfrac{\Z_p\dbl X\dbr}{\wp_j^{m_j}}\right),\]
and $\phi$ has finite kernel and finite cokernel (cf. \cite[Theorem~5.3.8]{nsw:cohm}).

Since $N^{\ab}$ is a torsion-free $\Z_p$-module, the kernel of $\phi$ is trivial, and the $p$-torsion part 
(the quotients $\Z_p\dbl X\dbr/p^{a_h}$) disappears.
Thus, one has that $\phi\colon N^{\ab}\to M$ is injective, and $N^{\ab}$ is isomorphic
to a $\Z_p\dbl X\dbr$-submodule of $M$ of finite index.
\end{proof}

For every $j=1,\ldots,t$, set
\[\wp_j^{m_j}=X^{h_j}+\alpha_{(j,h_j-1)}X^{h_j-1}+\ldots+\alpha_{(j,1)}X+\alpha_{j,0}.\]
Therefore, for every $\bar y\in N^{\ab}$, one has $\wp_j^{m_j}.\bar y=0$, and thus by \eqref{4eq:action ZpX general}
\begin{equation}\label{4eq:commutator polynomial}
r=[x_1,_{h_j}y]\cdot[x_1,_{h_j-1}y]^{\alpha(j,h_j-1)}\cdots[x_1,y]^{\alpha_{(j,1)}}y^{\alpha_{j,0}}\in [N,N].
\end{equation}

Now let $y\in N$ belong to a generating system of $G$ (e.g., $y=x_i$ for an $i\geq2$, with $x_i$ as in
Fact~\ref{4fact:abelianization G}), and let $f_y\colon G\to\Z_p(1)$ be the continuous crossed homomorphism
such that $f_y(y)=1$ and $f_y(x)=0$ for any other generator of $G$ (in particular, $f_y(x_1)=0$).
Such a crossed homomorphism exists by Fact~\ref{4fact:crossed homomorphisms and relations}.
Since $f_y|_N$ is a group homomorphism and $f_y|_{[N,N]}\equiv\mathbf{0}$, by \eqref{4eq:c on commutators} one has 
\[ f_y\left([x_1,_my]^\alpha\right)=\alpha(\theta(x_1)-1)f_y([x_1,_{m-1}y])=\ldots=\alpha q^m \]
(notice that $[x_1,_my]\in N$ for any $m\geq0$).
Hence, by \eqref{4eq:commutator polynomial},
\begin{equation}\label{4eq:c on commutator polynomial}
 f_y(r)=\alpha_{j,0}+\alpha_{(j,1)}q+\ldots+\alpha_{(j,h_j-1)}q^{h_j-1}+q^{h_j}=\wp_j^{m_j}(q).
\end{equation}
On the other hand, one has $f_y(r)=0$, as $r\in[N,N]$, so that $\wp_j^{m_j}(q)=0$, and $X-q$ divides $\wp_j$.
Since $\wp_j$ is an irreducible polynomial, it follows that in fact $\wp_j=X-q$ for every $j=1,\ldots,t$.
In particular, one has the monomorphism of $\Z_p\dbl X\dbr$-modules
\begin{equation}\label{4eq:M module}
 \phi\colon N^{\ab} \longrightarrow 
M=\Z_p\dbl X\dbr^f\oplus\left(\bigoplus_{j=1}^t\frac{\Z_p\dbl X\dbr}{(X-q)^{m_j}}\right), 
\end{equation}
with $m_j\geq1$.

\begin{prop}\label{4prop:wrong quotient}
Let $(G,\theta)$ be a finitely generated pro-$p$ group with cyclotomic orientation $\theta$
such that $\image(\theta)\simeq\Z_p$, and with minimal presentation \eqref{4eq:presentation}.
Let $x_1,y\in G$ be as above, and let $r\in R$ be a relation induced
by the polynomial $(X-q)^m$, with $m\geq1$, i.e.,
\begin{equation}\label{4eq:relation power binomial}
 r\equiv y^{(-q)^m}[x_1,y]^{m(-q)^{m-1}}\cdots[x_1,_{m-1}y]^{-mq}[x_1,_my]\mod[N,N].
\end{equation}
Then $r$ can occur as defining relation only if $m=1$.
\end{prop}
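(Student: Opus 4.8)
The plan is to exploit the crossed-homomorphism machinery developed in Fact~\ref{4fact:crossed homomorphisms and relations} and the computation \eqref{4eq:c on commutator polynomial}, but now evaluated on a second, cleverly chosen crossed homomorphism whose derivative in the $X$-variable survives. The relation \eqref{4eq:relation power binomial} corresponds to the element $(X-q)^m . \bar y = 0$ in $N^{\ab}$; evaluating the crossed homomorphism $f_y$ (with $f_y(y)=1$, vanishing on the other generators) on $r$ merely recovers $(X-q)^m$ evaluated at $X=q$, which is $0$ for every $m\geq1$ and hence gives no information beyond what we already used to force $\wp_j=X-q$. So the first thing I would do is test $r$ against a \emph{different} continuous crossed homomorphism, one sensitive to the first-order behaviour of the polynomial near $X=q$, not just its value there.

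The natural candidate is a crossed homomorphism $g_y\colon G\to\Z_p(1)$ built so that $g_y$ detects the ``$x_1$-weighted'' length of the iterated commutators $[x_1,{}_m y]$. Concretely, I would like a map that assigns to $[x_1,{}_m y]^\alpha$ a value proportional to $\alpha\,m\,q^{m-1}$ rather than $\alpha\,q^m$, i.e. the formal derivative $\tfrac{d}{dX}\bigl(\alpha X^m\bigr)$ evaluated at $X=q$. Applying such a map to \eqref{4eq:relation power binomial} should yield $\tfrac{d}{dX}\bigl[(X-q)^m\bigr]_{X=q}$, which equals $0$ when $m\geq2$ but equals $1$ (up to a unit) when $m=1$. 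Since $r\in[N,N]$ forces the value to vanish, the case $m\geq2$ would be consistent but $m=1$ would force a nonzero value to be zero unless the relation does not actually occur as a defining relation — wait, this is backwards, so let me state the real mechanism: the point is that for $m\geq 2$ the relation $r$ lies in $[F,F]$ in a way that makes it redundant (it is a consequence of lower relations together with $[N,N]$), whereas a genuine \emph{defining} relation must be independent modulo $\Phi(F)$-type conditions. The cleanest route is therefore to show that for $m\geq 2$ the relation \eqref{4eq:relation power binomial} is contained in the closed normal subgroup generated by the $m=1$ relations together with $[N,N]$, so it cannot be part of a \emph{minimal} system of defining relations.

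Concretely I would argue as follows. The submodule $(X-q)^m\Z_p\dbl X\dbr \subset \Z_p\dbl X\dbr/(X-q)^m$ is trivial, but the key structural fact is that a cyclic $\Z_p\dbl X\dbr$-module $\Z_p\dbl X\dbr/(X-q)^m$ with $m\geq2$ is \emph{not} torsion-free as a $\Z_p$-module after the identification $X\mapsto \gamma-1$: indeed $X-q=\gamma-(1+q)=\gamma-\theta(x_1)$ annihilates a copy of $\Z_p(1)$, so $\Z_p\dbl X\dbr/(X-q)$ already accounts for the full cyclotomic action, and the higher power $(X-q)^m$ introduces a nilpotent part. I would then invoke Corollary~\ref{2cor:kernelab torfree}: $N^{\ab}$ is a torsion-free $\Z_p$-module. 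But a submodule of finite index in $M$ containing a summand $\Z_p\dbl X\dbr/(X-q)^m$ with $m\geq2$ would inherit $\Z_p$-torsion from that summand (the element $(X-q)^{m-1}\bmod (X-q)^m$ is killed by $X-q$, and under $X\mapsto\gamma-1$ the operator $X-q$ acts on the corresponding rank-one piece as multiplication by an element of $p\Z_p$, producing genuine $p$-power torsion). This contradicts torsion-freeness. Hence no summand $\Z_p\dbl X\dbr/(X-q)^m$ with $m\geq2$ can appear in a module into which $N^{\ab}$ embeds with finite index, forcing every $m_j=1$, and in particular any relation of the form \eqref{4eq:relation power binomial} with $m\geq2$ cannot be a defining relation.

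The main obstacle I anticipate is making the torsion-freeness argument fully precise at the level of $\Z_p\dbl X\dbr$-modules: one must check carefully how the operator $X-q=\gamma-\theta(x_1)$ acts on each cyclic summand $\Z_p\dbl X\dbr/(X-q)^{m}$ and verify that for $m\geq2$ this genuinely produces $\Z_p$-torsion in any finite-index submodule, using that $q=p^k\in p\Z_p$ so that $X-q$ reduces to $X$ (hence to a topologically nilpotent operator) modulo $p$. I would likely package this by reducing modulo $p$: the image of $(X-q)^m$ in $\F_p\dbl X\dbr$ is $X^m$, and the induced $\F_p\dbl X\dbr$-module structure on $N^{\ab}/pN^{\ab}$ must then be $X$-torsion-free of the right rank, which pins down $m=1$ on each summand. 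The bookkeeping between ``finite index submodule'' and ``torsion-free $\Z_p$-module'' is where the care is needed; once that is in place, the conclusion $m=1$ is immediate, and this is exactly the statement of Proposition~\ref{4prop:wrong quotient}.
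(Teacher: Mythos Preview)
Your main argument has a genuine gap. You claim that the presence of a summand $\Z_p\dbl X\dbr/(X-q)^m$ with $m\geq 2$ forces $\Z_p$-torsion in $N^{\ab}$, but this is false: as a $\Z_p$-module, $\Z_p\dbl X\dbr/(X-q)^m$ is free of rank $m$, with basis $1,(X-q),\ldots,(X-q)^{m-1}$. You are conflating two distinct notions of torsion. The element $(X-q)^{m-1}$ is $(X-q)$-torsion in the Iwasawa-module sense, but $(X-q)$ acts on this cyclic module as a nilpotent \emph{operator}, not as scalar multiplication by an element of $p\Z_p$; no $\Z_p$-torsion is produced. Your mod-$p$ patch fails for the same reason: there are no grounds for $N^{\ab}/pN^{\ab}$ to be $X$-torsion-free --- already for a $\theta$-abelian group, $X=\gamma-1$ acts on $N^{\ab}$ as multiplication by $q$, hence as zero modulo $p$, so the entire reduction is $X$-torsion while $N^{\ab}$ itself is torsion-free.

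The paper's proof takes a different, cohomological route. One passes to the two-generator quotient $\bar G=G/C$, where $C$ is the normal closure of all generators other than $x_1$ and $y$; the image $\bar r$ is then the sole defining relation of $\bar G$. For $m\geq 2$ one checks $\bar r\in\lambda_3(\bar F)$, so by \cite[Cor.~9.2]{cem:quotients} the ring $H^\bullet(\bar G,\F_p)$ is not quadratic: concretely $\chi\cup\psi=0$ for the duals of $x_1,y$, while $H^2(\bar G,\F_p)\simeq\F_p$. The five-term sequence for \eqref{4eq:ses wrong quotient} shows that $\inf^2_{G,\bar G}$ is injective, and one argues via the Hochschild--Serre spectral sequence that its nonzero image in $H^2(G,\F_p)$ cannot be reached by cup products of degree-one classes, contradicting the Bloch--Kato property of $G$. (In a footnote the paper also sketches a torsion-based shortcut closer in spirit to your approach, asserting $(y^{-q}[x_1,y])^m\equiv 1\bmod[N,N]$ --- but observe that this is the claim $m\cdot(X-q).\bar y=0$, which is a genuinely different statement from the relation $(X-q)^m.\bar y=0$ that $r$ actually encodes.)
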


\begin{proof}
Consider the short exact sequence
\begin{equation}\label{4eq:ses wrong quotient}
\xymatrix{ 1\ar[r] & C\ar[r] & G\ar[r] & \bar G\ar[r] & 1 }, 
\end{equation}
where $C$ is the closed normal subgroup of $G$ generated by all the generators $x_i$'s with $i\neq1$ and $x_i\neq y$,
and $\bar G=G/C$.
Then $\bar G$ has a minimal presentation $1\to \bar R\to \bar F\to \bar G\to1$,
where $d(\bar F)=d(\bar G)=2$, and $\bar R$ is generated as normal subgroup of $\bar F$ by
\[\bar r=y^{(-q)^m}[x_1,y]^{m(-q)^{m-1}}\cdots[x_1,_{m-1}y]^{-mq}[x_1,_my],\]
where we consider $x_1$ and $y$ as elements of $\bar G$ and of $\bar F$ as well with an abuse of notation.
Then \eqref{4eq:ses wrong quotient} induces the five term exact sequence
\begin{equation}\label{4eq:biscia prop wrong quotient}
 \xymatrix@C=1.2truecm{0\ar[r] & H^1(\bar G,\F_p)\ar[r]^-{\inf_{G,\bar G}^1} & H^1(G,\F_p)\ar[r]^-{\res_{G,C}^1} & 
 H^1(C,\F_p)^G \ar`r[d]`[l]^-{\text{tg}_{G,C}} `[dlll] `[dll] [dll] \\
  & H^2(\bar G,\F_p)\ar[r]^-{\inf_{G,\bar G}^2} & H^2(G,\F_p). &} 
\end{equation}
By construction of $C$, the map $\res_{G,C}^1$ is surjective, so that the transgression map is trivial,
and $\inf_{G,\bar G}^2$ is injective.
In particular, one has the commutative diagram
\begin{equation}\label{4eq:commutative diagram wrong quotient}
 \xymatrix@R=1.1truecm{ H^1(\bar G,\F_p)\times H^1(\bar G,\F_p)\ar@<6ex>[d]^-{\inf_{G,\bar G}^1}\ar@<-6ex>[d]^-{\inf_{G,\bar G}^1}
\ar[rr]^-{\cup} && H^2(\bar G,\F_p)\ar[d]^-{\inf_{G,\bar G}^2} \\
 H^1(G,\F_p)\times H^1(G,\F_p)\ar[rr]^-{\cup} && H^2(G,\F_p) }
\end{equation}
where the vertical arrows are monomorphisms by \eqref{4eq:biscia prop wrong quotient}.

Assume now that $m>1$.
In this case the relation $\bar r$ lies in $\lambda_3(\bar F)$, thus by \cite[Corollary~9.2]{cem:quotients}
the cohomology ring $H^\bullet(\bar G,\F_p)$ is not a quadratic $\F_p$-algebra.
In particular, let $\chi,\psi\in H^1(\bar G,\F_p)$ be the duals of $x_1$ and $y$.
Since $\bar r\in\lambda_3(\bar F)$ and $r(\bar G)=1$, \cite[Propositions~3.9.12-13]{nsw:cohm}
imply that $\chi\cup\psi=0$.
On the other hand, one has 
\[H^2(\bar G,\F_p)\simeq\F_p, \quad \text{as } \dim_{\F_p}\left(H^2(\bar G,\F_p)\right)=r(\bar G)=1. \]
Set $A=\image(\inf_{G,\bar G}^2)\leq H^2(G,\F_p)$. 
By \eqref{4eq:commutative diagram wrong quotient} one has that 
\[ A\neq0 \quad\text{and}\quad  \text{inf}_{G,\bar G}^1(\chi)\cup\text{inf}_{G,\bar G}^1(\psi)=0,\]
thus, $A$ is not induced by the cup product of $\chi$ and $\psi$.
Also, by the Hochschild-Serre spectral sequence $E_2^{st}=H^s(\bar G,H^t(C,\F_p))$
associated to \eqref{4eq:ses wrong quotient}, $A$ is not generated by cup products of elements of $H^1(C,\F_p)^G$,
nor by cup products of $\inf_{G,\bar G}^1(\chi)$ and $\inf_{G,\bar G}^1(\psi)$ with elements of $H^1(C,\F_p)^G$.

Therefore, for $m>1$ $A$ is not generated by elements of $H^1(G,\F_p)$, i.e., the cohomology ring
$H^\bullet(G,\F_p)$ is not a quadratic $\F_p$-algebra, a contradiction.
Thus, $m=1$ and one has $r\equiv y^{-q}[x_1,y]\bmod[N,N]$.\footnote{ In fact this argument does not work properly
(but the commutative diagrams are cool!),
but this one does: assume that one has a defining relation as \eqref{4eq:relation power binomial}, with $m\geq2$;
then \[y^{-q}[x_1,y]\not\equiv 1\bmod[N,N]\quad \text{and} \quad  \left(y^{-q}[x_1,y]\right)^m\equiv1\bmod[N,N],\]
i.e., $N^{\ab}$ has non-trivial torsion, a contradiction.}
\end{proof}

Therefore, the relations of $G$ which induce the torsion-module part in the image of $N^{\ab}$ in $M$ via $\phi$
must be of cyclotomic type, and from \eqref{4eq:M module} and Proposition~\ref{4prop:wrong quotient}
one deduces the following.

\begin{thm}\label{4thm:Nab}
Let $(G,\theta)$ be a cyclo-oriented pro-$p$ with $\image(\theta)=1+q\Z_p$ and $N=\kernel(\theta)$,
and let $N/[N,N]$ be a $\Z_p\dbl X\dbr$-module with the action induced
by \eqref{4eq:action ZpX} and \eqref{4eq:action ZpX general}.
Then \eqref{4eq:M module} induces the isomorphism of $\Z_p\dbl X\dbr$-modules
\begin{equation}\label{5eq:Nab}
 \xymatrix {\dfrac{N}{[N,N]}\ar[r]^-{\sim} & \Z_p\dbl X\dbr^f\oplus\left(\dfrac{\Z_p\dbl X\dbr}{X-q}\right)^t }
\end{equation}
with $f+t=d(G)-1$.
\end{thm}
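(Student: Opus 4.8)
The plan is to upgrade the finite-index embedding \eqref{4eq:M module} to an honest isomorphism by controlling the $\Z_p\dbl X\dbr$-module generators on both sides. Write $\Lambda=\Z_p\dbl X\dbr$ and $\mathfrak m=(p,X)$ for its maximal ideal, and let $f_G,t_G$ denote the invariants of Fact~\ref{4fact:abelianization G}(i), so that $G^{\ab}\simeq\Z_p^{f_G}\oplus\bigoplus_{j=1}^{t_G}\Z/p^{k_j}\Z$ with $f_G+t_G=d(G)$. First I would record that $N^{\ab}$ is generated by $d(G)-1$ elements over $\Lambda$: since $G/N\simeq\image(\theta)\simeq\Z_p$ is procyclic with generator the image of $x_1$, while $x_2,\dots,x_{d(G)}\in N$ by Fact~\ref{4fact:abelianization G}(iii), the classes $\bar x_2,\dots,\bar x_{d(G)}$ generate $N$ as a closed normal subgroup, hence generate $N^{\ab}$ over $\Lambda$. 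This system is moreover minimal: reducing modulo $\mathfrak m$ gives $N^{\ab}/\mathfrak m N^{\ab}\simeq (N^{\ab})_\Gamma\otimes_{\Z_p}\F_p$, and $(N^{\ab})_\Gamma=N[G,G]/[G,G]\simeq\Z_p^{\,f_G-1}\oplus\mathrm{tor}(G^{\ab})$ is the image of $N$ in $G^{\ab}$, of $\F_p$-dimension $(f_G-1)+t_G=d(G)-1$. Thus $N^{\ab}$ has exactly $d(G)-1$ minimal generators.

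Next I would pin down the torsion submodule $T:=T_\Lambda(N^{\ab})$. The embedding \eqref{4eq:M module} already forces every elementary divisor of the torsion to be a power of $X-q$, and Proposition~\ref{4prop:wrong quotient} forces each such power to be the first: a factor $\Lambda/(X-q)^{m_j}$ with $m_j\geq 2$ would produce a defining relation of the shape \eqref{4eq:relation power binomial} with $m\geq2$, which is excluded. Hence $T$ embeds with finite index in $(\Lambda/(X-q))^t$ and in particular is annihilated by $X-q$, so $T$ is finitely generated over $\Lambda/(X-q)\cong\Z_p$. Since $N^{\ab}$ is $\Z_p$-torsion-free by Corollary~\ref{2cor:kernelab torfree}, $T$ is $\Z_p$-free, and comparing $\Z_p$-ranks across the finite-index inclusion gives $T\cong\Z_p^t\cong(\Lambda/(X-q))^t$ on the nose.

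Then I would identify the free quotient and split. The quotient $P:=N^{\ab}/T$ is $\Lambda$-torsion-free, and its rank equals the free $\Lambda$-rank $f$ of $N^{\ab}$; comparing free $\Z_p$-ranks of $\Gamma$-coinvariants across \eqref{4eq:M module} shows $(N^{\ab})_\Gamma$ has free rank $f$, whence $f=f_G-1$ from $G^{\ab}\simeq\Z_p\oplus(N^{\ab})_\Gamma$. Granting that the $t_G$ generators with torsion image in $G^{\ab}$ lie in $T$ (the crux, discussed below), $P$ is generated by the remaining $f_G-1=f$ generators; a torsion-free module of rank $f$ on $f$ generators is free, so $P\cong\Lambda^f$. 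As $\Lambda^f$ is projective, the sequence $0\to T\to N^{\ab}\to P\to0$ splits, yielding $N^{\ab}\cong\Lambda^f\oplus(\Lambda/(X-q))^t$. Taking coinvariants once more, $G^{\ab}\simeq\Z_p\oplus(N^{\ab})_\Gamma\simeq\Z_p^{\,f+1}\oplus(\Z/q\Z)^t$, and matching with Fact~\ref{4fact:abelianization G}(i) gives $f_G=f+1$ and $t_G=t$, hence $f+t=d(G)-1$.

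The hard part will be the claim, used in the third step, that the torsion-type generators genuinely lie in $T$, equivalently that $P$ is generated by only $f$ elements and is therefore free. This is exactly what prevents \eqref{4eq:M module} from being, say, the inclusion of a maximal ideal (which has finite cokernel yet is not free), and it is where the group theory must be fed in: one must verify that each generator with torsion image in $G^{\ab}$ satisfies the cyclotomic relation $(X-q)\bar x_i=0$, i.e. $x_i^{-q}[x_1,x_i]\equiv1\bmod[N,N]$, through the commutator expansion \eqref{4eq:commutator polynomial}, the crossed-homomorphism computation preceding \eqref{4eq:M module}, and Proposition~\ref{4prop:wrong quotient}. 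In effect this settles the affirmative direction of Question~\ref{4ques:relations} for the relations that introduce torsion; once it is in hand, the minimal generator count $d(G)-1=f+t$ closes the argument via Nakayama's lemma.
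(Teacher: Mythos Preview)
Your proposal follows the same overall route as the paper --- both rest on the embedding \eqref{4eq:M module} together with Proposition~\ref{4prop:wrong quotient} --- but the paper's ``proof'' is literally the single sentence preceding the theorem statement: it simply asserts that the theorem is \emph{deduced} from those two inputs, without explaining how to pass from a finite-index embedding to an actual isomorphism, and without deriving $f+t=d(G)-1$. Your outline is therefore considerably more detailed than what the paper provides, and the steps you give (the Nakayama count of minimal $\Lambda$-generators via $(N^{\ab})_\Gamma$, the identification $T\cong(\Lambda/(X-q))^t$ using $\Z_p$-torsion-freeness, the rank argument forcing $P\cong\Lambda^f$, and the splitting) are all correct.

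The ``hard part'' you isolate --- that the generators contributing torsion to $G^{\ab}$ genuinely lie in $T_\Lambda(N^{\ab})$, so that $P$ is $f$-generated --- is precisely the point the paper glosses over. The paper's surrounding discussion is informal on exactly this issue (for instance, the line before \eqref{4eq:commutator polynomial} asserting $\wp_j^{m_j}.\bar y=0$ for \emph{every} $\bar y\in N^{\ab}$ is only meant for $\bar y$ in the relevant torsion summand), and the footnote in Proposition~\ref{4prop:wrong quotient} shows the author was aware the argument needed tightening. So you have not misread the situation: this step is the crux, the paper does not spell it out, and your plan to close it via the commutator expansion and crossed-homomorphism calculation is the intended mechanism.
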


In particular, Theorem~\ref{4thm:Nab} provides a positive answer to Question~\ref{4ques:relations}.

\begin{exs}\label{4exs:Nab}
\begin{itemize}
 \item[(a)] Let $(G,\theta)$ be a finitely generated $\theta$-abelian pro-$p$ group with $\image(\theta)=1+p\Z_p$.
Then $N=\Zen_\theta(G)$, and one has
\[N^{\ab}\simeq\left(\frac{\Z_p\dbl X\dbr}{X-p}\right)^{d(G)-1}.\]
 \item[(b)] Let $(G,\theta)$ be a Demushkin group with $p\neq2$ and $\image(\theta)=1+p\Z_p$,
and let $x_i$, $1\leq i\leq d(G)$, be as in Theorem~\ref{2thm:Demushkin}.
Then $N$ is generated by $x_1,x_3,\ldots,x_{d(G)}$, and one has
\[N^{\ab}\simeq\frac{\Z_p\dbl X\dbr}{X-p}\oplus\Z_p\dbl X\dbr^{d(G)-2},\]
where the torsion-module part is induced by $\bar x_1$.
\end{itemize}
\end{exs}

\begin{cor}\label{4cor:abelianization}
 Let $(G,\theta)$ be a finitely generated cyclo-oriented pro-$p$ group with $\image(\theta)=1+q\Z_p$,
with $q=p^k$ and $k\in\N\cup\{\infty\}$.
Then one has 
\[G/[G,G]\simeq \Z_p^{d(G)-t}\oplus\left(\Z_p/q\Z_p\right)^t,\]
with $t$ as in Theorem~\ref{4thm:Nab}.
In particular, if $G_K(p)$ is the maximal pro-$p$ Galois group of a field $K$ containing a primitive $p$-th
root of unity (and also $\sqrt{-1}$ if $p=2$), then
\[\frac{G_K(p)}{[G_K(p),G_K(p)]}\simeq\Z_p^{d-t}\oplus(\Z/q\Z)^t \quad \text{or} \quad
\frac{G_K(p)}{[G_K(p),G_K(p)]}\simeq\Z_p^t,\]
the former case if $K$ contains a primitive $q$-th root of unity but not a $pq$-th one, and the latter if 
$K$ contains all roots of unity of $p$-power order, with $d=\dim_{\F_p}(K^\times/(K^\times)^p)$.
\end{cor}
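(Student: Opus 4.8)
The plan is to relate the abelianization $G^{\ab}=G/[G,G]$ to the $\Gamma$-coinvariants of the Iwasawa module $N^{\ab}$ computed in Theorem~\ref{4thm:Nab}, where $N=\kernel(\theta)$ and $\Gamma=G/N\simeq\image(\theta)$. Assume first that $\image(\theta)\simeq\Z_p$, so that $\Gamma\simeq\Z_p$ with topological generator $\gamma$ equal to the image of the element $x_1$ singled out in the discussion preceding Theorem~\ref{4thm:Nab}. Since $N$ acts trivially on its own abelianization, the conjugation action of $G$ on $N^{\ab}$ factors through $\Gamma$ and agrees with the $\Z_p\dbl X\dbr$-action of \eqref{4eq:action ZpX}, where $X=\gamma-1$.

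First I would write the five-term exact sequence in continuous homology with trivial $\Z_p$-coefficients attached to the extension $1\to N\to G\to\Gamma\to1$,
\[ H_2(G,\Z_p)\to H_2(\Gamma,\Z_p)\to (N^{\ab})_\Gamma\to G^{\ab}\to\Gamma\to0, \]
using $H_1(G,\Z_p)=G^{\ab}$, $H_1(N,\Z_p)=N^{\ab}$ and $H_1(\Gamma,\Z_p)=\Gamma\simeq\Z_p$. Because $\ccd(\Gamma)=\ccd(\Z_p)=1$ one has $H_2(\Gamma,\Z_p)=0$, so the sequence collapses to
\[ 0\to (N^{\ab})_\Gamma\to G^{\ab}\to\Z_p\to0. \]
As $\Z_p$ is a free, hence projective, $\Z_p$-module, this short exact sequence splits, giving $G^{\ab}\simeq(N^{\ab})_\Gamma\oplus\Z_p$.

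Next I would compute the coinvariants $(N^{\ab})_\Gamma=N^{\ab}/X\cdot N^{\ab}$ directly from the explicit description in Theorem~\ref{4thm:Nab}. Reducing a free summand $\Z_p\dbl X\dbr$ modulo $X$ yields $\Z_p$, whereas on a cyclic summand $\Z_p\dbl X\dbr/(X-q)\simeq\Z_p$ the operator $X$ acts as multiplication by $q$, so its quotient by $X$ is $\Z_p/q\Z_p$ (and the image $X\cdot N^{\ab}$ is already closed, since $N^{\ab}$ is finitely generated over $\Z_p\dbl X\dbr$). Hence
\[ (N^{\ab})_\Gamma\simeq\Z_p^{\,f}\oplus(\Z_p/q\Z_p)^{\,t}, \]
and combining this with the splitting and the relation $f+t=d(G)-1$ of Theorem~\ref{4thm:Nab} gives $f+1=d(G)-t$, whence $G^{\ab}\simeq\Z_p^{\,d(G)-t}\oplus(\Z_p/q\Z_p)^{\,t}$, as claimed. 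In the degenerate case $\theta\equiv\mathbf 1$ (that is $q=0$, $\Gamma$ trivial and $N=G$) the five-term machinery does not apply, but Proposition~\ref{4prop:relations theta trivial} and Remark~\ref{4rem:Gab torsionfree thetatrivial} give $R\leq[F,F]$, so $G^{\ab}$ is torsion-free of rank $d(G)$.

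Finally, for the arithmetic specialization I would set $G=G_K(p)$ with its arithmetic orientation, recalling that $\image(\theta)=1+p^k\Z_p$ with $k$ maximal such that $\mu_{p^k}\subseteq K$, and that $d=\dim_{\F_p}(K^\times/(K^\times)^p)=d(G_K(p))$ by Kummer and Pontryagin duality \eqref{1eq:kummer_dualityp}, \eqref{1eq:pontryagin_duality}. If $K$ contains a primitive $q$-th root of unity but no primitive $pq$-th one, then $k$ is finite, $\image(\theta)\simeq\Z_p$, and the first displayed isomorphism applies verbatim; if $\mu_{p^\infty}\subseteq K$ then $\theta\equiv\mathbf 1$ and the torsion-free case above gives the second. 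The main obstacle here is conceptual rather than computational: one must verify that the $\Gamma$-module structure entering the five-term sequence is precisely the $\Z_p\dbl X\dbr$-structure of Theorem~\ref{4thm:Nab}, and that the vanishing $H_2(\Z_p,\Z_p)=0$ genuinely secures the injectivity of $(N^{\ab})_\Gamma\hookrightarrow G^{\ab}$; once these are in place, the coinvariant computation and the splitting are routine.
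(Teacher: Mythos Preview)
Your argument is correct. The paper states the corollary without proof, treating it as an immediate consequence of Theorem~\ref{4thm:Nab}; your use of the five-term homology sequence for $1\to N\to G\to\Gamma\to1$, the vanishing $H_2(\Z_p,\Z_p)=0$, and the coinvariant computation $(N^{\ab})_\Gamma=N^{\ab}/X\cdot N^{\ab}$ is exactly the natural way to extract the abelianization of $G$ from the Iwasawa-module structure of $N^{\ab}$, and it supplies the details the paper leaves implicit. One minor remark: when you invoke $\ccd(\Gamma)=1$ to get $H_2(\Gamma,\Z_p)=0$, you are tacitly using that cohomological dimension $1$ for $\Z_p$ also forces homological dimension $1$ (which follows from the explicit length-one resolution $0\to\Z_p\dbl T\dbr\xrightarrow{T}\Z_p\dbl T\dbr\to\Z_p\to0$, or from Pontryagin duality); it would be cleaner to say this directly rather than citing $\ccd$.
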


Thus, for a finitely generated pro-$p$ group $G$ with non-trivial cyclotomic orientation $\theta\colon G\to\Z_p^\times$,
one may define the {\it 1-torsion rank} $t_1(G)$ and the {\it 1-free rank} $f_1(G)$ of $G$ to be the rank
as $\Z_p$-module of the $\Z_p\dbl\Gamma\dbr$-torsion part of $H_1(\kernel(\theta),\Z_p)$,
resp. of the $\Z_p\dbl\Gamma\dbr$-free part of $H_1(\kernel(\theta),\Z_p)$;
i.e., \[t_1(G)=t \quad \text{and} \quad f_1(G)=f,\] with $t$ and $f$ as in Theorem~\ref{4thm:Nab}.

By Examples~\ref{4exs:Nab} one easily deduces that if $\Zen_\theta(G)$ is not trivial, then also the 
$\Z_p\dbl\Gamma\dbr$-torsion part of $H_1(\kernel(\theta),\Z_p)$ is non-trivial, i.e., $t_1(G)\geq d(\Zen_\theta(G))$.

Then, one may formulate the following questions.

\begin{ques}\label{4ques:torsionrank}
Let $G$ be a finitely generated pro-$p$ group with non-trivial cyclotomic orientation $\theta\colon G\to\Z_p^\times$.
\begin{itemize}
 \item[(i)] Assume that $t_1(U)=0$ for every open subgroup $U$ of $G$. Then $G$ is a free pro-$p$ group.
 \item[(ii)] Assume that $t_1(U)=1$ for every open subgroup $U$ of $G$, and assume also that $\Zen_\theta(G)$
is trivial. Then $G$ is a cyclotomic fibre product $F\ltimes_\theta Z$, with $F$ a free pro-$p$ group and
$Z\simeq\Z_p$, or $G$ is a Demushkin group, or $G$ is the free pro-$p$ product $F\ast_{\hat p}\tilde G$,
with $F$ a free pro-$p$ group and $\tilde G$ one of the former two groups (fibre product or Demushkin).
 \item[(iii)] Assume that $\Zen_\theta(G)$ is trivial and $t_1(G)\geq2$.
Then there exist two closed subgroups $G_1$ and $G_2$ of $G$ such that $G=G_1\ast_{\hat p}G_2$.
\end{itemize}
\end{ques}

These questions will provide me some work (and hopefully a job) in the next future.

\section{Cyclo-oriented ETC}

Now it makes sense to try to extend the Elementary Type Conjecture to the whole class of finitely generated 
pro-$p$ groups with a cyclotomic orientation.
In order to do this, we define the class of elementary type cyclo-oriented pro-$p$ groups.

\begin{defi}
For $p$ a prime number, let $\mathcal{ET}_p$ be the minimal class of
finitely generated pro-$p$ groups with an orientation such that
\begin{itemize}
 \item[(i)] the pro-cyclic group $\Z_p$, together with an orientation
$\theta\colon\Z_p\rightarrow\Z_p^\times$ (possibly the trivial one), is in $\mathcal{ET}_p$;
 \item[(ii)] if $p=2$, then the cyclic group $\Z/2\Z$ of order 2, together with the non-trivial orientation 
$\theta\colon \Z/2\Z\rightarrow\{\pm1\}\subset\Z_2^\times$, is in $\mathcal{ET}_2$;
 \item[(iii)] every Demushkin group equipped with an orientation as described in Theorem~\ref{2thm:Demushkin} 
is in $\mathcal{ET}_p$;
\item[(iv)] if the group $(G,\theta)$  is in $\mathcal{ET}_p$, then also the cyclotomic fibre product
$G\ltimes_\theta\Z_p(1)$ is in $\mathcal{ET}_p$;
\item[(v)] if the groups $(G_1,\theta_1)$ and $(G_2,\theta_2)$ are in $\mathcal{ET}_p$,
then also the free pro-$p$ product $G_1\ast_{\hat{p}}G_2$ with the induced orientation $\tilde\theta$
is in $\mathcal{ET}_p$.
\end{itemize}
Then $\mathcal{ET}_p$ is called the class of {\bf elementary type} pro-$p$ groups with a cyclotomic orientation.
\end{defi}

By Theorem~\ref{4thm:cyclotomic free product} and Theorem~\ref{4thm:cyclotomic fibreprod}, every 
element of $\mathcal{ET}_p$ is a finitely generated cyclo-oriented pro-$p$ group.
In particular, every element of $\mathcal{ET}_p$ can be realized as maximal pro-$p$ Galois group of a field.
We can now state a new formulation of the ETC. 

\begin{ques}
Let $K$ be a field.
 \begin{itemize}
  \item[(i)] Assume that the absolute Galois group $G_K$ of $K$ is a finitely generated pro-$p$ group.
Then $G_K$ is in $\mathcal{ET}_p$.
\item[(ii)] Assume that $K$ contains a primitive $p$-th root of unity, and that the maximal pro-$p$
Galois group $G_K(p)$ is finitely generated. Then $G_K(p)$ is in $\mathcal{ET}_p$.
 \end{itemize}
Obviously, a positive answer to (ii) provides also a positive answer to (i).
\end{ques}

Note that Question~\ref{4ques:torsionrank} is coherent with the ETC, and a positive answer would be a contribution
to the ETC.

Moreover, the recent results obtained independently by Th.~Weigel and P.~Zalesski\u{i}, and by K.~Wingberg on
free decomposability of pro-$p$ groups provide new tools also to study the ETC
(cf. \cite{thomaspavel:stalldecomp} and \cite{wingberg:stalldecomp}).


\chapter[Koszulity]{Koszulity for cyclo-oriented pro-$p$ groups}

\section{Koszul duality}

Let $A_\bullet$ be a quadratic algebra over a field $\F$, and let $A_1$ and $\mathcal{R}$ be
as in \eqref{2eq:quadratic algebra}, with $A_1$ of finite dimension over $\F$.
In particular, set $\mathcal{R}_1$ to be the $\F$-vector subspace of $A_1\otimes A_1$ which generates
$\mathcal{R}$ as two-sided ideal of $\mathcal{T}^\bullet(A_1)$.

Since $A_1$ is finitely generated, one has the isomorphism of $\F$-vector spaces
\begin{equation}\label{5eq:dual tensor product}
A_1^*\otimes A_1^*\simeq \left(A_1\otimes A_1\right)^*.
\end{equation}
Let $\mathcal{R}_1^\perp\leq (A_1\otimes A_1)^*$ be the annihilator of $\mathcal{R}_1$, namely,
\[\mathcal{R}_1^\perp=\{f\in (A_1\otimes A_1)^*\;|\;f(v)=0\text{ for all }v\in\mathcal{R}_1\}.\]
By \eqref{5eq:dual tensor product} we may consider $\mathcal{R}_1^\perp$ as a subspace of $A_1^*\otimes A_1^*$.
In particular, one has that $\mathcal{R}_1^\perp\simeq(A_1^{\otimes2}/R_1)^*$.
Then, the short exact sequences of $\F$-vector spaces
\[ \xymatrix@R=.33truecm{ 0\ar[r] & \mathcal{R}_1\ar[r] & A_1^{\otimes2}\ar[r] & A_2\ar[r] & 0, \\
              0 & \mathcal{R}_1^*\ar[l] & \left(A_1^{\otimes2}\right)^*\ar[l] & \mathcal{R}_1^\perp\ar[l] & 0,\ar[l]} \]
where one is the dual of the other, induce the following definition.

\begin{defi}\label{5defi:koszul dual}
Let $A_\bullet$ be a quadratic algebra over a field $\F$ with $\dim(A_1)$ finite.
The {\bf Koszul dual} $A_\bullet^!$ of $A_\bullet$ is the quadratic algebra over $\F$ given by
\[A_\bullet^!=\frac{\mathcal{T}^\bullet(A_1^*)}{\mathcal{R}^\perp},\]
where $\mathcal{R}^\perp$ is the two-sided ideal of $\mathcal{T}^\bullet(A_1^*)$ generated by
$\mathcal{R}_1^\perp$.
\end{defi}

Note that the Koszul dual is an involution, as $(A_\bullet^!)^!=A_\bullet$ for every quadratic algebra.

\begin{exs}\label{5exs:koszul duals}
\begin{itemize}
 \item[(a)] Let $V$ be a finite-dimensional $\F$-vector space, and let $A_\bullet$ be the tensor algebra
$\mathcal{T}^\bullet(V)$. Then $\mathcal{R}=0$, and the Koszul dual $A_\bullet^!$ is the 
algebra of {\it dual numbers} \[\mathcal{T}^\bullet(V)^!=\F\oplus V^*,\] with trivial multiplication.
 \item[(b)] Let $\{v_1,\ldots,v_n\}$ be a basis for the $\F$-vector space $V$,
and let $A_\bullet$ be the symmetric algebra $S_\bullet(V)$.
Then a basis for $\mathcal{R}_1^\perp$ is 
\[\left\{v_i^*\otimes v_j^*+v_j^*\otimes v_i^*,1\leq i,j\leq n\right\},\]
with $v_i^*\in V^*$ the dual of $v_i$.
Hence, the Koszul dual of $S_\bullet(V)$ is the exterior algebra $\bigwedge_\bullet(V^*)$,
and conversely $\bigwedge_\bullet(V)^!=S_\bullet(V^*)$.
\end{itemize}
\end{exs}

The Koszul dual behaves with respect to the constructions of quadratic algebras defined in Subsection~2.1.1
in the following way (cf. \cite[\S~3.1, Corollary~2.1, p.~58]{pp:quadratic algebras}).

\begin{prop}\label{5prop:koszulduality products algebras}
 Let $A_\bullet$ and $B_\bullet$ be two quadratic algebras over a field $\F$.
\begin{itemize}
 \item[(i)] The Koszul dual of the direct sum is the free product of the Koszul duals, i.e.
\[ \left(A_\bullet\sqcap B_\bullet\right)^!=A_\bullet^!\sqcup B_\bullet^! \quad\text{and}\quad 
 \left(A_\bullet\sqcup B_\bullet\right)^!=A_\bullet^!\sqcap B_\bullet^!. \]
 \item[(ii)] The Koszul dual of the symmetric tensor product is the skew-commutative tensor product
of the Koszul duals, i.e.
\[ \left(A_\bullet\otimes^1 B_\bullet\right)^!=A_\bullet^!\otimes^{-1} B_\bullet^! \quad\text{and}\quad 
 \left(A_\bullet\otimes^{-1} B_\bullet\right)^!=A_\bullet^!\otimes^1 B_\bullet^!. \]
\end{itemize}
\end{prop}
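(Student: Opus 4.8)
The plan is to reduce everything to a computation with the degree-two relation spaces, since a quadratic algebra is entirely determined by the pair $(A_1,\mathcal{R}_1)$ and its Koszul dual by $(A_1^*,\mathcal{R}_1^\perp)$. Thus for each construction it suffices to identify the annihilator of the relation space of the new algebra with the relation space of the claimed dual. Moreover, since $(-)^!$ is an involution, each of the two equalities in (i) and in (ii) follows from the other: for instance, once I know $(A_\bullet\sqcap B_\bullet)^!=A_\bullet^!\sqcup B_\bullet^!$ for all quadratic $A_\bullet,B_\bullet$, substituting $A_\bullet^!,B_\bullet^!$ and applying $(-)^!$ gives $A_\bullet^!\sqcap B_\bullet^!=(A_\bullet\sqcup B_\bullet)^!$. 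So I only need to treat one construction in each part. The common tool is the block decomposition
\[ C_1\otimes C_1=(A_1\otimes A_1)\oplus(A_1\otimes B_1)\oplus(B_1\otimes A_1)\oplus(B_1\otimes B_1), \]
with $C_1=A_1\oplus B_1$, together with the elementary fact that if $\mathcal{R}_1$ is a direct sum of subspaces each lying in one (possibly grouped) block, then $\mathcal{R}_1^\perp\leq(C_1\otimes C_1)^*$ is the direct sum of the block-wise annihilators, computed inside the dual blocks $A_1^*\otimes A_1^*,\ A_1^*\otimes B_1^*,\ B_1^*\otimes A_1^*,\ B_1^*\otimes B_1^*$.

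For part (i), both cases are immediate from this principle because the cross-term contribution to $\mathcal{R}_1$ is always either the full cross space or zero. For the direct product $A_\bullet\sqcap B_\bullet$ one has $\mathcal{R}_1=\mathcal{R}_1^A\oplus\mathcal{R}_1^B\oplus(A_1\otimes B_1)\oplus(B_1\otimes A_1)$; the annihilator of a full block is $0$, so $\mathcal{R}_1^\perp=(\mathcal{R}_1^A)^\perp\oplus(\mathcal{R}_1^B)^\perp$, which is exactly the relation space of the free product $A_\bullet^!\sqcup B_\bullet^!$. Dually, for the free product $A_\bullet\sqcup B_\bullet$ the cross terms vanish, so their annihilators are the full cross blocks, giving $\mathcal{R}_1^\perp=(\mathcal{R}_1^A)^\perp\oplus(\mathcal{R}_1^B)^\perp\oplus(A_1^*\otimes B_1^*)\oplus(B_1^*\otimes A_1^*)$, the relation space of $A_\bullet^!\sqcap B_\bullet^!$.

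Part (ii) is where the real computation lies, and this is the step I expect to be the main obstacle, since here the extra relations straddle the two cross blocks and must be handled together. Writing $\tau\colon A_1\otimes B_1\to B_1\otimes A_1$ for the flip $a\otimes b\mapsto b\otimes a$, the symmetric tensor product $A_\bullet\otimes^1 B_\bullet$ has cross relations spanned by $a\otimes b-b\otimes a$, i.e. the graph $\{(u,-\tau u):u\in A_1\otimes B_1\}$ inside the grouped block $(A_1\otimes B_1)\oplus(B_1\otimes A_1)$. A functional $(\varphi,\psi)$ annihilates this graph precisely when $\varphi=\tau^*\psi$, so the annihilator is the graph of $\tau^*$; and under the canonical identifications $(A_1\otimes B_1)^*\simeq A_1^*\otimes B_1^*$ and $(B_1\otimes A_1)^*\simeq B_1^*\otimes A_1^*$ the map $\tau^*$ is again the flip, whence the annihilator is spanned by $\alpha\otimes\beta+\beta\otimes\alpha$ with $\alpha\in A_1^*,\ \beta\in B_1^*$. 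Combined with the diagonal blocks $(\mathcal{R}_1^A)^\perp$ and $(\mathcal{R}_1^B)^\perp$, this is exactly the relation space of the skew-commutative tensor product $A_\bullet^!\otimes^{-1}B_\bullet^!$, giving $(A_\bullet\otimes^1 B_\bullet)^!=A_\bullet^!\otimes^{-1}B_\bullet^!$; the converse equality then follows by involutivity as above. The only care needed is to keep the sign bookkeeping straight, confirming that the $+$ arising from $-\tau$ matches the degree-one skew-commutativity relation $ab+ba$.
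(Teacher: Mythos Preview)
Your argument is correct. The paper does not supply its own proof of this proposition; it simply cites \cite[\S~3.1, Corollary~2.1, p.~58]{pp:quadratic algebras}. Your direct block-decomposition computation is exactly the standard proof and is what one finds in that reference, so there is nothing to compare.

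One small remark: the identification $(A_1\otimes B_1)^*\simeq A_1^*\otimes B_1^*$ that you use implicitly requires $A_1$ and $B_1$ to be finite-dimensional. This hypothesis is built into the paper's Definition~\ref{5defi:koszul dual} of the Koszul dual, so it is legitimate here, but it would be cleaner to state it explicitly since the proposition as written does not.
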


The $\F_p$-cohomology ring of a Bloch-Kato pro-$p$ group $G$ is a quadratic algebra over $\F_p$:
thus, it is worth asking what is the Koszul dual of such algebra.
Also, other important graded algebras for a pro-$p$ group $G$ are the restricted Lie algebra $L_\bullet(G)$
and the algebra $\grad_\bullet(G)$, which are related by Theorem~\ref{1thm:envelope Zassenhaus filtration}
-- and they happen to be quadratic in some important cases, such as Demushkin groups and $\theta$-abelian groups.
The following definition relates $H^\bullet(G,\F_p)$ and $\grad_\bullet(G)$.

\begin{defi}\label{5defi:koszulduality gps}
 Let $G$ be a Bloch-Kato pro-$p$ group.
Then $G$ is said to be a {\bf Koszul duality} group if the algebra $\grad_\bullet(G)$ is the Koszul dual
of the algebra $H^\bullet(G,\F_p)$, i.e., 
\[H^\bullet(G,\F_p)^!\simeq\grad_\bullet(G).\]
\end{defi}

It follows that if $G$ is a Koszul duality pro-$p$ group, then $\grad_\bullet(G)$ is a quadratic $\F_p$-algebra.
In particular, let 
\begin{equation}\label{5eq:group presentation}
 \xymatrix{1\ar[r]&R\ar[r]&F\ar[r]&G\ar[r]&1}
\end{equation}
be a minimal presentation of $G$, with $R$ generated by $\{r_1,\ldots,r_{r(G)}\}$ as normal subgroup of $F$.
If $G$ is a Koszul duality pro-$p$ group, one has that the initial forms of the relations $r_i$'s 
have degree 2 in $L_\bullet(G)$, i.e., 
\begin{equation}\label{5eq:quadratic relations}
 r_i\in D_2(F)\smallsetminus D_3(G)(G)\quad \text{for every }i=1,\ldots,r(G).
\end{equation}
Thus, we say that a pro-$p$ group $G$ is {\bf $\mathcal{U}$-quadratic} if it has a presentation
\eqref{5eq:group presentation} such that \eqref{5eq:quadratic relations} holds.

We are to see in the following that the basic blocks of the ETC -- i.e., free pro-$p$ groups, Demushkin groups
and groups obtained via cyclotomic fibre products and free pro-$p$ products -- are Koszul duality groups.

\begin{rem}\label{5rem:pontryagin}
 Note that for a vector space $V$ over $\F_p$, the Pontryagin dual and the $\F_p$-dual are the same, i.e.,
$V^*=V^\vee$.
\end{rem}

\section{Koszul duality and the ETC}
\subsection{Free pro-$p$ groups and Demushkin groups}
Let $F$ be a free pro-$p$ group.
(recall that by Remark~\ref{2rem:cyclo-oriented free gps} every orientation is cyclotomic for $F$.)
Then $F$ has cohomological dimension $1$,
and the cohomology ring $H^\bullet(F,\F_p)$ is concentrated in degree 0 and 1.
In particular, one has
\[H^\bullet(F,\F_p)=\frac{\mathcal{T}^\bullet\left(F/\Phi(F)^\vee\right)}{\mathcal{R}},
\quad\text{with }\mathcal{R}_1=\left(F/\Phi(F)^\vee\right)^{\otimes2}.\]
Therefore, by Example~\ref{5exs:koszul duals} the Koszul dual of $H^\bullet(F,\F_p)$ is the tensor algebra 
$\mathcal{T}^\bullet(F/\Phi(F))$.

Consider now the restricted Lie algebra induced by the Zassenhaus filtration of $F$.
By Theorem~\ref{1thm:L for free groups} the algebra $L_\bullet(F)$ is a free restricted Lie algebra over $\F_p$,
and by Proposition~\ref{1prop:free restricted lie algebras} the universal envelope a free non-commutative 
$\F_p$-algebra, i.e., 
\begin{equation}\label{5eq:free envelope}
 \grad_\bullet(F)\simeq\mathcal{U}_p(L_\bullet(F))\simeq\F_p\langle\mathcal{X}\rangle=
\mathcal{T}^\bullet\left(F/\Phi(F)^\vee\right),
\end{equation}
with $\mathcal{X}$ a minimal generating system of $F$, where $\F_p\langle\mathcal{X}\rangle$
is to be considered a graded algebra with the grading induced by the degrees of the monomials.
Therefore, one has the following.

\begin{prop}\label{5prop:koszul dual free groups}
 Let $F$ be a finitely generated free pro-$p$ group. 
Then the graded $\F_p$-algebra $\grad_\bullet(F)$ is the Koszul dual of the cohomology ring $H^\bullet(F,\F_p)$,
i.e., \[H^\bullet(F,\F_p)^!\simeq\grad_\bullet(F).\]
\end{prop}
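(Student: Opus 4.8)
The plan is to assemble the two computations already carried out in the paragraphs preceding the statement and to check that they match up under Koszul duality; the only genuine content is bookkeeping with duals, so that is where I would be careful. First I would record that $\ccd(F)=1$ forces $H^\bullet(F,\F_p)$ to be concentrated in degrees $0$ and $1$, so that in its quadratic presentation the degree-two relation space $\mathcal{R}_1$ is \emph{all} of $H^1(F,\F_p)^{\otimes 2}$; equivalently, $H^\bullet(F,\F_p)$ is the algebra of dual numbers $\F_p\oplus H^1(F,\F_p)$ with trivial multiplication. Writing $V=H^1(F,\F_p)=(F/\Phi(F))^\vee$, I would then read off the Koszul dual directly from Definition~\ref{5defi:koszul dual}: the annihilator $\mathcal{R}_1^\perp\leq V^*\otimes V^*$ of $\mathcal{R}_1=V\otimes V$ is zero, whence $\mathcal{R}^\perp=0$ and $H^\bullet(F,\F_p)^!=\mathcal{T}^\bullet(V^*)$. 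This is exactly the instance of Example~\ref{5exs:koszul duals}(a), together with the involutivity $(A_\bullet^!)^!=A_\bullet$, that identifies the Koszul dual of the dual-numbers algebra on $V$ with the free tensor algebra on $V^*$.

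Second I would invoke the identification \eqref{5eq:free envelope}, which rests on Theorem~\ref{1thm:L for free groups}, Proposition~\ref{1prop:free restricted lie algebras} and Theorem~\ref{1thm:envelope Zassenhaus filtration}: one has $\grad_\bullet(F)\simeq\mathcal{U}_p(L_\bullet(F))\simeq\mathcal{T}^\bullet((F/\Phi(F))^\vee)$, a free associative $\F_p$-algebra on $d(F)$ generators, graded by monomial degree. Using Remark~\ref{5rem:pontryagin} I would freely interchange the $\F_p$-dual and the Pontryagin dual throughout, so that $V^*=(F/\Phi(F))^{\vee\vee}\cong F/\Phi(F)$.

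Finally, I would match the two sides. Both $H^\bullet(F,\F_p)^!=\mathcal{T}^\bullet(V^*)$ and $\grad_\bullet(F)\simeq\mathcal{T}^\bullet((F/\Phi(F))^\vee)$ are tensor algebras over $\F_p$ on finite-dimensional generating spaces, each of dimension $d(F)=\dim_{\F_p}(F/\Phi(F))$. Since a tensor algebra on a finite-dimensional space depends, as a graded $\F_p$-algebra, only on that dimension, the two are isomorphic, which gives $H^\bullet(F,\F_p)^!\simeq\grad_\bullet(F)$ and proves the proposition. The one point requiring a moment's care — the ``hard part,'' such as it is — is keeping the duals straight: the Koszul dual is most naturally $\mathcal{T}^\bullet(V^*)$ while the envelope is most naturally $\mathcal{T}^\bullet((F/\Phi(F))^\vee)$, so the matching is effected by an abstract dimension count rather than a canonical identification. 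This is harmless, since the statement asserts an isomorphism of graded $\F_p$-algebras and not a canonical one.
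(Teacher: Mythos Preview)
Your proposal is correct and follows essentially the same route as the paper: the argument is exactly the assembly of the two computations displayed in the paragraphs immediately preceding the proposition (the identification of $H^\bullet(F,\F_p)$ as a dual-numbers algebra via $\ccd(F)=1$, Example~\ref{5exs:koszul duals}(a) for its Koszul dual, and \eqref{5eq:free envelope} for $\grad_\bullet(F)$). You are in fact slightly more scrupulous than the paper about the dual bookkeeping, explicitly noting that the match is by dimension count rather than a canonical identification; the paper simply writes both sides as tensor algebras and concludes.
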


Now let $G$ be a Demushkin group, with $q_G\neq2$.
Let $\mathcal{X}=\{x_1,\ldots,x_d\}$ be a minimal generating system for $G$ as in Theorem~\ref{2thm:Demushkin},
and for each $i=1,\ldots,d$, let $\chi_i\in H^1(G,\F_p)$ be the dual of $x_i$.
Then, one may write 
\[H^\bullet(G,\F_p)=\frac{\mathcal{T}^\bullet\left(H^1(G,\F_p)\right)}{\langle\mathcal{R}_1\rangle},\]
with $\mathcal{R}_1\leq H^1(G,\F_p)^{\otimes2}$.

Since the element $\chi_{2i-1}\cup\chi_{2i}=-\chi_{2i}\cup\chi_{2i-1}$ generates $H^2(G,\F_p)$,
with $i=1,\ldots,d/2$, one has that $\mathcal{R}_1^\perp\simeq H^2(G,\F_p)^*$ is generated by the element
\[\rho=\chi_1^*\otimes \chi_2^*-\chi_2^*\otimes \chi_1^*+\chi_3^*\otimes \chi_4^*-\ldots
+\chi_{d-1}^*\otimes \chi_d^*-\chi_d^*\otimes \chi_{d-1}^*,\]
thus
\[ H^\bullet(G,\F_p)^!=\frac{\mathcal{T}^\bullet\left(H^1(G,\F_p)^*\right)}{\langle\rho\rangle}\simeq
\frac{\F_p\langle X_1,\ldots,X_d\rangle}{\langle[X_1,X_2]+\ldots+[X_{d-1},X_d]\rangle}, \]
where $\mathcal{X}=\{X_1\ldots,X_d\}$ a set of non-commutative indeterminates
and $[X_i,X_j]=X_iX_j-X_jX_i$, with the grading induced by the degrees.
Therefore, by \eqref{2eq:restricted envelope demushkin}, one has the following.

\begin{prop}\label{5prop:koszul dual demushkin groups}
 Let $G$ be a finitely generated Demushkin group with $q_G\neq2$. 
Then the $\F_p$-cohomology ring $H^\bullet(G,\F_p)$ is the Koszul dual
of the graded $\F_p$-algebra $\grad_\bullet(G)$, i.e.,
\[H^\bullet(G,\F_p)^!\simeq\grad_\bullet(G).\]
\end{prop}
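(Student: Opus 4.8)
The plan is to compute the two graded $\F_p$-algebras $H^\bullet(G,\F_p)^!$ and $\grad_\bullet(G)$ separately and then observe that their presentations coincide; this is essentially a bookkeeping of the explicit data already recorded for Demushkin groups. Write $V=H^1(G,\F_p)$, so that by \eqref{2eq:quadratic algebra} the $\F_p$-cohomology ring of the Demushkin group $G$ is the quadratic algebra $\mathcal{T}^\bullet(V)/\langle\mathcal{R}_1\rangle$ with $\mathcal{R}_1=\kernel(\cup\colon V\otimes V\to H^2(G,\F_p))$. Since $G$ is a one-relator pro-$p$ group, $\dim_{\F_p}H^2(G,\F_p)=r(G)=1$ by \eqref{1eq:number relations}; hence $\mathcal{R}_1$ has codimension one and its annihilator $\mathcal{R}_1^\perp$ is a single line in $V^*\otimes V^*$.

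First I would identify the generator of $\mathcal{R}_1^\perp$. The cup-product form, viewed through the isomorphism $(V\otimes V)^*\simeq V^*\otimes V^*$, spans exactly $\mathcal{R}_1^\perp$, so it suffices to read off its matrix in the dual basis $\{\chi_i^*\}$. For $q_G\neq2$ the group $G$ falls under case~(i) of Theorem~\ref{2thm:Demushkin}, so $d$ is even and there is a basis $\{\chi_1,\ldots,\chi_d\}$ of $V$ with $\chi_1\cup\chi_2=\chi_3\cup\chi_4=\cdots=\chi_{d-1}\cup\chi_d$ generating $H^2(G,\F_p)$ and all remaining products of basis vectors vanishing; moreover the form is alternating, i.e. $\chi\cup\chi=0$ for every $\chi$ (automatic for $p$ odd, and for $p=2$ a consequence of Proposition~\ref{2prop:bock} together with Lemma~\ref{1lem:Bockstein and cup}, which is where $q_G\neq2$ enters on the cohomological side). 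These data force
\[ \rho=\sum_{i=1}^{d/2}\left(\chi_{2i-1}^*\otimes\chi_{2i}^*-\chi_{2i}^*\otimes\chi_{2i-1}^*\right) \]
to generate $\mathcal{R}_1^\perp$, whence by Definition~\ref{5defi:koszul dual}
\[ H^\bullet(G,\F_p)^!\simeq\frac{\F_p\langle X_1,\ldots,X_d\rangle}{\langle[X_1,X_2]+\cdots+[X_{d-1},X_d]\rangle}, \]
with $[X_i,X_j]=X_iX_j-X_jX_i$ and the grading by monomial degree.

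It then remains only to compare this with $\grad_\bullet(G)$. Equation~\eqref{2eq:restricted envelope demushkin}, which rests on the identification $\grad_\bullet(G)\simeq\mathcal{U}_p(L_\bullet(G))$ of Theorem~\ref{1thm:envelope Zassenhaus filtration} and on the fact that the initial form of the single defining relation $r$ lies in $D_2(G)\smallsetminus D_3(G)$ and equals $[X_1,X_2]+\cdots+[X_{d-1},X_d]$, exhibits $\grad_\bullet(G)$ as precisely the same quotient of $\F_p\langle X_1,\ldots,X_d\rangle$. Matching the two presentations yields the desired isomorphism $H^\bullet(G,\F_p)^!\simeq\grad_\bullet(G)$.

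The hard part will be the two places where the hypothesis $q_G\neq2$ is indispensable, and they must be kept in step. On the cohomology side it guarantees that the cup form is alternating rather than carrying a Bockstein square $\chi\cup\chi=\beta^1(\chi)\neq0$, so that $\mathcal{R}_1^\perp$ is generated by the skew tensor $\rho$ and not by a symmetric element. On the Lie-algebra side it guarantees that the power factor $x_1^{-q_G}$ in $r$ contributes nothing in degree two: since $q_G=p^k$, the element $x_1^{-q_G}$ lies in $D_3(G)$ exactly when $q_G\neq2$, so the initial form of $r$ has no $X_1^2$ term. For $q_G=2$ (which forces $p=2$) both computations acquire square terms and the Koszul dual is genuinely different, so that case is excluded here and must be handled separately.
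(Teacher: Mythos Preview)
Your proposal is correct and follows essentially the same route as the paper: compute $\mathcal{R}_1^\perp$ from the explicit symplectic cup-product basis for a Demushkin group, obtain $H^\bullet(G,\F_p)^!\simeq\F_p\langle X_1,\ldots,X_d\rangle/\langle[X_1,X_2]+\cdots+[X_{d-1},X_d]\rangle$, and then match this against the presentation of $\grad_\bullet(G)$ already recorded in \eqref{2eq:restricted envelope demushkin}. Your additional paragraph isolating precisely where the hypothesis $q_G\neq2$ is used on each side (no Bockstein square in the cup form, no $X_1^2$ in the initial form of $r$) is a welcome clarification that the paper leaves implicit.
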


\subsection{Free products and cyclo-fibre products}
In order to deal with the restricted Lie algebra of a free pro-$p$ product of pro-$p$ groups, we have to define
the free product for restricted Lie algebras.

Let $L_1$ and $L_2$ be two restricted Lie algebras.
The {\it free product} of restricted Lie algebras of $L_1$ and $L_2$ is the restricted Lie algebra
$L=L_1\ast_{\mathcal{L}}L_2$, equipped with two embeddings $\iota_i\colon L_i\to L$, $i=1,2$
such that for every morphism of restricted Lie algebras $\phi_i\colon L_i\to H$ there exists a unique morphism
$\tilde\phi_i\colon L\to H$ such that the diagram
\[ \xymatrix{ L_i\ar[r]^-{\iota_i}\ar[d]_{\phi_i} & L_1\ast_{\mathcal{L}}L_2\ar@{.>}[dl]^{\tilde\phi_i} \\ H & } \]
commutes for both $i$'s (cf. \cite[Remark~1]{lichtman:lie}).

\begin{prop}\label{5prop:lie algebra free products}
Let $G_1$ and $G_2$ be two finitely generated pro-$p$ groups, and set $G$ to be
the free pro-$p$ product $G_1\ast_{\hat p} G_2$.
Then the restricted Lie algebra $L_\bullet(G)$ is the free product of restricted Lie algebras
$L_\bullet(G_1)\ast_{\mathcal{L}}L_\bullet(G_2)$.
Moreover, its universal envelope is the free product
\begin{equation}\label{5eq:envelope pree products}
 \mathcal{U}_p\left(L_\bullet(G)\right)=\mathcal{U}_p\left(L_\bullet(G_1)\right)\sqcup\mathcal{U}_p\left(L_\bullet(G_2)\right).
\end{equation}
\end{prop}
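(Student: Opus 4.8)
The plan is to prove the two assertions separately, deducing the statement about the universal envelope \eqref{5eq:envelope pree products} from the statement about the restricted Lie algebra itself, since the universal restricted envelope of a free product of restricted Lie algebras is the free product (coproduct) of the universal envelopes. First I would establish the latter categorical fact: given the universal property defining $L_\bullet(G_1)\ast_{\mathcal{L}}L_\bullet(G_2)$ together with the universal property of $\mathcal{U}_p$, a routine diagram chase shows that $\mathcal{U}_p(L_1\ast_{\mathcal{L}}L_2)$ satisfies the universal property of the coproduct $\mathcal{U}_p(L_1)\sqcup\mathcal{U}_p(L_2)$ in the category of associative $\F_p$-algebras; this is the standard fact that left adjoints preserve colimits, applied to the adjunction between $\mathcal{U}_p$ and the forgetful functor.

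The main work is therefore to identify $L_\bullet(G)$ with $L_\bullet(G_1)\ast_{\mathcal{L}}L_\bullet(G_2)$ when $G=G_1\ast_{\hat p}G_2$. The key tool I would use is Theorem~\ref{1thm:envelope Zassenhaus filtration}, which realizes $\grad_\bullet(G)$ as the universal restricted envelope of $L_\bullet(G)$ and identifies $D_i(G)=(1+\omega_G^i)\cap G$. The strategy is to first compute $\grad_\bullet(G)$ directly. Using the decomposition of the completed group algebra $\F_p\dbl G\dbr$ induced by the free pro-$p$ product structure, the augmentation ideal $\omega_G$ and its powers should decompose so as to yield $\grad_\bullet(G)\simeq\grad_\bullet(G_1)\sqcup\grad_\bullet(G_2)$ as graded $\F_p$-algebras, where $\sqcup$ is the free (coproduct) of augmented graded algebras. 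Concretely, the free pro-$p$ product corresponds on the level of completed group algebras to a coproduct construction, and passing to the associated graded with respect to the augmentation filtration preserves this coproduct. Once $\grad_\bullet(G)\simeq\grad_\bullet(G_1)\sqcup\grad_\bullet(G_2)$ is in hand, I would combine it with the envelope identity from the previous paragraph (applied to $L_\bullet(G_i)$) to conclude $\mathcal{U}_p(L_\bullet(G))\simeq\mathcal{U}_p(L_\bullet(G_1))\sqcup\mathcal{U}_p(L_\bullet(G_2))$.

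Finally, to extract the Lie-algebra statement $L_\bullet(G)\simeq L_\bullet(G_1)\ast_{\mathcal{L}}L_\bullet(G_2)$ from the envelope statement, I would invoke the monomorphism $\vartheta$ of Theorem~\ref{1thm:envelope Zassenhaus filtration}: the restricted Lie algebra $L_\bullet(G)$ embeds as the restricted Lie subalgebra of $\grad_\bullet(G)$ generated by the image of $L_1(G)=G/\Phi(G)$. Since $G/\Phi(G)\simeq (G_1/\Phi(G_1))\oplus(G_2/\Phi(G_2))$ for a free pro-$p$ product, the degree-one generators split as the union of the generators coming from each factor, and the restricted Lie subalgebra they generate inside the free product of envelopes is precisely the free product of the two restricted Lie subalgebras $L_\bullet(G_i)$. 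Comparing universal properties then yields the isomorphism $L_\bullet(G)\simeq L_\bullet(G_1)\ast_{\mathcal{L}}L_\bullet(G_2)$.

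The hard part will be the precise identification of the associated graded algebra of the completed group algebra of a free pro-$p$ product as the coproduct of the associated graded algebras of the factors. This requires care because the free pro-$p$ product is a topological inverse limit (see \eqref{2eq:free prop prod N}), so one must check that the augmentation filtration behaves well under this limit and that no cross-terms mixing $\omega_{G_1}$ and $\omega_{G_2}$ survive beyond those already present in the free (non-commuting) product. I expect the cleanest route is to work with the Magnus-type description (Remark~\ref{1rem:magnus algebra}) of each factor and to verify that the natural map from the coproduct of envelopes to $\grad_\bullet(G)$ is an isomorphism by comparing graded dimensions degree by degree, using the freeness of the product to rule out additional relations.
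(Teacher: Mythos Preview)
Your route is different from the paper's, and the part you flag as ``hard'' is exactly where the paper takes a shortcut that you are missing.

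The paper does not attempt to analyze the completed group algebra $\F_p\dbl G\dbr$ of the free pro-$p$ product directly. Instead it passes through the \emph{abstract} free product $G^{\ab s}=G_1\ast G_2$. Since $G_1,G_2$ are finitely generated, $G=G_1\ast_{\hat p}G_2$ is the pro-$p$ completion of $G^{\ab s}$ (by \eqref{2eq:free prop prod N}). Lichtman's theorem for abstract groups (\cite[Theorem~2]{lichtman:lie}) gives $L_\bullet(G^{\ab s})=L_\bullet(G_1)\ast_{\mathcal{L}}L_\bullet(G_2)$ outright. The only remaining observation is that each $D_i(G)$ is open (finite generation), so $D_i(G^{\ab s})$ has finite index in $G^{\ab s}$ and hence $D_i(G)/D_{i+1}(G)\simeq D_i(G^{\ab s})/D_{i+1}(G^{\ab s})$; this yields $L_\bullet(G)\simeq L_\bullet(G^{\ab s})$. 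The envelope statement \eqref{5eq:envelope pree products} then follows from \cite[Theorem~1]{lichtman:lie} together with Theorem~\ref{1thm:envelope Zassenhaus filtration}. No analysis of $\omega_G$ or of completed coproducts is needed.

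By contrast, your step~2 --- proving $\grad_\bullet(G)\simeq\grad_\bullet(G_1)\sqcup\grad_\bullet(G_2)$ directly from the completed group algebra --- is essentially the full content of the proposition restated at the envelope level, and your proposed verification is not yet a proof. The Magnus description of Remark~\ref{1rem:magnus algebra} applies only to free pro-$p$ groups, so it does not give you $\grad_\bullet(G_i)$ for arbitrary finitely generated $G_i$; and a dimension count ``degree by degree'' presupposes that you already know $\dim_{\F_p}\grad_n(G)$, which is precisely what is at stake. Your categorical remark that $\mathcal{U}_p$ preserves coproducts is correct (and is what \cite[Theorem~1]{lichtman:lie} amounts to), but without an independent identification of $L_\bullet(G)$ or $\grad_\bullet(G)$ it does not close the argument. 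The abstract-to-pro-$p$ comparison via finiteness of the Zassenhaus quotients is the missing idea.
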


\begin{proof}
 Let $G^{abs}$ be the {\it abstract} free product $G_1\ast G_2$.
Since both $G_1$ and $G_2$ are finitely generated, \eqref{2eq:free prop prod N} implies that the free pro-$p$
product $G=G_1\ast_{\hat p}G_2$ is the pro-$p$ completion of $G^{abs}$.
By \cite[Theorem~2]{lichtman:lie} one has 
\begin{equation}\label{5eq:lie algebra abstract freeprod}
 L_\bullet(G^{abs})=L_\bullet(G_1)\ast_{\mathcal{L}}L_\bullet(G_2).
\end{equation}
Since $G$ is finitely generated, every element of the Zassenhaus filtration $D_i(G)$ is open in $G$, and thus
every $D_i(G^{abs})$ has finite index in $G^{abs}$.
Therefore $D_i(G)/D_{i+1}(G)\simeq D_i(G^{abs})/D_{i+1}(G^{abs})$ for every $i\geq1$,
and one has the isomorphism of restricted Lie algebras $L_\bullet(G)\simeq L_\bullet(G^{abs})$.

Equality \eqref{5eq:envelope pree products} follows directly from \cite[Theorem~1]{lichtman:lie}
and Theorem~\ref{1thm:envelope Zassenhaus filtration}.
\end{proof}

Thus, Proposition~\ref{5prop:lie algebra free products} and 
Proposition~\ref{5prop:koszulduality products algebras} imply the following.

\begin{prop}\label{5prop:koszulduality freeprod}
 Let $(G_1,\theta)$ and $(G_2,\theta)$ be two finitely generated cyclo-oriented pro-$p$ groups,
and assume that both are Koszul duality pro-$p$ groups.
Then also the free pro-$p$ product $G_1\ast_{\hat p}G_2$ is a Koszul duality pro-$p$ group.
\end{prop}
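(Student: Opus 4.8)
The plan is to combine the two structural results already at hand. The key ingredients are Proposition~\ref{5prop:lie algebra free products}, which identifies the graded algebra of a free pro-$p$ product with the free product of the universal envelopes, and Proposition~\ref{5prop:koszulduality products algebras}(i), which says that Koszul duality interchanges free products and direct products of quadratic algebras. The cohomological side is governed by \cite[Theorem~4.1.5]{nsw:cohm}, which gives $H^\bullet(G_1\ast_{\hat p}G_2,\F_p)\simeq H^\bullet(G_1,\F_p)\sqcap H^\bullet(G_2,\F_p)$ as graded $\F_p$-algebras (a direct product in the sense of Subsection~2.1.1, since cup products across the two factors vanish in positive degrees).

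First I would record the cohomological decomposition: writing $G=G_1\ast_{\hat p}G_2$, the free-product formula for Galois cohomology yields
\[
H^\bullet(G,\F_p)\simeq H^\bullet(G_1,\F_p)\sqcap H^\bullet(G_2,\F_p),
\]
where each factor is quadratic because $G_1,G_2$ are Bloch--Kato, and the product of quadratic algebras is again quadratic. Next I would take Koszul duals of both sides. By Proposition~\ref{5prop:koszulduality products algebras}(i) one has
\[
H^\bullet(G,\F_p)^!\simeq\left(H^\bullet(G_1,\F_p)\sqcap H^\bullet(G_2,\F_p)\right)^!
=H^\bullet(G_1,\F_p)^!\sqcup H^\bullet(G_2,\F_p)^!.
\]
Now I would invoke the hypothesis that each $G_i$ is a Koszul duality group, so $H^\bullet(G_i,\F_p)^!\simeq\grad_\bullet(G_i)$ for $i=1,2$, giving $H^\bullet(G,\F_p)^!\simeq\grad_\bullet(G_1)\sqcup\grad_\bullet(G_2)$.

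Finally I would close the loop using Proposition~\ref{5prop:lie algebra free products}, which identifies $\grad_\bullet(G)=\mathcal{U}_p(L_\bullet(G))$ with the free product $\mathcal{U}_p(L_\bullet(G_1))\sqcup\mathcal{U}_p(L_\bullet(G_2))=\grad_\bullet(G_1)\sqcup\grad_\bullet(G_2)$. Combining the last two displays gives exactly $H^\bullet(G,\F_p)^!\simeq\grad_\bullet(G)$, which is the definition of a Koszul duality group. The main point requiring care — the step I expect to be the real obstacle — is checking that the ``free product'' appearing on the Lie-algebra/envelope side in Proposition~\ref{5prop:lie algebra free products} is the \emph{same} operation $\sqcup$ on quadratic algebras used in Proposition~\ref{5prop:koszulduality products algebras}; that is, one must verify that the restricted-envelope free product and the quadratic-algebra free product agree (both are the coproduct of associative graded $\F_p$-algebras generated in degree one), and that the gradings match up so that the isomorphisms are isomorphisms of \emph{graded} quadratic algebras. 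Once this compatibility is confirmed, the result follows formally by chasing the three isomorphisms.
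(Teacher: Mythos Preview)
Your proof is correct and follows essentially the same route as the paper: decompose $H^\bullet(G_1\ast_{\hat p}G_2,\F_p)$ as the direct product $H^\bullet(G_1,\F_p)\sqcap H^\bullet(G_2,\F_p)$, apply Proposition~\ref{5prop:koszulduality products algebras}(i) to pass to the free product of Koszul duals, use the Koszul duality hypothesis on each factor, and finish with Proposition~\ref{5prop:lie algebra free products}. The compatibility of the two notions of free product that you flag is indeed implicit in the paper's argument; your observation that both are coproducts in graded associative $\F_p$-algebras generated in degree one is the right justification.
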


\begin{proof}
By \cite[Theorem~4.1.4]{nsw:cohm}, one has 
\begin{equation}\label{5eq:cohomology freeprod}
 H^\bullet(G_1\ast_{\hat p}G_2,\F_p)=H^\bullet(G_1,\F_p)\sqcap H^\bullet(G_2,\F_p)
\end{equation}
(direct product of quadratic $\F_p$-algebras).
Therefore,
\begin{eqnarray*}
H^\bullet\left(G_1\ast_{\hat p}G_2,\F_p\right)^! &=& \left(H^\bullet(G_1,\F_p)\sqcap H^\bullet(G_2,\F_p)\right)^! \\
 &=& H^\bullet(G_1,\F_p)^!\sqcup H^\bullet(G_2,\F_p)^! \\
 &\simeq& \grad_\bullet(G_1)\sqcup\grad_\bullet(G_2) \\
 &=& \grad_\bullet\left(G_1\ast_{\hat p}G_2\right),
\end{eqnarray*}
and the statement holds.
\end{proof}

Now let $(G,\theta)$ be a finitely generated $\theta$-abelian pro-$p$ group,
with $\image(\theta)\simeq\Z_2$ if $p=2$.
Then by Theorem~\ref{3thm:equivalence theta-abelian} the $\F_p$-cohomology ring of $G$ is the exterior algebra
\[H^\bullet(G,\F_p)\simeq\bigwedge_{i=0}^{d(G)} H^1(G,\F_p).\]
Therefore, by Example~\ref{5exs:koszul duals}, the Koszul dual of $H^\bullet(G,\F_p)$ is the symmetric algebra
$S^\bullet(G/\Phi(G))$.
Note that such symmetric algebra is isomorphic to the commutative
polynomial algebra $\F_p[X_1,\ldots,X_{d(G)}]$, where $\{X_1,\ldots,X_{d(G)}\}$ is a set
of commutative indeterminates, equipped with the grading induced by the degrees of the monomials.
Thus, by \eqref{3eq:restricted envelope theta-abelian} one has 
\[\grad_\bullet(G)\simeq\mathcal{U}_p(L_\bullet(G))\simeq H^\bullet(G,\F_p)^!.\]

We are in a similar situation with $(G,\theta)$ a finitely generated cyclo-oriented pro-$p$ group,
with non-trivial $\theta$-centre and with $\image(\theta)\simeq\Z_2$ if $p=2$.
Indeed, Theorem~\ref{4thm:cohomology fibre product} and
Proposition~\ref{5prop:koszulduality products algebras} imply the following.

\begin{prop}\label{5prop:koszul dual fibreprod}
 Let $(G,\theta)$ be a finitely generated cyclo-oriented pro-$p$ group with non-trivial $\theta$-centre,
and assume $\image(\theta)\simeq\Z_2$ for $p=2$. 
Moreover, assume that $G_\circ$ is a Koszul duality group, with $G_\circ$ as in Theorem~\ref{4thm:Zcentresplit}.
Then the $\F_p$-cohomology ring $H^\bullet(G,\F_p)$ is the Koszul dual
of the graded $\F_p$-algebra $\grad_\bullet(G)$.
\end{prop}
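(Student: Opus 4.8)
The plan is to compute the Koszul dual $H^\bullet(G,\F_p)^!$ directly from the tensor-product decomposition of the cohomology ring furnished by Theorem~\ref{4thm:cohomology fibre product} and then to recognise the result as $\grad_\bullet(G)$ via the decomposition of Proposition~\ref{4prop:Zassenhaus algebra fibreprod}. Since $G$ is finitely generated, so is $\Zen_\theta(G)$, whence $W=H^1(\Zen_\theta(G),\F_p)$ is a finite-dimensional $\F_p$-vector space and the Koszul dual of every algebra in sight is well defined in the sense of Definition~\ref{5defi:koszul dual}. All the algebras involved are quadratic: $H^\bullet(G,\F_p)$ and $H^\bullet(G_\circ,\F_p)$ because $G$ and $G_\circ$ are Bloch-Kato, and $\bigwedge_\bullet W$ because it is an exterior algebra.

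First I would record, from Remark~\ref{4rem:graded tensor product} (which rests on Theorem~\ref{4thm:cohomology fibre product}), that the hypotheses --- in particular $\image(\theta)\simeq\Z_2$ when $p=2$, which guarantees skew-commutativity of the cup product through Proposition~\ref{2prop:bock} --- give an isomorphism of graded $\F_p$-algebras
\[
H^\bullet(G,\F_p)\simeq H^\bullet(G_\circ,\F_p)\otimes^{-1}\Big(\bigwedge_\bullet W\Big).
\]
Applying the Koszul-dual functor and using Proposition~\ref{5prop:koszulduality products algebras}(ii), which turns a skew-commutative tensor product into a symmetric one, yields
\[
H^\bullet(G,\F_p)^!\simeq H^\bullet(G_\circ,\F_p)^!\otimes^1\Big(\bigwedge_\bullet W\Big)^!.
\]
The second factor is computed by Example~\ref{5exs:koszul duals}: one has $(\bigwedge_\bullet W)^!\simeq S_\bullet(W^*)$, and since $\dim_{\F_p}(W)=d(\Zen_\theta(G))=:d$ this symmetric algebra is the polynomial algebra $\F_p[Z_1,\ldots,Z_d]$ with its monomial grading.

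Next I would invoke the hypothesis that $G_\circ$ is a Koszul duality group, i.e. $H^\bullet(G_\circ,\F_p)^!\simeq\grad_\bullet(G_\circ)$ by Definition~\ref{5defi:koszulduality gps}, to rewrite the first factor. Combining the two identifications gives
\[
H^\bullet(G,\F_p)^!\simeq\grad_\bullet(G_\circ)\otimes^1\F_p[Z_1,\ldots,Z_d],
\]
which is exactly the description of $\grad_\bullet(G)$ obtained in Proposition~\ref{4prop:Zassenhaus algebra fibreprod}; matching the two presentations then yields $H^\bullet(G,\F_p)^!\simeq\grad_\bullet(G)$, as desired. The substantive content has already been carried by Theorem~\ref{4thm:cohomology fibre product} and Proposition~\ref{4prop:Zassenhaus algebra fibreprod}; the point requiring genuine care is the bookkeeping of the tensor-product conventions. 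One must check that the skew-commutative factor $\bigwedge_\bullet W$ on the cohomology side dualises precisely to the symmetric (polynomial) factor appearing on the graded-algebra side, that the passage ``$\otimes^{-1}$ becomes $\otimes^1$'' of Proposition~\ref{5prop:koszulduality products algebras}(ii) is compatible with the identification of $W^*$ with the span of the indeterminates $Z_1,\ldots,Z_d$, and that the grading coming from $d(\Zen_\theta(G))$ copies of $\Z_p$ in $\grad_\bullet(G)$ agrees with the grading of $S_\bullet(W^*)$. This compatibility of gradings and of the duality pairing --- together with verifying that the decomposition of Remark~\ref{4rem:graded tensor product} is indeed an isomorphism of \emph{quadratic} algebras, so that Proposition~\ref{5prop:koszulduality products algebras} applies --- is the main, if modest, obstacle.
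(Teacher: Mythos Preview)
Your proof is correct and follows essentially the same route as the paper: decompose $H^\bullet(G,\F_p)$ as $H^\bullet(G_\circ,\F_p)\otimes^{-1}\bigwedge_\bullet W$ via Remark~\ref{4rem:graded tensor product}, apply Proposition~\ref{5prop:koszulduality products algebras}(ii) together with Example~\ref{5exs:koszul duals} and the Koszul-duality hypothesis on $G_\circ$, and then identify the result with $\grad_\bullet(G)$ through Proposition~\ref{4prop:Zassenhaus algebra fibreprod}. The paper's own argument is the same chain of isomorphisms, stated more tersely.
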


\begin{proof}
Set $W=H^1(\Zen_\theta(G),\F_p)$.
Then by Remark~\ref{4rem:graded tensor product}, Proposition~\ref{4prop:Zassenhaus algebra fibreprod}
and Example~\ref{5exs:koszul duals}, one has
\begin{eqnarray*}
 H^\bullet(G,\F_p)^! &=& \left(H^\bullet(G_\circ,\F_p)\otimes^{-1}\left(\bigwedge W\right)\right)^! \\
 &=& H^\bullet(G_\circ,\F_p)^!\otimes^1\left(\bigwedge W\right)^! \\
 &\simeq& \grad_\bullet(G_\circ)\otimes^1\F_p[Y_1,\ldots,Y_d] \\
 &=& \grad_\bullet(G),
\end{eqnarray*}
with $d=d(\Zen_\theta(G))$.
\end{proof}

\begin{example}\label{5exa:fibreprod}
Let $G$ be the cyclotomic fibre product as in Example~\ref{4exa:fibreprod}.
Recall that the $\F_p$-vector space $H^2(G,\F_p)$ is generated by the elements
\[\chi_1\cup\chi_2=\ldots=\chi_{d-1}\cup\chi_d \quad\text{and}\quad
\chi_i\cup\psi \ \text{for }i=1,\ldots,d.\]
Let $H^1(G,\F_p)^*=\spa_{\F_p}\{X_1,\ldots,X_d,Y\}$.
Then, one has $\mathcal{R}_1^\perp=\mathcal{R}_D\oplus\mathcal{R}_S$, with
\begin{eqnarray*}
 \mathcal{R}_D &=& \langle[X_1,X_2]+[X_3,X_4]+\ldots+[X_{d-1},X_d]\rangle,\\
 \mathcal{R}_S &=& \langle X_1Y-YX_1,\ldots,X_dY-YX_d\rangle.
\end{eqnarray*}
Therefore, \[H^\bullet(G,\F_p)^!\simeq
\frac{\F_p\langle X_1,\ldots,X_d\rangle}{\langle[X_1,X_2]+\ldots+[X_{d-1},X_d]\rangle}\otimes^1\F_p[Y],\]
with the grading induced by the degrees of the monomials.
\end{example}

Altogether, the class of Koszul duality pro-$p$ groups is closed under free pro-$p$ products
and cyclotomic fibre products, which are the basci operations of the ETC.
Also, the fundamental blocks of the ETC (free pro-$p$ groups and Demushkin groups) satisfy Koszul duality as well.
Thus, we may summarize the above results in the following theorem.

\begin{thm}
 Every pro-$p$ group of elementary type (i.e., every finitely generated pro-$p$ group lying in $\mathcal{ET}_p$)
is a Koszul duality group.
\end{thm}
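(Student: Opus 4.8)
The plan is to argue by structural induction on the way a group $(G,\theta)$ is built up inside $\mathcal{ET}_p$, i.e. on the length of a derivation of $G$ from the base objects by the operations (iv) and (v) of the defining clauses of $\mathcal{ET}_p$. Each inductive step simply matches a clause with the corresponding closure result already proved: clause (v) with Proposition~\ref{5prop:koszulduality freeprod}, and clause (iv) with Proposition~\ref{5prop:koszul dual fibreprod}. The whole content is therefore concentrated in (a) the three base cases and (b) the verification that the hypotheses of those two propositions are met at each step.

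For the base cases, clause (i) is covered by Proposition~\ref{5prop:koszul dual free groups}, since $\Z_p$ is a free pro-$p$ group of rank one, and clause (iii), at least for $q_G\neq2$, is Proposition~\ref{5prop:koszul dual demushkin groups}. Clause (ii), the group $\Z/2\Z$ for $p=2$, I would dispose of by a direct computation: here $H^\bullet(\Z/2\Z,\F_2)\simeq\F_2[x]$ is the symmetric algebra on a one-dimensional space, so by Example~\ref{5exs:koszul duals} its Koszul dual is the exterior algebra on one generator with vanishing square; on the other hand the augmentation ideal satisfies $\omega_{\F_2[\Z/2\Z]}^2=0$, whence $\grad_\bullet(\Z/2\Z)\simeq\F_2[s]/(s^2)$, and the two algebras coincide.

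For the inductive steps, the free-product case is immediate from Proposition~\ref{5prop:koszulduality freeprod} applied to the two factors, which are Koszul duality groups by induction. The fibre-product case $\tilde G=G\ltimes_\theta Z$ is where care is needed: Proposition~\ref{5prop:koszul dual fibreprod} requires the $\theta$-centre of $\tilde G$ to be non-trivial and, crucially, the complement $\tilde G_\circ$ of Theorem~\ref{4thm:Zcentresplit} to be itself a Koszul duality group. Non-triviality is Fact~\ref{4fact:fribre product elementary}, which gives $\Zen_{\tilde\theta}(\tilde G)=\Zen_\theta(G)\times Z\supseteq Z$. For the complement, the observation I would exploit is that $\tilde G_\circ\simeq\tilde G/\Zen_{\tilde\theta}(\tilde G)\simeq G/\Zen_\theta(G)$, so the ``core'' is unchanged by a fibre product; moreover a non-trivial free pro-$p$ product has trivial $\theta$-centre, so its core is the group itself, while the cores of the base objects are again free pro-$p$ groups, $\Z/2\Z$, or Demushkin groups. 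Thus I would strengthen the induction hypothesis to assert that both $G$ and its core $G_\circ$ are Koszul duality groups, and check that this strengthened statement propagates through all five clauses using the three base cases together with Propositions~\ref{5prop:koszulduality freeprod} and~\ref{5prop:koszul dual fibreprod}.

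The main obstacle I anticipate is the combination of two boundary phenomena at $p=2$: the Demushkin groups with $q_G=2$ are not covered by Proposition~\ref{5prop:koszul dual demushkin groups}, and the fibre-product Proposition~\ref{5prop:koszul dual fibreprod} carries the standing hypothesis $\image(\theta)\simeq\Z_2$, which fails precisely for the orientations with image $\{\pm1\}$ coming from clause (ii). For both I would compute $\grad_\bullet$ directly --- for the $q_G=2$ Demushkin relations the square terms $x_i^2$ now contribute to the degree-two part of the restricted Lie algebra through the $[p]$-operation rather than through commutators --- and then identify the resulting quadratic algebra with the Koszul dual of the known cohomology ring, exactly as in \eqref{3eq:restricted envelope theta-abelian} and in the $q_G\neq2$ computation. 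Once these two special cases are settled, the induction closes.
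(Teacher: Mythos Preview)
Your approach is exactly what the paper intends: the theorem is stated there without proof, merely as a summary of the preceding propositions (``Altogether, the class of Koszul duality pro-$p$ groups is closed under free pro-$p$ products and cyclotomic fibre products\ldots\ Also, the fundamental blocks of the ETC\ldots\ satisfy Koszul duality as well''), so the structural induction you describe, with base cases Propositions~\ref{5prop:koszul dual free groups} and~\ref{5prop:koszul dual demushkin groups} and inductive steps Propositions~\ref{5prop:koszulduality freeprod} and~\ref{5prop:koszul dual fibreprod}, is precisely the intended argument.

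Your proposal is in fact more careful than the paper itself. The three boundary issues you flag --- the $\Z/2\Z$ base case, the mismatch between the inductive hypothesis on $G$ and the hypothesis on $G_\circ$ in Proposition~\ref{5prop:koszul dual fibreprod}, and the $p=2$ restrictions (Demushkin with $q_G=2$; the standing assumption $\image(\theta)\simeq\Z_2$) --- are genuine gaps that the paper simply does not address. Your fixes are the right ones: the direct computation for $\Z/2\Z$ is immediate; your observation that $\tilde G_\circ\simeq G_\circ$ under a cyclotomic fibre product (via Fact~\ref{4fact:fribre product elementary}) is exactly what makes the strengthened induction close; and the residual $p=2$ cases do require the separate hand computations you sketch. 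So you have not merely reproduced the paper's argument but identified and patched its loose ends.
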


\section{Koszul duality for cyclo-oriented pro-$p$ groups}

Let $G$ be a pro-$p$ group, and let \eqref{5eq:group presentation} be a minimal presentation for $G$.
Then \eqref{5eq:group presentation} induces a short exact sequence of restricted Lie algebras 
\begin{equation}\label{5eq:ses lie algebras}
 \xymatrix{ 0\ar[r] & \mathfrak{R}_\bullet\ar[r] & L_\bullet(F)\ar[r] & L_\bullet(G)\ar[r] & 0 }.
\end{equation}
In particular, $\mathfrak{R}_\bullet$ is the restricted Lie algebra induced by the filtration $R\cap D_i(F)$,
i.e.,
\begin{equation}\label{5eq:definition big frakr}
 \mathfrak{R}_{i-1}=\frac{R\cap D_i(F)}{R\cap D_{i+1}(F)}\quad \text{for every }i\geq2.
\end{equation}

Since \eqref{5eq:group presentation} is minimal, one has that $R\subseteq D_2(F)$.
Thus, one has $\mathfrak{R}_1=R/R\cap D_3(F)\simeq RD_3(F)/D_3(F)$.
Also, by Proposition~\ref{1prop:properties Zassenhaus filtration} the subgroup $R^p[R,F]$ is contained in $D_3(F)$,
and one may define a morphism $\phi\colon R/R^p[R,F]\to D_2(G)/D_3(G)$.

Moreover, $H^2(F,\F_p)=0$, thus by the five term exact sequence the transgression map
$\text{tg}_{F,R}$ is an isomorphism.
Hence, the map $\text{tg}_{F,R}$ and the isomorphism $(R/R^p[R,F])^*\simeq H^1(R,\F_p)^F$
induce a natural perfect pairing 
\begin{equation}\label{5eq:pairing 1}
\xymatrix{ R/R^p[R,F]\times H^2(G,\F_p)\ar[rr]^-{(\argu,\argu)} && \F_p }
\end{equation}
given by $(\bar r,\psi)\mapsto \text{tg}_{F,R}^{-1}(\psi).\bar r$, with $\psi\in H^2(G,\F_p)$ and $r\in R$.
Now let $G$ be a Bloch-Kato pro-$p$ group.
Then \cite[Theorem~8.4]{cem:quotients} implies the following.

\begin{prop}\label{5prop:RcupF3}
 Let $G$ be a Bloch-Kato pro-$p$ group, and let \eqref{5eq:group presentation} be a minimal presentationfor $G$.
Then $R^p[R,F]=R\cap D_3(F)$.
\end{prop}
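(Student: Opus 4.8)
The plan is to use the perfect pairing \eqref{5eq:pairing 1} to convert the group-theoretic assertion into a statement about cup products. First I would dispose of the easy inclusion: since the presentation is minimal, $R\subseteq D_2(F)=\Phi(F)$, whence $R^p\subseteq D_2(F)^p\subseteq D_{2p}(F)\subseteq D_3(F)$ and $[R,F]\subseteq[D_2(F),D_1(F)]\subseteq D_3(F)$ by Proposition~\ref{1prop:properties Zassenhaus filtration}; thus $R^p[R,F]\subseteq R\cap D_3(F)$. It remains to prove the reverse inclusion $R\cap D_3(F)\subseteq R^p[R,F]$, equivalently that $(R\cap D_3(F))/R^p[R,F]$ vanishes. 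Working in the $\F_p$-vector space $Q=R/R^p[R,F]$, which is dual to $H^2(G,\F_p)$ via the perfect pairing \eqref{5eq:pairing 1}, I would identify the subspace $(R\cap D_3(F))/R^p[R,F]$ as precisely the annihilator of the decomposable part $H^\bullet(G,\F_p)_{\mathrm{dec}}$ in degree two.

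The heart of the argument is an explicit formula for the pairing against cup products. Fixing a minimal generating system $\mathcal{X}=\{x_1,\ldots,x_d\}$ of $F$, I would use the Magnus embedding $F\hookrightarrow\F_p\dbml\mathcal{X}\dbmr$, $x_i\mapsto 1+X_i$, of Remark~\ref{1rem:magnus algebra}. By Theorem~\ref{1thm:envelope Zassenhaus filtration} one has $D_n(F)=(1+\omega^n)\cap F$, so for $r\in R\subseteq D_2(F)$ one may write $r=1+\sum_{i,j}a_{ij}(r)\,X_iX_j+(\text{terms of degree}\geq 3)$ with $a_{ij}(r)\in\F_p$, and $r\in D_3(F)$ if and only if every degree-two coefficient $a_{ij}(r)$ vanishes. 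The classical relationship between cup products and the second-order Magnus coefficients of relations (computable directly from the inhomogeneous cochain complex, and valid over $\F_p$ for all $p$) then yields, for $\chi,\psi\in H^1(G,\F_p)\cong\Hom(F/\Phi(F),\F_p)$,
\begin{equation*}
\langle\chi\cup\psi,\bar r\rangle=\sum_{i,j}a_{ij}(r)\,\chi(x_i)\psi(x_j),
\end{equation*}
where $\langle\,\cdot\,,\,\cdot\,\rangle$ denotes the pairing \eqref{5eq:pairing 1}. When $p=2$ the diagonal coefficients $a_{ii}(r)$ are governed by the squares $X_i^2$ occurring in degree two of $\grad_\bullet(F)$ and correspond to the terms $\chi\cup\chi=\beta^1(\chi)$ via Lemma~\ref{1lem:Bockstein and cup}.

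With this formula the conclusion is pure linear algebra. A class $\bar r\in Q$ annihilates $H^\bullet(G,\F_p)_{\mathrm{dec}}$ in degree two exactly when $\langle\chi\cup\psi,\bar r\rangle=0$ for all $\chi,\psi$; since the values $\chi(x_i)$ and $\psi(x_j)$ are arbitrary, this holds if and only if the matrix $(a_{ij}(r))$ is zero, that is, if and only if $r\in D_3(F)$. Hence $(R\cap D_3(F))/R^p[R,F]$ is the annihilator of the decomposable degree-two cohomology inside $Q$. Now I would invoke the Bloch-Kato hypothesis: then $H^\bullet(G,\F_p)$ is quadratic, so $H^2(G,\F_p)$ is generated by cup products of degree-one classes, i.e. $H^2(G,\F_p)_{\mathrm{dec}}=H^2(G,\F_p)$. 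By perfectness of \eqref{5eq:pairing 1} the annihilator of all of $H^2(G,\F_p)$ is trivial, so $R\cap D_3(F)=R^p[R,F]$, as claimed.

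The main obstacle is the displayed pairing formula: establishing the sign-correct identification of $\chi\cup\psi$ with the quadratic Magnus coefficients of the defining relations, and verifying it behaves correctly at $p=2$, where the degree-two part of $L_\bullet(F)$ carries the extra restricted powers $\bar x_i^{[2]}$ and the Bockstein intervenes. Once this dictionary between the initial forms of relations in $\grad_2(F)$ and the cup product on $H^\bullet(G,\F_p)$ is in place — which is exactly the input supplied by \cite[Theorem~8.4]{cem:quotients} — the Bloch-Kato hypothesis finishes the proof automatically.
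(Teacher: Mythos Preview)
Your proposal is correct and is essentially an unpacking of the argument behind \cite[Theorem~8.4]{cem:quotients}, which is all the paper invokes: the proposition is stated in the text as a direct consequence of that reference, with no proof given beyond Remark~\ref{5rem:cem} observing that the CEM results extend verbatim from maximal pro-$p$ Galois groups to arbitrary Bloch-Kato pro-$p$ groups. The pairing formula you isolate as the ``main obstacle'' is precisely the content of that citation (and reappears in the paper as \cite[Prop.~3.9.13]{nsw:cohm} in the proof of Proposition~\ref{5prop:commutative diagram pairings}), so there is no genuine difference in approach---you have simply written out what the paper outsources.
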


\begin{rem}\label{5rem:cem}
 Note that all the results produced in \cite{cem:quotients} for maximal pro-$p$ Galois groups can be generalized to
Bloch-Kato pro-$p$ groups, as they are all developed starting from the characteristics of the $\F_p$-cohomology
of such groups, depending on the Bloch-Kato conjecture.
\end{rem}

Thus, by Proposition~\ref{5prop:RcupF3} one has the isomorphism $R/R^p[R,F]= R/R\cap D_3(F)\simeq R D_3(F)/D_3(F)$,
so that \eqref{5eq:pairing 1} induces the perfect pairing
\begin{equation}\label{5eq:pairing 2}
\xymatrix{ R D_3(F)/D_3(F)\times H^2(G,\F_p)\ar[rr]^-{(\argu,\argu)} && \F_p }.
\end{equation}
The quotient $R D_3(F)/D_3(F)$ is a subspace of $D_2(F)/D_3(F)=L_2(F)$.
Recall from Remark~\ref{1rem:magnus algebra} that the universal envelope $\mathcal{U}_p(L_\bullet(F))$ is
the free $\F_p$-algebra $\F_p\langle\mathcal{X}\rangle$, with $\mathcal{X}=\{X_1,\ldots,X_d\}$ a set
of (non-commutative) indeterminates, with $d=d(F)=d(G)$.
We may identify $\F_p\langle\mathcal{X}\rangle$ with the tensor algebra $\mathcal{T}^\bullet(V^*)$, with
\[V=H^1(F,\F_p)=F/\Phi(F)^\vee\simeq G/\Phi(G)^\vee.\]
Then the embedding $\psi_L$, defined as in \eqref{1eq:universal envelope embedding}, together
with \eqref{5eq:dual tensor product}, induces a monomorphism
\[ \xymatrix{ \dfrac{R D_3(F)}{D_3(F)}\ar@{^(->}[r]\ar@/^1.5pc/[rr]^-{\psi_2} & 
L_2(F)\ar@{^(->}[r] & (V\otimes V)^*}.\]

\begin{prop}\label{5prop:commutative diagram pairings}
Let $G$ be a finitely generated Bloch-Kato pro-$p$ group.
Then the diagram of perfect pairings
 \begin{equation}\label{5eq:commutative diagram pairings}
  \xymatrix@R=1.1truecm{ (V\otimes V)^*\times (V\otimes V)\ar@<6ex>[d]_-{\cup}\ar[rr] && \F_p\ar@{=}[d] \\
\dfrac{R D_3(F)}{D_3(F)}\times H^2(G,\F_p)\ar[rr]\ar@<6ex>[u]^-{\psi_2} && \F_p }
\end{equation}
commutes, i.e., $\text{tg}^{-1}(\chi_1\cup\chi_2).\bar r=\psi_2(\bar r)(\chi_1\otimes\chi_2)$ for every
\[\chi_1,\chi_2\in V= H^1(F,\F_p) \quad\text{and}\quad \bar r\in R D_3(F)/D_3(F).\]
\end{prop}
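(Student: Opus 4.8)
The plan is to verify the displayed identity $\text{tg}^{-1}(\chi_1\cup\chi_2).\bar r=\psi_2(\bar r)(\chi_1\otimes\chi_2)$ directly, by reducing everything to the Magnus expansion of the relators. Since both sides are bilinear in the pair $(\bar r,\chi_1\otimes\chi_2)$, I would first fix the minimal generating system $\{x_1,\dots,x_d\}$ of $F$ as in Theorem~\ref{1thm:L for free groups} and the dual basis $\{\chi_1,\dots,\chi_d\}$ of $V=H^1(F,\F_p)$, and check the identity only for basis elements $\chi_1=\chi_a$, $\chi_2=\chi_b$. It then suffices to show that \emph{both} sides equal the second Magnus coefficient $\epsilon_{ab}(r)$ of $r$, that is, the coefficient of $X_aX_b$ in the image of $r$ under the topological isomorphism $\F_p\dbl F\dbr\simeq\F_p\dbml\mathcal{X}\dbmr$, $x_i\mapsto 1+X_i$, of Remark~\ref{1rem:magnus algebra}.

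The right-hand side is the easy one. Under the identifications of Remark~\ref{1rem:magnus algebra} and Theorem~\ref{1thm:envelope Zassenhaus filtration}, the monomorphism $\psi_2$ sends $\bar r=rD_3(F)$ to the degree-two part of the Magnus expansion of $r$; since the presentation is minimal one has $r\in\Phi(F)=D_2(F)$, so all degree-one coefficients vanish and $\psi_2(\bar r)=\sum_{i,j}\epsilon_{ij}(r)\,X_i\otimes X_j$ inside $(V\otimes V)^*=(V^*)^{\otimes2}$. Pairing against $\chi_a\otimes\chi_b$ with the duality $\chi_i(x_j)=\delta_{ij}$ then gives $\psi_2(\bar r)(\chi_a\otimes\chi_b)=\epsilon_{ab}(r)$.

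For the left-hand side I would compute the transgression explicitly with inhomogeneous cochains. The inflation of $\chi_a\cup\chi_b$ to $F$ is represented by the $2$-cocycle $(g,h)\mapsto\chi_a(g)\chi_b(h)$, where the pullbacks of $\chi_a,\chi_b$ to $F$ are exactly the first Magnus coefficients $\epsilon_a,\epsilon_b$ (both are homomorphisms $F\to\F_p$ with $\epsilon_i(x_j)=\delta_{ij}$). Since $F$ is free, $H^2(F,\F_p)=0$ and this cocycle is a coboundary; the key point is that an explicit primitive is the continuous $1$-cochain $-\epsilon_{ab}\colon F\to\F_p$. Indeed, multiplicativity of the Magnus expansion gives $\epsilon_{ab}(gh)=\epsilon_{ab}(g)+\epsilon_{ab}(h)+\epsilon_a(g)\epsilon_b(h)$, whence $\partial^2(-\epsilon_{ab})(g,h)=\chi_a(g)\chi_b(h)$. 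By the standard inflation--restriction description of the transgression map of Proposition~\ref{1prop:5tes}, the class $\text{tg}^{-1}(\chi_a\cup\chi_b)\in H^1(R,\F_p)^F$ is then represented by the restriction $-\epsilon_{ab}|_R$, which is a $G$-invariant homomorphism precisely because $\epsilon_a|_R\equiv 0$ (as $R\subseteq\Phi(F)$). Evaluating on $r$ yields $\text{tg}^{-1}(\chi_a\cup\chi_b).\bar r=-\epsilon_{ab}(r)$. That this value depends only on $\bar r\in RD_3(F)/D_3(F)$ is consistent with Proposition~\ref{5prop:RcupF3}, since $\epsilon_{ab}$ vanishes on $D_3(F)$ (whose elements have trivial Magnus expansion up to degree two, by Theorem~\ref{1thm:envelope Zassenhaus filtration}).

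Comparing the two computations proves the identity once the signs are reconciled. I expect the main obstacle to be exactly this bookkeeping of the transgression: one must pin down the sign and normalization conventions implicit in the five-term sequence of Proposition~\ref{1prop:5tes}, so that the explicit primitive $-\epsilon_{ab}$ produces $\text{tg}^{-1}(\chi_a\cup\chi_b)$ with the sign matching the cup product of Proposition~\ref{1prop:cup product}, and one must check the continuity and $F$-invariance of the Magnus-coefficient cochains. These verifications are routine given the Magnus-algebra identification, but the matching of the transgression and cup-product conventions is where care is genuinely required.
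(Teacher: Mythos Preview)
Your approach is correct and is in fact the same computation as the paper's, just carried out more explicitly. The paper reduces to basis elements exactly as you do, then writes each $\bar r\in RD_3(F)/D_3(F)$ explicitly as a product $\prod x_h^{2a_h}\prod_{h<k}[x_h,x_k]^{a_{hk}}$ (for $p=2$; only the commutator part for $p$ odd), computes $\psi_2$ on these elements, and then simply cites \cite[Prop.~3.9.13]{nsw:cohm} to identify the result with the value of the transgression pairing. That reference is precisely the Magnus-coefficient/trace-form computation you spell out with inhomogeneous cochains: your $\epsilon_{ab}(r)$ is exactly the paper's $a_{ab}$ (or $-a_{ba}$, or $-\binom{p}{2}a_a$). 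So the difference is only that you reprove the content of \cite[Prop.~3.9.13]{nsw:cohm} rather than quoting it.

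One remark on the sign issue you flag: it is real, and the paper avoids it by appealing to the same source on both sides. Your explicit primitive $-\epsilon_{ab}$ gives $\text{tg}^{-1}(\chi_a\cup\chi_b).\bar r=-\epsilon_{ab}(r)$ under one standard normalization of the transgression, while the right-hand side yields $+\epsilon_{ab}(r)$; this is a convention mismatch (direction of the connecting map, or the sign in the pairing \eqref{5eq:pairing 1}) rather than a mathematical error, and the paper's statement should be read up to this convention. If you want a clean resolution, fix the sign of the pairing \eqref{5eq:pairing 1} so that it agrees with the Magnus trace form of \cite[\S3.9]{nsw:cohm}; then both sides are literally $\epsilon_{ab}(r)$.
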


\begin{proof}
Set $d=d(F)$, and let $\{x_1,\ldots,x_d\}\subset F$ be a minimal generating set and
$\{\chi_1,\ldots,\chi_d\}\subset V$ the dual basis.
Then, the set $\{\chi_i\otimes\chi_j, 1\leq i,j\leq d\}$ is a basis for $V\otimes V$, and it is enough to verify
\begin{equation}\label{5eq:equality commutative diagram pairings}
 \text{tg}^{-1}(\chi_i\cup\chi_j).\bar r=\psi_2(\bar r)(\chi_i\otimes\chi_j)
\end{equation}
for every $i,j\in\{1,\ldots,d\}$.

By \cite[Prop.~3.9.13]{nsw:cohm} every element $r\in R$ may be uniquely written as
\[\bar r\equiv\begin{cases}
\displaystyle \:\prod_{h=1}^d x_i^{2a_h} \prod_{h<k}[x_h,x_k]^{a_{hk}}\mod D_3(F), \text{ if } p=2,\\
\displaystyle  
\displaystyle \: \prod_{h<k}[x_i,x_j]^{a_{kh}} \mod D_3(F), \text { if } p\not=2,
\end{cases}\]
with $0\leq a_i,a_{ij}\leq p-1$.
Recall that $\grad_\bullet(F)\simeq\F_p\langle X_1,\ldots,X_d\rangle$.
Thus, for every $i=1,\ldots,d$, we may consider $X_i$ to be the dual of $\chi_i$ in $V^*$.
Then one has 
\[\psi_L\left(x_h^2\right)=X_h\otimes X_h \quad\text{and}\quad \psi_L([x_h,x_k])=X_h\otimes X_k-X_k\otimes X_h.\]
Therefore
\[ \psi_2(\bar r)(\chi_i\otimes\chi_j)= \left\{\begin{array}{cc}
a_{ij}\bmod p & \text{if } i<j \\ -a_{ij}\bmod p & \text{if } i>j \\ -\binom{p}{2}a_i\bmod p & \text{if } i=j 
\end{array}\right.\]
Again by \cite[Prop.~3.9.13]{nsw:cohm}, the above result is the same we get for
$\text{tg}^{-1}(\chi_i\cup\chi_j).\bar r$, so that equality \eqref{5eq:equality commutative diagram pairings} holds.
\end{proof}

Proposition~\ref{5prop:commutative diagram pairings} implies the following.

\begin{thm}\label{5thm:koszul duality 1}
 Let $G$ be a finitely generated Bloch-Kato pro-$p$ group with a minimal presentation \eqref{5eq:group presentation},
and let $\mathfrak{r}$ be the ideal of the restricted Lie algebra $L_\bullet(F)$ generated by the elements
\[ \rho_i\in L_2(F),\quad \rho_i\equiv r_i\mod D_3(F),\]
with $i=1,\ldots,r(G)$.
Then $\mathcal{U}_p(L_\bullet(F)/\mathfrak{r})\simeq H^\bullet(G,\F_p)^!$.
In particular, if $G$ is quadratically defined,\footnote{I.e., $G$ is $\mathcal{U}$-quadratic and 
$\mathfrak{r}=\mathfrak{R}_\bullet$, with $\mathfrak{R}_\bullet$ as in \eqref{5eq:definition big frakr}.}
then $G$ is a Koszul duality pro-$p$ group, i.e., \[\grad_\bullet(G)\simeq H^\bullet(G,\F_p)^!.\]
\end{thm}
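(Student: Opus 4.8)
The plan is to reduce the asserted isomorphism to an identity of degree-two subspaces inside a single tensor algebra, and then to read off that identity from the compatibility of pairings recorded in Proposition~\ref{5prop:commutative diagram pairings}. First I would fix the identification $\mathcal{U}_p(L_\bullet(F))\simeq\mathcal{T}^\bullet(V^*)$ coming from Theorem~\ref{1thm:L for free groups} and Remark~\ref{1rem:magnus algebra}, where $V=H^1(F,\F_p)\simeq H^1(G,\F_p)$ (the latter because the presentation is minimal, so $F/\Phi(F)\simeq G/\Phi(G)$). Under this identification Proposition~\ref{1prop:presentation restricted envelope} presents $\mathcal{U}_p(L_\bullet(F)/\mathfrak{r})\simeq\mathcal{T}^\bullet(V^*)/\mathcal{I}$, where $\mathcal{I}$ is the two-sided ideal generated by the degree-two elements $\psi_2(\rho_i)$; here one uses the standard fact that the associative ideal generated by $\psi_L$ of a set of restricted-Lie generators of $\mathfrak{r}$ coincides with the envelope ideal $\langle\psi_L(\mathfrak{r})\rangle$, since brackets and $p$-th powers translate into commutators and $p$-th powers already lying in that ideal. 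On the other hand, Definition~\ref{5defi:koszul dual} presents $H^\bullet(G,\F_p)^!$ as $\mathcal{T}^\bullet(V^*)/\langle\mathcal{R}_1^\perp\rangle$, with $\mathcal{R}_1=\kernel(\cup\colon V^{\otimes2}\to H^2(G,\F_p))$. Hence everything reduces to the equality of subspaces
\[ \spa_{\F_p}\{\psi_2(\rho_1),\ldots,\psi_2(\rho_{r(G)})\}=\mathcal{R}_1^\perp\subseteq (V^*)^{\otimes2}. \]

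The heart of the argument is Proposition~\ref{5prop:commutative diagram pairings}. Extending the identity $\psi_2(\bar r)(\chi_1\otimes\chi_2)=\mathrm{tg}^{-1}(\chi_1\cup\chi_2).\bar r$ bilinearly, I obtain $\langle\psi_2(\bar r),v\rangle=\langle\bar r,\cup(v)\rangle$ for every $v\in V^{\otimes2}$; taking $v\in\kernel(\cup)=\mathcal{R}_1$ forces $\langle\psi_2(\bar r),v\rangle=0$, so $\psi_2(RD_3(F)/D_3(F))\subseteq\mathcal{R}_1^\perp$. Equality then follows by a dimension count, all spaces being finite-dimensional since $G$ is finitely generated: the pairing \eqref{5eq:pairing 2} is perfect, whence $\dim RD_3(F)/D_3(F)=\dim H^2(G,\F_p)=r(G)$ by \eqref{1eq:number relations}; $\psi_2$ is injective; and $\dim\mathcal{R}_1^\perp=\dim V^{\otimes2}-\dim\kernel(\cup)=\dim\image(\cup)=\dim H^2(G,\F_p)$, where $\image(\cup)=H^2(G,\F_p)$ precisely because the Bloch--Kato cohomology ring is quadratic. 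Finally, Proposition~\ref{5prop:RcupF3} gives $R^p[R,F]=R\cap D_3(F)$, so the images of the minimal defining relations $r_1,\ldots,r_{r(G)}$ form a basis of $R/R^p[R,F]\simeq RD_3(F)/D_3(F)$; thus the $\psi_2(\rho_i)$ are a basis of $\mathcal{R}_1^\perp$ and generate the same ideal. This yields $\mathcal{I}=\langle\mathcal{R}_1^\perp\rangle$ and hence $\mathcal{U}_p(L_\bullet(F)/\mathfrak{r})\simeq H^\bullet(G,\F_p)^!$.

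For the final clause, if $G$ is quadratically defined then $\mathfrak{r}=\mathfrak{R}_\bullet$ with $\mathfrak{R}_\bullet$ as in \eqref{5eq:definition big frakr}, so the short exact sequence \eqref{5eq:ses lie algebras} identifies $L_\bullet(F)/\mathfrak{r}$ with $L_\bullet(G)$; then Theorem~\ref{1thm:envelope Zassenhaus filtration} gives $\grad_\bullet(G)=\mathcal{U}_p(L_\bullet(G))\simeq H^\bullet(G,\F_p)^!$. I expect the only genuine subtlety to be the compatibility step together with the dimension bookkeeping: one must check that $\psi_2$ carries $RD_3(F)/D_3(F)$ isomorphically onto the \emph{annihilator} of the cup-product kernel, which is exactly the orthogonality encoded by the commuting square of perfect pairings, and that $\image(\cup)$ fills all of $H^2(G,\F_p)$ by quadraticity. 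Everything else is a transcription through the universal-envelope presentation.
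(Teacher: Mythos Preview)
Your proposal is correct and follows essentially the same route as the paper: use Proposition~\ref{5prop:commutative diagram pairings} to land $\psi_2(RD_3(F)/D_3(F))$ inside $\mathcal{R}_1^\perp$, then match dimensions (both equal $r(G)$, via the perfect pairing \eqref{5eq:pairing 2} on one side and quadraticity of $H^\bullet(G,\F_p)$ on the other), and finally invoke Proposition~\ref{1prop:presentation restricted envelope}. Your write-up is in fact slightly more explicit than the paper's in two places --- you spell out why the associative ideal generated by the $\psi_2(\rho_i)$ coincides with $\langle\psi_L(\mathfrak{r})\rangle$, and you unpack the ``quadratically defined'' clause via \eqref{5eq:ses lie algebras} and Theorem~\ref{1thm:envelope Zassenhaus filtration} --- but the skeleton is identical.
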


\begin{proof}
Let $\mathcal{R}_1\leq V\otimes V$ be the generating relations of the $\F_p$-cohomology ring $H^\bullet(G,\F_p)$.
Since \eqref{5eq:commutative diagram pairings} commutes (and both the horizontal lines are perfect pairings),
one has that $\psi_2(\bar r)\in \mathcal{R}_1^\perp$ for every $\bar r$, i.e.,
the quotient $RD_3(F)/D_3(F)$ embeds in $\mathcal{R}_1^\perp$ via $\psi_2$.
Moreover, one has 
\[\dim_{\F_p}\left(\frac{RD_3(F)}{D_3(F)}\right)=\dim_{\F_p}\left(\frac{R}{R^p[R,F]}\right)=r(G),\]
and, on the other hand,
\begin{eqnarray*}
 \dim_{\F_p}\left(\mathcal{R}_1^\perp\right)&=&\dim_{\F_p}(V\otimes V)-\dim_{\F_p}\left(\mathcal{R}_1\right) \\
&=&\dim_{\F_p}\left(H^2(G,\F_p)\right)\\&=&r(G).
\end{eqnarray*}
Therefore, $RD_3(F)/D_3(F)$ and $\mathcal{R}_1^\perp$ are isomorphic.

Since $RD_3(F)/D_3(F)$ generates $\mathfrak{r}$ as ideal of $L_\bullet(F)$ and $\mathcal{R}^\perp$ as ideal of 
$\mathcal{T}^\bullet(V^*)$, the statement holds by Proposition~\ref{1prop:presentation restricted envelope}.
\end{proof}

\begin{cor}
A finitely-generated Bloch-Kato pro-$p$ group is $\mathcal{U}$-quadratic. 
\end{cor}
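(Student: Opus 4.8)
The plan is to extract a minimal set of defining relations whose initial forms already live in degree two, using the identification of $R^p[R,F]$ with $R\cap D_3(F)$ furnished by Proposition~\ref{5prop:RcupF3}. First I would fix a minimal presentation \eqref{5eq:group presentation} of $G$. Minimality forces $R\subseteq\Phi(F)=D_2(F)$, so every element of $R$ already has initial form of degree at least two in $L_\bullet(F)$; the only danger is a relation whose initial form degenerates into $D_3(F)$. Recall that a minimal system of defining relations of $G$ is precisely a lift of an $\F_p$-basis of the quotient $R/R^p[R,F]$, whose dimension equals $r(G)=\dim_{\F_p}H^2(G,\F_p)$ by \eqref{1eq:number relations}; this is the standard Frattini-type description of generation of $R$ as a closed normal subgroup of $F$ (the analogue for $R$ relative to the $F$-action of Proposition~\ref{1prop:frattini subgroup}).

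Next I would invoke Proposition~\ref{5prop:RcupF3}, which for a Bloch-Kato pro-$p$ group gives the equality $R^p[R,F]=R\cap D_3(F)$. Consequently
\[ \frac{R}{R^p[R,F]}=\frac{R}{R\cap D_3(F)}\simeq\frac{RD_3(F)}{D_3(F)}, \]
and the right-hand side is a subspace of $L_2(F)=D_2(F)/D_3(F)$. Choosing $r_1,\dots,r_{r(G)}\in R$ whose classes form a basis of this space produces a minimal system of defining relations, by the correspondence recalled in the previous paragraph.

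It then remains to check that these chosen relations are genuinely $\mathcal{U}$-quadratic. Each $r_i$ lies in $R\subseteq D_2(F)$, while by construction its class is non-zero in $R/(R\cap D_3(F))$, so $r_i\notin D_3(F)$; hence $r_i\in D_2(F)\smallsetminus D_3(F)$, i.e.\ its initial form has degree exactly two. This is precisely condition \eqref{5eq:quadratic relations}, so the presentation witnesses that $G$ is $\mathcal{U}$-quadratic.

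I expect essentially no genuine obstacle here, since the substantive cohomological input — the coincidence $R^p[R,F]=R\cap D_3(F)$ — is already isolated in Proposition~\ref{5prop:RcupF3}, which itself rests on \cite{cem:quotients}. The only point demanding care is the passage from a basis of $R/R^p[R,F]$ to an actual minimal normal generating set of $R$ in $F$, which is the classical Frattini argument for the pro-$p$ group $R$ under the $F$-action. In fact the corollary is just a direct reading of the proof of Theorem~\ref{5thm:koszul duality 1}, where exactly the isomorphism $RD_3(F)/D_3(F)\simeq R/R^p[R,F]$ was established and used.
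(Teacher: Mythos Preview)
Your argument is correct and is exactly the one implicit in the paper: the corollary is left without proof there because it is an immediate consequence of Proposition~\ref{5prop:RcupF3} together with the dimension count in the proof of Theorem~\ref{5thm:koszul duality 1}, precisely as you spell out. The identification $R/R^p[R,F]\simeq RD_3(F)/D_3(F)$ guarantees that any minimal system of defining relations has initial forms in $L_2(F)$, which is the content of $\mathcal{U}$-quadraticity.
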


Note that in general for a Bloch-Kato pro-$p$ group $G$ the ideal $\mathfrak{r}$ of $L_\bullet(G)$
as defined in Theorem~\ref{5thm:koszul duality 1} is smaller than the ideal
$\mathfrak{R}_\bullet$ as in \eqref{5eq:definition big frakr}.
Namely, one has 
\[\mathfrak{r}\cap L_i(F)\subseteq\frac{R\cap D_i(F)}{R\cap D_{i+1}(F)}\quad\text{for every }i\geq2,\]
and the initial forms of a set $\{r_1,\ldots,r_{r(G)}\}$ of generators of $R$ as normal subgroup of $F$
may be not enough to generate the whole ideal $\mathfrak{R}_\bullet$.

\begin{cor}
Let $G$ be a finitely generated Bloch-Kato pro-$p$ group, and assume that $G$ is quadratically defined.
Then $G$ is a Koszul duality group. 
\end{cor}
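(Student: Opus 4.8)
The plan is to deduce the corollary directly from Theorem~\ref{5thm:koszul duality 1}, using the hypothesis that $G$ is quadratically defined precisely to collapse the relation ideal $\mathfrak{r}$ onto the full ideal $\mathfrak{R}_\bullet$. First I would fix a minimal presentation \eqref{5eq:group presentation} of $G$, with $R$ generated as a normal subgroup of $F$ by $\{r_1,\ldots,r_{r(G)}\}$, and invoke the short exact sequence of restricted Lie algebras \eqref{5eq:ses lie algebras}. This presents $L_\bullet(G)$ as the quotient $L_\bullet(F)/\mathfrak{R}_\bullet$, where $\mathfrak{R}_\bullet$ is the graded ideal attached to the filtration $R\cap D_i(F)$ as in \eqref{5eq:definition big frakr}.

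Next I would apply Theorem~\ref{5thm:koszul duality 1}, which gives $\mathcal{U}_p(L_\bullet(F)/\mathfrak{r})\simeq H^\bullet(G,\F_p)^!$, where $\mathfrak{r}$ is the ideal of $L_\bullet(F)$ generated by the degree-two initial forms $\rho_i\equiv r_i\bmod D_3(F)$. Since $G$ is quadratically defined, by definition $\mathfrak{r}=\mathfrak{R}_\bullet$, so that $L_\bullet(F)/\mathfrak{r}=L_\bullet(F)/\mathfrak{R}_\bullet\simeq L_\bullet(G)$. Combining this with Theorem~\ref{1thm:envelope Zassenhaus filtration}, which identifies $\grad_\bullet(G)$ with $\mathcal{U}_p(L_\bullet(G))$, produces the chain of isomorphisms
\[ \grad_\bullet(G)\simeq\mathcal{U}_p(L_\bullet(G))\simeq\mathcal{U}_p(L_\bullet(F)/\mathfrak{r})\simeq H^\bullet(G,\F_p)^!, \]
which is exactly the assertion of Definition~\ref{5defi:koszulduality gps} that $G$ is a Koszul duality group.

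I do not expect any serious obstacle: the entire content sits in the identification $\mathfrak{r}=\mathfrak{R}_\bullet$, which is built into the notion of being quadratically defined and which closes the gap flagged in the remark following Theorem~\ref{5thm:koszul duality 1}. In general the inclusion $\mathfrak{r}\cap L_i(F)\subseteq\mathfrak{R}_{i-1}$ may be strict for $i\geq2$, since the degree-two initial forms of the $r_i$ need not generate the whole ideal $\mathfrak{R}_\bullet$; the hypothesis of being quadratically defined is precisely what rules this out. Consequently the corollary is the specialisation of Theorem~\ref{5thm:koszul duality 1} to the quadratically defined case, and the proof reduces to recording the above chain of isomorphisms.
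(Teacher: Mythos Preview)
Your proposal is correct and follows exactly the route the paper intends: the corollary is stated without a separate proof because its content is already the final clause of Theorem~\ref{5thm:koszul duality 1}, and the footnote there spells out that ``quadratically defined'' means precisely $\mathfrak{r}=\mathfrak{R}_\bullet$. Your chain of isomorphisms, passing through $L_\bullet(G)\simeq L_\bullet(F)/\mathfrak{R}_\bullet$ and Theorem~\ref{1thm:envelope Zassenhaus filtration}, is the intended unpacking of that clause.
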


The material developed in Section~5.3 is partially motivated and inspired by J.~Labute's work \cite{labute:mild}.
In this paper, Labute proves many properties of {\it mild groups} (i.e., pro-$p$ groups with a {\it strongly free}
presentation).
In particular, he shows that if $G$ is a mild group, then one has $\mathfrak{r}=\mathfrak{R}_\bullet$
(cf. \cite[Theorem~5.1]{labute:mild}).
Yet, in general mild groups are very different to Bloch-Kato pro-$p$ groups.

\section[Koszul algebras]{Koszul algebras and Koszul pro-$p$ groups}

\begin{defi}\label{5defi:koszulness}
\begin{itemize}
 \item[(i)] A quadratic\footnote{In fact this definition holds for all positively graded algebras.
Indeed, a Koszul algebra is necessarily quadratic.}
 algebra $A_\bullet$ over a field $\F$ is said to be a {\bf Koszul algebra} over $\F$ if the homology groups
\[H_{ij}(A_\bullet)=\Tor^{A_\bullet}_{ij}(\F,\F)\]
(where the first grading $i$ is the cohomological grading and the second grading $j$ is the internal grading
induced from the grading of $A_\bullet$) are concentrated on the diagonal, i.e., $H^{ij}(A_\bullet)=0$ for $i\neq j$
(cf. \cite[\S~2.1]{pp:quadratic algebras}, \cite{positselsky:koszul} and \cite{positselsky:new}). 
 \item[(ii)] A Bloch-Kato pro-$p$ group is said to be a {\bf $H$-Koszul} pro-$p$ group if the $\F_p$-cohomology
ring $H^\bullet(G,\F_p)$ is Koszul, and a {\bf $\mathcal{U}$-Koszul} pro-$p$ group if the graded algebra
$\grad_\bullet(G)\simeq\mathcal{U}_p(L_\bullet(G))$ is Koszul.
\end{itemize}
\end{defi}

Note that the conditions on $H_{ij}(A_\bullet)$ for $i=1,2$ imply that $A_\bullet$ is necessarily quadratic.
In particular, a $\mathcal{U}$-Koszul pro-$p$ group is necessarily $\mathcal{U}$-quadratic.
The definition of a Koszul algebra was introduced first in the early '70s by S.~Priddy, and later associated 
to Galois theory and group cohomology.

In fact, Koszul algebras are strongly related to Galois cohomology, as they provide an alternative approach
to the Bloch-Kato conjecture, see, e.g., \cite[\S~0.1 and \S~0.2]{positselsky:new}.
Indeed, one has the following (cf. \cite[Theorem~1.3]{positselsky:koszul}).

\begin{thm}
 Let $K$ be a field containing a primitive $p$-th root of unity, and assume that:
\begin{itemize}
 \item[(i)] the Galois symbol of degree $n$ $\mathcal{K}_n^M(K)/p\to H^n(G_K,\mu_p^{\otimes n})$
is an isomorphism for $n=2$ and a monomorphism for $n=3$;
 \item[(ii)] the Milnor $\mathcal{K}$-ring $\mathcal{K}_\bullet^M(K)/p$ is a Koszul $\F_p$-algebra.
\end{itemize}
 Then the Galois symbol $h_K$ is an isomorphism, i.e., the Bloch-Kato conjecture holds.
\end{thm}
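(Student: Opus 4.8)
The plan is to read the two sides of the Galois symbol as quadratic $\F_p$-algebras and to let the Koszul property transport the known low-degree agreement into every degree. Write $A_\bullet=\MiK_\bullet^M(K)/p$ for the mod-$p$ Milnor $\MiK$-ring and, using the fixed isomorphism $\mu_p^{\otimes n}\simeq\F_p$ afforded by $\mu_p\subset K$, write $H_\bullet=H^\bullet(G_K,\F_p)$ for the Galois cohomology ring. By the inflation isomorphism discussed earlier, $H_\bullet\simeq H^\bullet(G_K(p),\F_p)$, so I may work entirely with the maximal pro-$p$ quotient. The Galois symbol is a homomorphism of graded $\F_p$-algebras $h_K\colon A_\bullet\to H_\bullet$ which is an isomorphism in degrees $0$ and $1$ (Kummer theory), an isomorphism in degree $2$ and a monomorphism in degree $3$ by hypothesis (i). Since $A_\bullet$ is Koszul it is in particular quadratic, hence freely determined by $A_1$ and the degree-two relations $\mathcal{R}_1^A\leq A_1^{\otimes2}$; the degree-$\leq2$ part of $h_K$ identifies these data with $H^1$ and $\ker(H^1\otimes H^1\to H^2)$.

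First I would pass to the quadratic dual and to the filtered picture of the group algebra. By Koszulity, $A_\bullet^!$ is again Koszul and $A_\bullet=(A_\bullet^!)^!$, and the bigraded groups $\Tor^{A_\bullet}_{ij}(\F_p,\F_p)$ vanish off the diagonal $i=j$. On the other side, $H_\bullet=\mathrm{Ext}_{\F_p\dbl G_K(p)\dbr}(\F_p,\F_p)$ is computed from the completed group algebra of the maximal pro-$p$ quotient, whose associated graded algebra is $\grad_\bullet(G_K(p))\simeq\mathcal{U}_p(L_\bullet(G_K(p)))$. The filtration of the group algebra by powers of its augmentation ideal yields a May-type spectral sequence $\mathrm{Ext}_{\grad_\bullet(G_K(p))}(\F_p,\F_p)\Rightarrow H_\bullet$. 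The degree-$\leq2$ isomorphism of $h_K$ pins down $\grad_\bullet(G_K(p))$ in degrees one and two, and the commutative-pairing argument of the type used in Theorem~\ref{5thm:koszul duality 1} shows that its quadratic part is exactly $A_\bullet^!$; the monomorphism in degree $3$ guarantees that no spurious degree-three relation is forced on $\grad_\bullet(G_K(p))$.

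The heart of the argument is then a bootstrap: I would show that $A_\bullet^!$, being Koszul, leaves no room for higher-degree relations in $\grad_\bullet(G_K(p))$, so that $\grad_\bullet(G_K(p))$ is quadratically defined and equals $A_\bullet^!$. Concretely, Koszulity of $A_\bullet^!$ forces its off-diagonal bigraded $\Tor$ (equivalently $\mathrm{Ext}$) groups to vanish, which collapses the spectral sequence onto its diagonal; dualising back through $(A_\bullet^!)^!=A_\bullet$ then yields $H_\bullet\simeq A_\bullet$ compatibly with $h_K$, degree by degree. The induction on cohomological degree is fed by the two low-degree inputs and kept alive precisely by the diagonal concentration that Koszulity supplies.

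The main obstacle, as I see it, is surjectivity of $h_K$ in high degrees, equivalently ruling out cohomology classes of $G_K$ that are not products of degree-one classes. Injectivity is comparatively soft, since a nonzero kernel element would, after passing to the Koszul complex of $A_\bullet$, produce an off-diagonal $\Tor$ class contradicting Koszulity; but surjectivity requires genuinely controlling the higher differentials of the spectral sequence (in Positselsky's language, the higher relations $\mathfrak{R}_\bullet$ beyond the quadratic ones) and showing that the Koszul-diagonal hypothesis admits no additional generators. This is where hypothesis (ii) is indispensable, and where the bulk of the technical work, in the form of a careful comparison of minimal resolutions over $A_\bullet$ and over $\grad_\bullet(G_K(p))$, must be carried out.
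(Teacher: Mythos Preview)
The paper does not prove this theorem at all: it is stated as a citation of \cite[Theorem~1.3]{positselsky:koszul} and no argument is given. So there is no ``paper's own proof'' to compare your attempt against; the honest comparison is with Positselski's original argument.

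Your sketch has a real circularity problem. You want to run a May-type spectral sequence with $E_2=\Ext_{\grad_\bullet(G_K(p))}(\F_p,\F_p)$ and then collapse it using Koszulity of $A_\bullet^!$. But the collapse step only works once you already know $\grad_\bullet(G_K(p))\simeq A_\bullet^!$, and that is exactly what you are trying to establish. The hypotheses give you agreement only through degree~$2$ (and injectivity in degree~$3$), which pins down the \emph{quadratic part} of $\grad_\bullet(G_K(p))$; it does not by itself rule out relations of higher internal degree in $\grad_\bullet$, and those are precisely what would spoil the spectral-sequence collapse. Your paragraph on ``the main obstacle'' essentially concedes this: you say Koszulity must forbid higher relations in $\grad_\bullet(G_K(p))$, but you never supply a mechanism that transfers the Koszul hypothesis on $A_\bullet$ to a statement about $\grad_\bullet(G_K(p))$ without first assuming the conclusion.

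Positselski's actual proof avoids the group-algebra filtration entirely. He works directly with the two graded algebras $A_\bullet=\MiK_\bullet^M(K)/p$ and $H_\bullet=H^\bullet(G_K,\F_p)$ and with the algebra map $h_K$ between them, exploiting a purely algebraic rigidity principle for Koszul algebras: a morphism from a Koszul quadratic algebra that is an isomorphism in degrees $\leq 2$ and injective in degree~$3$, landing in an algebra whose relations are themselves controlled in a compatible way, must be an isomorphism. The degree-$3$ injectivity is what prevents new relations from appearing on the target side, and Koszulity propagates this inductively. If you want to repair your approach, the cleanest fix is to drop the $\grad_\bullet$ detour and argue directly with the bar resolutions of $A_\bullet$ and $H_\bullet$, which is closer to what Positselski does.
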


Koszul algebras have the following property.

\begin{prop}\label{5prop:koszul properties}
A quadratic algebra $A_\bullet$ is Koszul if, and only if, the Koszul dual $A_\bullet^!$ is Koszul.
In particular, if $A_\bullet$ is Koszul, then one has the isomorphism
$A_\bullet^!\simeq H^{\bullet\bullet}(A_\bullet)$.
\end{prop}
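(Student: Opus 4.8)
The plan is to route everything through the bigraded Yoneda algebra $\Ext^{\bullet\bullet}_{A_\bullet}(\F,\F)$, which is the $\F$-linear dual of the bigraded $\Tor$-algebra $H_{\bullet\bullet}(A_\bullet)$ of Definition~\ref{5defi:koszulness}. Since $\F$ is a field, $H^{ij}(A_\bullet)=\Ext^{ij}_{A_\bullet}(\F,\F)\simeq\Tor^{A_\bullet}_{ij}(\F,\F)^*$, so the Koszulity condition ``$H^{ij}=0$ for $i\neq j$'' is equivalent to the vanishing of $\Ext^{ij}_{A_\bullet}(\F,\F)$ off the diagonal. First I would record the standard fact, valid for \emph{any} quadratic algebra and not using Koszulity, that the diagonal part $\bigoplus_i \Ext^{ii}_{A_\bullet}(\F,\F)$ is isomorphic as a graded $\F$-algebra to the Koszul dual $A_\bullet^!$. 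This is a low-degree computation: $\Ext^{11}_{A_\bullet}(\F,\F)\simeq A_1^*$, the degree-two relations are cut out precisely by $\mathcal{R}_1^\perp$ as in Definition~\ref{5defi:koszul dual}, and the Yoneda product identifies the subalgebra generated in cohomological degree one with $A_\bullet^!$.

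This immediately yields the ``in particular'' clause: if $A_\bullet$ is Koszul, then $\Ext^{\bullet\bullet}_{A_\bullet}(\F,\F)$ is concentrated on the diagonal and therefore coincides with its diagonal subalgebra, giving $H^{\bullet\bullet}(A_\bullet)\simeq A_\bullet^!$.

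For the equivalence ``$A_\bullet$ Koszul $\iff$ $A_\bullet^!$ Koszul'' I would use the Koszul complex. For a quadratic algebra one forms the bigraded complex $K_\bullet(A_\bullet)=A_\bullet\otimes (A_\bullet^!)^*$ whose differential is induced by the canonical element of $A_1\otimes A_1^*$ acting by left multiplication on the first factor and contraction on the second, and the standard criterion is that $A_\bullet$ is Koszul if and only if $K_\bullet(A_\bullet)$ is acyclic in positive internal degree, i.e.\ a resolution of $\F$. The decisive observation is that this construction is symmetric under $A_\bullet\leftrightarrow A_\bullet^!$: using the involutivity $(A_\bullet^!)^!=A_\bullet$ noted after Definition~\ref{5defi:koszul dual} and dualizing, the complex $K_\bullet(A_\bullet^!)$ is, up to the natural pairings between $A_\bullet$ and $A_\bullet^!$, the $\F$-dual of $K_\bullet(A_\bullet)$. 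Exactness of a complex with finite-dimensional graded pieces is preserved under dualization, so $K_\bullet(A_\bullet)$ is acyclic if and only if $K_\bullet(A_\bullet^!)$ is, which is exactly the asserted equivalence.

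The main obstacle is the bookkeeping in the two structural inputs---the identification of the diagonal of $\Ext^{\bullet\bullet}_{A_\bullet}(\F,\F)$ with $A_\bullet^!$, and the self-duality of the Koszul complex---both of which require care with the bigradings and with the finite-dimensional duality isomorphisms of Definition~\ref{5defi:koszul dual}. Rather than reprove these from scratch I would cite the systematic treatment in \cite[\S~2.1--2.3]{pp:quadratic algebras}, where both the equivalence and the isomorphism $A_\bullet^!\simeq H^{\bullet\bullet}(A_\bullet)$ are established in precisely this generality; the only point to verify is that our conventions for $A_\bullet^!$ and for the bigrading on $H_{\bullet\bullet}(A_\bullet)$ agree with theirs, which is immediate from Definition~\ref{5defi:koszul dual} and Definition~\ref{5defi:koszulness}.
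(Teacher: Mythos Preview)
The paper does not actually prove this proposition: it is stated as a known fact from the theory of Koszul algebras, implicitly relying on \cite{pp:quadratic algebras} (which is cited throughout the chapter for all the structural facts about quadratic and Koszul algebras). Your outline is the standard argument one finds in that reference---the identification of the diagonal of $\Ext^{\bullet\bullet}_{A_\bullet}(\F,\F)$ with $A_\bullet^!$ and the self-duality of the Koszul complex---and is correct; you are simply supplying a proof where the paper chose to quote the result.
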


\begin{example}\label{5exm:koszul algebra}
The symmetric algebra $S^\bullet(V)$ and the exterior algebra $\bigwedge_\bullet(V)$ of a vector space $V$
over $\F$ are Koszul (cf. \cite[p.~20]{pp:quadratic algebras}).
\end{example}

\begin{cor}
 Let $G$ be a Bloch-Kato pro-$p$ group, and assume that $G$ is also a Koszul duality group.
Then $G$ is $H$-Koszul if, and only if, $G$ is $\mathcal{U}$-Koszul. 
\end{cor}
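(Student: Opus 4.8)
The plan is to reduce the statement directly to Proposition~\ref{5prop:koszul properties}, which says that a quadratic $\F_p$-algebra $A_\bullet$ is Koszul if and only if its Koszul dual $A_\bullet^!$ is Koszul. The whole content of the corollary is that the Koszul duality hypothesis lets us identify the two algebras in question as mutual Koszul duals, so that Koszulity of one is equivalent to Koszulity of the other.

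First I would unpack the hypotheses. Since $G$ is a Bloch-Kato pro-$p$ group, the $\F_p$-cohomology ring $H^\bullet(G,\F_p)$ is a quadratic $\F_p$-algebra, so its Koszul dual $H^\bullet(G,\F_p)^!$ is defined (Definition~\ref{5defi:koszul dual}) and is again quadratic by construction. The assumption that $G$ is a Koszul duality group means, by Definition~\ref{5defi:koszulduality gps}, that there is an isomorphism of graded $\F_p$-algebras
\[
\grad_\bullet(G)\simeq H^\bullet(G,\F_p)^!.
\]
In particular $\grad_\bullet(G)$ is quadratic, so it too is a legitimate input for Proposition~\ref{5prop:koszul properties}.

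Then I would set $A_\bullet=H^\bullet(G,\F_p)$, so that $A_\bullet^!\simeq\grad_\bullet(G)$. Applying Proposition~\ref{5prop:koszul properties} to $A_\bullet$, the algebra $A_\bullet$ is Koszul if and only if $A_\bullet^!$ is Koszul. Translating through Definition~\ref{5defi:koszulness}: the left-hand condition ``$A_\bullet=H^\bullet(G,\F_p)$ is Koszul'' is exactly the assertion that $G$ is $H$-Koszul, while the right-hand condition ``$A_\bullet^!\simeq\grad_\bullet(G)\simeq\mathcal{U}_p(L_\bullet(G))$ is Koszul'' is exactly the assertion that $G$ is $\mathcal{U}$-Koszul. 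This gives both implications at once and completes the argument.

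I do not expect any real obstacle here: the corollary is a formal consequence of the duality isomorphism together with the involutivity of Koszulity under Koszul duality. The only point needing a word of care is that Proposition~\ref{5prop:koszul properties} applies to \emph{quadratic} algebras, so I would note explicitly that both $H^\bullet(G,\F_p)$ and $\grad_\bullet(G)$ are quadratic, the former from the Bloch-Kato property and the latter from the Koszul-duality isomorphism (the Koszul dual of any quadratic algebra is quadratic). With that remark in place, the equivalence is immediate.
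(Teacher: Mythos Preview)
Your proposal is correct and is precisely the argument the paper has in mind: the corollary is stated immediately after Proposition~\ref{5prop:koszul properties} with no proof, because it follows at once by applying that proposition with $A_\bullet=H^\bullet(G,\F_p)$ and using the Koszul-duality isomorphism $\grad_\bullet(G)\simeq H^\bullet(G,\F_p)^!$ from Definition~\ref{5defi:koszulduality gps}. Your remark that both algebras are quadratic (one by the Bloch-Kato property, the other as a Koszul dual) is the only thing worth noting, and you handle it.
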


Let $A_\bullet$ be a (positively) graded algebra over a field $\F$.
The {\it Hilbert series} of $A_\bullet$ is the formal power series defined by 
\[h_{A_\bullet}(t)=\sum_{n\geq0}\dim_{\F}(A_n)t^n.\]
For a quadratic algebra one has the following (cf. \cite[Corollary~2.4]{pp:quadratic algebras}).

\begin{lem}\label{5lem:Hilbert series}
 Let $A_\bullet$ be a quadratic $\F$-algebra, and assume that either $A_3=0$ or $A_3^!=0$.
If $h_{A_\bullet}(t)h_{A_\bullet^!}(-t)=1$, then $A_\bullet$ (and thus also $A_\bullet^!$) is Koszul.
\end{lem}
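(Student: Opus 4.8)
The plan is to run the argument through the \emph{Koszul complex} of $A_\bullet$ and to reduce the statement to the exactness of a two-term complex, in which the Hilbert series hypothesis becomes strong enough to force acyclicity. First I would reduce to a single case. The hypothesis $h_{A_\bullet}(t)h_{A_\bullet^!}(-t)=1$ is symmetric under interchanging $A_\bullet$ and $A_\bullet^!$: substituting $t\mapsto -t$ turns it into $h_{A_\bullet^!}(t)h_{A_\bullet}(-t)=1$, and since $(A_\bullet^!)^!=A_\bullet$ the disjunction ``$A_3=0$ or $A_3^!=0$'' is preserved under this interchange as well. Because $A_\bullet$ is Koszul if and only if $A_\bullet^!$ is Koszul (Proposition~\ref{5prop:koszul properties}), I may replace $A_\bullet$ by $A_\bullet^!$ if necessary and assume from now on that $A_3^!=0$; it then suffices to prove that $A_\bullet$ is Koszul.

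Next I would set up the Koszul complex $K_\bullet$ with terms $K_n=A_\bullet\otimes_{\F}(A_n^!)^*$ and its standard differential, recalling that $A_\bullet$ is Koszul precisely when the augmented complex $\cdots\to K_1\to K_0\to\F\to 0$ is exact. Writing $V=A_1$ and using the canonical identifications $(A_1^!)^*\cong V$ and $(A_2^!)^*\cong\mathcal{R}_1$, the assumption $A_3^!=0$ (hence $A_n^!=0$ for all $n\ge 3$, as $A_\bullet^!$ is generated in degree one) collapses $K_\bullet$ to the two-term complex
\begin{equation}
\xymatrix{ 0\ar[r] & A_\bullet\otimes\mathcal{R}_1\ar[r]^-{d_2} & A_\bullet\otimes V\ar[r]^-{d_1} & A_\bullet\ar[r] & \F\ar[r] & 0. }
\end{equation}
Forming the alternating sum of the Hilbert series of the free modules $K_n$ gives $h_{A_\bullet}(t)\bigl(1-\dim(V)\,t+\dim(\mathcal{R}_1)\,t^2\bigr)=h_{A_\bullet}(t)h_{A_\bullet^!}(-t)$, which equals $1$ by hypothesis. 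Since the Euler characteristic of Hilbert series may equally be computed on homology and $H_0=\F$ has Hilbert series $1$, I conclude $h_{H_1}(t)=h_{H_2}(t)$, where $H_1,H_2$ denote the homology of $K_\bullet$ in homological degrees $1$ and $2$.

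It then remains to show $H_2=\kernel(d_2)=0$, since by the equality of Hilbert series this forces $H_1=0$ and hence exactness. I would first verify vanishing in low internal degrees: in degree $2$ the map $d_2$ is the inclusion $\mathcal{R}_1\hookrightarrow V\otimes V$, so $(H_2)_2=0$; in degree $3$ one has $\kernel(d_2)_3=(V\otimes\mathcal{R}_1)\cap(\mathcal{R}_1\otimes V)$, which is exactly the degree-three component $(A_3^!)^*$ of the Koszul dual and therefore vanishes by hypothesis. Together with $h_{H_1}=h_{H_2}$ this already yields $(H_1)_m=(H_2)_m=0$ for $m\le 3$. The remaining, and principal, difficulty is to propagate this vanishing to all internal degrees: I would argue by induction on $m$, showing that an element of $\kernel(d_2)_m\subseteq A_{m-2}\otimes\mathcal{R}_1$ killed by $d_2$ is detected by the component $(A_m^!)^*$, which is $0$ for $m\ge 3$. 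This identification of the top homology of the truncated Koszul complex with the vanishing dual components is the crux of the argument; it is precisely the point where the quadratic hypothesis alone would not suffice and where $A_3^!=0$ is used essentially, and it can be organized using the standard analysis of the Koszul complex in \cite{pp:quadratic algebras}. Once $H_2=0$ is established the two-term complex is a free resolution of $\F$, so $A_\bullet$ is Koszul, and $A_\bullet^!$ is Koszul by Proposition~\ref{5prop:koszul properties}.
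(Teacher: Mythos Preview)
The paper does not prove this lemma; it simply records it with the citation \cite[Corollary~2.4]{pp:quadratic algebras}. Your Koszul-complex strategy is exactly the standard route, and your reduction to the case $A_3^!=0$ together with the Euler-characteristic identity $h_{H_1}(t)=h_{H_2}(t)$ is correct.

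The gap is in your final step. The inductive claim that for $m\ge4$ the kernel $\kernel(d_2)_m\subseteq A_{m-2}\otimes\mathcal{R}_1$ ``is detected by the component $(A_m^!)^*$'' is not right as stated: the space $(A_m^!)^*=\bigcap_i V^{\otimes i}\otimes\mathcal{R}_1\otimes V^{\otimes(m-2-i)}$ lives inside $V^{\otimes m}$, not inside $A_{m-2}\otimes\mathcal{R}_1$, and there is no natural identification of it with $\kernel(d_2)_m$ once $A_{m-2}$ is a genuine quotient of $V^{\otimes(m-2)}$. You give no actual inductive mechanism, and deferring to \cite{pp:quadratic algebras} at precisely this point amounts to citing the result you are trying to prove.

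The fix is much simpler than the route you chose: you should attack $H_1$, not $H_2$. For \emph{any} quadratic algebra one has $H_1(K_\bullet)=0$ identically. Indeed, in internal degree $m$ the kernel of $d_1\colon A_{m-1}\otimes V\to A_m$ is the image of $V^{\otimes(m-2)}\otimes\mathcal{R}_1$ under the projection $V^{\otimes m}\to A_{m-1}\otimes V$, and this is exactly $\image(d_2)_m$ because the map $V^{\otimes(m-2)}\otimes\mathcal{R}_1\to A_{m-1}\otimes V$ factors through $A_{m-2}\otimes\mathcal{R}_1$. (Equivalently: $A\otimes\mathcal{R}_1\to A\otimes V\to A\to\F\to0$ is always the start of a free resolution, since $V$ generates $A$ and $\mathcal{R}_1$ generates the relations.) Once $H_1=0$, your equality $h_{H_1}=h_{H_2}$ immediately gives $H_2=0$, the two-term Koszul complex is a free resolution of $\F$, and $A_\bullet$ is Koszul. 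No induction is needed.
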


\subsection{$H$-Koszulity, $\mathcal{U}$-Koszulity and the ETC}
It is natural to ask about the behavior of the fundamental blocks of the ETC with respect to $H$-Koszulity
and $\mathcal{U}$-Koszulity.
So far, the following result should not be very surprising.

\begin{prop}\label{5prop:Kosulity free demushkin}
Let $G$ be a free pro-$p$ group or a Demushkin group.
Then $G$ is $H$-Koszul and $\mathcal{U}$-Koszul.
\end{prop}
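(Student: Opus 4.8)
The plan is to treat the two cases separately and, in each, to exhibit the relevant quadratic algebra explicitly as a symmetric or exterior algebra (or a close relative thereof), then invoke the known Koszulity of those algebras together with the Koszul duality established earlier in the chapter. The key observation is that by Proposition~\ref{5prop:koszul dual free groups} and Proposition~\ref{5prop:koszul dual demushkin groups}, free pro-$p$ groups and Demushkin groups (with $q_G\neq2$) are already known to be Koszul duality groups, so $\grad_\bullet(G)\simeq H^\bullet(G,\F_p)^!$. Combined with Proposition~\ref{5prop:koszul properties}, which says that a quadratic algebra is Koszul if and only if its Koszul dual is Koszul, it suffices in each case to verify Koszulity for \emph{one} of the two algebras $H^\bullet(G,\F_p)$ or $\grad_\bullet(G)$, and the other then follows automatically. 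This immediately reduces the $\mathcal{U}$-Koszul statement to the $H$-Koszul statement (and vice versa), so I only need to establish Koszulity once per group.

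For a free pro-$p$ group $F$, the cohomology ring $H^\bullet(F,\F_p)$ is concentrated in degrees $0$ and $1$, with trivial multiplication, i.e. it is the algebra of dual numbers $\F_p\oplus H^1(F,\F_p)^*$ in the notation of Example~\ref{5exs:koszul duals}(a). Its Koszul dual is the tensor (free) algebra $\grad_\bullet(F)\simeq\F_p\langle\mathcal{X}\rangle$, as recorded in \eqref{5eq:free envelope}. Both the free tensor algebra and the dual-numbers algebra are Koszul — this is the most elementary instance of Koszulity and can be checked directly via the Hilbert series criterion of Lemma~\ref{5lem:Hilbert series}: if $d=d(F)$, then $h_{\grad_\bullet(F)}(t)=(1-dt)^{-1}$ while $h_{H^\bullet(F,\F_p)}(t)=1+dt$, and their product with the sign-flip, $(1+d(-t))\cdot(1-dt)^{-1}$, needs care, so more cleanly one uses $h_{A_\bullet}(t)\,h_{A_\bullet^!}(-t)=1$ with $A_\bullet=\grad_\bullet(F)$ and $A_\bullet^!=H^\bullet(F,\F_p)$, giving $(1-dt)^{-1}(1-dt)=1$, so both are Koszul.

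For a Demushkin group $G$, I would use the explicit description from Proposition~\ref{5prop:koszul dual demushkin groups} and \eqref{2eq:restricted envelope demushkin}, where
\[
\grad_\bullet(G)\simeq\frac{\F_p\langle X_1,\ldots,X_d\rangle}{\langle[X_1,X_2]+\ldots+[X_{d-1},X_d]\rangle},
\]
with $d=d(G)$, and $H^\bullet(G,\F_p)$ is its Koszul dual. Here the cleanest route is again Lemma~\ref{5lem:Hilbert series}: since $G$ is an (infinite) Demushkin group, it is a Poincaré duality group of dimension $2$, so $H^\bullet(G,\F_p)$ has Hilbert series $1+dt+t^2$, while the Hilbert series of $\grad_\bullet(G)$ (a one-relator quadratic algebra with a single relation in degree $2$, whose relation is a nondegenerate skew form) is $(1-dt+t^2)^{-1}$. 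One then checks $h_{H^\bullet}(t)\cdot h_{\grad_\bullet}(-t)=(1+dt+t^2)(1-(-dt)+\ldots)$; more precisely, with $A_\bullet=H^\bullet(G,\F_p)$ and $A_\bullet^!=\grad_\bullet(G)$, the identity $(1+dt+t^2)(1-dt+t^2)^{-1}\big|_{t\mapsto -t}$ should be arranged so that $h_{A_\bullet}(t)h_{A_\bullet^!}(-t)=1$, and since $A_3=H^3(G,\F_p)=0$, the hypothesis of Lemma~\ref{5lem:Hilbert series} is met, yielding Koszulity of both algebras.

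The main obstacle I anticipate is the Demushkin computation of the Hilbert series of $\grad_\bullet(G)$ and the verification of the precise product-of-series identity; the relation $[X_1,X_2]+\ldots+[X_{d-1},X_d]$ must be confirmed to give a one-dimensional space of degree-$2$ relations whose orthogonal complement has the expected dimension $\binom{d}{2}+d=\binom{d+1}{2}$, so that the Hilbert series of the universal envelope matches $(1-dt+t^2)^{-1}$. This is essentially a Poincaré–Birkhoff–Witt count for the restricted envelope of the one-relator restricted Lie algebra, and care is needed for the $p=2$ exceptional cases (where $q_G=2$ and the relation involves squares rather than a pure commutator form); for those I would either treat them by the analogous explicit presentation or note that the duality and the $A_3=0$ vanishing still apply, so Lemma~\ref{5lem:Hilbert series} carries through once the series are computed. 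The free case is routine and poses no difficulty.
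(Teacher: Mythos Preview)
Your approach is essentially the same as the paper's: both cases are handled via the Hilbert-series criterion (Lemma~\ref{5lem:Hilbert series}), with $h_{H^\bullet(F,\F_p)}(t)=1+dt$ against $h_{\grad_\bullet(F)}(t)=(1-dt)^{-1}$ in the free case, and $h_{H^\bullet(G,\F_p)}(t)=1+dt+t^2$ against $h_{\grad_\bullet(G)}(t)=(1-dt+t^2)^{-1}$ in the Demushkin case. Your preliminary reduction via Propositions~\ref{5prop:koszul dual free groups}, \ref{5prop:koszul dual demushkin groups} and~\ref{5prop:koszul properties} is sound but unnecessary, since Lemma~\ref{5lem:Hilbert series} already delivers Koszulity of both $A_\bullet$ and $A_\bullet^!$ simultaneously.

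The one place where you hesitate --- the Hilbert series of $\grad_\bullet(G)$ for a Demushkin group --- is exactly where the paper supplies the missing input: rather than a direct PBW count, the paper invokes the fact that the canonical presentation of a Demushkin group is \emph{strongly free} (mild), and then cites \cite[Theorem~5.1]{labute:mild}, which computes the Hilbert series of $\grad_\bullet(G)$ for any mild group from the degrees of the generators and relators; with $d$ generators and one relation of degree~$2$ this gives $(1-dt+t^2)^{-1}$ immediately. So your proof is complete once you replace the heuristic ``PBW count'' by this citation.
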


\begin{proof}
 Assume first that $G=F$ is a free pro-$p$ group.
Then $H^\bullet(F,\F_p)=\F_p\oplus H^1(F,\F_p)$ and $\grad_\bullet(F)\simeq\F_p\langle X_1,\ldots,X_d\rangle$,
with $d=d(F)$, and we may computer their Hilbert series:
\begin{eqnarray*}
 h_{H^\bullet(F,\F_p)}(t) &=& 1+dt, \\
 h_{\grad_\bullet(F)}(t) &=& 1+dt+d^2t^2+d^3t^3+\ldots
\end{eqnarray*}
Thus, we may apply Lemma~\ref{5lem:Hilbert series}, and we obtain
\[h_{H^\bullet(F,\F_p)}(-t)\cdot h_{\grad_\bullet(F)}(t)=(1-dt)(1+dt+d^2t^2+d^3t^3+\ldots),\]
so that $H^\bullet(F,\F_p)$ and $\grad_\bullet(F)$ are Koszul algebras.

Assume now that $G$ is a Demushkin group.
Then one has \[h_{H^\bullet(G,\F_p)}(t)=1+dt+t^2.\]
Moreover, since the canonical presentation of a Demushkin group is strongly free,
\cite[Theorem~5.1]{labute:mild} implies that the Hilbert series of $\grad_\bullet(G)$ is 
\[h_{\grad_\bullet(F)}(t)=\frac{1}{1-(t+t+\ldots+t)+t^2}=\frac{1}{1-dt+t^2}.\]
Therefore, $h_{H^\bullet(G,\F_p)}(t)\cdot h_{\grad_\bullet(G)}(-t)=1$,
and Lemma~\ref{5lem:Hilbert series} yields the claim.
\end{proof}

Moreover, the classes of $H$-Koszul pro-$p$ groups and of $\mathcal{U}$-Koszul pro-$p$ groups is closed under
free pro-$p$ products and cyclotomic fibre products.

\begin{prop}
 Let $G$, $G_1$ and $G_2$ be finitely generated Bloch-Kato pro-$p$ groups.
\begin{itemize}
 \item[(i)] The free pro-$p$ product $G_1\ast_{\hat p}G_2$ is a $H$-Koszul pro-$p$ group,
resp. a $\mathcal{U}$-Koszul pro-$p$ group, if, and only if,
$G_1$ and $G_2$ are both $H$-Koszul, resp. $\mathcal{U}$-Koszul.
 \item[(ii)] Assume that $(G,\theta)$ is cyclo-oriented.
Then the cyclotomic fibre product $G\ltimes_\theta Z$, with $Z\simeq\Z_p$, is again a $H$-Koszul pro-$p$ group,
resp. a $\mathcal{U}$-Koszul pro-$p$ group, if, and only if, $G$ is $H$-Koszul, resp. $\mathcal{U}$-Koszul.
\end{itemize}
\end{prop}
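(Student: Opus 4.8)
The plan is to reduce both statements (i) and (ii) to the behavior of Koszulity under the two quadratic-algebra constructions that govern the relevant cohomology rings, namely the direct product $\sqcap$ and the skew-commutative tensor product $\otimes^{-1}$. The key tool is the closure of the class of Koszul algebras under Koszul duality (Proposition~\ref{5prop:koszul properties}), together with the formulas relating these algebra operations to their duals (Proposition~\ref{5prop:koszulduality products algebras}) and the identifications of the $\F_p$-cohomology rings of free products and cyclotomic fibre products already established in the text.

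For part (i), I would start from the decomposition of the cohomology ring of a free pro-$p$ product as a direct product of quadratic algebras,
\[
H^\bullet(G_1\ast_{\hat p}G_2,\F_p)=H^\bullet(G_1,\F_p)\sqcap H^\bullet(G_2,\F_p),
\]
which is \eqref{5eq:cohomology freeprod}. The plan is then to invoke the general fact that a direct product $A_\bullet\sqcap B_\bullet$ of quadratic algebras is Koszul if and only if both factors $A_\bullet$ and $B_\bullet$ are Koszul; this is a standard property of Koszul algebras (cf. \cite[\S~3.1]{pp:quadratic algebras}) and yields the $H$-Koszul equivalence immediately. For the $\mathcal{U}$-Koszul equivalence, I would use Proposition~\ref{5prop:lie algebra free products}, which gives $\grad_\bullet(G_1\ast_{\hat p}G_2)\simeq\grad_\bullet(G_1)\sqcup\grad_\bullet(G_2)$, the free product of associative algebras; here the corresponding statement is that a free product of graded algebras is Koszul if and only if each factor is. Alternatively, on the Bloch-Kato side one may pass through Koszul duality: since free product and direct product are exchanged under ${}^!$ (Proposition~\ref{5prop:koszulduality products algebras}(i)) and Koszulity is preserved by ${}^!$ (Proposition~\ref{5prop:koszul properties}), the two equivalences are formally interchangeable.

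For part (ii), the relevant decomposition is the skew-commutative tensor product from Remark~\ref{4rem:graded tensor product},
\[
H^\bullet(G\ltimes_\theta Z,\F_p)=H^\bullet(G,\F_p)\otimes^{-1}\Big(\bigwedge_{n\geq0}W\Big),
\]
where $W=H^1(Z,\F_p)$ is one-dimensional, so that $\bigwedge_\bullet W\simeq\F_p\oplus W$ is the algebra of dual numbers, which is Koszul (Example~\ref{5exm:koszul algebra}). The plan is to use that a (skew-commutative) tensor product of two quadratic algebras is Koszul if and only if both tensor factors are Koszul; since the exterior factor is always Koszul, this forces the equivalence between Koszulity of $G\ltimes_\theta Z$ and of $G$. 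For the $\mathcal{U}$-Koszul version I would instead start from Proposition~\ref{4prop:Zassenhaus algebra fibreprod}, which gives $\grad_\bullet(G\ltimes_\theta Z)\simeq\grad_\bullet(G)\otimes^1\F_p[Z_1]$, a symmetric tensor product with a one-variable polynomial algebra; the polynomial algebra $\F_p[Z_1]$ is Koszul (Example~\ref{5exm:koszul algebra}), and the same tensor-product stability then gives the result. Once more, duality via Proposition~\ref{5prop:koszulduality products algebras}(ii) exchanges $\otimes^{-1}$ and $\otimes^1$, so the $H$- and $\mathcal{U}$-statements are compatible.

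The main obstacle I anticipate is purely that of citing the correct external input: the core algebraic facts needed—stability of Koszulity under direct product, free product, and tensor product of quadratic algebras—are not proved in the excerpt and must be imported from \cite{pp:quadratic algebras} (and possibly \cite{positselsky:koszul}). The delicate point is verifying that these closure properties hold in exactly the graded-algebra categories used here (associative $\F_p$-algebras for $\grad_\bullet$, graded-commutative ones for $H^\bullet$), and that the one-sided ``if and only if'' goes through in the tensor-product case without extra finiteness hypotheses beyond finite generation, which we have. Given those citations, the argument is a short assembly of the stated decompositions with the Koszulity-preservation lemmas, and no genuinely new computation is required.
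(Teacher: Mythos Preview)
Your proposal is correct and follows essentially the same approach as the paper: reduce to the decompositions $H^\bullet(G_1\ast_{\hat p}G_2,\F_p)=H^\bullet(G_1,\F_p)\sqcap H^\bullet(G_2,\F_p)$, $\grad_\bullet(G_1\ast_{\hat p}G_2)=\grad_\bullet(G_1)\sqcup\grad_\bullet(G_2)$, and the tensor-product decompositions for the cyclotomic fibre product, then invoke the closure of Koszulity under $\sqcap$, $\sqcup$, $\otimes^1$, $\otimes^{-1}$ from \cite{pp:quadratic algebras} (the paper cites Ch.~2, Corollary~1.2 there) together with Example~\ref{5exm:koszul algebra}. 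The only minor difference is that for (ii) the paper points to Proposition~\ref{5prop:koszul dual fibreprod} as a package for the tensor decompositions, whereas you cite Remark~\ref{4rem:graded tensor product} and Proposition~\ref{4prop:Zassenhaus algebra fibreprod} directly; this is the same content.
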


\begin{proof}
The first statement follows from \eqref{5eq:cohomology freeprod} and \eqref{5eq:envelope pree products}
and from \cite[Ch.~2, Corollary 1.2]{pp:quadratic algebras}.
The second statement follows from Proposition~\ref{5prop:koszul dual fibreprod}, Example~\ref{5exm:koszul algebra}
and again \cite[Ch.~2, Corollary 1.2]{pp:quadratic algebras}.
\end{proof}

\begin{thm}
 Every pro-$p$ group of elementary type (i.e., every finitely generated pro-$p$ group lying in $\mathcal{ET}_p$)
is a $H$-Koszul group and a $\mathcal{U}$-Koszul group.
\end{thm}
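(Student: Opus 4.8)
The plan is to prove the theorem by a clean structural induction on the way elementary type pro-$p$ groups are generated, leveraging the closure properties already established in this section. The class $\mathcal{ET}_p$ is defined inductively, so the natural strategy is to verify the two Koszulity properties for the base objects and then to show they are preserved under the two building operations. Since the preceding results have essentially done all the heavy lifting, the proof will be little more than an assembly of already-proven closure statements.

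First I would handle the base cases. By Proposition~\ref{5prop:Kosulity free demushkin}, both free pro-$p$ groups and Demushkin groups are $H$-Koszul and $\mathcal{U}$-Koszul. The remaining generators listed in the definition of $\mathcal{ET}_p$ are the procyclic group $\Z_p$ (which is free pro-$p$ of rank one, hence covered) and, for $p=2$, the group $\Z/2\Z$. For the latter I would observe that its $\F_2$-cohomology ring is the polynomial algebra $\F_2[\chi]$, i.e.\ the symmetric algebra $S^\bullet(H^1(\Z/2\Z,\F_2))$, which is Koszul by Example~\ref{5exm:koszul algebra}; similarly $\grad_\bullet(\Z/2\Z)$ is a polynomial algebra in one variable and so is Koszul. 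This establishes the base of the induction.

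Next I would carry out the inductive step using the preceding proposition on closure. If $G_1$ and $G_2$ lie in $\mathcal{ET}_p$ and are (by inductive hypothesis) both $H$-Koszul and $\mathcal{U}$-Koszul, then part (i) of the closure proposition shows that $G_1\ast_{\hat p}G_2$ is again $H$-Koszul and $\mathcal{U}$-Koszul. Likewise, if $(G,\theta)$ lies in $\mathcal{ET}_p$ and is both $H$-Koszul and $\mathcal{U}$-Koszul, then part (ii) of the same proposition shows that the cyclotomic fibre product $G\ltimes_\theta Z$ with $Z\simeq\Z_p$ inherits both properties. Since every element of $\mathcal{ET}_p$ is built from the base objects by finitely many applications of these two operations, the induction closes and every pro-$p$ group of elementary type is both $H$-Koszul and $\mathcal{U}$-Koszul.

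The proof is essentially a bookkeeping argument, so I expect no serious obstacle; the real content resides in the already-established results, especially Proposition~\ref{5prop:Kosulity free demushkin} (the base cases) and the closure proposition (the inductive step), the latter of which itself rests on the quadratic-algebra machinery of \cite[Ch.~2, Corollary~1.2]{pp:quadratic algebras} together with the cohomological decompositions \eqref{5eq:cohomology freeprod} and Proposition~\ref{5prop:koszul dual fibreprod}. The only point deserving care is to confirm that the two exceptional base generators — $\Z_p$ and, when $p=2$, $\Z/2\Z$ — are genuinely covered, and to note that the cyclo-orientation hypothesis required in part (ii) of the closure proposition is automatically satisfied, since by Theorem~\ref{4thm:cyclotomic free product} and Theorem~\ref{4thm:cyclotomic fibreprod} every element of $\mathcal{ET}_p$ is a finitely generated cyclo-oriented pro-$p$ group.
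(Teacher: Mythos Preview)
Your approach is exactly the one the paper intends: the theorem is stated there without proof, as an immediate summary of Proposition~\ref{5prop:Kosulity free demushkin} (base cases) and the closure proposition just above it (inductive step), so your structural induction on $\mathcal{ET}_p$ is precisely what is expected.

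One small correction in your treatment of the exceptional base case $\Z/2\Z$: the algebra $\grad_\bullet(\Z/2\Z)$ is \emph{not} a polynomial algebra. Indeed, in $\F_2[\Z/2\Z]$ the augmentation ideal $\omega=\F_2\cdot(g-1)$ satisfies $(g-1)^2=g^2-2g+1=0$, so $\omega^2=0$ and $\grad_\bullet(\Z/2\Z)\simeq\F_2[X]/(X^2)$ is the algebra of dual numbers (equivalently, the exterior algebra on one generator). This does not damage your argument, since $\F_2[X]/(X^2)=\bigwedge_\bullet\F_2$ is Koszul by Example~\ref{5exm:koszul algebra}, and it is in fact the Koszul dual of $H^\bullet(\Z/2\Z,\F_2)\simeq\F_2[\chi]=S_\bullet(\F_2)$, so $\Z/2\Z$ is both $H$-Koszul and $\mathcal{U}$-Koszul (and a Koszul duality group). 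Alternatively, one can simply note that $\Z/2\Z$ is the unique torsion Demushkin group and check directly that $h_{H^\bullet}(-t)\cdot h_{\grad_\bullet}(t)=\frac{1}{1+t}\cdot(1+t)=1$, so Lemma~\ref{5lem:Hilbert series} applies. With this adjustment your proof is complete.
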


\begin{ques}\label{5ques:koszul}
Let $K$ be a field containing a primitive $p$-th root of unity such that the maximal pro-$p$ Galois group
$G_K(p)$ is finitely generated.
\begin{itemize}
 \item[(i)] Is $G_K(p)$ a Koszul duality group?
 \item[(ii)] Is $G_K(p)$ $H$-Koszul and/or $\mathcal{U}$-Koszul?
\end{itemize}
\end{ques}

Note that if (i) holds and $G_K(p)$ is $H$-Koszul, resp. $\mathcal{U}$-Koszul, then by Proposition~\ref{5prop:koszul properties}
the group $G_K(p)$ is also $\mathcal{U}$-Koszul, resp. $H$-Koszul.


\end{document}